\documentclass[oneside, 11pt,a4paper]{amsart}

\usepackage{stmaryrd}
\usepackage{subcaption}
\usepackage{graphicx} % Required for inserting images
\usepackage{amssymb} \usepackage{amsfonts} \usepackage{amsmath}
\usepackage{amsthm} \usepackage{epsfig} 
\usepackage{color}
\usepackage[T1]{fontenc}
\usepackage{pinlabel}
\usepackage{amscd}
\usepackage[all]{xy}
\usepackage{pinlabel}
\usepackage{mathrsfs}  
\usepackage[most]{tcolorbox}
\usepackage{hyperref}
\usepackage{float}
\usepackage{tikz}
\usetikzlibrary{decorations.pathreplacing, backgrounds, calc}
\usepackage{geometry}
\usepackage{tikz-cd}
\usepackage[shortlabels]{enumitem}
\usepackage{bm}
\usepackage{xfrac}
\usepackage{cleveref}
\usepackage[]{mdframed}
\usepackage{lmodern}
\usepackage{mathtools}
\DeclarePairedDelimiter{\ceil}{\lceil}{\rceil}
\DeclarePairedDelimiter{\floor}{\lfloor}{\rfloor}
\newtheorem{thm}{Theorem}[section]

\newtheorem{lem}[thm]{Lemma}
\newtheorem{add}[thm]{Addendum}
\newtheorem{corollary}[thm]{Corollary}
\newtheorem{prop}[thm]{Proposition}
\newtheorem{prob}[thm]{Problem}

\theoremstyle{definition}
\newtheorem{defn}[thm]{Definition}

\newtheorem{remark}{Remark}[section]

\newcommand{\bdve}[1]{\boldsymbol{\mathbf{#1}}}
\newcommand{\ve}{\varepsilon}
\newcommand{\Se}{S_\ve}
\newcommand{\bs}{\bm{s}}

\newcommand{\bt}{\bm{t}}

\newcommand{\bk}{\bm{k}}
\newcommand{\br}{\bm{r}}

\newcommand{\bH}{\ensuremath {\mathbb{H}}}
\newcommand{\Z}{\ensuremath {\mathbb{Z}}}
\newcommand{\Q}{\ensuremath {\mathbb{Q}}}
\newcommand{\R}{\ensuremath {\mathbb{R}}}
\newcommand{\F}{\ensuremath {\mathbb{F}}}

\newcommand{\cD}{\mathcal{D}}
\newcommand{\cK}{\mathcal{K}}
\newcommand{\cL}{\mathcal{L}}
\newcommand{\cM}{\mathcal{M}}
\newcommand{\cN}{\mathcal{N}}

\newcommand{\cX}{\mathcal{X}}
\newcommand{\cY}{\mathcal{Y}}

\newcommand{\bE}{\mathbb{E}}
\newcommand{\bI}{\mathbb{I}}

\newcommand{\bdE}{\bdve {E}}
\newcommand{\bdI}{\bdve {I}}
\newcommand{\bdx}{\bdve {x}}
\newcommand{\bdy}{\bdve {y}}
\newcommand{\bdw}{\bdve {w}}

\newtheoremstyle{namedd}{}{}{\itshape}{}{\bfseries}{.}{ }{#1 \thmnote{#3}}
\theoremstyle{namedd}
\newtheorem*{namedtheorem}{Theorem}

\DeclareMathOperator{\id}{{id}}

\DeclareMathOperator{\hboxtimes}{{\hat\boxtimes}}

\newcommand{\bCFm}{\bdve{CF}^{-}}
\newcommand{\bHFm}{\bdve{HF}^{-}}
\newcommand{\cCFL}{\mathcal{C\!F\!L}}
\newcommand{\cCFK}{\mathcal{C\hspace{-.5mm}F\hspace{-.3mm}K}}

\newcommand{\CF}{\mathit{CF}}

\newcommand{\HF}{\mathit{HF}}

\newcommand{\HFL}{\mathit{HFL}}

\newcommand{\Sl}{\mathit{Sl}}

\newcommand{\ds}[1]{{\color{blue}Diego: #1}}

\newtcbtheorem{Chapter}{\bfseries Chapter}{enhanced,drop shadow={black!50!white},
  coltitle=black,
  top=0.3in,
  attach boxed title to top left=
  {xshift=1.5em,yshift=-\tcboxedtitleheight/2},
  boxed title style={size=small,colback=pink}
}{chapter} 

\newcommand{\ie}{i.\,e.~}

\author{Diego Santoro}
\author{Hugo Zhou}

\address{HUN-REN Alfréd Rényi Institute of Mathematics, Reáltanoda utca 13-15, 1053, Budapest, Hungary}
\email{\href{mailto:diego.santoro95@gmail.com}{diego.santoro95@gmail.com}}

\address{Department of Mathematics, University of Michigan,  Ann Arbor, MI, 48109,   USA}
\email{\href{mailto:hugozhou@umich.edu}{hugozhou@umich.edu}}

\title{On L-space Surgeries on two-bridge links}

\begin{document}

\begin{abstract}
We classify the sets of $L$-space surgeries on all two-bridge links, providing the first examples of hyperbolic links for which such sets cannot be described as unions of finitely many rectangles in $\Q^2$.

The proof relies on several different techniques, each of which is applicable in greater generality: we introduce a sufficient diagrammatic condition for links in $S^3$ to be persistently foliar, a property that implies that every non-trivial surgery on such links supports a coorientable taut foliation. We define a simplified model for the Heegaard Floer homology of rational surgeries on two-component $L$-space links, following the work of Manolescu-Ozsv\'ath, Liu, and Zemke, and use it to obtain obstructions to $L$-space surgeries. Finally, we use explicit computations of Turaev torsions to determine $L$-space surgeries in the case of generalised $L$-space links. 

Among the consequences of our results, we obtain an optimal uniform bound on the volume of any hyperbolic $L$-space that is surgery on a two-bridge link, together with a classification of all $L$-space satellite knots whose associated two-component pattern link is a two-bridge link.
\end{abstract}

\maketitle

\tableofcontents

\section{Introduction}\label{section:intro}
An oriented rational homology sphere $M$ is an $L$-space if \(\widehat{\HF}(M)\) has dimension equal to the cardinality \(|H_1(M;\Z)|\) of its first homology group. Since this is the smallest possible rank, $L$-spaces have minimal Heegaard Floer homology. Standard examples include lens spaces and, more generally, elliptic \(3\)-manifolds \cite[Proposition~2.3]{OSlens05}. There are also many hyperbolic examples: for instance, every $(r_1,r_2,r_3)$-surgery on the Borromean rings, with $r_i\geq 1$, is an $L$-space. Although defined purely in terms of Heegaard Floer homology, $L$-spaces have been a useful  in studying several different topics in low-dimensional topology. For example, they can be used to study unknotting numbers (\cite{Gre14,McCoy17}), the cosmetic crossing conjecture (\cite{LidMoo17}), and to prove vanishing results for Seiberg-Witten invariants (\cite{AgoLin20,BFS24}). 
They also do not support coorientable taut foliations (\cite{OSgenus04,Bow16,KR17}), and are conjecturally characterized among irreducible \(3\)-manifolds by this property; see \cite{Juh15}. In a different direction, Boyer, Gordon and Watson conjectured in \cite{BGW13} that an irreducible $3$-manifold is an $L$-spaces if and only if its fundamental group is not left-orderable. The conjectured equivalence of these three notions is now known as the \emph{$L$-space conjecture} and it has been proven for graph manifolds \cite{BC, HRRW, Ras17}, but remains open in general.
\newline

A feature of Heegaard Floer theory is that information about the Heegaard Floer homology of a \(3\)-manifold \(M\) can often be extracted from a presentation of \(M\) as surgery on a link. This is especially effective when determining whether $M$ is an $L$-space. In \cite{GorNem16}, the definition of an \emph{$L$-space link} was introduced.
\begin{defn}
An $n$-component link \(L\subset S^3\) is an \emph{$L$-space link} if all of its positive  large integer surgeries are $L$-spaces, \ie if there exists $(p_1,\dots,p_n)\in \Z^n$ such that the surgery \(S^3_{d_1,\dots, d_n}(L)\) is an $L$-space for every integers $d_1,\dots, d_n$ with \(d_i\geq p_i\), \(\forall i=1,\dots, n\).     
\end{defn}
This definition can be naturally extended to that of \emph{generalised $(\pm,\dots,\pm)\,L$-space links}, obtained by considering the corresponding types of generalised large surgeries. According to this definition, an $L$-space link is a generalised $(+,\dots, +)\,L$-space link.

There is a significant difference between studying $L$-space surgeries on knots and on links with two or more components. For example, if a knot $K$ in $S^3$ has a non-meridional $L$-space surgery, then, up to mirroring, $K$ is an $L$-space knot. More precisely, the $r$-surgery on such a knot $K$ is an $L$-space if and only if \(r\in [2g(K)-1, \infty]\) \cite[Lemma 2.13]{HeddenCabling2}, where $g(K)$ denotes the genus of $K$ and \(r\in \overline{\Q}=\Q \cup \{\infty\}\). In addition, $L$-space knots satisfy strong topological constraints: they are fibered with right-veering monodromy, they are strongly quasipositive, and their $3$-dimensional and $4$-dimensional genus coincide and agree with their $\tau$ invariant \cite{Ghi08,Ni07,Hed10,HonKazMat07}.
For links, many of these properties do not hold. In particular, $L$-space links need not be fibered \cite[Example~3.9]{yajing_lspace}, or strongly quasipositive \cite[Proposition~1.5]{BeiCav16}, and there exist links with positive $L$-space surgeries that are not $L$-space links. Examples of such links will appear also in this paper. 

In general, our understanding of the possible shapes and structural properties of the set of $L$-space surgeries on a link, considered as a subset of \(\overline{\Q}^n\), is very limited.
In \cite{GorNem18}, the authors provide sufficient conditions for this set to be bounded from below for two-component $L$-space links, and they completely characterise when this happens for two-component algebraic links. In \cite{GorLiuMoo20,BeibeiLiu21}, among other results, the possible regions of integer $L$-space surgeries on two-component $L$-space links are studied, and, in some cases, classified.
The first non-trivial examples of complete classifications of \emph{rational} $L$-space surgeries are given in \cite{Ras20} where, using a gluing formula for $L$-spaces together with the classification result of \cite{Ras17}, the author classifies the set $L$-space surgeries on satellites by torus links. %\ds{to check}
\newline

For hyperbolic links, very few examples of classifications are known, and in those cases the set of $L$-space (rational) surgeries is fairly simple, being either empty of union of finitely many rectangles in $\Q^n$; for instance, we have one rectangle for the Whitehead link, and two rectangles for the Borromean rings\footnote{See, for instance, \cite{San22Whit, San24diag}.}. The goal of this paper is to describe the sets of $L$-space surgeries on all hyperbolic links with two components belonging to the family of \emph{two-bridge links}, and, by doing so, to provide more interesting examples of sets of $L$-space surgeries. 

We refer to Section~\ref{section:two-bridge} for definitions and some properties of two-bridge links. For now, recall that two-bridge links can have either one or two components, and in this paper we will always consider two-component links, since for two-bridge knots the situation is well-understood\footnote{The only two-bridge knots with non-meridional $L$-space surgeries are the $T(2,k)$ torus knots.}. Two-bridge links have unknotted components, and hence any surgery where at least one surgery coefficient is $\infty$ is either an $L$-space, indeed a lens space, or $S^2\times S^1$. For this reason, we will focus only on \emph{non-trivial} surgeries, i.e. those where no surgery coefficient is $\infty$.

Our first result is a classification of two-bridge links with non-trivial $L$-space surgeries. Indeed, we prove a stronger statement:

\begin{thm}\label{thm: classification of links with L-space surgeries}
A two-bridge link $L$ has a non-trivial $L$-space surgery if and only if it is isotopic, up to mirror, to $L_{n,k}$ or $L'_{n,k}$, for some $n,k\geq 0$. Moreover, every non-trivial surgery on any other two-bridge link supports a coorientable taut foliation.
\end{thm}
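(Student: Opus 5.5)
The proof splits according to the continued fraction expansion of the two-bridge link, and the two directions use different tools. Write a two-component two-bridge link $L$ as $C(a_1,\dots,a_m)$ with all $a_i$ of the same sign, after mirroring. The number of terms $m$ is odd, since $L$ has two components.

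\textbf{Foliation direction.} The core is a diagrammatic sufficient condition for a link in $S^3$ to be \emph{persistently foliar}: the exterior should carry a coorientable taut foliation (or a suitable branched surface) meeting each boundary torus in a way that realises every non-meridional slope.

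\begin{itemize}
\item To build it, I would use the Conway sphere and rational tangle decomposition of the alternating diagram. Following Roberts and Delman--Roberts, I would construct, for each region of the tangle decomposition, a sutured-manifold or branched-surface piece.
\item I would then glue these pieces along twice-punctured disks, which yields laminations carrying all boundary slopes simultaneously on both components.
\item The diagrammatic criterion should amount to this: whenever some twist region has enough crossings ($|a_i|\ge 2$ in the right positions), or there are enough twist regions, one can find a ``spine'' that is fully carried and whose complementary regions can be filled. This gives persistent foliarity.
\item I would then check combinatorially that every two-bridge link outside the families $L_{n,k}$ and $L'_{n,k}$ satisfies the criterion. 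This is a finite case analysis on the pattern of the $a_i$.
\end{itemize}

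Persistent foliarity implies that every non-trivial surgery supports a coorientable taut foliation. By the results cited in the introduction (Ozsváth--Szabó, Bowden, Kazez--Roberts), such a manifold is not an $L$-space. This proves both the ``only if'' direction and the final sentence of the theorem.

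\textbf{$L$-space direction.} For the converse it is enough to exhibit a single non-trivial $L$-space surgery on each $L_{n,k}$ and $L'_{n,k}$.

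\begin{itemize}
\item I expect these families to be $L$-space links, or generalised $L$-space links; they extend the Whitehead link and $T(2,2n)$-type twisted families.
\item A large surgery of the appropriate signs then gives an $L$-space directly.
\item Alternatively, I could use a Kirby calculus argument. Blowing down an unknotted component with $\pm1$ surgery turns the other component into a twisted torus knot or a $T(2,k)$-type knot that is an $L$-space knot. Suitable surgery on that knot is then an $L$-space, for instance by the genus bound $r\ge 2g-1$ recalled in the introduction.
\end{itemize}

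\textbf{Main obstacle.} I expect the hardest step to be the foliation construction. One must design a uniform branched-surface construction that is provably laminar, and that carries all slopes on both boundary tori at once, for every continued fraction outside the two exceptional families. My plan is to prove the criterion first for a small set of ``base'' twist patterns. I would then show it is preserved under adding crossings to a twist region and under appending twist regions, reducing the general case to finitely many minimal links to check by hand.
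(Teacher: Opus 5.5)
Your overall logic matches the paper's. Persistent foliarity of every two-bridge link outside the two families gives the ``only if'' direction and the final sentence. For the converse, one non-trivial $L$-space surgery on each $L_{n,k}$ and $L'_{n,k}$ suffices.

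\textbf{The converse has a genuine gap.} That these links are (generalised) $L$-space links is precisely what must be proved, not expected. For $L_{n,k}$ you could cite Liu's conjecture as proved by Dawra, but for $L'_{n,k}$ you would still need an argument. Your Kirby-calculus fallback also provably fails for $L_{n,k}$. By Theorem~\ref{thm: L-space links}, $S^3_{r,s}(L_{n,k})$ is an $L$-space if and only if $\min\{r,s\}\ge n+k-1$. So once $n,k\ge 1$ and $n+k\ge 3$, no surgery in which one unknotted component gets coefficient $\frac1m$ (in particular $\pm1$) is an $L$-space. Hence every knot obtained by blowing down a component is neither an $L$-space knot nor the mirror of one. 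For $L'_{n,k}$, twisting along the braid axis does produce positive braid closures that are $L$-space knots, but positivity alone does not prove this; the paper deduces it only afterwards.

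The missing idea is to realise both families as fillings of a single Floer simple manifold. Take the three-component link $\mathcal{L}$ from the proof of Proposition~\ref{prop: finding some L-space surg on nk/nk'} and set $Y=S^3_{k,k,\bullet}(\mathcal{L})$. It has the known $L$-space fillings $S^3_{k,k}(L_{1,k})$ (slope $\infty$) and $S^3_{k-1,k-1}(T(2,2k))$ (slope $1$), and its homological longitude is $2$. By Rasmussen--Rasmussen, every slope $s\le 1$ is then an $L$-space filling. The slopes $-\frac{1}{n-1}$ and $\frac{1}{n+1}$ give $S^3_{n+k-1,n+k-1}(L_{n,k})$ and $S^3_{k-n-1,k-n-1}(L'_{n,k})$. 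When $k=n+1$ the paper applies this to the mirror $L'_{k+1,n-1}$ instead.

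\textbf{The foliation half is only a plan, and your guessed criterion is false in that form.} $L'_{n,k}\cong L(2,\dots,2,2n)$ with $2k+1$ terms has a twist region with $2n\ge 4$ crossings and arbitrarily many twist regions, yet it has $L$-space surgeries. The condition in Theorem~\ref{thm: pers fol link} is imposed per component. Every twist region has at least two crossings, and both checkerboard graphs are connected. Each $K_i$ must pass through two twist regions $R_i^1,R_i^2$, all four distinct and none adjacent to a fixed region $S$ that is deleted from the $2$-complex. These must satisfy either $\max_j|w(R_i^j)|\ge 3$, or every twist region met by $K_i$ has even weight and $w(R_i^1)=-w(R_i^2)$.

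These twist regions are where two cusps are created on the boundary-parallel torus around $K_i$. The cusps turn its complementary region into $T_i\times[0,1]$ with two meridional annular sutures having the noncompact extension property. That is what lets the Delman--Roberts extension realise every non-meridional slope. Separately, the train tracks on the crossing-circle tori of the augmented link realise the slopes $\frac{1}{k_h}$ that recover $L$. In $L(2,\dots,2,2z)$ at most one twist region has weight different from $2$, so the criterion can serve at most one component, consistent with these being the exceptions.

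Because the criterion depends on the diagram, the case analysis also needs plumbing moves between continued fraction expansions, plus an addendum for short ones. Your same-sign expansion has one-crossing twist regions, which the criterion excludes. In that expansion the parity of $m$ also does not detect the number of components: $2$ and $1+\frac11$ both give the Hopf link. It does detect it for the even expansion.
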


\begin{figure}
    \centering
    \includegraphics[width=0.4\textwidth]{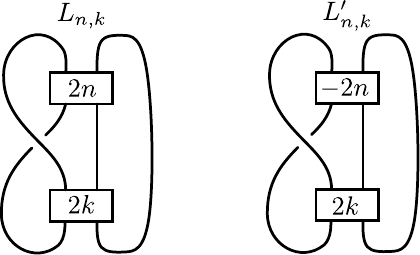}
    \caption{The only two-bridge links, up to mirror, with non-trivial $L$-space surgeries.}
    \label{fig: L and L'}
\end{figure}

The links \(L_{n,k}\) and \(L'_{n,k}\) are depicted in Figure \ref{fig: L and L'}, where we use boxes to indicate the number of crossings between two strands, with the convention that right-handed crossings are positive and left-handed crossings are negative. We observe that the families $L_{n,k}$ and $L'_{n,k}$, are not disjoint, and their intersection is given by the $T(2,2k)$ torus links.
\newline

The bulk of the proof of Theorem~\ref{thm: classification of links with L-space surgeries} is presented in Section~\ref{section:foliations} and exploits diagrammatic properties of two-bridge links. More precisely, we introduce the notion of a \emph{persistently foliar link}\footnote{The definition is a straightforward generalisation of that of persistently foliar knot, given by Delman and Roberts in \cite{DRdiamond}; see Definition \ref{def: pers fol link}.}, and we prove a general theorem which provides sufficient conditions for an $n$-component link to be persistently foliar; we refer to Section \ref{section:foliations}, and in particular to Theorem \ref{thm: pers fol link}, for details. As a direct consequence of the definition, all non-trivial surgeries on a persistently foliar link support coorientable taut foliations, and since $L$-spaces do not, Theorem \ref{thm: pers fol link} allows us to identify the links in the families $L_{n,k}$ and $L'_{n,k}$ as the only two-bridge links that can have $L$-space surgeries. We show that this is indeed the case in Section \ref{sec: classification}. 
\newline
 
The links $L_{n,k}$ already appeared in the literature in \cite{yajing_lspace}, where Y. Liu conjectured that these are exactly the two-bridge $L$-space links. This was proved by Dawra in his thesis \cite{DawraThesis}. As a consequence of our results we obtain a different proof of Liu's conjecture. In addition, with the next result, we classify all $L$-space surgeries on the links $L_{n,k}$. Recall that a two-bridge link is either hyperbolic or a  $T(2,2k)$ torus link, with $k\in\Z$. Dehn surgeries on torus links are Seifert fibered manifolds, or connected sums thereof, and since the $L$-spaces in this family are classified \cite{LiscaStipsicz}, we focus on the hyperbolic two-bridge links.

\begin{thm}\label{thm: L-space links}
A hyperbolic two-bridge link $L$ is an $L$-space link if and only if it is isotopic to $L_{n,k}$ for some $n\geq 1, k\geq 1$. In this case, for rationals $r,s$, the $(r,s)$-surgery on $L_{n,k}$ is an $L$-space if and only if $\operatorname{min}\{r,s\}\geq n+k-1$.
\end{thm}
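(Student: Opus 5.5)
By Theorem~\ref{thm: classification of links with L-space surgeries}, a hyperbolic two-bridge link with a non-trivial $L$-space surgery is, up to mirror, $L_{n,k}$ or $L'_{n,k}$ with the torus links $T(2,2k)$ excluded. So both parts of Theorem~\ref{thm: L-space links} reduce to two tasks: showing that the $L_{n,k}$ ($n,k\geq 1$) are $L$-space links with $L$-space surgery region exactly $\{\min\{r,s\}\geq n+k-1\}$, and showing that the mirrors of $L_{n,k}$ and the hyperbolic $L'_{n,k}$ (and their mirrors) are not $L$-space links.

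\emph{Step 1: sufficiency.} First I would show that $(r,s)$-surgery on $L_{n,k}$ is an $L$-space whenever $r,s\geq n+k-1$. Each component is unknotted, so I would do surgery on one component first. The result is a surgery on a knot in a lens space, or, after a Rolfsen twist, on a knot in $S^3$ (a twisted torus or cable-like knot). I would then apply the rational surgery inequality: if $Y_p(K)$ and $Y_{p+q}$-type neighbours are $L$-spaces with $|H_1|$ additive, the intermediate slopes are $L$-spaces too. Concretely, I would establish the base case of integer surgeries $(m_1,m_2)$ with $m_i\geq n+k-1$ via the surgery exact triangle and induction on $m_i$. The starting points are the slopes where one coefficient is $\infty$ (lens spaces) and the slope $n+k-1$ on the remaining knot. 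Monotonicity in each coordinate then gives all rationals above the threshold. This also shows $L_{n,k}$ is an $L$-space link.

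\emph{Step 2: necessity and non-$L$-space-link statements.} For the sharpness, I would use the simplified model of the Heegaard Floer complex of rational surgeries on two-component $L$-space links (Manolescu--Ozsváth/Liu/Zemke), built from the $H$-function. For a two-bridge $L$-space link, the $H$-function is determined by the multivariable Alexander polynomial, which is computable from the Conway form of $L_{n,k}$. The test is whether $\widehat{\HF}$ of $S^3_{r,s}$ has rank exceeding $|H_1|$. I would exhibit an extra generator whenever $r<n+k-1$, say, coming from a nonzero $H$-value jump at the lattice point controlled by the Alexander polynomial's top degree. This is the analogue of $2g-1$ for knots: the threshold $n+k-1$ should equal the corresponding "genus" quantity. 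Negative and mixed-sign slopes would be handled by the same obstruction, or by observing that the link is not a generalised $(-,\pm)$ $L$-space link.

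For the mirrors and for $L'_{n,k}$, I would show the $H$-function/Alexander polynomial fails the $L$-space link criterion, e.g.\ coefficients outside $\{0,\pm1\}$ in the required pattern, or a negative value where positivity is forced. Alternatively, I would use the Turaev torsion computation to show large positive surgeries are not $L$-spaces, since $|H_1|$ would not match the torsion count.

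The main obstacle is Step~2's necessity for all rational $(r,s)$ with one coordinate below the threshold. This needs a careful analysis of the rational surgery model across every $\mathrm{Spin}^c$ structure, rather than a single integer check. I would first try to reduce to the integer case via the surgery exact triangle in the opposite direction, i.e.\ showing that $L$-space slopes on a fixed component form an interval whose endpoint is detected at integers.
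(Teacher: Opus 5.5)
You have the right tools for propagating $L$-space surgeries (exact triangles and the interval structure) and the right kind of obstruction ($H$-functions in the rational simplified model). What is missing are the inputs that make the argument close, and they are missing at three points.

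\emph{Sufficiency.} The exact-triangle induction from $(m_1,m_2)$ and $(m_1,\infty)$ to $(m_1,m_2+1)$ (valid when $m_1m_2>l^2$) only propagates $L$-spaces away from a base point, and box-convexity is no different. Your base point, ``the slope $n+k-1$ on the remaining knot'', is exactly the claim that $S^3_{n+k-1,n+k-1}(L_{n,k})$ is an $L$-space, and you never establish it.

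It cannot come out of the induction. In the relevant range, the only triangle in which that slope carries the largest $|H_1|$ also involves slopes $n+k-2$ and $\infty$, and $S^3_{n+k-1,n+k-2}(L_{n,k})$ is not an $L$-space. A Rolfsen twist does not help either: it only reaches first coefficients $1/m\le 1<n+k-1$, unless $n=k=1$.

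The paper's idea is to write $L_{n,k}$ as $-\tfrac{1}{n-1}$-surgery on one component of a three-component link $\mathcal{L}$. The needed manifold then becomes a filling of $Y=S^3_{k,k,\bullet}(\mathcal{L})$.
\begin{itemize}
\item The slopes $\infty$ and $1$ are $L$-space fillings, coming from known $L$-space surgeries on $L_{1,k}$ and on $T(2,2k)$.
\item The homological longitude of $Y$ is $2$, so Theorem~\ref{theorem RR} makes every slope $\le 1$ an $L$-space filling, in particular $-\tfrac{1}{n-1}$.
\item Box-convexity then gives the whole region $\min\{r,s\}\ge n+k-1$.
\end{itemize}

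\emph{Necessity and mirrors.} A very good point $\bs_0$ produces two rank-one summands in one Spin$^c$ structure, by the argument of Lemma~\ref{lemma: very_good_point}, only when the first coefficient lies within distance $1$ of the first coordinate of $2\bs_0$. For $\bs_0=(\tfrac{n+k}{2}-1,\tfrac{n-k}{2})$ this excludes only $r\in(n+k-3,n+k-1)$, not all $r<n+k-1$.

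The paper closes this gap by applying Theorem~\ref{theorem RR} to $S^3_{\bullet,s}(L_{n,k})$. The $L$-space slopes form an interval containing $\infty$ and $r$ and avoiding $l^2/s$. If $r>l^2/s$, the interval contains that window, a contradiction. If $r<l^2/s$, it contains every $r'\le r$, contradicting the taut foliations on all surgeries with $\min\{r,s\}<1$ (Proposition~\ref{prop: some ctf Lnk}).

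Your fallback, ``$L_{n,k}$ is not a generalised $(-,\pm)\,L$-space link'', would suffice in that second case, but you assert it without proof, and your tools cannot prove it.
\begin{itemize}
\item The mirror of $L_{n,k}$ has the same multivariable Alexander polynomial up to units, and for $n=k$ even the same lattice $\bH$. The Gorsky--N\'emethi formula therefore returns the identical putative $H$-function, already for $L_{1,1}$ (the Whitehead link) and its mirror. So no Alexander-polynomial criterion can separate $L_{n,k}$ from its mirror.
\item Turaev torsion cannot ``count'' a non-$L$-space either, since $\chi(\widehat{\HF}(Y))=|H_1(Y)|$ for every rational homology sphere. Theorem~\ref{theorem RR} also needs Floer simplicity as an input.
\end{itemize}
The paper uses chirality-sensitive input instead. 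It has taut foliations on all surgeries on $L_{n,k}$ with $\min\{r_1,r_2\}<1$ (Proposition~\ref{prop: some ctf Lnk}) and on $L'_{n,k}$ with $\max\{r_1,r_2\}\ll 0$ (Proposition~\ref{prop: some ctf on L'nk}). It combines these with $\overline{L'_{n,k}}=L'_{k+1,n-1}$.
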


\begin{figure}
    \centering
    \includegraphics[width=0.5\textwidth]{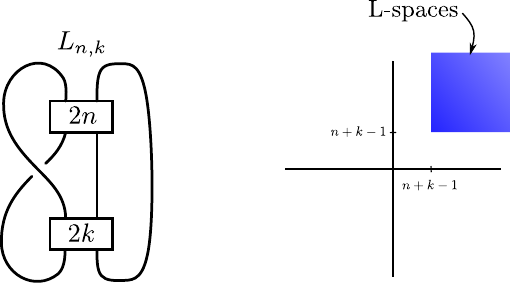}    \caption{The set of $L$-space surgeries on the link $L_{n,k}$, for $n\geq 1, k\geq 1$.}
    \label{fig: L with L-space surgeries}
\end{figure}

We are also able to complement Theorem~\ref{thm: L-space links} by showing that every non-$L$-space surgery on $L_{n,k}$ supports a coorientable taut foliation.

\begin{thm}\label{thm: CTF iff Lspace for Lnk}
Let $n,k\geq 1$, and let $M$ be a Dehn surgery on the link $L_{n,k}$. Then $M$ is not an $L$-space if and only if it supports a coorientable taut foliation.    
\end{thm}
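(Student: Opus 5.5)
One direction is immediate: $L$-spaces do not support coorientable taut foliations \cite{OSgenus04,Bow16,KR17}. So if $M$ supports such a foliation, $M$ is not an $L$-space. The substance is the converse: every non-$L$-space surgery on $L_{n,k}$ must be shown to carry a coorientable taut foliation.

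I would first reduce to non-trivial surgeries. If some coefficient is $\infty$, then $M$ is a surgery on an unknot, hence a lens space or $S^2\times S^1$. Lens spaces are $L$-spaces, and $S^2\times S^1$ carries the product foliation by spheres, which is coorientable and taut. Hence I may assume $(r,s)\in\Q^2$. By Theorem~\ref{thm: L-space links}, $M=S^3_{r,s}(L_{n,k})$ is then a non-$L$-space exactly when $\min\{r,s\}<n+k-1$. By the symmetry of $L_{n,k}$ exchanging the two components (if it has one; otherwise I would treat both cases), it suffices to treat $r<n+k-1$ with $s$ arbitrary.

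The plan is to exhibit, for each such $(r,s)$, a foliation adapted to the surgery. I would split into two regimes.

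Regime 1: $r<n+k-1$ with $r$ below a threshold such as $r<0$ or $r<n$. Here I would adapt the diagrammatic method behind Theorem~\ref{thm: pers fol link}. The persistently-foliar hypothesis fails for $L_{n,k}$ globally, but I expect a relative version to hold. Specifically, I would construct a taut branched surface in the exterior of $L_{n,k}$, built from a checkerboard-type spanning surface of the alternating-like diagram together with product disks at the twist regions. This branched surface should fully carry laminations realising all boundary slopes on the first component in an interval $(-\infty, n+k-1)$ and all slopes on the second component. Dehn filling along these slopes then yields foliations in the manner of Delman--Roberts.

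Regime 2: the remaining slopes, i.e. $r$ in some interval $[a,n+k-1)$. Here I would instead fill the first component, when $r$ is an integer near $n+k-1$, and recognise $S^3_r(K_1)$ minus the second component as a simpler manifold. Because the components are unknotted, filling one component of a two-bridge link gives a knot in a lens space, and for $L_{n,k}$ this is typically a Seifert fibered or graph-manifold exterior. For graph manifolds, the equivalence of non-$L$-space and CTF is known \cite{BC,HRRW,Ras17}, and Rasmussen's gluing criterion \cite{Ras20} gives the slope sets explicitly. For non-integer $r$ I would decompose $M$ along the torus around the first component into the exterior of the second component in its filling together with a solid torus, and use the foliation-gluing theorem: matching intervals of foliation-detected slopes \cite{BC}.

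The main obstacle is the middle range of non-integral $r$ close to $n+k-1$ paired with arbitrary $s$. There $M$ is typically hyperbolic, so the graph-manifold results do not apply directly. I would try to show that the branched surface of Regime 1 actually realises every slope in the full open interval $(-\infty,n+k-1)$ on the first component, by carefully choosing the sink disk decomposition at the $n$- and $k$-twist regions. The bound $n+k-1$ should appear as the sum of the twist contributions, mirroring the $2g-1$ bound for knots. Verifying that no sink disks arise up to exactly that slope is where I expect the real work to lie.
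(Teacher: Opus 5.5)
Your easy direction and the reduction to $(r,s)\in\Q^2$ with $r<n+k-1$ are fine. However, there is a genuine gap: the hard direction, which is the actual content of the theorem, is not proved.

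\textbf{Regime~2 rests on a false premise.} Since $L_{n,k}$ is hyperbolic, Thurston's hyperbolic Dehn surgery theorem \cite{ThuNotes} says $S^3_r(K_1)\setminus\nu K_2$ is hyperbolic for all but finitely many $r$. So the graph-manifold results \cite{BC,HRRW,Ras17} can reach at most finitely many values of $r$. Regluing a solid torus is just Dehn filling, and it gives nothing unless you already have foliations on that hyperbolic knot exterior.

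\textbf{Regime~1 is a hope rather than an argument.} A branched-surface construction of this kind is exactly what the paper carries out in Proposition~\ref{prop: some ctf Lnk}, and it only yields foliations when $\min\{r,s\}<|n-k|+1$. The remaining window $|n-k|+1\le r<n+k-1$ is nonempty whenever $n,k\geq 2$. That window is precisely your ``main obstacle'', and you give no mechanism for excluding sink discs all the way up to the sharp bound.

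\textbf{The missing idea} is to avoid building these foliations by hand and instead deduce them from the $L$-space classification, which is already established (Theorem~\ref{thm: L-space links}).
\begin{enumerate}
\item For $r_1\notin\{0,\infty\}$, the manifold $S^3_{r_1}(K_1)$ is a lens space, and $K_2$ becomes a $(1,1)$-knot $K_2'$ in it, because $L_{n,k}$ is two-bridge \cite{MorSak91,Saito04}.
\item By Theorem~\ref{thm: L-space links}, if $r_1<n+k-1$ then no $S^3_{r_1,s}(L_{n,k})$ with $s\in\Q$ is an $L$-space. Hence $K_2'$ has no non-trivial $L$-space surgeries.
\item Lyu's theorem \cite{Lyu25} then says $K_2'$ is persistently foliar. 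This produces coorientable taut foliations on $S^3_{r_1,s}(L_{n,k})$ for every $s\in\Q$ simultaneously, including the slope with $r_1 s=\operatorname{lk}(L_{n,k})^2$, where $M$ is not a rational homology sphere.
\item The case $r_1=0$, where the filling is $S^2\times S^1$ rather than a lens space, is covered by Proposition~\ref{prop: some ctf Lnk}, since then $\min\{r_1,r_2\}<1$.
\item The symmetry of two-bridge links handles $r_2<n+k-1$.
\end{enumerate}
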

We prove Theorems~\ref{thm: L-space links} and~\ref{thm: CTF iff Lspace for Lnk} in Section~\ref{subsec: class for Lnk}.
When \(k=1\) or \(n=1\), the links \(L_{n,k}\) coincide with the fibered hyperbolic two-bridge links admitting \(L\)-space surgeries and were previously studied in \cite{San22Whit, San23twobridge}. Interestingly, while in \cite{San22Whit, San23twobridge} the first author used taut foliations to show that a certain explicitly described collection of surgeries on \(L_{n,1}\) constituted the complete set of \(L\)-space surgeries on \(L_{n,1}\), here the direction of the argument is partially reversed. Indeed, some of the taut foliations provided by Theorem~\ref{thm: CTF iff Lspace for Lnk} are obtained as a consequence of the classification result Theorem~\ref{thm: L-space links}. This is made possible by Lyu's recent study of persistently foliar \((1,1)\) non-\(L\)-space knots~\cite{Lyu25}.
\newline

Focusing on the links $L'_{n,k}$, we show in Section \ref{sec: classification} that they are generalised $(+,-)\,L$-space links and that, except for the torus links, they are not $L$-space links. Since two-bridge links are symmetric, \ie they admit an isotopy exchanging their components, a two-bridge link is a generalised $(+,-)\,L$-space link if and only if it is a generalised $(-,+)\,L$-space link. We classify the $L$-space surgeries on the hyperbolic links in $L'_{n,k}$ with the following theorem.
\begin{thm}\label{thm: gen L-space links}
A  hyperbolic two-bridge link $L$ is a generalised $(+,-)\,L$-space link if and only if it is isotopic to $L'_{n,k}$, with $n\geq 2, k\geq 1$. 
In this case, denote $l=n+k$ and fix $r,s\in \Q^2$. Then the $(r,s)$-surgery on $L'_{n,k}$ is an $L$-space if and only if $r$ and $s$  satisfy one of the following conditions:
\begin{itemize}[label=--, leftmargin=2em, itemsep=1.2ex]

\item If $r < 1-2n$, then
\[
s \geq \frac{1}{\left\lfloor \frac{r + (2n-1)}{l} \right\rfloor}.
\]

\item If $r\in [-m,-m+1)$ for some integer $-m\in [1-2n,-2]$,then  
\[
s\geq -1-2n+m.
\] 
\item If $r \in \big[-\tfrac{1}{m},\, -\tfrac{1}{m+1}\big)$ for some integer $m \geq 1$, then
\[
s \geq 1 - 2n - ml.
\]

\item If $r=0$, then $s\ne 0$.

\item If $r \in \big(\tfrac{1}{m+1},\, \tfrac{1}{m}\big]$ for some integer $m \geq 1$, then
\[
s \leq 2k+1 + ml.
\]

\item If $r\in (m-1,m]$ for some integer $m\in [2,2k+1]$, then
\[s\geq 2k+3-m.
\]

\item If $r > 2k+1$, then
\[
s \leq \frac{1}{\left\lceil \frac{r - (2k+1)}{l} \right\rceil}.
\]

\end{itemize}
\end{thm}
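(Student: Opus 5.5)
The proof splits into two parts: identifying $L'_{n,k}$ with $n\ge 2$ as exactly the hyperbolic generalised $(+,-)$ $L$-space two-bridge links, and then deciding, for each rational pair $(r,s)$, whether $S^3_{r,s}(L'_{n,k})$ is an $L$-space.

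\textbf{Identification.} By Theorem~\ref{thm: classification of links with L-space surgeries}, any two-bridge link with a non-trivial $L$-space surgery is, up to mirror, some $L_{n,k}$ or $L'_{n,k}$. Theorem~\ref{thm: L-space links} shows that the hyperbolic $L_{n,k}$ are $L$-space links. Their $(+,-)$ large surgeries are therefore not $L$-spaces, since for $r\ge n+k-1$ we need $s\ge n+k-1$. Mirrors exchange the types $(+,-)$ and $(-,+)$, and symmetry of two-bridge links identifies these two types, so the mirror cases are handled as well. It then remains to check two things directly. First, large $(+,-)$ surgeries on $L'_{n,k}$ are $L$-spaces. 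Second, the cases $n\le 1$ are the torus links, or coincide with members of the $L$ family, and so are excluded.

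For the first check I would use a Kirby-calculus description. Blowing down or twisting along one unknotted component turns $(r,s)$-surgery on $L'_{n,k}$ into surgery on a knot in a lens space, or on a torus/$(1,1)$ knot. From there one can read off that large positive $r$ together with large negative $s$ gives lens spaces or connected sums of $L$-spaces.

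\textbf{$L$-space region, sufficiency.} I would organise the argument by the value of $r$, since the answer is a ``staircase'' in $r$. For each fixed $r$, the component $K_2$ becomes a knot $K_r$ in the lens space (or $S^3$) $S^3_r(K_1)$, because $K_1$ is unknotted. Its $L$-space surgery slopes form an interval, by the knot case in $L$-space manifolds; this is Rasmussen's / Hedden's interval result for Floer simple knots. Hence the problem reduces to computing the endpoint of that interval as a function of $r$.

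To compute the endpoint I would use the Turaev torsion method advertised in the abstract. For a generalised $L$-space link, the multivariable Alexander polynomial determines $\widehat{\HF}$ of rational surgeries. Concretely, $S^3_{r,s}$ is an $L$-space iff every $\mathrm{Spin}^c$ torsion coefficient is $0$ or $1$ in the right pattern, i.e.\ iff the total rank equals $|H_1|=|rs-\mathrm{lk}^2|$ appropriately scaled. I would compute the Alexander polynomial of $L'_{n,k}$ from its Conway form and evaluate the surgery formula for torsion. The thresholds $1-2n$ and $2k+1$ come from the one-variable specialisations (the torus-knot-like behaviour of $K_r$ when $r$ is integral). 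The terms $l=n+k$ appear because, for $r$ near $1/m$ or for $|r|$ large, $K_r$ is the $m$-fold twisted knot whose genus grows by $l$ per twist.

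\textbf{Necessity and the main obstacle.} Outside the claimed region I would obstruct $L$-spaces using the simplified Manolescu--Ozsv\'ath/Liu/Zemke model for rational surgeries on two-component links. I would apply it after changing framings so that the link becomes an $L$-space link, possibly after a $(-1)$-twist along a component, which turns $(+,-)$ into $(+,+)$. In that model one exhibits an extra generator in homology for the given $(r,s)$.

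The main obstacle I expect is the piecewise regime where $r\in(1/(m+1),1/m]$ or $r<1-2n$. There the floor and ceiling functions arise, and the boundary endpoints (closed versus open) must be tracked exactly. I would first verify the integer and reciprocal-integer values of $r$, since there $S^3_r(K_1)$ is simple. I would then interpolate to general $r$ using the monotonicity of the $L$-space interval in each variable, which comes from Rasmussen's gluing criterion for the complementary Seifert-fibered-like pieces. Finally, the case $r=0$ is separate: $S^3_0(K_1)=S^2\times S^1$, and $K_2$ becomes a knot whose surgeries are $L$-spaces exactly when $H_1$ is finite, i.e.\ when $s\ne 0$.
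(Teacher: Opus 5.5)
Your exclusion of the $L_{n,k}$ family via Theorem~\ref{thm: L-space links} is fine, but the constructive half has a false step and a missing input. For $n\ge2$, $k\ge1$ the link $L'_{n,k}$ is hyperbolic. By Thurston's hyperbolic Dehn surgery theorem, $S^3_{r,s}(L'_{n,k})$ is then hyperbolic once $|r|$ and $|s|$ are large, so it is not a lens space or a connected sum, and nothing can be ``read off'' that way. The paper instead produces one $L$-space surgery $(r,r)$ with $0<r^2<l^2$ (Proposition~\ref{prop: finding some L-space surg on nk/nk'}) and spreads it by box-convexity (Lemma~\ref{lem: small surgery implies gen L space link}). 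That surgery comes from an auxiliary three-component link, known $L$-space surgeries on $T(2,2k)$ and $L_{1,k}$, and the interval theorem.

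The same input is what your sufficiency sketch lacks. The knot obtained by $(-\tfrac1m)$-surgery on one component is a positive braid closure, but it is an $L$-space knot only because of Lemma~\ref{lem: small surgery implies gen L space link}. Only then does its genus, $2g-1=1-2n+ml^2-ml$, give the threshold $1-2n-ml$, and $2k+1+ml$ via the mirror (Lemma~\ref{lemma: mirror of generalised L-space links}). The staircase points $(-m,-1-2n+m)$ need a separate argument (Proposition~\ref{prop: L space surgeries on L'nk small box}): a Seifert fibred filling recognised via a Montesinos link and the Lisca--Stipsicz criterion. Also, Theorem~\ref{theorem RR} concerns fillings of a rational homology solid torus and needs Floer simplicity as an input. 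There is no result that the Alexander polynomial of a generalised $L$-space link determines $\widehat{\HF}$ of its surgeries, and the torsion criterion you state for the closed manifolds $S^3_{r,s}$ is not an available tool.

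The necessity direction is where the proposal genuinely breaks. Theorem~\ref{thm: intro_formal_link_H_Lambda} requires an $L$-space link, and neither $L'_{n,k}$ nor its mirror is one (Proposition~\ref{prop: some ctf on L'nk} with Lemma~\ref{lemma: mirror of generalised L-space links}). Changing framings does not change the link. A Rolfsen twist along a component replaces its coefficient $x$ by $x/(1\pm x)$, which stays bounded, and shifts the other coefficient by $\pm l^2$; so it does not carry large $(+,-)$ surgeries to large $(+,+)$ ones. Even for an $L$-space link, the obstruction the model yields (Lemma~\ref{lemma: very_good_point}) excludes whole strips $(r-1,r+1)\times\Q$. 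But every vertical line contains $L$-space surgeries on $L'_{n,k}$ (those with $rs\le0$, Corollary~\ref{cor: quadrant 2 and 4 are L-spaces}), and this persists after any twist, so it cannot detect the floor/ceiling thresholds.

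The missing idea is how the paper pins down $L(Y)$ for $Y=S^3_{r,\bullet}(L)$ with $r=-p/q<0$ and $\gcd(p,l)=1$, so that $H_1(Y)\cong\Z$. First, $\infty$ must be an endpoint of $L(Y)$. Otherwise box-convexity would make $S^3_{r_1,r_2}(L)$ an $L$-space for $r_1,r_2\ll0$, contradicting the taut foliations of Proposition~\ref{prop: some ctf on L'nk}. Second, one computes $\tau(Y)$ from $\Delta'_{n,k}$ by the gluing formula and exhibits an explicit $u\in\iota^{-1}(D^{\tau}_{>0})$ whose slope is the claimed threshold $\rho$, by checking $\iota(u)>0$ and $\iota(u)\notin S[\tau]$. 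Since $(r,\rho)$ is already known to be an $L$-space, Theorem~\ref{theorem RR} forces $L(Y)=[\rho,\infty]$. General $r$ then follows by approximation (Lemma~\ref{lemma: approx}) and box-convexity, and $r>0$ by the mirror. No Seifert-fibred gluing criterion is involved, since the exterior is hyperbolic.

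Finally, your $r=0$ reasoning is wrong. $|H_1(S^3_{0,p/q}(L))|=ql^2\neq0$ for every rational $s$, and Corollary~\ref{cor: quadrant 2 and 4 are L-spaces} shows $S^3_{0,s}(L'_{n,k})$ is an $L$-space for all $s\in\Q$, including $s=0$. The only non-rational-homology-sphere filling on that line is $s=\infty$, so the $r=0$ clause of the statement should be read as allowing every $s\in\Q$.
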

\begin{figure}
    \centering
    \includegraphics[width=0.65\linewidth]{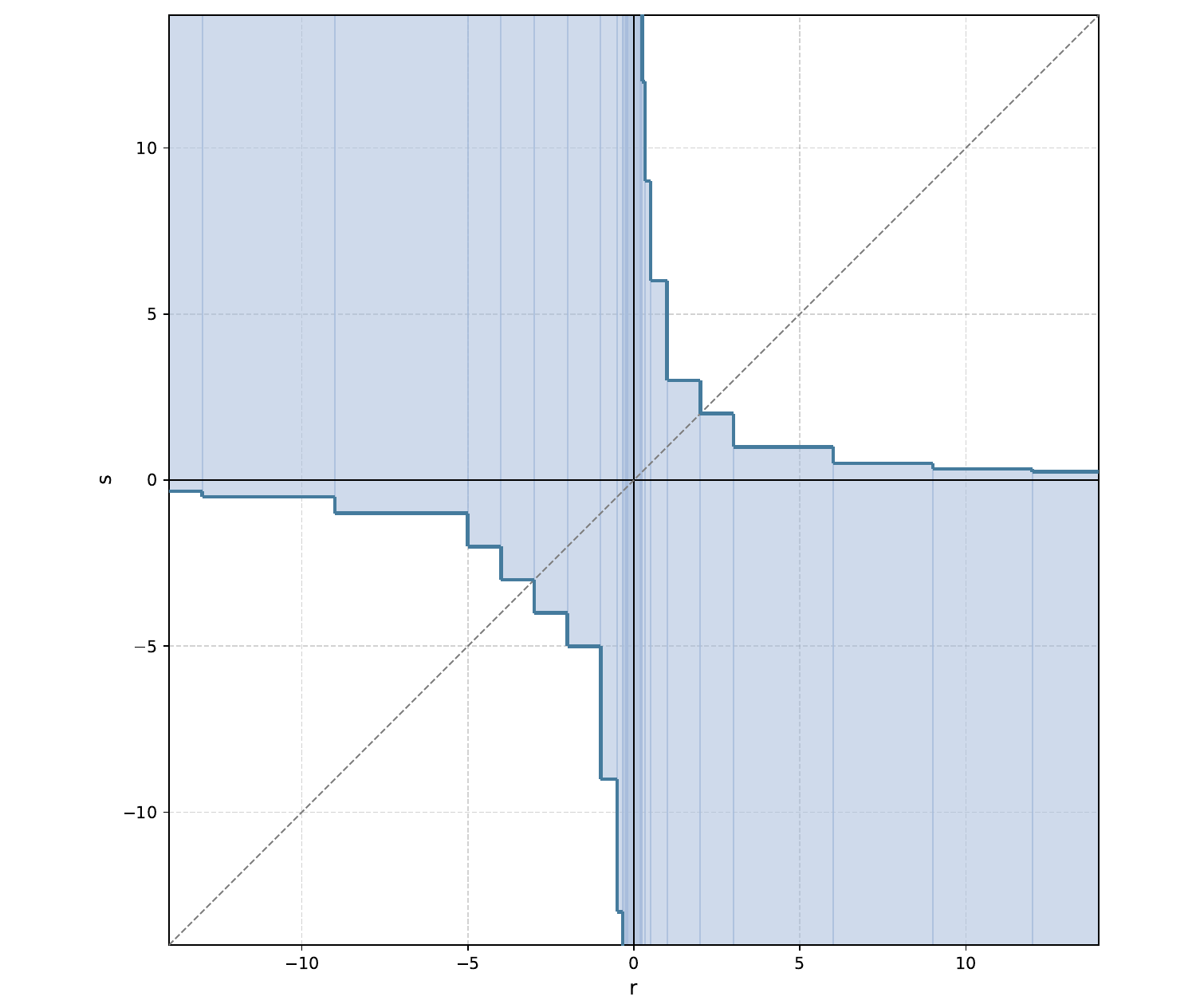}
\caption{The set of $L$-space surgeries on the link $L'_{3,1}$. Since two-bridge links are symmetric, the set is invariant under the reflection across the line $r=s$.}
\label{fig: gen $L$-space surgeries}
\end{figure}
Although not immediately apparent, the statement of Theorem~\ref{thm: gen L-space links} is invariant under exchanging $r$ and $s$, reflecting the symmetry of two-bridge links. We show in Figure~\ref{fig: gen $L$-space surgeries} the set of $L$-space surgeries for the link $L'_{3,1}$, which exhibits the features of the general case. 
\newline

The statement of Theorem \ref{thm: gen L-space links} can be rephrased more concisely in terms of \emph{box-convex} sets. We refer to Section \ref{subsec: box convexity} and specifically to Theorem \ref{thm: reformulation} for this reformulation, and for more comments regarding the structure of the set of $L$-space surgeries on these links. 

The proof of Theorem~\ref{thm: gen L-space links} is presented in Section~\ref{subsec: class for L'nk} and differs substantially from the one of Theorem~\ref{thm: L-space links}. In fact, the links $L_{n,k}$ are $L$-space links, and hence we can take advantage of some simplifications in the rational link surgery formula, that we describe in more detail below. On the other hand, the hyperbolic links in the family \(L'_{n,k}\) are not $L$-space link; in this case we proceed with an explicit and detailed study of the Turaev torsions of the manifolds obtained by filling only one of the boundary components of the exterior of the link. 
\subsection{Rational link surgery formula and the simplified model}
In the proof of Theorem \ref{thm: L-space links}, we obstruct certain rational surgeries on $L_{n,k}$ from yielding $L$-spaces. This is achieved using a simplified model of the rational link surgery formula, following \cite{mo_linksurgery,yajing_lspace,zemke_bordered}.

Given an oriented $n$-component link $L\subset S^3$ and an integer framing $\Lambda = (d_1,\dots,d_n)$,
 Manolescu and Ozsv\'ath constructed in \cite{mo_linksurgery} a \emph{link surgery complex} $C_\Lambda(L)$ from the link Floer complex of $L$ and proved that 
the homology of $C_\Lambda(L)$ is the \emph{completed Heegaard Floer homology} \(\bHFm(S^3_{\Lambda}(L))\). One can similarly recover  \(\widehat{\HF}(S^3_{\Lambda}(L))\) from $C_\Lambda(L)$.
The construction is referred to as the (integral) link surgery formula.

Zemke reinterpreted the link surgery formula  in the language of bordered modules in  \cite{zemke_bordered}. As a direct consequence, a version  of the link surgery formula which allows
for rational surgeries can be defined; see \cite[Remark 1.8]{zemke_bordered}. Since an explicit description of this rational link surgery formula has not appeared in the literature, we provide the details in Section \ref{subsec: rational_link_surgery_formula}
Specifically, we define a \emph{(rational) link surgery complex} by extending the construction of $C_\Lambda(L)$ to allow rational surgery coefficients, and verify that this complex is chain homotopy equivalent to \(\bCFm(S^3_{\Lambda}(L))\); see Theorem~\ref{thm: rational_surgery_chain}.
%H_*(C_\Lambda(L)) \cong \bHFm(S^3_{\Lambda}(L))$ \ds{this statement now is for the complexes, not the homologies. We can also state (or at least refer to) the theorem here? since now we have a thm environment} still holds for rational surgeries.

In the case of integral framings, the link surgery formula admits certain simplifications under favorable conditions. 
By construction,  $C_\Lambda (L)$ is a hypercube of chain complexes. This means that its underlying module is a direct sum of subspaces $C_\ve$ for $\ve \in \{0,1\}^n$ and the differential is the sum of the maps $D_{\ve,\ve'}$ for $\ve \leq \ve'$.
% one can pass to the homology before applying the formula, see \cite[Section 5]{OSinteger} for the knot case. Furthermore, 
Y. Liu  \cite{yajing_lspace} defined a \emph{simplified model} $H_\Lambda(L)$ (referred to by Liu as the perturbed complex)   by
\begin{itemize}
    \item first replacing each $C_\ve$ by $H_*(C_\ve)$; and
    \item then retaining only the maps induced by the maps $D_{\ve,\ve'}$ with $|\ve'-\ve|=1$.
\end{itemize}
It was proved by Liu \cite{yajing_lspace} that for surgeries with integral framings $\Lambda$ on a two-component $L$-space link $L$, the complex $C_\Lambda(L)$ is homotopy equivalent to $H_\Lambda(L)$. The proof relies on the fact that the link Floer homology of an $L$-space link is supported in even homological gradings.
In a different setting, Zemke proved that, for integer surgeries on plumbed trees, $H_\Lambda(L) \simeq C_\Lambda(L)$ in \cite{zemke_lattice}. 
% interpreting the link surgery formula via bordered modules \cite{zemke_bordered}. This approach was formalised in \cite{BLZlattice}; furthermore, the machinery  readily extends \ds{is this true? or we need to add the hypothses on the alexander gradings?} \hz{need the Alexander grading assumption} to show that for integral framings $\Lambda$, $H_\Lambda(L) \simeq C_\Lambda(L)$  for $L$-space links with formal link Floer complexes.
% Here, a complex is called \emph{formal} if it is homotopy equivalent
% % quasi-isomorphic 
% to the free resolution of its homology, following the terminology in \cite{BLZlattice}. It is shown in \cite{BLZlattice} that plumbed $L$-space links have formal link Floer complexes, and in \cite{CZZ_sat} that the same holds for two-component $L$-space links.
% \ds{maybe here we want to say something on this alexander formality? }
We extend  Liu's result on two-component $L$-space links to the setting of rational surgeries.
\begin{thm}\label{thm: intro_formal_link_H_Lambda}
    If $L\subset S^3$ is a two-component $L$-space link, then for any rational surgery coefficients $\Lambda$, we have $H_\Lambda(L) \simeq C_\Lambda(L)$.   
\end{thm}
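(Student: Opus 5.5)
We follow the strategy of Liu's proof in the integral case and reduce the rational statement to it via the explicit form of the rational link surgery complex of Section~\ref{subsec: rational_link_surgery_formula}. Write $\Lambda=(p_1/q_1,p_2/q_2)$. By construction (Theorem~\ref{thm: rational_surgery_chain}), $C_\Lambda(L)$ is again a $2$-dimensional hypercube of chain complexes. Each vertex $C_\ve$ is a direct sum, over finitely many copies indexed by the denominators $q_i$ (the rational ``expansion'' coming from Zemke's bordered description), of the generalized Floer complexes $\mathfrak A^{\ve}_{\bs}(L)$: for $\ve=(0,0)$ these are the $\mathfrak A^-_{\bs}$, and for the other vertices they are their localizations in one or both variables. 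The edge maps are sums of inclusion and flip maps $\Phi^{\pm L_i}$, composed with the identifications of copies dictated by the rational framing. The diagonal map $D_{(0,0),(1,1)}$ is built from the length-two homotopies $\Phi^{\pm L_1,\pm L_2}$.

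\textbf{Step 1: homology of the vertices.} For an $L$-space link, each $\mathfrak A^-_{\bs}(L)$ has homology $\F[[U]]$ supported in even grading. The same holds for its localizations, which are complexes for sublinks (again $L$-space links, with their large surgeries being $L$-spaces) or for the unknot. Hence every $H_*(C_\ve)$ is a direct sum of copies of $\F[[U]]$ in even Maslov gradings. This is the key input, and it is unaffected by passing to rational coefficients, because the rational complex only takes direct sums of copies of the same vertex complexes.

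\textbf{Step 2: homological perturbation.} Choose homotopy equivalence data $C_\ve \rightleftarrows H_*(C_\ve)$ at each vertex, over $\F[[U]]$. This is possible since each $C_\ve$ is chain homotopy equivalent to its homology, being free with homology free over $\F[[U]]$. Apply the homological perturbation lemma for hypercubes to transfer $C_\Lambda(L)$ to a hypercube structure on $\bigoplus_\ve H_*(C_\ve)$. The resulting edge maps are exactly the maps induced on homology by the $D_{\ve,\ve'}$ with $|\ve'-\ve|=1$, as in the definition of $H_\Lambda(L)$. The only extra term is a diagonal map $H_*(C_{(0,0)})\to H_*(C_{(1,1)})$.

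\textbf{Step 3: kill the diagonal by grading parity (main obstacle).} In the hypercube, the diagonal map shifts the Maslov grading by $+1$ relative to the edge maps, with each edge map having odd degree in the total complex. So the transferred diagonal map has odd degree between modules supported in even gradings, and it must vanish; this gives $H_\Lambda(L)\simeq C_\Lambda(L)$. The difficulty is that in the rational complex the gradings are only relatively defined on each $\mathrm{Spin}^c$ summand. One must also check that the maps identifying the different copies (the rational gluing) are homogeneous, so that a consistent $\Z$-grading with the parity property exists on every summand.

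I would handle this by working one $\mathrm{Spin}^c$ structure at a time. There I would use the Maslov grading formula for $\mathfrak A_{\bs}$ in terms of the $H$-function of $L$, and check that the identification maps preserve parity. This uses that they are built from the identity and from flip maps of homogeneous degree. Alternatively, one can argue through Zemke's bordered framework: the rational surgery complex is the box tensor product of the type-$D$ module of $L$ with rational solid-torus modules, and these latter are formal. Parity evenness of the link side then forces all higher length-two actions to vanish up to homotopy, exactly as in the integral case.
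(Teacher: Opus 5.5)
Your proposal is correct and follows essentially the paper's argument. You view $C_\Lambda(L)$ as a $2$-dimensional hypercube and transfer it to the vertex homologies via Liu's explicit perturbation formula; you should state explicitly that this needs grading-preserving $\pi_\ve, i_\ve$ and odd-degree homotopies $h_\ve$. You then kill the new diagonal map $D'_{00,11}$, which has odd $\Z/2$-degree, while each $H_*(C_\ve(\bs,\bk))\cong\F\llbracket U\rrbracket$ is supported in even gradings.

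The grading difficulty you anticipate in Step 3 does not arise. The paper simply uses the internal homological grading of each vertex complex $C_\ve(\bs,\bk)=C_\ve(\bs)$ inherited from $\cCFL(L)$. Since $\tilde\phi^{\vec M}_\ve$ is by definition the same map as $\phi^{\vec M}_\ve$ (of parity $|M|-1$), there are no extra identification maps to check, and the parity count is verbatim the integral one.
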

% The arguments are essentially the same as those in \cite{zemke_lattice, BLZlattice}. Since this statement does not appear explicitly in the literature, we include the details for completeness in Section~\ref{subsec: simplified model}.
% We also note that Theorem~\ref{thm: intro_formal_link_H_Lambda} extends Liu's result on two-component $L$-space links to the setting of rational surgeries.

According to the simplified model, for a two-component $L$-space link $L$,  $\bHFm(S^3_\Lambda(L))$ is determined by the link Floer homology of $L$, which in turn is determined by the multivariable Alexander polynomials of its sublinks  via the so-called \emph{$H$-function} \cite{GorNem15}. Therefore to determine whether a surgery yields an $L$-space it suffices to study the $H$-function of $L.$ This idea was used in \cite{GorNem18,BeibeiLiu21} to obstruct certain integer surgeries on $L$ from being an $L$-space. Combining these arguments with the simplified model for rational link surgery, we are able to produce obstructions to $L$-space surgeries on $L_{n,k}$ in Section \ref{section: very good points and non L space surgeries}.
\subsection{Some consequences}
We conclude the introduction by collecting some consequences of the main results of this paper.
\subsubsection{Hyperbolic volumes}
It follows from the main result of \cite{Lac04} that the set of volumes of hyperbolic two-bridge links is unbounded. This was already observed for fibered two-bridge links in \cite[Proposition~1.7]{San23twobridge}, and it is not difficult to provide, in the same way, families of non-fibered two-bridge links whose volume goes to infinity.
On the other hand, a direct consequence of Theorem~\ref{thm: classification of links with L-space surgeries} is that the volumes of hyperbolic $L$-spaces that are surgeries on two-bridge links are uniformly bounded. Let $v_8\sim3.663862...$ denote the volume of the ideal regular hyperbolic octahedron.
\begin{thm}\label{thm: bound on volume on L-spaces}
If $M$ is a hyperbolic $L$-space and that is surgery on a two-bridge link, then $\operatorname{vol}(M)<2v_8$. Moreover this bound is optimal.
\end{thm}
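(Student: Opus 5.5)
By Theorem~\ref{thm: classification of links with L-space surgeries}, if $M$ is a hyperbolic $L$-space obtained by surgery on a two-bridge link $L$, then up to mirror $L$ is $L_{n,k}$ or $L'_{n,k}$. If some coefficient is $\infty$, then $M$ is a lens space or $S^2\times S^1$, which is not hyperbolic. If $L$ is a torus link $T(2,2k)$, every surgery is Seifert fibered or a connected sum, so again not hyperbolic. We may therefore assume that $L$ is a hyperbolic link in one of the two families and that $M=S^3_{r,s}(L)$ with $r,s\in\Q$. Thurston's hyperbolic Dehn surgery theorem, in its sharp form (volume strictly decreases under filling), gives $\operatorname{vol}(M)<\operatorname{vol}(S^3\setminus L)$. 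This is not uniform, since the link volumes are unbounded. The plan is instead to realise each $L_{n,k}$ and $L'_{n,k}$ as a Dehn filling of one fixed hyperbolic manifold of volume $2v_8$.

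\textbf{Key step.} In Figure~\ref{fig: L and L'} the parameters $n,k$ count crossings in two twist boxes. Each twist region can be encircled by an unknotted crossing circle, and $\pm 1/m$ surgery on that circle adds or removes full twists. Hence every $L_{n,k}$ (and likewise every $L'_{n,k}$) is obtained by Dehn filling the two crossing circles of a single four-component augmented link $A$ (respectively $A'$); half twists are handled by the usual half-twist variant, which does not change the volume. Then $M$ is a filling of all four cusps of $A$ or $A'$. I expect that, up to homeomorphism of complements, $A$ and $A'$ are the minimally twisted four-chain link $8^4_1$, or a link with the same complement. That complement has volume exactly $2v_8$: it decomposes into two regular ideal octahedra, which is Agol's computation. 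Since $M$ is hyperbolic and arises by nontrivial filling of this cusped hyperbolic manifold, strict monotonicity of volume under Dehn filling (Thurston, via Gromov norm or Schl\"afli) gives $\operatorname{vol}(M)<2v_8$.

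\textbf{Main obstacle.} The hardest step is identifying the augmented links and checking that their complements are homeomorphic to the four-chain link complement. I would do this with a picture: after augmenting, untwist using the crossing circles (full twists can be removed by homeomorphisms of the complement), and isotope until the remaining diagram is a chain of four unknots linked cyclically. If $L_{n,k}$ and $L'_{n,k}$ turn out to need different augmentations, I would treat the two cases separately, possibly using the alternative (mirror) twist direction. Another route is to identify $A$ with the Borromean rings plus a circle, with volume $2v_8=10.149\ldots$, via SnapPy-style known decompositions.

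\textbf{Optimality.} Take a sequence of hyperbolic $L$-space surgeries in which all four filling slopes of $A$ go to infinity. Concretely, use $L_{n,n}$ with $n\to\infty$ and coefficients $r=s$ large, which are $L$-spaces by Theorem~\ref{thm: L-space links}, since $\min\{r,s\}\geq n+k-1$. This corresponds to filling $A$ along slopes $1/n$ on the crossing circles and $r,s\to\infty$ on the other two components. By Thurston's theorem these $M$ are hyperbolic for large parameters, and their volumes converge to $\operatorname{vol}(A)=2v_8$. So no smaller constant works.
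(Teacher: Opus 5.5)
Your argument is the paper's argument. Theorem~\ref{thm: classification of links with L-space surgeries} reduces to $L_{n,k}$ and $L'_{n,k}$, all of which are fillings of one four-component chain link (the paper uses $L8n7$, of volume $2v_8$). Strict decrease of volume under hyperbolic Dehn filling then gives the bound, and convergence as all four filling slopes go to infinity gives sharpness; you additionally make explicit, via Theorem~\ref{thm: L-space links}, that the approximating fillings are $L$-spaces.

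Discard your Borromean fallback route, though. First, $2v_8\approx 7.3277$, not $10.149\ldots$. Second, any hyperbolic link containing the Borromean rings together with an extra component has volume strictly greater than $2v_8$, by the same monotonicity, so that identification cannot work.
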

\begin{proof}
Since $M$ is an $L$-space, by Theorem \ref{thm: classification of links with L-space surgeries} it must be surgery one of the links in the families \(L_{n,k}\) and \(L'_{n,k}\). All these links are surgery on the four-component link \(L8n7\), shown in Figure~\ref{fig: $L8n7$}, whose volume is $2v_8$; see, for example, \cite{Kumar}. 
\begin{figure}
    \centering
    \includegraphics[width=0.15\textwidth]{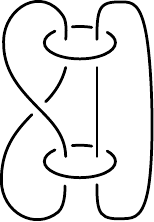}
    \caption{The link $L8n7$. Every $L$-space surgery on a two-bridge link is surgery on it.}
    \label{fig: $L8n7$}
\end{figure}
By \cite{ThuNotes}, volume decreases under hyperbolic Dehn filling and moreover, for parameters $n,k\in \mathbb{Z}$ and $r,s\in \Q$, the volume of the \((\frac{1}{n},\frac{1}{k},r ,s)\)-surgery on $L8n7$ converges to $2v_8$ as all parameters tend to infinity in absolute value. This yields the desired result.
\end{proof}
\subsubsection{Dehn surgery $($non-$)$detection results}
It is now well-known \cite{Ghi08} that if \(K\subset S^3\) is a non-trivial knot and the manifold \(S^3_{1}(K)\), obtained by $1$-surgery on $K$, is an $L$-space, then $K$ is the right-handed trefoil knot. More recently, it was shown that if \(S^3_{2}(K)\) is an $L$-space, then $K$ is either the trefoil or the cinquefoil \cite{FarReiWan24}.

For links with two components, such strong detection results are less common. On the other hand, the property that $(1,1)$-surgery yields an $L$-space characterises the Whitehead link among two-component links with unknotted components and linking number zero \cite{GorLidLiuMoo23}, and among fibered hyperbolic two-bridge links \cite{San23twobridge}. Moreover, by using \cite{BeibeiLiu21,GorLidLiuMoo23} one obtains the following.
\begin{prop}
Let $L$ be an $L$-space link with two unknotted components. If \(S^3_{1,1}(L)\) is an $L$-space, then $L$ is the Whitehead link or the \(T(2,2l)\) torus link. In particular, if $L$ is hyperbolic, then $L$ is the Whitehead link. \end{prop}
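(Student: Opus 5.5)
The proof will combine two inputs. First, from \cite{GorLidLiuMoo23} we use that $L$-space links with two unknotted components have tightly constrained $H$-functions. Second, from \cite{BeibeiLiu21} we use the integral-surgery obstructions for two-component $L$-space links. The first reduction concerns the linking number $\ell=\mathrm{lk}(L_1,L_2)$, after orienting so that $\ell\ge 0$. Since the components are unknots, $S^3_{1,1}(L)$ has $|H_1|=|\det\begin{pmatrix}1&\ell\\ \ell&1\end{pmatrix}|=|1-\ell^2|$. So $S^3_{1,1}(L)$ is a rational homology sphere unless $\ell=1$, which forces $\ell\neq 1$.

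In the case $\ell=0$ the result is exactly the theorem of \cite{GorLidLiuMoo23}: among two-component links with unknotted components and linking number zero, the Whitehead link is the only one whose $(1,1)$-surgery is an $L$-space. (The $L$-space link hypothesis is not even needed there; the unlink gives $S^1\times S^2\# S^1\times S^2$, which is excluded.) The plan is to cite this directly.

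In the case $\ell\ge 2$ we use \cite{BeibeiLiu21}. For a two-component $L$-space link with unknotted components, Liu describes the region of integral $L$-space surgeries in terms of the $H$-function. In particular, if $(d_1,d_2)$-surgery is an $L$-space with $d_i$ small relative to $\ell$, the $H$-function must agree with that of $T(2,2\ell)$. The plan is to apply that criterion at $(d_1,d_2)=(1,1)$. The step I expect to be the main obstacle is pinning down the precise statement: one must check that the hypotheses $d_1d_2-\ell^2<0$ and $d_i<\ell$ (or whatever normalisation \cite{BeibeiLiu21} uses) are met at $(1,1)$. Having done so, one concludes that the $H$-function, equivalently (via \cite{GorNem15}) the multivariable Alexander polynomial together with those of the components, coincides with that of the torus link $T(2,2\ell)$. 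One then promotes this to an isotopy using the link Floer detection of $T(2,2\ell)$ among $L$-space links with unknotted components, which is the argument combined in \cite{GorLidLiuMoo23}. The conclusion is that $L=T(2,2\ell)$.

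Finally, torus links are not hyperbolic, so a hyperbolic $L$ must fall into the case $\ell=0$ and is therefore the Whitehead link. This yields the last sentence of the statement.
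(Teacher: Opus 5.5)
Your cases $\ell=0$ (citing \cite{GorLidLiuMoo23}) and $\ell=1$ ($b_1>0$) match the paper, and so does the final deduction for hyperbolic $L$. The gap is the case $\ell\ge 2$, which is the only case requiring real work.

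\textbf{The two steps you rely on.}
\begin{enumerate}
\item An integral-surgery criterion from \cite{BeibeiLiu21}: an $L$-space surgery $(d_1,d_2)$ with $d_i$ ``small relative to $\ell$'' forces the $H$-function of $T(2,2\ell)$. You do not state its hypotheses, and you flag it yourself as unresolved.
\item A link Floer detection theorem for $T(2,2\ell)$, which you attribute to \cite{GorLidLiuMoo23}. That paper treats linking number zero and does not supply such a statement.
\end{enumerate}
As written, nothing connects the single hypothesis ``$S^3_{1,1}(L)$ is an $L$-space'' to the structure of $L$ when $\ell\ge 2$.

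\textbf{The missing idea.} Upgrade one $L$-space surgery to a whole quadrant of them, then apply Liu's result in the form the paper uses: a two-component $L$-space link that is also a generalised $(+,-)$ $L$-space link is $T(2,2\ell)$. The upgrade is Lemma~\ref{lem: small surgery implies gen L space link}, which follows from the Rasmussen--Rasmussen interval theorem (Theorem~\ref{theorem RR}).
\begin{enumerate}
\item Since the components are unknotted, $(r',\infty)$ and $(\infty,s')$ are $L$-space slopes for $r',s'\neq 0$.
\item In $Y=S^3_{\bullet,1}(L)$, the $L$-space filling slopes form an arc of $\overline{\mathbb{Q}}$. This arc contains $1$ and $\infty$ but not the homological longitude $\ell^2$. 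Since $\ell^2>1$, the arc must be $\{r'\le 1\}\cup\{\infty\}$. So $S^3_{r',1}(L)$ is an $L$-space for all $r'\le 1$.
\item Now fix $r'<0$. The $L$-space filling arc of $S^3_{r',\bullet}(L)$ contains $1$ and $\infty$ but not the longitude $\ell^2/r'<0$. Hence $S^3_{r',s'}(L)$ is an $L$-space for all $s'\ge 1$.
\item So $L$ is a generalised $(-,+)$ $L$-space link. Running the same argument with the components' roles exchanged shows it is also a generalised $(+,-)$ $L$-space link.
\end{enumerate}
Liu's theorem then gives the isotopy to $T(2,2\ell)$ directly. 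No $H$-function comparison and no separate detection theorem are needed. The condition $\ell\ge 2$ enters exactly as $1\cdot 1<\ell^2$, which puts the homological longitude on the correct side of $1$.
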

\begin{proof}
If $L$ has linking number zero, this follows from \cite{GorLidLiuMoo23}. If $L$ has linking number $l=1$, then \(S^3_{1,1}(L)\) has $b_1>0$, and in particular it is not an $L$-space. If $L$ has linking number $l>1$ and \(S^3_{1,1}(L)\) is an $L$-space, then Lemma \ref{lem: small surgery implies gen L space link} implies that $L$ is a generalised $(\pm,\mp)\,L$-space link. By \cite{BeibeiLiu21}, we can conclude that $L$ is the $T(2,2l)$ torus link.
\end{proof}
In contrast to the previous proposition, Theorem~\ref{thm: gen L-space links} implies that if one does not require $L$ to be an $L$-space link, then there exist infinitely many hyperbolic links whose $(1,1)$-surgery yields an $L$-space.
\begin{thm}
There exist infinitely many non-isotopic hyperbolic links with two unknotted components whose $(1,1)$-surgery is an $L$-space. 
\end{thm}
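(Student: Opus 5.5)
The plan is to read off the $(1,1)$-surgery directly from Theorem~\ref{thm: gen L-space links}, applied to the family $L'_{n,k}$ with $n\geq 2$, $k\geq 1$. All these links are two-bridge, hence have two unknotted components, and by Theorem~\ref{thm: gen L-space links} they are hyperbolic for these parameter values; in particular they are not the torus links $T(2,2k)$, where the families $L_{n,k}$ and $L'_{n,k}$ intersect. So the proof reduces to two tasks: checking that $(r,s)=(1,1)$ satisfies the criterion of Theorem~\ref{thm: gen L-space links} for infinitely many pairs $(n,k)$, and checking that infinitely many of these links are pairwise non-isotopic.

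For the first task, set $r=1$. Then $r$ lies in the interval $\big(\tfrac{1}{m+1},\tfrac{1}{m}\big]$ with $m=1$, namely $(\tfrac12,1]$. The corresponding case of Theorem~\ref{thm: gen L-space links} says the surgery is an $L$-space if and only if
\[
s\leq 2k+1+l, \qquad l=n+k .
\]
Taking $s=1$, this inequality holds for every $n\geq 2$, $k\geq 1$. Hence $S^3_{1,1}(L'_{n,k})$ is an $L$-space for all such $n,k$. I should double-check that $r=1$ is not also covered by the case $r\in(m-1,m]$ with $m\in[2,2k+1]$. That interval requires $m\geq 2$, so its range is $r>1$, and there is no conflict. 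As a consistency check, the symmetric reading with the roles of $r$ and $s$ swapped gives the same conclusion, as it must.

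For the second task, non-isotopy, I would fix $k=1$ and let $n$ range over $n\geq 2$. The simplest invariant to compare is the linking number of $L'_{n,1}$, which can be read off Figure~\ref{fig: L and L'}. If it does not depend on $n$, I would instead use the crossing number: reduced alternating two-bridge diagrams are minimal, and the number of crossings grows with $n$. A cleaner alternative is the two-bridge classification itself. The fractions $p/q$ associated with $L'_{n,1}$ have numerators $p=\det(L'_{n,1})$ growing with $n$, and Schubert's theorem says that links with distinct determinants are distinct. This gives infinitely many pairwise non-isotopic hyperbolic links.

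The main obstacle is this distinguishing step. The diagrams are given only pictorially, so I expect to need an explicit continued-fraction (or determinant) computation for $L'_{n,1}$, showing that it grows strictly in $n$. A determinant of the form $|a n + b|$ with $a\neq 0$ would suffice.
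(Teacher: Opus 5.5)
Your proof is correct. The first half is the same as the paper's: $r=1$ lies in the case $r\in(\tfrac12,1]$ of Theorem~\ref{thm: gen L-space links}, and its bound $s\le 2k+1+l$ holds for $s=1$. Only the easy direction is used here, namely Proposition~\ref{prop: L-space knots as surgeries on L'} with $m=1$. One small correction: hyperbolicity of $L'_{n,k}$ for $n\ge 2$, $k\ge 1$ comes from Lemma~\ref{lemma: gener. L-space links are not L-space links}. Theorem~\ref{thm: gen L-space links} does not assert it; its statement presupposes it.

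You diverge from the paper on non-isotopy. The paper tells the $L'_{n,k}$ apart, for fixed $k$, by their sets of $L$-space surgeries. For instance, at $r=1$ the threshold $3k+1+n$ changes with $n$. This requires the hard ``only if'' direction of the classification, i.e.\ the Turaev torsion analysis of Section~\ref{subsec: class for L'nk}. You use a classical invariant instead, which is more elementary and independent of that machinery.

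The obstacle you anticipate is already resolved in the paper:
\begin{itemize}
\item Section~\ref{subsec: alexander poly} identifies $L'_{n,k}$ with the mirror of $b(4nk+2n-2k,\,2n-1)$, so $\det L'_{n,1}=6n-2$, which is strictly increasing in $n$.
\item The linking number is $\pm(n+k)$; this is the $l$ used throughout Section~\ref{subsec: class for L'nk}. So your first choice already works: $|\operatorname{lk}(L'_{n,1})|=n+1$, and the hedge is unnecessary.
\end{itemize}
Also, distinct determinants give distinct links simply because the determinant is a link invariant; Schubert's classification is not needed for that direction.

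In short, your route is lighter. The paper's route costs the full classification but shows something slightly stronger: these links are distinguished by their $L$-space surgery sets alone.
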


\begin{proof}
For $n\geq 2$ and $k\geq 1$, the link $L'_{n,k}$ is hyperbolic, and Theorem~\ref{thm: gen L-space links} implies that $(1,1)$-surgery on each of them is an $L$-space. It also follows from the explicit description of their $L$-space surgeries that, for fixed $k$, the links $L'_{n,k}$ are pairwise non-isotopic.
\end{proof}
On the other hand, as consequence of Theorems~\ref{thm: L-space links} and \ref{thm: gen L-space links}, we can deduce the following detection results for the families $L_{n,k}$ and \(L'_{n,k}\).
\begin{prop}
Let $L$ be a two-bridge link, and suppose that the $(r,s)$-surgery on $L$ is an $L$-space, with $rs>\operatorname{lk}(L)^2$. Then $L$ is isotopic to one of the links $L_{n,k}$, for $k,n \geq 0$.
\end{prop}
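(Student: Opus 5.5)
The plan is to combine Theorem~\ref{thm: classification of links with L-space surgeries} with the explicit classifications in Theorems~\ref{thm: L-space links} and~\ref{thm: gen L-space links}. A non-trivial $L$-space surgery forces $L$ to be, up to mirror, some $L_{n,k}$ or $L'_{n,k}$, so the task is to rule out $L'_{n,k}$, and the mirrors of both families, under the hypothesis $rs>\operatorname{lk}(L)^2$. Since $\operatorname{lk}$ is defined only up to the orientation of a component, the hypothesis should be read as $rs>\operatorname{lk}^2$ for the orientation convention of the figure. Note also that $\det$ of the linking matrix is $rs-\operatorname{lk}^2$ in the integral case, and after clearing denominators in general. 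So the hypothesis says that the linking form of the surgery is positive definite when $r,s>0$, and negative definite when $r,s<0$.

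\textbf{Step 1: torus links.} The torus links $T(2,2k)$ lie in both families, so they are allowed in the conclusion ($L_{0,k}$ or $L_{n,0}$, depending on the conventions of Figure~\ref{fig: L and L'}). I need to check that their mirrors, and the degenerate members of $L'_{n,k}$ with $n\le 1$ or $k=0$, are either themselves of the form $L_{n,k}$ or admit no $L$-space surgery with $rs>\operatorname{lk}^2$. For torus links I would use the Lisca--Stipsicz classification, or the observation that mirroring sends $(r,s)$ to $(-r,-s)$. That map preserves the sign of $rs-\operatorname{lk}^2$, so it reduces the question to which definite quadrant contains $L$-space surgeries.

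\textbf{Step 2: the hyperbolic $L'_{n,k}$ with $n\ge2$, $k\ge1$, and their mirrors.} Here I read off from Theorem~\ref{thm: gen L-space links} that every $L$-space surgery has $rs\le \operatorname{lk}^2$, where $\operatorname{lk}(L'_{n,k})$ is computed from the diagram. My guess is $|\operatorname{lk}|=1$, consistent with the reflection symmetry and the $(1,1)$ example. I go through the cases of the theorem one at a time.
\begin{itemize}
\item For $r<0$ the bounds have the form $s\ge(\text{negative})$, and the hyperbola $rs=\operatorname{lk}^2$ in the third quadrant has to lie outside the allowed region. That is, whenever $r<0$ and $s<0$ with $s$ above the threshold, we need $rs\le \operatorname{lk}^2$.
\item For $r>2k+1$ the condition $s\le 1/\lceil\cdot\rceil\le 1$ gives $rs$ small only if $r$ is small, so this case needs care.
\end{itemize}
This is where I expect the main obstacle: checking that the staircase boundaries of Theorem~\ref{thm: gen L-space links} (for example $s\ge 1-2n-ml$ on $r\in[-1/m,-1/(m+1))$, which gives $rs\le (2n-1+ml)/m$) stay below the hyperbola $rs=\operatorname{lk}^2$. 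If the inequality turns out not to hold, I would fall back on orientation considerations. Reversing one component flips the sign of $\operatorname{lk}$ and of the surgery framing conventions, and for the theorem's bounds the relevant quantity is then the linking form of the generalised $(+,-)$ large surgery. Concretely, I would aim to show that any $L$-space surgery on $L'_{n,k}$ has an indefinite linking form. I would try this first through Lemma~\ref{lem: small surgery implies gen L space link}-style reasoning: a definite $L$-space surgery would make $L$ (or its mirror) behave like an $L$-space link, and by Theorem~\ref{thm: L-space links} the hyperbolic $L$-space links are exactly the $L_{n,k}$.

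\textbf{Step 3: mirrors of $L_{n,k}$.} These are excluded similarly. By Theorem~\ref{thm: L-space links}, the $L$-space surgeries on $L_{n,k}$ lie in $\min\{r,s\}\ge n+k-1>0$, where the linking form is positive definite. After mirroring they lie in the negative quadrant, which still satisfies $rs>\operatorname{lk}^2$. So the conclusion must be read up to mirror, and the statement's ``isotopic to one of the $L_{n,k}$'' should be interpreted together with the mirror convention of Theorem~\ref{thm: classification of links with L-space surgeries}.
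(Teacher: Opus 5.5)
\textbf{Step~2 uses the wrong linking number.} For $L'_{n,k}$ one has $|\operatorname{lk}|=n+k=l$. This is the $l$ of Theorem~\ref{thm: gen L-space links}, and it is what produces the relation $p\mu_1-ql\mu_2$ in Section~\ref{subsubsec: constraints L-space surgeries}.

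Your guess $|\operatorname{lk}|=1$ is not consistent with the $(1,1)$ example; the example rules it out. With $\operatorname{lk}=\pm1$, the $(1,1)$-surgery has $b_1>0$ by Remark~\ref{rem: longitudes 2-comp}. With that guess the inequality you want is also false: $(1,2)$ is an $L$-space surgery by the fifth case, and $rs=2>1$. So Step~2 stalls exactly at your ``main obstacle''.

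With $l=n+k$ the check is routine, and only cases with $rs>0$ matter:
\begin{itemize}
\item In the first and third cases, $rs\le l+(2n-1)/m\le l+2n-1$.
\item In the fifth and seventh cases, $rs\le l+2k+1$. The seventh case is not delicate: put $m'=\lceil (r-2k-1)/l\rceil$, so that $r\le 2k+1+m'l$ and $s\le 1/m'$.
\item In the two integral boxes, $rs\le m(2n+1-m)<(n+1)^2$ and $rs\le m(2k+3-m)<(k+2)^2$.
\end{itemize}
All of these are $<l^2$ because $n\ge2$ and $k\ge1$.

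One caveat: the sixth case of Theorem~\ref{thm: gen L-space links} must be read as $s\le 2k+3-m$. This is forced by Lemma~\ref{lemma: mirror of generalised L-space links} and by the set $L_+$ in Theorem~\ref{thm: reformulation}. Read literally, it would contain points of the curve $rs=l^2$, which are not rational homology spheres.

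\textbf{Your fallback is a complete and cleaner proof.} Suppose $r,s>0$ and $rs>\operatorname{lk}^2$.
\begin{itemize}
\item Every finite point of $[r,\infty]\times[s,\infty]$ satisfies $xy\ge rs$, so this rectangle lies in $\Sl^*(E_L)$.
\item Its vertices $(r,\infty)$, $(\infty,s)$, $(\infty,\infty)$ are $L$-space fillings, because the components of a two-bridge link are unknotted.
\item By Proposition~\ref{prop: box-convex}, every $(x,y)\ge(r,s)$ is an $L$-space, so $L$ is an $L$-space link.
\end{itemize}
If $r,s<0$, the rectangle $[-\infty,r]\times[-\infty,s]$ shows in the same way that the mirror of $L$ is an $L$-space link. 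Theorem~\ref{thm: L-space links} then settles the hyperbolic case. A non-hyperbolic two-bridge link is $T(2,2l)$, which up to mirror is one of the $L_{n,k}$.

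The paper writes no proof; it presents the statement as a consequence of Theorems~\ref{thm: L-space links} and~\ref{thm: gen L-space links}. The second theorem is what your Step~2 would use to exclude the $L'_{n,k}$. The box-convexity route needs neither Theorem~\ref{thm: gen L-space links} nor your Step~1. Both are replaced by one observation: a definite $L$-space surgery forces $L$ or its mirror to be an $L$-space link.

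\textbf{Step~3 is correct.} The mirrors of $L_{n,k}$ with $n,k\ge1$ have negative-definite $L$-space surgeries, for example $(-(n+k-1),-(n+k-1))$. They are not among the $L_{n,k}$, since they are hyperbolic but not $L$-space links. The same holds for the mirrors of $T(2,2l)$ with $l\ge2$, which is why the dichotomy you set up in Step~1 cannot hold literally. So the statement must be read up to mirror, as in Theorem~\ref{thm: classification of links with L-space surgeries}.

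Finally, $\operatorname{lk}^2$ and the surgery coefficients do not depend on the orientations of the components, so the hypothesis needs no orientation convention.
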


\begin{prop}
Let $L$ be a two-bridge link, and suppose that the $(r,s)$-surgery on $L$ is an $L$-space, and that at least one of the following conditions holds:
\begin{itemize}
    \item either $|r|$<1 or $|s|<1$, or 
    \item $rs<0$.
\end{itemize} 
Then $L$ is isotopic to one of the links $L'_{n,k}$, for $k,n\geq 0$.
\end{prop}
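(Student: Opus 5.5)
The proposition concerns a two-bridge link $L$ with an $L$-space surgery $S^3_{r,s}(L)$, subject to either $|r|<1$ or $|s|<1$, or $rs<0$. We must show that $L$ is isotopic to some $L'_{n,k}$. The natural route is the classification: first reduce to the two families, then use the explicit descriptions of their $L$-space surgery sets.

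\textbf{Step 1: reduction to the two families.} A surgery coefficient equal to $\infty$ would violate both hypotheses read literally. So the surgery is non-trivial, and Theorem~\ref{thm: classification of links with L-space surgeries} places $L$, up to mirror, in the family $L_{n,k}$ or $L'_{n,k}$.

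The mirror issue needs care. Mirroring sends $(r,s)$ to $(-r,-s)$, and both hypotheses are invariant under this change. So we may replace $L$ by its mirror and assume $L$ is literally one of the pictured links. The conclusion ``isotopic to $L'_{n,k}$'' should be read up to mirror, consistently with the classification theorem.

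\textbf{Step 2: the torus-link cases.} If $L$ is in the intersection of the families, that is, a $T(2,2k)$ torus link, it already lies in the $L'$ family and there is nothing to prove.

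\textbf{Step 3: the hyperbolic $L_{n,k}$.} Suppose $L=L_{n,k}$ is hyperbolic, so $n,k\geq1$. Theorem~\ref{thm: L-space links} says $S^3_{r,s}(L)$ is an $L$-space exactly when $\min\{r,s\}\geq n+k-1\geq 1$. Then $r,s\geq1$, so $rs>0$ and $|r|,|s|\geq1$. This contradicts the hypotheses, so this case cannot occur.

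\textbf{Step 4: the remaining degenerate members.} Some members of the families are neither hyperbolic nor covered directly by the theorems:
\begin{itemize}
\item $L_{n,0}$ and $L_{0,k}$, which are torus links or the unlink;
\item $L'_{n,k}$ with $n\leq 1$ or $k=0$, which are already in the $L'$ family.
\end{itemize}
I would identify the first group from Figure~\ref{fig: L and L'}. I expect them to be $T(2,2m)$ torus links (hence also $L'$ links) or trivial/split links. For the split ones, the $L$-space surgeries on an unlink of unknots are $S^3_{1/a}\#\ldots$, which can be handled directly, or they are again in the $L'$ family.

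\textbf{Main obstacle.} The substantive content is just Theorem~\ref{thm: L-space links}. The only real care lies in bookkeeping the conventions: mirrors, degenerate parameter values, and the possibility that a rational coefficient in the statement could be $\infty$. The key point is that the hyperbolic $L$-space links $L_{n,k}$ only have $L$-space surgeries with both coefficients at least $1$.
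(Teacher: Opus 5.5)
Your core argument is the one the paper intends. There the proposition is stated as an immediate consequence of Theorems~\ref{thm: classification of links with L-space surgeries} and~\ref{thm: L-space links}: reduce to the two families, then exclude the hyperbolic $L_{n,k}$ because all their $L$-space surgeries satisfy $r,s\geq n+k-1\geq 1$. Two of your bookkeeping steps are, however, wrong as written.

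\textbf{Infinite coefficients.} In Step~1 you claim that a coefficient equal to $\infty$ violates both hypotheses. This is false: $(\infty,s)$ with $0<|s|<1$ satisfies the first bullet. Since the components are unknotted, $S^3_{\infty,s}(L)$ is then a lens space for every two-bridge link $L$. For example, $S^3_{\infty,\frac12}(L_{1,1})=S^3$, yet $L_{1,1}$ is hyperbolic with linking number $0$, so it is not isotopic to any $L'_{n,k}$. The proposition is therefore only true for $r,s\in\mathbb{Q}$, i.e.\ under the paper's standing convention that only non-trivial surgeries are considered. You must impose this as part of the statement rather than derive it from the hypotheses; only then does Theorem~\ref{thm: classification of links with L-space surgeries} apply.

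\textbf{Mirrors.} You need not weaken the conclusion to ``up to mirror''. By Lemma~\ref{lemma: mirror of generalised L-space links}, the mirror of $L'_{n,k}$ is $L'_{k+1,n-1}$. The torus links in the family also occur in both chiralities, as $L'_{m,0}$ and its mirror $L'_{1,m-1}$. So the $L'$ family is closed under mirroring, and your mirror reduction yields the literal statement.

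This also removes the guesswork in Step~4. $L_{n,0}$ and $L_{0,k}$ are $T(2,2m)$ torus links; for instance $L_{n,0}=b(2n,-1)$ by the formula $L_{n,k}=b(4nk+2n+2k,-2k-1)$. The paper notes that these torus links form the intersection of the two families, so they already lie in the $L'$ family.
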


\subsubsection{Satellite operations}\label{subsubsec: satellite operations}
We briefly recall the satellite operation. Let $K\subset S^3$ be a knot, $P \subset S^1 \times \mathbb{D}^2$ a knot in the solid
torus, and let $m\in \mathbb{Z}$ be an integer. We define a new knot $P(K,m)\subset S^3$ by identifying $S^1\times \mathbb{D}^2$ with a tubular neighbourhood of $K$ in such a way that $S^1 \times \{\theta\}$, where $\theta\in \partial\mathbb{D}^2$, is identified
to the $m$-framed longitude of $K$. The knots $K$ and $P$ are called \emph{companion} and \emph{pattern} knot respectively, and $P(K,m)$ is called a \emph{satellite knot}. 

If we view the solid torus containing $P$ as the tubular neighbourhood of an unknot in $S^3$, and denote by $K_0$ its meridian, then we can naturally associate to the pattern knot the two-component link $L_P=K_0 \cup P\subset S^3$. We call $L_P$ the \emph{pattern link}. 

As a consequence of our main result, together with parts of the proofs presented in this paper, we obtain a classification of all $L$-space satellite knots whose pattern link is a two-bridge link.

\begin{thm}\label{thm: satellite}
Let $P(K,m)$ be the satellite knot of a non-trivial knot $K$, and assume that the pattern link $L_P$ is a hyperbolic two-bridge link. Then $P(K,m)$ is an $L$-space knot if and only if 
\begin{itemize}
\item$K$ is an $L$-space knot, 
\item $L_P$ is isotopic to $L'_{n,k}$ with $n\geq 2, k\geq 1$, and 
\item $m > 2g(K)-1$.
\end{itemize}
Moreover, if $L_P$ is not isotopic to any of the links $L'_{n,k}$, then every non-trivial surgery on $P(K,m)$ supports a coorientable taut foliation.
\end{thm}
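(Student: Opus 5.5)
We reduce the question about the satellite $P(K,m)$ to a question about surgeries on the pattern link $L_P$ with one component filled by the exterior of $K$. The basic observation is standard. Write $X_K$ for the exterior of $K$ and $X_{L_P}$ for the exterior of $L_P$. Then for every slope $t$,
\[
S^3_t(P(K,m)) \;=\; X_K \cup_{\partial} X_{L_P}(P;t'),
\]
where $X_{L_P}(P;t')$ is the Dehn filling of the $P$-cusp of $X_{L_P}$ with slope $t'$, and $t'$ is obtained from $t$ by the twisting induced by $m$. The meridian of $K$ is glued to the longitude of $K_0$, and the $m$-framed longitude of $K$ is glued to the meridian of $K_0$. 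In particular, replacing $X_K$ by a solid torus whose meridian is glued to the slope $\mu_K+ a\lambda$ produces the surgery $S^3_{1/(a-m)\,\text{-type},\,t'}(L_P)$ (after the standard change of coordinates). Thus $S^3_t(P(K,m))$ is obtained by gluing $X_K$ to a one-cusped manifold $Y_{t'}$ that is a filling of $X_{L_P}$.

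For the direction where $L_P$ is not some $L'_{n,k}$ or $L_{n,k}$, we use Theorem~\ref{thm: classification of links with L-space surgeries}, and more precisely the persistently foliar property from Section~\ref{section:foliations}. For a persistently foliar link, each non-trivial filling of the $P$-cusp carries a coorientable taut foliation that meets the remaining torus in a foliation by lines of any prescribed non-meridional slope. Since $K$ is non-trivial, $X_K$ admits foliations realising every rational slope except the longitude (Gabai; Roberts for fibered knots); in general it suffices to match one strongly realised slope. Gluing these foliations gives a coorientable taut foliation on $S^3_t(P(K,m))$ for every $t\neq\infty$, so $P(K,m)$ is not an $L$-space knot. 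This shows that $L_P$ must be $L_{n,k}$ or $L'_{n,k}$.

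To exclude $L_{n,k}$ and to handle $L'_{n,k}$, we use the gluing criterion for $L$-spaces of Hanselman--Rasmussen--Watson type (as in \cite{Ras20}). Here $X_K\cup Y$ is an $L$-space if and only if the interiors of the $L$-space filling slope sets $\mathcal L(X_K)$ and $\mathcal L(Y)$ cover all slopes after the gluing map. $\mathcal L(X_K)$ is empty unless $K$ is an $L$-space knot, in which case it is $[2g(K)-1,\infty]$. This forces $K$ to be an $L$-space knot, because $Y_{t'}$ has a non-$L$-space filling and is not a solid torus. We then read off $\mathcal L(Y_{t'})$ from Theorems~\ref{thm: L-space links} and~\ref{thm: gen L-space links}: fixing the $P$-coefficient $s$, the $L$-space slopes $r$ of the $K_0$-cusp form the slice of the region.

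We need large $t$ to give an $L$-space, so we take large $s$, and then we need the complement of the slice to lie in the interior of the image of $[2g(K)-1,\infty]$. For $L_{n,k}$ the slice for large $s$ is $[n+k-1,\infty]$. This misses the slope near $\infty$ of $K_0$ (the meridian side), which corresponds to the longitude of $K$ and lies outside $\mathcal L(X_K)$, so no gluing works. For $L'_{n,k}$ and large $s$, the slice contains $r\in[-1/m,\infty)$-type intervals around $0$ and $\infty$, and its complement is an interval of negative $r$ below roughly $-1/\lceil\cdot\rceil$. Translating by $m$, this complement is contained in the interior of the image of $[2g(K)-1,\infty]$ exactly when $m>2g(K)-1$.

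The main obstacle is this slope bookkeeping: the precise coordinate change between $r$ on $K_0$ and slopes on $\partial X_K$ (which is where the twist by $m$ enters), and checking the boundary cases of Theorem~\ref{thm: gen L-space links} so that the inequality comes out strict, as $m>2g(K)-1$. We would first verify this against the known cable and Whitehead-double-type cases as a sanity check.
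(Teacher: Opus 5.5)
Your architecture (persistently foliar links for the generic two-bridge patterns, then the gluing criterion \cite[Theorem~1.14]{HRW} together with the classification) matches the paper for $L'_{n,k}$. However, there is a genuine gap, and the slope bookkeeping is wrong in a way that matters.

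\textbf{The ``Moreover'' clause for $L_{n,k}$.} This clause covers $L_P=L_{n,k}$ (and its mirror): the theorem asserts that every non-trivial surgery on $P(K,m)$ then carries a coorientable taut foliation. These links are not persistently foliar, so your first step does not apply. Your treatment of them through the gluing criterion can at best show that $P(K,m)$ is not an $L$-space knot; it produces no foliations. The paper instead glues the foliations of Proposition~\ref{prop: some ctf Lnk} (fillings of $L_{n,k}$ with one coefficient $<1$ and the other arbitrary) to foliations of $X_K$, as in the proof of \cite[Theorem~1.11]{San23twobridge}. This gives the ``Moreover'' clause and non-$L$-space-knotness at once.

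Your input on $X_K$ is also misstated. $X_K$ does not realise every slope except the longitude: the Seifert longitude is exactly what Gabai realises, and for an $L$-space knot no slope in $[2g(K)-1,\infty]$ is realised. What the gluing needs is one realised slope different from $m$, the slope glued to $\mu_0$. For $m=0$ this requires more than Gabai (Roberts, Li--Roberts).

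\textbf{Slope bookkeeping.} Since $\varphi(\mu_0)=m\mu_K+\lambda_K$ and $\varphi(\lambda_0)=\mu_K$, a slope $s$ on $\partial X_K$ corresponds to $1/(s-m)$ on $\partial\nu K_0$. Thus the meridian of $K_0$ corresponds to slope $m$ (not the Seifert longitude), and the $K_0$-slope $0$ corresponds to $\mu_K$.

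Take a large coefficient $R$ on $P$, large enough that $S^3_{\bullet,R}(L_P)$ is hyperbolic, which \cite[Theorem~1.14]{HRW} requires. Both slices then have $\infty$ as an endpoint: $[n+k-1,\infty]$ for $L_{n,k}$, and $\{x\le 1/c\}\cup\{\infty\}$ with $c=\lceil (R-2k-1)/l\rceil>0$ for $L'_{n,k}$. Hence:
\begin{itemize}
\item The complement of the interior of the $L'_{n,k}$-slice is $[1/c,\infty]$, a positive interval containing $\infty$, not a negative one.
\item Your exclusion of $L_{n,k}$ breaks down when $m>2g(K)-1$, because then slope $m$ lies in $L^\circ(X_K)=(2g(K)-1,\infty)$.
\end{itemize}

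The real distinction is at the $K_0$-slope $0$. It is interior to the $L'_{n,k}$-slice but not in $[n+k-1,\infty]$, while its image $\mu_K$ is never interior to $L(X_K)$ for non-trivial $K$; this excludes $L_{n,k}$.

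For $L'_{n,k}$, covering the endpoint $\infty$ forces $m\in(2g(K)-1,\infty)$. Conversely, if $m>2g(K)-1$, then $\varphi^{-1}$ carries $[m,\infty)\subset L^\circ(X_K)$ onto $(0,\infty]\supseteq[1/c,\infty]$. This is where the strict inequality comes from.

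Finally, $L(X_K)$ is never empty. When $K$ is the mirror of an $L$-space knot it equals $[-\infty,1-2g(K)]$, and you must exclude this case too. It fails because the $K_0$-slope $1/c$ corresponds to the large positive slope $m+c$, which is not in that interval.
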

Note that if the pattern $L_P$ is a torus two-bridge link, i.e. a $T(2,2l)$ torus link, then the satellite $P(K,m)$ is a cable of $K$, and $L$-space cable knots are completely classified by \cite{HeddenCabling2,Homcable}. For the proof of Theorem \ref{thm: satellite} we refer to Section \ref{sec: satellite}.

\subsection*{Structure of the paper}The structure of the paper is as follows. The main body is divided into three parts: Sections~\ref{section:foliations}--\ref{sec: pers fol two-bridge}, Sections~\ref{section: Heegaard Floer Preliminaries}--\ref{section:rational link surgery}--\ref{section: very good points and non L space surgeries}, and Sections~\ref{section: turaev torsion}--\ref{sec: classification}. We have tried to make these three parts as self-contained as possible, with minimal interdependence, so that the reader may approach each of them independently. 

Sections~\ref{section:foliations}--\ref{sec: pers fol two-bridge} are devoted to the construction of taut foliations. More precisely, in Section~\ref{section:foliations} we prove Theorem~\ref{thm: pers fol link}, which, roughly speaking, states that every non-trivial surgery on an $n$-component link admitting a special diagram supports a coorientable taut foliation. Next, in Section~\ref{sec: pers fol two-bridge}, we apply this theorem, together with some variations of it, to the study of two-bridge links, proving the part of Theorem~\ref{thm: classification of links with L-space surgeries} concerning taut foliations. In the last part of this section, we also construct foliations on some surgeries on the links \(L_{n,k}\) and \(L'_{n,k}\).  

In Sections~\ref{section: Heegaard Floer Preliminaries}--\ref{section:rational link surgery}--\ref{section: very good points and non L space surgeries}, we define the rational link surgery complex and obtain a simplified model for two-component L-space links, which we use to derive obstructions to $L$-space surgeries. In Section~\ref{section: Heegaard Floer Preliminaries}, we review preliminaries on Heegaard Floer homology and the related bordered invariants. In  Section~\ref{section:rational link surgery}, we review the integer link surgery formula by \cite{mo_linksurgery}, and use Zemke's formulation from \cite{zemke_bordered,zemke_general} to prove a link surgery formula that allows rational coefficients; see Theorem \ref{thm: rational_surgery_chain}. In the same section, we show that for two-component  $L$-space links this complex admits a simplified model; see Theorem \ref{thm: intro_formal_link_H_Lambda}. In Section~\ref{section: very good points and non L space surgeries} we use this simplified model to obtain an obstruction to $L$-space surgeries on two-component $L$-space links and apply it to $L_{n,k}.$

In Sections~\ref{section: turaev torsion}--\ref{sec: classification}, we classify the $L$-space surgeries on the links $L_{n,k}$ and \(L'_{n,k}\). We begin by recalling some properties of the Turaev torsion,  and explaining how, by the results of \cite{RR}, it can be used to determine the set of \(L\)-space fillings of rational homology solid tori. We also discuss, in Section~\ref{subsec: box convexity}, the consequences of \cite{RR} for manifolds with multiple boundary components, and introduce the notion of \emph{weak box-convexity}. In Section~\ref{sec: classification}, we focus on the links \(L_{n,k}\) and \(L'_{n,k}\). For \(L_{n,k}\), which we show are \(L\)-space links, we classify their surgeries by combining the arguments developed  in Section~\ref{prop: some ctf Lnk} and Section~\ref{section: very good points and non L space surgeries}. For the links \(L'_{n,k}\), we use a different technique, based essentially on a detailed study of the Turaev torsions of the manifold obtained by filling only one boundary component of their exteriors.

Finally, there are two remaining sections. In Section~\ref{section:two-bridge}, we introduce some basic notions on two-bridge links and prove several technical results used throughout the paper. In particular, we use plumbing calculus to describe different diagrams of two-bridge links, which are used in Section~\ref{sec: pers fol two-bridge}, and we compute the multivariable Alexander polynomials of $L_{n,k}$ and \(L'_{n,k}\), which are needed in Sections~\ref{section: very good points and non L space surgeries} and Section~\ref{sec: classification}. In Section~\ref{sec: satellite}, we prove Theorem~\ref{thm: satellite} on satellite knots.

\subsection*{Acknowledgements}
DS would like to thank Alice Merz for many helpful conversations.  HZ would like to thank his postdoc mentor Linh Truong for helpful conversations and generous support. The authors thank Ian Zemke for pointing out a mistake in an earlier draft of this paper and for clarifying some technical details. The authors thank Qingfeng Lyu for pointing out that his result could be used to prove Theorem~\ref{thm: CTF iff Lspace for Lnk}. DS was
partially supported by FWF project P 34318 and by ERC Advanced Grant KnotSurf4d. HZ is supported by an AMS-Simons travel grant. The authors thank the Max Planck Institute for Mathematics in Bonn for its hospitality, where this project was initiated.

\section{Two-bridge links}\label{section:two-bridge}
In this section, we recall some basic notions on two-bridge links and prove some technical results that will be needed later in the paper. More precisely, in Section \ref{subsec: plumbing calculus} we use plumbing calculus to find suitable diagrams of two-bridge links, which will be crucial to construct in Section \ref{sec: pers fol two-bridge} to construct taut foliations. Next, in Section \ref{subsec: alexander poly}, we compute the multivariable Alexander polynomials of the links $L_{n,k}$ and $L'_{n,k}$, which we will use in Sections \ref{subsec: H_function_L_n_k} and $\ref{subsec: class for L'nk}$ respectively.  We refer to \cite{BZ} for more details on two-bridge links. 
\newline
%\begin{figure}[]
%    \centering   
%    \includegraphics[width=0.9\textwidth]{Figures/two_bridge_crossings.pdf}
%    \caption{The two-bridge link $L(a_1,\dots, a_n)$. We adopt the convention that a negative left-handed crossing is a right-handed crossing, and viceversa.}
%    \label{figure:two bridge}
%\end{figure}

A \emph{two-bridge link} can be described by a rational number $\frac{p}{q}$, where $p$ and $q$ are coprime integers with $p>0$, $q$ is odd and $0<|q|<p$, in the following way. We fix a sequence of integers $(a_1,\dots,a_n)$ such that
\begin{equation}\label{eq:continued fraction}
\frac{p}{q}=a_1-\cfrac{1}{a_2-\cfrac{1}{\ddots-\cfrac{1}{a_n}}}         
\end{equation}
and consider the link defined by the diagram (disregarding the orientations for now) in Figure \ref{fig: two bridge ori}. We denote this link by $L(a_1,\dots, a_n)$\footnote{We warn the reader the difference in notation with \cite{San23twobridge}: the link $L(a_1,a_2,\dots,a_n)$ here corresponds to $L(a_1,-a_2,\dots,(-1)^{n-1}a_n)$ there.}.

We are interested in the case when $L(a_1,\dots, a_n)$ has two components, and this happens exactly when the fraction \(\frac{p}{q}\) has numerator $p$ even, see \cite[Proposition~12.3]{BZ}. 
When $L(a_1,\dots, a_n)$ is a link we orient the components as in Figure \ref{fig: two bridge ori}.

\begin{figure}
    \centering
    \includegraphics[width=0.8\textwidth]{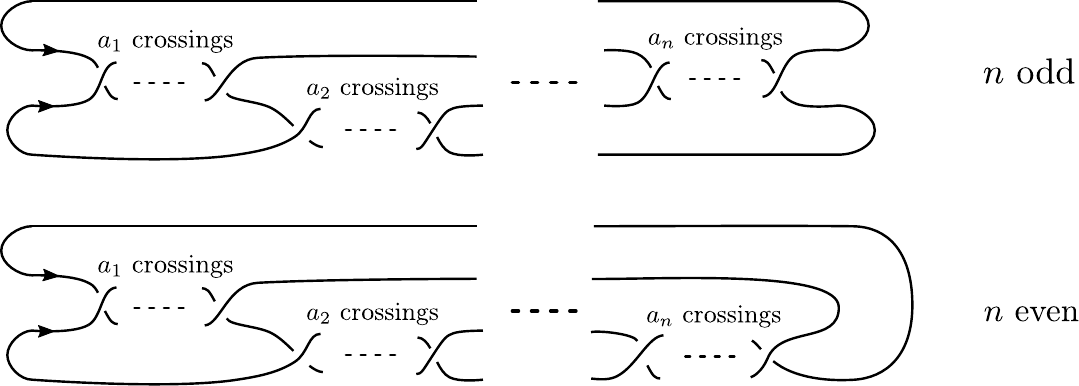}
    \caption{The two-bridge link $L(a_1,\dots, a_n)$. When it has two components, we orient it as illustrated. We adopt the convention that a negative left-handed crossing is a right-handed crossing, and viceversa.}
    \label{fig: two bridge ori}
\end{figure}

A priori it could happen that the isotopy class of the two-bridge link associated to \(\frac{p}{q}\) depends on the choice of the continued fraction representation of $\frac{p}{q}$. This is not the case, by the following theorem.

\begin{thm}[\cite{Schubert}, see also \cite{BZ}]\label{teo: classification two-bridge links}
Let $L=L(a_1\dots,a_n)$ and $L'=L(b_1,\dots,b_m)$ be two oriented two-bridge links and let $\frac{p}{q}$ and $\frac{p'}{q'}$ be the rational numbers defined as in equation~\eqref{eq:continued fraction}. Then the links $L$ and $L'$ are isotopic if and only if $p=p'$ and $q'\equiv q^{\pm 1 } \mod 2p$. If $p=p'$ and $q'\equiv q+p \mod 2p$ or $qq'\equiv 1+p \mod 2p$,
then $L$ and $L'$ are isotopic after reversing the orientation of one of the components.
\end{thm}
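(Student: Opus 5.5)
The statement is Schubert's classification of oriented two-bridge links. I would follow the classical route through the double branched cover and the orbifold structure, rather than try anything diagrammatic.

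\textbf{Unoriented classification.} The double cover of $S^3$ branched along $L(a_1,\dots,a_n)$ is the lens space $L(p,q)$. To see this, cut $S^3$ along the bridge sphere into two rational tangles. The double branched cover of each trivial tangle is a solid torus, so the cover is a genus-one Heegaard splitting. Its gluing map is read off from the continued fraction \eqref{eq:continued fraction}: each twist box $a_i$ lifts to a Dehn twist, and the product of the corresponding matrices $\begin{pmatrix} a_i & -1\\ 1 & 0\end{pmatrix}$ sends the meridian to a curve of slope $p/q$.

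Conversely, suppose $L$ and $L'$ are isotopic. Then the lens spaces $L(p,q)$ and $L(p',q')$ are homeomorphic, so $p=p'$ and $q'\equiv \pm q^{\pm1}\pmod p$ by the Reidemeister--Brody classification. The extra sign is absorbed by the normalisation ($q$ odd, mirror images excluded by orientation of $S^3$).

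The harder part is the converse: the lens space determines the link, not just its double cover. For this I would use that the covering involution is, up to isotopy, the unique hyperelliptic-type involution of the genus-one Heegaard splitting, via Bonahon--Otal uniqueness of genus-one splittings of lens spaces together with Hodgson--Rubinstein. The branch set is the quotient of its fixed set, so it is determined.

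\textbf{Oriented refinement.} This is where the condition modulo $2p$ enters. I would encode the orientation of the two components by the homomorphism $\pi_1(S^3\setminus L)\to \Z/2\times\Z/2$ recording linking with each component. Equivalently, pass to the $\Z/2p$ data: $\pi_1$ of the link exterior surjects onto a dihedral-type group, and the orientation singles out a lift of $q \bmod p$ to $q \bmod 2p$. Concretely, I would use Schubert's normal form (Conway's $4$-plat presentation with the two components oriented as in Figure~\ref{fig: two bridge ori}) and check three things:
\begin{itemize}
\item[(i)] Turning the diagram upside down replaces $q$ by $q^{-1}$ modulo $2p$.
\item[(ii)] Reversing one component shifts $q$ by $p$ modulo $2p$.
\item[(iii)] Nothing else in the normalisation can change $q$ modulo $2p$.
\end{itemize}
For (ii), reversing one component changes the linking number of the two components, computable from the diagram as a sum of crossing signs. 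That linking number is determined by $q$ modulo $2p$ via a Dedekind-sum-type formula, so the classes $q$ and $q+p$ are genuinely distinguished.

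\textbf{Main obstacle.} I expect the uniqueness of the branch set (equivalently, of the involution) up to isotopy to be the hardest step. In practice I would cite Schubert's original argument or Burde--Zieschang (\cite{BZ}, Ch.~12) for it rather than reprove it. The oriented bookkeeping modulo $2p$ is then a careful but elementary check with the linking-number formula.
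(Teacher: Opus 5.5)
The paper gives no proof of this theorem; it quotes it from Schubert and \cite{BZ}, so the comparison is with the classical argument. Your unoriented half is that argument. The isotopy lifts to an orientation-preserving homeomorphism $L(p,q)\to L(p,q')$, and the oriented classification of lens spaces gives $q'\equiv q^{\pm1}\pmod p$. It is orientation-preservation, not the parity of $q$, that removes the extra sign. You do not need Bonahon--Otal or Hodgson--Rubinstein for the converse. Sufficiency is the easy direction: it follows from your item (i) together with the fact that Schubert's normal form depends only on $q \bmod 2p$.

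The genuine gap is the oriented necessity. The lens space only sees $q\bmod p$. So you must still prove that $b(p,q)$ is not isotopic to its one-component reversal $b(p,q+p)$ unless $q+p\equiv q^{-1}\pmod{2p}$, i.e.\ unless $q^2\equiv 1+p\pmod{2p}$.

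\begin{itemize}
\item Item (iii) just asserts this. Bookkeeping on normal forms cannot prove non-isotopy, because an isotopy need not pass through normal forms.
\item The homomorphism to $\Z/2\times\Z/2$ is blind to orientations, as is any dihedral quotient in which meridians go to involutions.
\item The linking number detects the reversal only when it is nonzero, since reversal sends $\operatorname{lk}$ to $-\operatorname{lk}$.
\end{itemize}

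The last point fails in cases the theorem covers. Take $b(80,47)=L(2,4,2,2,-4)$. Here $\operatorname{lk}=\pm\sum_{i\ \mathrm{odd}}(-1)^{\lfloor 47i/80\rfloor}=0$, which is also $\tfrac12(2+2-4)$ from the odd-indexed twist regions. But $47^2\equiv 49\not\equiv 1\pmod{80}$, so $47+80\not\equiv 47^{\pm1}\pmod{160}$. The theorem therefore says this link is not isotopic to its reversal, and nothing in your argument proves that.

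To close the gap you need an invariant that sees the orientation of a single component beyond $\operatorname{lk}$. One option is the spin structure on the double branched cover determined by the quasi-orientation, as in Turaev's classification of oriented Montesinos links; this reduces the claim to a classification of spin lens spaces. Otherwise, cite Schubert's argument for the oriented statement, not only for the unoriented one.
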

We denote by $b(p,q)$ the two-bridge link associated to the rational number $\frac{p}{q}$. We also recall the following fact, that will be used repeatedly in the paper; see \cite[Exercise~2.1.16]{Kaw}. 

\begin{lem}\label{lemma: symmetric}
Two-bridge links are symmetric, \ie there exists an ambient isotopy of $S^3$ interchanging their components.\qed
\end{lem}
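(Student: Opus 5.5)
The plan is to give a direct geometric construction using the standard 4-plat (Conway) picture of a two-bridge link. This avoids building the argument on Schubert's classification. Put $L=b(p,q)$ in the form of Figure~\ref{fig: two bridge ori}: two bridges on top, joined through a sequence of horizontal and vertical twist boxes (a rational tangle $T$), then closed off at the bottom. The goal is to exhibit an orientation-preserving homeomorphism of $S^3$, isotopic to the identity since every such homeomorphism is, that carries $L$ to itself and exchanges its two components.

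\textbf{Step 1: a symmetry of the tangle.} Consider the rotation $\rho$ of $S^3$ by $\pi$ about the axis perpendicular to the projection plane, passing through the centre of the diagram. I would choose the diagram so that $L$ is the numerator/denominator closure of a single rational tangle sitting in a ball $B$ centred on that axis.

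A rational tangle is invariant, up to isotopy rel boundary, under each of the three $\pi$-rotations of the ball about the coordinate axes. This is the standard "flype/rotation invariance" of rational tangles. It is proved by induction on the continued fraction length: a twist box is visibly invariant under these rotations, and adding a twist commutes with the rotation up to an isotopy of the twist region. Applying this to the rotation about the axis perpendicular to the plane, $\rho(T)$ is isotopic to $T$ rel $\partial B$.

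The closing arcs outside $B$ are two arcs connecting the four endpoints in pairs. Under $\rho$ the four endpoints NW, NE, SE, SW go to SE, SW, NW, NE, so the closing arcs are carried to closing arcs. Hence $\rho(L)$ is isotopic to $L$.

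\textbf{Step 2: the components are swapped.} The remaining task is to check that, when $p$ is even, the isotopy $\rho$ followed by the rel-boundary isotopy exchanges the components rather than preserving them. In the two-component case, each closing arc belongs to a different component. Equivalently, the tangle connects NW to NE and SW to SE (numerator-type parity), or NW to SW and NE to SE, depending on the closure convention. In either case $\rho$ maps the arc through NW to the arc through SE. So the closing arc in the top half is sent to the one in the bottom half, and the components are interchanged.

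The rel-boundary isotopy fixes the endpoints, so it does not undo this exchange. If instead the tangle parity made $\rho$ preserve the components, I would use the rotation about the horizontal in-plane axis, which sends NW to SW. Whichever convention Figure~\ref{fig: two bridge ori} uses, one of the two in-plane or out-of-plane $\pi$-rotations swaps the two closing arcs.

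\textbf{Main obstacle.} The delicate point is the parity bookkeeping in Step 2. One has to confirm, from $p$ even and $q$ odd, which pairs of endpoints the rational tangle connects. Then one has to choose the rotation that is a symmetry of the closure and also swaps the closing arcs. I would handle this by tracking the connectivity of the tangle via $p/q \bmod 2$, where $1/0$, $0/1$ and $1/1$ correspond to the three connection patterns, and checking each case.

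A cleaner alternative, if the bookkeeping gets messy, is to use the Schubert normal form. There the link is the preimage of two arcs in the double-branched-cover description $S^3 \to L(p,q)$, and the deck-type symmetry of the lens space swaps the two lifts.
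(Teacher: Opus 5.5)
Your argument is correct. It differs from the paper only in that the paper gives no argument: the lemma carries a \qed and rests on Kawauchi's Exercise~2.1.16.

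\textbf{What your proof supplies.} You give a self-contained proof via the $\pi$-rotation $\rho$ of the 4-plat picture. Its only external input is the classical fact that a rational tangle is isotopic rel $\partial B$ to its images under the $\pi$-rotations about the two in-plane axes (Conway; see Kauffman--Lambropoulou).
\begin{itemize}
\item That fact is proved with flypes, not because the twist boxes are ``visibly'' invariant, so cite it rather than sketch it.
\item Your rotation about the perpendicular axis is the composite of those two flips, so its invariance follows directly.
\item Because the symmetry is explicit (rotate through angle $t\pi$, then apply the rel-$\partial B$ isotopy inside $B$), you get an ambient isotopy directly. You never need the fact that orientation-preserving homeomorphisms of $S^3$ are isotopic to the identity.
\end{itemize}
The citation buys brevity; your route buys a concrete symmetry.

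\textbf{Step 2 is easier than you fear.} The parity bookkeeping you flag as the main obstacle is immediate and needs nothing about $p/q \bmod 2$.
\begin{itemize}
\item If the closure has two components, no component can contain both closing arcs. Such a component would contain all four endpoints, hence both strands of $T$, and the link would be a knot.
\item So each component $C_i$ is one strand of $T$ together with one closing arc $a_i$.
\item NW and SE lie on different closing arcs for both the numerator and the denominator closure. Hence $\rho$ always exchanges $a_1$ and $a_2$.
\item The rel-$\partial B$ isotopy preserves which endpoints each strand joins. It therefore carries $\rho(C_1)$ onto $C_2$.
\end{itemize}
Your fallback to an in-plane rotation is therefore never needed.

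\textbf{Drop the final ``cleaner alternative''.} The branched double cover goes $L(p,q)\to S^3$, with the link as branch locus. The link is not a preimage of arcs under a map $S^3\to L(p,q)$, so that paragraph does not make sense as written.
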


\subsection{Plumbing calculus}\label{subsec: plumbing calculus}
In Section \ref{sec: pers fol two-bridge}, we exploit different diagrammatic presentations of two-bridge links to construct taut foliations on their surgeries. For this purpose, we briefly introduce some notions from plumbing calculus, which provides a convenient way to obtain various diagrams of a two-bridge link. We restrict ourselves to recalling only what is strictly necessary for our purposes and refer to \cite{Neu81} for the general theory of plumbing calculus and to \cite{BonSieb} for its connection with links in $S^3$.
\newline

We can encode two-bridge links with linear plumbing graphs, \ie linear graphs whose vertices have an integer weight assigned to them, by associating to a graph
\begin{center}
\begin{tikzpicture}[scale=0.7]

  % Nodo 1
  \fill (0,0) circle (2pt);
  \node at (0,0.4) {  $a_1$};

  % Nodo 2
  \fill (1.5,0) circle (2pt);
  \node at (1.5,0.4) {  $a_2$};

  % Puntini intermedi
  \node at (3,0) {$\cdots$};

  % Nodo n-1
  \fill (4.5,0) circle (2pt);
  \node at (4.5,0.4) {  $a_{n-1}$};

  % Nodo n
  \fill (6,0) circle (2pt);
  \node at (6,0.4) {  $a_n$};

  % Archi
  \draw (0,0) -- (1.5,0);
  \draw (1.5,0) -- (2.4,0); % fino al puntinato
  \draw (3.6,0) -- (4.5,0); % dopo il puntinato
  \draw (4.5,0) -- (6,0);

\end{tikzpicture}
\end{center}

the (unoriented) two-bridge link $L(-a_1,-a_2,\dots, -a_n)$, with diagram described in Figure~\ref{fig: two bridge ori}.

We introduce the following moves on linear plumbing graphs.

\begin{itemize}
\item \emph{blow-down}: a vertex with weight $\epsilon\in \{\pm 1\}$ can be removed in the following ways, according to its valence: 

\begin{center}
\begin{tikzpicture}[scale=0.7]

  % initial dots
  \node at (-5,0) {$\cdots$};

  % Node 1 
  \fill (-4,0) circle (2pt);
  \node at (-4,0.4) {  $a$};

  % Nodo 2
  \fill (-2.5,0) circle (2pt);
  \node at (-2.5,0.4) {  $\epsilon$};

  % Nodo 3
  \fill (-1,0) circle (2pt);
  \node at (-1,0.4) {  $b$};

  % final node
  \node at (0,0) {$\cdots$};

    % lines
 \draw (-4.5,0) -- (-4,0);
  \draw (-4,0) -- (-2.5,0); 
  \draw (-2.5,0) -- (-1,0);
  \draw (-1,0) -- (-0.5,0);

 \node at (1,0) {$\rightarrow$};
 
   % initial dots
  \node at (2,0) {$\cdots$};

  % Node 1 
  \fill (3,0) circle (2pt);
  \node at (3,0.4) { $a-\epsilon$};

  % Nodo 3
  \fill (4.5,0) circle (2pt);
  \node at (4.5,0.4) { $b-\epsilon$};

  % final node
  \node at (5.5,0) {$\cdots$};

    % lines
 \draw (2.5,0) -- (3,0);
  \draw (3,0) -- (4.5,0); 
  \draw (4.5,0) -- (5,0);

  % initial dots
  \node at (-3.5,-1) {$\cdots$};

  % Node 1 
  \fill (-2.5,-1) circle (2pt);
  \node at (-2.5,-0.6) {$a$};

  % Nodo 2
  \fill (-1,-1) circle (2pt);
  \node at (-1,-0.6) {  $\epsilon$};

    % lines
 \draw (-3,-1) -- (-2.5,-1);
  \draw (-2.5,-1) -- (-1,-1);

 \node at (1,-1) {$\rightarrow$};
 
   % initial dots
  \node at (2,-1) {$\cdots$};

  % Node 1 
  \fill (3,-1) circle (2pt);
  \node at (3,-0.6) {  $a-\epsilon$};

    % lines
 \draw (2.5,-1) -- (3,-1);

\end{tikzpicture}
\end{center}
We call \emph{blow-up} the inverse operation.
\item \emph{$0$-chain absorption}: a vertex with weight $0$ can be removed as shown: 
\begin{center}
\begin{tikzpicture}[scale=0.7]

  % initial dots
  \node at (-5,0) {$\cdots$};

  % Node 1 
  \fill (-4,0) circle (2pt);
  \node at (-4,0.4) {  $a$};

  % Nodo 2
  \fill (-2.5,0) circle (2pt);
  \node at (-2.5,0.4) {  $0$};

  % Nodo 3
  \fill (-1,0) circle (2pt);
  \node at (-1,0.4) {  $b$};

  % final node
  \node at (0,0) {$\cdots$};

    % lines
 \draw (-4.5,0) -- (-4,0);
  \draw (-4,0) -- (-2.5,0); 
  \draw (-2.5,0) -- (-1,0);
  \draw (-1,0) -- (-0.5,0);

 \node at (1,0) {$\rightarrow$};
 
   % initial dots
  \node at (2,0) {$\cdots$};

  % Node 1 
  \fill (3,0) circle (2pt);
  \node at (3,0.4) { $a+b$};

  % final node
  \node at (4,0) {$\cdots$};

    % lines
 \draw (2.5,0) -- (3,0);
  \draw (3,0) -- (3.5,0);

  % initial dots
  \node at (-3.5,-1) {$\cdots$};

  % Node 1 
  \fill (-2.5,-1) circle (2pt);
  \node at (-2.5,-0.6) {$a$};

  % Nodo 2
  \fill (-1,-1) circle (2pt);
  \node at (-1,-0.6) {  $0$};

    % lines
 \draw (-3,-1) -- (-2.5,-1);
  \draw (-2.5,-1) -- (-1,-1);

 \node at (1,-1) {$\rightarrow$};
 
   % initial dots
  \node at (2,-1) {$\cdots$};

  % Node 1 
  \fill (3,-1) circle (2pt);
  \node at (3,-0.6) {  $a$};

    % lines
 \draw (2.5,-1) -- (3,-1);

\end{tikzpicture}
\end{center}
\end{itemize}
\begin{defn}
Two linear plumbing graphs $\mathcal{T}$ and $\mathcal{T}'$ are \emph{equivalent} if they are related by a sequence of blow-downs and $0$-chain absorptions, and their inverses.
\end{defn}
For us, the moves just introduced are relevant for the following fact. We refer to \cite[Chapter~12]{BonSieb} for details.
\begin{prop}\label{prop: plumbing graphs moves}
Let $\mathcal{T}$ and $\mathcal{T}'$ be two equivalent linear plumbing graphs. Then the corresponding two-bridge links are isotopic. 
\end{prop}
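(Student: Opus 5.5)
It suffices to treat a single move, since isotopy is transitive and each move's inverse gives the same pair of links. I would argue through the continued fraction: by Theorem~\ref{teo: classification two-bridge links}, the two-bridge link $L(b_1,\dots,b_m)$ depends, up to isotopy, only on the fraction $p/q$ of equation~\eqref{eq:continued fraction} and on $q$ modulo $p$ up to inversion. So it is enough to show that each move on a linear plumbing graph with weights $(a_1,\dots,a_n)$ preserves $p$ and preserves $q$ up to the allowed ambiguity. Here the associated link is $L(-a_1,\dots,-a_n)$. Since the statement only concerns unoriented links, orientation issues can be ignored: the congruences $q'\equiv q+p$ or $qq'\equiv 1+p$ also give isotopy after reversing one component.

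The key tool is the matrix description of the continued fraction. Set $M(a)=\begin{pmatrix} a & -1\\ 1 & 0\end{pmatrix}$; then the product $M(-a_1)\cdots M(-a_n)$ has first column $(p,q)^T$, up to sign.

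\textbf{Interior blow-down.} For $\epsilon=\pm1$, a direct computation gives
\[
M(x-\epsilon)\,M(y-\epsilon)=\pm\, M(x)\,M(\epsilon)\,M(y)
\]
(after accounting for the sign conventions from negating the weights). Hence an interior blow-down leaves the full product matrix unchanged up to sign, so $p/q$ is unchanged.

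\textbf{Interior $0$-chain absorption.} Here $M(a)M(0)M(b)=-M(a+b)$, which again preserves the product up to sign.

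\textbf{End moves.} At the final vertex, a removal changes only the last factor. A blow-down at the end leaves the first column $(p,q)^T$ unchanged up to sign, by the identity $M(a)M(\epsilon)e_1=\pm M(a-\epsilon)e_1$, so $p/q$ is preserved. If instead the removed vertex is the first one, I would use the reverse symmetry of two-bridge links: reading the graph backwards replaces $p/q$ by $p/q'$ with $qq'\equiv\pm1 \pmod p$, which Theorem~\ref{teo: classification two-bridge links} already identifies with the same link.

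The only real obstacle is bookkeeping: tracking the signs between the graph weights $a_i$ and the entries $-a_i$ of the link, and handling the degenerate cases. These include graphs of length one or two and the possibility $|q|>p$ or $q$ even after a move, which requires renormalising $q$ modulo $p$, or modulo $2p$ to stay within the standard form. I would handle these by computing $p$ and $q$ directly from the matrix product. As an alternative check, one can reproduce the classical diagrammatic argument of \cite[Chapter~12]{BonSieb}. There a $\pm1$ vertex is a single-crossing twist region; untwisting it by a flype or Reidemeister I move shifts the adjacent twist counts by $\mp1$. A $0$ vertex is an empty twist region, whose neighbours merge into one box.
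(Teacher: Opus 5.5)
Your matrix identities are correct: $M(x-\epsilon)M(y-\epsilon)=\epsilon\,M(x)M(\epsilon)M(y)$, $M(a)M(0)M(b)=-M(a+b)$, $M(a)M(\epsilon)e_1=\epsilon\,M(a-\epsilon)e_1$, and a blow-down at the first vertex changes $q$ by $\pm p$, which is harmless for unoriented links. The gap is that your ``End moves'' paragraph treats only the blow-down and skips the valence-one $0$-chain absorption $(\dots,a,0)\to(\dots,a)$.

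That skipped move is exactly where the argument fails. Your own test gives $M(a)M(0)e_1=(-1,0)^{T}\neq\pm M(a)e_1$, so $p$ is not preserved. Concretely, the plumbed $3$-manifold of the graph $(2,0)$ is $S^3$, since the $0$-framed meridian cancels the other component. The graph $(2)$ instead gives $\mathbb{RP}^3$. By the paper's Remark these are the double branched covers of the associated links, so the links differ: an unknot versus the Hopf link.

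So, with the leaf move exactly as drawn in the paper, the proposition is false, and no invariance argument can prove it. The correct leaf move deletes the $0$-vertex together with its neighbour, $(\dots,c,a,0)\to(\dots,c)$, and your method confirms this at once: $M(c)M(a)M(0)e_1=-M(c)e_1$. As far as I can see, the paper only ever uses interior $0$-absorptions (in Lemmas~\ref{lemma: operation on plumb graphs 2} and~\ref{lemma: operation on plumb graphs 3}), so nothing later is affected. Your proof still has to correct the move rather than pass over it.

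The second gap is the step from arithmetic to topology, which you file under ``bookkeeping''. Theorem~\ref{teo: classification two-bridge links}, as stated, concerns expansions of a fraction in normal form ($p>0$, $q$ odd, $0<|q|<p$). Graphs with weights $0,\pm1$ can have values outside that range; for example $L(1,3)$ has value $2/3$, and $p\le 1$ also occurs. Your reversal trick for the first vertex leans on the same theorem for such sequences, and is unnecessary anyway given the direct computation.

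Renormalising $q$ changes the number, not the diagram. What you need is that $L(b_1,\dots,b_m)$ is $b(p,q)$ for every integer sequence with value $p/q$. That is Conway's theorem that a rational tangle is determined by its fraction, or the diagrammatic argument of \cite[Chapter~12]{BonSieb} that the paper simply cites. It cannot be extracted from the moves themselves without circularity.

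Granting that input, your route is a legitimate alternative to the paper's appeal to plumbing calculus. It shows that $p$ and $q^{\pm1}\bmod p$ are invariant under the corrected moves, then applies the Schubert--Conway classification. It replaces explicit isotopies by $2\times2$ matrix identities, at the price of importing the classification theorem for arbitrary continued fraction expansions.
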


\begin{remark} \label{rem: linear_plumbing}
We briefly motivate the appearance of the minus signs in the correspondence between plumbing graphs and two-bridge links. We adopt the convention that the oriented lens space $L(p,q)$ is defined as $-\frac{p}{q}$-surgery on the unknot and, in our notations, it is the double branched cover of the two-bridge link $b(p,q)$. A linear\footnote{A similar discussion holds more generally for \emph{embedded} plumbing graphs.} plumbing graph $\mathcal{T}$ defines a $3$-manifold in the following way. First, one considers the link obtained by associating to each vertex of $\mathcal{T}$ an unknotted circle in $S^3$ and by linking two circles as a Hopf link whenever the corresponding vertices are connected by an edge. Next, the weights of the vertices define surgery coefficients on the components of such link, and hence a $3$-manifold. One can show that this $3$-manifold is the double branched cover of the two-bridge link associated to $\mathcal{T}$, and that the $3$-manifolds associated to the linear graph with weights $(a_1,\dots, a_n)$ is the $\frac{p}{q}$-surgery on the unknot, where
$$
\frac{p}{q}=a_1-\cfrac{1}{a_2-\cfrac{1}{\ddots-\cfrac{1}{a_n}}}.    
$$
This, as an oriented manifold, is $L(p,-q)$ and hence the two-bridge link associated to such a graph is $b(p,-q)=L(-a_1,-a_2,\dots, -a_n)$.
\end{remark}

\begin{comment}\begin{defn}
A plumbing graph is a finite tree $\mathcal{T}$ where every vertex has an integer weight assigned to it.
\end{defn}

A plumbing graph can be used to define a $4$-manifold with boundary $W(\mathcal{T}$ by gluing two-handles to $D^4$ along a framed link obtained by associating to each vertex of $\mathcal{T}$ an unknotted circle in $S^3$, framed according to the weight of the vertex, and by linking as the Hopf link two circles when the corresponding vertices are connected an edge.

If we fix an embedding of $\mathcal{T}$ in the plane (and some additional choices, see \ds{referece}) we can also describe a (unoriented) link $L(\mathcal{T})$ in $S^3$ in the following way: each vertex of $\mathcal{T}$ represents
a twisted band, the weight being the number of half-twists (positive for right-handed twists, negative
for left-handed); when two vertices are connected by an edge, the corresponding bands are plumbed
one to the other. In this way one gets a (possibly non-orientable) surface $S(\mathcal{T})$, and $L(\mathcal{T})$ is the
boundary of this surface.

We are interested in case when $\mathcal{T}$ is a linear graph.
\end{comment}

We collect, for the sake of completeness, some elementary lemmas on linear plumbing graphs that we will need later.

\begin{lem}\label{lemma: operation on plumb graphs 1}
For each $k\geq 0$, the linear plumbing graphs
\begin{center}
\begin{tikzpicture}[scale=0.7]

  % Node 1 
  \fill (-6,0) circle (2pt);
  \node at (-6,0.4) {  $-2$};
  
  % initial dots
  \node at (-5,0) {$\cdots$};

  % Node 1 
  \fill (-4,0) circle (2pt);
  \node at (-4,0.4) {  $-2$};

  % Nodo 2
  \fill (-2.5,0) circle (2pt);
  \node at (-2.5,0.4) {  $-1$};
% square bracket
\draw (-6,-0.5) -- (-2.5,-0.5);      % segmento orizzontale
\draw (-6,-0.5) -- (-6,-0.2);      % tratto verticale sinistro
\draw (-2.5,-0.5) -- (-2.5,-0.2);      % tratto verticale destro
\node at (-4.25,-0.8) {$k$ vertices};
  % Nodo 3
  \fill (-1,0) circle (2pt);
  \node at (-1,0.4) {$a$};

  % final node
  \node at (0,0) {$\cdots$};

    % lines
    \draw (-6,0) -- (-5.5,0);
 \draw (-4.5,0) -- (-4,0);
  \draw (-4,0) -- (-2.5,0); 
  \draw (-2.5,0) -- (-1,0);
  \draw (-1,0) -- (-0.5,0);

 \node at (1.75,0) {and};
 
   % node 1
  \fill (4,0) circle (2pt);
  \node at (4,0.4) {$a+k$};

  % final dots 
  \node at (5,0) {$\cdots$};

    % lines
 \draw (4,0) -- (4.5,0);
  \end{tikzpicture}
  \end{center}
  are equivalent.
\end{lem}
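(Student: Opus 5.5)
Read the picture as a string of $k$ vertices: the first $k-1$ have weight $-2$, the last has weight $-1$, and that last one is attached to a vertex of weight $a$. The rest of the graph (the dots to the right) is left untouched. The plan is induction on $k$, using only blow-downs. For $k=0$ there is nothing to prove: the chain is empty and the vertex has weight $a+0=a$.

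For $k=1$ the chain is a single vertex of weight $-1$. It has valence one and its only neighbour is the vertex of weight $a$. The second (end-vertex) blow-down move with $\epsilon=-1$ deletes it and changes the neighbour's weight to $a-\epsilon=a+1$. This is exactly the claimed graph.

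For the inductive step, take $k\ge 2$. The $-1$ vertex now has two neighbours: the last $-2$ vertex of the chain and the vertex of weight $a$. Apply the valence-two blow-down with $\epsilon=-1$. This removes the $-1$ vertex, connects its two neighbours by an edge, and adds $1$ to each of their weights. The last $-2$ becomes $-1$ and $a$ becomes $a+1$. The result is a chain of $k-2$ vertices of weight $-2$, then one $-1$, then a vertex of weight $a+1$: the same configuration with $k-1$ vertices and $a$ replaced by $a+1$. By the inductive hypothesis this graph is equivalent to the one in which the vertex has weight $(a+1)+(k-1)=a+k$. Since equivalence is transitive, the two graphs in the statement are equivalent.

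There is no real obstacle. The only points needing care are that each blow-down is applied in the correct valence case, which for $k=2$ means the new $-1$ vertex is an endpoint of the chain, and that the tail beyond the vertex of weight $a$ never changes. It is also worth noting that the whole argument is a sequence of blow-downs, so no $0$-chain absorptions are needed.
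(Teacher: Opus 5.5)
Your proof is correct and is the paper's argument. Both proceed by induction on $k$: the case $k=1$ is a single end-vertex blow-down, and in general one blow-down of the $-1$ vertex turns the adjacent $-2$ into $-1$ and $a$ into $a+1$, which reduces to the case $k-1$.
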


\begin{proof}
    The proof is straightforward by induction on $k$. When $k=0$ there is nothing to prove, and when $k=1$ the trees are, by definition, related by a single blow-down. In general, after applying one blow-down we obtain
    \begin{center}
\begin{tikzpicture}[scale=0.7]

  % Node 1 
  \fill (-6,0) circle (2pt);
  \node at (-6,0.4) {  $-2$};
  
  % initial dots
  \node at (-5,0) {$\cdots$};

  % Node 1 
  \fill (-4,0) circle (2pt);
  \node at (-4,0.4) {  $-2$};

  % Nodo 2
  \fill (-2.5,0) circle (2pt);
  \node at (-2.5,0.4) {  $-1$};
% square bracket
\draw (-6,-0.5) -- (-2.5,-0.5);      % segmento orizzontale
\draw (-6,-0.5) -- (-6,-0.2);      % tratto verticale sinistro
\draw (-2.5,-0.5) -- (-2.5,-0.2);      % tratto verticale destro
\node at (-4.25,-0.8) {$k$ vertices};
  % Nodo 3
  \fill (-1,0) circle (2pt);
  \node at (-1,0.4) {$a$};

  % final node
  \node at (0,0) {$\cdots$};

    % lines
    \draw (-6,0) -- (-5.5,0);
 \draw (-4.5,0) -- (-4,0);
  \draw (-4,0) -- (-2.5,0); 
  \draw (-2.5,0) -- (-1,0);
  \draw (-1,0) -- (-0.5,0);

 \node at (1,0) {$\rightarrow$};

\begin{scope}[shift={(1,0)}]    
  % Node 1 
  \fill (1,0) circle (2pt);
  \node at (1,0.4) {  $-2$};
  
  % initial dots
  \node at (2,0) {$\cdots$};

  % Node 1 
  \fill (3,0) circle (2pt);
  \node at (3,0.4) {  $-2$};

  % Nodo 2
  \fill (4.5,0) circle (2pt);
  \node at (4.5,0.4) {  $-1$};
% square bracket
\draw (1,-0.5) -- (4.5,-0.5);      % segmento orizzontale
\draw (1,-0.5) -- (1,-0.2);      % tratto verticale sinistro
\draw (4.5,-0.5) -- (4.5,-0.2);      % tratto verticale destro
\node at (2.75,-0.8) {$k-1$ vertices};
  % Nodo 3
  \fill (6,0) circle (2pt);
  \node at (6,0.4) {$a+1$};

  % final node
  \node at (7,0) {$\cdots$};

    % lines
    \draw (1,0) -- (1.5,0);
 \draw (2.5,0) -- (3,0);
  \draw (3,0) -- (4.5,0); 
  \draw (4.5,0) -- (6,0);
  \draw (6,0) -- (6.5,0);
\end{scope}
  \end{tikzpicture}
  \end{center}
  and the claim follows by induction.
\end{proof}

\begin{lem}\label{lemma: operation on plumb graphs 1.5}
For each $k\geq 0$, the linear plumbing graphs
\begin{center}
\begin{tikzpicture}[scale=0.7]

  % Node 1 
  \fill (-6,0) circle (2pt);
  \node at (-6,0.4) {  $-2$};
  
  % initial dots
  \node at (-5,0) {$\cdots$};

  % Node 1 
  \fill (-4,0) circle (2pt);
  \node at (-4,0.4) {  $-2$};

  % Nodo 2
  \fill (-2.5,0) circle (2pt);
  \node at (-2.5,0.4) {  $a$};
% square bracket
\draw (-6,-0.5) -- (-4,-0.5);      % segmento orizzontale
\draw (-6,-0.5) -- (-6,-0.2);      % tratto verticale sinistro
\draw (-4,-0.5) -- (-4,-0.2);      % tratto verticale destro
\node at (-5,-0.8) {$k$ vertices};

  % final node
  \node at (-1.5,0) {$\cdots$};

    % lines
    \draw (-6,0) -- (-5.5,0);
 \draw (-4.5,0) -- (-4,0);
  \draw (-4,0) -- (-2,0); 

 \node at (0.75,0) {and};
 
   % node 1
  \fill (3,0) circle (2pt);
  \node at (3,0.4) {$k+1$};
  % Nodo 2
  \fill (4.5,0) circle (2pt);
  \node at (4.5,0.4) {  $a+1$};
  
  % final dots 
  \node at (5.5,0) {$\cdots$};

    % lines
 \draw (3,0) -- (5,0);
  \end{tikzpicture}
  \end{center}
  are equivalent.
\end{lem}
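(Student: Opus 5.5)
The plan is to start from the graph on the right, $(k+1, a+1, \dots)$, and use one blow-up followed by a sequence of interior blow-downs to reach the graph on the left. This is the same bookkeeping as in the proof of Lemma~\ref{lemma: operation on plumb graphs 1}. I would first dispose of the case $k=0$ separately. There the statement asks that $(a,\dots)$ and $(1,a+1,\dots)$ be equivalent. This is a single blow-down of the leaf of weight $\epsilon=1$ adjacent to the vertex of weight $a+1$, which turns that weight into $(a+1)-1=a$.

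For $k\ge 1$, I would start from the left-hand graph $(-2,\dots,-2,a,\dots)$ with $k$ vertices of weight $-2$. I would blow up a leaf of weight $+1$ attached to the leftmost $-2$ vertex. By the leaf version of the blow-down move read backwards, this changes that $-2$ into $-1$ and produces the graph
\[
(1,\,-1,\,\underbrace{-2,\dots,-2}_{k-1},\,a,\dots).
\]
Now the vertex of weight $-1$ is interior, with neighbours of weights $1$ and either $-2$ (if $k\ge 2$) or $a$ (if $k=1$). Blowing it down with $\epsilon=-1$ adds $1$ to each neighbour.

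For $k\ge 2$, this gives $(2,-1,-2,\dots,-2,a,\dots)$ with $k-2$ vertices of weight $-2$ remaining. The pattern then repeats: at stage $j$ the graph is
\[
(j,\,-1,\,\underbrace{-2,\dots,-2}_{k-j},\,a,\dots),
\]
and blowing down the $-1$ produces stage $j+1$. When $j=k$, the graph is $(k,-1,a,\dots)$, and one last interior blow-down gives $(k+1,a+1,\dots)$, which is exactly the right-hand graph. (For $k=1$ this last step is the only one: $(1,-1,a,\dots)\to(2,a+1,\dots)$.) I would formalise this as a short induction on $j$.

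I expect no real obstacle here. The only point needing care is the sign bookkeeping in the blow-down move: for a leaf, weight $c$ becomes $c-\epsilon$; for an interior vertex, both neighbours change by $-\epsilon$. With these rules, the first blow-up must indeed turn $-2$ into $-1$ next to a leaf of weight $+1$, and each interior $-1$ must shift both neighbours by $+1$.
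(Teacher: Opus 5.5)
Your proof is correct and is essentially the paper's argument: a single $+1$ blow-up followed by a cascade of blow-downs of $-1$ vertices. The only difference is where the blow-up is placed. The paper blows up the edge between the $(-2)$-chain and $a$, producing $(-2,\dots,-2,-1,1,a+1,\dots)$, and then quotes Lemma~\ref{lemma: operation on plumb graphs 1} to collapse the chain into the vertex of weight $1$. You instead blow up a leaf at the far end of the chain and sweep toward $a$, redoing that induction by hand, and you treat $k=0$ separately, which the paper's edge blow-up tacitly requires.
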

\begin{proof}
We first blow-up the edge to the left of the vertex with weight $a$ to obtain   
\begin{center}
\begin{tikzpicture}[scale=0.7]

  % Node 1 
  \fill (-6,0) circle (2pt);
  \node at (-6,0.4) {  $-2$};
  
  % initial dots
  \node at (-5,0) {$\cdots$};

  % Node 1 
  \fill (-4,0) circle (2pt);
  \node at (-4,0.4) {  $-1$};

  % Nodo 2
  \fill (-2.5,0) circle (2pt);
  \node at (-2.5,0.4) {  $1$};
% square bracket

\draw (-6,-0.5) -- (-4,-0.5);      % segmento orizzontale

\draw (-6,-0.5) -- (-6,-0.2);      % tratto verticale sinistro

\draw (-4,-0.5) -- (-4,-0.2);      % tratto verticale destro

\node at (-5,-0.8) {$k$ vertices};
  % Nodo 3
  \fill (-1,0) circle (2pt);
  \node at (-1,0.4) {$a+1$};

  % final node
  \node at (0,0) {$\cdots$};

    % lines
    \draw (-6,0) -- (-5.5,0);
 \draw (-4.5,0) -- (-4,0);
  \draw (-4,0) -- (-2.5,0); 
  \draw (-2.5,0) -- (-1,0);
  \draw (-1,0) -- (-0.5,0);
  \end{tikzpicture}
  \end{center}
and conclude by Lemma \ref{lemma: operation on plumb graphs 1}.  
\end{proof}

\begin{lem}\label{lemma: operation on plumb graphs 2}
For each $n\geq 1$ and $h,k\geq 1$, the linear plumbing graphs
\begin{center}
\begin{tikzpicture}[scale=0.7]

\node at (-6.75,0) {  $\mathcal{T}=$};
  % Node 1 
  \fill (-6,0) circle (2pt);
  \node at (-6,0.4) {  $-2$};
  
  % initial dots
  \node at (-5,0) {$\cdots$};

  % Node 2
  \fill (-4,0) circle (2pt);
  \node at (-4,0.4) {  $-2$};

  % Nodo 3
  \fill (-2.5,0) circle (2pt);
  \node at (-2.5,0.4) {  $2n$};

% square bracket
\draw (-6,-0.5) -- (-4,-0.5);      % segmento orizzontale
\draw (-6,-0.5) -- (-6,-0.2);      % tratto verticale sinistro
\draw (-4,-0.5) -- (-4,-0.2);      % tratto verticale destro
\node at (-5,-0.8) {$k$ vertices};

 % Node 4 
  \fill (-1,0) circle (2pt);
  \node at (-1,0.4) {  $-2$};
  
  % initial dots
  \node at (0,0) {$\cdots$};

  % Node 5 
  \fill (1,0) circle (2pt);
  \node at (1,0.4) {  $-2$};

% square bracket
\draw (-1,-0.5) -- (1,-0.5);      % segmento orizzontale
\draw (-1,-0.5) -- (-1,-0.2);      % tratto verticale sinistro
\draw (1,-0.5) -- (1,-0.2);      % tratto verticale destro
\node at (0,-0.8) {$h$ vertices};

    % lines
    \draw (-6,0) -- (-5.5,0);
 \draw (-4.5,0) -- (-4,0);
  \draw (-4,0) -- (-2.5,0); 
  \draw (-2.5,0) -- (-1,0);
  \draw (-1,0) -- (-0.5,0);
  \draw (0.5,0) -- (1,0);

 \node at (2,0) {and};
 
 \begin{scope}[shift={(0.5,0)}]
 \node at (2.75,0) {$\mathcal{T}'=$};
   % node 1
  \fill (3.5,0) circle (2pt);
  \node at (3.5,0.4) {$k$};
  % node 2
  \fill (5,0) circle (2pt);
  \node at (5,0.4) {$-2$};
  
  % dots 
  \node at (6,0) {$\cdots$};
  
   % node 3
  \fill (7,0) circle (2pt);
  \node at (7,0.4) {$-2$};
  % square bracket
\draw (5,-0.5) -- (7,-0.5);      % segmento orizzontale
\draw (5,-0.5) -- (5,-0.2);      % tratto verticale sinistro
\draw (7,-0.5) -- (7,-0.2);      % tratto verticale destro
\node at (6,-0.8) {$2n+1$ vertices};
   % node 4
  \fill (8.5,0) circle (2pt);
  \node at (8.5,0.4) {$h$};

    % lines
 \draw (3.5,0) -- (5,0);
 \draw (5,0) -- (5.5,0);
  \draw (6.5,0) -- (7,0);
  \draw (7,0) -- (8.5,0);
  \end{scope}
  \end{tikzpicture}
  \end{center}
  are equivalent.
\end{lem}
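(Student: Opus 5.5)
The plan is to reduce both $\mathcal{T}$ and $\mathcal{T}'$ to the same three-vertex linear plumbing graph
\[
\mathcal{T}_0:\qquad (k+1)\;\text{---}\;(2n+2)\;\text{---}\;(h+1),
\]
using only Lemmas~\ref{lemma: operation on plumb graphs 1} and~\ref{lemma: operation on plumb graphs 1.5}, blow-ups and blow-downs. Equivalence is an equivalence relation, so this proves the claim. Throughout, a linear graph and its left–right reversal are the same abstract weighted graph. Hence the lemmas, which are stated for a chain at the left end, may also be applied to a chain at the right end.

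\emph{Reducing $\mathcal{T}$.} Apply Lemma~\ref{lemma: operation on plumb graphs 1.5} to the leftmost $k$ vertices of weight $-2$, with $a=2n$. This replaces them by a single vertex of weight $k+1$ and raises the central weight to $2n+1$, so we get
\[
(k+1)\;\text{---}\;(2n+1)\;\text{---}\;\underbrace{(-2)\;\text{---}\cdots\text{---}\;(-2)}_{h}.
\]
Now read the graph from right to left and apply the same lemma to the $h$ vertices of weight $-2$, with $a=2n+1$. This gives $(h+1)$ at the end and raises the central weight to $2n+2$, which is exactly $\mathcal{T}_0$.

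\emph{Reducing $\mathcal{T}'$.} Here the chain of $2n+1$ vertices of weight $-2$ sits in the middle, so Lemma~\ref{lemma: operation on plumb graphs 1.5} does not apply directly; this is the only step needing care. I would first blow up the edge between the vertex $k$ and the first $-2$ with a $+1$ vertex. By the blow-down rule read backwards, this turns $k$ into $k+1$, inserts a vertex of weight $1$, and turns the first $-2$ into $-1$:
\[
(k+1)\;\text{---}\;(1)\;\text{---}\;(-1)\;\text{---}\;\underbrace{(-2)\cdots(-2)}_{2n}\;\text{---}\;(h).
\]
Then blow down the $-1$ vertex. Its neighbours each gain $+1$: the $1$ becomes $2$, and the next $-2$ becomes $-1$. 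Iterating, as in the proof of Lemma~\ref{lemma: operation on plumb graphs 1}, each blow-down increases the weight of the inserted vertex by one and moves the $-1$ one step to the right. After $2n+1$ blow-downs the whole chain is absorbed. The inserted vertex then has weight $1+(2n+1)=2n+2$, and the final blow-down raises $h$ to $h+1$. The vertex $k+1$ is untouched, since it is never adjacent to a $-1$. The result is again $\mathcal{T}_0$.

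The main thing to check is the bookkeeping in this telescoping blow-down: the left neighbour of each successive $-1$ is always the inserted vertex, so it gains $+1$ each time, and the right neighbour is the next $-2$, which becomes $-1$. The last step is governed by the valence-two blow-down rule, whose right neighbour is $h$. Once this is verified by an induction on the number of remaining $-2$'s, exactly as in Lemma~\ref{lemma: operation on plumb graphs 1}, the lemma follows.
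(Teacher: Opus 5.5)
Your proof is correct. Your first half, reducing $\mathcal{T}$ to $\mathcal{T}_0$ (weights $(k+1,2n+2,h+1)$) by applying Lemma~\ref{lemma: operation on plumb graphs 1.5} at both ends, is exactly what the paper does.

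The second half takes a different route.

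\emph{The paper} goes outward from $\mathcal{T}_0$. It blows up both edges at the central vertex with $(-1)$-vertices, obtaining weights $(k,-1,2n,-1,h)$. It then repeats this symmetric step: the two old $-1$'s become $-2$'s, two new $-1$'s appear next to the centre, and the central weight drops by $2$. Once the centre has weight $0$, a single $0$-chain absorption merges $(-1,0,-1)$ into one $-2$, leaving $2n+1$ vertices of weight $-2$.

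\emph{You} go inward from $\mathcal{T}'$, using one $(+1)$ blow-up and then a sweep of $2n+1$ valence-two blow-downs. This is the mechanism of Lemma~\ref{lemma: operation on plumb graphs 1}, applied to a chain in the interior of the graph.

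Your route uses only blow-ups and blow-downs. It actually proves a parity-free interior version of Lemma~\ref{lemma: operation on plumb graphs 1.5}: a chain of $m\geq 1$ vertices of weight $-2$, sitting between vertices of weights $A$ and $B$, can be replaced by the chain $(A+1,m+1,B+1)$. The paper's route needs the central weight $2n+2$ to be even, so that the symmetric steps land on $0$. In exchange it is quick to state, and the paper reuses it with signs reversed in the proof of Lemma~\ref{lemma: operation on plumb graphs 3}. Your general statement, with signs reversed, would give that lemma directly as well.
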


\begin{proof}
By Lemma \ref{lemma: operation on plumb graphs 1.5} the graph  $\mathcal{T}$ is equivalent to
\begin{center}
\begin{tikzpicture}
% node 1
  \fill (3,0) circle (2pt);
  \node at (3,0.4) {$k+1$};
  % Nodo 2
  \fill (4.5,0) circle (2pt);
  \node at (4.5,0.4) {  $2n+2$};
% node 3
\fill (6,0) circle (2pt);
  \node at (6,0.4) {  $h+1$};
    % lines
 \draw (3,0) -- (6,0);
  \end{tikzpicture}
  \end{center}
and by blowing-up the edges adjacent to the central vertex we have
\begin{center}
\begin{tikzpicture}
% node 1
  \fill (3,0) circle (2pt);
  \node at (3,0.4) {$k$};
  % node 2
  \fill (4.5,0) circle (2pt);
  \node at (4.5,0.4) {$-1$};
  
  % Nodo 3
  \fill (6,0) circle (2pt);
  \node at (6,0.4) {  $2n$};
% node 4
  \fill (7.5,0) circle (2pt);
  \node at (7.5,0.4) {$-1$};
% node 5
\fill (9,0) circle (2pt);
  \node at (9,0.4) {$h$};
    % lines
 \draw (3,0) -- (9,0);
  \end{tikzpicture}.
  \end{center}
We repeat this last operation $n$ times and perform one $0$-chain absorption to conclude.
\end{proof}

\begin{lem}\label{lemma: operation on plumb graphs 3}
For each $n\geq 2$ and $h,k\geq 1$, the linear plumbing graphs
\begin{center}
\begin{tikzpicture}[scale=0.7]

\node at (-6.75,0) {  $\mathcal{T}=$};
  % Node 1 
  \fill (-6,0) circle (2pt);
  \node at (-6,0.4) {  $-2$};
  
  % initial dots
  \node at (-5,0) {$\cdots$};

  % Node 2
  \fill (-4,0) circle (2pt);
  \node at (-4,0.4) {  $-2$};

  % Nodo 3
  \fill (-2.5,0) circle (2pt);
  \node at (-2.5,0.4) {  $-2n$};

% square bracket
\draw (-6,-0.5) -- (-4,-0.5);      % segmento orizzontale
\draw (-6,-0.5) -- (-6,-0.2);      % tratto verticale sinistro
\draw (-4,-0.5) -- (-4,-0.2);      % tratto verticale destro
\node at (-5,-0.8) {$k$ vertices};

 % Node 4 
  \fill (-1,0) circle (2pt);
  \node at (-1,0.4) {  $-2$};
  
  % initial dots
  \node at (0,0) {$\cdots$};

  % Node 5 
  \fill (1,0) circle (2pt);
  \node at (1,0.4) {  $-2$};

% square bracket
\draw (-1,-0.5) -- (1,-0.5);      % segmento orizzontale
\draw (-1,-0.5) -- (-1,-0.2);      % tratto verticale sinistro
\draw (1,-0.5) -- (1,-0.2);      % tratto verticale destro
\node at (0,-0.8) {$h$ vertices};

    % lines
    \draw (-6,0) -- (-5.5,0);
 \draw (-4.5,0) -- (-4,0);
  \draw (-4,0) -- (-2.5,0); 
  \draw (-2.5,0) -- (-1,0);
  \draw (-1,0) -- (-0.5,0);
  \draw (0.5,0) -- (1,0);

 \node at (2,0) {and};
 \begin{scope}[shift={(0.5,0)}]
     
 \node at (2.75,0) {$\mathcal{T}'=$};
 \begin{scope}[shift={(0.5,0)}]   % node 1
  \fill (3.5,0) circle (2pt);
  \node at (3.5,0.4) {$k+2$};
  % node 2
  \fill (5,0) circle (2pt);
  \node at (5,0.4) {$2$};
  
  % dots 
  \node at (6,0) {$\cdots$};
  
   % node 3
  \fill (7,0) circle (2pt);
  \node at (7,0.4) {$2$};
  % square bracket
\draw (5,-0.5) -- (7,-0.5);      % segmento orizzontale
\draw (5,-0.5) -- (5,-0.2);      % tratto verticale sinistro
\draw (7,-0.5) -- (7,-0.2);      % tratto verticale destro
\node at (6,-0.8) {$2n-3$ vertices};
   % node 4
  \fill (8.5,0) circle (2pt);
  \node at (8.5,0.4) {$h+2$};

    % lines
 \draw (3.5,0) -- (5,0);
 \draw (5,0) -- (5.5,0);
  \draw (6.5,0) -- (7,0);
  \draw (7,0) -- (8.5,0);
  \end{scope}
  \end{scope}
  \end{tikzpicture}
  \end{center}
  are equivalent.
\end{lem}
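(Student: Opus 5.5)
We prove the equivalence by reducing $\mathcal{T}$ to a three-vertex graph using Lemma \ref{lemma: operation on plumb graphs 1.5}. We then rebuild the chain of $2$'s by repeatedly blowing up with $\epsilon=+1$, and finish with a single blow-down of a $-1$ vertex.

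\emph{Step 1: collapse the two $-2$ chains.} Apply Lemma \ref{lemma: operation on plumb graphs 1.5} to the left chain of $k$ vertices of weight $-2$ followed by the vertex of weight $a=-2n$. This turns $\mathcal{T}$ into the graph with weights $k+1,\,-2n+1$, followed by the $h$ vertices of weight $-2$. Reading the graph from the right, apply Lemma \ref{lemma: operation on plumb graphs 1.5} again with $a=-2n+1$. We get that $\mathcal{T}$ is equivalent to the three-vertex linear graph with weights
\[
(k+1,\; -2n+2,\; h+1).
\]

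\emph{Step 2: blow up toward the left.} A blow-up with $\epsilon=+1$ on an edge between weights $a$ and $b$ produces $a+1,\,1,\,b+1$. Blow up the edge between $k+1$ and $-2n+2$ to get $(k+2,1,-2n+3,h+1)$. Then repeatedly blow up the edge between the newly created vertex of weight $1$ and the central vertex. Each such move raises the previous $1$ to $2$, inserts a new $1$, and raises the central weight by one.

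By induction on $j$, after $j\ge 1$ blow-ups the graph is
\[
(k+2,\; \underbrace{2,\dots,2}_{j-1},\; 1,\; -2n+2+j,\; h+1).
\]
We stop at $j=2n-3$, which is $\ge 1$ exactly because $n\ge 2$. At that point the central weight is $-1$.

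\emph{Step 3: blow down.} Blow down the vertex of weight $-1$, which has valence two. This adds $+1$ to each of its neighbours: the $1$ becomes $2$ and $h+1$ becomes $h+2$. The result is
\[
(k+2,\;\underbrace{2,\dots,2}_{2n-3},\; h+2),
\]
which is $\mathcal{T}'$. The case $n=2$ needs no separate treatment: with $j=1$ the intermediate graph is $(k+2,1,-1,h+1)$, which blows down to $(k+2,2,h+2)$.

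The only step needing care is the bookkeeping in Step 2: the blow-up must be performed on the correct edge each time, and the count must give exactly $2n-3$ vertices of weight $2$. The induction on $j$ settles both points. Nothing else in the argument is delicate.
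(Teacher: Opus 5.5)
Your proof is correct and follows the paper's route: both reduce $\mathcal{T}$ to $(k+1,-2n+2,h+1)$ via Lemma~\ref{lemma: operation on plumb graphs 1.5} and then expand the central vertex into a chain of $2$'s by $\epsilon=+1$ blow-ups. The only difference is in the bookkeeping. The paper argues as in Lemma~\ref{lemma: operation on plumb graphs 2}: it blows up both edges adjacent to the central vertex $n-1$ times until that weight is $0$, and finishes with a $0$-chain absorption that merges the two flanking $1$'s into a $2$. You instead blow up on one side only, $2n-3$ times, and finish with a blow-down of the resulting $-1$.
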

\begin{proof}
The graph $\mathcal{T}$ is equivalent by Lemma \ref{lemma: operation on plumb graphs 1.5} to
\begin{center}
\begin{tikzpicture}[scale=1]

  % Node 1 
  \fill (-4,0) circle (2pt);
  \node at (-4,0.4) {  $k+1$};

  % Nodo 2
  \fill (-2.5,0) circle (2pt);
  \node at (-2.5,0.4) {  $-2n+2$};

  % Nodo 3
  \fill (-1,0) circle (2pt);
  \node at (-1,0.4) {$h+1$};
    % lines 
  \draw (-4,0) -- (-2.5,0); 
  \draw (-2.5,0) -- (-1,0);
\end{tikzpicture}
\end{center}
which in turn is equivalent to $\mathcal{T}'$
 by arguing as in the proof of Lemma \ref{lemma: operation on plumb graphs 2}
\end{proof}

We conclude this section by finding suitable descriptions of two-bridge links. The corresponding diagrams, obtained as in Figure \ref{fig: two bridge ori}, will be used in the next sections to construct taut foliations on their surgeries.

We first address the case of non-fibered two-bridge links with the next two lemmas.
\begin{lem}\label{lemma: nonfib two-bridge links std diagrams} 
Let $L$ be a non-fibered two-bridge link, and suppose that $L$ is not isotopic, up to mirror, to $L(2,2,\dots, 2, 2z)$ for any non-zero $z\in \mathbb{Z}$. Then $L=L(a_1,\dots, a_m)$, where $m\geq 3$ is odd, $|a_i|\geq 2$ for all $i$, and $a_j$ is even whenever $j$ is odd. 

Moreover, either $L=L(2a,2b,2c)$, or $m\geq 5$ and one of the following non-exclusive conditions holds:
\begin{itemize}
    \item[1a$)$] There exist an even index \(i\) and odd indices \(i_1,i_2\) such that \(|a_i|>2\) and \(\operatorname{sign}(a_{i_1})\neq \operatorname{sign}(a_{i_2})\).
    \item[1b$)$] There exist an even index \(i\) and an odd index \(i'\) such that \(|a_i|>2\) and \(|a_{i'}|>2\).
    \item[2a$)$] There exist an odd index \(i\) and indices \(i_1\neq i\), \(i_2\neq i\) such that \(|a_i|>2\) and \(\operatorname{sign}(a_{i_1})\neq \operatorname{sign}(a_{i_2})\).
    \item[2b$)$] There exist distinct odd indices \(i\neq i'\) such that \(|a_i|>2\) and \(|a_{i'}|>2\).
\end{itemize}
\end{lem}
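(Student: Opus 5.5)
Every two-component two-bridge link $b(p,q)$, with $p$ even, has a continued fraction expansion $\frac{p}{q}=[2b_1,2b_2,\dots,2b_m]$ with all entries even and non-zero. This is the standard even expansion, obtained by choosing $q$ odd in its class modulo $2p$. We also need $m$ odd, since the even expansion of $p/q$ with $p$ even has odd length. This gives the diagram whose Seifert surface is a plumbing of twisted bands. The classical criterion (see \cite{BZ}, or Gabai's plumbing results) says that $L(2b_1,\dots,2b_m)$ is fibered if and only if $|b_i|=1$ for all $i$. Hence non-fiberedness means some entry has absolute value at least $4$. The plan is then to study this even expansion. The first assertion ($m\ge3$ odd, $|a_i|\geq 2$, $a_j$ even for $j$ odd) then holds with every entry even. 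The exception is $m=1$, which gives $L(2z)=T(2,2z)$, i.e.\ the excluded case $L(2z)$ with no $2$'s. So we may assume $m\geq 3$.

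For the second part, if $m=3$ we are in the case $L(2a,2b,2c)$. Suppose $m\geq 5$ and none of 1a), 1b), 2a), 2b) holds. Some $|a_i|\geq 4$ by non-fiberedness. I split according to the parity of this index.

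\emph{Case $i$ even.} Since 1b) fails, every odd-indexed entry has $|a_{i'}|=2$. Since 1a) fails, all odd-indexed entries have the same sign. Since 2a) and 2b) concern odd indices with large entries, they fail automatically. So the link has the form $L(\pm2, a_2,\pm2,a_4,\dots,\pm2)$ with all odd entries of one sign and the even entries arbitrary.

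\emph{Case $i$ odd.} Since 2b) fails, $i$ is the unique odd index with $|a_i|>2$. Since 2a) fails, all other entries share one sign. Since 1b) fails, all even entries have $|a_j|=2$. So the link is $L(\pm2,\dots,\pm2,a_i,\pm2,\dots,\pm2)$, with all entries except $a_i$ equal to $2$ (after mirroring).

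The main step is to show that these residual families are either not of the claimed type or reduce to $L(2,\dots,2,2z)$. I would proceed with the plumbing calculus of Lemmas~\ref{lemma: operation on plumb graphs 1}–\ref{lemma: operation on plumb graphs 3}, using Proposition~\ref{prop: plumbing graphs moves}.

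In the odd case, a chain of $-2$ vertices on both sides of a central vertex can be moved by Lemma~\ref{lemma: operation on plumb graphs 2}/\ref{lemma: operation on plumb graphs 3}-type manipulations, in the spirit of Lemma~\ref{lemma: operation on plumb graphs 1.5}. This should convert it into a chain $2,\dots,2,2z$, or into a shorter even expansion with $m=3$ of the form $L(2a,2b,2c)$. Alternatively, one can use the symmetry $q\mapsto q^{-1}$ from Theorem~\ref{teo: classification two-bridge links}, which reverses the continued fraction, together with the freedom to choose another even representative, to move the large entry to the end.

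The even case with alternating structure is handled similarly. The $\pm2$ blocks of constant sign between the even-indexed entries are collapsed via Lemma~\ref{lemma: operation on plumb graphs 1.5}, and I would aim to show that the result is again an expansion of length $3$ or the excluded family. I expect this bookkeeping to be the main obstacle: tracking which residual even-expansion shapes are genuinely equivalent to $L(2,\dots,2,2z)$ or to a length-$3$ expansion, rather than forming new counterexamples, while keeping all entries even. If the reduction fails for some residual shape, the fallback is to enlarge the list in the statement's spirit by checking, via Theorem~\ref{teo: classification two-bridge links}, that the shape coincides with one of the excluded links.
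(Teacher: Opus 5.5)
There is a genuine gap in your main step. Your reduction to the all-even expansion is correct, and so is your analysis of what the failure of 1a)--2b) forces: after mirroring, all odd-indexed entries equal $2$ in the even case, and all entries except $a_i$ equal $2$ in the odd case. The problem is what you then try to prove. You aim to show, keeping all entries even, that such residual expansions can only come from $L(2,\dots,2,2z)$ or from a length-$3$ even expansion. This is false.

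Take $L=L(2,6,2,6,2)=b(198,109)$. The all-even expansion of $p/q'$ is unique for each odd $q'$ with $|q'|<p$, and $109^2\equiv 1 \pmod{198}$. So Theorem~\ref{teo: classification two-bridge links} shows that, up to mirror and reversal, the only all-even presentations of $L$ are:
\begin{itemize}
\item $(2,6,2,6,2)$;
\item $(-2,4,-2,4,-2)$, the expansion of $198/(-89)$, i.e.\ the other orientation.
\end{itemize}
Both lie in your residual even family, and neither consists of $\pm2$'s, so $L$ is non-fibered for either orientation. Neither has length $3$, and neither is of the form $(2,\dots,2,2z)$. So no presentation of this $L$ with all entries even satisfies the conclusion, and no bookkeeping will turn it into an excluded link. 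Your fallback of ``enlarging the list'' would change the statement.

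The odd case fails in the same way: $L(2,2,6,2,2)=L(-4,-2,-2,-2,-4)$ is neither excluded nor of length $3$. Reversing via $q\mapsto q^{-1}$ also never moves an interior entry to an end.

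The missing idea is that the lemma only asks for \emph{some} presentation, and only requires $a_j$ even for $j$ odd; even-indexed entries may be odd.

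In the even case, let $2k$ be the first index with $\alpha_{2k}\neq 2$. Applying Lemma~\ref{lemma: operation on plumb graphs 1.5} to the initial chain of $2k-1$ vertices of weight $-2$ gives $L=L(-2k,\alpha_{2k}-1,2,\alpha_{2k+2},\dots)$. Here $|\alpha_{2k}-1|\geq 3$ sits at an even index, and the odd-indexed entries $-2k$ and $2$ have opposite signs, so 1a) holds. The exception is $2k=m-1$, where $L(-2k,\alpha_{2k}-1,2)=L(-2k,\alpha_{2k}-2,-2)$ has the form $L(2a,2b,2c)$. For instance, $L(2,6,2,6,2)=L(-2,5,2,6,2)$.

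In the odd case, the plumbing moves you mention are the right tool, but the target is different. If the large entry $\alpha_{2k+1}$ is at an end, you are in the excluded family. Otherwise Lemmas~\ref{lemma: operation on plumb graphs 2} and~\ref{lemma: operation on plumb graphs 3} give:
\begin{itemize}
\item $L(-2k,2,\dots,2,-2h)$ when $\alpha_{2k+1}<0$, which satisfies 2a) once the Gabai--Kazez criterion \cite{GabKaz90} and non-fiberedness force $\max\{2k,2h\}>2$;
\item $L(-2k-2,-2,\dots,-2,-2h-2)$ when $\alpha_{2k+1}>0$, which satisfies 2b), or has length $3$ if $\alpha_{2k+1}=4$.
\end{itemize}
These are new presentations meeting the conditions, not excluded links.
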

\begin{proof}
It follows from \cite[Exercise~2.1.14]{Kaw} that any two-bridge link $L$ can be written as $L=L(\alpha_1,\dots, \alpha_m)$, where $\alpha_j$ is a non-zero even integer for all $j$. In particular, if $L$ has two components, then $m$ must be odd. Since $L$ is non-fibered by hypothesis, we deduce that $m\geq 3$ and that there exists at least one index $i$ such that $|\alpha_i|>2$. Indeed, if $m=1$, then $L$ is a torus link and hence fibered; moreover, if $|\alpha_j|=2$ for all $j$, then $L$ is fibered by \cite[Proposition~2]{GabKaz90}\footnote{The proof presented there is for knots, but the same proof works also for links.}. 
If $m=3$, then $L$ satisfies the statement of the lemma. Otherwise, we consider two cases:
\newline

\begin{tabular}{|c|}
\hline
\emph{The index $i$ is even:}
\\
\hline
\end{tabular} If the sequence $(\alpha_1,\dots, \alpha_m)$ satisfies condition \emph{1a$)$} or \emph{1b$)$}, there is nothing to prove. Otherwise, for every odd \(j\), all the \(\alpha_j\) have the same sign and satisfy \(|\alpha_j|=2\). After replacing \(L\) by its mirror image if necessary, we may assume that \(\alpha_j=2\) for every odd \(j\). 

Now let \(j\) be the smallest index such that \(\alpha_j\neq 2\). By hypothesis, such an index exists, and it must be even; write \(j=2k\) for some \(k\ge 1\). In terms of plumbing graphs, the link \[
L(2,\dots,2,\alpha_{2k},2,\alpha_{2k+2},\dots)
\]
corresponds to the following graph:
\begin{center}
\begin{tikzpicture}[scale=0.7]

  % Nodo 1
  \fill (-3,0) circle (2pt);
  \node at (-3,0.4) {$-2$};

  \node at (-1.5, 0) {$\cdots$};
  
 % Node 2
  \fill (-0.5,0) circle (2pt);
  \node at (-0.5,0.4) {  $-2$};
  %arrow
 % \draw[->] (0.25,-0.65) -- (0.25,-0.15);
  
  % Node 3 
  \fill (1,0) circle (2pt);
  \node at (1,0.4) {  $-\alpha_{2k}$};

  \node at (2,0) {  $\cdots$ };
  \node at (2.5,0) {,};
% square bracket
\draw (-3,-0.5) -- (-0.5,-0.5);      % segmento orizzontale
\draw (-3,-0.5) -- (-3,-0.2);      % tratto verticale sinistro
\draw (-0.5,-0.5) -- (-0.5,-0.2);      % tratto verticale destro
\node at (-1.8,-0.8) {\footnotesize{$2k-1$ vertices}};

    % lines
    \draw (-3,0) -- (-2,0);
    \draw (-1,0) -- (1.5,0);

 \end{tikzpicture}
 \end{center}
 % By performing a blow-up along the edge indicated by the arrow and applying Lemma~\ref{lemma: operation on plumb graphs 1}, we obtain the graph
 which by Lemma~\ref{lemma: operation on plumb graphs 1.5} is equivalent to
\begin{center}
\begin{tikzpicture}[scale=0.7]

  % Nodo 1
  \fill (-3,0) circle (2pt);
  \node at (-3,0.4) {  $2k$};

 % Node 2
  \fill (-1,0) circle (2pt);
  \node at (-1.3,0.4) {  $-\alpha_{2k}+1$};
  
  \node at (0,0) {  $\cdots$};
  \node at (0.5,0) {.};
  
 % lines
    \draw (-3,0) -- (-0.5,0);    
 \end{tikzpicture}
 \end{center}
This implies by Proposition \ref{prop: plumbing graphs moves} that 
\[
L=L(-2k,\alpha_{2k}-1, 2, \alpha_{2k+2},\dots).
\]
This description of \(L\) satisfies condition \emph{1a$)$}, except when \(2k=m-1\). In that case,  by applying Lemma~\ref{lemma: operation on plumb graphs 1.5} on the right,
\[
L=L(-2k,\alpha_{2k}-1,2)=L(-2k,\alpha_{2k}-2,-2),
\]
and \(L\) is of the form \(L(2a,2b,2c)\), as required.
\newline

\begin{tabular}{|c|}
\hline
\emph{The index $i$ is odd:}
\\
\hline
\end{tabular}
Suppose that the sequence \((\alpha_1,\dots,\alpha_m)\) does not satisfy any of the four desired conditions. Up to taking the mirror image, we may assume that \(\alpha_j=2\) for all \(j\neq i\). Write \(i=2k+1\), where \(k\ge 0\), and let \(h\) be such that \(2k+2h+1=m\). We can assume that \(k,h\ge 1\), since otherwise \(L=L(2,\dots,2,2z)\), contrary to our assumptions on \(L\).

If \(\alpha_i=-2n<0\), then the corresponding plumbing graph is

\begin{center}
\begin{tikzpicture}[scale=0.7]

  % Node 1 
  \fill (-6,0) circle (2pt);
  \node at (-6,0.4) {  $-2$};
  
  % initial dots
  \node at (-5,0) {$\cdots$};

  % Node 2
  \fill (-4,0) circle (2pt);
  \node at (-4,0.4) {  $-2$};

  % Nodo 3
  \fill (-2.5,0) circle (2pt);
  \node at (-2.5,0.4) {  $2n$};

% square bracket
\draw (-6,-0.5) -- (-4,-0.5);      % segmento orizzontale
\draw (-6,-0.5) -- (-6,-0.2);      % tratto verticale sinistro
\draw (-4,-0.5) -- (-4,-0.2);      % tratto verticale destro
\node at (-5,-0.8) {\footnotesize{$2k$ vertices}};

 % Node 4 
  \fill (-1,0) circle (2pt);
  \node at (-1,0.4) {  $-2$};
  
  % Node 5 
  \fill (1,0) circle (2pt);
  \node at (1,0.4) {  $-2$};

% square bracket
\draw (-1,-0.5) -- (1,-0.5);      % segmento orizzontale
\draw (-1,-0.5) -- (-1,-0.2);      % tratto verticale sinistro
\draw (1,-0.5) -- (1,-0.2);      % tratto verticale destro
\node at (0,-0.8) {\footnotesize{$2h$ vertices}};

  % final node
  \node at (0,0) {$\cdots$};

    % lines
    \draw (-6,0) -- (-5.5,0);
 \draw (-4.5,0) -- (-4,0);
  \draw (-4,0) -- (-2.5,0); 
  \draw (-2.5,0) -- (-1,0);
  \draw (-1,0) -- (-0.5,0);
  \draw (0.5,0) -- (1,0);
\end{tikzpicture}
\end{center}
and this graph defines, by Lemma \ref{lemma: operation on plumb graphs 2} and Proposition \ref{prop: plumbing graphs moves}, the same two-bridge link as
\begin{center}
\begin{tikzpicture}[scale=0.7]  
% node 1
  \fill (3.5,0) circle (2pt);
  \node at (3.5,0.4) {$2k$};
  % node 2
  \fill (5,0) circle (2pt);
  \node at (5,0.4) {$-2$};
  
  % dots 
  \node at (6,0) {$\cdots$};
  
   % node 3
  \fill (7,0) circle (2pt);
  \node at (7,0.4) {$-2$};
  % square bracket
\draw (5,-0.5) -- (7,-0.5);      % segmento orizzontale
\draw (5,-0.5) -- (5,-0.2);      % tratto verticale sinistro
\draw (7,-0.5) -- (7,-0.2);      % tratto verticale destro
\node at (6,-0.8) {\footnotesize{$2n+1$ vertices}};
   % node 4
  \fill (8.5,0) circle (2pt);
  \node at (8.5,0.4) {$2h$};
\node at (9,0) {,};

    % lines
 \draw (3.5,0) -- (5,0);
 \draw (5,0) -- (5.5,0);
  \draw (6.5,0) -- (7,0);
  \draw (7,0) -- (8.5,0);
  \end{tikzpicture}
  \end{center}
and hence $L=L(-2k, 2,\dots, 2,-2h)$. Applying again \cite[Proposition~2]{GabKaz90}, together with the assumption that \(L\) is non-fibered, we deduce that at least one between \(2k\) and \(2h\) is greater than \(2\). Therefore, this description of \(L\) satisfies condition \emph{2a$)$}. 

If \(\alpha_i=2n>0\), then the corresponding plumbing graph is
\begin{center}
\begin{tikzpicture}[scale=0.7]
  % Node 1 
  \fill (-6,0) circle (2pt);
  \node at (-6,0.4) {  $-2$};
  
  % initial dots
  \node at (-5,0) {$\cdots$};

  % Node 2
  \fill (-4,0) circle (2pt);
  \node at (-4,0.4) {  $-2$};

  % Nodo 3
  \fill (-2.5,0) circle (2pt);
  \node at (-2.5,0.4) {  $-2n$};

% square bracket
\draw (-6,-0.5) -- (-4,-0.5);      % segmento orizzontale
\draw (-6,-0.5) -- (-6,-0.2);      % tratto verticale sinistro
\draw (-4,-0.5) -- (-4,-0.2);      % tratto verticale destro
\node at (-5,-0.8) {$2k$ vertices};

 % Node 4 
  \fill (-1,0) circle (2pt);
  \node at (-1,0.4) {  $-2$};
  
  % initial dots
  \node at (0,0) {$\cdots$};

  % Node 5 
  \fill (1,0) circle (2pt);
  \node at (1,0.4) {  $-2$};

% square bracket
\draw (-1,-0.5) -- (1,-0.5);      % segmento orizzontale
\draw (-1,-0.5) -- (-1,-0.2);      % tratto verticale sinistro
\draw (1,-0.5) -- (1,-0.2);      % tratto verticale destro
\node at (0,-0.8) {$2h$ vertices};

    % lines
    \draw (-6,0) -- (-5.5,0);
 \draw (-4.5,0) -- (-4,0);
  \draw (-4,0) -- (-2.5,0); 
  \draw (-2.5,0) -- (-1,0);
  \draw (-1,0) -- (-0.5,0);
  \draw (0.5,0) -- (1,0);
\end{tikzpicture}
\end{center}
and by Lemma \ref{lemma: operation on plumb graphs 3}, the link \(L\) is isotopic to \(L(-2k-2,-2,\dots,-2,-2h-2)\), and hence it satisfies condition \emph{2a$)$} of the statement, unless \(2n=4\), in which case \(L=L(-2k-2,-2,-2h-2)\).
\end{proof}

\begin{lem}\label{lemma: std diagram for m=3}
Suppose that \(L=L(2a,2b,2c)\) is non-fibered and not isotopic, up to mirror, to \(L(2,2,\dots, 2, 2z)\) for any non-zero \(z\in \mathbb{Z}\). Then, up to mirror image, \(L\) satisfies one of the following:
\begin{itemize}
\item \(L\) can be written as in Lemma~\ref{lemma: nonfib two-bridge links std diagrams}, with \(m\geq 5\) and satisfying one of the four conditions 1a\()\)-- 2b\(\)) of that statement;
\item \(L=L(-2,2\beta -1, 2)\), with \(|2\beta|>2\);
\item \(L=L(3,2,\dots,2,3)\), where the sequence \((3,2,\dots,2,3)\) has even length;
\item \(L=L(2\alpha-1, -2, 2\gamma -1)\), with \(2\alpha,2\gamma>2\).
\end{itemize}
\end{lem}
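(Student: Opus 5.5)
The plan is a case analysis on the signs and sizes of $2a,2b,2c$, using the plumbing moves of Lemmas~\ref{lemma: operation on plumb graphs 1}--\ref{lemma: operation on plumb graphs 3} and Proposition~\ref{prop: plumbing graphs moves} to trade the length-three description for a longer one. That longer description should either satisfy one of conditions 1a$)$--2b$)$ of Lemma~\ref{lemma: nonfib two-bridge links std diagrams}, or land in one of the three listed exceptional forms.

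The setup is as follows. Non-fiberedness, via \cite[Proposition~2]{GabKaz90}, forces at least one of $|2a|,|2b|,|2c|$ to exceed $2$. The symmetry $L(a_1,a_2,a_3)\cong L(a_3,a_2,a_1)$ (turning the diagram around) lets us assume $|2a|\ge|2c|$ where convenient. Up to mirror we normalise one sign, and we split according to whether the middle entry $2b$ is the large one or one of the outer entries is.

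\emph{Case A: the middle entry is large, $|2b|>2$.} Suppose first that the outer entries have $|2a|=|2c|=2$ and the same sign; up to mirror this is $L(2,2b,2)$. We then apply Lemma~\ref{lemma: operation on plumb graphs 1.5} on one side, exactly as in the even-index case of Lemma~\ref{lemma: nonfib two-bridge links std diagrams}, to get $L(-2,2b-1,2)$. This is the second bullet with $\beta=b$; one must also check $|2\beta|>2$, and the case $2b=-2$, which would otherwise give a fibered link, is excluded by the hypothesis. If instead the outer entries have opposite signs, or one of them is large, we blow up along the edges adjacent to the middle vertex, as in the proof of Lemma~\ref{lemma: operation on plumb graphs 2}. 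Each blow-up lengthens the chain while keeping all entries even, and we expect the resulting length-$\ge5$ sequence to satisfy 1a$)$ or 1b$)$, since the large even-position entry persists.

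\emph{Case B: an outer entry is large and $|2b|=2$.} Up to mirror take $2b=\pm2$. We then run the computation of Lemmas~\ref{lemma: operation on plumb graphs 2} and~\ref{lemma: operation on plumb graphs 3} in the degenerate situation where the chains of $-2$'s have length one.
\begin{itemize}
\item If $2a$ and $2c$ have the sign for which Lemma~\ref{lemma: operation on plumb graphs 3}-type moves apply, blowing up turns $L(2\alpha,\mp2,2\gamma)$ into $L(2\alpha-1,-2,2\gamma-1)$. This is the fourth bullet, with the condition $2\alpha,2\gamma>2$ coming from non-fiberedness and the exclusion of $L(2,\dots,2,2z)$.
\item If one of $2a,2c$ equals $\pm2$ with the other sign configuration, the chain collapses to $L(3,2,\dots,2,3)$ of even length. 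This is the third bullet, and the parity of the length should follow from counting the vertices produced by Lemma~\ref{lemma: operation on plumb graphs 1}.
\item In the remaining sign and size configurations, the lengthened sequence should satisfy 2a$)$ or 2b$)$, since two large odd-position entries, or sign changes, survive.
\end{itemize}

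\emph{Cases with two or three large entries.} Whenever at least two of the entries are large, a single blow-up producing a length-five sequence should immediately give 1b$)$ or 2b$)$.

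The main obstacle is bookkeeping rather than ideas. One must track precisely how signs and weights change under the blow-ups (with the minus-sign convention of Remark~\ref{rem: linear_plumbing}), and check that every configuration of $(a,b,c)$ is covered exactly once. It is especially delicate to check that the configurations which do \emph{not} fit conditions 1a$)$--2b$)$ are precisely the three exceptional families listed in the statement. I would first tabulate the cases by the triple $(\operatorname{sign}, |\cdot|>2?)$ of each entry, reducing via mirror and reversal symmetry to roughly half a dozen cases, and then treat each one by a short plumbing computation.
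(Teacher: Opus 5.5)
Your overall plan, a case analysis on $(2a,2b,2c)$ driven by plumbing moves, is the right kind of argument. As written, however, both the mechanism and the routing of cases are wrong, so there is a genuine gap.

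The assertion that blowing up along the edges at the middle vertex lengthens the chain ``while keeping all entries even'' is false: a blow-up inserts a vertex of weight $\pm1$ and shifts both neighbours by $\pm1$. Carrying out the collapse of Lemma~\ref{lemma: operation on plumb graphs 2} on a large middle entry, say $2b\le-4$, turns $L(2a,2b,2c)$ into $L(2a+1,2,\dots,2,2c+1)$. Its odd entries sit at odd positions, so you never reach the form required in Lemma~\ref{lemma: nonfib two-bridge links std diagrams}.

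What works is Lemma~\ref{lemma: operation on plumb graphs 1.5} applied to a large \emph{outer} entry. Normalising $2a>0$, if $2a>2$ then $L(2a,2b,2c)=L(-2,\dots,-2,2b-1,2c)$ with $2a-1$ entries $-2$, of length $2a+1\ge5$. The only odd entry $2b-1$ sits at an even position, and $|2b-1|\ge3$ exactly when $2b\ne2$. This gives 1a$)$ or 1b$)$ unless $2c=-2$. In that case a second application yields $L(-2,\dots,-2,2b,2)$, which satisfies 1a$)$ because non-fiberedness forces $|2b|>2$. So the relevant dichotomy is $2b=2$ versus $2b\ne2$, not the size of $|2b|$.

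More seriously, the exceptional families are attached to the wrong configurations.

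(i) In Case~A you send $L(2,2b,-2)$, $|2b|>2$, to 1a$)$/1b$)$. But for $L(2,4,-2)=b(16,9)$, every expansion of $\pm16/9$ or $\pm16/7$ with all $|a_i|\ge2$ has length at most $4$, so no description with $m\ge5$ exists. This configuration is exactly the third family. Lemma~\ref{lemma: operation on plumb graphs 1.5}, followed by blow-ups and blow-downs as in the proof of Lemma~\ref{lemma: operation on plumb graphs 2}, shows that $L(2,2b,-2)$ is, up to mirror, $L(3,2,\dots,2,3)$ of length $|2b|$.

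(ii) Conversely, $L(3,2,\dots,2,3)$ never arises in your Case~B. With $|2b|=2$ and an outer entry $\pm2$, one gets one of three outcomes: condition 1a$)$ (e.g.\ $L(2a,-2,2)=L(-2,\dots,-2,-3,2)$), a fibered link, or an excluded $L(2,2,2z)$.

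(iii) Your last paragraph fails for $L(4,2,4)$. It has two large entries, but up to symmetry its only expansion of length $\ge5$ with $|a_i|\ge2$ is $(-2,-2,-4,-2,-2)$, which satisfies none of 1a$)$--2b$)$. It belongs to the fourth family, since $L(4,2,4)=L(3,-2,3)$. In any case, a single blow-up always creates a $\pm1$ entry.

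Finally, non-fiberedness must be used to \emph{discard} cases, not only to produce one large entry. For $2b=2$, $2a>2$, $2c<0$, two applications of Lemma~\ref{lemma: operation on plumb graphs 1.5} give $L(-2,\dots,-2,2,\dots,2)$, which is fibered. The surviving subcase $2b=2$, $2c>0$ is precisely $L(2a-1,-2,2c-1)$.
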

\begin{proof}
We assume \(2a>0\) without loss of generality, and we distinguish two cases.

\begin{tabular}{|c|}
\hline
Case $2b\ne 2$. 
\\
\hline
\end{tabular}
If \(2a>2\), then by Lemma \ref{lemma: operation on plumb graphs 1.5} we have
\[
L=L(-2,\dots,-2,2b-1,2c),
\]
where the sequence \((-2,\dots,-2,2b-1,2c)\) has length at least five. Hence, if \(2c>0\) or \(|2c|>2\), then \(L\) satisfies condition \emph{1a$)$} or \emph{1b$)$} of Lemma \ref{lemma: nonfib two-bridge links std diagrams}. Otherwise, \(2c=-2\), and by Lemma \ref{lemma: operation on plumb graphs 1.5} again,
\[
L=L(-2,\dots,-2,2b,2),
\]
with \(|2b|>2\), since \(L\) is non-fibered. In this case as well, \(L\) satisfies condition \emph{1a$)$} of the preceding lemma.

If \(2a=2\) and \(2c>2\), we argue as before, exchanging \(a\) and \(c\). If \(2c<-2\), then
\[
L(2,2b,2c)=L(2,2b+1,2,\dots,2)=L(-2,2b,2,\dots,2),
\]
and we conclude as before that \(L\) satisfies condition \emph{1a)}. Finally, if \(2c=\pm 2\), then we have again \(|2b|>2\), and
\[
L=L(2,2b,2)=L(-2,2b-1,2) \text{ or } L=L(2,2b,-2).
\]
In the latter case, arguing as in the proof of Lemma \ref{lemma: operation on plumb graphs 2}, we see that \(L\) is isotopic, up to mirror image, to \(L(3,2,\dots,2,3)\), where \((3,2,\dots,2,3)\) has even length.

\begin{tabular}{|c|}
\hline
Case $2b= 2$. 
\\
\hline
\end{tabular} 
Note that we must have \(2a>2\), since otherwise \(L=L(2,2,2c)\), contrary to our assumptions. If \(2c<0\), then by applying Lemma \ref{lemma: operation on plumb graphs 1.5} twice, we obtain
\[
L(2a,2,2c)=L(-2,\dots,-2,2,2,\dots,2),
\]
and hence \(L\) is fibered, a contradiction. If \(2c>0\), then
\[
L(2a,2,2c)=L(2a-1,-2,2c-1),
\]
which is what we wanted.
\end{proof}
We now turn, with the next lemma, to the fibered case.
\begin{lem}\label{lemma: fibered case std diagram}
Let $L$ be a fibered two-bridge link, and suppose that $L$ is not isotopic, up to mirror, to $L(2,2,\dots, 2, 2z)$ for any non-zero $z\in \mathbb{Z}$. Then either \(L=L(a_1,\dots,a_m)\), where this description satisfies one of the conditions of Lemmas \ref{lemma: nonfib two-bridge links std diagrams} and \ref{lemma: std diagram for m=3}, or, up to mirror image, one of the following holds:
\begin{itemize}
\item $L=L(\alpha+1,3,\pm 2)$, with $\alpha\geq 3$ odd;
\item $L=L(\alpha+1,4,\beta+1)$, with $\alpha, \beta\geq 2$ even;
\item $L=L(\alpha+1,3,a_{\alpha+2},\dots,a_{m-\beta-1},\pm 3, \pm (\beta+1))$, with $\alpha,\beta\geq 2$ even and $|a_i|=2$ for all $i=\alpha+2,\dots, m-\beta-1$.
\end{itemize}
\end{lem}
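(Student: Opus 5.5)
We start from the standard description of fibered two-bridge links: by \cite[Proposition~2]{GabKaz90} (which applies to links as well), a two-bridge link is fibered exactly when it admits a description $L=L(\alpha_1,\dots,\alpha_m)$ with all $\alpha_j\in\{\pm 2\}$. Since $L$ has two components, $m$ is odd. Up to mirror image we may assume that $\alpha_1=2$. Group the sequence into maximal blocks of consecutive equal entries $(2,\dots,2)$ and $(-2,\dots,-2)$. If all entries have the same sign, then $L=L(2,\dots,2)$, which is of the excluded form $L(2,\dots,2,2z)$. So there is at least one sign change. The plan is to use the plumbing moves of Lemmas~\ref{lemma: operation on plumb graphs 1}--\ref{lemma: operation on plumb graphs 3} to collapse blocks into longer descriptions containing entries of absolute value at least $3$. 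We then check whether the result satisfies one of the conditions 1a)--2b) of Lemma~\ref{lemma: nonfib two-bridge links std diagrams}, or one of the alternatives of Lemma~\ref{lemma: std diagram for m=3}. Those conditions are purely combinatorial on the sequence, so it does not matter that the lemmas were stated for non-fibered links.

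Concretely, in the plumbing graph the entry $-\alpha_j$ appears, so a block of $2$'s gives a chain of $-2$ vertices. By Lemma~\ref{lemma: operation on plumb graphs 1.5}, a leading block of $\alpha$ vertices of weight $-2$ followed by a vertex of weight $a$ is equivalent to a vertex of weight $\alpha+1$ followed by one of weight $a+1$. Applying this at both ends, and internally via blow-ups at the sign changes as in the proof of Lemma~\ref{lemma: operation on plumb graphs 2}, turns each sign change into an entry of absolute value $3$ or $4$. The ends become $\alpha+1$ and $\beta+1$, where $\alpha,\beta$ are the lengths of the first and last blocks. This is how the three exceptional shapes in the statement arise:
\begin{itemize}
\item $(\alpha+1,3,\pm2)$ occurs when there is one sign change and the last block has length one;
\item $(\alpha+1,4,\beta+1)$ occurs when one short middle block is absorbed;
\item the general shape $(\alpha+1,3,\dots,\pm3,\pm(\beta+1))$ occurs when there are at least two sign changes.
\end{itemize}

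The case analysis is organised by the number of sign changes and by the parities and lengths of the end blocks. The parity constraint ($m$ odd, together with Theorem~\ref{teo: classification two-bridge links} to identify equal links) forces $\alpha$ and $\beta$ to be even in the listed cases, or odd in the first case. In each case either the resulting sequence has two entries of absolute value greater than $2$ at distinct odd positions, or one such entry at an odd position together with a sign change elsewhere. This gives condition 2b) or 2a), possibly after one more application of Lemma~\ref{lemma: operation on plumb graphs 1.5} to shift parities, or one of the forms of Lemma~\ref{lemma: std diagram for m=3}. Otherwise we land in one of the three listed exceptions.

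The main obstacle is the bookkeeping. We must verify that the listed exceptional families are exactly the configurations in which no rewriting places the large entries at positions compatible with conditions 1a)--2b). In particular, we must track which positions are odd after each blow-up, since each move shifts the length of the sequence. I would first handle the case of a single sign change completely, which yields the first two exceptions, and then induct on the number of sign changes, reducing the interior blocks to $\pm3$ entries.
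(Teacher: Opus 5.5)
Your opening steps match the paper's: pass via \cite{GabKaz90} to a description with all entries $\pm2$, mirror to normalise the first block, and absorb an end block of length $\alpha$ with Lemma~\ref{lemma: operation on plumb graphs 1.5}. This turns the sequence into $(\alpha+1,3,a_{\alpha+2},\dots,a_m)$ up to mirror. After that, however, the proposal has a genuine gap, and the route you sketch would not work.

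\emph{The interior cannot be changed.} The available moves act only at the ends of the chain. A blow-down needs a vertex of weight $\pm1$, a $0$-chain absorption needs a vertex of weight $0$, and Lemma~\ref{lemma: operation on plumb graphs 1.5} absorbs a block of $-2$'s only at an end. A blow-up at an interior edge between $\mp2$ and $\pm2$ inserts a vertex of weight $\pm1$, which is not allowed in the descriptions you want (all $|a_i|\ge2$). So your induction on the number of sign changes, ``reducing the interior blocks to $\pm3$ entries'', cannot be carried out. It is also not what happens. The third exceptional family keeps every interior entry equal to $\pm2$. The $3$, $\pm3$ and $4$ in the statement are only the $+1$ that Lemma~\ref{lemma: operation on plumb graphs 1.5} adds to the vertex next to an absorbed end block (added twice when one vertex is adjacent to both end blocks).

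Your description of where the exceptions come from is also wrong. One sign change followed by a last block of length one gives $L(2,\dots,2,-2)$, which is excluded. By contrast, $L(\alpha+1,3,\pm2)$ comes from a first block of odd length $\alpha\ge3$ followed by exactly two entries.

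\emph{The missing criterion.} Satisfying a condition of Lemma~\ref{lemma: nonfib two-bridge links std diagrams} also requires odd length at least $5$ and every odd-indexed entry to be \emph{even}. This is what later makes one component pass only through odd-indexed twist regions. Your test ``two entries of absolute value $>2$ at distinct odd positions gives 2b)'' ignores this. For example, it would place $(\alpha+1,3,\dots,\pm3,\pm(\beta+1))$ with $\alpha,\beta$ even in 2b), even though $\alpha+1$ and $\beta+1$ are odd. Likewise, the parities of $\alpha$ and $\beta$ are not forced by $m$ being odd; they are exactly what the argument splits on.

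The paper's proof is a short case split with no induction:
\begin{itemize}
\item If $\alpha\ge3$ is odd, then $\alpha+1\ge4$ is even and sits at position $1$, next to the $3$ at position $2$, in a sequence of odd length $m-\alpha+1$. So 1b) holds, unless that length is $3$, which gives $L(\alpha+1,3,\pm2)$.
\item If $\alpha=1$, one gets 1a), or, for $m=3$, the form $L(2,3,-2)$ of Lemma~\ref{lemma: std diagram for m=3}, possibly after first treating the other end.
\item If $\alpha$ is even, then $\alpha+1$ is odd at an odd position, so one turns to the last block, of length $\beta$. Odd $\beta$ reduces to the previous cases after reversing the sequence. Even $\alpha$ and $\beta$ give $L(\alpha+1,4,\beta+1)$ when exactly one entry lies between the two end blocks, and the third family when at least three do.
\end{itemize}
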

\begin{proof}
By \cite[Proposition~2]{GabKaz90}, we may assume that \(L=L(a_1,\dots,a_m)\) with \(|a_i|=2\) for all \(i\), and, up to mirror image, that \(a_1=-2\). Let \(\alpha+1\) be the smallest index \(i\) such that \(a_i=2\). Our hypotheses on \(L\) imply that \(1\le \alpha < m-1\). By Lemma \ref{lemma: operation on plumb graphs 1.5}, we have
\[
L=L(\alpha+1,3,a_{\alpha+2},\dots,a_m).
\]
In particular, if \(\alpha\ge 3\) is odd, then either \(L\) satisfies condition \emph{1b$)$} of Lemma \ref{lemma: nonfib two-bridge links std diagrams}, or \(L=L(\alpha+1,3,\pm 2)\).

Assume that \(\alpha=1\). In this case, there exists \(i>\alpha+1\) such that \(a_i=-2\); otherwise \(L=(-2,2,2,\dots,2)\), contrary to our assumptions. Thus, if \(m\ge 5\), then \(L\) satisfies condition \emph{1a$)$} of Lemma \ref{lemma: nonfib two-bridge links std diagrams}; otherwise, if \(m=3\), then \(L=L(2,3,-2)\), and so \(L\) satisfies one of the conditions of Lemma \ref{lemma: std diagram for m=3}.

We are left with the case in which \(\alpha\) is even. In this case, we apply the same argument starting from \(a_m\). More precisely, let \(\beta\) be the largest integer such that
\[
a_m=\cdots=a_{m-\beta+1}=\mp 2.
\]
If \(\beta\) is odd, then we are in one of the cases considered above. Otherwise, both \(\alpha\) and \(\beta\) are even, and applying Lemma \ref{lemma: operation on plumb graphs 1.5} we deduce that either
\[
L=L(\alpha+1,3,a_{\alpha+2},\dots,a_{m-\beta-1},\pm 3,\pm(\beta+1)),
\]
with \(\alpha+2\le m-\beta-1\), or \(\alpha+1=m-\beta\) and
\[
L=L(\alpha+1,4,\beta+1).
\]
In both cases, \(L\) satisfies one of the conditions in the statement.
\end{proof}

\begin{remark}\label{rem: L_n,k other description}
Observe that, as a consequence of Lemma \ref{lemma: operation on plumb graphs 1.5}, each of the links \(L_{n,k}\) and \(L'_{n,k}\) in Figure~\ref{fig: L and L'} is isotopic to \(L(2,\dots,2z)\) for some non-zero \(z\in\mathbb{Z}\), and viceversa. Hence, in the previous lemmas, we could equivalently have assumed that we were working with two-bridge links not isotopic to any of \(L_{n,k}\), \(L'_{n,k}\), or their mirrors.
\end{remark}

\subsection{Alexander polynomials}\label{subsec: alexander poly}

In this section we compute the multivariable Alexander polynomials of the links \(L_{n,k}\) and \(L'_{n,k}\). These polynomials will be used in Section \ref{subsec: H_function_L_n_k} to compute the $H$-function of $L_{n,k}$, and in Section \ref{subsec: class for L'nk} to determine the Turaev torsions of the manifolds obtained by filling one component of the links $L'_{n,k}$. 
Recall that the multivariable Alexander polynomial $\Delta_L$ of a link $L$ with more than one component depends on the orientation of $L$, so we fix the orientation on the link \(b(p,q)\) as described in Figure \ref{fig: two bridge ori}. Moreover, since the Alexander polynomial is unchanged by taking mirror images, we can assume that $q$ is positive.

In \cite{Hos20}, Hoste gives an explicit formula for $\Delta_{b(p,q)}$ and describes a useful algorithm to compute it, derived from an interpretation of such formula in terms of walks on the $2$-dimensional integer lattice. We note that our conventions for two-bridge links differ from those in \cite{Hos20}: the oriented link $K_{\sfrac{p}{q}}$ there corresponds to the oriented link $b(p,-q)$ in this paper.

The algorithm in \cite{Hos20} proceeds as follows. Let $\mathcal{A}$ be the set of positive multiples of $q$ that are strictly less than $pq$; that is,
\[
\mathcal{A}=\{iq|\, 0<i<p\}.
\]
To each multiple \(iq\), associate the quantity \(h_i=(-1)^{\lfloor iq/p\rfloor}\), and, when \(i\neq 1,p-1\), the quantity \(
v_i=\frac{h_{i-1}+h_{i+1}}{2}.\)
These data determines a path in \(\mathbb{Z}^2\) as follows:
\begin{itemize}\item Start at \((0,0)\in\mathbb{Z}^2\).
\item For each even multiple \(2iq\) of \(q\) in \(\mathcal{A}\), take a step from a  point \(P\) to the point \(P+(h_{2i},v_{2i})\).
\end{itemize}
In other words, the horizontal component of the step associated  to the multiple \(2iq\) is \(h_{2i}\), while the vertical component depends on the values of \(h_{2i-1}\) and \(h_{2i+1}\): if they are equal and positive is \(+1\), if they are equal and negative is $-1$, and is $0$ if they are different. We refer to the vector \((h_{2i}, v_{2i})\) as the \emph{coordinates of the step}.

In \cite{Hos20}, Hoste shows that the multivariable Alexander polynomial of \(b(p,q)\) is represented by the polynomial whose coefficient of \(x^iy^j\) is \((-1)^{i+j}k_{i,j}\), where \(k_{i,j}\) is the number of times the path visits the point \((i,j)\).

\subsubsection{Alexander polynomials of the links \(L_{n,k}\).}\label{subsubsec: Alex poly of link L_n,k}
We start with the links \(L_{n,k}\), with \(n,k\ge 1\), depicted on the left-hand side of Figure~\ref{fig: L and L'}. By direct computation, one checks that
\[
L_{n,k}=b(4nk+2n+2k,-2k-1).
\]
If we set \(p=2n+1\) and \(q=2k+1\), then \(L_{n,k}=b(p',-q)\), where \(p'=pq-1\). As observed above, in order to compute the Alexander polynomial, we can work with the mirror \(b(p',q)\).

We introduce some notation. As before, let \(\mathcal{A}\) denote the set of multiples of \(q\) contained in \((0,p'q)\), and observe that, since \(p'\) and \(q\) are coprime,
\[
\mathcal{A}=\mathcal{A}_1\sqcup\cdots\sqcup \mathcal{A}_q,
\]
where
\[
\mathcal{A}_m=\mathcal{A}\cap [(m-1)p',mp'], \qquad 1\le m\le q.
\]
Moreover, since for each fixed \(m\) there are exactly \(p\) multiples of \(q\) in \([(m-1)p',mp']\), the sets \(\mathcal{A}_1\) and \(\mathcal{A}_q\) have cardinality \(|\mathcal{A}_1|=|\mathcal{A}_q|=p-1\), while \(|\mathcal{A}_m|=p\) for \(1<m<q\). Notice that, by definition, if \(iq\in \mathcal{A}_m\), then \(h_i=(-1)^{m-1}\). 
\begin{figure}[]
    \centering   
    \includegraphics[width=0.8\textwidth]{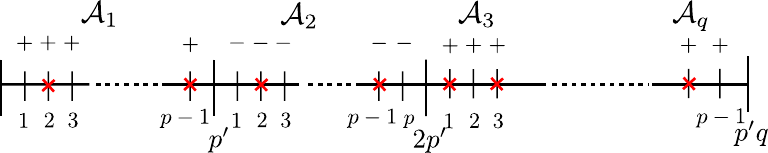}
    \caption{The figure shows the set \(\mathcal{A}\) and its partition into the subsets \(\mathcal{A}_m\). The multiples of \(q\) in each \(\mathcal{A}_m\) are labelled by the integers \(1,\dots,|\mathcal{A}_m|\). Above each multiple \(iq\), we indicate the sign of the corresponding \(h_i\).}
    \label{figure:Hoste}
\end{figure}

In the following proposition, we use the symbol \(\doteq\) to denote equality up to multiplication by units in \(\mathbb{Z}[x^{\pm1},y^{\pm1}]\).

\begin{prop}\label{prop: Alex poly of L_{n,k}}
For \(n,k\ge 1\), the multivariable Alexander polynomial \(\Delta_{n,k}\) of \(L_{n,k}\) is given by
\[
\Delta_{n,k}\doteq
\left(\sum_{i=0}^{k}x^{i}y^{k-i}\right)r_{n-1}
-
\left(\sum_{i=0}^{k-1}x^{i}y^{k-i-1}\right)r_n,
\]
where \(r_n=1+xy+\cdots+x^ny^n\).
\end{prop}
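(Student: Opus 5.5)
We will run Hoste's lattice-walk algorithm on the mirror $b(p',q)$, where $p=2n+1$, $q=2k+1$ and $p'=pq-1$, using the partition $\mathcal{A}=\mathcal{A}_1\sqcup\cdots\sqcup\mathcal{A}_q$ introduced above. The multiples $iq$ are consecutive integers in the ordering by $i$, so $h_i=(-1)^{m-1}$ is constant on each block $\mathcal{A}_m$. Hence $v_i$ differs from $\pm h_i$ only when $i-1$ and $i+1$ lie in different blocks. The first step is therefore to record, for each block, the pattern of $h$'s and to locate the block boundaries. A boundary occurs between the last element of $\mathcal{A}_m$ and the first element of $\mathcal{A}_{m+1}$. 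The blocks have sizes $p-1,p,\dots,p,p-1$, and $p$ is odd, so the parity of the index $i$ at the start of each block alternates in a controlled way: $\mathcal{A}_1$ ends at $i=p-1$ (even), $\mathcal{A}_2$ ends at $i=2p-1$ (odd), and so on.

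The second step is to read off the path. Inside a block with sign $\epsilon=(-1)^{m-1}$, every even index $2i$ whose neighbours are in the same block contributes the step $(\epsilon,\epsilon)$, a diagonal move. An even index adjacent to a boundary contributes $(\epsilon,0)$, a horizontal move. An even index that is itself the first element of a block, with the preceding odd index in the previous block, likewise gives a horizontal move with the new sign. Concretely, we expect the path to go as follows. It moves diagonally up by $n-1$ or $n$ steps in $\mathcal{A}_1$. It then makes a horizontal transition and runs diagonally back down within $\mathcal{A}_2$, shifted horizontally by one unit. This alternation repeats $q$ times, so that the visited points form $k+1$ diagonal runs of length $n$ (that is, $n$ points, the support of $r_{n-1}$) and $k$ diagonal runs of length $n+1$ (the support of $r_n$), interleaved. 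We will check the exact run lengths by counting even indices in each block: there are $(p-1)/2=n$ in $\mathcal{A}_1$, and alternately $n$ and $n+1$ in the interior blocks depending on the starting parity.

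The third step is to identify the starting points of the runs. After an affine change of coordinates, which amounts to multiplying by a unit in $\mathbb{Z}[x^{\pm1},y^{\pm1}]$, the $r_{n-1}$-runs should start at $(i,k-i)$ for $0\le i\le k$, and the $r_n$-runs at $(i,k-1-i)$ for $0\le i\le k-1$. The monomials $x^iy^{k-i}$ and $x^iy^{k-i-1}$ in the statement encode exactly these positions. We must also confirm that each point is visited at most once, so that $k_{i,j}\in\{0,1\}$. The signs then follow from parity: every point $(a,b)$ in an $r_{n-1}$-run has $a+b\equiv k$, and every point in an $r_n$-run has $a+b\equiv k-1$. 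This produces the relative minus sign between the two sums, with the overall sign absorbed into $\doteq$.

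The main obstacle is the bookkeeping at the block boundaries. We need to determine precisely which even indices sit next to a boundary, and which of $h_{2i\pm1}$ change sign there, since this fixes both the horizontal offsets and whether a run has length $n$ or $n+1$. I would handle it by treating $m$ odd and $m$ even separately, writing the index range of $\mathcal{A}_m$ explicitly as roughly $((m-1)p',mp')/q$. As a sanity check I would verify small cases, for instance $n=k=1$, which gives $b(8,3)$ and the expected polynomial $(x+y)-(1+xy)$, to fix the signs and orientation conventions.
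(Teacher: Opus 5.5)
Your proposal is correct and takes essentially the same approach as the paper: you run Hoste's walk on $b(p',q)$ over the blocks $\mathcal{A}_m$, with diagonal steps inside blocks and horizontal steps at block boundaries, and the paper packages exactly this as the translates $x^{j-1}y^{1-j}(r_{n-1}-y^{-1}r_n)$ for the pairs $\mathcal{A}_{2j-1}\cup\mathcal{A}_{2j}$ plus a final run $x^ky^{-k}r_{n-1}$ from $\mathcal{A}_q$, which matches your predicted run lengths, starting points and signs. Your check that no point is visited twice is harmless but not needed, because the coefficient $(-1)^{i+j}k_{i,j}$ is additive over visits.
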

\begin{proof}
We want to locate the even multiples of \(q\) in the various sets \(\mathcal{A}_m\), for \(m=1,\dots,q\). We label the elements of each \(\mathcal{A}_m\) by the integers \(1,\dots,|\mathcal{A}_m|\), according to their natural order. For instance, the even multiples of \(q\) in \(\mathcal{A}_1\) are the elements with labels \(2,4,\dots,p-1\); the same holds for \(\mathcal{A}_2\), and so on. 
Since \(p=2n+1\) is odd, it follows by induction that, for \(m\neq 1\), the even multiples of \(q\) in \(\mathcal{A}_m\) are those with odd label when \(m\) is odd, and those with even label when \(m\) is even. 
See Figure \ref{figure:Hoste} for a pictorial description.

From the definitions, the coordinates of the step associated to an even multiple in \(\mathcal{A}_m\) are:
\begin{itemize}
    \item \(\bigl((-1)^{m-1},0\bigr)\) if the multiple has label \(1\) or \(|\mathcal{A}_m|\);
    \item \(\bigl((-1)^{m-1},(-1)^{m-1}\bigr)\) otherwise.
\end{itemize}
Applying Hoste's algorithm to the multiples of \(q\) in \(\mathcal{A}_1\cup\mathcal{A}_2\), we obtain a path that starts at \((0,0)\) and ends at \((0,-1)\). This path consists of \(n-1\) steps with coordinates \((1,1)\) and one step with coordinates \((1,0)\), corresponding to the even multiples of \(q\) in \(\mathcal{A}_1\), followed by \(n\) steps with coordinates \((-1,-1)\), corresponding to the even multiples of \(q\) in \(\mathcal{A}_2\). This gives the summand
\[
P_1=r_{n-1}-x^n y^{n-1}\bigl(1+x^{-1}y^{-1}+\cdots+x^{-n}y^{-n}\bigr)
= r_{n-1}-y^{-1}r_n.
\]
Next, the algorithm applied to the multiples in \(\mathcal{A}_3\cup\mathcal{A}_4\) first prescribes a step with coordinates \((1,0)\), and then exactly the same pattern of steps as above. Thus we obtain the same path as before, shifted so that it starts at \((1,-1)\), and the corresponding summand is
\(
P_2=xy^{-1}P_1.
\)
Iterating this process, and recalling that \(q=2k+1\), we obtain summands \(P_j\) associated to \(\mathcal{A}_{2j-1}\cup\mathcal{A}_{2j}\), for \(j=1,\dots,k\), satisfying
\(
P_j=xy^{-1}P_{j-1},
\)
and hence, by induction,
\[
P_j=x^{j-1}y^{1-j}P_1.
\]
This also shows that the path associated to \(\mathcal{A}_1\sqcup\cdots\sqcup\mathcal{A}_{2k}\) ends at the point \((k-1,-k)\).

Finally, the multiples of \(q\) in \(\mathcal{A}_q\) correspond to one step with coordinates \((1,0)\), followed by \(n-1\) steps with coordinates \((1,1)\), and so give the summand
\[
P'=x^k y^{-k}r_{n-1}.
\]
In summary, we have
\begin{equation*}
\begin{split}
\Delta_{n,k}
&\doteq \sum_{j=1}^k P_j + P' \\
&= \left(\sum_{j=1}^k x^{j-1}y^{1-j}\right)P_1 + x^k y^{-k}r_{n-1} \\
&= \left(\sum_{i=0}^{k-1} x^i y^{-i}\right)P_1 + x^k y^{-k}r_{n-1} \\
&= \left(\sum_{i=0}^{k-1} x^i y^{-i}\right)\bigl(r_{n-1}-y^{-1}r_n\bigr) + x^k y^{-k}r_{n-1} \\
&= \left(\sum_{i=0}^k x^i y^{-i}\right)r_{n-1}
   - \left(\sum_{i=0}^{k-1} x^i y^{-i-1}\right)r_n.
\end{split}
\end{equation*}
Multiplying by \(y^k\), we obtain the desired formula.
\begin{comment}\item The even multiples of $q$ in $\mathcal{A}_m$, for odd $m\ne 1,q$, correspond to one step with coefficient $(1,0)$, then $(n-1)$ steps with with coefficient $(1,1)$, and finally one step with coefficient $(1,0)$, giving a summand equal to
\[
(-1)^{a_m+b_m}x^{a_m}y^{b_m}(-x+xr_{n-1}-x^ny^{n-1}),
\]
with $(a_m, b_m)$ to be determined.
\item Finally, the even multiples of $q$ in $
\mathcal{A}_q$ correspond to one step with direction $(1,0)$, followed to $(n-1)$ steps with direction $(1,1)$, and a summand equal to
\[
(-1)^{a_q+b_q}x^{a_q}y^{b_q}(-x+xr_{n-1}),
\]
with $(a_q, b_q)$ to be determined.
\end{comment}
\end{proof}
We refer the reader to Figure \ref{fig: H_function_L_n_k} for (a translate of) the set of lattice points visited by the path in the preceding proof.
\subsubsection{Alexander polynomials of the links \(L'_{n,k}\).}\label{subsubsec: Alex poly of link L'_n,k} We focus now on the links \(L'_{n,k}\), with \(n,k\ge 1\), described on the right-hand side of Figure~\ref{fig: L and L'}. They are isotopic to the mirrors of \(b(4nk+2n-2k,2n-1)=b(pq+1,q)\), where \(p=2k+1\) and \(q=2n-1\). For this reason, we work with $b(pq+1,q)$, and we orient it according to the convention set at the beginning of Section \ref{section:two-bridge}, i.e. as in Figure \ref{figure: L' ori}. We set \(p'=pq+1\), so that \(L'_{n,k}=b(p',q)\).
\begin{figure}[]
    \centering   
    \includegraphics[width=0.16\textwidth]{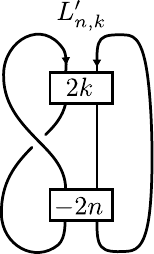}
    \caption{The orientation on the link $L'_{n,k}$ used to compute its Alexander polynomial.}
    \label{figure: L' ori}
\end{figure}

As before, we have
\[
\mathcal{A}=\mathcal{A}_1\sqcup\cdots\sqcup\mathcal{A}_q,
\]
where \(\mathcal{A}\) is the set of multiples of \(q\) in \((0,p'q)\), and
\[
\mathcal{A}_m=\mathcal{A}\cap[(m-1)p',mp'], \qquad m=1,\dots,q.
\]
Each \(\mathcal{A}_m\) has cardinality \(p\), and since \(p\) is odd, if we label the multiples of $q$ in each \(\mathcal{A}_m\) by \(1,\dots,p\), then the even multiples of \(q\) in \(\mathcal{A}_m\) are those whose parity agrees with that of \(m-1\).
Recall that, for \(k\ge 0\), we use the notation \(r_k=1+xy+\cdots+x^k y^k\).
\begin{prop}\label{prop: Alex poly of L'_{n,k}}
For \(n,k\ge 1\), the multivariable Alexander polynomial \(\Delta'_{n,k}\) of \(L'_{n,k}\) is
\[
\Delta'_{n,k}\doteq
\left(\sum_{i=0}^{n-1}x^{-i}y^{i}\right)r_k
-
\left(\sum_{i=0}^{n-2}x^{-i}y^{i+1}\right)r_{k-1}.
\]
\end{prop}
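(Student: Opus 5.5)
We follow the proof of Proposition~\ref{prop: Alex poly of L_{n,k}}: apply Hoste's algorithm to \(b(p',q)\), with \(p=2k+1\), \(q=2n-1\) and \(p'=pq+1\). We use the partition \(\mathcal{A}=\mathcal{A}_1\sqcup\cdots\sqcup\mathcal{A}_q\), where each \(\mathcal{A}_m\) has exactly \(p\) elements. If \(iq\in\mathcal{A}_m\), then \(h_i=(-1)^{m-1}\), and the even multiples in \(\mathcal{A}_m\) are those whose label has the parity of \(m-1\). The first step is to determine the coordinates \((h_{2i},v_{2i})\) of the step attached to each even multiple. The horizontal component is \((-1)^{m-1}\). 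The vertical component is \((-1)^{m-1}\) when both neighbours lie in the same \(\mathcal{A}_m\), i.e.\ for labels \(2,\dots,p-1\). It is \(0\) when the label is \(1\) or \(p\) and the neighbour lies in an adjacent block of opposite sign. At the two ends of \(\mathcal{A}\) (label \(1\) in \(\mathcal{A}_1\) and label \(p\) in \(\mathcal{A}_q\)) the quantity \(v_i\) is undefined; we check that these labels are odd multiples, so they contribute no step.

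Next we describe the path block by block. For odd \(m\) the even multiples have odd labels \(1,3,\dots,p\). For \(m=1\) we drop label \(1\), since it is an odd multiple (\(i=1\)). Checking the parity of the indices \(i\) directly is the bookkeeping point we must verify carefully. With this, \(\mathcal{A}_1\) contributes \(k\) steps: \(k-1\) steps \((1,1)\) and a final step \((1,0)\) from label \(p\). For even \(m\) the even labels are \(2,\dots,p-1\), giving \(k\) steps \((-1,-1)\), all interior. For odd \(m\neq 1,q\) we get a step \((1,0)\) from label \(1\), then \(k-1\) steps \((1,1)\), then a step \((1,0)\) from label \(p\). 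For \(m=q\) we get a step \((1,0)\) followed by \(k\) steps \((1,1)\), since label \(p\) in \(\mathcal{A}_q\) is interior or absent. The precise endpoint conventions here must be checked against the two asymmetric ends, and this is where the roles of \(r_k\) and \(r_{k-1}\) get decided.

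We then sum the contributions. Because \(\mathcal{A}_1\) has \(p\) elements rather than \(p-1\), the pattern differs from the \(L_{n,k}\) case. The blocks \(\mathcal{A}_1\cup\mathcal{A}_2\) visit the points of \(r_k\) shifted appropriately and then return along a diagonal, which gives a summand of the form \(r_k - c\, r_{k-1}\) for a monomial \(c\). Each subsequent pair \(\mathcal{A}_{2j-1}\cup\mathcal{A}_{2j}\) reproduces the same pattern shifted by a fixed vector; from the net displacement \((1,0)\) followed by the return, this shift should be \(x^{-1}y\) up to the chosen normalisation. We sum the resulting geometric series over the \(n-1\) pairs, add the contribution of the last block \(\mathcal{A}_q\), and multiply by a unit. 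This yields \(\bigl(\sum_{i=0}^{n-1}x^{-i}y^{i}\bigr)r_k-\bigl(\sum_{i=0}^{n-2}x^{-i}y^{i+1}\bigr)r_{k-1}\).

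The main obstacle is getting the signs \((-1)^{i+j}\) and the multiplicities right. Some lattice points are visited twice, on the outgoing and on the returning part of each pair, and the signs must produce exactly a difference of two sums with no residual terms. We will double-check by computing a small case, say \(n=2,k=1\) (\(b(8,3)\)), and comparing with the formula.
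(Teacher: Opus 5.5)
Your strategy is the paper's: Hoste's algorithm on $b(p',q)$, blocks $\mathcal{A}_m$ of size $p$, pairs $\mathcal{A}_{2j-1}\cup\mathcal{A}_{2j}$, then the last block $\mathcal{A}_q$. However, the block-by-block step data in your second paragraph is wrong, and your third paragraph does not follow from it.

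You state the correct rule: the even multiples in $\mathcal{A}_m$ are those whose label has the parity of $m-1$. You then apply it backwards. For odd $m$ the even multiples have \emph{even} labels $2,4,\dots,2k$, and for even $m$ they have \emph{odd} labels $1,3,\dots,p$. In $\mathcal{A}_1$, label $j$ is the multiple $jq$, so label $p$ is an odd multiple just like label $1$, and it contributes no step.

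The correct data are therefore as follows. Every odd block, including $\mathcal{A}_1$ and $\mathcal{A}_q$, contributes exactly $k$ steps $(1,1)$, all from interior labels, and never a step $(1,0)$. Every even block contributes $k+1$ steps: $(-1,0)$ from label $1$, then $k-1$ steps $(-1,-1)$, then $(-1,0)$ from label $p$. What you wrote is the step pattern of the $L_{n,k}$ computation with $k$ in place of $n$. That pattern relied on $|\mathcal{A}_1|=|\mathcal{A}_q|=p-1$ and does not transfer here.

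This error changes the answer, not just the bookkeeping. With your steps, $\mathcal{A}_1\cup\mathcal{A}_2$ visits $(0,0),\dots,(k-1,k-1)$ and then $(k,k-1),\dots,(0,-1)$, giving $r_{k-1}-y^{-1}r_k$. Each later pair is shifted by $xy^{-1}$. The total is $\bigl(\sum_{j=0}^{n-2}x^{j}y^{-j}\bigr)(r_{k-1}-y^{-1}r_k)+x^{n-1}y^{1-n}r_k$, which equals $k-n+2$ at $x=y=1$. For $n\ge 2$ this is not $\pm(n+k)$, the value of the claimed polynomial there. So your first summand $r_k-c\,r_{k-1}$ and your shift $x^{-1}y$ are read off from the target, not derived from your steps.

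With the corrected data the argument goes through:
\begin{itemize}
\item The path through $\mathcal{A}_1\cup\mathcal{A}_2$, omitting its last step, goes from $(0,0)$ up to $(k,k)$ and back through $(k-1,k),\dots,(0,1)$. This gives $P_1=r_k-y\,r_{k-1}$.
\item The last step $(-1,0)$ lands at $(-1,1)$, so $P_j=(x^{-1}y)^{j-1}P_1$ for $j=1,\dots,n-1$. A shift by $(-1,1)$ does not change the sign $(-1)^{a+b}$ of a lattice point $(a,b)$, so signs are unaffected.
\item The block $\mathcal{A}_q$ contributes $x^{1-n}y^{n-1}r_k$, and summing gives the formula.
\end{itemize}

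Your proposed check on $b(8,3)$ would have caught this. The true path is $(0,0)\to(1,1)\to(0,1)\to(-1,1)\to(0,2)$, giving $1+xy-y+x^{-1}y+y^2$. Your rules give $(0,0)\to(1,0)\to(0,-1)\to(1,-1)\to(2,0)$ instead.

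Finally, for $n=1$ one has $q=1$ and $\mathcal{A}_1=\mathcal{A}_q$, so your separate rules for $m=1$ and $m=q$ must agree. With the corrected data both give $r_k$, which matches the paper's separate treatment of this torus-link case.
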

\begin{proof}The argument is similar to the one in Proposition \ref{prop: Alex poly of L_{n,k}}. We begin by assuming \(n\ge 2\), so that \(q\ge 2\), and apply Hoste's algorithm to the multiples of \(q\) in \(\mathcal{A}_1\cup\mathcal{A}_2\). This yields \(k=\frac{p-1}{2}\) steps with coordinates \((1,1)\), corresponding to \(\mathcal{A}_1\), followed by one step with coordinates \((-1,0)\), then \(k-1\) steps with coordinates \((-1,-1)\), and finally one step with coordinates \((-1,0)\) again. The path obtained before this last step starts at \((0,0)\) and ends at \((0,1)\), and it gives the summand
\[
P_1=r_k-yr_{k-1}.
\]
Next, we arrive at the point \((-1,1)\) with the last step, and we apply the same reasoning to \(\mathcal{A}_3\cup\mathcal{A}_4\). The path obtained by carrying out all the steps associated to \(\mathcal{A}_3\cup\mathcal{A}_4\) except the last one is exactly the same as before, shifted so that the starting point is \((-1,1)\). Thus we obtain the summand 
\( 
P_2=x^{-1}yP_1.
\)
Iterating, and recalling that \(q=2n-1\), we obtain summands \(P_j\) associated to \(\mathcal{A}_{2j-1}\cup\mathcal{A}_{2j}\), for \(j=1,\dots,n-1\), satisfying
\(
P_j=x^{-1}yP_{j-1},
\)
which implies by induction that
\[
P_j=x^{1-j}y^{j-1}P_1.
\]This also shows that the path associated to \(\mathcal{A}_1\sqcup\cdots\sqcup\mathcal{A}_{2n-2}\) ends at the point \((1-n,n-1)\). Finally, the multiples of \(q\) in \(\mathcal{A}_q\) correspond to \(k\) steps with coordinates \((1,1)\), and so give the summand
\[
P'=x^{1-n}y^{n-1}r_k.
\]
Therefore,
\begin{equation*}
\begin{split}
\Delta'_{n,k}
&\doteq \sum_{j=1}^{n-1}P_j+P' \\
&=\left(\sum_{j=1}^{n-1}x^{1-j}y^{j-1}\right)P_1+x^{1-n}y^{n-1}r_k \\
&=\left(\sum_{i=0}^{n-2}x^{-i}y^i\right)P_1+x^{1-n}y^{n-1}r_k \\
&=\left(\sum_{i=0}^{n-2}x^{-i}y^i\right)(r_k-yr_{k-1})+x^{1-n}y^{n-1}r_k \\
&=\left(\sum_{i=0}^{n-1}x^{-i}y^i\right)r_k-\left(\sum_{i=0}^{n-2}x^{-i}y^{i+1}\right)r_{k-1},
\end{split}
\end{equation*}
which is the desired result.

When \(n=1\), the link \(L'_{1,k}\) is a torus link. In this case, $\mathcal{A}=\mathcal{A}_1$ and we have
\[
\Delta'_{1,k}\doteq \sum_{i=0}^{k}x^{i}y^i=r_k,
\]
and this concludes the proof.
\end{proof}

\section{Diagrams and persistently foliar links}\label{section:foliations}
The goal of this section is to prove Theorem \ref{thm: pers fol link}, which is a general theorem regarding the existence of taut foliations on all non-trivial surgeries on $n$-component links. This theorem will be applied in Section \ref{sec: pers fol two-bridge} to two-bridge links.

The section is roughly divided in three parts. We first introduce the notation needed to state Theorem \ref{thm: pers fol link}. We then recall some notions regarding the theory of branched surfaces, and finally, in Section \ref{subsec:proof of pers fol thm}, we prove Theorem \ref{thm: pers fol link}.
\newline

Recall that a slope on a torus $T$ is an element of the projective space $\mathbb{P}(H_1(T; \R))$ and that rational slopes, i.e. the elements of $\mathbb{P}(H_1(T;\Q))$, are in bijection with the set of isotopy classes of (unoriented) essential simple closed curves on $T$.  There is an identification, canonical up to isotopy, between \(T\) and \(\sfrac{H_1(T;\mathbb{R})}{H_1(T;\mathbb{Z})}\). A \emph{linear foliation of slope \(s\)} on \(T\) is a foliation isotopic to the image in \(T\) of the foliation by parallel lines of slope \(s\) on \(H_1(T;\mathbb{R})\). 

If \(M\) is a compact orientable \(3\)-manifold whose boundary is a union of tori \(T_1,\dots,T_n\), a \emph{multislope} is an \(n\)-tuple \((s_1,\dots,s_n)\), where each \(s_i\) is a slope on the torus \(T_i\). If \(M\) is the exterior of a link \(L\subset S^3\), we say that a multislope \((s_1,\dots,s_n)\) is \emph{non-trivial} if none of the \(s_i\) is the meridian of a component of \(L\).

\begin{defn}\label{def: pers fol link}
A link \(L\subset S^3\) is \emph{persistently foliar} if, for every non-trivial multislope on \(L\), there exists a coorientable taut foliation in the exterior of \(L\) intersecting the boundary of the link exterior transversely in a linear foliation of that multislope.
\end{defn}

This definition generalises that of a persistently foliar knot, introduced by Delman and Roberts in \cite{DRdiamond}. Observe that every non-trivial surgery on a persistently foliar link supports a coorientable taut foliation, obtained by capping off the leaves of the taut foliation in the exterior of the link with the meridional discs of the solid tori. 
\newline

\begin{figure}[]
    \centering
    \includegraphics[width=0.55\textwidth]{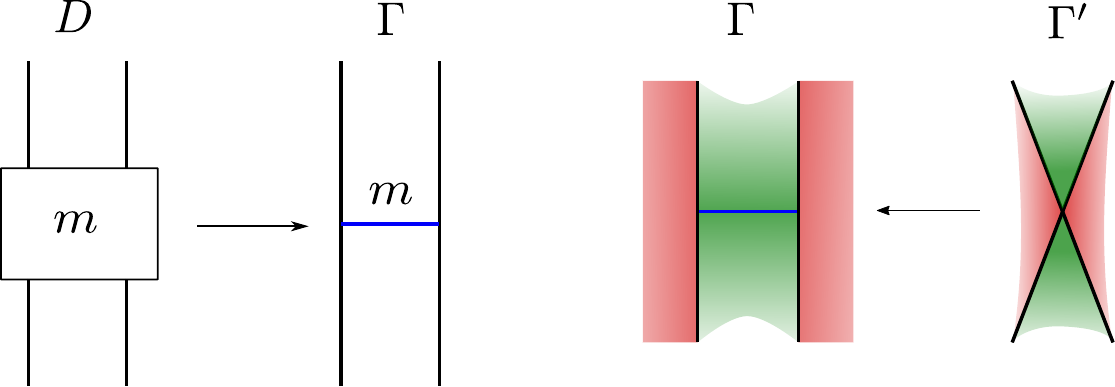}
    \caption{Left: Construction of $\Gamma$ from $D$. Right: Colouring the complementary regions of $\Gamma$ via a checkerboard colouring of the complementary regions of $\Gamma'$.}
    \label{fig: Gamma' intro}
\end{figure}
The main theorem of this section states, roughly speaking, that links with special diagrams are persistently foliar. The diagrammatic condition appearing in the statement comes from adapting and extending the main construction of \cite{San24diag} to the case of links with multiple components. We begin by introducing some notation and definitions.

\begin{defn}
Let $D$ be a diagram of a link in $S^3$. A \emph{bigon} in $D$ is a complementary region of the link projection in $S^2$ having two vertices, corresponding to two crossings of the diagram, in its boundary. A \emph{twist region} is either a maximal connected chain of bigons arranged in a row — that is, one that is not contained in a longer such chain — or a single crossing not adjacent to any bigons. 
\end{defn}

We will always suppose that the diagram is alternating in a twist region. Otherwise, the diagram can be replaced by another one with fewer crossings by using Reidemeister moves. 

\begin{defn} Let $D$ be a link diagram, and let $R$ be a twist region containing at least two crossings. The \emph{weight} $w(R)$ of $R$ is the integer whose absolute value equals the number of crossings in $R$, with sign determined by the crossing type: positive if the crossings in $R$ are right-handed and negative if they are left-handed. \end{defn}

\begin{defn}
Let \(K\) be a component of $L$, and let $R$ be a twist region in a diagram of $L$. We say that $K$ \emph{passes through} $R$ if at least one of the two strands of $L$ in $R$ belongs to $K$. 
\end{defn}
Suppose that \(D\) is a link diagram whose twist regions all contain at least two crossings. We associate to \(D\) two graphs with \(\mathbb{Z}\)-weighted edges, \(\mathcal{G}_g\) and \(\mathcal{G}_r\), as follows:

\begin{itemize}[leftmargin=*]
\item \textbf{Step 1.} We replace each twist region in \(D\) by a blue arc with weight \(w(R)\), as Figure~\ref{fig: Gamma' intro}--left shows. We denote the resulting graph in \(S^2\) by \(\Gamma\).

\item \textbf{Step 2.} We colour the complementary regions of \(\Gamma\) as follows. First, we collapse each blue arc of \(\Gamma\) to a point, obtaining a \(4\)-valent graph \(\Gamma' \subset S^2\), whose complementary regions correspond naturally to those of \(\Gamma\). We then checkerboard-colour the complementary regions of \(\Gamma'\) and transfer this colouring to \(\Gamma\) via the correspondence (see Figure~\ref{fig: Gamma' intro}--right). For convenience, we colour the regions green and red.

\item \textbf{Step 3.} The vertices of the graph \(\mathcal{G}_g\) correspond to the green complementary regions of \(\Gamma\). Each blue arc in \(\Gamma\) that meets some green regions \emph{only} at its endpoints determines a weighted edge between the corresponding vertices, with weight equal to that of the blue arc. The graph \(\mathcal{G}_r\) is defined analogously, using the red complementary regions and the blue arcs that meet them only at their endpoints.
\end{itemize}
We refer to Figure \ref{fig: example theorem} for a concrete example of the construction of the graphs.
\newline

We denote by $\mathcal{G}$ the disjoint union $\mathcal{G}_r\sqcup \mathcal{G}_g$. By construction, the edges of $\mathcal{G}$ are canonically identified with the twist regions of $D$. Using this identification, we say that a component of $L=K_1\sqcup \cdots \sqcup K_n$ \emph{passes through} an edge of $\mathcal{G}$ if it passes through the corresponding twist region. We denote by $\mathcal{R}_i$ the set of edges of $\mathcal{G}$ the component $K_i$ passes through.

\begin{figure}[]
    \centering
    \includegraphics[width=0.65\textwidth]{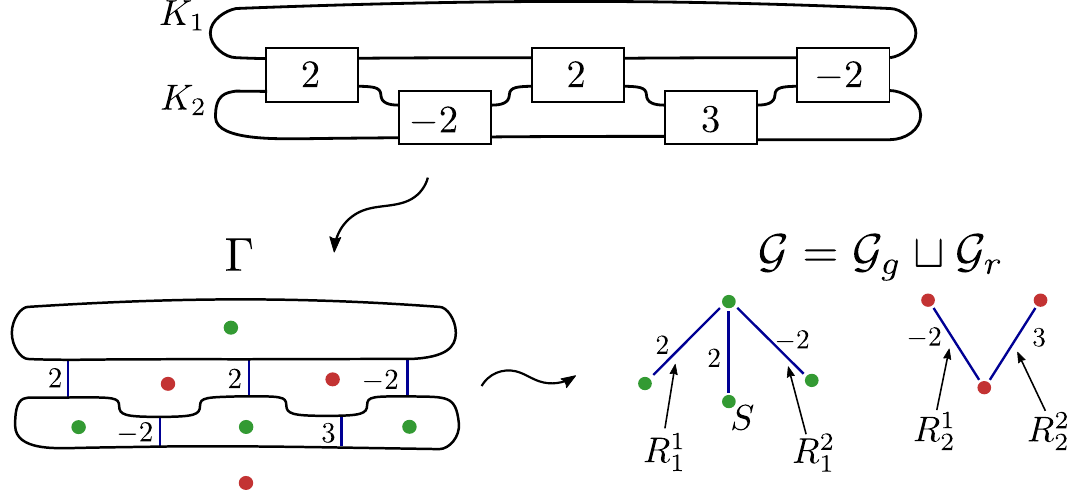}
    \caption{The link \(K_1 \sqcup K_2\) is persistently foliar by Theorem \ref{thm: pers fol link}. The picture shows the graphs \(\Gamma\) and \(\mathcal{G}=\mathcal{G}_g \sqcup \mathcal{G}_r\) obtained from the diagram. For \(i=2\), the edges \(R_i^1\) and \(R_i^2\) satisfy the first condition of the theorem. For \(i=1\), \(K_1\) passes only through twist regions with an even number of crossings, and since \(R_1^1\) and \(R_1^2\) have opposite weights, the second condition of the theorem is satisfied.}
    \label{fig: example theorem}
\end{figure}
\begin{thm}\label{thm: pers fol link}
Let \(D\) be a diagram of a link
\(L = K_1 \sqcup \cdots \sqcup K_n\) in \(S^3\).
Assume that every twist region of \(D\) contains at least two crossings, and that the graphs \(\mathcal{G}_r\) and \(\mathcal{G}_g\) are connected. Suppose further that there exist a vertex \(S\) of \(\mathcal{G}\) and a collection of pairwise distinct edges
\[
R_i^j \in \mathcal{R}_i \quad \text{for } i=1,\dots,n \text{ and } j=1,2,
\]
none of which is adjacent to \(S\), such that for each \(i\) one of the following holds:
\begin{itemize}
\item \(\max\{|w(R_i^1)|, |w(R_i^2)|\} \geq 3\);
\item every edge in \(\mathcal{R}_i\) has even weight and \(w(R_i^1) = -w(R_i^2)\).
\end{itemize}
Then \(L\) is persistently foliar. In particular, every non-trivial surgery on \(L\) supports a coorientable taut foliation.
\end{thm}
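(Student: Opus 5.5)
We will follow the strategy of \cite{San24diag}, based on laminar branched surfaces in the sense of Li and Gabai--Oertel, extending it from knots to links with several components. Starting from $D$, we build a branched surface $B$ in the exterior $E(L)$. We take the checkerboard-type spanning surface determined by the colouring of $\Gamma$: a disc for each red region (or green region, depending on the chosen sub-branched surface), joined at each twist region by a twisted band. At the twist regions we add, as in \cite{San24diag}, product discs or ``sink discs'' that branch along the twisted bands, so that each twist region of weight $w(R)$ carries a family of branch loci. The orientations of the branchings are dictated by the graphs $\mathcal{G}_r$ and $\mathcal{G}_g$. Connectedness of $\mathcal{G}_r,\mathcal{G}_g$ lets us choose a maximal tree rooted at the vertex $S$, and we orient every branch direction along edges pointing towards $S$. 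This ensures that every complementary sector except the one corresponding to $S$ is a non-sink disc, and we arrange for $S$ to be the unique place where the boundary train track is modified. The first step is then to verify the hypotheses of Li's theorem: $B$ is laminar (no sink discs, no half sink discs, complement irreducible and with incompressible horizontal boundary, no Reeb components). This check is local and follows the diagrammatic arguments of \cite{San24diag}, using that every twist region has at least two crossings.

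The second step is to analyse the boundary train track $\tau_i=B\cap\partial N(K_i)$ for each component. The train track $\tau_i$ is built from contributions of the twist regions in $\mathcal{R}_i$ that $K_i$ passes through. Each such region contributes cusps whose slopes are controlled by $w(R)$. The aim is to show that the slopes fully carried by $\tau_i$, as we vary the choices of branch directions in the regions $R_i^1,R_i^2$, cover every non-meridional slope. Here the hypothesis that $R_i^j$ is not adjacent to $S$ matters: it lets us flip the branching at $R_i^1,R_i^2$ independently without creating sink discs. The two alternatives in the statement correspond to two computations. In the first, $|w|\ge 3$ at one region gives enough twisting that the carried intervals of slopes, which are of the form $[\,\cdot\,,\infty)$ and $(-\infty,\cdot\,]$, overlap. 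In the second, all weights along $K_i$ are even, so the framing contributions are symmetric, and the opposite weights $w(R_i^1)=-w(R_i^2)$ produce carried slope intervals that are mirror images and together cover $\mathbb{Q}$. Distinctness of all the $R_i^j$ is what makes the choices for different components independent. For a given multislope we therefore fix, component by component, a branched surface $B_{\mathbf{s}}$ carrying $s_i$ on each $\partial N(K_i)$.

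Finally, we apply Li's theorem on laminar branched surfaces with boundary, in the form used by Delman--Roberts and \cite{San24diag}. This gives, for each such multislope, an essential lamination fully carried by $B_{\mathbf{s}}$ meeting the boundary tori in the prescribed slopes. Since $B$ is built from a spanning surface together with product discs, the lamination can be thickened and extended across the complementary product regions to a coorientable taut foliation transverse to $\partial E(L)$ in linear foliations of slope $s_i$. Coorientability comes from the transverse orientation induced by the branch directions, and tautness from the existence of a closed transversal through $S$, which uses connectedness of $\mathcal{G}$. Capping off with meridional discs then gives the statement about surgeries.

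The main obstacle is the second step, namely the slope computation on each $\tau_i$. A single component may pass through many twist regions, possibly twice through the same one, and each region shifts the carried slopes. We must show that the union of carried slopes is exactly all non-meridional slopes, including the boundary cases $|w|=2$, while keeping the changes at $R_i^1,R_i^2$ compatible with the absence of sink discs. We would first treat the case where $K_i$ passes through each $R_i^j$ along one strand only, computing the slope interval as the sum of the weights along $\mathcal{R}_i$ corrected by the choices at $R_i^j$, and then reduce the general case to this one.
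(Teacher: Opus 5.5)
Your proposal has a genuine gap, and it sits in the step you flag as the main obstacle. You never construct branched surfaces whose train tracks $\tau_i=B\cap\partial\nu K_i$ realise all non-meridional slopes. Your heuristics for the two hypotheses are not arguments: one claims that $|w|\geq 3$ makes the carried intervals overlap, the other that opposite even weights give mirror-image intervals covering $\mathbb{Q}$. Mirror intervals $(a,\infty)$ and $(-\infty,-a)$, for instance, cover $\mathbb{Q}$ only if $a<0$, and nothing in your construction controls $a$.

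Even if this worked, Li's theorem (Theorem~\ref{boundary train tracks}) only gives laminations meeting the boundary in \emph{rational} slopes realised by $\partial B$. Definition~\ref{def: pers fol link} requires every non-meridional multislope in $\mathbb{P}(H_1(T;\mathbb{R}))$, irrational ones included.

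Your first step is not in place either. The surface you describe is a checkerboard surface with ``sink discs'' along the bands and branch directions pointing to $S$ along a spanning tree. That is not the branched surface of \cite{San24diag}, so laminarity and the product structure of the complement cannot be quoted from there. Moreover, sink discs are exactly what must be avoided.

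The missing idea is that the branched surface should not meet $\partial\nu K_i$ at all. The paper proceeds as follows.
\begin{itemize}
\item It drills a crossing circle at each twist region, leaving zero or one crossing, so that $L$ is recovered by $1/k_h$ fillings.
\item In the exterior of this augmented link it builds a $2$-complex $\Sigma$ from boundary-parallel tori $T_i$ around the $K'_i$, the crossing discs, and the projection-sphere regions, the latter cooriented via the green/red colouring.
\item It deletes the region $S$. This merges the two complementary balls into one region, and the discs meeting $S$ are $S$-cooriented.
\item All the twisting is absorbed by the train tracks on the crossing-circle tori. These only need to realise the single rational slope $1/k_h$, which is where $S$-coorientation is used, and Li's theorem then fills them.
\end{itemize}

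For the components of $L$, the paper instead creates two cusps on $T_i$ along the meridional circles $c_i^j\subset T_i\cap D_i^j$, where $D_i^j$ is the crossing disc of $R_i^j$. The complementary region is then $T_i\times[0,1]$ with two meridional annular sutures. The Delman--Roberts noncompact extension property (Theorem~\ref{cusps implies persistently foliar}) then gives, from a single branched surface, foliations realising every slope except the meridian, rational or not.

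So the hypotheses on $R_i^j$ are not a slope computation. They are what allows $D_i^1$, $D_i^2$ and the two annuli of $T_i\setminus(c_i^1\cup c_i^2)$ to be cooriented so that both cusps form while each $D_i^j$ stays $c_i^j$-cooriented. That is needed to avoid half sink discs when smoothing (Lemma~\ref{lemma: existence of cusps}). It is automatic when $|w|\geq 3$. For weight $\pm2$ the forced coorientations are compatible when all weights along $K_i$ are even and $w(R_i^1)=-w(R_i^2)$.

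Non-adjacency to $S$ keeps these discs out of the set $\mathcal{D}_S$, whose coorientations are dictated by $S$. Connectedness of $\mathcal{G}_r$ and $\mathcal{G}_g$ is used to show that the remaining complementary region is a product sutured ball, not to produce a closed transversal through $S$.
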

Figure~\ref{fig: example theorem} shows a sample application of the theorem to a link with two components.

\begin{remark}
If the diagram $D$ has sufficiently many twist regions compared to the number of components of $L$, then the hypothesis on the existence of the vertex $S$ in the statement Theorem \ref{thm: pers fol link} is always satisfied. Indeed, an Euler characteristic argument as in \cite[Lemma~4.3]{San24diag} shows that, given any collection of $2n$ distinct edges of \(\mathcal{G}\), there exists a vertex $S$ of $\mathcal{G}$ not adjacent to any of them whenever D has more than $4n-2$ twist regions.
\end{remark}

\begin{remark}
In the case of knots, a very similar theorem appears in \cite[Theorem~4.1]{San24diag}. Nevertheless, even in this case, Theorem \ref{thm: pers fol link} is more general, as it also applies to knot diagrams in which all twist regions have two crossings, provided that two of them contain crossings of opposite type.
\end{remark}

The proof of Figure~\ref{thm: pers fol link} is based on the construction presented in \cite{San24diag}. For this reason, we will often refer the reader to that paper for proofs and more details. At the same time, in order to explain the modifications required to establish the more general statement given here and to clarify as much as possible the key ideas underlying the proof, we need to recall some notions. We begin with branched surfaces

\subsection{Branched surfaces}
Let $M$ be a compact orientable manifold, whose boundary is a possibly empty union of tori.

\begin{figure}[]
    \centering
    \includegraphics[width=0.7\textwidth]{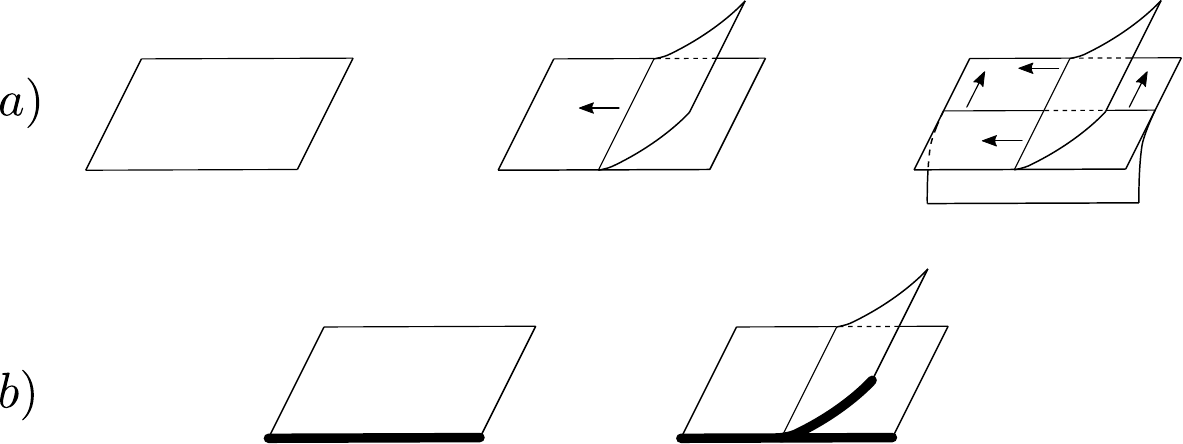}
    \caption{Local models for a branched surface, with cusp directions.}
    \label{branched surface}
\end{figure}
\begin{defn}
A \emph{branched surface with boundary} in  $M$ is a compact subset $B\subset M$ locally diffeomorphic to one of the models shown of Figure~\ref{branched surface} $a)$ (in $\R^3$) or Figure~\ref{branched surface} $b)$ (in the closed half-space), where $\partial B:= B\cap \partial M$ is represented with a bold line.
\end{defn}

We define the \emph{branch locus} of $B$ as the set of points where $B$ is not locally homeomorphic to a surface, and the \emph{triple points of $B$} as those where the branch locus is not locally homeomorphic to an arc.

The complement of the branch locus in $B$ is a union of connected surfaces. The abstract closures of these surfaces -- taken under any path metric on $M$ -- are called the \emph{sectors} of $B$. Similarly, the complement of the set of triple points within the branch locus is a union of $1$-dimensional connected manifolds. To each of these manifolds we associate
arrows in $B$, called \emph{cusp directions}, indicating the direction of the smoothing, as illustrated in Figure \ref{branched surface}.

We denote by $N_B$ a fibered regular neighbourhood of $B$, well-defined up to isotopy, whose local structure is depicted in Figure~\ref{regular neighbourhood}.
The boundary of $N_B$ naturally decomposes into three compact subsurfaces: the \emph{horizontal boundary} $\partial_h N_B$, the \emph{vertical boundary} $\partial_v N_B$, and $N_B\cap \partial M$. The horizontal boundary is transverse to the interval fibers of $N_B$, while the vertical boundary intersects them, if at all, in one or two proper closed subintervals contained in their interiors.

\begin{figure}[]
    \centering
    \includegraphics[width=0.55\textwidth]{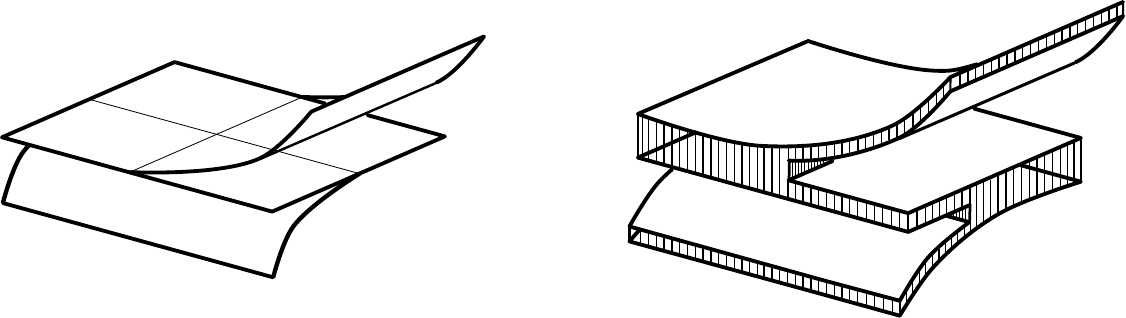}
    \caption{Regular neighbourhood of a branched surface.}
    \label{regular neighbourhood}
\end{figure}

\subsubsection{From branched surfaces to laminations.}
A common strategy to construct foliations on a $3$-manifold is to find intermediate objects, called \emph{laminations}. 
\begin{defn}\label{lamination abstr}
A lamination $\mathcal{L}$ in $M$ is a decomposition of a closed subset of $M$ into a union of injectively immersed connected surfaces, called \emph{leaves} of $\mathcal{L}$, such that $(M, \mathcal{L})$ is locally homeomorphic to $(\R^3, \R^2\times C)$ or to $(\mathbb{H}^3, \mathbb{H}^2\times C)$. Here, $C$ is a closed subset of $\R$ depending on the chart, $\mathbb{H}^2$ is the closed half-plane, and $\mathbb{H}^3=\mathbb{H}^2\times \mathbb{R}$ is the closed half-space.
\end{defn}
Branched surfaces provide an useful combinatorial tool that can be used to define laminations.
\begin{defn}\label{lamination}
Let $B$ be a branched surface in a $3$-manifold $M$. A lamination $\mathcal{L}$ is \emph{carried by B} if $\mathcal{L}\subset N_B$ and $\mathcal{L}$ intersects the fibers of $N_B$ transversely. We say that $\mathcal{L}$ is \emph{fully carried} by $B$ if it is carried by $B$ and intersects every fiber of $N_B$.
\end{defn}

Analogous definitions apply to train tracks on surfaces, and, for later use, we introduce the following notion.
\begin{defn}
Let $\tau$ be a train track on a torus $T$. A rational slope $s$ on $T$ is \emph{realised} by $\tau$ if $\tau$ fully carries a union of finitely many curves of slope $s$.
\end{defn}
Not all branched surfaces fully carry laminations (see, for example, \cite[Proposition~3.1]{GabOer}); however, in \cite{Li_laminar}, Li introduces the notion of \emph{laminar branched surface}, and shows that they always fully carry laminations. We refer to \cite{Li_laminar} for details and definitions, and note only that a crucial requirement for a branched surface to be laminar is that it contains no \emph{$($half$)$ sink discs}.

\begin{defn}[\cite{Li_laminar}]
Let $B$ be a branched surface in $M$ and let $S$ be a sector in $B$.
We say that $S$ is a \emph{sink disc} if $S$ is homeomorphic to a disc, $S\cap \partial M=\emptyset$, and the cusp direction of any smooth curve or arc in its boundary points into $S$.
We say that $S$ is a \emph{half sink disc} if $S$ is a disc, $S  \cap \partial M\neq \emptyset$, and the cusp direction of any smooth arc in $\partial S\setminus \partial M$ points into $S$.
\end{defn}

We fix the following notation. Let $M$ be a manifold whose boundary is union of tori $T_1, \dots, T_k$, and let $h\leq k$ be a positive integer. Given a rational multislope $(s_1,\dots, s_h)$, we denote by $M(s_1,\dots, s_h)$ the manifold obtained by filling the boundary component $T_i$ along the slope $s_i$, for $i=1, \dots, h$. When $h < k$, this manifold has non-empty boundary. 

In the case of our interest, which concern fillings of $3$-manifolds with torus boundary, the following theorem plays a key role. We state it in a slightly more general and detailed form than the one appearing in \cite{Li_boundary}; the additional details follow from its proof. We have not defined the notion of \emph{essential} lamination, appearing in the statement, since we will not need it; we refer to \cite{GabOer} for details.

\begin{thm}[{\cite[Theorem~2.5]{Li_boundary}}]\label{boundary train tracks}
Let $M$ be a compact $3$-manifold whose boundary is union of tori $T_1,\dots, T_k$. Let $B$ be a laminar branched surface in $M$ with $B\cap \partial M\subset T_1\cup \cdots \cup T_h$, for some $h\leq k$, and assume that, for each $i=1, \dots, h$, the complement $T_i \setminus \partial B$ is a union of bigons. Let $(s_1,\dots, s_h)$ be any rational multislope realised by the train track $\partial B$, and suppose that $B$ does not carry a torus that bounds a solid torus in $M(s_1,\dots,s_h)$. Then, there exists an essential lamination $\mathcal{L}$ in $M$ fully carried by $B$ that intersects $T_i$ in parallel simple closed curves of slope $s_i$, for $i=1, \dots, h$. Moreover, this lamination extends to an essential lamination of the filled manifold $M(s_1,\dots, s_h)$.
\end{thm}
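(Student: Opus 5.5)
The plan is to pass from $M$ to the filled manifold $M(s_1,\dots,s_h)$: extend $B$ across the filling solid tori to a branched surface $\widehat B$, check that $\widehat B$ is laminar, and use the Gabai--Oertel criterion \cite{GabOer} to get essentiality. Restricting the resulting lamination back to $M$ then yields both assertions of the statement.

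\textbf{Step 1: constructing $\widehat B$.} Fix $i\le h$. Since $s_i$ is realised by $\partial B\cap T_i$, there are integral weights on the branches of the train track $\partial B\cap T_i$. These weights satisfy the switch conditions and are positive on every branch, and they define a multicurve $c_i$ of slope $s_i$ fully carried by the track. Let $V_i$ be the solid torus glued along $T_i$ in $M(s_1,\dots,s_h)$. Its meridian disc $D_i$ has boundary of slope $s_i$. I would take the meridian discs bounded by the components of $c_i$, viewed inside a collar of $T_i$ in $V_i$, and smooth them onto $B$ along $\partial B\cap T_i$. Concretely, near $T_i$ we add the annulus $(\partial B\cap T_i)\times[0,1]$ followed by a disc sector $D_i$ attached along the carried curve. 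The cusp directions are arranged so that the new sectors are a disc $D_i$, whose boundary is smooth, together with product sectors (branch $\times$ interval) pointing outward. The result $\widehat B$ lies in $M(s_1,\dots,s_h)$ and is again a branched surface. Its boundary lies only on $T_{h+1},\dots,T_k$.

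\textbf{Step 2: laminarity of $\widehat B$.} This is where the hypothesis that $T_i\setminus\partial B$ is a union of bigons enters. Since $B$ is laminar, it has no sink discs or half sink discs. The new sectors are of two kinds. The first kind are products of branches of the train track with an interval. They are not discs with all cusps pointing inwards, because their boundary contains the original arcs on $B$, with cusps pointing outwards. The second kind are the discs $D_i$, whose boundary is smooth and so is not a sink. One must also check that the complementary regions of $N_{\widehat B}$ in $V_i$ are of the right kind. The bigon complementary regions of the train track become product regions $(\text{bigon})\times I$, capped by the meridian disc, which gives $3$-balls with the correct horizontal/vertical structure. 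Because there are no monogons and no complementary discs with fewer than two cusps, no region is a ``disc of contact'' or a forbidden shape. I would then verify Li's remaining laminarity conditions \cite{Li_laminar}, namely that there are no Reeb branched surfaces and that $\partial_h N$ is incompressible and free of sphere and disc components, by relating them directly to the corresponding conditions for $B$. Li's theorem then gives a lamination $\widehat{\mathcal L}$ fully carried by $\widehat B$. The hypothesis that $B$ carries no torus bounding a solid torus in $M(s_1,\dots,s_h)$ is exactly what excludes a Reeb-type configuration, which could otherwise be created inside $V_i$ together with $B$.

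\textbf{Step 3: essentiality and restriction.} By Gabai--Oertel, a lamination fully carried by a laminar branched surface is essential, provided no carried torus bounds a solid torus. I expect this last proviso to be the main obstacle, since one must show that every such torus carried by $\widehat B$ comes from $B$. My first attempt would be to isotope a carried torus off the discs $D_i$ using the bigon structure, and then invoke the hypothesis. The lamination $\widehat{\mathcal L}$ meets each $D_i$-region in meridian discs. Removing the interiors of the $V_i$ therefore gives $\mathcal L=\widehat{\mathcal L}\cap M$. This lamination is fully carried by $B$, meets $T_i$ in parallel curves of slope $s_i$, and extends to $M(s_1,\dots,s_h)$ by construction. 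It is essential in $M$ because an essential lamination stays essential after drilling out core curves transverse to it.
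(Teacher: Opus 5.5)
Your overall plan is the route the paper takes from Li (see the sketch in the proof of Theorem~\ref{thm: general pers thm}): cap $B$ off by meridian discs in $M(s_1,\dots,s_h)$, show the capped surface is laminar, apply Li's theorem, and restrict to $M$. However, Step~1 as written does not produce a branched surface, and this is a genuine gap.

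Since $T_i\setminus\partial B$ consists of bigons, the track $\partial B\cap T_i$ has switches. At a switch, the equation $w(a)=w(b)+w(c)$ with positive weights forces $w(a)\ge 2$. So $c_i$ runs several times over some branch, and meridian discs attached ``along the carried curve'' on $(\partial B\cap T_i)\times\{1\}$ would have overlapping boundaries. The missing step, and exactly where realisability of $s_i$ is used, is to \emph{split} $B$ in a collar of $T_i$ so that $\partial B\cap T_i$ becomes the multicurve $c_i$ of parallel slope-$s_i$ circles. Only then can parallel meridian discs be glued, and they glue smoothly, without creating new branch locus.

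This also invalidates your sector bookkeeping in Step~2. Attaching $(\partial B\cap T_i)\times[0,1]$ along $\partial B$ only extends existing sectors, so there are no separate ``product sectors''. The capping discs are absorbed into the sectors of the split surface that meet $T_i$. The real danger is that capping turns such a sector into a disc whose remaining boundary cusps all point inward. Examples are an annular or planar sector whose other boundary lies in the branch locus, or a sector created by the splitting. Such a sector need not have been a half sink disc of $B$, yet it becomes a sink disc of $\widehat B$. You must exclude this, for instance by splitting so that sheets only merge as one moves away from $T_i$ and by using that $B$ has no half sink discs. The remark that $\partial D_i$ is smooth does not address it.

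The remaining laminarity conditions are asserted rather than proved, and you leave the torus issue open yourself. A torus carried by $\widehat B$ with positive weight on a capping disc meets $M$ in a punctured torus with boundary of slope $s_i$. It is not a torus carried by $B$, so the hypothesis does not apply to it directly, and ``isotoping it off the discs using the bigons'' needs an actual argument.

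Step~3 also asks for more than the quoted route needs. You require, in the sense of Gabai--Oertel, that $\widehat B$ carry no torus bounding a solid torus. The paper instead shows the capped surface is laminar in Li's sense, where only Reeb branched surfaces must be excluded, and then invokes \cite[Theorem~1]{Li_laminar}. Finally, the claim that essentiality in $M$ survives drilling out the cores is stated without justification.
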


\subsubsection{From laminations to foliations.}
In general, the presence of a lamination \(\mathcal{L}\) does not guarantee the existence of a foliation extending it. Here, we present one situation in which this does happen, following \cite{DRdiamond}. We assume some familiarity with sutured manifold theory, and refer the reader to \cite{Gabfol} for definitions.

\begin{defn}[\cite{DRdiamond}]
A cooriented branched surface $B$ is \emph{taut} if the exterior of $B$ is a taut sutured manifold and if through every sector of $B$ there is a closed oriented curve that is
positively transverse to $B$.
\end{defn}

If $\mathcal{L}$ is a lamination in $M$ carried by a branched surface $B$, we will suppose that $\partial_h N_B$ is contained in $\mathcal{L}$. Moreover, we denote by $M_{|\mathcal{L}}$ the metric completion of $M\setminus \mathcal{L}$ under any path metric on  $M\setminus \mathcal{L}$.

\begin{defn}[\cite{DRdiamond}]
Suppose that $B$ fully carries a lamination $\mathcal{L}$. A component $A$ of $\partial_v N_B$ satisfies the \emph{noncompact extension property} relative to $(B, \mathcal{L})$ if there exists a proper embedding of $[0,1]\times [0, \infty)$  in $M_{|\mathcal{L}} \cap N_B$ satisfying the following conditions:
\begin{itemize}
    \item the image of $[0,1] \times \{0\}$ is contained in a fiber of $N_B$ and contains a fiber of $A$;
    \item the image of $\{0,1\}\times [0,\infty)$ is contained in leaves of $\mathcal{L}$.
\end{itemize}  
\end{defn}
As the name suggests, the noncompact extension property can be used to extend laminations to foliations, as the following result shows. 
\begin{thm}[{\cite[Proposition~3.10]{DRdiamond}}]\label{cusps implies persistently foliar}
Suppose that \(\partial_1 M,\dots,\partial_m M\) are torus boundary components of \(M\), and let \(B\) be a taut, cooriented branched surface in \(M\) that is disjoint from them and fully carries a lamination \(\mathcal{L}\). For each \(i\), let \((Y_i,\partial_v Y_i)\) denote the complementary region of \(N_B\) containing \(\partial_i M\). Assume that
\[
(Y_i,\partial_v Y_i)\cong (\partial_i M\times [0,1],\, \mathcal{A}_1\cup\cdots\cup\mathcal{A}_{2n_i}),
\]
where $\partial_i M \times \{0\} = \partial_i M$ and $\mathcal{A}_1, \dots, \mathcal{A}_{2n_i}$ are disjoint essential annuli in $ \partial_i M \times \{1\}$ satisfying
the noncompact extension property relative to $(B, \mathcal{L})$. Then, for any multislope $(s_1,\dots, s_m)$ on $\partial_1 M,\dots, \partial_m M$ with the property that no $s_i$ is isotopic in $Y_i$ to the
common core of the annuli $\mathcal{A}_k$ there exists a cooriented taut foliation
that extends $\mathcal{L}$ and intersects $\partial_1 M,\dots, \partial_m M$ transversely in a linear foliation of such multislope.
\end{thm}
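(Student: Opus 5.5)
The plan follows the strategy of Delman and Roberts. Start from the lamination \(\mathcal{L}\) fully carried by \(B\), with \(\partial_h N_B\subset\mathcal{L}\), and extend it to a foliation of all of \(M\). Do this one complementary region at a time, and keep the boundary tori \(\partial_i M\) transverse with the prescribed slopes. The complement \(M\setminus\mathcal{L}\) splits into two kinds of pieces. The first are the interstitial \(I\)-bundle regions inside \(N_B\); these are foliated by the obvious product foliation by fibres' orthogonal planes, i.e.\ horizontally. The second are the complementary regions of \(N_B\). Those not meeting \(\partial M\) are taut sutured manifolds, because \(B\) is taut, so Gabai's theorem gives taut foliations on them. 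These foliations are tangent to the horizontal boundary, and near the sutures they agree with the standard spiralling model along \(\partial_v N_B\).

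The real work is on the regions \(Y_i\cong \partial_i M\times[0,1]\) with sutures \(\mathcal{A}_1\cup\dots\cup\mathcal{A}_{2n_i}\subset \partial_iM\times\{1\}\). Let \(c\) be the common core of the annuli, and let \(s_i\) be a slope not isotopic to \(c\). I would write down an explicit foliation of \(T^2\times[0,1]\) with the following properties:
\begin{itemize}
\item it is transverse to \(T^2\times\{0\}\), inducing there the linear foliation of slope \(s_i\);
\item it has the complementary annuli of \(\partial_h Y_i\) in \(T^2\times\{1\}\) as leaves;
\item near each \(\mathcal{A}_k\), its leaves spiral, half-Reeb style, towards the horizontal annuli on either side.
\end{itemize}
A concrete model is to take leaves that are graphs of functions over the \(s_i\)-direction, and to let them become asymptotic to \(T^2\times\{1\}\) as they approach the horizontal annuli. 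Because \(s_i\neq c\), every leaf crosses the sutures essentially, so no compact torus or annulus leaf appears in the interior and no Reeb annulus appears. The noncompact extension property is what makes this possible. The leaves of the \(Y_i\)-foliation that exit through \(\mathcal{A}_k\) have to continue into \(M_{|\mathcal{L}}\cap N_B\), and the embedded strip \([0,1]\times[0,\infty)\), whose long sides lie in leaves of \(\mathcal{L}\), supplies a product region in which these leaves can be continued indefinitely without closing up. One then glues: the continued leaves fill the strip as \([0,1]\times[0,\infty)\times\R\)-type product pieces, and this matches the horizontal foliation of the \(I\)-bundle gaps.

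Finally, I would check tautness of the resulting foliation \(\mathcal{F}\). Every leaf of \(\mathcal{L}\) meets a closed transversal, since \(B\) is taut and each sector has a positively transverse closed curve. Leaves inside the sutured pieces inherit transversals from the Gabai foliations. Leaves in \(Y_i\) meet arcs from \(\partial_iM\times\{0\}\) to \(B\), which can be closed up using the transversals through \(B\); equivalently, one shows there is no Reeb component and no dead-end component. The step I expect to be the main obstacle is the local construction in \(Y_i\) and its gluing across the vertical annuli. One has to verify that the spiralling leaves, continued along the noncompact strips, give a genuine foliation with continuous transverse structure rather than merely a lamination with gaps. The first thing I would try is to follow Delman and Roberts' explicit model of \(T^2\times I\) with \(2n_i\) sutures, and to verify by hand that the hypothesis \(s_i\neq c\) excludes compact leaves.
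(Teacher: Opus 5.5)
The paper gives no argument of its own here. It quotes Delman--Roberts and only remarks that their proof adapts to multislopes, which is consistent with your choice to work in each $Y_i$ separately with its own $s_i$. Your overall architecture is the right one. However, the step where the noncompact extension property is actually used is misdescribed, and as written it fails.

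In your model the leaves near $\mathcal{A}_k$ spiral towards the horizontal annuli on both sides. So $\mathcal{F}|_{Y_i}$ meets $\mathcal{A}_k=c_k\times I$ in lines transverse to the $I$-fibres that spiral from one boundary circle to the other. In other words, the trace is the suspension of a homeomorphism $h_k$ of the fibre with no interior fixed point. This is the behaviour one expects for $s_i\neq c$: for rational $s_i$, after filling, the leaves near the core are meridian discs, and near the top of $Y_i$ they must be spun around the horizontal annuli.

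But the horizontal product foliation of the interstitial region $G$ adjacent to $\mathcal{A}_k$ meets $\mathcal{A}_k$ in the core circles $c_k\times\{u\}$, i.e.\ with trivial holonomy. So the two foliations do not glue, contrary to your claim that the continued leaves match the horizontal foliation of the gaps. Nor can the leaves just be continued inside a product neighbourhood of the strip. Such a neighbourhood meets $\mathcal{A}_k$ only in a thin rectangle $(\text{arc of } c_k)\times I$, and every spiralling trace line crosses that rectangle infinitely often.

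What the hypothesis actually provides is the freedom to prescribe the holonomy around the cusp. Since $B$ is cooriented, $G\cong S\times I$ for a surface $S$ having $c_k$ as a boundary circle. After an isotopy, the strip is $\rho\times I$ for a properly embedded ray $\rho\subset S$ from $c_k$ to infinity. Now:
\begin{itemize}
\item cut $G$ along $\rho\times I$ and foliate the result horizontally;
\item reglue the two copies of $\rho\times I$ by $\mathrm{id}_\rho\times h_k$.
\end{itemize}
Because $\rho$ is proper, this is a genuine foliation of $G$ extending $\mathcal{L}$. Its holonomy around $c_k$ is $h_k$, so it matches $\mathcal{F}|_{Y_i}$ along $\mathcal{A}_k$. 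The holonomy around every other cusp circle of $G$ is unchanged, so the standard gluing to the other complementary regions is undisturbed (take the rays for different annuli disjoint).

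Without such a ray, the holonomy around $c_k$ is tied to the rest of $S$. For instance, if $S$ is an annulus joining $c_k$ to another cusp, it is determined by the holonomy there, and the extension can fail. That is why the hypothesis is needed.

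Two smaller points.
\begin{itemize}
\item The sentence ``because $s_i\neq c$ \dots\ no compact torus or annulus leaf appears'' states what the explicit model on $T^2\times[0,1]$ must be shown to satisfy, rather than proving it. This includes compatibility with the alternating coorientations of the horizontal annuli.
\item In the tautness check, a transverse arc starting on $\partial_i M$ cannot be closed up. Argue instead that every leaf meeting $Y_i$ either crosses some $\mathcal{A}_k$ or limits on a horizontal annulus. Either way it is met by the closed transversals that tautness of $B$ provides through the adjacent sectors.
\end{itemize}
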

The statement above differs from that of Delman and Roberts in \cite{DRdiamond}, in that it guarantees the existence of foliations realising multislopes rather than slopes. However, their proof adapts readily to this more general setting, as communicated privately by the authors.

\subsection{Proof of Theorem \ref{thm: pers fol link}}\label{subsec:proof of pers fol thm}
In this section we prove Theorem \ref{thm: pers fol link}.
Let \(L\subset S^3\) with components \(K_1,\dots, K_n\), and fix a diagram \(D\) of it. Starting from \(D\), we define a new link \(\mathcal{L}\subset S^3\)  as follows. For each twist region, we first remove all crossings in it if it contains an even number of them, and all but one
otherwise. Then, we encircle the two strands with an unknotted circle, called a \emph{crossing circle}. The link \(\mathcal{L}\) has $n$ components, that we denote \(K'_1,\dots, K'_n\) corresponding to those of $L$, and additional components \(J_1,\dots, J_m\) given by the crossing circles, one for each twist region of $D$. Each crossing circle bounds a twice-punctured disc in the exterior of \(\mathcal{L}\), that we call \emph{crossing disc}, see Figure~\ref{fig: borr twists}.

By construction, the link \(L\) is the image of \(K'_1 \sqcup \cdots \sqcup K'_n\) after an appropriate Dehn surgery along the crossing circles. More precisely, if the twist region associated to the crossing circle $J_i$ contains $2k_i$ or $2k_i+1$ left-handed crossings, with \(k_i\geq 0\), the surgery coefficient of \(J_i\) is \(\frac{1}{k_i}\). Analogously, if the twist region contains $-2k_i$ or $-(2k_i-1)$ right-handed crossings, with \(k_i\leq 0\), the surgery coefficient is \(\frac{1}{k_i}\). 
\begin{figure}[h]
    \centering
    \includegraphics[width=0.7\textwidth]{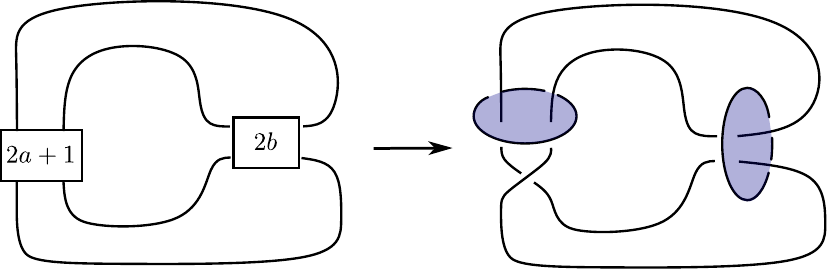}
    \caption{The link $\mathcal{L}$ associated to the diagram on the left, where $a\geq 0$ and $b\ne 0$. The twice-punctured discs bounded by the crossing circles are coloured. }
    \label{fig: borr twists}
\end{figure}
\begin{remark}
When all twist regions in $D$ have even weight, the components \(K'_1,\dots, K'_n\) of \(\mathcal{L}\) are all embedded in the projection sphere. When this does not happen, it is useful to consider an auxiliary link \(\mathcal{L}'\), whose exterior is a mutant of the exterior of \(\mathcal{L}\). The link \(\mathcal{L}'\) is obtained by cutting the exterior of \(\mathcal{L}\) along the crossing discs that are adjacent to crossings, and by regluing with an half-twist so to remove the crossing. The link \(\mathcal{L}'\) has the same crossing circles as \(\mathcal{L}\), and the remaining components (whose number might have changed) are now embedded in the projection sphere. 
\end{remark}

The overall strategy of the proof is the following. We aim to construct a branched surface in the exterior of $\mathcal{L}$, and use it, together with Theorems \ref{boundary train tracks} and \ref{cusps implies persistently foliar}, to construct taut foliations in the exterior of \(\mathcal{L}\) intersecting:
\begin{itemize}
\item the boundary components corresponding to the crossing circles in linear foliations of multislope \((\frac{1}{k_1},\dots, \frac{1}{k_m})\);
\item the boundary components corresponding to the remaining components of $\mathcal{L}$ in linear foliation of any non-trivial multislope $(s_1,\dots, s_n)$.
\end{itemize}
Such a situation would imply that $L$ is persistently foliar, proving the theorem.
\newline

Let $M$ denote the exterior of \(\mathcal{L}\), with \(\partial_i M\) being the boundary component corresponding to \(K'_i\), for \(i=1,\dots n\). 
For each of these boundary components, we fix a boundary parallel torus \(T_i=\partial_i M\times \{1\}\), where 
\[
\nu\partial_i M=\partial_i M \times [0,1],  \text{ with } \partial_i M= \partial_i M\times \{0\}
\]
is a fixed collar of $\partial_i M$.

Our first step is to define a $2$-complex $\Sigma$ in $M$, that we will later smooth into a branched surface.
The $2$-complex $\Sigma$ is defined as the union of:

\begin{itemize}[leftmargin=*]
\item The boundary parallel tori $T_1,\dots, T_n$.
\item The closures of the intersections between the crossing discs $D_1,\dots, D_m$, bounded by the components $J_1,\dots, J_m$ respectively, and \(M\setminus(\nu\partial_1 M\cup\cdots \cup \nu\partial_n M)\). By a slight abuse of notation, we still denote this discs $D_1,\dots, D_m$ and refer to them as the crossing discs.
\item The closures in $M$ of the connected components of 
$$
S^2\setminus (D_1\cup\cdots \cup D_m \cup \nu\partial_1 M\cup\cdots \cup \nu\partial_n M)$$
when the weights of all twist regions in $D$ are even; otherwise, we proceed as follows: consider the image of 
$$
D_1\cup\cdots \cup D_m \cup \nu\partial_1 M\cup\cdots \cup \nu\partial_n M
$$
in the exterior of $\mathcal{L}'$ after the mutation, take the closures of the connected components of its complement in $S^2$, and then take their preimages in $M$ under the mutation. In both  cases, we denote by $\mathcal{S}$ the collection of surfaces obtained in this way.
\end{itemize}
Notice that each element of 
$S$ is an immersed disc in 
$M$ whose interior is embedded. Figure \ref{fig: Sigma} shows an example of $\Sigma$ in the case when all twist regions have an even number of crossings. The exterior of $\Sigma$ has a very simple structure, as shown by the following lemma, proved in \cite[Lemma~5.4]{San24diag}.

\begin{figure}[]
    \centering
    \includegraphics[width=0.4\textwidth]{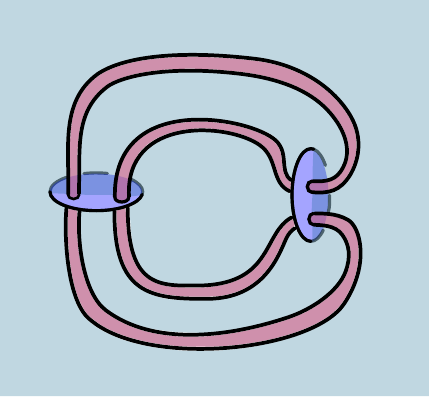}
    \caption{The $2$-complex $\Sigma$. We use different colours to show the different surfaces defined in the construction of $\Sigma$.}
    \label{fig: Sigma}
\end{figure}

\begin{lem}\label{lemma: exterior of Sigma}
The complement of a regular neighbourhood of $\Sigma$ in $M$ is homeomorphic to the disjoint union of two balls, $X_1$ and $X_2$, and products $T_1\times [0,1],\dots, T_n\times [0,1]$. \qed
\end{lem}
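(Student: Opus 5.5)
We aim to show that a regular neighbourhood of $\Sigma$ cuts $M$ into two balls together with collars of the tori $T_i$. We first dispose of the collars. Each $T_i=\partial_i M\times\{1\}$ bounds the collar $\nu\partial_i M=\partial_iM\times[0,1]$. By construction, none of the crossing discs $D_j$ or the surfaces in $\mathcal{S}$ enter the interior of these collars: the $D_j$ were truncated to $M\setminus\bigcup_i\nu\partial_iM$, and the pieces of $\mathcal{S}$ are complementary to $\bigcup_i\nu\partial_iM$. Hence removing a regular neighbourhood of $T_i$ from $\nu\partial_iM$ leaves a product $T_i\times[0,1]$, up to relabelling. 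It then remains to understand the complement of $\Sigma$ in $M\setminus\bigcup_i\nu\partial_iM$.

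For this, we first treat the case in which all twist regions have even weight. Here the components $K'_1,\dots,K'_n$ lie in the projection sphere $S^2$, and each crossing circle $J_j$ meets $S^2$ in two points, with $D_j$ a vertical twice-punctured disc. The union
\[
S^2\cup D_1\cup\cdots\cup D_m\cup\nu\partial_1M\cup\cdots\cup\nu\partial_nM
\]
separates $S^3$ into the two open half-spaces above and below $S^2$, each cut further by the upper or lower halves of the crossing discs. Each half of a crossing disc is a disc meeting $S^2$ along an arc and attached to the boundary of the half-space. Cutting a $3$-ball along properly embedded discs yields balls, and since each half-disc is boundary-parallel, cutting along it splits off a ball. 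This ball is, however, a neighbourhood of $J_j$, which has been removed: the half-disc together with the arc of $J_j$ bounds a region that is part of $\nu J_j$. So we will check that, after removing $\nu J_j$ (not part of $M$), the half-disc cuts off nothing new, and each side of $S^2$ gives exactly one ball, $X_1$ above and $X_2$ below. The cleanest way to carry this out is to isotope each upper half-disc, in the complement, to lie in a thin neighbourhood of $S^2\cup\nu J_j$. The complement of $S^2$, of the half-discs and of the neighbourhoods of the link in the upper half-space is then homeomorphic to the open upper half-space, which is a ball.

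In the general case we use the mutation $\mathcal{L}'$. The elements of $\mathcal{S}$ are defined as preimages of the complementary regions in $S^2$ after cutting along the crossing discs adjacent to crossings and regluing with a half-twist. Cutting $M$ along those crossing discs yields the same cut-open manifold as cutting the exterior of $\mathcal{L}'$ along the corresponding discs. The complement of $\Sigma$ is therefore identified with the complement of the analogous complex $\Sigma'$ in the exterior of $\mathcal{L}'$, and the even case applies.

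The main obstacle is the bookkeeping in the even case: we must verify that each half crossing disc, in the complement of $\nu J_j$, is isotopic into a neighbourhood of $S^2\cup\partial\nu J_j$, so that no extra ball components arise. We must also verify that the complementary regions of $S^2$, cut by the crossing discs and the collars, glue up to one ball on each side.
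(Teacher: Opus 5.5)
Your treatment of the collars $\nu\partial_i M$ is fine, and so is your reduction of the odd case to the mutant $\mathcal{L}'$. The gap is in the even case: the mechanism you give there is wrong, and the verification you single out as the main obstacle is false as stated.

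Let $B_+$ be the upper component of $S^3\setminus S^2$. Let $\alpha_j=J_j\cap\overline{B_+}$ be the upper arc of the crossing circle and $D_j^+$ the upper half of the crossing disc, so that $\partial D_j^+=\alpha_j\cup\beta_j$ with $\beta_j\subset S^2$.

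\begin{itemize}
\item The half-disc is not properly embedded in $\overline{B_+}$, since its edge $\alpha_j$ lies in the interior.
\item $D_j^+$ together with $\alpha_j$ does not bound anything inside $\nu J_j$: the disc reaches from $\alpha_j$ all the way down to $S^2$.
\item After removing $\nu\alpha_j$, the manifold $\overline{B_+}\setminus\nu\alpha_j$ is a solid torus, being a ball minus a boundary-parallel arc.
\item $D_j^+$ is a \emph{meridian} disc of this solid torus, because a small meridian loop of $\alpha_j$ meets $D_j^+$ exactly once.
\end{itemize}

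Hence $\partial D_j^+$ is essential in the boundary torus. So $D_j^+$ is not boundary-parallel, and it cannot be isotoped, in the complement of $\nu J_j$ with $J_j$ fixed, into a neighbourhood of $S^2\cup\partial\nu J_j$. Your intuition is in fact backwards. Without the half-discs, the upper piece would be a genus-$m$ handlebody rather than a ball; the half-discs are exactly what cuts these handles open.

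The correct argument is short. The arcs $\alpha_1,\dots,\alpha_m$ are simultaneously boundary-parallel via the disjoint discs $D_j^+$. So $\overline{B_+}\setminus\bigcup_j\nu\alpha_j$ is a genus-$m$ handlebody, the truncated $D_j^+$ form a complete system of meridian discs, and cutting along them yields a single ball.

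Equivalently, each $D_j^+$ has the free edge $\alpha_j$ and so collapses onto $\beta_j\subset S^2$. Therefore a regular neighbourhood of $S^2\cup\bigcup_j D_j^+$ in $\overline{B_+}$ is a collar of $S^2$. It may be taken to contain $\nu\alpha_j$, and also $(\nu K'_i\cup\nu\partial_i M)\cap\overline{B_+}$ since $K'_i\subset S^2$. Its complement is the ball $X_1$, and symmetrically for $X_2$.

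Your ``thin neighbourhood'' picture becomes correct only if you use an ambient isotopy that also moves $J_j$, flattening $\alpha_j$ toward $S^2$ as in the usual picture for fully augmented links.
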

We now assign coorientations to the surfaces composing $\Sigma$, which we will use to determine a smoothing of $\Sigma$. We denote by $\delta_1 M,\dots, \delta_m M$ the boundary components of $M$ corresponding to the crossing circles $J_1,\dots J_m$.
Observe that $\delta_1 M,\dots, \delta_m M$ intersect $\Sigma$ transversely, while the remaining boundary components are disjoint from $\Sigma$. We want the branched surface we obtain from $\Sigma$ to have the following two important properties: the boundary train track induced on $\delta_h M$ must realise the slope $\frac{1}{k_h}$ for $h=1,\dots, m$, and each torus $T_i$ must have two \emph{cusps}, for $i=1,\dots, n$.

\begin{defn}\label{def: cusps}
Let $T$ be one of the boundary parallel tori contained in $\Sigma$. A \emph{cusp} on $T \subset \Sigma$ is a local smoothing of $\Sigma$ in a neighbourhood of a simple closed curve $c \subset T$ such that the cusp direction along $c$ points into $\Sigma \setminus T$.
\end{defn}

\begin{figure}[]
\centering
\includegraphics[width=0.5\textwidth]{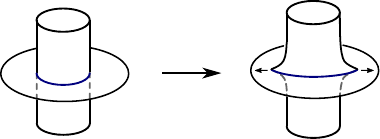}
\caption{A cusp on $T$.}
\label{fig: cusp}
\end{figure}

Figure \ref{fig: cusp} shows an example of a cusp. Note that once a coorientation of the crossing disc $D$ intersecting $T$ along $c$ is chosen, there is a unique choice of coorientation of $T\setminus c$ in a neighbourhood of $c$ such that smoothing according to these coorientations produces a cusp.
\newline

Before starting to assign coorientations, we make explicit a strong relation between the combinatorial properties of the diagram of the link \(\mathcal{L}\) and the graphs associated to the diagram $D$ of $L$ at the beginning of Section \ref{section:foliations}.
From now on we suppose that the diagram $D$ of $L$ satisfies the hypotheses of Theorem \ref{thm: pers fol link}. In particular, all twist regions contain at least two crossings, so that the graphs \(\Gamma\), \(\mathcal{G}_r\), \(\mathcal{G}_g\) and \(\mathcal{G}\) can be defined.
By construction there is a canonical bijection between \(\mathcal{S}\) and the set of connected components of \(S^2\setminus \Gamma\), and hence to the vertices of \(\mathcal{G}\). More precisely, the following properties hold:
\begin{itemize}[leftmargin=*]
    \item each element of \(\mathcal{S}\)  corresponds bijectively to a vertex of \(\mathcal{G}\);
    \item every crossing disc corresponds bijectively to an edge of \(\mathcal{G}\);
    \item an element in \(\mathcal{S}\) intersects the boundary of a crossing disc $D$ if and only if the corresponding vertex in \(\mathcal{G}\) is adjacent to the edge corresponding to $D$.
\end{itemize}
By using this dictionary, we have that a component $K_i$ of $L$ passes through an edge of \(\mathcal{G}\) if and only if the corresponding component $K_i'$ of \(\mathcal{L}\) intersects the crossing disc associated to that edge. Moreover, we can define the weight $w(D)$ of a crossing disc $D$ as the weight of the corresponding twist region (and edge in $\mathcal{G}$). The following lemma is simply a restatement in this setting of some of the hypotheses of Theorem \ref{thm: pers fol link}.
\begin{lem}\label{lem: hyp pers fol restated}
In the notations above, assume that $L$ satisfies the hypotheses of Theorem~\ref{thm: pers fol link}. Then there exists an element \(S\in \mathcal{S}\) and pairwise distinct crossing discs 
\[
D_i^j \text{ for $i=1,\dots, n$ and $j=1,2$,}  
\]
satisfying \(D_i^j\cap \partial \nu K_i'\ne \emptyset\) and $S\cap \partial D_i^j=\emptyset$ for each $i,j$, such that for each \(i\) one of the following holds:
\begin{itemize}
\item \(\max\{|w(D_i^1)|, |w(D_i^2)|\} \geq 3\);
\item every crossing disc intersecting $\partial\nu K'_i$ has even weight and \(w(D_i^1) = -w(D_i^2)\).\qed
\end{itemize}
\end{lem}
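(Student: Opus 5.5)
This is a direct translation of the hypotheses of Theorem~\ref{thm: pers fol link} through the dictionary between \(\mathcal{S}\), crossing discs and the graph \(\mathcal{G}\). The plan is to take the vertex \(S\) and the edges \(R_i^j\) provided by the theorem and let \(S\in\mathcal{S}\) and \(D_i^j\) be the corresponding objects under the canonical bijections. These bijections send vertices of \(\mathcal{G}\) to elements of \(\mathcal{S}\), and edges of \(\mathcal{G}\) (equivalently, twist regions of \(D\)) to crossing discs. Since the \(R_i^j\) are pairwise distinct edges, the crossing discs \(D_i^j\) are pairwise distinct.

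Next, I check the two incidence conditions. By the third item of the dictionary, an element of \(\mathcal{S}\) meets \(\partial D\) exactly when the corresponding vertex is adjacent to the edge of \(D\). Since no \(R_i^j\) is adjacent to \(S\), we get \(S\cap\partial D_i^j=\emptyset\). Similarly, \(R_i^j\in\mathcal{R}_i\) means that \(K_i\) passes through the twist region. As observed before the lemma, this is equivalent to \(K_i'\) intersecting the corresponding crossing disc, and hence \(D_i^j\cap\partial\nu K_i'\neq\emptyset\). The one point needing care here is that the crossing discs are truncated outside \(\nu\partial_i M\). A strand of \(K_i'\) puncturing the disc therefore produces a boundary circle of the truncated disc lying on \(\partial\nu K_i'\), so the intersection is indeed non-empty.

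Finally, the weight conditions transfer verbatim, because \(w(D)\) was defined as the weight of the corresponding edge. The set of crossing discs meeting \(\partial\nu K_i'\) corresponds exactly to \(\mathcal{R}_i\), by the same passing-through equivalence. So the first alternative \(\max\{|w(R_i^1)|,|w(R_i^2)|\}\ge 3\) becomes the first alternative of the lemma. Likewise, ``every edge in \(\mathcal{R}_i\) has even weight and \(w(R_i^1)=-w(R_i^2)\)'' becomes the second. I expect no real obstacle; the only subtle point is justifying that the passing-through correspondence is an exact equality of sets, which is needed for the ``every crossing disc'' clause.
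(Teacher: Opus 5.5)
Your proposal is correct and follows the same route as the paper. The paper offers no argument beyond calling the lemma ``simply a restatement'' of the hypotheses of Theorem~\ref{thm: pers fol link} through the dictionary: vertices of $\mathcal{G}$ correspond to elements of $\mathcal{S}$, edges to crossing discs, adjacency to meeting $\partial D$, and $K_i$ passing through an edge to $K_i'$ meeting the disc. Your extra remark about the truncated discs meeting $T_i$, which the paper tacitly identifies with $\partial\nu K_i'$, is a harmless clarification.
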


\subsubsection{Coorientations of the elements in $\mathcal{S}$.}\label{subsec: coorientations of elements in S}
We fix a colouring of the connected components of $S^2\setminus \Gamma$, and use it, together with the bijection mentioned in the previous section, to assign coorientations to the elements in $\mathcal{S}$. More precisely, recall that $X_1$ and $X_2$ denote the two balls in the exterior of $\Sigma$. For consistency, we adopt the convention that our figures are drawn from the viewpoint of a reader located inside the ball $X_1$. We impose the coorientation to point into $X_1$ (resp. into $X_2$) for sectors corresponding to green (resp. red) complementary regions of $\Gamma$. 
We will also adhere to the convention that the colour green (resp. red) indicates the positive (resp. negative) side of a sector of $\Sigma$. See Figure~\ref{fig: coorientations in S^2} for examples of this coorientation near a crossing disc, both when the disc is adjacent to a crossing and when it is not.

\begin{figure}[]
    \centering
    \includegraphics[width=0.5\textwidth]{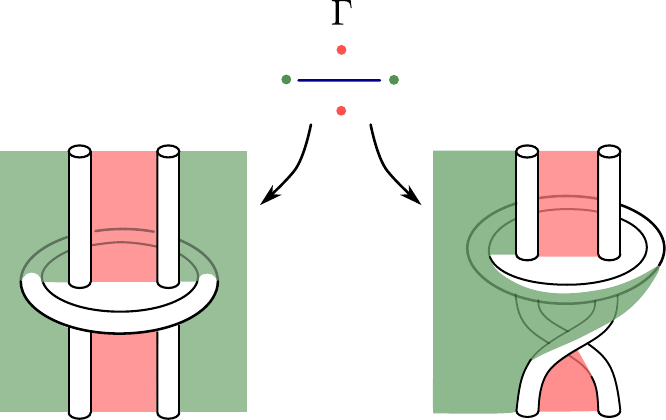}
    \caption{Coorientation of the elements in $\mathcal{S}$ induced by a colouring of the complementary regions of $\Gamma$. To simplify the figure, we have not shown the fourth element in $\mathcal{S}$ in the 2-complex on the right.}
    \label{fig: coorientations in S^2}
\end{figure}

\subsubsection{Coorientations of the crossing discs and the boundary parallel tori.}

Let us fix, for each $i=1,\dots, n$, the crossing discs $D^{1,2}_i$ given by Lemma~\ref{lem: hyp pers fol restated}. Moreover, for each $i$, we fix circles of intersections
$$
c^1_i\subset T_i\cap D_i^1 \text{ and } c^2_i\subset T_i\cap D_i^2.
$$

\begin{defn}
    We say that $D_i^j$ is \emph{$c^j_i$-cooriented} if either \(|w(D^j_i)|\geq 3\) or $w(D^j_i)=2\varepsilon$, where $\epsilon=\pm 1$, and $D^j_i$ is cooriented with the following rule. There are two arcs of intersection between $D^j_i$ and the regions in $\mathcal{S}$ pointing inside $X_1$, and only one of this two arcs intersects $c_i^j$. Denote it by $\gamma$, and orient it so that its starting point $\gamma(0)$ lies in $c^j_i$. Take any vector $V$ tangent to the projection sphere $S^2$ at $\gamma(0)$, oriented so that the cross product $\dot{\gamma}(0)\times V$ points into $X_1$. If $\varepsilon=1$, we coorient $D^j_i$ so that $V$ is positively transverse to $D^j_i$, and negative otherwise.
\end{defn}
Figure \ref{fig: coorientations disc} shows an example in the case the twist region has weight $2$. 

\begin{figure}[]
    \centering
    \includegraphics[width=0.25\textwidth]{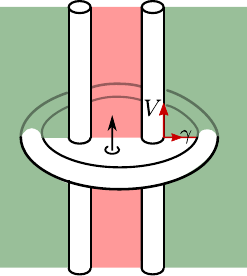}
    \caption{The figure shows the arc $\gamma$ and the vector $V$ used to coorient the crossing disc. In this case, we are assuming that the twist region associated with the disc has weight $2$. If the weight were $-2$, the disc would be cooriented in the opposite way.}
    \label{fig: coorientations disc}
\end{figure}
In the next lemma, we prescribe coorientations for the boundary parallel tori and the discs $D_i^j$.

\begin{lem}\label{lemma: existence of cusps} Assume that \(L\) satisfies the hypotheses of Theorem~\ref{thm: pers fol link}. Then the subcomplex of \(\Sigma\) consisting of the boundary-parallel tori \(T_i\) and the discs \(D_i^j\) can be smoothed into a cooriented branched surface with two cusps on each \(T_i\), for \(i=1,\dots,n\). Moreover, the discs \(D_i^j\) are \(c_i^j\)-cooriented with respect to the induced coorientation.
\end{lem}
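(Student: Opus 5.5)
We treat each torus \(T_i\) separately. The discs \(D_i^j\) are pairwise distinct and each meets only one of the tori among those where we place cusps (we place cusps of \(T_i\) only along \(c_i^1\subset D_i^1\) and \(c_i^2\subset D_i^2\)). So it suffices to fix \(i\), choose coorientations of \(D_i^1\), \(D_i^2\) and \(T_i\), and check that the smoothing near \(c_i^1\cup c_i^2\) gives two cusps. The rest of \(T_i\cap\Sigma\) only has to be compatible locally; it will be dealt with later when the remaining crossing discs are cooriented.

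The first step is to fix the coorientations of the discs. If \(|w(D_i^j)|\geq 3\), the definition of \(c_i^j\)-cooriented imposes no constraint, so we keep the coorientation free and choose it afterwards. If \(w(D_i^j)=2\varepsilon\), we coorient \(D_i^j\) by the rule in the definition, using the arc \(\gamma\) through \(c_i^j\) and the vector \(V\). As observed after Definition~\ref{def: cusps}, once the coorientation of \(D_i^j\) is fixed there is a unique coorientation of \(T_i\) in a collar of \(c_i^j\) producing a cusp along \(c_i^j\), namely the one for which the cusp direction points into \(D_i^j\).

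The torus \(T_i\) minus \(c_i^1\cup c_i^2\) consists of two annuli \(A\) and \(A'\), since \(c_i^1\) and \(c_i^2\) are parallel meridional curves. Using a single coorientation on all of \(T_i\) is not enough in general, so instead we coorient \(A\) and \(A'\) independently. Across \(c_i^1\) and \(c_i^2\) the torus \(T_i\) is then not smooth: we smooth it against \(D_i^j\), so that \(T_i\) locally looks like two sheets merging into the disc. Concretely, near \(c_i^j\) the two sides \(A\cap U\) and \(A'\cap U\) must both be cooriented compatibly with \(D_i^j\) in the smoothing that makes the cusp direction point into \(D_i^j\). This is exactly the picture of Figure~\ref{fig: cusp}, and it forces the coorientations on the two sides to be opposite relative to \(T_i\), as happens along any such branch curve. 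The step I expect to be the main obstacle is consistency around the circle: there are two curves \(c_i^1,c_i^2\), and each forces a flip of the coorientation of \(T_i\). Two flips make the prescriptions on \(A\) and \(A'\) agree, so no contradiction arises. We must still check that the flip requested at \(c_i^2\) matches the one already determined by \(c_i^1\), and this holds exactly when \(D_i^1\) and \(D_i^2\) have suitable relative coorientations.

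To settle this, we split into the two cases of Lemma~\ref{lem: hyp pers fol restated}.
\begin{itemize}
\item If \(|w(D_i^j)|\geq 3\) for some \(j\), the coorientation of that disc is free, so we choose it to match whatever the other disc forces.
\item If both weights are even and \(w(D_i^1)=-w(D_i^2)=2\varepsilon\), the rule gives opposite signs \(\varepsilon,-\varepsilon\). We then check, by tracing \(\gamma\) and \(V\) along the two discs met by \(K_i'\) with the green/red convention of Section~\ref{subsec: coorientations of elements in S}, that the opposite weights produce precisely the opposite local orientations needed for the two flips to agree. This is a local computation using Figure~\ref{fig: coorientations disc}, carried out with the strand of \(K_i'\) traversing the two discs.
\end{itemize}
Finally, \(c_i^j\)-coorientation holds by construction in every case.
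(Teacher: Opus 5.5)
Your architecture matches the paper's proof. You coorient the weight-$\pm2$ discs by the $c_i^j$-rule, and you observe that a cusp forces the two annuli of $T_i\setminus(c_i^1\cup c_i^2)$ to be cooriented oppositely: the annulus on the positive side of $D_i^1$ points out of $T_i\times[0,1]$, the other points in. When some $|w(D_i^j)|\geq 3$, you use the freedom in that disc's coorientation. All of this is fine.

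The gap is in the second case, which is the only place where something has to be proved. You call the compatibility check ``a local computation'', and the only parity you use is that $w(D_i^1)$ and $w(D_i^2)$ are even, which is vacuous once they are $\pm 2$. The check is not local. The $c_i^1$-rule and the $c_i^2$-rule are defined at two different places, and comparing them requires transporting a frame along the annulus $A$ from $c_i^1$ to $c_i^2$.

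The paper does this transport as follows. Let $\delta\subset A$ be the arc cut out by the sectors of $\mathcal{S}$ cooriented into $X_1$, and let $x(t)$ be the unit tangent field of those sectors along $\delta$. The needed fact is that $x(t)\times\dot\delta(t)$ points into $X_1$ for every $t$. This holds because every crossing disc met by $T_i$ (every edge of $\mathcal{R}_i$) has even weight: $K_i'$ then lies in the projection sphere with no half-twist between the two circles, so $\delta$ stays on $S^2$. Given that fact:
\begin{itemize}
\item $w(D_i^1)=2$ makes $\dot\delta(0)$ positive for $D_i^1$;
\item $w(D_i^2)=-2$ makes $\dot\delta(1)$ negative for $D_i^2$.
\end{itemize}
Hence $A$ lies on the positive side of both discs, and both cusps demand the same coorientation on $A$.

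Without the hypothesis on all of $\mathcal{R}_i$, your claim is false. Each odd-weight disc met by $T_i$ along $A$ inserts a half-twist that reverses the side of $X_1$ along $\delta$. The correct condition then becomes $w(D_i^1)=(-1)^{r-1}w(D_i^2)$, where $r$ is the number of such discs, as the remark following the lemma records. Given the coorientation of $D_i^1$, the compatible coorientation of $D_i^2$ is unique, and weights $2$ and $-2$ give opposite ones. So when $r$ is odd, opposite weights produce incompatible cusps.

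To close the gap you must (i) invoke the evenness of every edge in $\mathcal{R}_i$, not just of $D_i^1$ and $D_i^2$, and (ii) carry out the transport along $A$ rather than a computation at the two discs alone.
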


\begin{proof}
First, we coorient the discs with weights equal to \(\pm 2\), so that they are \(c_i^j\)-cooriented. Our aim is to create the cusps along the curves $c_i^j$. Each torus $T_i$ is cut by \(c^{1,2}_i\) in two annuli, and in order to create the cusps, we coorient these two annuli in opposite ways. If $D^1_i, D_i^2$ satisfy the first case of the statement Lemma~\ref{lem: hyp pers fol restated}, we assume without loss of generality that $|w(D_i^2)|\geq 3$. If $D^1_i$ is not already cooriented, we coorient it arbitrarily. The coorientation of $D^1_i$ and the constraint of creating a cusp on $c^1_i$ when smoothing force the coorientation on the annuli of $T_i\setminus (c^1_i\cup c^2_i)$. More precisely, the annular component of $T_i\setminus (c^1_i\cup c^2_i)$ intersecting $D_i^1$ from the positive (resp. negative) side must have coorientation pointing out of (resp. into) the product region $T_i\times [0,1]$ in the exterior of $\Sigma$. Next, these coorientations on the annuli and the constraint of creating a cusp along $c^2_i$ impose a unique coorientation on $D_i^2$.

Suppose now that $w(D_i^1) = -w(D_i^2)$, both having absolute value $2$, and that all the crossing discs intersecting $T_i$ are have even weights. As before, the coorientation of $D^1_i$ and the constraint of creating a cusp on $c^1_i$ force the coorientation on the annuli of $T_i\setminus (c^1_i\cup c^2_i)$, and the same happens with $D^2_i$ and $c^2_i$. We claim that these two coorientations coincide, and so that there is a coorientation on $T_i\setminus (c^1_i\cup c^2_i)$ that creates cusps on both discs when smoothing. Without loss of generality, suppose $w(D_i^1)=2$, and let $A$ be the annulus component of $T_i\setminus (c^1_i\cup c^2_i)$ intersecting $D^1_i$ from the positive side. Consider the arc $\delta$ on $A$, going from $c^1_i$ to $c^2_i$, given by the intersection between $A$ and the regions in $\mathcal{S}$ pointing into $X_1$. Parametrise $\delta$ with $[0,1]$, and notice that $\dot{\delta}(0)$ is positive with respect to the coorientation of $D_i^1$, being it $c_i^1$-cooriented.  Fix a metric on $M$, and consider the vector field $x(t)$ along $\delta$ defined by the unit vectors tangent to the regions in $\mathcal{S}$ intersecting $A$. This situation is illustrated in Figure~\ref{fig: compatible cusps}. Since all discs intersecting $T_i$ corresponds to twist regions with an even number of crossings it follows that:

\begin{itemize}

\item the arc $\delta$ lies on the projection sphere;
\item the cross product $x(t)\times \dot{\delta}(t)$ points into $X_1$, for all $t\in [0,1]$. 
\end{itemize}

\begin{figure}[]
    \centering
    \includegraphics[width=0.35\textwidth]{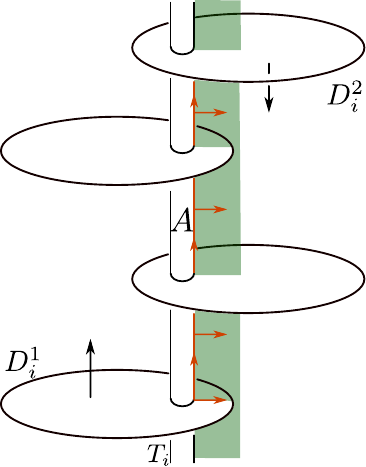}
    \caption{The annulus $A$ intersects both discs $D^1_i$ and $D_i^2$ from the positive side. The arc $\delta$ and the vector field $x(t)$ along it are coloured in light red. In this example, $D_i^1$ has weight $2$ and $D_i^2$ has weight $-2$.}
    \label{fig: compatible cusps}
\end{figure}
In particular,  $x(1)\times \dot{\delta}(1)$, points into $X_1$. Since $w(D_i^2)=-2$, and the disc $D^2_i$ is $c_i^2$-cooriented, the vector $\dot{\delta}(1)$ is negative with respect to the coorientation on $D^2_i$. In other words, the annulus $A$ intersects $D_2$ from the positive side, which is what we wanted.  
\end{proof}

\begin{remark}
The reader can verify that the proof of the previous lemma extends readily to show that the same conclusion holds in the more general setting obtained by replacing the second condition in Lemma~\ref{lem: hyp pers fol restated} with the following:
\begin{itemize}
    \item $|w(D_i^1)| = |w(D_i^2)| = 2$, and 
    $w(D_i^1) = (-1)^{r-1} \, w(D_i^2)$,
    where $r$ denotes the number of discs with odd weight intersected by $T_i$ between the circles $c_i^1$ and $c_i^2$.
\end{itemize}
\end{remark}

From Lemma \ref{lemma: existence of cusps} we know how to obtain the desired two cusps on each boundary parallel torus. The goal now is to show that we can coorient the remaining crossing discs and finally smooth $\Sigma$ so that the boundary train track of the resulting branched surface on $\delta_h M$ realises the slope $\frac{1}{k_h}$, for $h=1,\dots, m$. 

More precisely, we are interested in smoothing a subcomplex of $\Sigma$. Let $S$ be the element of $\mathcal{S}$ given by Lemma~\ref{lem: hyp pers fol restated}, and denote by $\mathcal{D}_S$ the set of crossing discs whose boundary intersects $S$. Notice that, by hypothesis, none of the $D_i^j$ belongs to $\mathcal{D}_S$.
Also notice, that since $S$ is cooriented (and oriented, using the ambient orientation), its intersection with the boundary parallel tori $T_i$,  that we denote by $\sigma$, is naturally oriented.
\begin{defn}
Let \(D\) be a crossing disc in \(\mathcal{D}_S\), fix \(p\in D\cap S\), and let \(W\) be any vector positively tangent to \(\sigma\) at \(p\). We say that \(D\) is \emph{\(S\)-cooriented} if \(W\) is negatively transverse (resp. positively transverse) when \(w(D)>0\) (resp. \(w(D)<0\)).
\end{defn}
For example, the disc in Figure \ref{fig: coorientations disc} is $S$-cooriented if $S$ is the region on the right of the picture.

\begin{lem}\label{lemma: train tracks of $B$}
In the above notations and assumptions, there exists a coorientation of $\Sigma\setminus S$, and a smoothing of it into a cooriented branched surface $B$ such that:
\begin{itemize}
\item each boundary parallel torus $T_i$ has two cusps, for $i=1,\dots n$;
\item the boundary train track $\tau_h=B\cap \delta_h M$ realises the slope $\frac{1}{k_h}$, for $h=1,\dots m$;
\item $B$ has no (half) sink discs. 
\end{itemize}
\end{lem}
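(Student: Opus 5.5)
The plan is to build the coorientation in three layers and then check the three properties one at a time. The elements of $\mathcal{S}\setminus\{S\}$ keep the coorientations fixed in Section~\ref{subsec: coorientations of elements in S}, induced by the green/red colouring. The discs $D_i^j$ and the tori $T_i$ receive the coorientations given by Lemma~\ref{lemma: existence of cusps}. This already produces two cusps on each $T_i$ and makes every $D_i^j$ a $c_i^j$-cooriented disc. Every crossing disc in $\mathcal{D}_S$ is taken to be $S$-cooriented. Since no $D_i^j$ lies in $\mathcal{D}_S$, these two prescriptions never conflict. The remaining crossing discs, which are neither in $\mathcal{D}_S$ nor among the $D_i^j$, are cooriented by a rule chosen to help the slope condition: if $|w(D)|\ge 3$ the choice is arbitrary, and if $w(D)=\pm2$ we pick the orientation that makes the local picture along its boundary match the weight-$\pm2$ model of Figure~\ref{fig: coorientations disc}. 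Once every sector is cooriented, we smooth $\Sigma\setminus S$ so that the coorientations agree along each intersection curve. The first bullet of the lemma then holds by construction.

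For the second bullet, fix a boundary component $\delta_h M$. The train track $\tau_h$ comes from two kinds of arcs: the meridian arcs of $D_h$, and the arcs where the elements of $\mathcal{S}$ meet $\delta_h M$. These are two or four arcs parallel to the longitude, depending on whether the twist region is adjacent to a crossing. Removing $S$ deletes some of these longitudinal arcs when $D_h\in\mathcal{D}_S$. I will do a local case analysis, following \cite[Section~5]{San24diag}. The cases are indexed by three things: the parity of the weight; whether the disc is adjacent to $S$, is some $D_i^j$, or is neither; and the sign of the weight. In each case the claim to verify is the same. The coorientations of the longitudinal arcs, given by the green/red rule, combined with the coorientation of the meridian arc, give a smoothing whose realised slopes form an interval of one of the forms $[0,\infty]$, $[-\infty,0]$, or an interval containing $\frac1{k}$ for every $k$ of the appropriate sign. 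For $|w|\ge3$ either coorientation of the meridian works, because $|k_h|\ge1$ and the track carries all slopes $\frac1{k}$ with $k$ of the correct sign regardless. For $|w|=2$ we have $k_h=\pm1$, and the rules above ($c$-coorientation, $S$-coorientation, or the local model) are designed exactly so that the slope $\pm1$ is carried. Checking this for the $c_i^j$-cooriented case amounts to comparing with Figure~\ref{fig: coorientations disc}.

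For the third bullet, we list the sectors of $B$ that are discs. The tori $T_i$ are cut into annuli by the crossing discs, so they never give sink discs. The crossing discs are twice-punctured, and meet $\delta_hM$, so any disc sectors among them are half-disc pieces; the cusp directions at the $c_i^j$ point into $\Sigma\setminus T_i$, so the annular sectors of the tori are not sinks. The main work is with the sectors coming from $\mathcal{S}\setminus\{S\}$, each of which is a disc. Here I follow the argument of \cite{San24diag}. Along the arcs where such a sector $S'$ meets a crossing disc, the green/red colouring puts two adjacent regions on opposite sides, so the cusp direction points into $S'$ on one side and out of it on the other. Connectivity of $\mathcal{G}_r$ and $\mathcal{G}_g$, together with the removal of $S$, will then be used to show that every sector has at least one outward cusp. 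I expect the sink-disc verification to be the main obstacle. It has to be compatible with the forced coorientations near the cusps on the $T_i$, and it requires a careful local analysis at triple points where a crossing disc, a torus $T_i$, and an element of $\mathcal{S}$ meet. I would first reduce to the knot case of \cite[Lemma~5.7]{San24diag}, and then only check the new configurations near $c_i^j$ in the weight $\pm2$ case.
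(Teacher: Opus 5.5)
Your coorientation scheme matches the paper's. The proof of the third bullet, however, has a genuine gap: you treat the smoothing as determined by the coorientations (``we smooth $\Sigma\setminus S$ so that the coorientations agree''). In the paper, the smoothing near each crossing disc is a real choice. Near each of the three arcs of $D\cap\mathcal{S}$, the $\mathcal{S}$-sheets are slid along $D$, creating two arcs of double points, as in \cite[Propositions~4.10, 4.12]{San24diag}. The content of the lemma is that this choice can be made well:
\begin{itemize}
\item crossing discs that carry no cusp create no half sink discs;
\item the danger is at the discs $D_i^j$, where the cusp direction along $c_i^j$ points into $D_i^j$, and some smoothings that do give the correct $\tau_h$ still create (half) sink discs;
\item the $c_i^j$-coorientation of Lemma~\ref{lemma: existence of cusps} is exactly what guarantees that some smoothing meets both requirements.
\end{itemize}
Since your smoothing has no freedom, there is nothing to adjust if such a sink appears. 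Your closing remark about checking configurations near $c_i^j$ points at the right place, but the mechanism is missing. Instead you put the main work on the $\mathcal{S}$-sectors and on connectivity of $\mathcal{G}_r,\mathcal{G}_g$. The paper uses connectivity only to show that $X$ is a product sutured ball (Proposition~\ref{prop: properties of B}), not to rule out sink discs.

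Two supporting claims are also inaccurate. First, the $\mathcal{S}$-regions also meet $T_i$, along the curves $\sigma$, so the sectors of $T_i$ are not just annuli cut out by the crossing discs. Second, at an even-weight twist region the two sides of an arc of $D\cap\mathcal{S}$ have the same colour: either it is the same side region on both sides, or the two end regions, which share a colour. So your ``opposite sides'' argument for outward cusps does not apply as stated.

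Your slope argument also misassigns the roles of the coorientation rules. The $c_i^j$-coorientation is not what makes $\tau_h$ carry $\pm1$. What the train tracks need is the $S$-coorientation of the discs in $\mathcal{D}_S$, which the paper imposes whatever the weight, because removing $S$ deletes a branch of $\tau_h$. So your claim that for $|w|\ge 3$ either coorientation works is false for $D\in\mathcal{D}_S$. Nor can $|k_h|\ge 1$ explain a threshold at $|w|=3$, since weight $\pm3$ also gives $k_h=\pm1$. The condition $|w|\ge3$ in the definition of $c_i^j$-cooriented concerns avoiding half sink discs, not the slope. Conversely, discs that are neither in $\mathcal{D}_S$ nor among the $D_i^j$ can be cooriented arbitrarily, so your extra weight-$\pm2$ rule is superfluous, though harmless.

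To repair the plan:
\begin{itemize}
\item keep your coorientations;
\item make the smoothing near each crossing disc an explicit choice;
\item verify the slope condition disc by disc, using $S$-coorientation for $D\in\mathcal{D}_S$;
\item carry out the half-sink-disc analysis on the discs $D_i^j$ near $c_i^j$, using $c_i^j$-coorientation.
\end{itemize}
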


\begin{proof}
Notice that, once the coorientations for the surfaces composing $\Sigma\setminus S$ have been fixed, there is a natural way to smooth it into a cooriented branched surface, except for the neighbourhoods of the crossing discs. In fact, each disc intersects the elements in $\mathcal{S}$ along three arcs that do not respect the local model for branched surfaces. This problem is solved by explicitly describing a way to slide the surfaces in $\mathcal{S}$ along the discs in a neighbourhood of each intersection arc, thereby creating two arcs of double points.

In Section~\ref{subsec: coorientations of elements in S} we have already explained how to coorient the regions in $\mathcal{S}\setminus S$. The crossing discs $D_i^j$ and the boundary parallel tori are cooriented as prescribed by Lemma~\ref{lemma: existence of cusps}. We are left with the coorientations of the remaining crossing discs. If \(D\in \mathcal{D}_S\) we coorient it so that it is $S$-cooriented. If $\partial D\cap S= \emptyset$, then we coorient it arbitrarily. We now have to choose a smoothing of $\Sigma \setminus S$ near the crossing discs. This is done by following closely the arguments explained in \cite[Propositions~4.10, 4.12]{San24diag}, to which we refer for details. Roughly speaking, we have to show that we can choose a smoothing whose boundary train tracks realise the desired slopes, without creating half sink discs. It is not difficult to show that -- and here it is important that the discs in \(\mathcal{D}_S\) are \(S\)-cooriented -- it is always possible to desingularise the complex \(\Sigma \setminus S\) near each crossing discs to obtain train tracks realising the desired slopes. If the crossing disc does not contain a cusp (\ie is not one of the \(D^j_i\)) no half sink discs are created during this procedure. Otherwise, there are smoothings that produce the correct train tracks while containing sink discs; but, since the discs are \(c_i^j\)-cooriented, one can show that there is always at least one choice of smoothing that satisfies both requests.
\end{proof}
In the next section we will need a slight variation of the previous lemma. In fact, in the same way, using the arguments of \cite[Propositions~4.10, 4.12]{San24diag}, one proves the following.

 \begin{add}[Addendum to Lemma \ref{lemma: train tracks of $B$}]\label{addendum}
The conclusion of Lemma \ref{lemma: train tracks of $B$} holds also if there exists a  coorientation of the subcomplex of $\Sigma$ comprising the boundary parallel tori and the discs $D_i^j$ as in Lemma~\ref{lemma: existence of cusps} so that one (or more) of the following  conditions occur:
\begin{itemize}
\item The discs \(D_i^j\) are not necessarily pairwise distinct, and whenever \(D_i^j=D_{i'}^{j'}\) for some \((i,j)\ne (i',j')\), the weight \(w(D_i^j)\) is odd.
\item The region \(S\) intersects the boundary of some of the discs \(D_i^j\), and whenever this happens, we have that $D_i^j$ is $S$-cooriented with the induced coorientation\footnote{Notice that if \(|w(D_i^j)| = 2\) this is equivalent to asking that \(S\) intersects \(c_i^j\).}.
\end{itemize}
\end{add}
Examples where this addendum is useful appear in Section \ref{sec: pers fol two-bridge}; see Figures \ref{figure: m=3 case 1}, \ref{figure: m=3 case 2}, \ref{figure: m=3 case 3} and \ref{figure: fibered case}.

The following properties of the branched surface $B$ are proven by applying verbatim the arguments in \cite[Proposition~4.12, 4.14]{San24diag}, to which we refer the reader. We only point out that here is where the hypothesis that the graphs $\mathcal{G}_r$ and $\mathcal{G}_g$ are connected is used, since it plays a crucial role in proving that $X$ is a product sutured ball. 
\begin{prop}\label{prop: properties of B}
The branched surface $B$ is laminar, and the complement $M\setminus {\rm int}(N_{B})$ decomposes as a sutured manifold $X\sqcup Y_1 \sqcup \cdots \sqcup Y_n $, where:

\begin{itemize}
\item $X$ is a product sutured ball. 
\item $(Y_i,\partial_v Y_i)=(\nu \partial_i M, \mathcal{A}^1_i\cup \mathcal{A}^2_i)$, where $\mathcal{A}^1_i\cup \mathcal{A}^2_i\subset  \partial_i M \times \{1\}$ are two annuli whose slope is a meridian of $K'_i$. \qed
\end{itemize}
\end{prop}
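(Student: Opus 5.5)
The plan is to read off the complementary regions of $N_B$ from Lemma~\ref{lemma: exterior of Sigma}, and then to check Li's laminarity criteria one at a time against this explicit description.

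\textbf{Step 1: the complement of $\Sigma\setminus S$.} By Lemma~\ref{lemma: exterior of Sigma}, the complement of a regular neighbourhood of $\Sigma$ consists of two balls $X_1,X_2$ and the products $T_i\times[0,1]$. The sector $S$ lies on the boundary of both balls, since its two sides face $X_1$ and $X_2$ respectively. Removing $S$ therefore glues $X_1$ and $X_2$ along a disc, giving a single ball $X$. The product regions are untouched, because $S$ meets no torus $T_i$ in its interior. They become the regions $Y_i=\nu\partial_i M$.

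\textbf{Step 2: the sutures on $Y_i$.} By Lemma~\ref{lemma: train tracks of $B$}, each $T_i$ carries exactly two cusps, along the circles $c_i^1,c_i^2$. These circles are components of $T_i\cap D_i^j$. A crossing disc meets $\partial\nu K_i'$ in meridian circles of $K_i'$, so each $c_i^j$ is a meridian. Elsewhere the smoothing near $T_i$ has cusp directions pointing away from $Y_i$, so these contribute no vertical boundary to $Y_i$. Hence $\partial_v Y_i$ consists of two annuli $\mathcal{A}_i^1,\mathcal{A}_i^2$, one thickening each $c_i^j$, and both have meridional slope.

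\textbf{Step 3: $X$ is a product sutured ball.} I must show that $\partial_v X$ is a single annulus, so that $R_\pm(X)$ are two discs. The sutures of $X$ are the cusp curves on $\partial X$ produced at the crossing discs, together with the arcs where sectors are slid over one another. The plan follows \cite[Proposition~4.12]{San24diag}:
\begin{itemize}
\item The sectors cooriented into $X_1$ are the green regions. The sectors cooriented into $X_2$ are the red regions, together with the relevant sides of the crossing discs.
\item Sectors of the same colour are glued along the smoothed crossing discs. Which sectors get glued is recorded exactly by the edges of $\mathcal{G}_g$ and $\mathcal{G}_r$.
\item Since $\mathcal{G}_g$ and $\mathcal{G}_r$ are connected, the positive part of $\partial X$ is connected, and so is the negative part.
\item An Euler characteristic count on the sphere $\partial X$ then forces each of $R_+(X)$ and $R_-(X)$ to be a disc.
\end{itemize}
I expect this to be the main obstacle. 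It requires tracking carefully how the smoothing near each crossing disc, including those in $\mathcal{D}_S$ and the discs $D_i^j$ carrying cusps, splits the boundary of $X$. The point to verify is that no extra sutures or extra components of $R_\pm$ appear; in particular, the removed sector $S$ must only join $R_+$ to $R_-$ along one suture. I would first check this locally at one crossing disc of each type, and then globalise using the connectivity of $\mathcal{G}_g$ and $\mathcal{G}_r$.

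\textbf{Step 4: laminarity.} I then verify Li's conditions \cite{Li_laminar}.
\begin{itemize}
\item Lemma~\ref{lemma: train tracks of $B$} gives that $B$ has no sink discs and no half sink discs.
\item Incompressibility of $\partial_h N_B$ and the absence of monogons follow from the product structures found above. In $X$, $R_\pm$ are discs separated by a single annulus, so no monogon exists. In $Y_i$, $R_\pm$ are annuli parallel to $\partial_i M$, essential in $M$, and each component of $R_\pm$ meets $\partial_v Y_i$ nontrivially.
\item For the Reeb condition: a torus carried by $B$ would have to be built from the $T_i$ together with disc sectors. The cusps on the $T_i$ and the product structure of $X$ rule out a carried torus bounding a solid torus, and hence rule out Reeb branched surfaces.
\end{itemize}
Together with Steps 1--3, this proves both the laminarity of $B$ and the stated decomposition of $M\setminus\operatorname{int}(N_B)$.
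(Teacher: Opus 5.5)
Your overall route matches the paper, which proves this by appeal to \cite[Propositions~4.12, 4.14]{San24diag} and notes only that connectivity of $\mathcal{G}_g$ and $\mathcal{G}_r$ is what makes $X$ a product sutured ball. Steps~1 and~2 and the sink-disc part of Step~4 are fine.

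The gap is in Step~3, the step the paper singles out. You pass from ``sectors of the same colour are glued along the crossing discs'' and connectivity of the two graphs to connectivity of the positive and negative parts of $\partial X$. In effect you treat $R_\pm(X)$ as the green and red faces, but on $\partial X$ the sign of a face is not determined by its colour. A green sector is cooriented into $X_1$, so its side facing $X_1$ points into $X$ and its side facing $X_2$ points out of $X$; the reverse holds for red. Hence $R_{\mathrm{in}}(X)$ consists of the green faces seen from $X_1$, the red faces seen from $X_2$, and the crossing-disc and torus faces of the same sign. The graphs control each ball separately. A suture separates exactly those adjacent faces that have opposite sign. So a local check at a crossing disc $D$ shows that the two regions meeting $\partial D$ (the endpoints of the edge of $\mathcal{G}$ given by $D$) are joined, with no suture, to one and the same face of the half of $D$ lying in $X_1$, and likewise in $X_2$. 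Connectivity of $\mathcal{G}_g$ and $\mathcal{G}_r$ therefore makes the in- and out-parts of $\partial X_1$ and of $\partial X_2$ connected, before $S$ is removed, once the torus faces, the remaining disc faces and the pieces of $\delta_h M$ are attached.

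What is missing is the merge across $S$. Gluing two product sutured balls along a disc lying in $R_+$ of one and $R_-$ of the other is not automatically a product; if the disc lies in the interior of those faces you get three sutures. Moreover $S$ may be a cut vertex of its graph. You must check that removing $S$ reconnects faces along $\partial S$, so that the green-from-$X_1$ and red-from-$X_2$ pieces of $R_{\mathrm{in}}$ become a single component, and symmetrically for $R_{\mathrm{out}}$. This happens in two places:
\begin{itemize}
\item across the former slits of the discs in $\mathcal{D}_S$, where the same-sign faces of the two halves of $D$ become adjacent;
\item along the arcs $S\cap T_i$.
\end{itemize}
Only then does your sphere argument yield a single suture. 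Your remark that $S$ should ``join $R_+$ to $R_-$ along one suture'' has this backwards: removing $S$ merges pieces of the same sign.

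Separately, the Reeb bullet in Step~4 is only asserted. One way to close it is to note that every sector of $B$ has a closed positive transversal, which is easy once $X$ is connected. A carried torus bounding a solid torus separates $M$, so no such transversal can cross it.
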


\begin{thm}\label{thm: general pers thm}
Let $(s_1,\dots, s_n)$ be any non-trivial multislope on $\partial_1 M\sqcup \cdots \sqcup \partial_n M$ and let $(r_1, \dots, r_m)$ be a rational multislope on $\delta_1 M\sqcup \cdots \sqcup \delta_m M$. Suppose that the multislope $(r_1,\dots, r_m)$ is realised by the boundary train track $\partial B$, and that $r_h$ is not the longitude defined by the disc $D_h$, for $h=1,\dots, m$. 
Then, there exists a coorientable taut foliation in $M$ intersecting the boundary transversely in a linear foliation of multislope $(s_1,\dots, s_n,r_1,\dots, r_m)$. 
\end{thm}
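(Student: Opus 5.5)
The plan is to combine the branched surface $B$ of Lemma~\ref{lemma: train tracks of $B$} with Theorem~\ref{boundary train tracks} and Theorem~\ref{cusps implies persistently foliar}. We work first in $M$ with only the crossing-circle boundary components $\delta_1M,\dots,\delta_mM$ treated as the tori $T_1,\dots,T_h$ of Theorem~\ref{boundary train tracks}. By Proposition~\ref{prop: properties of B}, $B$ is laminar. We also need the complement of each boundary train track $\tau_h=B\cap\delta_hM$ to consist of bigons. This should follow from the explicit local model of the smoothing near a crossing disc, as in \cite{San24diag}: $\tau_h$ is built from the meridional arcs coming from the two sheets of $\mathcal{S}$ and the longitudinal curve $\partial D_h$, smoothed with cusps that cut the torus into bigons.

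Next we check that $B$ carries no torus bounding a solid torus in $M(r_1,\dots,r_m)$. A torus carried by $B$ would have to be built from sectors of $B$ with nonnegative weights satisfying the branch equations. The only closed surfaces available are essentially the boundary parallel tori $T_i$, but the two cusps on each $T_i$ prevent it from being carried on its own. Any other closed carried surface would have to pass through crossing discs and the sectors of $\mathcal{S}$. I would exclude such tori using the product structure of $X$ and the $Y_i$ (Proposition~\ref{prop: properties of B}), together with the hypothesis $r_h\neq$ longitude of $D_h$. That hypothesis rules out the carried disc $D_h$ capping off to a sphere or to a torus compressing in the filling.

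Given these checks, Theorem~\ref{boundary train tracks} yields an essential lamination $\mathcal{L}$ fully carried by $B$ that meets each $\delta_hM$ in curves of slope $r_h$. To fill $M\setminus\mathcal{L}$ by a foliation, we apply Theorem~\ref{cusps implies persistently foliar} to the boundary components $\partial_1M,\dots,\partial_nM$, which are disjoint from $B$. Its hypotheses require four things.
\begin{enumerate}
\item $B$ must be taut. The complementary region $X$ is a product sutured ball and the $Y_i$ are product-type, so the exterior is taut. The transverse curves through every sector should come from the coorientation conventions, which always point from $X_2$ to $X_1$ or across the tori.
\item Each $Y_i$ must have the form $\partial_iM\times[0,1]$ with two annuli $\mathcal{A}_i^1,\mathcal{A}_i^2$. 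This is exactly Proposition~\ref{prop: properties of B}.
\item The annuli $\mathcal{A}_i^1,\mathcal{A}_i^2$ must satisfy the noncompact extension property.
\item The excluded slope must be the meridian. The cores of the annuli are meridians of $K_i'$, and $(s_1,\dots,s_n)$ is non-trivial, so this holds.
\end{enumerate}
The theorem then gives a cooriented taut foliation extending $\mathcal{L}$ with multislope $(s_1,\dots,s_n)$ on the $\partial_iM$. The leaves of $\mathcal{L}$ near $\delta_hM$ are already transverse in a linear foliation of slope $r_h$, possibly after thickening and filling the product regions near $\delta_hM$ by a product foliation.

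The main obstacle is the noncompact extension property for the cusp annuli. I would argue as Delman--Roberts do for their diamond-taut branched surfaces. From the cusp along $c_i^j$, the vertical annulus $\mathcal{A}_i^j$ abuts the disc $D_i^j$ and the adjacent sectors of $\mathcal{S}$. Following the cusp direction into $D_i^j$ and then onward, the branched surface should allow a half-infinite strip $[0,1]\times[0,\infty)$ between two adjacent leaves of $\mathcal{L}$. Such a strip exists because $\mathcal{L}$ can be chosen, by splitting $B$ along that sector, so that two leaves diverge near the cusp and never reconverge. This is where the $c_i^j$-coorientation and the absence of sink discs enter: an infinite path following cusp directions exists and never closes up into a compact region.
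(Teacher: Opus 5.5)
There is a genuine gap in how you handle the crossing-circle tori $\delta_1M,\dots,\delta_mM$.

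\textbf{Where your argument breaks.} You apply Theorem~\ref{cusps implies persistently foliar} directly in $M$, where $B$ still meets the $\delta_hM$. That theorem's tautness hypothesis (the complement of $N_B$ is a taut sutured manifold) is formulated without accounting for boundary tori that $B$ meets. More importantly, its conclusion only controls the foliation on the tori $\partial_iM$ disjoint from $B$. It says nothing about the foliation on $\delta_hM$.

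Your sentence that the leaves near $\delta_hM$ are ``already transverse in a linear foliation of slope $r_h$'' is therefore unsupported. Theorem~\ref{boundary train tracks} only tells you that $\mathcal{L}\cap\delta_hM$ is a lamination by parallel closed curves of slope $r_h$. The complementary regions of $\mathcal{L}$ that touch $\delta_hM$ (in particular the one containing $X$) are filled in by the Delman--Roberts construction with no guarantee of transversality to $\delta_hM$. Even granting transversality, the gaps between consecutive closed leaves on $\delta_hM$ could be foliated non-linearly, for instance by Reeb annuli. So the prescribed slopes $r_h$, which are the content of the statement beyond the cusp-annulus machinery, are exactly what your plan leaves unproved.

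\textbf{The missing idea: Dehn fill first.} This is what the paper does.
\begin{enumerate}
\item Following the proof of \cite[Theorem~2.5]{Li_boundary}, split $B$ near each $\delta_hM$ so that $\partial B$ becomes parallel curves of slope $r_h$.
\item Cap these curves off with meridian discs of the filling solid tori. This gives a branched surface $B(r_1,\dots,r_m)$ in $M'=M(r_1,\dots,r_m)$ that is disjoint from $\partial M'=\partial_1M\sqcup\cdots\sqcup\partial_nM$.
\item Show $B(r_1,\dots,r_m)$ is taut, including its new meridian-disc sectors; the paper imports this from the first part of the proof of \cite[Theorem~4.16]{San24diag}.
\item Show $B(r_1,\dots,r_m)$ is laminar, from Li's proof. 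It then fully carries a lamination $\mathcal{L}$ in $M'$ by \cite{Li_laminar}.
\item Modify $\mathcal{L}$ so that the annuli $\mathcal{A}_i^1,\mathcal{A}_i^2$ have the noncompact extension property.
\item Apply Theorem~\ref{cusps implies persistently foliar} verbatim in $M'$, whose complementary regions are $X'\sqcup Y_1\sqcup\cdots\sqcup Y_n$.
\end{enumerate}
Because the meridian discs are sectors of $B(r_1,\dots,r_m)$, the cores of the filling solid tori can be taken transverse to the resulting foliation. Drilling them out returns a foliation of $M$ meeting each $\delta_hM$ in a linear foliation of slope $r_h$, so the boundary behaviour comes for free.

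Once you reorganise the argument this way, your remaining checks go through at the same level of detail as the paper's sketch, with the tautness and laminarity checks now made for $B(r_1,\dots,r_m)$ in $M'$. These are the bigon condition, the product structure of the $Y_i$ with meridional annuli, the use of non-triviality of $(s_1,\dots,s_n)$, and the noncompact extension property.
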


\begin{proof}
The proof follows closely that of \cite[Theorem~4.16]{San24diag}, and we just give a sketch. We start by using Theorem~\ref{boundary train tracks}; more precisely, as in the proof \cite[Theorem~2.5]{Li_boundary}, we modify the branched surface $B$ in a neighbourhood of $\delta_1 M\sqcup \cdots \sqcup \delta_m M$ by splitting so that it intersects these boundary components in parallel closed curves of slope $r_1,\dots, r_m$ respectively. This implies that $B$ can be extended to a branched surface $B(r_1,\dots,r_m)$ in the manifold $M'$ obtained by filling the boundary components $\delta_h M$ along the slopes $r_h$, for $h=1,\dots, m$. This branched surface is defined by gluing to the boundary of $B$ the meridian discs of the glued solid tori. This branched surface is taut by arguing as in the first part of the proof of \cite[Theorem~4.16]{San24diag}. Moreover, since $B$ is laminar, it follows from the proof of \cite[Theorem~2.5]{Li_boundary} that $B(r_1,\dots,r_m)$ is laminar, and hence fully carries a lamination $\mathcal{L}$ in $M'$ by \cite[Theorem~1]{Li_laminar}. 

The manifold $M'$ has $n$ boundary components, naturally identified with $\partial_1 M,\dots, \partial_n M$, and 
$$
M'\setminus {\rm int}(N_{B'})=X'\sqcup Y_1 \sqcup \cdots \sqcup Y_n, 
$$ 
where the $Y_i$ are introduced in the statement of Proposition \ref{prop: properties of B}. Proceeding as in the proof of \cite[Theorem~4.16]{San24diag}, we can modify the lamination $\mathcal{L}$ so that, for each $i=1,\dots,n$, the annuli $\mathcal{A}^1_i, \mathcal{A}^2_i$ satisfy the noncompact extension property relative to ($B(r_1, \dots, r_m), \mathcal{L}$). We conclude by Theorem~\ref{cusps implies persistently foliar} that for every non-trivial multislope $(s_1,\dots, s_n)$ on $\partial_1 M\sqcup \cdots \sqcup \partial_n M$, the lamination $\mathcal{L}$ can be extended to a coorientable taut foliation in $M'$ intersecting the boundary transversely in a foliation of such multislope. 
\end{proof}

We are finally ready to prove Theorem \ref{thm: pers fol link}.
\begin{proof}[Proof of Theorem \ref{thm: pers fol link}]
Let $(s'_1,\dots, s'_n)$ be any non-trivial multislope on the link $L$. Consider the non-zero integers $k_h$, for $h=1,\dots, m$, such that filling the boundary components $\delta_1 M,\dots, \delta_m M$ along the slopes $\frac{1}{k_1},\dots, \frac{1}{k_m}$ yields the exterior of the link $L$. Via this identification, the fixed multislope on $L$ corresponds to a non-trivial multislope $ (s_1,\dots, s_n)$ on $\partial_1 M \sqcup \cdots \sqcup \partial_n M$. By Lemma \ref{lemma: train tracks of $B$}, the boundary train track $\partial B$ realises the multislope $(\frac{1}{k_1},\dots, \frac{1}{k_m})$, and hence we conclude by using Theorem~\ref{thm: general pers thm}.
\end{proof}

\section{Persistently foliar two-bridge links}\label{sec: pers fol two-bridge}
The main goal of this section is to apply Theorem \ref{thm: pers fol link} to the study of two-bridge links, and prove the following result. Recall the links $L_{n,k}$, $L'_{n,k}$ defined in Figure~\ref{fig: L and L'}. 

\begin{thm}\label{thm: persistently fol two-br}
Let $L$ be a two-bridge link, and suppose that $L$ is not isotopic to any the links \(L_{n,k}, L'_{n,k}\), or their mirrors, for \(n,k\geq 0\). Then $L$ is persistently foliar. In particular, every non-trivial surgery on $L$ supports a coorientable taut foliation.
\end{thm}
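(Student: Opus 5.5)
By Remark~\ref{rem: L_n,k other description}, the hypothesis on $L$ is equivalent to saying that $L$ is not isotopic, up to mirror, to $L(2,\dots,2,2z)$ for any non-zero $z\in\Z$. Since being persistently foliar is invariant under mirroring, I may work with either $L$ or its mirror. I split into the fibered and non-fibered cases. In each case the diagrammatic Lemmas~\ref{lemma: nonfib two-bridge links std diagrams}, \ref{lemma: std diagram for m=3} and \ref{lemma: fibered case std diagram} give a description $L=L(a_1,\dots,a_m)$ from a short explicit list. For each entry I will take the standard diagram of Figure~\ref{fig: two bridge ori} and verify the hypotheses of Theorem~\ref{thm: pers fol link}, or of its strengthening via Addendum~\ref{addendum}.

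\textbf{Generic case ($m\ge 5$, conditions 1a)--2b)).} Every twist region has at least two crossings because $|a_i|\ge 2$. The twist regions alternate between ``horizontal'' ones (odd index) and ``vertical'' ones (even index). Accordingly, $\Gamma$ has a linear structure, and $\mathcal{G}_g$ and $\mathcal{G}_r$ become two paths, or path-like graphs with multiple edges, carrying the odd-indexed and the even-indexed regions respectively. In particular both graphs are connected.

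I will then identify which component passes through which region. Since every odd-index $a_j$ is even, both components pass through each even-indexed (vertical) region. For the odd-indexed regions I will track the parity pattern directly.

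Next I choose the vertex $S$ and the edges $R_i^j$.
\begin{itemize}
\item Under 1b) or 2b), I take regions with $|a|\ge 3$ as the ``large'' edges, giving the first alternative of Theorem~\ref{thm: pers fol link} for each component.
\item Under 1a) or 2a), one component may only see regions of weight $\pm2$. For it I use the second alternative, with the two regions of opposite sign $a_{i_1},a_{i_2}$, after checking that all regions it passes through have even weight.
\end{itemize}
The vertex $S$ should be taken at an end of the path graph, away from the chosen edges. This is why $m\ge5$ is needed: it leaves enough twist regions that some region is non-adjacent to all four chosen edges. Some edges may have to be shared between components. When that happens, I will use the first bullet of Addendum~\ref{addendum}, noting that shared edges have odd weight only in the fibered reductions.

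\textbf{Exceptional short families.} These are $L(-2,2\beta-1,2)$, $L(3,2,\dots,2,3)$, $L(2\alpha-1,-2,2\gamma-1)$, and the three fibered families of Lemma~\ref{lemma: fibered case std diagram}. Here the diagrams have too few twist regions for a free vertex $S$ to exist. I will treat each family by drawing $\Gamma$ and $\mathcal{G}$ explicitly and allowing $S$ to touch some $D_i^j$. I will then check that the induced coorientation makes those discs $S$-cooriented, as required in the second bullet of Addendum~\ref{addendum}. In the weight-$\pm2$ cases this amounts to choosing $c_i^j$ to meet $S$. The odd entries ($3$, $2\beta-1$, $\alpha+1$ odd) give regions with $|w|\ge3$ that both components pass through. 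These are exactly the shared discs allowed by the first bullet of the Addendum.

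\textbf{Main obstacle.} I expect the hardest step to be these low-complexity cases. There, the coorientation forced by the cusps (Lemma~\ref{lemma: existence of cusps}) must be made compatible with $S$-coorientation, and this requires a case-by-case sign check, possibly using the refined parity condition in the remark after Lemma~\ref{lemma: existence of cusps}. Once all cases are verified, Theorem~\ref{thm: pers fol link} together with the Addendum yields persistent foliarity. The final sentence of the statement then follows by capping off the leaves with meridian discs.
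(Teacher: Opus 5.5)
Your architecture is the paper's: normal forms from Lemmas~\ref{lemma: nonfib two-bridge links std diagrams}, \ref{lemma: std diagram for m=3} and \ref{lemma: fibered case std diagram}, Theorem~\ref{thm: pers fol link} for $m\ge 5$, and Addendum~\ref{addendum} for the short families. However, the two combinatorial facts that the generic case actually rests on are misstated, so that case does not go through as written.

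\emph{The graphs are stars, not paths.} In the standard diagram, all odd-indexed twist regions join one common region to pairwise distinct leaf regions, and likewise for the even-indexed ones. So each of $\mathcal{G}_g,\mathcal{G}_r$ is a star: one central vertex, all other vertices of valence one. This is what produces $S$. Two edges in the same star share the centre, so four chosen edges meet at most $6$ vertices. On the other hand $\mathcal{G}$ has $m+2$ vertices, since $\Gamma'$ is $4$-valent with $m$ vertices and hence has $m+2$ faces. So a free vertex exists once $m\ge 5$. Your ``take $S$ at an end of the path'' gives no such count.

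\emph{The incidence pattern is the reverse of what you state.} Because $a_j$ is even for odd $j$, one component $K_1$ meets only the odd-indexed regions, all of which have even weight, while $K_2$ meets all $m$ regions. Thus both components meet the odd-indexed regions, and only $K_2$ meets the even-indexed ones. This is exactly what makes the second alternative of Theorem~\ref{thm: pers fol link} available, with no further parity check: for $K_1$ in case 1a), and for $K_2$ in case 2a) when all $a_j$ are even. It also gives $|\mathcal{R}_1|=(m+1)/2\ge 3$ and $|\mathcal{R}_2|=m$. Hence four distinct edges can always be chosen, and the generic case needs neither sharing nor the Addendum.

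The same pattern corrects your picture of the short families. In $L(-2,2\beta-1,2)$ and $L(\alpha+1,3,\pm 2)$, the odd entry sits at index $2$. Its region is therefore met only by $K_2$ (through both strands), not by both components as you claim. There are only three twist regions, and the Addendum allows shared discs only if they have odd weight. So $K_1$ must use $R_1,R_3$, and both cusps of $T_2$ must sit on the two circles of $T_2\cap D_2$ of that single odd-weight disc. This is where the first bullet of the Addendum is used. The remaining work is checking the second bullet for the region $S$, which necessarily meets cusped discs.

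Also, not every exceptional family lacks a free vertex. $L(3,2,\dots,2,3)$ of length at least $6$, and the third fibered family (where $m\ge 5$), are handled directly by Theorem~\ref{thm: pers fol link} and Lemma~\ref{lemma: graph connect + existence of S}. Only $L(3,2,2,3)$ and the length-three families need the Addendum. There the paper's verification, like the one you propose, is a case-by-case check of coorientations on explicit pictures of $\mathcal{L}$.
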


To prove Theorem \ref{thm: persistently fol two-br}, we first consider the non-fibered two-bridge links satisfying the hypotheses of the statement, and then the fibered ones. We remark that in \cite{San23twobridge} it is shown that all non-trivial surgeries on a fibered two-bridge link satisfying the hypotheses of Theorem \ref{thm: persistently fol two-br} support coorientable taut foliations. In this case, Theorem~\ref{thm: persistently fol two-br}, besides providing a different proof, covers also the case of non-rational multislopes.
\newline

In Section \ref{subsec: fol on Lnk and L'nk}, that is the last part of this section, we construct taut foliations on some surgeries on the links \(L_{n,k}\) and \(L'_{n,k}\). The results proved there will be used both in the classification of their \(L\)-space surgeries in Section \ref{sec: classification} and in the applications to satellite knots in Section \ref{sec: satellite}. Although the proofs in Section \ref{subsec: fol on Lnk and L'nk} also use branched surfaces, they rely on a different construction -- natural for fibered links -- from the one presented in the previous section. Such a construction has been used quite often in the literature; for this reason, we will not give full details, but provide references.
\newline

We fix some notations. Consider a two-bridge link $L=L(a_1,\dots, a_m)$, denote its component by $K_1$ and $K_2$, and let $D$ be the diagram associated to it as in Figure \ref{fig: two bridge ori}. As in the proof of Lemma \ref{lemma: nonfib two-bridge links std diagrams}, we can always suppose that $|a_i|\geq 2$ for each $i=1,\dots, m$. We can hence consider the graphs $\mathcal{G}_r$, $\mathcal{G}_g$ and  $\mathcal{G}=\mathcal{G}_r\cup \mathcal{G}_g$ associated to $D$. Notice that, by definition,  the diagram $D$ has a twist region containing $|a_j|$ crossings, for each $j=1,\dots, m$. We denote this twist region and -- with slight abuse of notation -- the corresponding edge in $\mathcal{G}$ by $R_j$. 

\begin{lem}\label{lemma: graph connect + existence of S}
The graphs $\mathcal{G}_g$ and $\mathcal{G}_r$ are connected. Moreover, if $m\geq 5$, for any choice 
$$
R^1_i, R^2_i\in \mathcal{R}_i \text{ for $i=1,2$ }
$$
of edges of $\mathcal{G}$, there exists a vertex $S$ of $\mathcal{G}$ not adjacent to any of them.     
\end{lem}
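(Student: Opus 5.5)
I will describe the graphs $\mathcal{G}_g$ and $\mathcal{G}_r$ explicitly for the standard two-bridge diagram $D$ of $L(a_1,\dots,a_m)$ in Figure~\ref{fig: two bridge ori}, and then read off both claims from that description. In this diagram the twist regions $R_1,\dots,R_m$ sit in a horizontal row, alternating between the \lq\lq upper'' and \lq\lq lower'' pair of strands; odd-indexed regions are twists of one pair and even-indexed regions of the other. Collapsing each $R_j$ to a blue arc gives $\Gamma$. Its complementary regions are then the regions of the diagram away from twists, which are of two kinds.

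\emph{Long horizontal regions.} These are the regions lying between consecutive twist regions of the same parity (above, below and in the middle of the row). Together with the two end regions, they alternate in colour along the row.

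\emph{The outer region.} This is the unbounded region.

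Concretely, I expect the green regions to be a chain $V_0,V_1,\dots$ in which the edge $R_j$ with $j$ odd joins two consecutive vertices. The red regions should similarly form a chain whose consecutive vertices are joined by the $R_j$ with $j$ even. The outermost regions contribute the end vertices. I will check this by tracing the checkerboard colouring of the $4$-valent graph $\Gamma'$ along the row: each blue arc meets two regions of one colour only at its endpoints, and these are the two regions it separates. This yields $\mathcal{G}_g$ and $\mathcal{G}_r$ as two paths, possibly with one extra vertex or a cycle closed by the outer region. In either case both graphs are connected, which proves the first claim.

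For the second claim, assume $m\ge 5$ and fix the four edges $R_1^1,R_1^2,R_2^1,R_2^2$ (not necessarily distinct). A vertex $S$ is excluded only if it is an endpoint of one of these edges. Each edge has exactly two endpoints, so at most $8$ vertices are excluded. Since each edge joins two distinct regions, $\mathcal{G}$ has at most $m$ edges on roughly $m+2$ to $m+4$ vertices, and the bound alone does not suffice for small $m$. I will therefore argue by counting along the paths. $\mathcal{G}$ is a union of two paths with about $m$ edges in total, and four edges of a path cover at most $8$ vertices. For small $m$ such as $5$, $6$ or $7$, I will use that the two paths have lengths $\lceil m/2\rceil$ and $\lfloor m/2\rfloor$ (up to the end-vertex convention) with $\lceil m/2\rceil+\lfloor m/2\rfloor+2$ or more vertices. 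Four edges can then cover all vertices only if they form a \lq\lq perfect matching'' of both paths. That requires both paths to have an even number of vertices and disjoint chosen edges, and these conditions impose a parity constraint on $m$ that I will check is incompatible with $m\ge 5$. Alternatively, I will cite the Euler characteristic argument of \cite[Lemma~4.3]{San24diag}, which applies once $m>4n-2=6$. The cases $m=5,6$ (in fact only $m$ odd matters for two components) would then be handled by the explicit path count.

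The main obstacle is the first step, namely getting the exact shape of $\mathcal{G}_g$ and $\mathcal{G}_r$ right. This means determining which colour the outer and end regions receive, and whether each graph is a path or a cycle. The small case $m=5$ is delicate because there the count is tight. I plan to settle it by drawing the diagram for $m=5$ explicitly and listing the vertices. I expect $\mathcal{G}_g$ to be a path with $4$ vertices and $3$ edges and $\mathcal{G}_r$ a path with $3$ vertices and $2$ edges, or similar. I will then check that any $4$ of the $5$ edges leave some vertex uncovered: four edges can cover at most $8$ of the $7$ vertices, but covering everything would force a perfect matching on a $3$-vertex path, which is impossible.
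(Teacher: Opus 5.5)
There is a genuine gap, and it is at the step you flagged as the main obstacle. $\mathcal{G}_g$ and $\mathcal{G}_r$ are not paths, and the small-$m$ count you build on the path picture fails.

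\textbf{The edge convention.} The edge of $\mathcal{G}$ coming from a twist region joins the two regions that meet its blue arc \emph{only at its endpoints}. These are the two regions flanking the twist region along its length, namely the ones pierced by the crossing circle; this matches the dictionary between $\mathcal{S}$ and $\mathcal{G}$ in Section~\ref{subsec:proof of pers fol thm}. They are not ``the two regions it separates'': the arc separates the two regions of the \emph{other} colour, the ones before and after it along the row.

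\textbf{The correct shape.} In the two-bridge diagram, all $R_j$ with $j$ of a fixed parity share one flanking region, a long region running along the whole row that no twist region cuts. Their other flanking regions are pairwise distinct, because they are separated by the twist regions of the opposite parity. Hence $\mathcal{G}_g$ and $\mathcal{G}_r$ are stars whose branches have length one, which is what the paper uses.

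\textbf{How the paper concludes.} Connectivity of stars is immediate. Any two edges in the same component share the centre, so four edges are incident to at most $6$ vertices. On the other hand, $\mathcal{G}$ has exactly $m+2\geq 7$ vertices, not ``$m+2$ to $m+4$'': its vertices are the faces of the connected $4$-valent graph $\Gamma'$ with $m$ vertices, and there are $m+2$ of them.

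\textbf{Why your path model cannot work.} Your count for $m=5$ is wrong even inside your own model. Covering $7$ vertices with $4$ edges only requires total overlap at most one; it does not require perfect matchings. Take the green path $R_1,R_3,R_5$ on four vertices and the red path $R_2,R_4$ on three vertices. Choose $R_1^1=R_1$, $R_1^2=R_5$, $R_2^1=R_2$, $R_2^2=R_4$. This is admissible when $K_1$ passes through the odd-indexed twist regions and $K_2$ through all of them, as in Section~\ref{sec: pers fol two-bridge}. These four edges cover every vertex, since $R_2$ and $R_4$ share the middle vertex of the red path.

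So in your model the second claim would be false for $m=5$. Your fallback to \cite[Lemma~4.3]{San24diag} only reaches $m\geq 7$ and does not cover this case. The missing ingredient is the star structure.
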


\begin{proof}
That the graphs are connected is immediate, see Figure \ref{fig: example theorem} for an example. More precisely, each of the two graphs is a star-shaped tree whose branches have length one\footnote{That is to say, it is a tree with one vertex of valency $v$ and all the others of valency one.}. For this reason, each pair of edges in the same connected component of $\mathcal{G}$ shares a vertex, and hence four edges of $\mathcal{G}$ are adjacent to at most  $6$ vertices. The graph $\mathcal{G}$ has $m+2$ vertices in total, and under the hypothesis $m\geq 5$, we can find the desired vertex $S$.
\end{proof}

\subsection{When \texorpdfstring{$L$}{L} is non-fibered}  Let $L$ be a two-bridge link, and assume that it is not isotopic to \(L_{n,k}, L'_{n,k}\) or their mirrors. By Remark \ref{rem: L_n,k other description}, this is equivalent to saying that $L$ is not isotopic to any link \(L(2,\dots, 2,2z)\), with \(z\in\mathbb{Z}\). If in addition $L$ is non-fibered, then we can write $L=L(a_1,\dots, a_m)$ as described in the statement of Lemma \ref{lemma: nonfib two-bridge links std diagrams}. Since $a_j$ is even for each odd $j$, one of the two components of $L$, say $K_1$, has the property that $K_1$ passes through a twist region $R_j$ if and only if $j$ is odd. In particular, every edge in \(\mathcal{R}_1\) has even weight. We also point out that the component $K_2$ passes through \emph{all} twist regions of $D$.

\begin{prop}\label{prop: nonfibered  m geq 5}
Let $L$ be a non-fibered two-bridge link, and assume that it is not isotopic to $L_{n,k}, L'_{n,k}$ or their mirrors. Write $L=L(a_1,\dots, a_m)$ as in Lemma \ref{lemma: nonfib two-bridge links std diagrams}, and suppose that $|m|\geq 5$. Then $L$ is persistently foliar.
\end{prop}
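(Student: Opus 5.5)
The plan is to apply Theorem~\ref{thm: pers fol link} directly to the standard diagram $D$ of $L=L(a_1,\dots,a_m)$ from Figure~\ref{fig: two bridge ori}, with $m\geq 5$ and the sequence chosen as in Lemma~\ref{lemma: nonfib two-bridge links std diagrams}. Since $|a_j|\geq 2$ for all $j$, every twist region has at least two crossings. By Lemma~\ref{lemma: graph connect + existence of S}, $\mathcal{G}_r$ and $\mathcal{G}_g$ are connected. Moreover, once four edges $R_1^1,R_1^2\in\mathcal{R}_1$ and $R_2^1,R_2^2\in\mathcal{R}_2$ are fixed, there is a vertex $S$ not adjacent to any of them. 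So all that remains is to choose the four pairwise distinct edges so that, for each component $K_i$, one of the two alternatives of Theorem~\ref{thm: pers fol link} holds.

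Recall that $K_1$ passes exactly through the $R_j$ with $j$ odd, all of which have even weight, while $K_2$ passes through every $R_j$. Since $m\geq 5$ is odd, there are at least three odd indices. The choice is made case by case according to which of the conditions 1a)--2b) of Lemma~\ref{lemma: nonfib two-bridge links std diagrams} holds.
\begin{itemize}
\item[1a)] Take the even index $i$ with $|a_i|>2$ and set $R_2^1=R_i$, with $R_2^2$ any other region. For $K_1$, take odd indices $i_1,i_2$ with $a_{i_1},a_{i_2}$ of opposite sign.
\item[1b)] For $K_2$, use the even $R_i$ with $|a_i|>2$. For $K_1$, use the odd $R_{i'}$ with $|a_{i'}|>2$ together with another odd region.
\item[2a), 2b)] The large odd region $R_i$ (or the two large odd regions $R_i,R_{i'}$) can serve either $K_1$ or $K_2$, since both components pass through odd regions.
\end{itemize}
If the opposite-sign pair for $K_1$ has weights $\pm 2$, the condition $w(R_1^1)=-w(R_1^2)$ must hold exactly. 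So I would first try to pick a pair of odd indices with weights exactly $2$ and $-2$, or else a pair containing a weight of absolute value at least $3$, in which case the first alternative applies anyway.

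The main obstacle is the bookkeeping in case 2a) with a single large odd region. There $R_i$ is needed for $K_2$, which needs some weight of absolute value at least $3$ unless opposite signs are available among its regions. But $K_2$ also passes through even regions, which may have odd weight, so the second alternative is unavailable for $K_2$. Hence $R_i$ must go to $K_2$. Then $K_1$ must use two other odd regions with opposite weights $\pm 2$, and these must be distinct from $R_i$; the indices $i_1,i_2\neq i$ of 2a) are what provides this. If $i_1$ or $i_2$ is even, I would redistribute: give $R_i$ to $K_1$ (satisfying the first alternative), and give $K_2$ the pair containing an even region together with some other region. $K_2$ then needs a large weight among its regions, and I expect to handle the residual subcases by swapping roles or by falling back on the Addendum to Lemma~\ref{lemma: train tracks of $B$}. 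Case 2b) is easy: give one large odd region to each component and pad each with any other distinct region. Once the edges are fixed, Theorem~\ref{thm: pers fol link} gives the conclusion.
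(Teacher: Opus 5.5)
Your strategy matches the paper's: apply Theorem~\ref{thm: pers fol link} to the standard diagram, take connectivity and the vertex $S$ from Lemma~\ref{lemma: graph connect + existence of S}, and choose the four edges according to which of 1a)--2b) holds. Cases 1a), 1b), 2b), and 2a) with $i_1,i_2$ both odd go through as in the paper. Your hedging about the pair for $K_1$ is unnecessary: two nonzero even weights of opposite sign are either $2,-2$ or include one of absolute value at least $4$.

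The gap is case 2a) when $i_1$ or $i_2$ is even, which you leave unresolved. You assume the second alternative is unavailable for $K_2$ because even-indexed regions may have odd weight. That assumption fails precisely here, and this is the missing observation. Case 2a) already supplies an odd index $i$ with $|a_i|>2$, so if some even-indexed coefficient had $|a_j|>2$ you would be in case 1b). Hence, in the residual subcase, every even-indexed $a_j$ equals $\pm 2$. Then every twist region has even weight, and the second alternative of Theorem~\ref{thm: pers fol link} is available for $K_2$.

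The paper then assigns the edges as follows:
\begin{itemize}
\item $K_1$ gets $R_i$ and one more odd region distinct from $R_{i_1}$ (possible since there are at least three odd indices).
\item $K_2$ gets $R_{i_1},R_{i_2}$. Either one of these has weight of absolute value at least $3$, or both are $\pm 2$ with opposite signs, so $w(R_{i_1})=-w(R_{i_2})$.
\end{itemize}

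Without this observation, your fallbacks do not close the case. For a sequence like $(4,2,2,-2,2)$, the only region with $|w|\geq 3$ is $R_1$. Giving $R_1$ to $K_2$ leaves $K_1$ with $R_3,R_5$, both of weight $2$, so neither alternative holds. Giving $R_1$ to $K_1$ leaves $K_2$ with only weights $\pm 2$, so you need the all-even alternative. Addendum~\ref{addendum} does not help either. It only relaxes distinctness for shared discs of odd weight and allows $S$ to meet the chosen discs.
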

\begin{proof}
The proof consists in showing that the diagram $D$ coming from the description of $L$ in the statement satisfies the hypotheses of Theorem \ref{thm: pers fol link}. The graphs $\mathcal{G}_g$ and $\mathcal{G}_r$ are connected by Lemma \ref{lemma: graph connect + existence of S}. We now prove the existence of four distinct edges 
\[R_i^j\in \mathcal{R}_i \text{ for $i=1,2$ and $j=1,2$}\]
satisfying the hypotheses of Theorem~\ref{thm: pers fol link}. By Lemma \ref{lemma: nonfib two-bridge links std diagrams}, we have four possibilities:
\begin{itemize}[leftmargin=*]
\item [$1 a$)] In this case, there exist an even index \(i\) such that \(|a_i|>2\), and two odd indices \(i_1,i_2\) such that \(a_{i_1}\) and \(a_{i_2}\) have opposite signs. We consider \(R_{i_1},R_{i_2}\in \mathcal{R}_1\) and \(R_i\in \mathcal{R}_2\) the corresponding twist regions, and fix \(R'\) any edge of \(\mathcal{R}_2\) different from the three already chosen. Then \(R_i\) and \(R'\) satisfy the first condition of Theorem~\ref{thm: pers fol link}, since
\[
|w(R_i)|=|a_i|>2,
\]
and \(R_{i_1}, R_{i_2}\) satisfy either the first or the second condition. If \(|a_{i_1}|=|a_{i_2}|\), we use that all edges in \(\mathcal{R}_1\) have even weights.
\newline

\item[$1b$)] In this case there are indices $i$ even and $i'$ odd such that $|a_i|>2$ and $|a_{i'}|>2$. We consider 
$$R_{i'}, R'\in \mathcal{R}_1 \text{ and } R_i,R\in \mathcal{R}_2,
$$ where all four edges are distinct. Both pairs satisfy the first condition of Theorem \ref{thm: pers fol link}.
\newline    
    
\item[$2a$)] We have an odd index \(i\) such that \(|a_i|>2\), and indices \(i_1\neq i\) and \(i_2\neq i\) such that \(a_{i_1}\) and \(a_{i_2}\) have opposite signs. If \(i_1\) and \(i_2\) are both odd, then the corresponding edges of \(\mathcal{G}\) both lie in \(\mathcal{R}_1\), and we choose \(R_i,R\in \mathcal{R}_2\), with \(R\) distinct from the previously chosen edges. If \(i_1\) and \(i_2\) are both even, and we are not in case $1b$), then all the \(a_j\) are even. In this case, we take
\[
R_i,R\in \mathcal{R}_1 \quad \text{and} \quad R_{i_1},R_{i_2}\in \mathcal{R}_2,
\]
with all edges distinct.
Finally, if one index is odd, say \(i_1\), and the other is even, we first look at the even-indexed coefficients \(a_j\). If at least one of them satisfies \(|a_j|>2\), then we are in case $1b$), and we conclude. Otherwise, they all have absolute value \(2\), and we proceed as above, choosing
\[
R_i,R\in \mathcal{R}_1 \quad \text{and} \quad R_{i_1},R_{i_2}\in \mathcal{R}_2.
\]
In either case, the chosen pairs satisfy the hypotheses of Theorem~\ref{thm: pers fol link}.
\newline

\item[$2b$)] There exist distinct odd indices \(i \neq i'\) such that \(|a_i|>2\) and \(|a_{i'}|>2\). In this case, we consider
\[
R_i,R\in \mathcal{R}_1 \quad \text{and} \quad R_{i'},R'\in \mathcal{R}_2,
\]
chosen so that all four edges are distinct, and we conclude, since both pairs satisfy the first condition in Theorem~\ref{thm: pers fol link}.
\end{itemize}
By Lemma \ref{lemma: graph connect + existence of S} we deduce the existence of a vertex $S$ not adjacent to any of the four edges chosen as above. Therefore, we can apply Theorem \ref{thm: pers fol link} and conclude the proof.
\end{proof}

With the next proposition, we cover the case $m=3$.
\begin{prop}\label{prop: nonfibered $m=3$}
Suppose that $L=L(2a,2b,2c)$ is not fibered and not isotopic to $L_{n,k}, L'_{n,k}$ or their mirrors. Then $L$ is persistently foliar.
\end{prop}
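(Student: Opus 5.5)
By Lemma~\ref{lemma: std diagram for m=3}, after possibly replacing $L$ by its mirror, $L$ falls into one of four cases. I will produce, in each case, a diagram to which Theorem~\ref{thm: pers fol link} applies, possibly in the relaxed form given by Addendum~\ref{addendum}.

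In the first case, $L=L(a_1,\dots,a_m)$ with $m\geq5$ and one of the conditions 1a)--2b) holds. Then Proposition~\ref{prop: nonfibered  m geq 5} applies verbatim, so there is nothing more to do. It remains to treat the three diagrams with exactly three twist regions:
\[
L(-2,2\beta-1,2),\qquad L(3,2,\dots,2,3),\qquad L(2\alpha-1,-2,2\gamma-1).
\]
I expect the second family to be handled with $m\geq5$ twist regions, except in small cases.

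For the two $m=3$ diagrams I would proceed directly. First, Lemma~\ref{lemma: graph connect + existence of S} still gives that $\mathcal{G}_g$ and $\mathcal{G}_r$ are connected. However, $\mathcal{G}$ now has only $5$ vertices and $3$ edges, so four pairwise distinct edges $R_i^j$ cannot exist. This is exactly why the Addendum is needed: I will allow an edge of odd weight to be shared, with $D_1^j=D_2^{j'}$.

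In $L(-2,2\beta-1,2)$ the middle twist region has odd weight with $|2\beta-1|\geq3$, and both components pass through it. I would choose
\[
R_1^1=R_1,\quad R_1^2=R_2,\quad R_2^1=R_2,\quad R_2^2=R_3,
\]
or a similar assignment adapted to which components pass through which regions. Then each pair contains the edge of weight $\geq3$, so the first condition holds. Similarly, in $L(2\alpha-1,-2,2\gamma-1)$ the outer regions have odd weight of absolute value $\geq3$, so each component can be assigned one large odd edge together with the middle edge.

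In each of these cases I must then choose the vertex $S$. Since $\mathcal{G}$ is a union of two stars, $S$ should be a leaf vertex; if $S$ meets some $D_i^j$, I would verify the second condition of the Addendum, namely that the induced coorientation makes that disc $S$-cooriented. The freedom available is the arbitrary coorientation of one disc in each pair in Lemma~\ref{lemma: existence of cusps}, together with the choice of the cusp circles $c_i^j$. I would use this freedom to match the $S$-coorientation rule; in the weight-$\pm2$ case this amounts to requiring that $S$ meets $c_i^j$.

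For $L(3,2,\dots,2,3)$ of even length, the number of twist regions is at least $4$. When it is at least $5$, I would pick the two weight-$3$ end regions, one for each component if possible, or shared via the Addendum since they have odd weight, together with two further regions. Lemma~\ref{lemma: graph connect + existence of S} then gives a vertex $S$ directly. The length-$4$ case $L(3,2,2,3)$ I would treat by hand, exactly like the $m=3$ cases above.

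The main obstacle will be the small diagrams, where $S$ is necessarily adjacent to some chosen disc. There one has to check, via the pictures, that the coorientations forced by the cusp construction in Lemma~\ref{lemma: existence of cusps} are compatible with $S$-coorientation. I would do this by explicitly drawing $\Sigma$ for each family and tracking signs as in the proof of Lemma~\ref{lemma: existence of cusps}. If a sign fails, I would swap which component the shared odd disc is assigned to, or swap the roles of the red and green colourings, before concluding from Theorem~\ref{thm: pers fol link} as modified by the Addendum.
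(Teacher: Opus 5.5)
Your overall architecture matches the paper's. You reduce via Lemma~\ref{lemma: std diagram for m=3}, dispose of the first case with the $m\geq 5$ proposition, and apply Theorem~\ref{thm: pers fol link} directly to $L(3,2,\dots,2,3)$ of length at least $6$. For $L(3,2,2,3)$ and the two length-three families you use Addendum~\ref{addendum} together with an explicit check. However, in the two length-three families, which is where all the content lies, the cusp configurations you specify are not admissible.

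\textbf{The family $L(-2,2\beta-1,2)$.} Here $a_1,a_3$ are even, so the parity argument the paper uses before the $m\geq 5$ proposition applies. One component, $K_1$, passes only through $R_1$ and $R_3$. Both strands of the odd middle region $R_2$ lie on $K_2$, so it is a self-twist region of $K_2$.
\begin{itemize}
\item Your claim that both components pass through the middle region is false, and the assignment $R_1^2=R_2$ is impossible.
\item $K_1$ has no disc of weight $\geq 3$ available at all: its only discs have weights $-2$ and $2$. So the first condition of Theorem~\ref{thm: pers fol link} can never hold for $K_1$.
\item You need the second condition for $K_1$: every disc meeting $T_1$ has even weight and $w(R_1)=-w(R_3)$. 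Your plan never invokes it, and your fallback moves (swapping which component gets the shared odd disc, swapping colourings) cannot supply it.
\item On the $K_2$ side, the sharing needed is within one component, not across components. $K_2'$ punctures $D_2$ twice, so $T_2\cap D_2$ consists of two circles. Both cusps of $T_2$ can be placed on $D_2$, i.e.\ $D_2^1=D_2^2$, which the first bullet of the Addendum allows because $w(D_2)$ is odd.
\end{itemize}
This is the configuration the paper draws for this case. The region $S$ then necessarily meets cusp-carrying discs and is handled by the second bullet of the Addendum.

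\textbf{The family $L(2\alpha-1,-2,2\gamma-1)$.} Here both components do pass through all three regions. But giving each component ``one large odd edge together with the middle edge'' puts a cusp of $T_1$ and a cusp of $T_2$ on the middle disc, which has weight $-2$. The Addendum does not cover this: it only permits coincidences $D_i^j=D_{i'}^{j'}$ of odd weight. A weight-$\pm 2$ disc carrying a cusp must be $c_i^j$-cooriented with respect to that particular cusp circle, which is what is used to avoid half sink discs in the smoothing step. Two such requirements coming from different tori need not be compatible.

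To repair this, only the odd outer discs may be shared. For instance, let the middle disc carry a cusp of one torus only, and let an outer disc carry one cusp of each torus. You must then still verify two things:
\begin{itemize}
\item the coorientations propagated through the cusp constraints, as in Lemma~\ref{lemma: existence of cusps}, are consistent;
\item the unavoidable region $S$ meets only discs that are $S$-cooriented.
\end{itemize}
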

\begin{proof}
By Lemma \ref{lemma: std diagram for m=3} and Proposition \ref{prop: nonfibered  m geq 5}, we have to consider three cases.

If $L=L(-2,2\beta-1,2)$, with $|2\beta|>2$, then, up to mirror, we can assume $2\beta-1\geq 3$. In this case, the diagram $D$ of $L$ associated to this description has only three twist regions, so we cannot apply Theorem \ref{thm: pers fol link}. However, if we consider the link $\mathcal{L}$ obtained from $D$ as explained at the beginning of Section \ref{section:foliations}, we obtain the link in Figure~\ref{figure: m=3 case 1}, and as explained in Addendum~\ref{addendum}, the conclusion of Lemma~\ref{lemma: train tracks of $B$} holds also in this case. Hence, we can conclude that $L$ is persistently foliar.

\begin{figure}[]
   \centering   \includegraphics[width=0.48\textwidth]{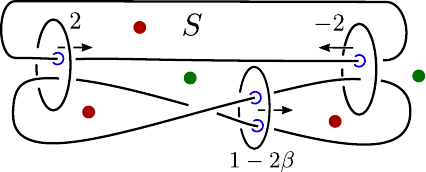}
    \caption{The picture shows the link $\mathcal{L}$ and the weights of the crossing circles. We also describe how to assign coorientations to the surfaces of $\mathcal{S}$, using green/red dots, and the crossing discs. We use blue circles to indicate the curves of intersection $c_i^j$ where we want to create the cusps of the branched surface. Notice that by Addendum \ref{addendum} we can create two cusps on the discs with odd weight, and we can remove the region $S$, although it intersects two of the crossing discs containing cusps.}
    \label{figure: m=3 case 1}
\end{figure}

Assume that $L$ is isotopic, up to mirror, to $L=L(3,2,\dots, 2,3)$, where the sequence has even length. In this case we denote, as in the proof of Proposition \ref{prop: nonfibered  m geq 5}, by $R_1,R_2,\dots$ the edges of the graph $\mathcal{G}$ associated to the diagram of $L$ coming to this description. Then, the edges
$$
R_1,R_2\in \mathcal{R}_1 \text{ and } R_3,R_4\in\mathcal{R}_2
$$
satisfy the hypotheses of Theorem \ref{thm: pers fol link}. If the sequence $(3,2,\dots,2,3)$ has length at least $6$, then we can deduce, by Lemma \ref{lemma: graph connect + existence of S}, that $L$ is persistently foliar. Otherwise, we have $L=L(3,2,2,3)$ and we conclude by using Addendum \ref{addendum}; see Figure \ref{figure: m=3 case 2}.

\begin{figure}[]
   \centering   \includegraphics[width=0.55\textwidth]{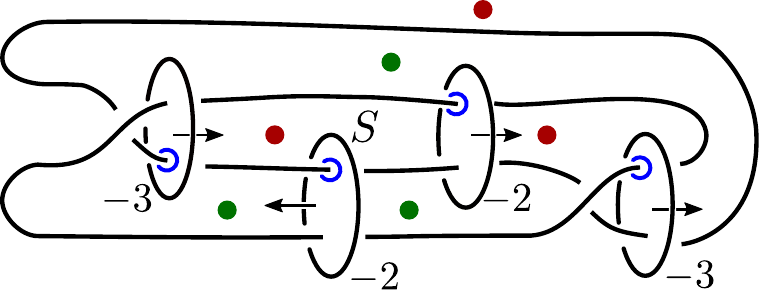}
    \caption{The figure shows the link $\mathcal{L}$ associated to a diagram of $L=L(3,2,2,3)$, with weights of the crossing circles and coorientations. By proceeding as in Lemma \ref{lemma: existence of cusps} we can coorient the boundary parallel tori so to create cusps on the curves $c_i^j$, denoted with blue cirlces. Since the region $S$ satisfies the hypothesis of Addendum \ref{addendum}, we can conclude that $L$ is persistently foliar.}
    \label{figure: m=3 case 2}
\end{figure}

Finally, we have the case when $L$ is isotopic, up to mirror, to $L(2\alpha-1,-2,2\gamma-1)$, with $2a,2\gamma>2$. Also in this case, we can use Addendum \ref{addendum} to deduce that $L$ is persistently foliar; see Figure \ref{figure: m=3 case 3}.
\end{proof}

\begin{figure}[]
   \centering   \includegraphics[width=0.45\textwidth]{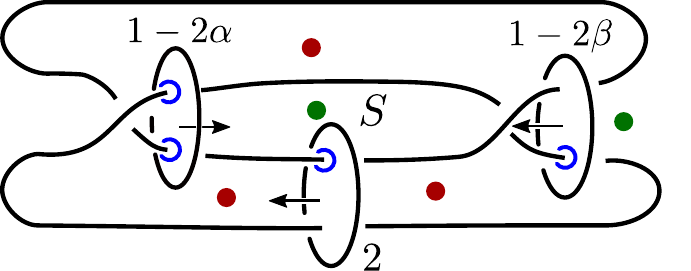}
    \caption{The link $\mathcal{L}$ and the coorientations of the surface in $\mathcal{S}$ and the crossing discs. The coorientations of the discs force, as in the proof of Lemma \ref{lemma: existence of cusps}, local coorientations on the boundary parallel tori in a neighbourhood of the curves $c_i^j$, and the reader can check that these local coorientations extend compatibly. We conclude using Addendum \ref{addendum} to show that $L$ is persistently foliar.}
    \label{figure: m=3 case 3}
\end{figure}

\subsection{When \texorpdfstring{$L$}{L} is fibered} 
We conclude the proof of Theorem~\ref{thm: persistently fol two-br} by studying the case when $L$ is fibered.

\begin{prop}\label{prop: pers fol fibered case}
     Suppose that $L$ is a fibered two-bridge link and assume that $L$ is not isotopic to $L_{n,k}, L'_{n,k}$ or their mirrors. Then $L$ is persistently foliar.
\end{prop}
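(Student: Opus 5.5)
The plan is to reduce to the normal forms given by Lemma~\ref{lemma: fibered case std diagram} and to treat each of them with Theorem~\ref{thm: pers fol link} or its Addendum~\ref{addendum}, as was done for non-fibered links. By Remark~\ref{rem: L_n,k other description}, the hypothesis that $L$ is not isotopic to $L_{n,k}$, $L'_{n,k}$ or their mirrors says exactly that $L$ is not, up to mirror, of the form $L(2,\dots,2,2z)$. Lemma~\ref{lemma: fibered case std diagram} then leaves two possibilities. Either $L=L(a_1,\dots,a_m)$ with a description satisfying one of the conditions of Lemmas~\ref{lemma: nonfib two-bridge links std diagrams} and \ref{lemma: std diagram for m=3}, or, up to mirror, $L$ is one of $L(\alpha+1,3,\pm2)$, $L(\alpha+1,4,\beta+1)$, or $L(\alpha+1,3,a_{\alpha+2},\dots,\pm3,\pm(\beta+1))$. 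In the first case the proofs of Propositions~\ref{prop: nonfibered  m geq 5} and \ref{prop: nonfibered $m=3$} go through verbatim. They only used the diagrammatic conditions 1a)--2b) and the explicit forms of Lemma~\ref{lemma: std diagram for m=3}, and never used non-fiberedness; mirroring preserves persistent foliarity. So only the three exceptional families remain.

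For the long family $L(\alpha+1,3,a_{\alpha+2},\dots,a_{m-\beta-1},\pm3,\pm(\beta+1))$, I would take $K_1$ to be the component passing through the odd-indexed twist regions and $K_2$ the one passing through all regions. Then I would choose edges with large weight: the two regions of weight $3$ are adjacent to regions of weight $\alpha+1\geq3$ and $\beta+1\geq3$ respectively. I would assign $R_1^1=R_1$ (weight $\alpha+1\ge 3$) and $R_1^2$ the last region (weight $\beta+1\ge3$), which are both odd-indexed. For $K_2$, I would take the two regions of weight $\pm3$. All four are distinct, and each pair satisfies the first condition of Theorem~\ref{thm: pers fol link}. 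The graphs are connected by Lemma~\ref{lemma: graph connect + existence of S}, and when $m\ge5$ the same lemma provides the vertex $S$. The parity bookkeeping for which component passes through which region needs checking, because the first coefficient is now odd. If the assignment fails, I would swap the roles of the pairs; the first condition only needs one weight at least $3$ in each pair.

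The short families $L(\alpha+1,3,\pm2)$ and $L(\alpha+1,4,\beta+1)$ have only three twist regions, so there is no vertex $S$ avoiding four distinct edges. Here I would proceed as in Figures~\ref{figure: m=3 case 1}--\ref{figure: m=3 case 3}. I would build the crossing-circle link $\mathcal{L}$ and place the two cusps on each boundary-parallel torus along discs of odd weight, reusing a disc for both components when its weight is odd; this is allowed by the first clause of Addendum~\ref{addendum}. I would then choose $S$ to be a region whose adjacent cusped discs can be made $S$-cooriented, which is the second clause. The main obstacle is this step. One must check by hand that the coorientations forced on each torus by the two cusps are compatible, as in the proof of Lemma~\ref{lemma: existence of cusps}, and that some region $S$ meets the relevant discs with the right coorientation. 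For $L(\alpha+1,4,\beta+1)$, where the middle weight is even, I would first try putting $K_2$'s cusps on the two outer odd discs and $K_1$'s cusps on the weight-$4$ disc and one outer disc. If no consistent choice exists, I would fall back to an alternative continued fraction obtained from Lemma~\ref{lemma: operation on plumb graphs 1.5} to get more twist regions. Once all cases are handled, Theorem~\ref{thm: general pers thm} gives persistent foliarity.
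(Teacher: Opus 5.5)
\textbf{Long family and reduction.} Your reduction through Lemma~\ref{lemma: fibered case std diagram} and your treatment of the long family follow the paper's proof. One correction there: since $a_1,a_2,a_{m-1},a_m$ are odd, neither component passes through exactly the odd-indexed regions. One component passes through every $R_j$. The other passes through $R_1$, $R_m$ and all even-indexed regions, but not through $R_3,R_5,\dots,R_{m-2}$. Your pairing still works, because both components pass through $R_1,R_2,R_{m-1},R_m$. That same fact is what justifies the paper's pairing of $R_1,R_2$ against $R_{m-1},R_m$.

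\textbf{The gap: the three-region families.} These are the only cases needing input beyond Theorem~\ref{thm: pers fol link}. You describe what would have to be checked, but you never specify the cusp curves, the disc coorientations or the region $S$.

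Your placement rule also does not fit $L(\alpha+1,3,\pm2)$. There the odd-indexed weights are even, so one component meets only $R_1$ and $R_3$, whose weights $\alpha+1$ and $\pm2$ are even. The other component meets all three regions and is the only one meeting the odd disc $D_2$, through both strands of $R_2$. So no disc can be shared by the two components under the first clause of Addendum~\ref{addendum}. What works is to put both cusps of the second component on the two circles of $D_2\cap T$, to put the first component's cusps on $D_1$ and $D_3$, and then to choose $S$ via the second clause. For $\varepsilon=-1$ the paper notes that the crossing-circle link $\mathcal{L}$ is exactly the one of Figure~\ref{figure: m=3 case 1}, so the same branched surface applies. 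For $\varepsilon=+1$ it builds the new configuration of Figure~\ref{figure: fibered case}, where $|w(D_1)|=\alpha+1>2$ makes $D_1$ automatically $c$-cooriented.

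For $L(\alpha+1,4,\beta+1)$ the parity pattern (odd, even, odd) matches $L(2\alpha-1,-2,2\gamma-1)$. The paper reuses the configuration of Figure~\ref{figure: m=3 case 3}, with the coorientations of all regions of $\mathcal{S}$ reversed. Your suggested placement, sharing one odd outer disc between the two components, has the right shape. However, you leave the compatibility of the forced torus coorientations and the choice of $S$ unchecked.

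Your fallback is not a way out either. The natural candidate is to unfold with Lemma~\ref{lemma: operation on plumb graphs 1.5}. That ends at the all-$\pm2$ expansion $(-2,\dots,-2,2,-2,\dots,-2)$, which has no weight $\geq 3$ and only a single $+2$. So Theorem~\ref{thm: pers fol link} cannot be fed two disjoint opposite-sign pairs there.
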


\begin{proof}
By Lemma~\ref{lemma: fibered case std diagram} we have three cases to consider.
If, up to mirror, $L=L(\alpha+1,4,\beta+1)$, with $\alpha,\beta\geq 2$ even, then we conclude as in Figure~\ref{figure: m=3 case 3}, by changing the coorientations of all the regions in $\mathcal{S}$. 

If $L=L(a_1,\dots, a_m)$, where
$$
(a_1,\dots, a_m)=(\alpha+1,3, a_{\alpha+2}, \dots, a_{m-\beta-1}, \pm 3,\pm(\beta +1))$$
with $\alpha,\beta\geq 2$ even, $m\geq 5$, and all the others indices having absolute value equal to $2$, we consider the edges 
$$
R_1,R_2\in \mathcal{R}_1 \text{ and } R_{m-1}, R_m\in \mathcal{R}_2$$
of $\mathcal{G}$.
These edges satisfy the hypotheses of Theorem~\ref{thm: pers fol link}, and since $m\geq 5$, we can use Lemma~\ref{lemma: graph connect + existence of S} to conclude that $L$ is persistently foliar.

The only case left is when $L=L(\alpha+1,3,2\varepsilon)$ with $\alpha\geq 3$ odd and $\varepsilon=\pm 1$. When $\varepsilon=-1$, the link $\mathcal{L}$ is the one described in Figure \ref{figure: m=3 case 1}, and the same branched surface in its exterior provides the desired foliations. If $\varepsilon=1$, then we conclude by proceeding as described in Figure \ref{figure: fibered case}
\end{proof}

\begin{figure}[]
   \centering   \includegraphics[width=0.48\textwidth]{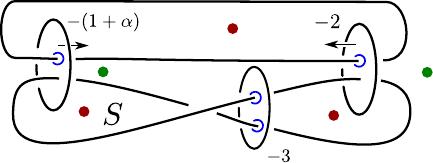}
    \caption{Notice that, since \(|w(D_1)|=\alpha+1>2\) the disc $D_1$ is automatically $c$-cooriented, and also in this case we can use Addendum~\ref{addendum} to conclude that $L$ is persistently foliar.}
    \label{figure: fibered case}
\end{figure}

\begin{proof}[Proof of Theorem \ref{thm: persistently fol two-br}]
When $L$ is not fibered, the result follows from Proposition \ref{prop: nonfibered  m geq 5} and Proposition \ref{prop: nonfibered $m=3$}. When $L$ is fibered, it follows from Proposition \ref{prop: pers fol fibered case}.
\end{proof}

\subsection{Taut foliations and the links 
\texorpdfstring{$L_{n,k}$ and $L'_{n,k}$}{Ln,k and L'n,k}}\label{subsec: fol on Lnk and L'nk}
We conclude the section by construction taut foliations on some of the manifolds obtained by Dehn surgeries along the links $L_{n,k}$ and $L'_{n,k}$ of Figure \ref{fig: L and L'}. We will use these results in Sections~\ref{sec: classification} an \ref{sec: satellite}.

\begin{prop}\label{prop: some ctf Lnk}
Let $n,k\geq 1$, and let $r_1,r_2$ be rationals with \[ \operatorname{min}\{r_1,r_2\}<\operatorname{max}\{n,k\}-\operatorname{min}\{n,k\}+1.
\]
Then the $(r_1,r_2)$-surgery on $L_{n,k}$ supports a coorientable taut foliation. In particular, this happens if $\operatorname{min}\{r_1,r_2\}<1$. 
\end{prop}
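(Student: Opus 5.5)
We plan to use the construction natural for fibered links, as announced at the beginning of this subsection. By Remark~\ref{rem: L_n,k other description}, $L_{n,k}$ can be written as $L(2,\dots,2,2z)$, so by \cite[Proposition~2]{GabKaz90} it is fibered. The fiber $F$ is a plumbing of Hopf bands (together with one twisted band); equivalently, $F$ is obtained by Murasugi summing the fibers of the two torus-link pieces $T(2,2n+1)$ and $T(2,2k+1)$ that are visible in Figure~\ref{fig: L and L'}. The monodromy is then a product of Dehn twists along a chain of curves.

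The plan is to build a branched surface $B$ in the exterior of $L_{n,k}$ from $F$ together with a collection of product discs or product annuli $D_1,\dots,D_N$ in the complement of $F$, in the style of Roberts' construction for fibered knots and of \cite{San22Whit,San23twobridge}. Each $D_j$ is chosen transverse to a twist curve, so it meets $F$ along arcs $\alpha_j$ and $h(\alpha_j)$. Smoothing $F\cup\bigcup D_j$ with consistent coorientations gives a cooriented branched surface. Its boundary train track on each boundary torus $T_i$ is the longitude $\lambda_i$ (the boundary of $F$) with some arcs added that wind around the meridian. The slopes realised by that train track then form an interval of the form $[-\infty, c_i)$ in the surgery coordinates, where $c_i$ counts how many of the $D_j$ meet $K_i$. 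Because $L_{n,k}$ has linking number $n+k$ (or a value shifted from it), the framing of $F$ on each component is the Seifert framing minus the linking number. The realised interval will therefore be of the form $r_i<c_i$.

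Next, we aim to arrange the following. Using the $n$ twists coming from one side and $k$ from the other, at least one boundary torus, say the one of $K_1$, has realised slopes $r_1<\max\{n,k\}-\min\{n,k\}+1$. The realised slopes on the other torus should be all non-meridional slopes. The latter can be obtained by creating two cusps along $\partial F$ near $K_2$, using the noncompact extension property of Theorem~\ref{cusps implies persistently foliar}. The symmetry of Lemma~\ref{lemma: symmetric} then handles the case where the minimum is $r_2$.

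The remaining ingredients are standard. We must check that $B$ has no sink discs, so that it is laminar in the sense of \cite{Li_laminar}, and that it carries no torus bounding a solid torus. Then Theorem~\ref{boundary train tracks} produces a lamination realising $r_1$, and Theorem~\ref{cusps implies persistently foliar} extends it to a coorientable taut foliation realising $(r_1,r_2)$. Tautness comes from the fiber: every sector meets a transversal closed curve given by the flow of the monodromy.

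The main obstacle is the precise choice of product discs so that the interval of realised slopes on $K_1$ is sharp, with endpoint exactly $|n-k|+1$, while keeping $B$ free of sink discs. This requires tracking how the Dehn twists in the two chains (of lengths related to $n$ and $k$) interact near $K_1$. We would first try taking product discs dual to the $\min\{n,k\}$ shared twist curves with a negative coorientation and to the remaining $|n-k|$ curves with a positive coorientation, and then compute the boundary train track. The final claim of the statement follows because $|n-k|+1\ge 1$.
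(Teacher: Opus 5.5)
There is a genuine gap at the very first step. For $n,k\geq 2$, which are exactly the cases not already covered by \cite[Proposition~3.20]{San23twobridge}, the link $L_{n,k}$ is \emph{not} fibered.

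\begin{itemize}
\item The description $L_{n,k}=L(2,\dots,2,-2k)$, of length $2n+1$, contains an entry of absolute value $2k>2$. So \cite[Proposition~2]{GabKaz90}, which needs every entry to have absolute value $2$, gives nothing here.
\item Using Proposition~\ref{prop: Alex poly of L_{n,k}}, $\Delta_{n,k}(t,t)$ has leading term $-k\,t^{2n+k-1}$.
\item For the other orientation, $\Delta_{n,k}(t,t^{-1})$ has leading term $n\,t^{k}$.
\item Hence the one-variable Alexander polynomial is non-monic for both orientations, so neither orientation is fibered.
\item Consistently with this, a plumbing containing an unknotted annulus with $k\geq 2$ full twists is never a fiber, by Gabai's theorem on Murasugi sums.
\end{itemize}

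Without a fibration there is no monodromy, no suspension flow to produce product discs, and no fiber to supply tautness. So the Roberts-type branched surface you describe cannot be built in the exterior of $L_{n,k}$. Two smaller points: $|\operatorname{lk}(L_{n,k})|=|n-k|$, while $n+k$ is the linking number of $L'_{n,k}$. Also, the step you flag as the main obstacle, getting the endpoint exactly $|n-k|+1$, is left unresolved in your plan.

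The missing idea is to pass to a fibered link of which $L_{n,k}$ is a Dehn filling. Assume $k\geq n$, using $L_{n,k}\cong L_{k,n}$ and the symmetry of two-bridge links to swap the roles of $r_1$ and $r_2$ at the end.

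\begin{itemize}
\item Realise $L_{n,k}$ as $(-\frac{1}{k})$-surgery on an unknotted third component $K_3'$ of a three-component link $\mathcal{L}_n$ that does not depend on $k$. Essentially, $K_3'$ encircles the $-2k$ twist region of $L(2,\dots,2,-2k)$.
\item $\mathcal{L}_n$ has a Seifert surface built by boundary-summing and plumbing Hopf bands. It is a fiber, with monodromy a product of Dehn twists.
\item A branched surface built from four arcs in this fiber gives coorientable taut foliations on all surgeries on $\mathcal{L}_n$ with Seifert-framed coefficients in $(-\infty,2)\times\mathbb{Q}\times(-\infty,0)$.
\item In the canonical framing this region is $(-\infty,1-n)\times\mathbb{Q}\times(-\infty,0)$.
\item Since $-\frac{1}{k}\in(-\infty,0)$, you may fill $K_3'$ along that slope. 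The induced twist shifts the first coefficient by $k$, which gives exactly $r_1<k-n+1$ with $r_2$ arbitrary.
\end{itemize}

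So the sharp bound comes from framing bookkeeping through this auxiliary filling, not from a delicate choice of product discs in the exterior of $L_{n,k}$.
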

\begin{proof}
When \(n=1\) or \(k=1\), the link \(L_{n,k}\) is fibered, and a stronger version of this result is proved in \cite[Proposition~3.20]{San23twobridge}. The same ideas can be adapted to prove the general case. We give a sketch of the argument and refer the reader to \cite{San23twobridge} for details.

From now on, we assume that \(k>1\) and \(n>1\). Moreover, it is clear from the diagram in Figure \ref{fig: L and L'} that \(L_{n,k}\) is isotopic to \(L_{k,n}\); thus, we may assume that \(k\geq n\).
\begin{figure}[]
   \centering   \includegraphics[width=1\textwidth]{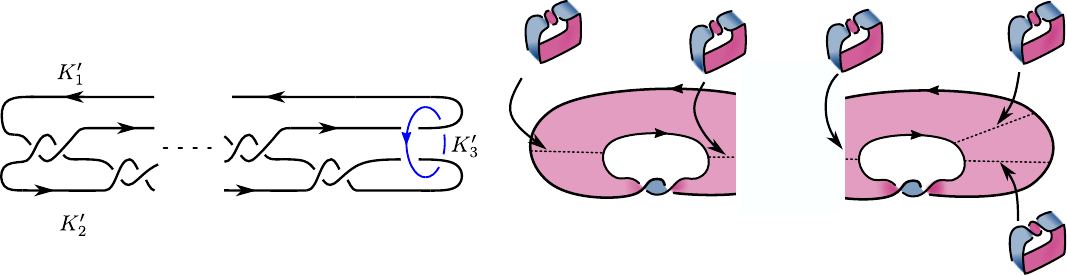}
    \caption{Left: The three-component link $\mathcal{L}_n$. The link $L_{n,k}$ is obtained as $(-\frac{1}{k})$-surgery on the component $K'_3$.
    Right: A fiber surface for the link $\mathcal{L}_n$. The positive (resp. negative) side of the surface is coloured in pink (resp. blue).}
    \label{figure: link Lnk three components}
\end{figure}
By Lemma~\ref{lemma: operation on plumb graphs 1.5}, the link \(L_{n,k}=L(-2n,1,-2k)\) is isotopic to \(L(2,2,\dots,2,-2k)\), where the sequence \((2,2,\dots,2,-2k)\) has length \(2n+1\). From this description, it follows that \(L_{n,k}\) is obtained by surgery on the three-component link \(\mathcal{L}_n\) shown in Figure~\ref{figure: link Lnk three components}.
In the same figure, we also show a Seifert surface \(F\) for \(\mathcal{L}_n\), obtained by starting with the boundary connected sum of \(n\) Hopf bands and then plumbing \(n+2\) additional Hopf bands to this surface. By \cite{Stallings}, \(\mathcal{L}_n\) is a fibered link, and \(F\) is a fiber surface. Moreover, the monodromy can be described explicitly: it is a composition of Dehn twists along the cores of the Hopf bands, see \cite[Corollary~1.4]{GabaiMurasugi} for details. In our case, this implies that the monodromy is given by the diffeomorphism
\begin{equation*}\label{eq: monodromia}
    h=\tau_{c_n}\cdots \tau_{c_1}\tau^{-1}_{d_{n+2}}\tau_{d_{n+1}}\cdots \tau_{d_1}
    \end{equation*}
where $\tau_{c_i}$ and $\tau_{d_i}$ denote the right-handed Dehn twists along the curves $c_i$ and $d_i$ shown in Figure \ref{figure: monodromy}.
\begin{figure}[]
   \centering   \includegraphics[width=0.6\textwidth]{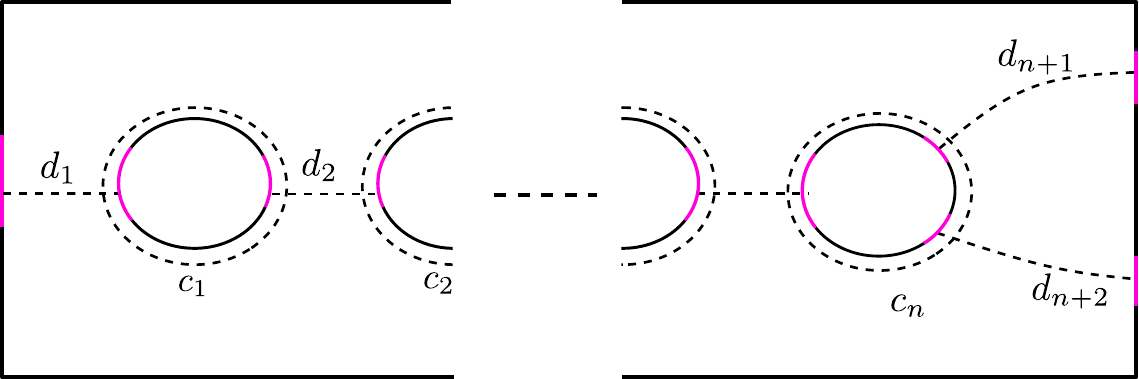}
    \caption{An abstract drawing of the fiber surface $F$, together with the curves $c_i$ and $d_i$. To simplify the picture we do not draw the $1$-handles; we understand that the pink-coloured arcs are pairwise identified in the obvious way.}
    \label{figure: monodromy}
\end{figure}
There is a way to construct a branched surface \(B\) in the exterior of \(\mathcal{L}_n\) by starting with a family of pairwise disjoint, properly embedded, oriented arcs; see, for example, \cite{Roberts1,Roberts2}. Roughly speaking, \(B\) is obtained by smoothing the union of the fiber surface \(F\) and the discs spanned by the arcs under the suspension flow; see \cite[Section~3.2]{San23twobridge} for more details. In particular, the branch locus of \(B\) is contained in \(F\), and is the union of the chosen arcs and a their slight perturbation of their images under the monodromy. In our case, we consider the arcs \(\alpha,\beta,\gamma,\delta\), and the branch locus, with cusp directions as explained in \cite[Section~3.2]{San23twobridge}, is shown in the top part Figure~\ref{figure: arcs nk}.
One proves, arguing as in \cite[Proposition~3.20]{San23twobridge}, that this branched surface implies the existence of coorientable taut foliations on every manifold obtained by surgery on \(\mathcal{L}_n\) with surgery coefficients
\[
(s_1,s_2,s_3)\in (-\infty,2)\times \mathbb{Q}\times (-\infty,0),
\]
where the coefficients are computed with respect to the framing induced by the Seifert surface \(F\), which in general does not agree with the canonical framing. 
More precisely the correspondence between coefficients with respect to the Seifert and canonical framings is given by
\begin{center}
\begin{tikzcd}
{\overbrace{(a, b, c)}^{\text{Seifert framing for $\mathcal{L}_n$}}} \arrow[r] & {\overbrace{(a-l_{12}-l_{13}, b-l_{12}-l_{23}, c-l_{13}-l_{23})}^{\text{Canonical framing for $\mathcal{L}_n$}}}, 
\end{tikzcd}
\end{center}
where $l_{ij}=\operatorname{lk}(K'_i,K'_j)$.
As a consequence, in the canonical framing of \(\mathcal{L}_n\), we obtain the set
\[
(-\infty,-n+1)\times \mathbb{Q}\times (-\infty,0).
\]
Moreover, since \(L_{n,k}\) is obtained by \((-\tfrac{1}{k})\)-surgery along the component \(K'_3\), we conclude that, for each \((r_1,r_2)\in (-\infty,k-n+1)\times \mathbb{Q}\), the corresponding Dehn surgery on \(L_{n,k}\) supports a coorientable taut foliation. Since \(k\geq n\) by hypothesis and two-bridge links are symmetric, the proof is complete.
\end{proof}

The statement of Proposition~\ref{prop: some ctf Lnk} is not optimal, as it does not provide taut foliations on all surgeries along \(L_{n,k}\) that are not
L-spaces.  However, it is enough to obtain interesting applications to satellite knots, see Section~\ref{sec: satellite}. We will improve Proposition~\ref{prop: some ctf Lnk} in Section~\ref{subsec: class for Lnk}, where, by combining it with a recent theorem of Lyu~\cite{Lyu25} and Theorem~\ref{thm: L-space links}, we construct taut foliations on all non-$L$-space surgeries on the links \(L_{n,k}\); see Theorem~\ref{thm: CTF iff Lspace for Lnk}
\begin{figure}[]
   \centering   \includegraphics[width=0.5\textwidth]{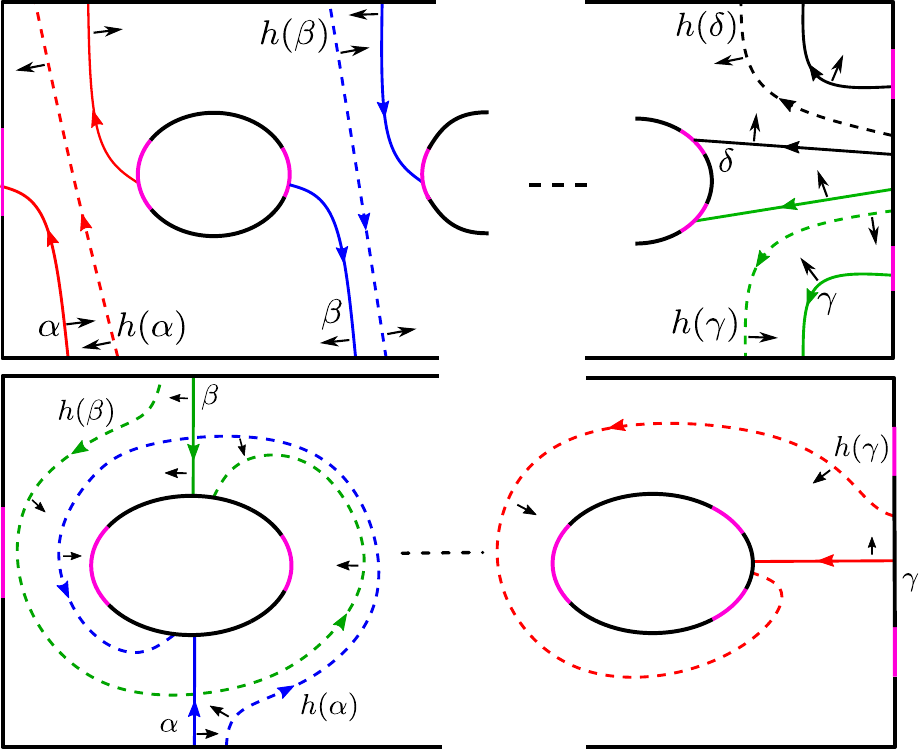}
    \caption{The figure shows the branch loci of the branched surfaces constructed in the proofs of Propositions~\ref{prop: some ctf Lnk} and \ref{prop: some ctf on L'nk}, and the cusp directions along them.}
    \label{figure: arcs nk}
\end{figure}

In the next proposition, we focus on the links \(L'_{n,k}\).

\begin{prop}\label{prop: some ctf on L'nk}
Let \(n\geq 2\) and \(k\geq 1\), and let \(r_1,r_2\in \mathbb{Q}\) satisfy \(\max\{r_1,r_2\}\ll 0\). Then the \((r_1,r_2)\)-surgery on \(L_{n,k}\) supports a coorientable taut foliation. In particular, \(L_{n,k}\) is not the mirror of an \(L\)-space link.
\end{prop}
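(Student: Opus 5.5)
The plan is to deduce the statement directly from Proposition~\ref{prop: some ctf Lnk}, which already concerns the links \(L_{n,k}\), and then to obtain the ``in particular'' clause by comparing with the definition of (mirrored) \(L\)-space links.

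\textbf{Step 1 (the foliations).} Fix \(n\geq 2\) and \(k\geq 1\), and let \(r_1,r_2\in\mathbb{Q}\) with \(\max\{r_1,r_2\}\) sufficiently negative; concretely, it suffices that \(\max\{r_1,r_2\}<1\). Then
\[
\min\{r_1,r_2\}\leq \max\{r_1,r_2\}<1\leq \max\{n,k\}-\min\{n,k\}+1 .
\]
This is exactly the hypothesis of Proposition~\ref{prop: some ctf Lnk} (see the ``in particular'' clause there). Hence the \((r_1,r_2)\)-surgery on \(L_{n,k}\) supports a coorientable taut foliation. In fact only one coefficient needs to be small, so the condition \(\max\{r_1,r_2\}\ll 0\) is far more than needed. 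If one prefers a self-contained argument, one can instead rerun the fibered construction used in the proof of Proposition~\ref{prop: some ctf Lnk}. That proof builds a branched surface from the fiber \(F\) of \(\mathcal{L}_n\) and the arcs \(\alpha,\beta,\gamma,\delta\), giving foliations for \((s_1,s_2,s_3)\in(-\infty,-n+1)\times\mathbb{Q}\times(-\infty,0)\) in the canonical framing. One then fills \(K'_3\) with slope \(-\tfrac1k\) and uses the symmetry of two-bridge links (Lemma~\ref{lemma: symmetric}). I do not expect this to be needed.

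\textbf{Step 2 (not the mirror of an \(L\)-space link).} Suppose, for a contradiction, that the mirror \(\overline{L_{n,k}}\) is an \(L\)-space link. Then there are integers \(p_1,p_2\) such that \(S^3_{d_1,d_2}(\overline{L_{n,k}})\) is an \(L\)-space for all integers \(d_i\geq p_i\). Since \(S^3_{d_1,d_2}(\overline{L_{n,k}})\cong -S^3_{-d_1,-d_2}(L_{n,k})\), and being an \(L\)-space is preserved under orientation reversal, every integral surgery \(S^3_{e_1,e_2}(L_{n,k})\) with \(e_i\leq -p_i\) would be an \(L\)-space. Now choose \(e_1=e_2\) very negative, below both \(-p_1,-p_2\) and below \(1\). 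By Step 1, that manifold supports a coorientable taut foliation. It is a rational homology sphere in this range, since the linking matrix has determinant \(e_1e_2-\operatorname{lk}^2\neq 0\) for \(|e_i|\) large. An \(L\)-space cannot support a coorientable taut foliation \cite{OSgenus04,Bow16,KR17}, which gives the contradiction.

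\textbf{Main obstacle.} There is no substantive obstacle: the statement is essentially a corollary of Proposition~\ref{prop: some ctf Lnk}. The only points to check are bookkeeping. First, the mirror and orientation-reversal correspondence must send surgery coefficients \((d_1,d_2)\mapsto(-d_1,-d_2)\). Second, the chosen very negative surgeries must have \(b_1=0\), so that the \(L\)-space notion applies. This holds once \(e_1e_2>\operatorname{lk}(L_{n,k})^2\).
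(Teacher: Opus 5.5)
Read literally, your argument is correct: for $L_{n,k}$ the claim follows at once from Proposition~\ref{prop: some ctf Lnk}, and your Step~2 is sound. However, the printed statement contains a misprint. The proposition is about $L'_{n,k}$, and that is the link the paper's proof treats. Several features of the paper show this:
\begin{itemize}
\item it is introduced by ``In the next proposition, we focus on the links $L'_{n,k}$'';
\item the hypothesis $n\ge 2$ is exactly the hyperbolicity range of $L'_{n,k}$, whereas for $L_{n,k}$ it would be an unexplained restriction, since Proposition~\ref{prop: some ctf Lnk} already covers $n\ge 1$;
\item every later citation applies it to $L'_{n,k}$: the proof of Theorem~\ref{thm: L-space links}, Lemma~\ref{lemma: gener. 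L-space links are not L-space links}, Remark~\ref{rem: infinity is an endpoint}, and Lemma~\ref{lemma: assumption on a}.
\end{itemize}
You yourself observed that the hypothesis on $\max\{r_1,r_2\}$ is ``far more than needed'', so that the proposition would merely restate, in weaker form, the one just before it. That should have been the warning sign.

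For $L'_{n,k}$ your route gives nothing. Proposition~\ref{prop: some ctf Lnk} concerns a different family of links. Moreover, no bound on a single coefficient can suffice: by Corollary~\ref{cor: quadrant 2 and 4 are L-spaces}, every $(r_1,r_2)$-surgery on $L'_{n,k}$ with $r_1<0<r_2$ is an $L$-space.

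The missing ingredient is a separate foliation construction for $L'_{n,k}$. The paper's argument runs as follows.
\begin{itemize}
\item It writes $L'_{n,k}\cong L(2,\dots,2,2n)$ with $2k+1$ entries. This exhibits $L'_{n,k}$ as $\tfrac1n$-surgery on the component $K'_3$ of the fibered link $\mathcal{L}_k$; note that $n$ and $k$ swap roles relative to Proposition~\ref{prop: some ctf Lnk}.
\item From the fiber surface of $\mathcal{L}_k$ and a different family of arcs, $\alpha,\beta,\gamma$ (bottom of Figure~\ref{figure: arcs nk}), it builds a branched surface. This gives coorientable taut foliations on all surgeries on $\mathcal{L}_k$ whose Seifert-framed coefficients lie in $(-\infty,1)^3$. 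Here all three coefficients are bounded above, which is exactly why the conclusion needs $\max\{r_1,r_2\}$ small.
\item It converts to the canonical framing and fills $K'_3$ with slope $\tfrac1n$, which lies below $1$ precisely because $n\ge 2$. This yields foliations on the $(r_1,r_2)$-surgery on $L'_{n,k}$ for $(r_1,r_2)\in(-\infty,-n-k)\times(-\infty,2-n-k)$.
\item The symmetry of two-bridge links covers the reflected region.
\end{itemize}
Once this is in place, your Step~2 transfers verbatim and shows that $L'_{n,k}$ is not the mirror of an $L$-space link. The ingredients are the same: negate coefficients under mirroring, check that very negative surgeries are rational homology spheres, and use that $L$-spaces carry no coorientable taut foliations.
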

\begin{proof}
The proof is exactly the same as that of Proposition \ref{prop: some ctf Lnk}. The link \(L'_{n,k}\) is isotopic to \(L(2,2,\dots,2n)\), where the sequence \((2,2,\dots,2n)\) has length \(2k+1\). In particular, \(L'_{n,k}\) is obtained from the link \(\mathcal{L}_k\), introduced in the proof of Proposition \ref{prop: some ctf Lnk}, as \((1/n)\)-surgery on \(K'_3\). The branched surface in the exterior of \(\mathcal{L}_k\) obtained from the arcs \(\alpha,\beta,\gamma\), showed in the bottom part of Figure \ref{figure: arcs nk} can be used to construct taut foliations on all surgeries on \(\mathcal{L}_k\) with surgery coefficients, measured in the Seifert framing,
\[
(s_1,s_2,s_3)\in (-\infty,1)\times (-\infty,1)\times (-\infty,1).
\]
This implies that, for each \((r_1,r_2)\in (-\infty,-n-k)\times (-\infty,2-n-k)\), the corresponding Dehn surgery on \(L'_{n,k}\) supports a coorientable taut foliation.
\end{proof}

\section{Preliminaries on Heegaard Floer homology}\label{section: Heegaard Floer Preliminaries}
In this section, we  briefly review some preliminaries on Heegaard Floer homology needed for Sections \ref{section:rational link surgery} and \ref{section: very good points and non L space surgeries}.
In Section \ref{subsec: Heegaard Floer and link Floer}, we recall the definitions of Heegaard Floer homology and the link Floer complex; then, in Section \ref{subsec: type_A_algebra_and_bimodules}, we review 
the definitions of type-$A$  and type-$D$ modules and bimodules following \cite{LOT, LOT_bimodules} and in Section  \ref{subsec: link_surgery_algebra} and \ref{subsec: solid_tori} we review  the definitions of the link surgery algebra and the type-$A$ module for solid tori from \cite{zemke_bordered}.
% finally, in Section \ref{subsec: homological_perturbation} we recall the homological perturbation lemma for hypercubes proved in \cite{HHSZ_invo_dual_knot}. 
\subsection{Heegaard Floer homology and the link Floer complex} \label{subsec: Heegaard Floer and link Floer}
In \cite{OS3manifold}, Ozsv\'ath-Szab\'o defined a topological invariant for a three manifold $M$, the Heegaard Floer chain complex, which comes in different flavors. The simplest version is the \emph{hat flavor}  \(\widehat{\CF}(M)\), which takes the form of a  chain complex over $\F=\Z/2\Z$. The \emph{minus flavor} $\CF^-(M)$ takes the form of a free chain complex over the ring $\F[U]$, where $U$ is a formal variable. We will also consider the algebraic completion of $\CF^-(M)$ over $\F[U],$ denoted $\bCFm(M)$, which is a chain complex over the power series ring $\F\llbracket U \rrbracket.$  Setting $U=0$ in $\CF^-(M)$ or $\bCFm(M)$ recovers \(\widehat{\CF}(M)\).
Taking the homology yields the Heegaard Floer homologies  \(\widehat{\HF}(M)\), $\HF^-(M)$ and $\bHFm(M)$, which take the form of an $\F$-vector space, and modules over $\F[U]$ and $\F\llbracket U \rrbracket,$ respectively. The Heegaard Floer chain complex and homology admit a homological $\Z$-grading, denoted $gr$, such that $gr(U)=-2$ and $gr(\partial)=-1$. 

The Heegaard Floer chain complex and homology decompose over the set spin$^c$ structures Spin$^c(M)$, for which there is a non-canonical identification Spin$^c(M) \cong H_1(M;\Z)$. We have
\[
\widehat\CF(M) = \bigoplus_{\bt \in \operatorname{Spin}^c(M)} \widehat\CF(M,\bt).
\]
The same decomposition holds for other flavors and for the homology as well.
Ozsv\'ath-Szab\'o proved in \cite[Theorem 10.1]{OS3manifoldProperty} that for a rational homology sphere $M$ it holds 
$\operatorname{dim}\widehat\HF(M,\bt) \geq 1$ for each $\bt \in \operatorname{Spin}^c(M).$ 
\begin{defn}
    A rational homology sphere $M$ is an \emph{$L$-space} if $\operatorname{dim}\widehat\HF(M) = |H_1(M;\Z)|,$ or equivalently,  if $\operatorname{dim}\widehat\HF(M,\bs) = 1$ for each $\bs \in \operatorname{Spin}^c(M).$
\end{defn}
In \cite{OSLinks}, Ozsv\'ath-Szab\'o defined 
 a refinement of the Heegaard Floer chain complex, which is an invariant of a link $L \subset M$.   Later, Zemke provided a reformulation of this invariant.  In the following we recall some of its important properties. For a detailed exposition, the reader is referred to \cite[Section 3.3]{zemkefunctor}. 

Given an $n$-component link $L=K_1\cup \dots \cup K_n$ in a rational homology sphere $M$,  the \emph{link Floer complex} $\cCFL(L)$ is a finitely generated free chain complex over the ring $\F[W_1,Z_1,\dots,W_n,Z_n]$. If we set $U_i=W_iZ_i$, then the homological actions induced by multiplication by $U_i$ coincide for all $i=1,\dots, n$.
% and therefore we impose the extra relation $U=W_iZ_i$. 
Define the \emph{link lattice}  by  \[\bH (L) := \prod_{i=1}^n \left(\Z + \frac{\operatorname{lk}(L,L-K_i)}{2} \right). \]
Each generator of $\cCFL(L)$ has an \emph{Alexander grading} $\bs = (s_1,\dots,s_n) \in \bH$ and  the algebraic elements interact with the Alexander grading as follows: for $i=1,\dots,n,$ multiplication by $Z_i$
increases $s_i$ by $1$ and multiplication by $W_i$ decreases $s_i$ by $1$. In particular, $U_i$ preserves the Alexander grading. The differential preserves Alexander grading, and
there is a decomposition as a chain complex 
\[
 \cCFL(L) = \bigoplus_{\bs \in \bH (L)} \cCFL(L,\bs),
\]
where each $\cCFL(L,\bs)$ is a chain complex over $\F[U_1,\dots,U_n].$ Specifically, suppose $\{\bdy_1,\dots,\bdy_m\}$ is a free basis of $\cCFL(L)$. Then
  each $\cCFL(L,\bs)$ is freely generated over $\F[U_1,\dots,U_n]$ by $\{a_1 \bdy_1,\dots, a_m \bdy_m \}$, where  $a_i \in \F[W_1,Z_1,\dots,W_n,Z_n]$ is a monomial.

%After inverting $Z_1,\dots,Z_n$ in $\cCFL(L)$, the resulting chain complex, viewed as a chain complex over $\F[U_1]$,\ds{check} is chain homotopy equivalent to $\CF^-(M)$ of the ambient manifold by \cite[Theorem 4.4]{OSLinks}.  
There is a homological grading on $\cCFL(L)$, where $gr(W_i)=-2, gr(Z_i)=0$ and $gr(\partial)=-1$.

We recall from the introduction  the definition of an \emph{$L$-space link} by \cite{GorNem16}.
\begin{defn}
An $n$-component link \(L\subset S^3\) is an \emph{$L$-space link} if all of its positive  large integer surgeries are $L$-spaces, \ie if there exists $(p_1,\dots,p_n)\in \Z^n$ such that the surgery \(S^3_{d_1,\dots, d_n}(L)\) is an $L$-space for every integers $d_1,\dots, d_n$ with \(d_i\geq p_i\), \(\forall i=1,\dots, n\).  
\end{defn}
\subsection{Type-$A$ and type-$D$ modules and bimodules} \label{subsec: type_A_algebra_and_bimodules} We recall several definitions regarding bordered modules. 
All definitions in this section are taken from \cite{LOT, LOT_bimodules}. We always assume the ground ring $\br$ has characteristic $2.$
\begin{defn}
Suppose that $A$ is an associative algebra over a ground ring $\br.$
    A \emph{(right) type-$D$ module} $\cN^A$  is a right $\br$-module $N$ together with an $\br$-module map 
    \[
    \delta^1: N \to N \otimes_{\br} A
    \]
    satisfying that 
    \[(\mathbb{I}_{N} \otimes \mu)\circ(\delta_1 \otimes \mathbb{I}_{A}  )\circ \delta^1 = 0,\]
    where $\mu : A \otimes A \to A$ is the multiplication on $A$.
\end{defn}
Once we have a type-$D$ module, we can define maps $\delta^i$ for $i\geq 0$ by setting 
    \[\delta^0 = \mathbb{I}_A, \qquad \delta^i = (\delta^1 \otimes \mathbb{I}_{A^{\otimes i-1}} ) \circ \delta^{i-1} \qquad i > 0.
    \]
In the case of our interest, type-$D$ modules will be \emph{bounded}; see, \cite[Definition~2.22]{LOT}. This means that for each $x\in N$, there exists an here is an integer $m$ so that for all $i\geq m$, $\delta^i(x)=0$. In this case, we set
    \[
    \delta := \sum_{i=0}^\infty \delta^i: N \to N \otimes_{\br} \mathcal{T}^*(A),
    \]
    where \[
\mathcal{T}^*(A) := \bigoplus_{n=0}^{\infty} A^{\otimes n}. 
\]

\begin{defn}
  Suppose that $A$ is an associative algebra over a ground ring $\br.$ A \emph{(left) type-$A$ module}  ${}_{A}\cM$ is a  left $\br$-module $M$, together with $\br$-module maps 
\[
m_{i}:   A ^{\otimes i-1} \otimes_{\br} M   \rightarrow M, \qquad i \in \Z_{>0}
\]
  called the \emph{structure maps}. They satisfy the compatibility relations
 \begin{align*} 
 \sum_{i=0}^{n} m_{n-i+1}( m_{i+1} ( a_n \otimes \cdots  \otimes a_{i}) \otimes \cdots \otimes a_{1}, x )    + 
  \sum_{i=1}^{n-1} m_{n} ( a_n \otimes \cdots  \otimes \mu( a_{i+1} \otimes a_{i}) \otimes \cdots \otimes a_{1}, x )
    = 0.
   \end{align*}
   We define a right type-$A$ module  $\cM_{A}$ analogously.
 The compatibility relation has a graphic description, which we now recall.
   
First, the maps $m_{i}$  can be packed together into a single map \[
m= \sum_{i> 0} m_{i}:  \mathcal{T}^*(A) \otimes_{\br} M  \rightarrow M.\]
We also define the map $D^{A}: \mathcal{T}^*(A) \rightarrow \mathcal{T}^*(A)$  by
\[
   D^{A}(a_n \otimes\cdots\otimes a_1)= \sum_{i=1}^{n-1}  a_n\otimes  \cdots \otimes \mu(a_{i+1}\otimes  a_{i} )\otimes \cdots \otimes a_1
    \]
and the co-multiplication  $   \Delta\colon \mathcal{T}^*(A)\to \mathcal{T}^*(A)\otimes_{\br} \mathcal{T}^*(A)$
 by
\[
\Delta(a_n\otimes \cdots \otimes a_1)=\sum_{i=0}^n (a_n\otimes \cdots \otimes a_{i+1})\otimes (a_{i}\otimes \cdots \otimes a_1).
\]
Then  the compatibility relation can be described graphically  as
\begin{equation*}
    \label{eq:Amodule-relation}
\begin{tikzcd}[column sep=.5cm,row sep=.4cm]
 \ar[d, Rightarrow] &  \ar[dashed,dd] \\
 \Delta \ar[dr, Rightarrow] \ar[ddr, Rightarrow]&\\
& m  \ar[dashed,d]\\
& m  \ar[dashed,d]\\
\ & \ 
\end{tikzcd}
+ 
\begin{tikzcd}[column sep=.5cm,row sep=.4cm]
\ar[d, Rightarrow] &\ar[dashed,dd]   \\
 {D}^{A} \ar[dr, Rightarrow] &   \\
& m  \ar[dashed,d]\\
\ & \ 
\end{tikzcd}
=0
\end{equation*}
where  the dashed lines indicate elements of $M$, and double arrows indicate elements of  $\mathcal{T}^*(A)$.
\end{defn}
If $A$ is commutative, a type-$A$ module ${}_{A}\cM$ with $m_i = 0$ for $i>2$ is a regular chain complex over the ring $A$, with the differential given by $m_1$ and multiplication by elements in $A$ given by $m_2$. 

The notion of a type-$A$ module can be generalized to define a bimodule.
\begin{defn}
  Suppose that $A$ and $B$ are associative algebras over $\br$. A \emph{type-$AA$ bimodule} or an \emph{$A_\infty$-bimodule} $_{A}\cM_{B}$ is an $\br$-module $M$, together with the structure maps, for $i\geq 0, j\geq 0$,
\[
m_{i,1,j}: A ^{\otimes i} \otimes_{\br} M \otimes_{\br} B ^{\otimes j} \rightarrow M
\]
satisfying the compatibility relation
%  \begin{align*} 
%  \sum_{\substack{i\geq s \geq 0\\j\geq t \geq 0}} m_{i-s,1,j-t}(a_i\otimes \cdots \otimes a_{s+1},m_{s,1,t}(a_s\otimes \cdots \otimes a_1,x,b_1\otimes   \cdots \otimes b_t)&b_{t+1}\otimes \cdots \otimes b_{j} )    \\
%   +\sum_{s=1}^i  m_{i-1,1,j}(a_i\otimes \cdots \otimes\mu(a_{s+1}\otimes \cdots \otimes a_s )\otimes \cdots  \otimes a_1,x,b_1\otimes  \cdots \otimes b_j)&\\
%  +\sum_{t=1}^j   m_{i,1,j-1}(a_i\otimes \cdots \otimes a_1,x,b_1\otimes  \cdots \otimes \mu(b_t\otimes \cdots \otimes b_{t+1} )\otimes \cdots \otimes b_j)&    
%     = 0.
%    \end{align*}
% The maps $m_{i,1,j}, i\geq 0, j\geq 0$  can be packed together into a single map \[
% m= \sum_{i,j \geq 0} m_{i,1,j} :\mathcal{T}^*(A)\otimes M\otimes \mathcal{T}^*(B) \rightarrow M\]
% where \[
% \mathcal{T}^*(A) := \bigoplus_{n=0}^{\infty} A^{\otimes n}. 
% \]
%  The compatibility relation has a graphic description as follow.
% Define the map $D^{A}: \mathcal{T}^*(A) \rightarrow \mathcal{T}^*(A)$  by
% \[
%    D^{A}(a_1\otimes a_2\otimes\cdots\otimes a_n)= \sum_{j=1}^{n-1}  a_1\otimes  \cdots \otimes \mu(a_j\otimes  a_{j+1} )\otimes \cdots \otimes a_n
%     \]
% and the co-multiplication  $   \Delta\colon \mathcal{T}^*(A)\to \mathcal{T}^*(A)\otimes \mathcal{T}^*(A)$
%  by
% \[
% \Delta(a_1\otimes \cdots \otimes a_n)=\sum_{j=0}^n (a_1\otimes \cdots \otimes a_j)\otimes (a_{j+1}\otimes \cdots \otimes a_n).
% \]
% Then  the compatibility relation can be described  as
\begin{equation*}
    \label{eq:AAbimodule-relation}
\begin{tikzcd}[column sep=.5cm,row sep=.4cm]
\ar[d, Rightarrow] & \ar[dashed,dd] & \ar[d, Rightarrow]\\
\Delta \ar[dr, Rightarrow] \ar[ddr, Rightarrow] && \Delta \ar[dl, Rightarrow] \ar[ddl, Rightarrow]\\
& m  \ar[dashed,d]&\\
& m  \ar[dashed,d]&\\
&\ &
\end{tikzcd}
+
\begin{tikzcd}[column sep=.5cm,row sep=.4cm]
\ar[d, Rightarrow] & \ar[dashed,dd] & \ar[ddl, Rightarrow]\\
{D}^{A} \ar[dr, Rightarrow]  && \\
& m  \ar[dashed,d]&\\
&\ &
\end{tikzcd}
+ 
\begin{tikzcd}[column sep=.5cm,row sep=.4cm]
\ar[ddr, Rightarrow] & \ar[dashed,dd] &  \ar[d, Rightarrow]\\
 && {D}^{B} \ar[dl, Rightarrow]  \\
& m  \ar[dashed,d]&\\
&\ &
\end{tikzcd}
=0.
\end{equation*}
\end{defn}

 Given a right type-$D$ module $\cN^A$ and a type-$AA$ bimodule ${}_A\cM_B$, the \emph{box tensor product} $\cN^A \boxtimes {}_A\cM_B$ is a right type-$A$ module over $B$, with underlying module  $N\otimes_{\br} M$ and structure maps given
 by the following diagram
\begin{equation*}
\begin{tikzcd}[column sep=.5cm,row sep=.4cm]
\ar[d, dashed] & \ar[dashed,dd] & \ar[ddl, Rightarrow]\\
\delta \ar[dd, dashed] \ar[dr, Rightarrow] &&  \\
& m  \ar[dashed,d]&.\\
\ &\ &
\end{tikzcd}
\end{equation*}

\subsection{Knot and link surgery algebras} \label{subsec: link_surgery_algebra}
In this section we recall the definitions of the knot and link surgery algebras, following \cite{zemke_bordered}. Before doing so, we premise a brief discussion regarding algebras on idempotent rings. 

Assume that $A$ is an associative algebra over the idempotent ring $\bdI=\bdI_0 \oplus \bdI_1$. Here, each $\bdI_i$ is a copy of $\F =\Z/2\Z$, and we denote by $i_0$ and $i_1$ the generators of $\bdI_0$ and $\bdI_1$ respectively. Then, as a module over $\bdI$, we can decompose $A$ as
\[
A=\left( \bdI_0 \cdot A \cdot \bdI_0\right) \oplus \left(\bdI_0 \cdot A \cdot \bdI_1\right) \oplus \left(\bdI_1 \cdot A \cdot \bdI_0\right) \oplus \left(\bdI_1 \cdot A \cdot \bdI_1\right)
\]
In fact, it is not difficult to see that the above subalgebras have pairwise trivial intersections, and they generate since for every $a\in A$ we have:
\[
a=(i_0+i_1)a(i_0+i_1)=(i_0 a i_0) + (i_0 a i_1) + (i_1 a i_0) + (i_1 a i_1).
\]
By reversing this reasoning, we see that in order to define an algebra over $\bdI$ it is enough to describe the four submodules above, and the product rules among their elements. 

\begin{defn} The \emph{knot surgery algebra} $\cK$ is the associative algebra over $\bdI$ defined by setting
\begin{align*}
    \bdI_0 \cdot \cK \cdot \bdI_0 = \F[W,Z], \qquad &\bdI_1 \cdot \cK \cdot \bdI_1 = \F[U,T,T^{-1}] \\
    \bdI_0 \cdot \cK \cdot \bdI_1 = 0, \hspace{0.65 em}\text{ and} \qquad &\bdI_1 \cdot \cK \cdot \bdI_0 = \F[U,T,T^{-1}] \otimes \langle \sigma, \tau \rangle,
\end{align*}
where $U = WZ$, and $T=Z$\footnote{We use this difference in notation since $T$ is invertible in $\bdI_1 \cdot \cK \cdot \bdI_1$ and in $\bdI_1 \cdot \cK \cdot \bdI_0$.}. The products among elements of these submodules are defined as follows. For \(\varepsilon=0,1\), the product in \(\bdI_\varepsilon \cdot \cK \cdot \bdI_\varepsilon\) is given by polynomial multiplication. The only other non-trivial products are those between elements in \(\bdI_1 \cdot \cK \cdot \bdI_0\) and \(\bdI_0 \cdot \cK \cdot \bdI_0\) and they are obtained by imposing 
% We view $\F[U,T,T^{-1}]$ as $\F[U,Z,Z^{-1}] = \F[W,Z,Z^{-1}]$, where the variable T is used to distinguish this from the idempotent $0$, in which $Z$ is not invertible.
the algebra elements $\sigma$ and $\tau$ to satisfy the relation
\[
\sigma \cdot a = g^{\sigma}(a) \cdot \sigma, \qquad \tau \cdot a = g^{\tau}(a) \cdot \tau
\]
for any $a\in \F[W,Z]= \bdI_0 \cdot \cK \cdot \bdI_0.$ Here $g^{\sigma}$ is the natural inclusion map from $\F[W,Z]$ to $\F[U,T,T^{-1}]$ given by
\begin{align*}
g^{\sigma}(W) = UT^{-1} \qquad &g^{\sigma}(Z) = T
\intertext{
and
 $g^{\tau}$ is an algebra homomorphism that satisfies }
 g^{\tau}(W)=T^{-1} \qquad \quad &g^{\tau}(Z)=UT^{2}.\end{align*}
 % For a module over $\cK,$ $Z$ or $T$ increases the Alexander grading by $1,$ $W$ or $T^{-1}$ increases the Alexander grading by $1$ and $U$ preserves the Alexander grading.
\end{defn}

We call $\bdI_0 \cdot \cK \cdot \bdI_0$ and $\bdI_1 \cdot \cK \cdot \bdI_1$ the idempotent $0$ and $1$ of $\cK$, respectively. Observe that they satisfy the property $(\bdI_\varepsilon \cdot \cK \cdot \bdI_\varepsilon) \cdot (\bdI_\varepsilon \cdot \cK \cdot \bdI_\varepsilon) = \bdI_\varepsilon \cdot \cK \cdot \bdI_\varepsilon$ for $\varepsilon =0,1.$
\begin{defn}
The $n$-component \emph{link surgery algebra} is 
\[
\cL =  \cK_1 \otimes_\F \cdots \otimes_\F \cK_n
\]
where each $\cK_i$ is a copy of the knot surgery algebra. 
% We impose an extra relation $W_1 Z_1=\cdots=W_n Z_n=U$ in $\cL$ as this often proves convenient\ds{I don't think this relation holds in the link surgery algebra}.
The link surgery algebra $\cL$ is an algebra over the idempotent ring $\bdE_n: = \underbrace{\bdI \otimes_\F \cdots  \otimes_\F \bdI}_{n}.$
\end{defn}

We give an explicit description of the link surgery algebra as follows. Define $\bE_n := \{ 0,1 \}^n$. For $\varepsilon,\varepsilon' \in \bE_n$, we write  $\ve \leq \ve'$ if the inequality holds in each coordinate. Similarly, we write $\ve < \ve'$ if $\ve \leq \ve'$ and  $\ve \ne \ve'$. We begin by describing the subalgebras $\bdI_{\ve'} \cdot \cL \cdot \bdI_\ve$, for $\ve, \ve'\in \bE_n$, and then the products between their elements.

There are $2^n$ idempotents in $\cL$, namely $\bdI_\ve \cdot \cL \cdot \bdI_\ve$  for each $\ve = (\ve_1, \dots, \ve_n) \in \bE_n$, where  $\bdI_\ve := \otimes_i^n\bdI_{\ve_i}$.
  More concretely, $\bdI_\ve \cdot \cL \cdot \bdI_\ve$  is the tensor of $\F[Z_i,W_i]$ for those indices $i$ such that $\ve_i =0$ and $ \F[U_i, T_i,T^{-1}_i]$ for the indices $i$ such that $\ve_i =1$. Given $\ve,\ve' \in \bE_n,$ if $\ve \leq  \ve'$, then $\bdI_{\ve'} \cdot \cL \cdot \bdI_\ve$ is the tensor of $ \F[U_i, T_i,T^{-1}_i]\otimes  \langle \sigma_i, \tau_i \rangle$ for $i$ such that $\ve'_i-\ve_i=1$ and $\bdI_{\ve_i} \cdot \cK_i \cdot \bdI_{\ve_i}$ for $i$ such that $\ve'_i-\ve_i=0$; otherwise $\bdI_{\ve'} \cdot \cL \cdot \bdI_\ve = 0$. 

Concerning products between elements of the above subalgebras, the only ones that require some discussion are those between elements of $\bdI_{\ve'} \cdot \cL \cdot \bdI_\ve$ and $\bdI_{\ve} \cdot \cL \cdot \bdI_\ve$, when $\ve< \ve'$. We fix the following notation. Given $\ve< \ve'$, suppose that $t\in \bdI_{\ve'} \cdot \cL \cdot \bdI_\ve$ is in the form of  $t = \otimes_i^n t_i$, where  $t_i = \sigma_i$ or $\tau_i$ for the indices $i$ with $\ve_i'-\ve_i=1$ and $t_i = 1$ for those with $\ve_i'-\ve_i=0$.
Define \[g^t(a_1 \otimes\cdots \otimes a_n) =  \otimes_i^n g^{t_i}(a_i) \] where $g^{\sigma_i}$ and $g^{\tau_i}$ are defined as before, and $g^{t_i}$ is the identity map if $t_i=1$. Then the products between elements in $\bdI_{\ve'} \cdot \cL \cdot \bdI_\ve$ and $\bdI_{\ve} \cdot \cL \cdot \bdI_\ve$ are obtained by imposing  
\[
t \cdot a = g^t(a) \cdot t
\]
for any $a\in \bdI_{\ve}\cdot \cL \cdot \bdI_{\ve}.$ 
\newline

 The link surgery algebra is used to define a type-$D$ module $\cX(K)^{\cL}$ related to the link surgery formula, which we will describe in Section \ref{subsec: zemke's reformulation}. 
 \subsection{Type-$A$ modules for solid tori} \label{subsec: solid_tori}
 Another component of the link surgery formula from the bordered module perspective is the bordered modules for a disjoint union of solid tori, which we now discuss.  

We start by describing the type-$A$ module ${}_{\cK}\cD_\lambda$ for a solid torus with integral framing $\lambda$, defined in  \cite[Section 8.2]{zemke_bordered}. Since ${}_{\cK}\cD_\lambda$ is a module over $\bdI$, arguing as in the beginning of Section~\ref{subsec: link_surgery_algebra}, we have that it is enough to describe the submodules $\bdI_0 \cdot \cD_\lambda$ and $\bdI_1 \cdot \cD_\lambda$.  The underlying space $\bdI_0 \cdot \cD_\lambda$ is freely generated over $\F[W,Z]$ by a generator $\bdx^0$ and $\bdI_1 \cdot \cD_\lambda$ is freely generated over  $\F[U,T,T^{-1}]$ by a generator $\bdx^1$.
All structure maps $m_i$ vanish for $i\neq 2$ and the maps $m_2$ are defined as follows. If $a\in \bdI_i \cdot \cK \cdot \bdI_i$ and $x\in \bdI_j \cdot \cD_\lambda$, then we define $m_2(a,x)=a\cdot x$ if $i=j$, and to be $0$ otherwise. We define
 \[
 m_2(\sigma, a\bdx^0) = g^\sigma (a) \bdx^1 \qquad  m_2(\tau, a\bdx^0) = g^\tau (a) \bdx^1
 \]
for $a \in \F[W,Z].$

The module ${}_{\cK}\cD_\lambda$ can be viewed as a type-$AA$ bimodule ${}_{\cK}[\cD_\lambda]_{\F[U]}$, where $m_{0,1,1}$ is defined by the multiplication by power of $U$ and and $m_{1,1,0}$ is the same as $m_2$ on ${}_{\cK}\cD_\lambda$. All other structure maps $m_{i,1,j}$ are set to be zero.

Generalising this to the setting of rationally framed solid torus, the type-$D$ module $\cD^\cK_{p/q}$  is defined in \cite[Section 18.2]{zemke_bordered}. For $\nu \in \{0,1\}$, the underlying space of $\cD^\cK_{p/q}\cdot \bdI_{\nu}$ is the $\F$-span of $\{\bdx^\nu_0,\dots,\bdx^\nu_{q-1} \}$ and the type-$D$ structure map is given by
\[
\delta^1(\bdx^0_k) = \bdx^1_k \otimes \sigma + \bdx^1_{k+p} \otimes T^{\floor{\frac{k+p}{q}}} \tau 
\]
where we choose the representative of $k\in \Z/q\Z$ satisfying $0\leq k<q$. 

We can obtain the type-$A$ module ${}_{\cK}\cD_{p/q}$ by ${}_{\cK}\cD_{p/q} := \cD^\cK_{p/q} \boxtimes {}_{\cK| \cK} [\bI^{\Supset}]$ where ${}_{\cK| \cK} [\bI^{\Supset}]$ is the identity bimodule defined in  \cite[Section 18.4]{zemke_bordered}. We now give an explicit description of  ${}_{\cK}\cD_{p/q}$. 

Suppose that $p\in \Z,q\in \Z_{>0}$ and that $p$ and $q$ are coprime.
The underlying space $ \bdI_0 \cdot \cD_{p/q}$ is freely generated over $\F[W,Z]$ by $q$ generators $\{\bdx^0_0, \dots,\bdx^0_{q-1}\}$ and  $\bdI_1 \cdot \cD_{p/q}$ is freely generated over $\F[U,T,T^{-1}]$ by $q$ generators $\{\bdx^1_0,\dots,\bdx^1_{q-1}\}$. We view the generators  $\bdx^i_k$ as indexed by $i\in \{0,1\}$ and $k\in \Z/q\Z.$
  
As before, all structure maps $m_i$ vanish for $i\neq 2$ and the maps $m_2$ are defined as follows.
If $a\in \bdI_i \cdot \cK \cdot \bdI_i$ and $x\in \bdI_j \cdot \cD_\lambda$, define $m_2(a,x)= a \cdot x$ if $i=j$ and to be zero otherwise. We define
 \[
 m_2(\sigma, a\bdx^0_k) = g^\sigma (a) \cdot \bdx^1_k \qquad  m_2(\tau, a\bdx^0_k) = T^{\floor{\frac{k+p}{q}}}g^\tau (a) \cdot \bdx^1_{k+p}
 \]
for $k\in \Z/q\Z, a \in \F[W,Z].$

The following is a schematic drawing of ${}_{\cK}\cD_{1/3}$, where an arrow from $\bdx$ to $\bdy$ labelled by $u|a$ means that $m_2(u,\bdx)=a\otimes \bdy .$  
\[
\begin{tikzcd} [labels=description, column sep=1.4cm]
\bdx^0_0 \arrow[d, "\sigma|1" description, crossing over]  & \bdx^0_1  & \bdx^0_2 \arrow[d, "\sigma|1" description, crossing over] \arrow[dll, "\tau|T" description, pos=0.15] \\[10 mm]
 \bdx^1_0 & \arrow[from=ul, "\tau|1" description, crossing over] \arrow[from=u, "\sigma|1" description, crossing over] \bdx^1_1 & \arrow[from=ul, "\tau|1" description, crossing over] \bdx^1_2
\end{tikzcd}
\]

We are now ready to define the type-$A$ module of the disjoint union of $n$ (rationally framed) solid tori ${}_{\cL}\cD_\Lambda$.
\begin{defn}
Given a set of rational framings $\Lambda = (p_1/q_1, \dots, p_n/q_n)$  with $p_i\in \Z,q_i\in \Z_{>0}$ and $p_i,q_i$ coprime,  we define 
\[{}_{\cL}\cD_\Lambda :=  {}_{\cK_1}\cD_{p_1/q_1} \otimes_\F \cdots \otimes_\F {}_{\cK_n}\cD_{p_n/q_n}\]
where $\otimes_\F$ is the \emph{external tensor product}. For a detailed definition of the external tensor product of type-$A$ modules, see \cite[Section 3.5]{zemke_bordered} and also \cite[Definition 3.21]{LOTDiagonals}. However,  since each ${}_{\cK_i}\cD_{p_i/q_i}$ has only $m_2$ non-trivial, the definition is greatly simplified. Indeed, it is straightforward to verify that  if type-$A$ modules $M_1$ and $M_2$ over $A_i, i=1,2$ both have $m_j=0$ for $j>2$, then their  external tensor product  has underlying module $M_1 \otimes_\F M_2,$ with structure maps 
\[
m_2(a_1 \otimes a_2,x_1 \otimes x_2) = m_2(a_1,x_1)\otimes m_2(a_2,x_2) \text{ for } x_i \in M_i, a_i\in A_i, i =1,2
\] and $m_j=0$ for $j>2$. 
% Here $|$ denotes the external tensor product. 
In the external tensor product $m_1$ is obtained by the Leibniz rule, therefore it  vanishes for ${}_{\cL}\cD_\Lambda$ as well.
Applying the above formula iteratively, we obtain that the structure map on ${}_{\cL}\cD_\Lambda$ is given by 
\begin{equation}\label{eq: external_tensor}
   m_2(a_1\otimes\cdots\otimes a_n,x_1\otimes\cdots \otimes x_n)=m_2(a_1,x_1)\otimes \cdots \otimes m_2(a_n,x_n)  
\end{equation}
 for $a_i \in \cK_i $ and $ x_i \in \cD_{p_i/q_i}, i =1,\dots,n.$

We now give  an explicit description of ${}_{\cL}\cD_\Lambda$.
For $\ve \in \bE_n,$ $\bdI_\ve \cdot \cD_\Lambda$ is freely generated over $\big(\bdI_\ve \cdot \cL \cdot \bdI_\ve \big)$ by $q_1\cdots q_n$ generators $\{ \bdx^{\ve_1}_{k_1}\bdx^{\ve_2}_{k_2}\cdots \bdx^{\ve_n}_{k_n} : k_i\in \Z/q_i\Z, i=1,\dots,n\}$. We recall that $\bdI_\ve \cdot \cL \cdot \bdI_\ve$  is the tensor product of   $\F[Z_i,W_i]$ for $i$ such that $\ve_i =0$ and $ \F[U_i, T_i,T^{-1}_i]$ for $i$ such that $\ve_i =1$. We denote the generators by $\bdx^\ve_{\bk}:=\bdx^{\ve_1}_{k_1}\bdx^{\ve_2}_{k_2}\cdots \bdx^{\ve_n}_{k_n} $  where $\bk=(k_1,\dots,k_n).$ The structure map $m_j$ is zero if $j\neq 2$ and $m_2$ is given by \eqref{eq: external_tensor}.

More concretely, given $\ve\leq \ve',$ suppose $t\in \bdI_{\ve'}\cdot \cL \cdot \bdI_{\ve} $ is a tensor product of  $\sigma_i$ or $\tau_i$ for $i$ such that $\ve_i'-\ve_i=1$ and $1$ for $i$ such that $\ve_i'-\ve_i=0$. If we let $\frak{d}(t) \subset \{1,\dots,n\}$ denote the set of $i$ such that $\tau_i$ is a tensor factor  in $t$, then for any    $a \in \bdI_{\ve}\cdot \cL \cdot \bdI_{\ve}$ and $b \in \bdI_{\ve'}\cdot \cL \cdot \bdI_{\ve'}$, we have
\[
m_2(b\cdot t, a\cdot \bdx^{\ve}_{\bk}) = b\cdot g^t(a) \cdot (\prod_{i\in \frak{d}(t)}T_i^{\floor{\frac{k_i+p_i}{q_i}}} ) \cdot \bdx^{\ve'}_{\bk'}
\]
where $k'_i=k_i+p_i$ for $i$ such that $i\in \frak{d}(t)$ and $k'_i=k_i$ otherwise. 
\end{defn}

As in the case of one solid torus, ${}_{\cL}\cD_\Lambda$ can be viewed as a type-$AA$ bimodule ${}_{\cL}[\cD_\Lambda]_{\F[U_1,\dots,U_n]}$, where $m_{0,1,1}$ is multiplications by $U_i$, $m_{1,1,0}$ is the same as $m_2$ on ${}_{\cL}\cD_\Lambda$ and $m_{i,1,j}=0$ for all other $i$ and $j$.

\section{Rational link surgery formula}\label{section:rational link surgery}
In this section we deduce a rational link surgery formula as a direct consequence of the works by Zemke \cite{zemke_bordered, zemke_general}. Then we use Liu's work \cite{yajing_lspace} to show that the rational link surgery complex admits a simplified model in the case of two-component $L$-space links. In Section \ref{subsec: integral_link_surgery_formula}-\ref{subsec: zemke's reformulation} we review the statement of the integral link surgery formula in \cite{mo_linksurgery}, the definition of the $H$-function (in the case of $L$-space links) by \cite{GorNem15} and Zemke's reformulation of the link surgery formula using bordered modules \cite{zemke_bordered}. In Section \ref{subsec: rational_link_surgery_formula}, we define the rational link surgery complex and prove that its homology is the same the Heegaard Floer homology of the corresponding rational surgery using the works by Zemke. Finally, in Section \ref{subsec: simplified model},  we obtain a simplified model of the link surgery complex by directly applying Liu's results.
% In this section we deduce a rational link surgery formula, simplifying in the case of $L$-space links with Alexander-formal link Floer homology, as a direct consequence of the works by Zemke \cite{zemke_bordered, zemke_general}. In Section \ref{subsec: integral_link_surgery_formula}-\ref{subsec: zemke's reformulation} we review the statement of the integral link surgery formula in \cite{mo_linksurgery}, the definition of the $H$-function (in the case of $L$-space links) by \cite{GorNem15} and Zemke's reformulation of the link surgery formula using bordered modules \cite{zemke_bordered}. In Section \ref{subsec: rational_link_surgery_formula}, we introduce the rational link surgery complex and prove that it computes the Heegaard Floer chain complex of the corresponding rational surgery. Finally, in Section \ref{subsec: simplified model}, we prove its equivalence to the simplified model by applying the homological perturbation lemma for hypercubes.
\subsection{The integral link surgery formula} \label{subsec: integral_link_surgery_formula}
We recall the construction of the integral link surgery complex by
Manolescu-Ozsv{\'a}th \cite{mo_linksurgery},  following a slight reformulation by Zemke \cite{zemke_bordered}.
Before doing so, we introduce the notion of \emph{hypercube} of chain complexes from \cite{mo_linksurgery}. This formalism appears naturally in this context, and we will make use of it in what follows. Recall the notation \(\bE_n = \{0,1 \}^n\). 
\begin{defn}
An $n$-dimensional hypercube of chain complexes $(C_\ve, D_{\ve,\ve'})_{\ve\in\bE_n}$ consists of:
\begin{itemize}
\item A group $C_{\ve}$ for each $\ve\in \bE_n$.
\item Linear maps \(D_{\ve,\ve'}:C_{\ve} \to C_{\ve'}\) for each pair of indices $\ve,\ve'\in \bE_n$ with $\ve\leq \ve'$.
\end{itemize}
Furthermore, we ask the the following condition holds for each pair $\ve\leq \ve''$
\[
\sum_{\substack{\ve'\in \bE_n\\ \ve\leq\ve'\leq \ve''}}D_{\ve',\ve''}\circ D_{\ve,\ve'}=0.
\]
\end{defn}
 Suppose $L=K_1 \cup \cdots \cup K_n \subset S^3$ is an oriented $n$-component link.  
Let $\Lambda =(\lambda_1, \dots, \lambda_n) \in \Z^n$ be a set of integral surgery framings. We often conflate it with the \emph{surgery matrix} $\Lambda = ( \Lambda_{i,j})_{n\times n}$ defined by
% Define the link lattice  $\bH (L) := \Z^2 + (\ell/2,\ell/2) $ 
\begin{align*}
\Lambda_{i,j} = \begin{cases}
    \lambda_i   &i=j \\
    \operatorname{lk}(K_i,K_j) \qquad & i\neq j.
\end{cases}
\end{align*}

 % The \emph{link lattice} is defined by  \[\bH (L) := \prod_{i=1}^n \left(\Z + \frac{\operatorname{lk}(L,L-K_i)}{2} \right). \]
 
Given $\ve = (\ve_1, \dots, \ve_n) \in \bE_n = \{0,1 \}^n, $ denote by $\Se$ the  multiplicatively closed set in $\F[W_1,Z_1,\dots,W_n,Z_n]$ generated by $Z_i$  for $i$ such that $\ve_i = 1$. Let $\Se^{-1}\cCFL(L)$ be the localisation of $\cCFL(L)$ by $\Se$. There is a decomposition
 \[
 \Se^{-1}\cCFL(L) = \bigoplus_{\bs \in \bH(L)} \Se^{-1}\cCFL(L,\bs)
 \]
 coming from the Spin$^c$ decomposition of $\cCFL(L)$,
 where each $\Se^{-1}\cCFL(L,\bs)$ is a free complex over $\F[ U_1,\dots,U_n ].$
We define  $C_\ve(\bs) $ to be the free complex over the power series ring $\F\llbracket U_1,\dots,U_n \rrbracket$ with the same set of generators as $\Se^{-1}\cCFL(L,\bs)$, and set
 % In $C_\ve$, we write $T_i = Z_i$ whenever $\ve_i =1$ to make a distinction. 
 % For each $\ve,$ the group $C_\ve$  decomposes over the Alexander grading $\bs = (s_1,\dots, s_n) \in \bH(L)$ and we denote each subgroup by 
 \[
 C_\ve  := \prod_{\bs \in  \bH(L)} C_\ve(\bs). 
 \]
 Each complex $C_\ve$ is equipped with a homological grading coming from $\cCFL(L)$\footnote{Observe that, in contrast to the usual case, $C_\ve$ is not the direct sum of its graded part, but we still refer to it as a graded complex.}.
 % The element $W_i Z_i$ for $i=1,\dots,n$ induces the same action on the homology and so we set $U=W_i Z_i$ for $i=1,\dots,n$ on $\cCFL(L)$. The element $W_i$ (resp.~$Z_i$) decreases (resp.~increases) the $i$-th component of the Alexander grading by $1$ and    $U$ preserves the Alexander grading. 
 % Each $C_\ve(\bs)$ is a graded module over $\F\llbracket U_1,\dots,U_n\rrbracket$. 

\begin{remark}\label{rem: Z_i_iso}
 For $i$ such that $\ve_i = 1$, the element $Z_i$ is a unit in $\Se^{-1}\F[W_1,Z_1,\dots,W_n,Z_n]$, and hence the map $Z_i$ is an isomorphism of the chain complex $C_{\ve}(\bs)$ that changes the $i$-th component of the Alexander grading by $1$. In particular,  the isomorphism class of $C_{\ve}(\bs)$ does not depend on $s_i$ for $i$ such that $\ve_i=1.$
\end{remark}

Denote by  $\Lambda_j$ the $j$-th column vector of the surgery matrix  $\Lambda$. Under the identification  $H_1(S^3 \setminus L;\Z) = \Z^n$ induced by the meridians of the components of $L$, the vector  $\Lambda_j$ represents the $\lambda_j$-framed longitude of $K_j$.
Given any arbitrarily (possibly empty) oriented sublink $\vec{M}$ of $L$, define $\Lambda_{L,\vec{M}} $ to be the sum of the vectors $\Lambda_j$
for which the orientation of $K_j$ in $\vec{M}$ is opposite to its orientation in $L$. 
Each $\ve \in \bE_n$ can be viewed as the indicator function of a sublink $M \subset L$, where $K_j \subset M$ if $\ve_j = 1.$ For each $\vec{M}$ and $\ve,$
Manolescu-Ozsv{\'a}th defined a  map 
\begin{equation}\label{eq: phi_vecM}
\phi^{\vec{M}}_{\varepsilon}: C_\ve(\bs) \rightarrow C_{\ve'}(\bs + \Lambda_{L,\vec{M}} )
\end{equation}
 where $\ve' - \ve$ is the indicator function of $M$. 
 % The homological grading shift of  $\phi^{\vec{M}}_{\varepsilon}$  is $|\ve' - \ve|-1$.

It is proved in \cite[Proposition 9.4]{mo_linksurgery} that\footnote{Note the difference in notation: in \cite{mo_linksurgery} $\Phi^{\vec{M}}$ is used to denote the map on the chain level.} for any fixed arbitrarily oriented sublink $\vec{N} \subset L$,
\[
\sum_{\substack{\vec{M_1}\cup \vec{M_2}=\vec{N}\\\vec{M_1}\cap \vec{M_2}= \emptyset}} \phi^{\vec{M_2}}\circ \phi^{\vec{M_1}}  = 0.
\]
 When $\vec{M}$ is the empty sublink, $\phi^{\emptyset}_{\varepsilon}$ is the internal differential on $C_\ve(\bs)$. 
When $|M|=1$,  $\phi^{\vec{M}}_{\varepsilon}$ is a chain map, and we denote by $\Phi^{\vec{M}}_{\varepsilon}$ its induced map on the homology.

% By construction, the map $\phi^{\vec{M}}_{\varepsilon}$, viewed as maps between $C_\ve(\bs)$, changes the $\Z/2\Z$-homological grading by $|M|-1$. 
% By construction, the map $\phi^{\vec{M}}_{\varepsilon}$ depends only on $\vec{M}$ and $\ve$. In particular, it does not depend on the surgery framings\footnote{Observe that the isomorphism class of  $C_{\ve'}(\bs + \Lambda_{L,\vec{M}} )$  does not depend on the surgery framings, since $\ve'_j=1$ whenever $\Lambda_J$ is a term in $ \Lambda_{L,\vec{M}}$.}.
 % The map $\phi^{\vec{M}}_{\varepsilon}$ is defined via Heegaard diagrams and is difficult to compute explicitly. 
     
% We note that, as map between chain complexes, $\phi^{\vec{M}}_{\varepsilon}$
%   is independent of the surgery framing. This is because $\ve'_j=1$ if $\Lambda_J$ is term in $ \Lambda_{L,\vec{M}}$. 
\begin{defn}
The \emph{integral link surgery complex} is defined by
    \[
    C_\Lambda(L) := \bigoplus_{\ve\in \bE_n} \prod_{\bs \in \bH(L)} C_\ve(\bs)
    \]
      equipped with  the differential given by the sum of $\phi^{\vec{M}}_{\varepsilon}$, where $\vec{M}$ ranges over all oriented sublinks of $L.$
      % Shift the  homological grading of each $C_\ve$ by $-|\ve|$ such that $\phi^{\vec{M}}_{\varepsilon}$ always lowers grading by $1$ on $C_\Lambda(L)$.
It is proved in \cite{mo_linksurgery} that 
$C_\Lambda(L)$ is a chain complex and that 
    \[
    H_*(C_\Lambda(L)) \cong \bHFm(S^3_\Lambda(L))
    \]
    as $\F\llbracket U\rrbracket$-modules.
    % , where $\bCFm(S^3_\Lambda(L))$ is the completion of $\CF^{-}(S^3_\Lambda(L))$ over the $U$-action. 
    Moreover,   $C_\Lambda(L)$ naturally splits over Spin$^c$ structures as follows: there is an identification Spin$^c(S^3_\Lambda(L)) \cong \bH(L)/H(\Lambda)$, where $H(\Lambda)$ is the integral lattice spanned by $\Lambda$. Then $C_{\Lambda}$ decomposes as a direct sum of complexes $C_\Lambda(L,\bdve{t} )$, where  for each $\bdve{t} \in \bH(L)/H(\Lambda)$ we set
    \begin{equation}\label{eq: C_Lambda_split_spin_c}
        C_\Lambda(L,\bdve{t} ) :=  \bigoplus_{\ve\in \bE_n} \prod_{\bs \in \bdve{t} + H(\Lambda)} C_\ve(\bs),
    \end{equation}
    with $\bdve{t} + H(\Lambda)$ being the coset of $H(\Lambda)$ in $\bH(L)$ containing $\bdve{t}.$  In \cite{mo_linksurgery}, the authors define a $\Z/2k(\bdve{t})\Z$-grading on each $C_\Lambda(L,\bdve{t} )$, where $k(\bdve{t})\in \Z$ depends on $\bdve{t}$, and they show that the above isomorphism restricts to isomorphisms
    \[
    H_*(C_\Lambda(L, \bdve{t})) \cong \bHFm(S^3_\Lambda(L),\bdve{t}),
    \] 
    as relatively graded $\F\llbracket U\rrbracket$-modules.
    Analogous results hold for the other versions, and we have 
    \[
    H_*(C_\Lambda(L)/U_1) \cong \widehat{HF}(S^3_\Lambda(L)) \quad \text{and} \quad H_*(C_\Lambda(L,\bdve{t})/U_1) \cong \widehat{HF}(S^3_\Lambda(L),\bdve{t}).
    \]
    \end{defn}
\subsection{The $H$-function} \label{subsec: H-function}
By \cite[Theorem 12.1]{mo_linksurgery}, when $L$ is an $L$-space link we have that $H_*(\cCFL(L,\bs)) \cong \F[U]$ for each $\bs \in \bH(L)$. 

The \emph{$H$-function} of $L$ from \cite{GorNem15} is defined by
\[
H_L(\bdve{s}) = -\tfrac{1}{2}  gr(1),
\]
where $1 \in \F[ U] \cong H_*(\cCFL(L,\bs))$.
    % The maps $\Phi^{\vec{M}}_{\varepsilon}$ are determined by the $H$-function.

The $H$-function of $L$-space links can be computed from the Alexander polynomials of all the sublinks by the work of Gorsky-N{\'e}methi.
Recall that given a sublink $L' \subset L$,  Manolescu-Ozsv{\'a}th defined the \emph{reduction map} $\pi_{L,L'}\colon \bH(L)\to \bH(L')$  to be
\[
s_i\mapsto s_i-\frac{\operatorname{lk}(K_i, L - L')}{2}
\]
for all $i$ such that $K_i\subset L'$.
\begin{prop}[{\cite[Theorem~2.2.11]{GorNem15}}] \label{prop:GNH-function} Let $L$ be an oriented $L$-space link in $S^3$. Then
	\[
	H_L(\bdve{s})=\sum_{L'\subset L} (-1)^{|L'|-1} \sum_{\substack{\bdve{s}'\in \bH(L') \\ \bdve{s}'\ge \pi_{L,L'}(\bdve{s}+\bdve{1})}} \chi(\HFL^-(L',\bdve{s}')).
	\]
\end{prop}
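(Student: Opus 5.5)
The plan is to express $\chi(\HFL^-(L,\bs))$ as an alternating sum of values of $H_L$ at the corners of a unit cube, and then to recover $H_L$ by summing these identities over an orthant. The boundary terms of that summation will be controlled by the stabilisation of $H_L$ to the $H$-functions of sublinks.

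\textbf{Step 1 (the complexes $A(\bs)$).} Inverting $Z_1,\dots,Z_n$ in $\cCFL(L)$ and fixing the $W$-exponents, view $A(\bs):=\cCFL(L,\bs)$ as an $\F[U]$-complex. By \cite[Theorem 12.1]{mo_linksurgery}, recalled above, its homology is $\F[U]$ with top generator in grading $-2H_L(\bs)$. Multiplication by $W_i$ (equivalently, the inclusion of generalised Floer complexes) gives chain maps $A(\bs-e_i)\to A(\bs)$. Since every $A(\bs)$ is free and all homologies are $\F[U]$, such a map is multiplication by $U^{H_L(\bs-e_i)-H_L(\bs)}$ on homology. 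In particular $H_L$ is non-increasing, and each difference $H_L(\bs-e_i)-H_L(\bs)$ is $0$ or $1$.

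\textbf{Step 2 (the cube identity).} $\HFL^-(L,\bs)$ is the homology of the associated graded complex. It is the total complex of the $n$-dimensional cube whose vertices are $A(\bs-e_B)$ for $B\subseteq\{1,\dots,n\}$, with edge maps the inclusions $W_i$; this is the standard iterated mapping cone description of $\operatorname{gr}$. We take the Euler characteristic with respect to the Maslov grading, computed on a finite truncation (degree $\le N$ for $N\gg0$). Every term of the truncation is finite-dimensional, and the $N$-dependence cancels in the alternating sum over $B$. This yields
\[
\chi(\HFL^-(L,\bs))=\sum_{B\subseteq\{1,\dots,n\}}(-1)^{|B|-1}H_L(\bs-e_B)
\]
(the $B=\emptyset$ term entering with sign $-1$), up to an overall sign fixed by the grading convention. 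We will check that sign on the Hopf link or the unknot.

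\textbf{Step 3 (inversion and stabilisation).} Replace $\bs$ by $\bs'$ and sum over all $\bs'\ge\bs+\bdve{1}$. The sums are finite: for a fixed component $i$, $\chi(\HFL^-)$ vanishes once $s'_i$ exceeds the genus bound. Applying the cube identity coordinate by coordinate makes the sum telescope in each direction. The surviving terms are $H_L(\bs)$ together with boundary terms in which some coordinates $s'_i$ tend to $+\infty$. For these we use the known large-$s_i$ behaviour: $H_L(\bs)=H_{L-K_i}(\pi_{L,L-K_i}(\bs))$ for $s_i\gg0$. This follows because $A(\bs)$ stabilises to the complex of the sublink with Alexander gradings shifted by half linking numbers, which is exactly the reduction map. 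Organising the boundary terms by the set of coordinates sent to infinity, and inducting on the number of components, turns them into the sublink terms $\sum_{\bs''\ge\pi_{L,L'}(\bs+\bdve{1})}\chi(\HFL^-(L',\bs''))$ with sign $(-1)^{|L'|-1}$. This is the usual Möbius inversion on the Boolean lattice of sublinks.

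\textbf{Main obstacle.} The main obstacle is the bookkeeping in Step 3. One has to verify that iterating the reduction maps composes correctly, i.e.\ $\pi_{L',L''}\circ\pi_{L,L'}=\pi_{L,L''}$, and that the shift $\bs+\bdve{1}$ is compatible with reduction, so that the boundary terms match the stated inner sums exactly. One also has to justify the grading normalisation, so that no constant offset appears; this uses $H_{\emptyset}=0$ and the fact that $H_L(\bs)=0$ for $\bs\gg0$.
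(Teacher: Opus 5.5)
The paper gives no proof here and simply imports the statement from Gorsky--N\'emethi; your argument is the standard proof from that source and is correct: the cube identity $\chi(\HFL^-(L,\bs))=\sum_{B}(-1)^{|B|-1}H_L(\bs-e_B)$, then summing over the orthant $\bs'\ge\bs+\bdve{1}$, using the stabilisation $H_L(\bs)=H_{L-K_i}(\pi_{L,L-K_i}(\bs))$ for $s_i\gg0$, and M\"obius inversion over sublinks, with the Step~3 bookkeeping (signs, $\pi_{L',L''}\circ\pi_{L,L'}=\pi_{L,L''}$, compatibility with the shift by $\bdve{1}$) done right. Two small repairs are needed: with the paper's conventions the inclusion $A(\bs-e_i)\hookrightarrow A(\bs)$ is multiplication by $Z_i$, not $W_i$ (which lowers $s_i$); and in Step~2 the Euler characteristic should come from the short exact sequences $0\to Q_C(\bs-e_j)\xrightarrow{Z_j}Q_C(\bs)\to Q_{C\cup\{j\}}(\bs)\to 0$ with $Q_C(\bs)=A(\bs)/\sum_{i\in C}Z_iA(\bs-e_i)$, rather than from a Maslov truncation (the first sequence has $Z_{j*}=U^{H_L(\bs-e_j)-H_L(\bs)}$ injective on $\F[U]$, after which all homologies are finite-dimensional and $\chi$ is additive), because for $n\ge 2$ the complex $\cCFL(L,\bs)$ is free over $\F[U_1,\dots,U_n]$ and truncating it produces cut-off corrections that depend on the chain model and on $\bs$, so the claimed cancellation of the $N$-dependence is not automatic.
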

In the above, $\HFL^-(L')$ denotes  $H_*(\cCFL(L')/(Z_i: K_i\subset L'))$ and $\bdve{1}$ denotes $(1,\dots, 1)$. 
 Let 
\begin{equation}
	\tilde{\Delta}_L(x_1,\dots,x_n) :=
	\begin{cases}
		(	x_1\cdots x_n)^{\frac{1}{2}}\Delta_L(x_1,\dots,x_n) & \text{if $n>1$}\\
		\Delta_L(x)/(1-x^{-1})& \text{if $n=1$}\\
	\end{cases}       
\label{eq:normalized Alex poly}
\end{equation}
where $\Delta_L(x_1,\dots,x_n)$ is the symmetrised Alexander polynomial. By \cite[Section~1.1]{OSLinks},  the following relation holds
\begin{equation*}
	\tilde{\Delta}_{L}(x_1,\dots,x_n) = \sum_{\bdve{s}\in\mathbb{H}(L)}\chi(\HFL^{-}(L,\bdve{s}))x_1^{s_1}\dots x_n^{s_n},
	\label{eq:euler_char_Alex}
\end{equation*}
where $\bdve{s} = (s_1,\dots,s_n)$. For $n=1$, we expand $1/(1-x^{-1})$ as a power series in $x^{-1}$. There is an overall sign ambiguity of $\tilde{\Delta}_L$ which can be eliminated by the fact that  $H_L(\bdve{s})$
is non-increasing as $\bdve{s}$ increases in each component. 

% For an $L$-space link $L\subset S^3$,
%  the maps $\Phi^{\vec{M}}_{\varepsilon}$ with $|M|=1$ are determined by the $H$-function of the sublinks of $L$.\ds{is this true for any number of components? can we give a reference?} 

Specialising to the case of a two-component $L$-space link $L=K_1 \cup K_2$ where $l=\operatorname{lk}(K_1,K_2)$, when $s_1$ (resp.~$s_2$) is sufficiently large, $H_L(s_1,s_2)$ stabilises to a constant value denoted by $H_L(\infty,s_2)$ (resp.~$H_L(s_1,\infty)$). Moreover, there is a symmetry
\[H_L(s_1,s_2)+s_1+s_2 = H_L(-s_1,-s_2).\]
 The following description of $\Phi^{\vec{M}}_{\varepsilon}$ can be found in \cite[Section 3.2]{BeibeiLiu21}.
\begin{lem} \label{lemma: two_component_$L$-space_H_Lambda_maps}
When $L$ is a two-component $L$-space link, for $\bdve{s}=(s_1,s_2)$ and $\ve \in \bE_2,$ 
we have
\begin{eqnarray}
\label{maps2}
\begin{aligned}
\Phi^{K_1}_{00}=U^{H_L(s_{1}, s_{2})-H_L(\infty, s_{2})}, \quad &\Phi^{-K_1}_{00}=U^{H_L(-s_{1}, -s_{2})-H_L(\infty, -s_{2})}, \\
\Phi^{K_2}_{00}= U^{H_L(s_{1}, s_{2})-H_L(s_{1}, \infty)}, \quad &\Phi^{-K_2}_{00}=U^{H_L(-s_{1}, -s_{2})-H_L(-s_{1}, \infty)}, \\
\Phi^{K_1}_{01}=U^{H_L(s_{1}, \infty)}=U^{H_{K_1}(s_{1}-l/2)}, \quad &\Phi^{-K_1}_{01}=U^{H_{K_1}(l/2-s_{1})}, \\
\Phi^{K_2}_{10}=U^{H_L(\infty, s_{2})}=U^{H_{K_2}(s_{2}-l/2)}, \quad &\Phi^{-K_2}_{10}=U^{H_{K_2}(l/2-s_{2})}. 
\end{aligned}
\end{eqnarray}
\end{lem}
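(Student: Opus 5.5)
The approach is to reduce everything to the structure of the link Floer complex of a two-component $L$-space link, and then read off the maps $\Phi^{\pm K_i}_\ve$ from the gradings of the generators. By \cite[Theorem 12.1]{mo_linksurgery}, every $H_*(\cCFL(L,\bs))$ is a copy of $\F[U]$. Localising at $Z_1$, $Z_2$, or both gives the complexes $C_{10}(\bs)$, $C_{01}(\bs)$ and $C_{11}(\bs)$. These are the link Floer complexes of a single component (or of the empty link) in $S^3$, up to the shift by the reduction map $\pi_{L,K_i}$. Since the components of an $L$-space link are $L$-space knots \cite{GorNem16}, each of these localised homologies is again $\F[U]$. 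The key observation is that every $\Phi^{\pm K_i}_\ve$ is a $U$-equivariant, homogeneous chain map between complexes whose homology is $\F[U]$. It is also an isomorphism after inverting $U$, being the localisation map (or its composition with the Alexander-grading flip). Hence each $\Phi$ is multiplication by $U^{c}$, where $c$ equals half the grading difference between the bottom generators of source and target.

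The plan is to verify this for the four types of maps in turn.

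\emph{Inclusion maps $\Phi^{K_1}_{00}$ and $\Phi^{K_2}_{00}$.} These are the localisation maps $C_{00}(\bs)\to C_{10}(\bs)$, and similarly for $K_2$. The generator of $H_*(C_{10}(\bs))$ has grading $-2H_L(\infty,s_2)$. This holds because inverting $Z_1$ identifies $C_{10}(s_1,s_2)$ with $\cCFL(L,(s_1',s_2))$ for $s_1'\gg 0$, and the $H$-function stabilises there. The source generator has grading $-2H_L(s_1,s_2)$. Monotonicity of $H_L$ gives a nonnegative exponent, namely $H_L(s_1,s_2)-H_L(\infty,s_2)$.

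\emph{Flip maps $\Phi^{-K_1}_{00}$ and $\Phi^{-K_2}_{00}$.} For $-K_1$, the map is the localisation at $W_1$ followed by the flip identification to $C_{10}(\bs-\Lambda_1)$. I treat it symmetrically, using $W_1$-localisation, which stabilises at $s_1\to-\infty$. This gives the exponent $H_L(s_1,s_2)-H_L(-\infty,s_2)-s_1$-type quantities, where the $-s_1$ comes from the grading normalisation of the flip. Applying the symmetry $H_L(s_1,s_2)+s_1+s_2=H_L(-s_1,-s_2)$ together with the stable values turns this into $H_L(-s_1,-s_2)-H_L(\infty,-s_2)$.

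\emph{Edge maps.} The maps $\Phi^{\pm K_1}_{01}$ and $\Phi^{\pm K_2}_{10}$ are exactly the knot-surgery maps $v$, $h$ for the component $K_i$, in the Alexander grading $s_i-l/2$ obtained from the reduction map. Here I invoke the one-component $L$-space knot computation: $v_s=U^{H_K(s)}$ and $h_s=U^{H_K(-s)}$. I also identify $H_L(s_1,\infty)=H_{K_1}(s_1-l/2)$, which follows from Proposition~\ref{prop:GNH-function} by letting $s_2\to\infty$, or equally from the localisation argument above.

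The main obstacle is bookkeeping rather than ideas. One part is getting the grading normalisation of the flip maps right, including the absolute grading on the $\ve_i=1$ complexes and the shift by $\Lambda$. The other is checking the precise identification $C_{10}(\bs)\simeq \cCFL(K_2)$ at Alexander grading $s_2-l/2$. I would settle both by comparing with \cite[Section 3.2]{BeibeiLiu21} and with the $n=1$ case, where the formulas reduce to the Ozsv\'ath--Szab\'o $v_s,h_s$.
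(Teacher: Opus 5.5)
The paper gives no argument for this lemma; it is imported from \cite[Section~3.2]{BeibeiLiu21}. You instead derive it directly, and your skeleton is sound. It is essentially the standard argument behind that reference:
\begin{itemize}
\item Every $H_*(C_\ve(\bs))$ is $\F\llbracket U\rrbracket$, with generator in grading $-2H_L$ at a stabilised lattice point. This follows from the stabilisation trick of Remark~\ref{rmk: even hom degrees}, so you do not need that components of $L$-space links are $L$-space knots.
\item The maps $\Phi^{K_i}_{\ve}$ are localisations. They preserve $gr$ and become the identity once $U_1=W_1Z_1$ is inverted, so they equal $U^c$ with $c$ read off from gradings.
\item The $01$/$10$ edge maps are the knot maps $v_s=U^{H_K(s)}$ and $h_s=U^{H_K(-s)}$ at $s=s_i-l/2$, with $H_L(s_1,\infty)=H_{K_1}(s_1-l/2)$ from Proposition~\ref{prop:GNH-function}.
\end{itemize}
What your route buys over the citation is that everything reduces to the $L$-space hypothesis, stabilisation in the Alexander grading, and the symmetry of $H_L$.

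The step you need to repair is the flip maps. Your intermediate formula is not right as stated: $H_L(-\infty,s_2)$ does not exist, since $H_L(t,s_2)=H_L(\infty,-s_2)-t-s_2$ for $t\ll 0$, and the $s_1$ term must enter with a plus sign. You also do not need any grading normalisation of the flip.

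\emph{Factorisation.} By the $g^{\tau}$-equivariance $\phi^{-K_1}(W_1x)=T_1^{-1}\phi^{-K_1}(x)$, with $T_1$ a grading-preserving unit, the map $\Phi^{-K_1}_{00}$ on $H_*(C_{00}(s_1,s_2))$ is, up to $T_1^{N}$, a composite of two maps. The first is $W_1^{N}\colon C_{00}(s_1,s_2)\to C_{00}(s_1-N,s_2)$, which lowers $gr$ by $2N$. The second is $\phi^{-K_1}$ on $C_{00}(s_1-N,s_2)$.

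\emph{The flip step.} For $N\gg 0$, the complex $C_{00}(s_1-N,s_2)$ already equals its $W_1$-localisation. On it, $\phi^{-K_1}$ is the flip homotopy equivalence, so it carries generator to generator.

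\emph{The exponent.} Hence the exponent is $H_L(s_1,s_2)-H_L(s_1-N,s_2)+N$. Applying the symmetry at $(s_1-N,s_2)$ rewrites this as $H_L(s_1,s_2)+s_1+s_2-H_L(\infty,-s_2)=H_L(-s_1,-s_2)-H_L(\infty,-s_2)$. The same factorisation gives $\Phi^{-K_1}_{01}=U^{H_{K_1}(l/2-s_1)}$ directly, using $H_K(-s)=H_K(s)+s$.
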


\begin{remark}\label{rmk: even hom degrees}
     Since localisation is an exact functor,  we have $H_*(C_\ve(\bs)) \cong \F\llbracket U \rrbracket$ for any $\ve \in \bE_n $ and $ \bs \in \bH(L)$. Moreover $H_*(C_\ve(\bs))$ is concentrated in even homological gradings. This can be seen as follows. Since $\cCFL(L)$ is finitely generated over $F[W_1,Z_1,\dots, W_n,Z_n]$,  when $s_i\gg 0$ for $i=1,\dots,n$, the chain complex $C_\ve(\bs)$ is generated by $Z_i$ powers of the free generators of $\cCFL(L)$. It follows that $C_\ve(\bs) \simeq C_{(1,\dots,1)}(\bs)$.  Viewed as a chain complex over $\F\llbracket U_1\rrbracket$, the complex $C_{(1,\dots,1)}(\bs)$ is graded chain homotopy equivalent to $\bCFm(S^3)$ by \cite[Theorem 4.4]{OSLinks}, and hence the generator of $H_*(C_\ve(\bs))$ lies in homological grading $0$.   Now, for every $\bs$, multiplication by $(Z_1 \cdots Z_n)^N$ yields an inclusion from $C_\ve(\bs)$ to $C_\ve(\bs')$, which induces an $\F\llbracket U \rrbracket$-equivariant map on homology, where $U_1$ acts as $U$. Furthermore, since $W_i Z_i = U_i,$ the induced map is non-trivial on homology. It follows that  the map induced by $(Z_1 \cdots Z_n)^N$ on homology is the multiplication by a power of $U$. For a  sufficiently large $N,$ since
    the generator of $H_*(C_\ve(\bs'))  \cong \F\llbracket U \rrbracket$ lies in the homological grading $0$,  it follows that the generator of $H_*(C_\ve(\bs)) \cong \F\llbracket U \rrbracket$ lies in homological grading $-2k$ for some $k\in \Z_{\geq 0}$. 
\end{remark}

\subsection{Zemke's reformulation of the link surgery formula} \label{subsec: zemke's reformulation}  In this section we review
 Zemke's reformulation of the link surgery formula using bordered modules \cite{zemke_bordered}, which allows us to generalise the link surgery formula to rational coefficients.
 
We first recall Zemke's definition of the type-$D$ link surgery module from \cite[Section 8.6]{zemke_bordered}.
Given an oriented $n$-component link $L \subset S^3$ and an integral framing $\Lambda,$ Zemke constructed a type-$D$ module $\cX_\Lambda(L)^{\cL}$ which encodes all the maps $\phi^{\vec{M}}_\ve$ in the link surgery complex.

% the link surgery complex.
% can be obtained as
% \[C_{\Lambda_1+\Lambda_2}(L) \simeq \cX_{\Lambda_1}(L)^{\cL} \hboxtimes {}_{\cL}[\cD_{\Lambda_2}]_{\F[U]},
% \]where ${}_{\cL}[\cD_{\Lambda_2}]_{\F[U]}$ is the type-$A$ bimodule of disjoint tori defined in Section \ref{subsec: solid_tori}.

Suppose $\{\bdy_1,\dots,\bdy_m\}$ is a free basis of $\cCFL(L)$.  The underlying space of  $ \cX_\Lambda(L)^{\cL}$  is   
\[ \text{Span}_{\F}\{\bdy_1,\dots,\bdy_m\} \otimes_{\F} \bdE_n.\] In other words,
for each $\ve \in \bE_n,$ and each free generator $\bdy$ of $\cCFL(L)$, we have a generator $\bdy^\ve$.
% Observe that $\cX(L)^{\cL} \cdot \bdI_\ve $ can be identified to $ \Se^{-1}\cCFL(L) = C_\ve.$ (before taking the completion).

We define the map $\delta^1$ as follows. Fix $\ve \in \bE_n,$ and let $\bdw^\ve = \bdy^\ve_{i}$ for some $i \in \{1,\dots,n\}$.
Let $N \subset L$ denote the sublink consisting of $K_i$ for which $\ve_i =0.$ 
For each oriented sublink $\vec{M}\subset N$ and $\ve'\in \bE_n$ such that $\ve' - \ve$ is the indicator function of $M$, we can write
$\phi^{\vec{M}}_{\varepsilon}(\bdw^\ve) = \sum_{j=1}^m b_j \bdy^{\ve'}_j$, where we are considering $\bdw^\ve$ as an element in $C_\ve$, and where $b_j \in S^{-1}_{\ve'}\F[Z_1,W_1,\dots,Z_n,W_n]=\bdI_{\ve'} \cdot \cL \cdot \bdI_{\ve'}$. 

Define an element  $t^{\vec{M}}\in \bdI_{\ve'} \cdot \cL \cdot \bdI_{\ve}$ given by $t^{\vec{M}} = \otimes_{i=1}^n t^{\vec{M}}_{i}$ where
\begin{equation} \label{eq: def_t_M}
    t^{\vec{M}}_{i} := \begin{cases}
        1 &\text{if $K_i$ is not in $M$;}\\
        \sigma_i &\text{if $K_i$ is oriented the same way in $\vec{M}$ as in $L$;}\\
        \tau_i &\text{if  $K_i$ is oriented the opposite ways in $\vec{M}$ as in $L$}.
    \end{cases}
\end{equation}

We declare $\delta^1(\bdw^\ve )$ to have the summand
 \[
  \sum_{j=1}^m \bdy^{\ve'}_j \otimes  b_j \cdot t^{\vec{M}}.
 \]

% Zemke proved several versions of pairing theorems for bordered modules over link surgery algebras in \cite{zemke_bordered, zemke_general}.

In the case of integral framings, the box tensor pairing of bordered modules recovers the Manolescu-Ozsv{\'a}th link surgery formula.
 In the following, $\hboxtimes$ denotes  a completed version of the box tensor product defined in Section \ref{subsec: type_A_algebra_and_bimodules}.  See \cite[Section 7.6]{zemke_bordered} for details. 
 % Roughly speaking, Zemke equips the underlying tensor product with a suitable topology and then takes its completion with respect to that topology.
\begin{prop}
[{\cite[Proposition 8.8]{zemke_bordered}}]\label{prop: integral_pairing}
Given a set of integral surgery framings $ \Lambda =(\lambda_1,\dots,\lambda_n)$, there is a chain isomorphism  
\begin{equation}
    C_{\Lambda}(L)_{\F[U_1,\dots,U_n]}  \cong   \cX_{\Lambda}(L)^\cL \hboxtimes {}_{\cL}[\cD_{(0,\dots,0)}]_{\F[U_1,\dots, U_n]}
    \end{equation}
where ${}_{\cL}[\cD_{(0,\dots,0)}]_{\F[U_1,\dots, U_n]}$ is the type-$AA$ bimodule of disjoint tori.
% \ds{the object on the right is not completed,right? it is only a module over $F[U]$ for now. By looking at Zemke's paper, it seems that the completion is happening in the box tensor (his box tensor has a hat on it, I don't know if it plays a role)}
\end{prop}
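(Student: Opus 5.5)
Since this is a citation of Zemke's result (Proposition 8.8 in \cite{zemke_bordered}), the plan is to verify the isomorphism by unwinding the box tensor product directly, generator by generator. First, I will compute the underlying module of $\cX_{\Lambda}(L)^\cL \hboxtimes {}_{\cL}[\cD_{(0,\dots,0)}]_{\F[U_1,\dots,U_n]}$. The idempotent $\bdI_\ve$ part of $\cX_\Lambda(L)$ is spanned by the generators $\bdy_j^\ve$, and $\bdI_\ve\cdot\cD_{(0,\dots,0)}$ is free of rank one over $\bdI_\ve\cdot\cL\cdot\bdI_\ve$ on $\bdx^\ve$ (here every $q_i=1$). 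Tensoring over $\bdE_n$ therefore gives, in idempotent $\ve$, the free module on the $\bdy_j$ over $\bdI_\ve\cdot\cL\cdot\bdI_\ve=S_\ve^{-1}\F[W_1,Z_1,\dots,W_n,Z_n]$. This is exactly $S_\ve^{-1}\cCFL(L)$, and after completion it is $\prod_{\bs}C_\ve(\bs)$. Matching the Alexander grading with the direct-product decomposition uses that the $U_i$ preserve $\bs$.

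Next, I will match the differentials. Since only $m_2$ is nonzero on $\cD$, the box tensor differential is just $\bdy^\ve\otimes\bdx^\ve\mapsto\sum \bdy_j^{\ve'}\otimes m_2(b_j t^{\vec M},\bdx^\ve)$, where the sum runs over the terms of $\delta^1$. Let me compute this term: $m_2(b_j t^{\vec M}, \bdx^\ve)=b_j g^{t^{\vec M}}(1)\bdx^{\ve'}$. With $p_i=0$, the $T_i$ factor $T_i^{\lfloor (0+0)/1\rfloor}$ is trivial, so the result is $b_j\bdx^{\ve'}$, and this recovers $\phi^{\vec M}_\ve$ on generators. The substantive check is $\F[W,Z]$-linearity. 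For $a\bdy^\ve$, the $\delta^1$-term is defined by writing $\phi^{\vec M}_\ve$ on a basis element and then moving $a$ past $t^{\vec M}$, which produces $g^{t}(a)$. So I must verify that the Manolescu--Ozsv\'ath map $\phi^{\vec M}_\ve$ is semilinear with respect to $g^{\sigma}$ (the inclusion map) on components oriented as in $L$, and with respect to $g^{\tau}$, i.e.\ $W\mapsto T^{-1}$ and $Z\mapsto UT^2$, on reversed components.

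The main obstacle is the framing. Zemke's convention places the framing in $\cX_\Lambda$, through the identification used for $\tau$ (the flip map shifts Alexander grading by $\Lambda_{L,\vec M}$). I will check that the grading shift $\bs\mapsto\bs+\Lambda_{L,\vec M}$ in \eqref{eq: phi_vecM} corresponds exactly to the $T_i$-powers built into $\cX_\Lambda$ via the $g^\tau$ twist, once the solid-torus module carries framing $0$. Concretely, $Z\mapsto UT^2$ and $W\mapsto T^{-1}$ send grading $s$ to $-s$, which is the reversal in $\Phi^{-K}$, and the linking/framing shift appears as the $T$-monomial coefficient of $b_j$ in $\cX_\Lambda$. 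Finally, the completion $\hboxtimes$ has to be compared with the direct product $\prod_{\bs}$. I will argue that the topologies agree because each $C_\ve(\bs)$ is finitely generated over $\F\llbracket U\rrbracket$ and the maps respect the Alexander filtration. In the paper itself I would simply cite \cite{zemke_bordered} for this step.
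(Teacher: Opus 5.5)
Your outline is correct and follows the same direct unwinding the paper relies on. For this statement the paper only cites \cite{zemke_bordered}, but it carries out exactly your computation, with the framing moved from $\cX$ to $\cD$, in the proof of Theorem~\ref{thm: rational_surgery_chain}. There the semilinearity $\phi^{\vec{M}}_{\ve}(a\bdw^{\ve})=g^{t^{\vec{M}}}(a)\,\phi^{\vec{M}}_{\ve}(\bdw^{\ve})$, which you single out as the substantive check, is taken from \cite[Lemma~6.7]{zemke_bordered} rather than reproved; you should do the same.

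Your framing paragraph, however, is unnecessary, and the mechanism it describes is wrong. The differential must commute with $U=WZ$, so $g^{\tau}$ must fix $U$, which forces $g^{\tau}(Z)=UT$; the $UT^2$ in the text is a typo. With that correction, both $g^{\sigma}$ and $g^{\tau}$ shift Alexander gradings exactly as the input monomial does. So nothing sends $s$ to $-s$; the $-\bs$ appears only in the $U$-exponents of the induced maps on homology.

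The framing enters solely because $\delta^1$ on $\cX_{\Lambda}(L)$ is defined from $\phi^{\vec{M}}_{\ve}$ with target $C_{\ve'}(\bs+\Lambda_{L,\vec{M}})$, that is, through the $T$-powers in the coefficients $b_j$. Once semilinearity is granted, $g^{t^{\vec{M}}}(a)\,b_j\,\bdy^{\ve'}_j$ automatically lies in the correct summand, so no separate framing check is needed.
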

% \begin{remark}
%    In the above, we can equally well use $(0,\dots,0)$ framing on $\cX(L)^\cL$ and let ${}_{\cL}[\cD_\Lambda]_{\F[U]}$ supply the surgery framing. One readily checks that the resulting complex from the box tensor product is the same. 
% \end{remark}
Proposition \ref{prop: integral_pairing}, together with \cite{mo_linksurgery}, implies that $\bHFm(S^3_{\Lambda}(L))$ can be recovered as the homology of \(\cX_{\Lambda}(L)^\cL \hboxtimes {}_{\cL}[\cD_{(0,\dots,0)}]_{\F[U_1,\dots, U_n]}\). In fact, by \cite[Theorem 3.8]{zemke_general}, as chain complexes over $\F\llbracket U\rrbracket,$ we have
\[
 \cX_{\Lambda}(L)^\cL \hboxtimes {}_{\cL}[\cD_{(0,\dots,0)}]_{\F[U_1,\dots, U_n]} \simeq \bCFm(S^3_{\Lambda}(L))
\]
where $U$ acts by $U_1$ on the left hand side.

It is pointed out in \cite[Remark 1.8]{zemke_bordered} that this can be generalised to allow
for rational surgeries on links.
We have the following, as a result of  \cite{mo_linksurgery,zemke_bordered,zemke_general}.
\begin{thm}\label{thm: zemke_rational}
Given a set of  rational  surgery coefficients $ \Lambda =(\lambda_1,\dots,\lambda_n)$, there is a homotopy equivalence of chain complexes over $\F\llbracket U\rrbracket$
\begin{equation}\label{eq: rational_link_surgery_box}
      \cX_{(0,\dots,0)}(L)^\cL \hboxtimes {}_{\cL}[\cD_\Lambda]_{\F[U_1,\dots,U_n]} \simeq \bCFm(S^3_{\Lambda}(L))
    \end{equation}
    where $U$ acts by $U_1$ on the left hand side. 
\end{thm}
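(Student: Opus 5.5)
The plan is to reduce rational surgery to integral surgery on an enlarged link, and then use Zemke's bordered pairing theorems to collapse the extra components into the rationally framed solid-torus module ${}_{\cK}\cD_{p/q}$. The key idea is that each solid torus with rational framing is itself obtained by integral surgery on a chain of unknots.

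\textbf{Step 1 (reduction to an integral framing).} For each $i$ I write $p_i/q_i$ as a continued fraction $[c^i_1,\dots,c^i_{r_i}]$ with integer entries. Then $S^3_\Lambda(L)$ is integral surgery on an auxiliary link $L'$. This link $L'$ is obtained from $L$ by adjoining to each $K_i$ a linear chain of unknots $J^i_1,\dots,J^i_{r_i-1}$, linked in the pattern of a linear plumbing, with $J^i_1$ a meridian of $K_i$. Its integral framing $\Lambda'$ is read off from the $c^i_j$, so that $S^3_{\Lambda'}(L')\cong S^3_\Lambda(L)$ by slam-dunks. Proposition~\ref{prop: integral_pairing}, combined with \cite[Theorem 3.8]{zemke_general}, then gives
\[
\cX_{\Lambda'}(L')^{\cL'}\hboxtimes {}_{\cL'}[\cD_{(0,\dots,0)}]\simeq \bCFm(S^3_{\Lambda}(L)).
\]

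\textbf{Step 2 (splitting off the chains).} Next I decompose the type-$D$ module of $L'$ using Zemke's connected-sum and cabling pairing theorems from \cite{zemke_bordered,zemke_general}. Adjoining a meridian chain to $K_i$ amounts to tensoring, in the $i$-th algebra factor, with the type-$DA$ bimodules of the Hopf link. This yields a homotopy equivalence
\[
\cX_{\Lambda'}(L')^{\cL'}\simeq \cX_{(0,\dots,0)}(L)^{\cL}\boxtimes \big(\text{Hopf-link and framing bimodules}\big).
\]
In the process I track how an integral framing change on a component only alters the $T$-powers attached to the $\tau$-arrows. This lets me push all framing data into the solid-torus side, which is why $\cX_{(0,\dots,0)}(L)$ appears on the left of \eqref{eq: rational_link_surgery_box}.

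\textbf{Step 3 (identifying the solid-torus factor).} Using associativity of $\boxtimes$, the remaining task is to show that, for each component, the chain of Hopf-link bimodules tensored with ${}_{\cK}[\cD_0]$ and the framing shifts is homotopy equivalent to ${}_{\cK}\cD_{p_i/q_i}$ from Section~\ref{subsec: solid_tori}. I would prove this by induction on the length of the continued fraction. Each step corresponds to replacing $p/q$ by $c-q/p$, and I would check it directly against the explicit description of $\cD^{\cK}_{p/q}$ in \cite[Section 18.2]{zemke_bordered} and the identity bimodule ${}_{\cK|\cK}[\bI^{\Supset}]$. Since all $\cD_{p_i/q_i}$ have only $m_2$ nonzero, the external tensor product formula \eqref{eq: external_tensor} then assembles the components into ${}_{\cL}[\cD_\Lambda]$.

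\textbf{Expected main obstacle.} The hardest part will be Step 3 together with the analytic bookkeeping. First, the inductive identification must produce exactly the $T^{\lfloor (k+p)/q\rfloor}$ coefficients, with the correct indices $k+p$ modulo $q$. Second, every homotopy equivalence must survive the completed tensor product $\hboxtimes$. For the latter I would check boundedness or filtration conditions on each bimodule, as in \cite[Section 7.6]{zemke_bordered}, so that homotopy equivalences of modules induce homotopy equivalences after completion. I would also verify that the $U$-action on the left side is given by $U_1$ throughout.
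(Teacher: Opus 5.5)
Your outline is correct, but Step~3 takes a different route from the paper's proof.

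\textbf{Steps 1--2.} These are essentially the paper's. Your chain $K_i\cup J^i_1\cup\cdots\cup J^i_{r_i-1}$ is isotopic to the paper's $K_i\#O_i$ together with the plumbing $J_i$. The paper likewise splits it off using Zemke's connected-sum results \cite[Theorem~15.1, Proposition~15.3]{zemke_bordered}, and it also places all framing data on the $\bar J_i$ side so that $\cX_{(0,\dots,0)}(L)$ remains.

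\textbf{Step 3 in the paper.} The paper takes a single connected sum with the whole plumbing $\bar J_i=O_i\cup J_i$. It then surgers all $J_i$-components at once by Zemke's naturality under Dehn surgery (Theorem~\ref{thm: dehn_surgery_cX}, i.e.\ \cite[Theorem~9.1]{zemke_general}). This produces the type-$D$ module of the Morse-framed knot $O_{q/r}\subset L(q,r)$, and the paper invokes $\cX_{\lambda_d}(O_{q/r})^{\cK}\simeq\cD^{\cK}_{p/q}$ (Equation~\eqref{eq: linearplumbing_equivalence}). Associativity of $\boxtimes$ and ${}_{\cK}\cD_{p/q}=\cD^{\cK}_{p/q}\boxtimes{}_{\cK|\cK}[\bI^{\Supset}]$ then finish the argument.

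\textbf{Step 3 in your proposal.} You stay in $S^3$, peel off one Hopf link at a time, and want to check by induction on the continued fraction that the result is ${}_{\cK}\cD_{p/q}$. This avoids knots in lens spaces and the general-manifold version of $\cX$. The cost is that a citation becomes a genuine computation, which your sketch leaves open.
\begin{itemize}
\item At each step, the tensor product of the Hopf-link module with the previous solid-torus module is much larger than the target model. For the new slope $c-q/p$ the target has only $|p|$ generators per idempotent and only $m_2$ nonzero.
\item You therefore need an explicit homological-perturbation reduction. It must produce exactly the $\sigma$- and $\tau$-actions of Section~\ref{subsec: solid_tori}, with the correct $T$-powers and index shifts for the new slope, and it must leave no higher actions.
\item In effect you would be re-deriving by hand the equivalence that the paper imports.
\end{itemize}

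\textbf{Why the paper's route pays off later.} Factoring through $O_{q/r}$ identifies the generators $\bdx^\nu_k$ with Alexander gradings of $\cCFK(O_{q/r})$, hence with $\mathrm{Spin}^c$ structures on $L(q,r)$. Lemma~\ref{lem: phiM_D_p_q} and the $\mathrm{Spin}^c$ splitting in Theorem~\ref{thm: rational_surgery_chain} rely on exactly this. Your inductive equivalences would have to carry the same grading bookkeeping to support that refinement.
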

Since this statement does not appear in the literature, we include a proof at the end of this section for the reader's convenience.
% In the case of rational surgeries on a knot $K$, by construction
%  \[
%   \cD^{\cK}_{p/q} \hboxtimes {}_{\cK} \cX_0(K)  \simeq   \cX_0^{\cK}(K)  \hboxtimes {}_{\cK}\cD_{p/q}
%  \]
% recovers the rational surgery formula on a knot defined in \cite{OSrational11}.  While \cite[Section~18.2]{zemke_bordered} states this only for $K \subset S^3$,  the same construction applies to Morse surgery on knots in arbitrary closed, oriented three-manifolds, as in \cite{OSrational11}. 

We begin by fixing the notation and setup.
As observed in \cite{OSrational11},
a rational surgery on an oriented knot $K \subset S^3$ can be viewed as a Morse surgery on a link in some rational homology sphere.
Here, a Morse surgery means a Dehn surgery in which the meridian of each attached solid torus is identified with a longitude of the corresponding link component. We call the specified longitude the \emph{Morse framing}.
Explicitly,  the \((\frac{p}{q})\)-surgery on  $K\subset S^3$ is realised by a surgery with coefficient $d$ on the knot $K \# O_{q/r} \subset L(q,r).$
Here $d= \floor{\frac{p}{q}}$ and $\frac{p}{q}= d + \frac{r}{q}$. The knot
 $O_{q/r}$ is obtained from the Hopf link by performing $-\frac{q}{r}$ on one component, viewing the other component as a knot in $ L(q,r) = S^3_{-q/r}(U)$. Denote by $\lambda_d$ the Morse framing on $O_{q/r}$ given by the $d$ framed surgery.  See Figure \ref{fig:morseframed} for an example. Furthermore, we may associate to $(O_{q/r}, \lambda_d)$ 
   a linear plumbing\footnote{i.e. a linear chain of unknots pairwise linked as Hopf links.} $\bar J := O \cup J \subset S^3$, whose oriented are oriented arbitrarily and with framing $\Lambda=(d,a_1,\dots,a_m)$, where
 $$
\frac{p}{q}=d-\cfrac{1}{a_1-\cfrac{1}{\ddots-\cfrac{1}{a_m}}},   
$$
such that by performing surgeries on $J$ according to the framing \[\Lambda_J=(a_1,\dots,a_m),\]  the remaining component $O$ becomes  $O_{q/r}$ with the desired Morse framing $\lambda_d$. 

\begin{figure}[ht]
    \centering
    \includegraphics[width=0.35\linewidth]{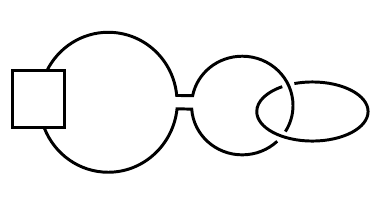}

    % overlay labels
    \begin{picture}(0,0)
    \put(-64,47){{\large $K$}}
        \put(3,22){$d$}
        \put(67,60){$-\frac{q}{r}$}
    \end{picture}
    \caption{The \(\frac{p}{q}\)-surgery on  $K \subset Y$ is realised by a Morse  surgery on the knot $K \# O_{q/r} \subset Y \# L(q,r).$}
    \label{fig:morseframed}
\end{figure}
In \cite{zemke_general}, the definition of the type-$D$ module $\cX_\Lambda(L)^{\cL}$ was extended to links in general $3$-manifolds, and more generally to bordered manifolds $Y$ with torus is an invariant of  $Y$. As a consequence, we  $\cX_\Lambda(L)^{\cL}$ is natural with respect to performing Dehn surgeries, in the following sense:
 \begin{thm}[{\cite[Theorem 9.1]{zemke_general}}] \label{thm: dehn_surgery_cX}
     Suppose $(Y',L')$ is a link with Morse framing $\Lambda'$, obtained from a link $(Y,L)$ with Morse framing $\Lambda$ by performing Dehn surgery on one component of $L$ according to its framing. Then
     \[
     \cX_{\Lambda'}(L')^{\cL_{n-1}} \simeq \cX_{\Lambda}(L)^{\cL_{n}} \boxtimes {}_{\cK}\cD_0. 
     \]
 \end{thm}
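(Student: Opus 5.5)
This result is quoted from \cite[Theorem 9.1]{zemke_general} rather than proved here. The natural proof runs through Zemke's bordered interpretation of $\cX_\Lambda(L)^{\cL}$, and I would reconstruct it as follows.

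The first step is to view $(Y,L)$ with Morse framing $\Lambda$ as a bordered $3$-manifold $Y\setminus \nu(L)$ with $n$ torus boundary components. Each boundary torus is parametrised by the pair (meridian of $K_i$, $\lambda_i$-framed longitude of $K_i$). Following \cite{zemke_general}, I would take $\cX_\Lambda(L)^{\cL_n}$ as the type-$D$ invariant of this bordered manifold. Its idempotent-$0$ part is the link Floer complex and its idempotent-$1$ parts are the localisations, so the structure maps are exactly the maps $\phi^{\vec M}_\ve$, as in Section~\ref{subsec: zemke's reformulation}. Well-definedness up to homotopy equivalence (independence of Heegaard diagram) I would take from \cite{zemke_general}.

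The second step is to identify Dehn surgery on one component $K_n$ along its framing with gluing a solid torus to the $n$-th boundary torus. In this solid torus the meridian is glued to the framed longitude. In the chosen parametrisation this is precisely the $0$-framed solid torus, whose type-$A$ module is ${}_{\cK}\cD_0$ from Section~\ref{subsec: solid_tori}. Here I would check that ${}_{\cK}\cD_0$ is the correct bordered invariant, for example by comparing $\cX_0(O)\hboxtimes {}_{\cK}\cD_0$ with $\bCFm(S^3_0(O))$ for the unknot $O$, using the integral knot surgery formula.

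The third step is to apply a pairing theorem for gluing along a single torus boundary component of a multi-boundary bordered manifold. This gives that $\cX_\Lambda(L)^{\cL_n}\boxtimes {}_{\cK}\cD_0$, with the tensor taken only over the $n$-th factor $\cK_n$ of $\cL_n=\cL_{n-1}\otimes\cK_n$, is the type-$D$ invariant over $\cL_{n-1}$ of the glued manifold $Y'\setminus\nu(L')$. The framings $\Lambda'$ on the remaining components are induced from $\Lambda$ because the parametrisations of the untouched tori do not change. By the identification in the first step, this invariant is $\cX_{\Lambda'}(L')^{\cL_{n-1}}$.

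I expect the main obstacle to be the pairing theorem in the third step. It requires either a neck-stretching/degeneration argument along the $n$-th torus, or a reduction to the integral link surgery formula of \cite{mo_linksurgery} by realising the filling via a surgery exact triangle or a mapping-cone argument that is compatible with the remaining $\cL_{n-1}$-action. A secondary difficulty is checking that the completed box tensor product $\hboxtimes$ is well behaved here, i.e.\ that $\cX_\Lambda(L)$ is suitably bounded with respect to the $\cK_n$-factor so that the tensor product is defined and homotopy invariant.
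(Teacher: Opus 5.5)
As far as this paper is concerned, your first sentence is the whole story: the statement is imported from \cite[Theorem~9.1]{zemke_general} and not proved here. Your reconstruction, however, is not a proof, because Step~3 is the theorem itself. Filling a single boundary torus of $Y\setminus\nu(L)$ with a solid torus is a special case of whatever gluing principle you have in mind. The identity it produces is literally $\cX_{\Lambda'}(L')^{\cL_{n-1}}\simeq\cX_\Lambda(L)^{\cL_n}\boxtimes{}_{\cK}\cD_0$, so invoking it assumes what is to be proved.

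Steps~2 and~3 also cannot be separated. ${}_{\cK}\cD_0$ is defined algebraically, so there is no independent geometric sense in which it is ``the'' solid-torus invariant, and an unknot sanity check does not supply one. Its meaning comes only from surgery and pairing theorems of exactly this kind.

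Your first way around this, neck-stretching along the $n$-th torus, is not available as stated. $\cX_\Lambda(L)^{\cL}$ is not built from a bordered Heegaard diagram with a Reeb-chord algebra. It packages the Manolescu--Ozsv\'ath maps $\phi^{\vec M}_\ve$, which are polygon counts on hypercubes of ordinary Heegaard diagrams, so there is no bordered boundary to degenerate along.

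Your second suggestion is the viable route, but it needs much more than \cite{mo_linksurgery}, which only identifies the total complex with $\bCFm$ of the fully surgered manifold. Two things are missing.

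\begin{enumerate}
\item For each $\ve\in\bE_{n-1}$, the idempotent-$\ve$ part of $\cX_\Lambda(L)\boxtimes{}_{\cK}\cD_0$ is a (completed) mapping cone for $K_n$. You must identify it with the localisation $S_\ve^{-1}\cCFL(Y',L')$. That is, you need a surgery formula for the link Floer complex of the unsurgered components, not only for $\bCFm$.
\item These vertexwise identifications must intertwine the maps $\phi^{\vec M}$, for oriented sublinks $\vec M\subset L'$, in the two hypercubes, up to a coherent system of higher homotopies. Agreement of the vertex complexes, or even of all closed manifolds obtained by pairing with solid-torus modules, does not give a homotopy equivalence of type-$D$ modules over $\cL_{n-1}$.
\end{enumerate}

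Building this $(n-1)$-dimensional hypercube of surgery-formula equivalences is the actual content of the theorem, and it is absent from your sketch. One way would be to run the large-surgery and exact-triangle arguments with the basepoints of $L'$ kept in the diagrams and with extra cube directions for its components.
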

% Moreover, by \cite[Theorem 10.8]{zemke_general}, both isomorphisms in Theorem \ref{thm: dehn_surgery_cX} and Proposition \ref{prop: integral_pairing} are graded.
Applying the above theorem to our setting, we have 
\begin{equation} \label{eq: linearplumbing_equivalence}
    \cX_{\Lambda}(\bar J)^{\cL_{m+1}} \boxtimes {}_{\cL_m}\cD_{(0,\dots,0)}  \simeq  \cX_{\lambda_d}(O_{q/r})^\cK \simeq \cD_{p/q}^\cK
\end{equation}
where $\cD_{p/q}^\cK$ is the type-$D$ module for rational framed solid torus defined in Section \ref{subsec: solid_tori}.
Denote the type-$D$ module on the left hand side of Equation~\eqref{eq: linearplumbing_equivalence} by $\cY_{\Lambda}(\bar J)^\cK$ and denote
\[
{}_{\cK}\cY_{\Lambda}(\bar J) := \cY_{\Lambda}(\bar J)^\cK \boxtimes {}_{\cK|\cK}[\bI^{\Supset}].
\]

Before proving Theorem~\ref{thm: zemke_rational} at the end of this section,
we briefly discuss the behaviour of the structure maps under the chain homotopy
equivalence in Equation~\eqref{eq: linearplumbing_equivalence} and prove a useful
lemma. This relationship will play an important role in our study of the
Spin$^c$ decomposition of the rational link surgery complex in
Section~\ref{subsec: rational_link_surgery_formula}.

By the proof of \cite[Corollary 9.2]{zemke_general}, we have
 \begin{align}
  \label{eq: cfko_q_r_idempotent0}  \cY_{\Lambda}(\bar J)^\cK \boxtimes {}_{\cK|\cK}[\bdI_0] &\simeq {}_{\F[W,Z]}\cCFK(O_{q/r}) \\
  \label{eq: cfko_q_r_idempotent1} \cY_{\Lambda}(\bar J)^\cK \boxtimes {}_{\cK|\cK}[\bdI_1] &\simeq {}_{\F[U,T,T^{-1}]}S_1^{-1}\cCFK(O_{q/r})
\end{align}
as type-$A$ modules,
where for $\nu \in \{0,1\}$, the bimodule ${}_{\cK|\cK}[\bdI_\nu]$ is obtained by restricting the identity bimodule ${}_{\cK|\cK}[\bI^{\Supset}]$ to idempotent $\bdI_\nu$.
 We denote the complex in the left hand side of Equations~\eqref{eq: cfko_q_r_idempotent0} and  \eqref{eq: cfko_q_r_idempotent1} by $\cY_0(\bar J)$ and $\cY_1(\bar J)$, respectively.  

For every $\bt \in $ Spin$^c(L(q,r))$, $\cCFK(O_{q/r},\bt)$ is freely generated by a single generator with trivial differential. Furthermore, the Alexander grading on $\cCFK(O_{q/r})$ takes values in $\frac{q-1}{2q}+\frac{1}{q}\Z$  and the Spin$^c$ structure of a generator is determined by the Alexander grading via the map
\begin{align*}
    G_{L(q,r),O_{q/r}}: \ \frac{q}{2}+\frac{1}{q}\Z &\to \Z/q\Z\\
   \frac{q-1}{2q}+\frac{k}{q} &\longmapsto k
\end{align*}
defined in \cite[Section 2.2]{OSrational11}\footnote{The Alexander grading determines a relative Spin$^c$ structure
$\underline{\operatorname{Spin}}^c(L(q,r),O_{q/r})$,
which determines a Spin$^c$ structure of $L(q,r)$ by \cite[Lemma 7.1]{OSrational11}. For a computation of the Alexander gradings of $O_{q/r}$ when $r=1$, see \cite[Example 5.15]{HL}. A formula for arbitrary $r$ follows from applying \cite[Equation (1.4)]{HL} recursively.}.
Hence there is an isomorphism of the underlying space
\[
\bdI_{\nu} \cdot  {}_{\cK}\cX_{\lambda_d}(O_{q/r})   \cong \bdI_{\nu} \cdot {}_{\cK}\cD_{p/q}
\]
 sending the unique generator of
$S^{-1}_{\nu}\cCFK(O_{q/r})$ in Alexander grading $\frac{k}{q}$ to
$\bdx_k^\nu$, and extending $S^{-1}_{\nu}\F[W,Z]$-equivariantly.  We equip ${}_{\cK}\cD_{p/q}$ with the Alexander grading via this isomorphism.
Let 
\[
g: {}_{\cK}\cY_{\Lambda}(\bar J) \to  {}_{\cK}\cD_{p/q}
\] denote the map obtained by postcomposing the chain homotopy equivalence in Equation~\eqref{eq: cfko_q_r_idempotent0} and \eqref{eq: cfko_q_r_idempotent1} with the isomorphism of the underlying space above. We have the following. Recall the notation $\bar J := O \cup J$.
\begin{lem} \label{lem: phiM_D_p_q}
    Suppose $\bdy \in C_{\ve}(\bs) \subset C_{\Lambda}(\bar J)$ for $\ve \in \bE_{m+1}$ and $\bs \in \bH(\bar J).$ Let $\vec{M} \subset \bar J$ be a sublink such that $\ve_i=0$ for each component in $M$. If $M \cap O = \emptyset,$ then $g\circ \phi^{\vec{M}}(\bdy) = 0.$ Otherwise $g\circ \phi^{\vec{M}}(\bdy) = m_2(t^{\vec{M} \cap O},g(\bdy)),$ where $t^{\vec{M}}$ is defined in Equation~\eqref{eq: def_t_M}.
\end{lem}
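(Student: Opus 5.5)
The plan is to compute $g\circ\phi^{\vec M}$ by unpacking the box tensor product that defines ${}_{\cK}\cY_\Lambda(\bar J)$, and then to compare with the explicit structure maps of ${}_{\cK}\cD_{p/q}$.

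First, by definition, $\cY_\Lambda(\bar J)^{\cK}=\cX_\Lambda(\bar J)^{\cL_{m+1}}\boxtimes{}_{\cL_m}\cD_{(0,\dots,0)}$, where the $J$-factors are paired with $\cD_0$ and the $O$-factor is left as a type-$D$ output. The structure map $\delta^1$ of $\cX_\Lambda(\bar J)$ applied to $\bdy^\ve$ is the sum over oriented sublinks $\vec M$ of terms $\bdy_j^{\ve'}\otimes b_j\, t^{\vec M}$. Since every $m_i$ with $i\neq 2$ vanishes on ${}_{\cL_m}\cD_0$, the box tensor product only uses single $\delta^1$ steps whose $J$-part is fed into $m_2$ of $\cD_0$. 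The $O$-part is kept as a coefficient in $\cK$. Hence each $\phi^{\vec M}$ appears in the box tensor differential. It contributes to the $\cD$-component (the type-$D$ output) exactly through the $O$-factor $t^{\vec M\cap O}$.

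Second, I split into the two cases of the statement.

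\emph{Case $M\cap O=\emptyset$.} The term $\phi^{\vec M}$ only changes the $J$-idempotents. In $\cY_\Lambda(\bar J)$ it is therefore part of the internal differential of the complexes $\cY_\nu(\bar J)$. Under the homotopy equivalences \eqref{eq: cfko_q_r_idempotent0} and \eqref{eq: cfko_q_r_idempotent1}, these complexes become $\cCFK(O_{q/r})$ and $S_1^{-1}\cCFK(O_{q/r})$. In each Spin$^c$ structure these have a single generator and vanishing differential. Hence the image under $g$ is zero. To make this rigorous, I would argue on homological or Alexander gradings. The map $\phi^{\vec M}$ preserves the Alexander grading of $O$ (it shifts only by $\Lambda_{\bar J,\vec M}$, and the $O$-coordinate of this shift is compatible with the Spin$^c$ identification). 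It also shifts the Maslov grading by an odd amount relative to the even-graded single generator. So its image in the one-generator-per-grading model must vanish.

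\emph{Case $M\cap O\neq\emptyset$.} Here $\phi^{\vec M}$ contributes the term $\bdy'\otimes b\,t^{\vec M\cap O}$ with $t^{\vec M\cap O}\in\{\sigma,\tau\}$. Applying the identity bimodule, this becomes the action $m_2(t^{\vec M\cap O},\cdot)$ in ${}_{\cK}\cY_\Lambda(\bar J)$. By \cite[Theorem 9.1]{zemke_general}, the equivalence \eqref{eq: linearplumbing_equivalence} is one of type-$D$ modules over $\cK$. It therefore intertwines the $\sigma$- and $\tau$-actions up to homotopy. Since the target complexes have zero differential and one generator per grading, homotopies vanish on the relevant graded pieces, so the equality holds on the nose. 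Finally, I must check that the isomorphism to $\cD_{p/q}$, sending the generator in Alexander grading $k/q$ to $\bdx^\nu_k$, matches the formulas $m_2(\sigma,a\bdx^0_k)=g^\sigma(a)\bdx^1_k$ and $m_2(\tau,a\bdx^0_k)=T^{\lfloor (k+p)/q\rfloor}g^\tau(a)\bdx^1_{k+p}$. This means $\sigma$ preserves the Alexander grading, while $\tau$ shifts the Spin$^c$ class by $p$ and the Alexander grading accordingly.

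The main obstacle is this last bookkeeping step: verifying that the Alexander-grading shift of $\tau$ in $O_{q/r}$ with Morse framing $\lambda_d$ produces exactly the index shift $k\mapsto k+p$ and the power $T^{\lfloor (k+p)/q\rfloor}$. I would first handle it by tracking the grading through the Morse framing, using $p=dq+r$ and the map $G_{L(q,r),O_{q/r}}$.
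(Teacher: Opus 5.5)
Your split into the cases $O\not\subset M$ and $O\subset M$ follows the paper. In the first case your main argument is exactly the paper's: the term is part of the internal differential of $\cY_\nu(\bar J)$, and $g$ is a chain map into a complex with vanishing differential. The gap is in the case $O\subset M$. The step you call ``the main obstacle'' is the entire content of the lemma, and the shortcut you use to reduce to it does not hold as stated.

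First, $g$ is not given to you as a $\cK$-equivariant equivalence. It is assembled idempotent by idempotent from \eqref{eq: cfko_q_r_idempotent0} and \eqref{eq: cfko_q_r_idempotent1}, and then composed with a bijection onto ${}_{\cK}\cD_{p/q}$ defined purely through Alexander gradings. So knowing that it intertwines $\sigma$ and $\tau$ is the same problem as the grading computation you postpone.

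Second, even for an honest $A_\infty$-equivalence $f$ into a module with zero differential, the relation reads $f_1(m_2(t,\bdy))+m_2(t,f_1(\bdy))=f_2(t,\partial\bdy)$. The right-hand side sits in the same degree as the left, so no grading argument kills it, and you only get equality on cycles.

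The grading remark in your first case has a similar problem, and its Alexander part is not free either. The column of the surgery matrix for the first component of $J$ has entry $\pm1$ in the $O$-slot, so $s_0$ does change.

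What is missing is an absolute dictionary between the lattice $\bH(\bar J)$, where $\bdy$ and $\phi^{\vec M}$ live, and the Alexander grading on $\cCFK(O_{q/r})$ that defines $g$. Tracking the Morse framing inside $L(q,r)$ tells you how the flip map moves Alexander gradings of $O_{q/r}$. It does not tell you what $g$ does to $\phi^{\vec M}(\bdy)$. In particular, it does not show that reversing components of $J$, which shifts $\bs$ by the columns $\Lambda_j$ with $j\ge 2$, has no effect.

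The paper gets this dictionary from \cite[Theorem~10.8]{zemke_general}. The equivalences are absolutely graded for $A(\bs)=s_0+\tfrac12\bigl(\langle c_1(\mathfrak{z}^J_{\bs}),\hat\Sigma_0\rangle-\Sigma\cdot\hat\Sigma_0\bigr)$, where $\mathfrak{z}^J_{\bs}$ is fixed by the $J$-coordinates of $\bs$. Write $\hat\Sigma_0=\sum_k v_k\Sigma_k$ with $v_k=(-1)^{k+1}D_{k+1}/D_1$, where $D_k$ is the numerator of $[a_k,\dots,a_m]$. This makes the grading linear, $A(\bs)=\frac1q\sum_{k=0}^m(-1)^kD_{k+1}s_k$. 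The recursion $D_k=a_kD_{k+1}-D_{k+2}$ then gives $A(\Lambda_1)=p/q$ and $A(\Lambda_j)=0$ for $j\ge2$.

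Hence $\phi^{\vec M}$ shifts $A$ by $0$ when $O$ keeps its orientation, whatever happens on $J$, and by $p/q$ when $O$ is reversed. Since ${}_{\cK}\cD_{p/q}$ has rank one over $\F[U]$ in each Alexander grading, this yields the $\sigma$ and $\tau$ formulas. The index shift $k\mapsto k+p$ and the factor $T^{\lfloor (k+p)/q\rfloor}$ come from splitting $\frac{k+p}{q}$ into its fractional and integer parts. The same linear formula for $A(\bs)$ is reused in the Spin$^c$ part of the proof of Theorem~\ref{thm: rational_surgery_chain}, so you need it explicitly, not just the conclusion of the lemma.
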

\begin{proof}
  We write $\vec{M} = \epsilon  O \cup \vec{M}(J)$ where $\vec{M}(J) = \vec{M} \cap J$ and $\epsilon \in \{0,1,-1\}$ records whether $O$ belongs to $M$ and, if so, whether the orientation of $O$ induced by $\vec{M}$ agrees with the orientation induced by $L$. We study three cases depending on the value of $\epsilon.$

If $\epsilon=0,$ then $\phi^{\vec{M}}$ induces an internal differential of either  $\cY_0(\bar J)$ or $\cY_1(\bar J)$. Since $\cCFK(O_{q/r})$ has trivial differential and the chain homotopy equivalences in Equation~\eqref{eq: cfko_q_r_idempotent0} and \eqref{eq: cfko_q_r_idempotent1} are chain maps, $g\circ \phi^{\vec{M}}(\bdy) = 0.$

If $\epsilon= \pm 1,$  $\phi^{\vec{M}}$ induces a map $m_2(t^{\vec{M}},-)$ from $\cY_0(\bar J)$ to $\cY_1(\bar J)$. As the elements in $ {}_{\cK}\cD_{p/q}$ are determined by their Alexander gradings, it suffices to determine the Alexander grading shift of $g\circ \phi^{\vec{M}}.$

Let $W_{\Lambda_J}(J)$ be the two-handle cobordism from $S^3$ to $S^3_{\Lambda_J}(J)=L(q,r)$. For $k=1,\dots,m$, let
$\Sigma_k \in H_2(W_{\Lambda_J}(J);\Q)$ denote the homology class represented by the surface obtained as followsby capping a Seifert surface for the $k$-th component of $J$ with the core of the corresponding two-handle. Let $\Sigma_0 \in H_2(W_{\Lambda_J}(J);\Z)$ be the class of a Seifert surface for multiple copies of $O_{q/r}\subset L(q,r)$ capped with disks in $S^3$, and define
\[
\hat\Sigma_0=\frac{\Sigma_0}{|\Sigma_0\cdot\mu|}\in H_2(W_{\Lambda_J}(J);\Q),
\]
where $\mu$ denotes the meridian of $O$. Set
$\Sigma=\hat\Sigma_0+\Sigma_1+\cdots+\Sigma_m$.

By \cite[Theorem 10.8]{zemke_general}, Equation~\eqref{eq: cfko_q_r_idempotent0} and  \eqref{eq: cfko_q_r_idempotent1} are absolutely Alexander graded if we endow $\cY_0(\bar J)$ and $\cY_1(\bar J)$ with the Alexander grading obtained by associating to $\bs\in \bH(O \cup J)$ the quantity
\begin{equation*}
    A(\bs)=s_0 + \frac{\langle c_1(\frak{z}^J_{\bs}), \hat\Sigma_0  \rangle - \Sigma \cdot \hat\Sigma_0}{2},
\end{equation*}
where  $\frak{z}^J_{\bs} \in $ Spin$^c(W_{\Lambda_J}(J))$ is characterised by
\begin{equation*}
    \frac{\langle c_1(\frak{z}^J_{\bs}),  \Sigma_k \rangle
    - \Sigma\cdot\Sigma_k}{2}
    = -2s_k
    \qquad k=1,\dots,m.
\end{equation*}
We have
\[
\hat\Sigma_0 \cdot \Sigma_j = \begin{cases}
    1 \quad &j=1 \\
    0  \quad &j>1
\end{cases} \qquad \Sigma_k \cdot \Sigma_j = \begin{cases}
    a_k \quad & k =j\\
    1 &|k-j|=1\\
    0 &|k-j|>1
\end{cases}.
\]
Write 
 $\hat\Sigma_0 = \sum_{k=1}^m  v_k \Sigma_k $
for $v_k \in \Q$, and note that, since $H_2(\partial W_{\Lambda_J}(J); \Q)$ is trivial, the homology class of $\hat\Sigma_0$ is determined by how it pairs with all the $\Sigma_i$. 
One readily checks that the above system on the left is   by \[v_k = (-1)^{k+1}\frac{D_{k+1}}{D_1}\] where $D_k$ denotes the numerator of the continued fraction $[a_k,\dots,a_m]$ for $k=1,\dots,m$ and $D_{m+1}=1.$ In particular, $D_1=q, D_2=r$ and $D_k$ satisfies the recursive relation
\begin{equation} \label{eq: recursive_D_i}
    D_k = a_{k} D_{k+1} - D_{k+2} \qquad k=1,\dots,m-1.
\end{equation}
We compute
\begin{align*}
    A(\bs)&=s_0 + \frac{\langle c_1(\frak{z}^J_{\bs}), \hat\Sigma_0  \rangle - \Sigma \cdot \hat\Sigma_0}{2}\\
    &=s_0 + \sum_{k=1}^m v_k \Big( \frac{\langle c_1(\frak{z}^J_{\bs}), \Sigma_k  \rangle - \Sigma \cdot \Sigma_k}{2} \Big)\\
    &=s_0 - \sum_{k=1}^m v_k s_k \\
    &= \frac{1}{D_1} \big( D_1 s_0 -D_2 s_1 +\dots + (-1)^m D_{m+1} s_m  \big).
\end{align*}
Since this relation is linear in $\bs$, it suffices to compute the Alexander grading shift associated to $\Lambda_j$ for $j=1,\dots,m+1$, where $\Lambda_j$ denotes the $j$-th row of the surgery matrix of $\bar J$.

We compute 
\begin{align*}
    A(\Lambda_1) = \frac{1}{D_1} \big( D_1 d  - D_2   \big) = \frac{1}{q} \big( q d - r   \big) = \frac{p}{q}
\end{align*}
and
\begin{align*}
    A(\Lambda_j) &= \frac{(-1)^j}{D_1} \big( D_{j-1} - a_{j-1}D_j + D_{j+1}   \big) = 0 \qquad j=2,\dots,m-1\\
    A(\Lambda_{m+1}) &= \frac{(-1)^{m+1}}{D_1} \big( D_{m} - a_{m}D_{m+1}  \big) = 0
\end{align*}
 by Equation~\eqref{eq: recursive_D_i}.

If $\epsilon = 1,$  $\Lambda_{\bar J, \vec{M}}$ is a linear sum of $\Lambda_j$ with $j\neq 1$. Therefore the Alexander grading shift of $g\circ \phi^{\vec{M}}$ is zero. For $\bdy \in C_{\Lambda}(\bar J)$ such that $g(\bdy)=\bdx^0_k,$ we have 
\[
g\circ \phi^{\vec{M}}(\bdy) = m_2(\sigma, g(\bdy)) = \bdx^1_k. 
\]
If $\epsilon = -1,$  the Alexander grading shift of $g\circ \phi^{\vec{M}}$ is $p/q$. For $\bdy \in C_{\Lambda}(\bar J)$ such that $g(\bdy)=\bdx^0_k,$ by comparing the Alexander grading, we have 
\[
g\circ \phi^{\vec{M}}(\bdy) = m_2(\tau, g(\bdy)) = T^{\floor{\frac{p+k}{q}} } \bdx^1_{k+p}
\]
 where the representative of $k\in \Z/q\Z$ is chosen such that $0\leq k < q.$  This completes the proof of the lemma.
\end{proof}

We end this section by proving Theorem \ref{thm: zemke_rational}.
 \begin{proof}[Proof of Theorem \ref{thm: zemke_rational}]
Given a set of rational surgery coefficients \(\Lambda=(\frac{p_1}{q_1},\dots,\frac{p_n}{q_n})\) on a link $L = K_1\cup \cdots \cup K_n \subset S^3$, we can represent 
 $S^3_\Lambda(L)$ as surgery with integral framing $\bar\Lambda$ on a link  $\bar L= L \# \bar J_1 \# \cdots \# \bar J_n \subset S^3$, where $\bar J_i = O_i \cup J_i$ is the linear plumbing with framing $\Lambda^i$, described in the discussion after the statement of Theorem~\ref{thm: zemke_rational}, and the connected sum is understood to be between $K_i \subset L$ and $O_i \subset \bar J_i$.  Thus by Proposition \ref{prop: integral_pairing}, we have
\begin{equation} \label{eq: link_surgery_linear_plumbing}
C_{\bar \Lambda}(\bar L)_{\F[U_1,\dots, U_{N}]} \cong \cX_{\bar \Lambda}(\bar L)^{\cL_N} \hboxtimes {}_{\cL_N}[\cD_{(0,\dots,0)}]_{\F[U_1,\dots, U_{N}]} 
\end{equation}
 and, since $\bar \Lambda$ is an integer vector, \cite{mo_linksurgery} implies that as a chain complex over $\F\llbracket U \rrbracket$ this is chain homotopy equivalent to $\bCFm(S^3_{\Lambda}(L))$.
 
 By combining Theorem~15.1 and Proposition~15.3 of \cite{zemke_bordered}, we have
 \begin{align}   
\cX_{\bar \Lambda}(\bar L)^{\cL_N} \hboxtimes  \big( {}_{\cK_1}[\cD_0]_{\F[U_1]} \otimes_\F \cdots  \otimes_\F {}_{\cK_n}[\cD_0]_{\F[U_n]}  \big) &\simeq   \Big(\cX_{(0,\dots,0)}(L)^{\cL_n} \otimes_\F \cX_{\Lambda^1}(\bar J_1)^{\cL_{m_1}} \otimes_\F \cdots \notag \\
 \cdots \otimes_\F \cX_{\Lambda^n}(\bar J_n)^{\cL_{m_n}}  \Big) &\hboxtimes \big( {}_{\cK_1|\cK_1}[\bI^{\Supset}]_{\F[U_1]}  \otimes_\F \cdots  \otimes_\F {}_{\cK_n|\cK_n}[\bI^{\Supset}]_{\F[U_n]}  \big) \label{eq: link_surgery_pairing}.
 \end{align} 
 Therefore 
  \begin{align}   
C_{\bar \Lambda}(\bar L)_{\F[U_1,\dots, U_{n}]} &\simeq \Big(\cX_{(0,\dots,0)}(L)^{\cL_n} \otimes_\F \cY_{\Lambda^1}(\bar J_1)^{\cK_1} \otimes_\F  \cdots \notag \\
& \qquad \cdots \otimes_\F \cY_{\Lambda^n}(\bar J_n)^{\cK_n}  \Big) \hboxtimes \big( {}_{\cK_1|\cK_1}[\bI^{\Supset}]_{\F[U_1]}  \otimes_\F \cdots  \otimes_\F {}_{\cK_n|\cK_n}[\bI^{\Supset}]_{\F[U_n]}  \big) \notag \\
&\simeq \Big(\cX_{(0,\dots,0)}(L)^{\cL_n} \otimes_\F \cD_{p_1/q_1}^{\cK_1} \otimes_\F  \cdots \otimes_\F \cD_{p_n/q_n}^{\cK_n}  \Big) \notag \\
&  \qquad \hboxtimes \big( {}_{\cK_1|\cK_1}[\bI^{\Supset}]_{\F[U_1]}  \otimes_\F \cdots  \otimes_\F {}_{\cK_n|\cK_n}[\bI^{\Supset}]_{\F[U_n]}  \big) \label{eq: link_surgery_equivalence} \\
% &=\cX_{(0,\dots,0)}(L)^{\cL_n} \hboxtimes \Bigg( \Big( \cY_{\Lambda^1}(\bar J_1)^{\cK_1} \boxtimes {}_{\cK_1|\cK_1}[\bI^{\Supset}]_{\F[U_1]} \Big) \otimes_\F \cdots  \notag \\
% & \qquad \cdots \otimes_\F \Big( \cY_{\Lambda^n}(\bar J_n)^{\cK_n} \boxtimes {}_{\cK_n|\cK_n}[\bI^{\Supset}]_{\F[U_n]} \Big)  \Bigg) \label{eq: link_surgery_equivalence} \\
&=\cX_{(0,\dots,0)}(L)^{\cL_n} \hboxtimes \Bigg( \Big( \cD_{p_1/q_1}^{\cK_1} \boxtimes {}_{\cK_1|\cK_1}[\bI^{\Supset}]_{\F[U_1]} \Big) \otimes_\F \cdots  \notag \\
& \qquad \cdots \otimes_\F \Big( \cD_{p_n/q_n}^{\cK_n} \boxtimes {}_{\cK_n|\cK_n}[\bI^{\Supset}]_{\F[U_n]} \Big)  \Bigg) \notag \\
&=\cX_{(0,\dots,0)}(L)^{\cL_n} \hboxtimes \big( {}_{\cK_1}[\cD_{p_1/q_1}]_{\F[U_1]} \otimes_\F \cdots \otimes_\F {}_{\cK_n}[\cD_{p_n/q_n}]_{\F[U_n]}  \big) \notag \\
&= \cX_{(0,\dots,0)}(L)^{\cL_n} \hboxtimes {}_{\cL_n} [\cD_{\Lambda}]_{\F[U_1,\dots,U_n]}. \notag
 \end{align} 
The first chain homotopy equivalence is by Equation~\eqref{eq: link_surgery_pairing} and the definition of $\cY_{\Lambda^i}(\bar J_i)^{\cK_i}$ given after Theorem \ref{thm: dehn_surgery_cX}. The second chain
homotopy equivalence is
by Equation~\eqref{eq: linearplumbing_equivalence}.
  \end{proof}

\subsection{The rational link surgery formula} \label{subsec: rational_link_surgery_formula} 
In this section we define the rational link surgery complex associated to a link $L\subset S^3$ endowed with a rational framing, and we show with Theorem \ref{thm: rational_surgery_chain} that it can be used to compute the Heegaard Floer homology of the corresponding surgery on $L$.

Let $L=K_1\cup\cdots \cup K_n \subset S^3$ be an $n$-component link, and fix a set of surgery coefficients \(\Lambda = (\frac{p_1}{q_1},\dots,\frac{p_n}{q_n})\) where $p_i \in \Z, q_i \in \Z_{>0} $ and $p_i$ and $q_i$ are coprime. Roughly speaking, the rational surgery complex is defined by assigning, for each lattice point  $\bs \in \bH(L)$ and each $\ve\in\bE_n$,  $q_1\cdots q_n$ copies of the same complex $C_\ve(\bs)$, rather than a single copy as in the integral surgery complex.

We start by defining another  lattice from $\bH(L)$ and a rational link surgery matrix.
\begin{defn}\label{def: tildeH}
We define $\tilde{\bH}(L)$ as
\begin{equation}\label{eq: tildeH}
\tilde{\bH}(L) : = \bH(L)\oplus \Z/q_1 \Z \oplus \cdots \oplus \Z/q_n \Z
\end{equation}
and we denote its elements by $(\bs,\bk):=(s_1,\dots,s_n,k_1,\dots,k_n)$, where $s_i \in \bH(L)$ and $k_i \in \Z/q_i\Z$ for each $i=1,\dots,n$.
We view $\tilde{\bH}(L)$ naturally as an affine $\Z^n$-lattice in $\R^n$. The identification is given by  a  map
\begin{align*}
f: \tilde{\bH}(L) &\to \R^n,
\end{align*}
defined up to an overall constant vector shift in  $\R^n$, as follows. For any $(\bs,\bk) \in \tilde{\bH}(L),$ we define $f(\bs,\bk)=(u_1,\dots,u_n)$, where
\begin{equation}\label{eq: identification_tilde_H}
    u_i = q_is_i + k_i,
\end{equation}
and each representative $k_i$ is chosen such that $0\leq k_i<q_i$.
%     Define the underlying space of the rational surgery complex to be \[C_\ve := \bigoplus_{(\bs,\bk)\in \tilde{\bH}(L)} C_\ve(\bs,\bk)\]
% for each $\ve\in \bE_n$, where
\end{defn}

Define a \emph{(rational) surgery matrix} $\tilde{\Lambda} = (\tilde{\Lambda}_{i,j})_{n\times n}$ by
\begin{align*}
\tilde{\Lambda}_{i,j} = \begin{cases}
    p_i,    &i=j \\
    q_i\operatorname{lk}(K_i,K_j), \qquad & i\neq j
\end{cases}
\end{align*}
whose column vectors we denote $\tilde{\Lambda}_j$ for $j=1,\dots,n.$

 % Recall that  there is a map $\phi^{\vec{M}}_{\varepsilon}: C_\ve(\bs) \rightarrow C_{\ve'}(\bs + \Lambda_{L,\vec{M}} )$ in \eqref{eq: phi_vecM}. 
 % Observe that $\bs'$ agrees with  $\bs + \Lambda_{L,\vec{M}}$ in every $i$-th component for which $\ve_i=0$. Since the isomorphism class of $C_{\ve}(\bs)$ does not depend on $\ve_i$ for which $\ve_i=1$,  it follows that $C_{\ve'}(\bs') \cong C_{\ve'}(\bs + \Lambda_{L,\vec{M}} ).$
 \begin{defn}\label{def: rational_link_surgery_complex}
For each $\ve\in \bE_n$ and $(\bs,\bk)\in \tilde{\bH}(L)$, we define
\[C_\ve(\bs,\bk) := C_\ve(\bs).
\]
Given an oriented sublink $\vec{M}\subset L$,  define  $\tilde{\Lambda}_{L,\vec{M}}$  to be the sum of $\tilde{\Lambda}_j$
for which the orientation of $K_j$   in $\vec{M}$ is opposite to its orientation in $L$. Suppose that $(\bs',\bk')$ satisfies $f(\bs',\bk') = f(\bs,\bk) + \tilde{\Lambda}_{L,\vec{M}}$  and that  $\ve' - \ve$ is the indicator function of $M$.
Set the map
\[
\tilde{\phi}^{\vec{M}}_{\varepsilon}: C_\ve(\bs,\bk) \rightarrow C_{\ve'}(\bs',\bk' )
\] 
to be the same as $\phi^{\vec{M}}_{\varepsilon}: C_\ve(\bs) \rightarrow C_{\ve'}(\bs + \Lambda_{L,\vec{M}} )$\footnote{Observe that $\bs'$ agrees with  $\bs + \Lambda_{L,\vec{M}}$ in every $i$-th component for which $\ve_i=0$. It follows that  $C_{\ve'}(\bs') \cong C_{\ve'}(\bs + \Lambda_{L,\vec{M}} )$.}.  
When $|M|=1,$ define $\tilde{\Phi}^{\vec{M}}_{\varepsilon}$ to be the  map induced by $\tilde{\phi}^{\vec{M}}_{\varepsilon}$ on homology.
The \emph{(rational) link surgery complex} is defined by
    \[C_{\Lambda}(L) : = \bigoplus_{\ve\in \bE_n} \prod_{(\bs,\bk)\in \tilde{\bH}(L)} C_\ve(\bs,\bk)\]
equipped with the differential that is the sum of $\tilde{\phi}^{\vec{M}}_{\varepsilon}$,  where $\vec{M}$ ranges over all oriented sublinks of $L.$
\end{defn}

% When $\tilde{\Lambda}$ is nondegenerate ,
The complex $C_{\Lambda}(L)$ splits over $\tilde{\bH}(L)/H(\tilde{\Lambda})$, where  $H(\tilde{\Lambda})$ denotes the $\Z$-span of the vectors $\tilde{\Lambda}_i.$ More precisely, if for each $\bdve{t} \in \tilde{\bH}(L)/H(\tilde{\Lambda})$ we define 
\[
C_{\Lambda}(L,\bdve{t}) := \bigoplus_{\ve\in \bE_n} \prod_{(\bs,\bk)\in \bdve{t}+H(\tilde{\Lambda})} C_\ve(\bs,\bk),
\]
then $C_{\Lambda}(L,\bdve{t})$ is a summand of $C_{\Lambda}(L)$.  

Figure~\ref{fig: rational_surgery_complex} shows an example of the simplified model of $C_{\Lambda}(L)$, obtained by taking the homology of each $C_{\ve}(\bs,\bk)$ and considering only induced maps with $|M|=1.$

Observe that in the case of  integral surgery coefficients, the above definitions recover the definitions of the integral link surgery complex.

% Shift the  homological grading on each $C_\ve(\bs)$ such that   $\tilde{\phi}^{\vec{M}}_{\varepsilon}$ lowers the  homological grading by $1$ on $C_\Lambda(L)$.
% Define $H_{\Lambda}(L)$ to be obtained from  $C_{\Lambda}(L)$ by replacing each $C_\ve(\bs)$ by $H_*(C_\ve(\bs))$ and $\tilde{\phi}^{\vec{M}}_{\varepsilon}$ by the induced map $\tilde{\Phi}^{\vec{M}}_{\varepsilon}$.

We are now ready to state and prove the rational link surgery formula. 
\begin{thm}\label{thm: rational_surgery_chain}
    Given an oriented $n$-component link \(L=K_1 \cup \cdots \cup K_n \subset S^3\) and a set of rational surgery coefficients $\Lambda$, we have 
     \[
    C_\Lambda(L) \simeq \bCFm(S^3_\Lambda(L))
    \]
    as chain complexes over $\F\llbracket U\rrbracket$. Moreover, for each $\bdve{t}\in$ Spin$^c(S^3_\Lambda(L)) \cong \tilde{\bH}(L)/H(\tilde{\Lambda})$, the above chain homotopy equivalence restricts to
     \[
    C_\Lambda(L,\bdve{t}) \simeq \bCFm(S^3_\Lambda(L),\bdve{t}).
    \]
    
    % Similarly,
    % \[
    % C_\Lambda(L)/U_1 \simeq \widehat{CF}(S^3_\Lambda(L))  \quad \text{and} \quad  C_\Lambda(L, \bdve{t})/U_1 \simeq \widehat{CF}(S^3_\Lambda(L),\bdve{t}).
    % \]
\end{thm}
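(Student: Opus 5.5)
The plan is to deduce the statement from Theorem~\ref{thm: zemke_rational} by computing the box tensor product $\cX_{(0,\dots,0)}(L)^{\cL}\hboxtimes {}_{\cL}[\cD_\Lambda]_{\F[U_1,\dots,U_n]}$ explicitly and identifying it, as a chain complex, with $C_\Lambda(L)$ of Definition~\ref{def: rational_link_surgery_complex}.

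First I identify the underlying modules. Since each ${}_{\cK_i}\cD_{p_i/q_i}$ has only $m_2$ non-trivial, the box tensor product has differential $\partial(\bdy^\ve\otimes \bdx^\ve_{\bk})=\sum m_2(b\cdot t^{\vec M},\bdx^\ve_{\bk})$, summed over the terms $\bdy^{\ve'}\otimes b\cdot t^{\vec M}$ of $\delta^1(\bdy^\ve)$. In idempotent $\ve$ the module $\bdI_\ve\cdot\cD_\Lambda$ is free over $\bdI_\ve\cdot\cL\cdot\bdI_\ve$ on the $q_1\cdots q_n$ generators $\bdx^\ve_{\bk}$. Hence the summand of the tensor product in idempotent $\ve$ is $\bigoplus_{\bk}S_\ve^{-1}\cCFL(L)\cdot \bdx^\ve_{\bk}$, completed. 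I split each copy by Alexander grading. This gives exactly $\prod_{(\bs,\bk)}C_\ve(\bs,\bk)$ with $C_\ve(\bs,\bk)=C_\ve(\bs)$, and the completion in $\hboxtimes$ matches the direct product over $\tilde\bH(L)$.

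Next I match the differentials term by term. When $\vec M=\emptyset$ we get the internal differential. For non-empty $\vec M$, the formula for $m_2$ on $\cD_\Lambda$ shows the following. If $K_i$ appears in $\vec M$ with its original orientation, the factor $\sigma_i$ sends $\bdx_{k_i}$ to $\bdx_{k_i}$ and acts on coefficients via $g^{\sigma_i}$. If $K_i$ appears with reversed orientation, the factor $\tau_i$ sends $\bdx_{k_i}$ to $T_i^{\lfloor (k_i+p_i)/q_i\rfloor}\bdx_{k_i+p_i}$ and acts via $g^{\tau_i}$.

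The main obstacle is the bookkeeping check that this Alexander-grading shift equals $f(\bs',\bk')=f(\bs,\bk)+\tilde\Lambda_{L,\vec M}$. The zero framing in $\cX_{(0,\dots,0)}$ contributes the linking-number columns $\operatorname{lk}(K_i,K_j)$. The $g^{\tau_j}$ substitution $W\mapsto T^{-1}$, $Z\mapsto UT^2$ reflects the $j$-th coordinate of the grading, so $s_j\mapsto -s_j$ combined with the $\phi^{-K_j}$ convention. The $\cD$ factor contributes $p_j/q_j$ in the $j$-th coordinate, via the floor term and the shift $k_j\mapsto k_j+p_j$. In the coordinates $u_i=q_is_i+k_i$, the shift $s_j\mapsto s_j+p_j/q_j$ realised by $(T_j^{\lfloor\cdot\rfloor},k_j+p_j)$ is precisely $u_j\mapsto u_j+p_j$. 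For $i\neq j$, the shift $s_i\mapsto s_i+\operatorname{lk}$ becomes $u_i\mapsto u_i+q_i\operatorname{lk}(K_i,K_j)$. Together these reproduce $\tilde\Lambda$. The computation is the same one as in Lemma~\ref{lem: phiM_D_p_q}, where the Alexander grading on ${}_{\cK}\cD_{p/q}$ was fixed so that $\tau$ shifts it by $p/q$. I also invoke Remark~\ref{rem: Z_i_iso} to identify $C_{\ve'}(\bs')$ with $C_{\ve'}(\bs+\Lambda_{L,\vec M})$ in the coordinates with $\ve'_i=1$. This produces a chain isomorphism with $C_\Lambda(L)$, and Theorem~\ref{thm: zemke_rational} then gives $C_\Lambda(L)\simeq\bCFm(S^3_\Lambda(L))$.

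For the Spin$^c$ refinement, I observe that the differential preserves cosets of $H(\tilde\Lambda)$, so $C_\Lambda(L)$ splits as stated. I then track Spin$^c$ structures through the proof of Theorem~\ref{thm: zemke_rational}. In the integral model for $\bar L=L\#\bar J_1\#\cdots\#\bar J_n$, the splitting $\bH(\bar L)/H(\bar\Lambda)\cong\operatorname{Spin}^c$ from \cite{mo_linksurgery} is respected by the homotopy equivalences. The reduction in Lemma~\ref{lem: phiM_D_p_q} sends $\bs$ to the Alexander grading $A(\bs)$, which is linear, and one checks that it induces a bijection $\bH(\bar L)/H(\bar\Lambda)\to\tilde\bH(L)/H(\tilde\Lambda)$. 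Both sides have cardinality $|\det\tilde\Lambda|=|H_1|$ when this is non-zero. Since the equivalences are sums of grading-homogeneous maps, they restrict summand by summand.
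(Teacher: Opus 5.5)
Your proposal follows the paper's proof. The first part is the same term-by-term identification of $\cX_{(0,\dots,0)}(L)^{\cL}\hboxtimes{}_{\cL}[\cD_\Lambda]$ with $C_\Lambda(L)$, including the verification $f(\bs',\bk')=f(\bs,\bk)+\tilde\Lambda_{L,\vec M}$. When you pass from generators to arbitrary elements you implicitly use the twisted equivariance $\phi^{\vec M}_\ve(a\bdw)=g^{t^{\vec M}}(a)\,\phi^{\vec M}_\ve(\bdw)$, which the paper cites from \cite[Lemma~6.7]{zemke_bordered}. The Spin$^c$ part tracks gradings through the integral model for $\bar L$ using essentially the same lattice map $\rho$.

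The one real divergence is that you get injectivity of $\rho_*$ by counting, and this only works when $\det\tilde\Lambda\neq 0$. For surgeries with $b_1>0$ you still need a direct argument, as the paper gives, that $\rho(\bs)=\rho(\bs')$ forces $\bs-\bs'\in H(\bar\Lambda)$.
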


\begin{proof}
We divide the proof into two parts. In the first part, we establish the chain
homotopy equivalence
\[
C_\Lambda(L) \simeq \bCFm(S^3_\Lambda(L)),
\]
while in the second part we prove the Spin$^c$ decomposition.

The first part of the argument essentially amounts to unpacking the definitions
appearing in \eqref{eq: rational_link_surgery_box}.

By Theorem \ref{thm: zemke_rational}, 
\begin{equation} \label{eq: tensor_pairing_in_proof_rational_surgery}
\cX_{(0,\dots,0)}(L)^\cL \hboxtimes {}_{\cL}[\cD_\Lambda]_{\F[U_1,\dots,U_n]}
\end{equation}
is chain homotopy equivalent to $\bCFm(S^3_\Lambda(L))$ as chain complexes over $\F\llbracket U \rrbracket.$
We need to show that \eqref{eq: tensor_pairing_in_proof_rational_surgery} agrees with our definition of $C_\Lambda (L).$  Once the framing  $(0,\dots,0)$ on $L$ is understood, we drop it from the notation and write $ \cX(L)^\cL$ for simplicity.

Suppose $\{\bdy_1,\dots,\bdy_m\}$ is a free basis of $\cCFL(L)$.
Before completion, the underlying space of 
$\cX_{(0,\dots,0)}(L)^\cL \hboxtimes {}_{\cL}[\cD_\Lambda]_{\F[U_1,\dots,U_n]}$ is the direct sum of
\[  \left( \cX(L)^\cL \cdot \bdI_\ve \right) \otimes_{\bdI_\ve} \left(  \bdI_\ve \cdot {}_{\cL}[\cD_\Lambda]_{\F[U_1,\dots,U_n]} \right).\] 
Each summand is naturally identified with 
\[
\bigoplus_{(\bs,\bk)\in \tilde{\bH}(\Lambda)} C_{\ve}(\bs,\bk)
\]
under the map
\[
\bdy^\ve \otimes   a_1 \bdx^{\ve_1}_{k_1} \otimes \cdots \otimes a_n \bdx^{\ve_n}_{k_n} \longmapsto a_1 \cdots a_n \bdy^\ve \otimes   \bdx^{\ve_1}_{k_1} \otimes \cdots \otimes  \bdx^{\ve_n}_{k_n} 
\]
where $a_1 \cdots a_n \bdy^\ve \in C_\ve(\bs)$ and $\bk = (k_1,\dots,k_n).$ We denote $\bdx^\ve_{\bk}:=\bdx^{\ve_1}_{k_1} \otimes \cdots \otimes  \bdx^{\ve_n}_{k_n}.$
% consists of elements of the form $\sum_{i=1}^m  \bdy^\ve_{i} \otimes   a_i \bdx^{\ve}_{\bk}$ \ds{a bit confused by this. is the sum over $k$ too? and why the coefficients $a_i$ do not depend on $k$?} where $a_i \in \bdI_\ve \cdot \cL \cdot \bdI_\ve$. This space is naturally identified with $q_1\cdots q_n$ copies of $C_{\ve}$ via the map \ds{I am also a bit confused by this map. Where do the $x_k$ go? I think it is just me, maybe I can ask you in person}
% \[
% \sum_{i=1}^m  \bdy^\ve_{i} \otimes   a_i \bdx^{\ve}_{\bk} \mapsto \sum_{i=1}^m  a_i \bdy^\ve_{i}.
% \]
% Under this identification, the subspace 
%  consisting of elements $\sum_{i=1}^m  \bdy^\ve_{i} \otimes   a_i \bdx^{\ve}_{\bk}$ such that $a_i \bdy^\ve_{i} \in  C_{\ve}(\bs)$ for some $\bs \in \bH(L)$ 
% is  identified with $ C_\ve( \bs,\bk)$. 
The $\F[U_1,\dots,U_n]$-module action on $\cX(L)^\cL \hboxtimes {}_{\cL}[\cD_\Lambda]_{\F[U_1,\dots,U_n]}$ is defined by 
\begin{equation*}
    \begin{tikzcd}
        [column sep=0.1 cm, row sep=0.4 cm]
\cX(L) \ar[ddd, dashed] & \otimes & \cD_\Lambda \ar[dd, dashed] & \F[U_1,\dots,U_n] \ar[ddl]
\\
    &  &  &
   \\
  &  & m_{0,1,1}\ar[d, dashed] &
\\
\cX(L) &\otimes & \cD_\Lambda
    \end{tikzcd}.
\end{equation*}
This is simply the multiplication by $U_i$ on the second tensor factor, which coincides with the usual $\F\llbracket U_1,\dots,U_n \rrbracket$-module action on $ C_\ve( \bs,\bk)$. Therefore $\cX(L)^\cL \hboxtimes {}_{\cL}[\cD_\Lambda]_{\F[U_1,\dots,U_n]}$ agrees with $C_\Lambda (L)$ as a $\F\llbracket U_1,\dots,U_n \rrbracket$-module.

 It remains to  show that the differential defined by \eqref{eq: rational_link_surgery_box} coincides with that of  $C_\Lambda(L)$.
It suffices to consider elements of the form $\bdw^\ve \otimes   a \bdx^{\ve}_{\bk}$ where $a\in \bdI_\ve \cdot \cL \cdot \bdI_\ve$ is a homogeneous element and $\bdw^\ve$ is  $\bdy^\ve_{i}$ for some $i \in \{1,\dots,n\}.$  Suppose $a \bdw^\ve  $ lies in $ C_{\ve}(\bs)$ for some $\bs\in \bH(L).$ By an abuse of notation, we write $\bdw^\ve \otimes   a \bdx^{\ve}_{\bk}$ also for the corresponding element in $ C_{\ve}(\bs,\bk).$

Let $N$ be the sublink of $L$ consisting of $K_i$ for which $\ve_i = 0.$ 
We will show that $\partial \big(\bdw^\ve \otimes   a \bdx^{\ve}_{\bk}\big)$ is equal to the sum of $\tilde{\phi}^{\vec{M}}_{\varepsilon}(\bdw^\ve  \otimes a \bdx^{\ve}_{\bk})$ where $\vec{M}$ ranges over all oriented sublinks of $N.$

The only non-trivial $m_{i,1,j}$ in ${}_{\cL}[\cD_\Lambda]_{\F[U_1,\dots,U_n]}$ with $i>0$ is  $m_{1,1,0}.$ Hence the only terms contributing to the differential have the form 
\begin{equation} \label{eq: rationa_hypercube_structure_map}
    \begin{tikzcd}
        [column sep=0.1 cm, row sep=0.4 cm]
\cX(L) \ar[d, dashed] & \otimes & \cD_\Lambda \ar[dd, dashed]
\\
  \delta^1 \ar[dd, dashed] \ar[drr] &  & 
   \\
  &  & m_{1,1,0}\ar[d, dashed]
\\
\cX(L) &\otimes & \cD_\Lambda
    \end{tikzcd}.
\end{equation}
We  compute $ \partial \big(\bdw^\ve \otimes   a \bdx^{\ve}_{\bk}\big)$ by plugging the algebraic output of $\delta^1(\bdw^\ve)$ into $m_{1,1,0} (-, a \bdx^{\ve}_{\bk})$.

Given $\vec{M}\subset N,$ let   $\ve' - \ve$ be the indicator function of $M$. We can write $\phi^{\vec{M}}_{\varepsilon}(\bdw^\ve) = \sum_{j=1}^m b_j \bdy^{\ve'}_j$.
By definition $\delta^1(\bdw^\ve)$ has a summand $\sum_{j=1}^m \bdy^{\ve'}_j \otimes b_j \cdot t^{\vec{M}}$
where   $b_j \in \bdI_{\ve'} \cdot \cL \cdot \bdI_{\ve'}$ and $t^{\vec{M}}$ is defined by \eqref{eq: def_t_M}. 
Let $\frak{d}(t^{\vec{M}}) \subset \{1,\dots,n\}$ denote the set of indices $i$ such that $\tau_i$ is a tensor factor  in $t^{\vec{M}}$.

 We  compute
\[m_{1,1,0}(b_j \cdot t^{\vec{M}},a \bdx^{\ve}_{\bk}) = b_j \cdot g^{t^{\vec{M}}}(a)  \cdot \Big(\prod_{i\in \frak{d}(t^{\vec{M}})}T_i^{\floor{\frac{k_i+p_i}{q_i}}} \Big) \cdot \bdx^{\ve'}_{\bk'} \]
where $g^{t^{\vec{M}}}$ is the algebra homomorphism defined in Section \ref{subsec: link_surgery_algebra}, $k'_i=k_i+p_i$ for $i\in \frak{d}(t^{\vec{M}})$ and $k'_i=k_i$ otherwise. We conclude that for each $\vec{M}\subset N$,  with indicator function $\ve' - \ve$, $\partial \big(\bdw^\ve \otimes  a \bdx^{\ve}_{\bk}\big)$ has a summand 
\begin{align}\label{eq: rational_surgery_proof_summand}
\sum_{j=1}^m \bdy^{\ve'}_j\otimes \left(\prod_{i\in \frak{d}(t^{\vec{M}})}T_i^{\floor{\frac{k_i+p_i}{q_i}}} \right) \cdot g^{t^{\vec{M}}}(a) \cdot b_j   \cdot   \bdx^{\ve'}_{\bk'}. 
\end{align}
This expression can be further simplified.
By \cite[Lemma 6.7]{zemke_bordered},  
\begin{equation*}
    \begin{aligned}
        \phi^{\vec{M}}_{\varepsilon}(a\bdw^\ve) &= g^{t^{\vec{M}}}(a) \cdot  \phi^{\vec{M}}_{\varepsilon}(\bdw^\ve) \\
        &= g^{t^{\vec{M}}}(a) \cdot \sum_{j=1}^m b_j \bdy^{\ve'}_j.
    \end{aligned}
\end{equation*}
If we write $k_i + p_i = r_i + h_i q_i$ where $h_i = \floor{\frac{k_i+p_i}{q_i}}$,  and $0\leq k_i, r_i< q_i$, then the summand  \eqref{eq: rational_surgery_proof_summand}  corresponds to the element 
\begin{equation} \label{eq: rational_surgery_proof_summand2}
    \Big(\prod_{i\in \frak{d}(t^{\vec{M}})}T_i^{h_i} \Big) \phi^{\vec{M}}_{\varepsilon}(a\bdw^\ve)  \in C_{\ve'}(\bs',\bk')
\end{equation} 
 for some $\bs' \in \bH(L)$.  

 On the other hand, by definition $\tilde{\phi}^{\vec{M}}_{\varepsilon}$ is identified with $\phi^{\vec{M}}_{\varepsilon}$ as maps between complexes and the action of $T_i$ only shifts the Alexander grading. Therefore,  to show that the expression in \eqref{eq: rational_surgery_proof_summand2} equals to $\tilde{\phi}^{\vec{M}}_{\varepsilon}(\bdw^\ve \otimes a \bdx^{\ve}_{\bk})$, we need only verify that $\tilde{\phi}^{\vec{M}}_{\varepsilon}(\bdw^\ve \otimes a \bdx^{\ve}_{\bk})$ also lies in $ C_{\ve'}(\bs',\bk').$
 In other words, we need to show that  $f(\bs',\bk')=f(\bs,\bk) + \tilde{\Lambda}_{L,\vec{M}}$.

Since the   framing on $L$ is $(0,\dots,0)$, we have that
 $\phi^{\vec{M}}_{\varepsilon}(a\bdw^\ve) $ lies in $ C_{\ve}(\bs + \sum_{i\in \frak{d}(t^{\vec{M}})}\Lambda^0_i)$ where $\Lambda^0_i=(\Lambda^0_{i,j})_{n\times 1}$ is given by 
\begin{align*}
\Lambda^0_{i,j} = \begin{cases}
    0,    &i=j \\
    \operatorname{lk}(K_i,K_j), \qquad & i\neq j.
\end{cases}
\end{align*}

% , where $\bs' = \bs + \sum_{i\in \frak{d}(t^{\vec{M}})}\Lambda'_i$ where $\Lambda'_i=(\Lambda'_{i,j})_{n\times 1}$ is given by 
% \begin{align*}
% \Lambda'_{i,j} = \begin{cases}
%     h_i,    &i=j \\
%     \operatorname{lk}(K_i,K_j), \qquad & i\neq j.
% \end{cases}
% \end{align*}
The map $T_i^{h_i}$ changes the $i$-th component of the Alexander grading by $h_i$.
Consider the $i$-th component of $f(\bs',\bk')-f(\bs,\bk)$. If $i\in \frak{d}(t^{\vec{M}})$, this is
\begin{align*}
q_i(s'_i - s_i) + (k'_i - k_i) &= q_i \Big( h_i + \sum_{\substack{j\in \frak{d}(t^{\vec{M}}) \\ j\neq i}}  \operatorname{lk}(K_i,K_j)\Big) + r_i -k_i \\
&= q_i h_i + r_i - k_i + q_i \sum_{\substack{j\in \frak{d}(t^{\vec{M}}) \\ j\neq i}}  \operatorname{lk}(K_i,K_j)   \\
&=p_i + q_i \sum_{\substack{j\in \frak{d}(t^{\vec{M}}) \\ j\neq i}}  \operatorname{lk}(K_i,K_j),
\end{align*}
whereas if $i\notin \frak{d}(t^{\vec{M}})$  this is 
\begin{align*}
q_i(s'_i - s_i) + (k'_i - k_i) &= q_i \sum_{\substack{j\in \frak{d}(t^{\vec{M}})}}  \operatorname{lk}(K_i,K_j).  
\end{align*}
Therefore $f(\bs',\bk')=f(\bs,\bk) + \tilde{\Lambda}_{L,\vec{M}}$. We have shown that the summand  \eqref{eq: rational_surgery_proof_summand} is equal to $\tilde{\phi}^{\vec{M}}_{\varepsilon}(\bdw^\ve \otimes a \bdx^{\ve}_{\bk})$. Repeating this argument for every oriented sublink $\vec{M} \subset N$ proves the desired statement. 
% \ds{this sentence on gradings has to be removed}The grading statement is  by \cite[Theorem 10.8]{zemke_general} together with Theorem  \ref{thm: zemke_rational}.
% Finally, observe that $C_{\Lambda}(L)$ naturally splits over $\tilde{\bH}(L)/H(\tilde{\Lambda})$,  into $|\tilde{\Lambda}|$ summands.
% Each summand is in one Spin$^c$ structure since elements inside are related by differential and $U_i$. Similarly, the set Spin$^c(S^3_\Lambda(L))$ has has cardinality  $|\tilde{\Lambda}|$, since it is in bijection with  $H_1(S^3_\Lambda(L))$, and the transpose of $\tilde{\Lambda}$ is a presentation matrix $H_1(S^3_\Lambda(L))$.
% Since for a rational homology sphere $M$, by \cite[Theorem 10.1]{OS3manifoldProperty}, $\operatorname{rank}\HF^-(M,\bt) = 1$ for each $\bt \in$ Spin$^c(M)$,
% it follows that there is a bijection between Spin$^c(S^3_\Lambda(L)) \cong H_1(S^3_\Lambda(L))$ and $\tilde{\bH}(L)/H(\tilde{\Lambda})$. 
\end{proof}
 In the following, we prove the part of the statement of Theorem \ref{thm: rational_surgery_chain} regarding the splitting over the Spin$^c$ structures.
\begin{proof}[Proof of Theorem \ref{thm: rational_surgery_chain}, continued]
Recall from the proof of Theorem \ref{thm: zemke_rational} that we represent $S^3_{\Lambda}(L)$ as a surgery with integral framing $\bar \Lambda$ on the link \[\bar L = L \# \bar J_1 \# \cdots \bar J_n \] where $\bar J_i = O_i \cup J_i$ is a linear plumbing with framing $\Lambda^i$ and the connected sum is between $K_i \subset L$ and $O_i \subset \bar J_i.$ We equip $L$  with the framing  $\Lambda^0=(0,\dots,0).$

Given any $\bs \in \bH(\bar L),$
by \cite{mo_linksurgery}, we know that
\[
\bCFm(S^3_{\Lambda}(L),[\bs]) \simeq C_{\bar \Lambda}(\bar L, [\bs]) 
\]
where $[\bs]$ denotes the equivalence class of $\bs$ in $\bH(\bar L)/H(\bar \Lambda) \cong$ Spin$^c(S^3_{\Lambda}(L)).$ We will define a map
\begin{align*}
    \rho\colon \bH(\bar L) &\to \tilde{\bH}(L)\\
    \bs &\mapsto (\bt,\bk)
\end{align*}
which induces a bijection
\[
\rho_*: \bH(\bar L)/H(\bar\Lambda)
\to
\tilde{\bH}(L)/H(\tilde{\Lambda}),
\]
such that the chain homotopy equivalence between
$C_{\bar\Lambda}(\bar L)$ and $C_{\Lambda}(L)$ in
Equation~\eqref{eq: link_surgery_equivalence} restricts to a chain homotopy equivalence
\begin{equation}\label{eq: spinc_decom_claim}
    C_{\bar\Lambda}(\bar L,[\bs])
    \simeq
    C_{\Lambda}(L,\rho_*[\bs]).
\end{equation}
We recall that by Equation~\eqref{eq: link_surgery_equivalence}, both $C_{\bar \Lambda}(\bar L) $  and $  C_{\Lambda}( L)$ are chain homotopy equivalent to an intermediate object
\begin{align}
Y_{\bar \Lambda}(\bar L) &:=\cX_{(0,\dots,0)}(L)^{\cL_n} \hboxtimes \Big( {}_{\cK_1}[\cY_{\Lambda^1}(\bar J_1)] \otimes_\F  \cdots  
 \otimes_\F {}_{\cK_n}[\cY_{\Lambda^n}(\bar J_n)]  \Big)
 %\\  = &\cX_{(0,\dots,0)}(L)^{\cL_n} \hboxtimes \Bigg( \Big( \cY_{\Lambda^1}(\bar J_1)^{\cK_1} \boxtimes {}_{\cK_1|\cK_1}[\bI^{\Supset}] \Big) \otimes_\F \cdots  
 % \cdots \otimes_\F \Big( \cY_{\Lambda^n}(\bar J_n)^{\cK_n} \boxtimes {}_{\cK_n|\cK_n}[\bI^{\Supset}] \Big) \Bigg)   \notag 
\end{align}
Denote the chain homotopy equivalences in  Equation~\eqref{eq: link_surgery_equivalence} by 
\[h: C_{\bar \Lambda}(\bar L) \to Y_{\bar \Lambda}(\bar L) \quad \text{and} \quad g: Y_{\bar \Lambda}(\bar L) \to C_{\Lambda}( L) \]
and define the map $h'$ and $g'$ to be the homotopy inverses of $h$ and $g$, respectively.
We have $g=\id_{\cX(L)} \otimes g_1 \otimes \cdots \otimes g_n$ where, 
for $i=1,\dots,n$, we denote by $g_i$ the chain homotopy equivalence given by
\[
{}_{\cK_i}[\cY_{\Lambda^i}(\bar J_i)] \simeq  {}_{\cK_i}[\cX_{\lambda_d^i}(O_{q_i/r_i})] \cong {}_{\cK_i}[\cD_{p_i/q_i}].
\]
Similarly, $g'=\id_{\cX(L)} \otimes g'_1 \otimes \cdots \otimes g'_n$ where each $g'_i$ is a homotopy inverse of $g_i$.

A key feature of the maps $h$ and $h'$ is that they preserve the Alexander grading, in the following sense.

As explained in \cite[Section~12.2]{zemke_bordered}, $h$ can be interpreted
as decomposing the hypercube $C_{\bar\Lambda}(\bar L)$ into subcubes according
to the values of the coordinates of $\bar\varepsilon\in\bE_N$ corresponding to
the components $K_i\subset L$. More precisely, the underlying module of
$C_{\bar\Lambda}(\bar L)$ is the tensor product of the modules underlying
$n+1$ hypercubes, namely $C_{\Lambda^0}(L)$ and
$C_{\Lambda^i}(\bar J_i)$ for $i=1,\dots,n$. Since the Alexander grading is additive under the tensor product, $\phi^{\vec{M}}$ has the same Alexander grading shift as
\begin{equation} \label{eq: map_decomp}
 \psi_0^{\vec{M}} \otimes \psi_1^{\vec{M}} \otimes \psi_1^{\vec{M}} \otimes  \cdots \otimes \psi_n^{\vec{M}}.
\end{equation}
Here, we set
\[
\psi_0^{\vec{M}} = \begin{cases}
   \phi^{L, \vec{M}\cap L} \quad &M \cap L \neq \emptyset\\
    \id_{C_{\Lambda^0}(L)}  &M \cap L = \emptyset 
\end{cases} 
\]
where
$\phi^{L, \vec{M}\cap L}$ denotes the hypercube map in $C_{\Lambda^0}(L)$  corresponding to the sublink $\vec{M}\cap L$. Similarly, for $i=1,\dots,n$, we set
\[
\psi_i^{\vec{M}} = \begin{cases}
    \phi^{\bar J_i, \vec{M}\cap \bar J_i} \quad &M \cap \bar J_i \neq \emptyset\\
    \id_{C_{\Lambda^i}(\bar J_i)}  &M \cap \bar J_i = \emptyset 
\end{cases} 
\]
where
$\phi^{\bar J_i, \vec{M}\cap \bar J_i}$ denotes the hypercube map in $C_{\Lambda^i}(\bar J_i)$ corresponding to the sublink $\vec{M}\cap \bar J_i.$ 

By the proof of \cite[Theorem~12.1]{zemke_bordered}, we have that 
the maps $h$ and $h'$ arise
from homotopies between $\phi^{\vec{M}}$ and the map in Equation~\eqref{eq: map_decomp}. 

The maps $g$ and $g'$ also interact with the Alexander grading in a predictable way.
 Suppose $\bdy_i\in C_{\Lambda^i}(\bar J_i)$ has Alexander grading
$(s(O_i),\bs(J_i)) \in \bH(\bar J_i)$.
By the proof of Lemma \ref{lem: phiM_D_p_q}, 
each term in $g_i(\bdy_{i})$, thought as as an element of  $S^{-1}_{\nu}\cCFK(O_{q_i/r_i})$ with $\nu\in\{0,1\}$, has Alexander grading 
 \begin{equation} \label{eq: spinc_decomp_A_O_q_r}
     s(O_i)-\bdve{v}^i \cdot \bs(J_i)
 \end{equation}
 for some vector $\bdve{v}^i$ determined by $p_i/q_i.$

We now define the map $\rho$.
 Take $\bdy \in C_{\bar \Lambda}(\bar L)$ with Alexander grading $\bs$ such that $h(\bdy)\ne 0$. 
Each term in $h(\bdy)$ is of the form  
\[
\bdw \otimes \bdy_{1} \otimes \cdots \otimes \bdy_{n},
\]
where $\bdw $  is a free generator of $ S_{\ve}^{-1}\cCFL(L)$ for some $\ve \in \bE_n$  with Alexander grading $\bs_{\bdw} \in \bH(L)$ and   $\bdy_{i}\in {}_{\cK_i}\cY_{\Lambda^i}(\bar J_i)$  with Alexander grading $\bs_{\bdy_i} = (s(O_i),\bs(J_i)) \in \bH(\bar J_i)$ for $i=1,\dots,n$. We have
\begin{align}
\label{eq: spinc_decomp_Alex_K_i}    \pi_{\bar L, K_i}(\bs) &= \pi_{L, K_i}(\bs_{\bdw}) + \pi_{\bar J_i, O_i}(\bs_{\bdy_i}) \\
\notag    \pi_{\bar L, J_i}(\bs) &=  \pi_{\bar J_i, J_i}(\bs_{\bdy_i})
\end{align}
 for $i=1,\dots,n$, where $\pi_{L, L'}: \bH(L) \to  \bH(L')$ is the reduction map defined in Section \ref{subsec: H-function}.

Next, apply the map $g$, and note that the Spin$^c$ structure $k_i \in \Z/q\Z  $ of $g_i(\bdy_i)$ depends only on $\bs$. In fact, $\bs(J_i)$ is determined by $\bs$, and Equation~\eqref{eq: spinc_decomp_A_O_q_r} implies that $s(O_i)$ only changes the Alexander grading by an integer, which has no effect under the map
\[
G_{L(q,r),O_{q/r}}: \frac{q_i-1}{2q_i}+ \frac{k_i}{q_i} \longmapsto k_i,
\]
from the set of all Alexander gradings of $\cCFK(O_{q_i/r_i})$ to Spin$^c(L(q_i,r_i)) \cong \Z/q_i\Z$.
Moreover, by Equation~\eqref{eq: spinc_decomp_A_O_q_r} and \eqref{eq: spinc_decomp_Alex_K_i}, each term in $g\circ h(\bdy)$ viewed as an element in 
\[
S^{-1}_\ve \cCFL(L \# O_{q_1/r_1} \# \cdots \# O_{q_n/r_n}) \cong S^{-1}_\ve \Big( \cCFL(L) \otimes \cCFK(O_{q_1/r_1}) \otimes \cdots \otimes \cCFK(O_{q_n/r_n}) \Big),
\]
has the same Alexander grading. Therefore, we may define the map $\rho$ by 
\[
\rho: \bs \mapsto (\pi_{\bar L,L}(\bs),\bk)
\]
where $\bk =(k_1,\dots,k_n)$.

To show that $\rho$ induces a map from $ \bH(\bar L)/H(\bar\Lambda)$ to $\tilde{\bH}(L)/H(\tilde{\Lambda})$, we show that for any  arbitrarily oriented non-trivial sublink $\vec{M}\subset \bar L$, we have 
\begin{equation}
    \label{eq: spinc_decomp_coset}\rho(\bs+ \Lambda_{\bar L, \vec{M}}) = \rho(\bs) + \tilde{\Lambda}_{L,\vec{M}\cap L}.
\end{equation}
If $M \cap L = \emptyset$, then by the proof of Lemma \ref{lem: phiM_D_p_q} the map $\psi_i^{\vec{M}}$ induces an internal differential of ${}_{\cK_i}[\cY_{\Lambda^i}(\bar J_i)]$, which preserves the Spin$^c$ structure $k_i$; moreover, the Alexander grading shift of $\psi_i^{\vec{M}}$ is zero. Therefore $\rho(\bs+ \Lambda_{\bar L, \vec{M}}) = \rho(\bs).$
 We now assume that
$M\cap L\neq \emptyset$.

As before, let $\bdy \in C_{\bar \Lambda}(\bar L)$ be an element with Alexander grading $\bs$ and let
$\bdw \otimes \bdy_{1} \otimes \cdots \otimes \bdy_{n}$
be a term in $h(\bdy)$.

We compute $\tilde{\phi}^{\vec{M}\cap L}\big(g(\bdw \otimes \bdy_{1} \otimes \cdots \otimes \bdy_{n})\big)$ using Equation~\eqref{eq: rationa_hypercube_structure_map}. In $\cX_{\Lambda^0}(L)^{\cL}$, the terms in $\delta^1(\bdw)$ whose coefficients contain $t^{\vec{M}\cap L}$  are defined using  $\phi^{L, \vec{M}\cap L}(\bdw)$. 

By Lemma \ref{lem: phiM_D_p_q}, for  $i \in \{1,\dots,n\}$ such that $M \cap K_i \neq \emptyset,$ 
\[ g_i \circ \phi^{\bar J_i, \vec{M}\cap \bar J_i}(\bdy_i) = m_{1,1,0}(t^{\vec{M}\cap K_i}, g_i(\bdy_i)). \] 
We compute
\begin{align*}
   g \Big( \psi_0^{\vec{M}}(\bdw) \otimes \psi_1^{\vec{M}}(\bdy_1) \otimes  \cdots \otimes \psi_n^{\vec{M}}(\bdy_n) \Big) 
  & = \psi_0^{\vec{M}}(\bdw) \otimes g_1\big(\psi_1^{\vec{M}}(\bdy_1)\big) \otimes  \cdots \otimes g_n\big(\psi_n^{\vec{M}}(\bdy_n)\big)\\
    &= \psi_0^{\vec{M}}(\bdw) \otimes m_{1,1,0}\big( t^{\vec{M}\cap L}, g_1(\bdy_1) \otimes  \cdots \otimes  g_n(\bdy_1) \big) \\
    &=\tilde{\phi}^{\vec{M}\cap L}\big(\bdw \otimes g_1(\bdy_{1}) \otimes \cdots \otimes g_n(\bdy_{n})\big)\\
  &=  \tilde{\phi}^{\vec{M}\cap L}(g( \bdw \otimes \bdy_{1} \otimes \cdots \otimes \bdy_{n})).
\end{align*}
The first and the last equality follow from the definition of $g$. For the second equality, we partition the indices $i\in \{1,\dots,n\}$ into two sets according to whether $M \cap K_i = \emptyset,$ and use the fact that $m_{2}$ on ${}_{\cL}[\cD_\Lambda]$ is the tensor product of $m_{2}$ of each ${}_{\cK_i}[\cD_{p_i/q_i}]$. The third equality follows from Equation~\eqref{eq: rationa_hypercube_structure_map}. 

 The previous computation shows that $g\circ h (\phi^{\vec{M}}(\bdy))=\tilde{\phi}^{\vec{M}\cap L}(g\circ h(\bdy))$, and as a consequence that $\rho(\bs+ \Lambda_{\bar L, \vec{M}}) = \rho(\bs) + \tilde{\Lambda}_{L,\vec{M}\cap L}$.
Therefore $\rho$ induces a map $\rho_*$ from $ \bH(\bar L)/H(\bar\Lambda)$ to $\tilde{\bH}(L)/H(\tilde{\Lambda})$.

Since $\rho$ is surjective, $\rho_*$ is surjective. To prove that $\rho_*$ is injective, we would like to show that 
  $\rho_*([\bs]) = \rho_*([\bs'])$ implies $[\bs]=[\bs'].$ By Equation~\eqref{eq: spinc_decomp_coset}, it suffices to show that 
  if $\rho(\bs) = \rho(\bs')=(\bt,\bk)$ for some $(\bt,\bk) \in \tilde{\bH}(L)$, then $[\bs]=[\bs'].$

 As discussed earlier, $(\bt,\bk)$ represents an Alexander grading in
 \[\cCFL(L) \otimes \cCFK(O_{q_1/r_1}) \otimes \cdots \otimes \cCFK(O_{q_n/r_n}).\]
 Since both $g$ and $h$ preserve the Alexander grading of $\cCFL(L)$, it follows that  $\pi_{\bar L, L}(\bs) = \pi_{\bar L, L}(\bs')$.
On the other hand, for each $J_i$,  $\bH(J_i)/H(\Lambda_{J_i}) \cong \Z/q_i\Z$ where $H(\Lambda_{J_i})$ is spanned by the column vectors of  the surgery matrix $\Lambda_{J_i}$ of $J_i$.
Moreover, by Equation~\eqref{eq: spinc_decomp_A_O_q_r}, if $\vec{M}\subset \bar J_i$ contains the first component of $J_i$ in the opposite orientation,  $\phi^{\bar J_i,\vec{M}}$ increases the $i$-th component Alexander grading of $\cCFL(L)$ by $1$.  Since $\pi_{\bar L, L}(\bs) = \pi_{\bar L, L}(\bs')$, 
it follows that $\pi_{\bar L, J_i}(\bs) - \pi_{\bar L, J_i}(\bs') \in H^\circ(\Lambda_{J_i}),$  where $H^\circ(\Lambda_{J_i})\subset H(\Lambda_{J_i})$ is spanned by the second through $m$-th column vectors of $\Lambda_{J_i}$.
Therefore $[\bs]=[\bs'],$ as desired.
This completes the proof. 
\end{proof}

\subsection{A simplified model of $C_{\Lambda}(L)$} \label{subsec: simplified model}
Following \cite{yajing_lspace}, 
we define a simplified model  $H_{\Lambda}(L)$ -- referred to in \cite{yajing_lspace} as the perturbed complex -- of $C_{\Lambda}(L)$ as follow. 
Recall from Definition \ref{def: rational_link_surgery_complex} that when $|M|=1,$
\[
\tilde{\Phi}^{\vec{M}}_{\varepsilon}: H_*(C_\ve(\bs,\bk)) \rightarrow H_*(C_{\ve'}(\bs',\bk' ))
\]
 are the maps induced by $\tilde{\phi}^{\vec{M}}_{\varepsilon}$ on homology,
where  $f(\bs',\bk') = f(\bs,\bk) + \tilde{\Lambda}_{L,\vec{M}}$.
% Here   $f$ is the map from Definition \ref{def: tildeH} that identifies $\tilde{\bH}(L)$ with $\bH(L).$ 
\begin{figure}\captionsetup{width=\textwidth}
 \begin{tikzpicture}[scale=1.2]
 \begin{scope}[on background layer]
   \draw[fill=green!10!white, draw=none]
    (-4.2,-3.2) rectangle (4.2,3.2);
 \foreach \i in {0,1,2,3}
 {\draw[thin, black!20!white] (-4.2,{\i*2-3}) -- (4.2,{\i*2-3}); }
 \foreach \j in {-1,0,1}
 {\draw[thin, black!20!white] ({\j*4},-3.2) -- ({\j*4},3.2);}
 \end{scope}
 \begin{scope}
      \foreach \i in {-1,0,1}
    \foreach \j in {0,1}
    \foreach \k in {0,1}
     { \draw[very thick, gray!30!white, fill=red!10!white] ({-3+4*\j+2*\k},{2*\i}) circle (0.8);
     \node (\i\j\k00) at ({-3+4*\j+2*\k+0.4},{2*\i+0.4}) {{\tiny $00$}};
     \node (\i\j\k01) at ({-3+4*\j+2*\k-0.2},{2*\i+0.2}) {{\tiny $01$}};
     \node (\i\j\k10) at ({-3+4*\j+2*\k+0.2},{2*\i-0.2}) {{\tiny $10$}};
     \node (\i\j\k11) at ({-3+4*\j+2*\k-0.4},{2*\i-0.4}) {{\tiny $11$}};
       \draw[-stealth, shorten <=-2pt, shorten >=-2pt] (\i\j\k00)  to node[] {} (\i\j\k01);
        \draw[-stealth, shorten <=-2pt, shorten >=-2pt] (\i\j\k00)  to node[] {} (\i\j\k10);
         \draw[-stealth, shorten <=-2pt, shorten >=-2pt] (\i\j\k01)  to node[] {} (\i\j\k11);
          \draw[-stealth, shorten <=-2pt, shorten >=-2pt] (\i\j\k10)  to node[] {} (\i\j\k11);
     }
      % \foreach \j in {-6,6}
      % {\fill[gray!40] (\j+-2,1.5) rectangle (\j+1.5,0.5);
      % \draw (\j+0.5,0.5) to (\j+1.5,0.5);
      % } 
      \end{scope}      
       \draw[-stealth, shorten <=-4pt, shorten >=-4pt] (-10000)  to node[pos=0.5, fill=red!10!white, inner sep=1.5pt] {{\ }} node[pos=0.53, fill=red!10!white, inner sep=1.5pt] {{\ }} node[pos=0.56, fill=red!10!white, inner sep=1.5pt] {{\ }} node[pos=0.6, fill=red!10!white, inner sep=2.8pt] {{\ }} node[pos=0.64, fill=red!10!white, inner sep=1.8pt] {{\ }} (01001);
       \node () at (-0.7,-0.58) {{\tiny $\Phi^{-K_2}$}};
       \draw[-stealth, shorten <=-4pt, shorten >=-4pt] (-10010)  to  node[pos=0.7, fill=green!10!white, inner sep=1pt, xshift=2pt, yshift=-2pt] {{\tiny $\Phi^{-K_2}$}} (01011);
       \draw[-stealth, shorten <=-4pt, shorten >=-4pt] (-10001)  to node[pos=0.1, fill=red!10!white, inner sep=2.8pt] {{\ }}  node[pos=0.13, fill=red!10!white, inner sep=2.8pt] {{\ }}  node[pos=0.15, fill=red!10!white, inner sep=2.5pt] {{\ }} node[pos=0.78, fill=green!10!white, inner sep=5pt] {{\ }} (00111);
      \node () at (-2.1,-0.75) {{\tiny $\Phi^{-K_1}$}};
       \draw[-stealth, shorten <=-4pt, shorten >=-4pt] (-10000)  to  node[midway, fill=green!10!white, inner sep=1pt, xshift=1pt, yshift=-1pt] {{\tiny $\Phi^{-K_1}$}} (00110);  
         \draw[-stealth, shorten <=-4pt, shorten >=-4pt] (01000)  to node[pos=0.24,fill=green!10!white, inner sep=3pt] {{\ }} (11110);
         \node () at (2.2,0.8) {{\tiny $\Phi^{-K_1}$}};
       \draw[-stealth, shorten <=-4pt, shorten >=5pt] (01001)  to node[midway, fill=green!10!white, inner sep=1pt] {{\tiny $\Phi^{-K_1}$}} ($(11111)+(2 pt,0)$);
         \draw[-stealth, shorten <=-4pt, shorten >=-4pt] (00100)  to node[pos=0.3, fill=green!10!white, inner sep=1pt] {{\tiny $\Phi^{-K_2}$}} (11101);
       \draw[-stealth, shorten <=-4pt, shorten >=5pt] (00110)  to node[pos=0.4, fill=red!10!white, inner sep=2.5pt] {{\ }} node[pos=0.36, fill=red!10!white, inner sep=2.4pt] {{\ }} ($(11111)+(0,1pt)$); 
       \node () at (0.82,0.52) {{\tiny $\Phi^{-K_2}$}};
    
       \node () at (-2.4,-1.81) {{\tiny $\Phi^{K_1}$}};
       \node () at (-2.9, -1.5) {{\tiny $\Phi^{K_2}$}};
       \node () at (-3.55,-2.05) {{\tiny $\Phi^{K_1}$}};
       \node () at (-2.9, -2.5) {{\tiny $\Phi^{K_2}$}};
       \node () at (-3.6, -2.9) {{\tiny $(\begin{pmatrix} s_1 \\ s_2  \end{pmatrix},0)$}};
       \node () at (-1.6, -2.9) {{\tiny $(\begin{pmatrix} s_1 \\ s_2  \end{pmatrix},1)$}};
        \node () at (0.6, -2.9) {{\tiny $(\begin{pmatrix} s_1 +1 \\ s_2  \end{pmatrix},0)$}};
       \node () at (2.6, -2.9) {{\tiny $(\begin{pmatrix} s_1 + 1\\ s_2  \end{pmatrix},1)$}};

\node () at (-3.2, 0.95) {{\tiny $(\begin{pmatrix} s_1 \\ s_2 + 1  \end{pmatrix},0)$}};
       \node () at (-1.2, 0.95) {{\tiny $(\begin{pmatrix} s_1 \\ s_2 + 1  \end{pmatrix},1)$}};
        \node () at (1.7, -0.8) {{\tiny $(\begin{pmatrix} s_1 +1 \\ s_2 +1  \end{pmatrix},0)$}};
       \node () at (3.7, -0.8) {{\tiny $(\begin{pmatrix} s_1 + 1\\ s_2 + 1 \end{pmatrix},1)$}};

       \node () at (-3, 3.1) {{\tiny $(\begin{pmatrix} s_1  \\ s_2 +2  \end{pmatrix},0)$}};
       \node () at (-1, 3.1) {{\tiny $(\begin{pmatrix} s_1 \\ s_2 + 2 \end{pmatrix},1)$}};
        \node () at (1, 3.1) {{\tiny $(\begin{pmatrix} s_1 +1 \\ s_2 +2 \end{pmatrix},0)$}};
       \node () at (3, 3.1) {{\tiny $(\begin{pmatrix} s_1 + 1\\ s_2+2  \end{pmatrix},1)$}};

 \node () at (-5.2, 3) {{\tiny $\operatorname{lk}(K_1,K_2)=1$}};
         \node () at (-5.3, 2.5) {{\tiny $\tilde{\Lambda} = \begin{pmatrix} 1 & 2\\ 1 & 1  \end{pmatrix}$}};         
\end{tikzpicture}
\caption{The simplified model  $H_{\Lambda}(L)$ of the link surgery complex for the $(\tfrac{1}{2},1)$-surgery on a link $L = K_1 \cup K_2$ with $\operatorname{lk}(K_1,K_2)=1$. Inside each disk, there are four vertices corresponding to the summands $H_{*}(C_{\ve}(\bs,\bk))$, where the numbers indicate the values of $\ve \in \bE_2$. The pair $(\bs,\bk)$ is indicated nearby, where we suppress $k_2$ and record only $k_1 \in \Z/2\Z$. We also suppress the subindex of the map $\Phi_{\ve}^{\vec{M}}$.}
\label{fig: rational_surgery_complex}
\end{figure}
\begin{defn}
Define $H_{\Lambda}(L)$ to be the complex whose underlying space is
\[
H_{\Lambda}(L) = \bigoplus_{\ve\in \bE_n} \prod_{(\bs,\bk)\in\tilde{\bH}(L)} H_*(C_{\ve}(\bs,\bk))
\]
and with differential given by the sum of $\tilde{\Phi}^{\vec{M}}_{\varepsilon}$, ranging over oriented sublinks with $|M|=1$.
See Figure \ref{fig: rational_surgery_complex} for an example of $H_{\Lambda}(L)$ where we consider the $(\tfrac{1}{2},1)$-surgery on a link $L = K_1 \cup K_2$ with $\operatorname{lk}(K_1,K_2)=1$.
\end{defn}
We now show $C_{\Lambda}(L)\simeq H_{\Lambda}(L)$ for rational surgeries on two-component $L$-space links by applying  Liu's work \cite{Yajing_twobridge}.

Liu proved that one can replace the  chain complex in each vertex of the hypercube by a chain homotopy equivalent one.
\begin{prop}[{\cite[Proposition 5.19]{Yajing_twobridge}}]
    Given a $n$-dimensional hypercube $(C_\ve,D_{\ve,\ve'})$, suppose that $C'_\ve$ is chain homotopy equivalent to $ C_\ve$ for $\ve \in \bE_n.$ Then there exists a hypercube $(C'_\ve,D'_{\ve,\ve'})$ which is homotopy equivalent to $(C_\ve,D_{\ve,\ve'}).$
\end{prop}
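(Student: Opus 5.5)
We build the new hypercube by transferring the structure along chosen homotopy equivalences, in the spirit of the homological perturbation lemma. For each $\ve\in\bE_n$ we fix chain maps $f_\ve\colon C_\ve\to C'_\ve$ and $g_\ve\colon C'_\ve\to C_\ve$, together with homotopies $h_\ve\colon C_\ve\to C_\ve$, such that
\[
g_\ve f_\ve = \id + \partial h_\ve + h_\ve\partial .
\]
The internal differentials $D_{\ve,\ve}$ and $D'_{\ve,\ve}$ are the given differentials of $C_\ve$ and $C'_\ve$. For $\ve<\ve'$ we define $D'_{\ve,\ve'}$ as a sum over all chains $\ve=\ve_0<\ve_1<\cdots<\ve_k=\ve'$ with $k\geq 1$:
\[
D'_{\ve,\ve'} = \sum f_{\ve_k}\, D_{\ve_{k-1},\ve_k}\, h_{\ve_{k-1}}\, D_{\ve_{k-2},\ve_{k-1}} \cdots h_{\ve_1}\, D_{\ve_0,\ve_1}\, g_{\ve_0}.
\]
This sum is finite, because every chain in $\bE_n$ has length at most $n$; no convergence hypothesis is needed. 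In the same way we define the hypercube morphisms
\[
F = \sum f\,D\,h\,D\cdots h, \qquad G=\sum h\,D\cdots D\,g,
\]
and a homotopy $H=\sum h\,D\,h\cdots D\,h$.

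The verification has three steps, carried out in this order.

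First, we show that $(C'_\ve,D'_{\ve,\ve'})$ satisfies the hypercube relation. We expand
\[
\sum_{\ve\leq\ve'\leq\ve''} D'_{\ve',\ve''}D'_{\ve,\ve'} .
\]
Wherever a factor $g_{\ve'} f_{\ve'}$ appears, we replace it by $\id+\partial h+h\partial$. We then use the original hypercube relation for the $D$'s and the fact that $f$, $g$ are chain maps. The terms should cancel telescopically.

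Second, we show that $F$ and $G$ are morphisms of hypercubes, that is, they commute with the total differentials.

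Third, we show that $FG$ and $GF$ are homotopic to the identity. This may require first arranging the side conditions $hh=0$, $hg=0$ and $fh=0$, which can always be achieved over a field. Alternatively, we can pass to mapping-cone arguments instead.

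A cleaner route is induction on $n$ by viewing an $n$-cube as a mapping cone. An $n$-dimensional hypercube is a one-dimensional hypercube of $(n-1)$-dimensional hypercubes, namely the two faces $\ve_n=0$ and $\ve_n=1$ joined by a chain map. By induction we replace each face by a hypercube on the $C'_\ve$ that is homotopy equivalent to it. We then compose the connecting map with the equivalences to obtain a new connecting map. The standard fact that the mapping cones of homotopy-equivalent maps are homotopy equivalent then finishes the induction step. The base case $n=1$ is exactly this cone fact. The inductive approach is the one we would write up, since it avoids the combinatorics.

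The main obstacle is bookkeeping in the inductive step. The connecting morphism between the two faces is itself a morphism of $(n-1)$-cubes carrying higher components. Transporting it along hypercube homotopy equivalences produces a map that commutes with the total differentials only up to homotopy. We must check that composing with $F$ and $G$ on the two faces still yields a strict morphism of $(n-1)$-cubes whose cone is a genuine $n$-cube. This works because the composite of hypercube morphisms is again a hypercube morphism. The homotopy needed for the cone equivalence then comes from the hypercube homotopies furnished by the induction hypothesis.
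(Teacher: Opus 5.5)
Your transferred maps $D'_{\ve,\ve'}=\sum f\,D\,h\,D\cdots h\,D\,g$ are the same ones behind the cited result; for $n=2$ they reproduce the formula in Proposition~\ref{prop: yajing_two_component}. Your first two steps are sound, even without side conditions or $fg=\id$. Write $\delta$ for the sum of the $D_{\ve,\ve'}$ with $\ve<\ve'$, and set $A=\delta+\delta h\delta+\delta h\delta h\delta+\cdots$, which is a finite sum. The relation $(\partial+\delta)^2=0$ gives $\partial A+A\partial+AA=A(\partial h+h\partial)A$. Combined with $gf=\id+\partial h+h\partial$, this shows three things: $fAg$ satisfies the hypercube relation, $F=f(\id+Ah)$ and $G=(\id+hA)g$ are chain maps, and $GF=\id+[\partial+\delta,\,h+hAh]$.

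The gap is the other composite. You never show $FG\simeq\id$ on the new hypercube, and your proposed fix does not work. The side conditions $hh=0$, $hg=0$, $fh=0$ only reduce $FG$ to $fg$. To finish you would need $fg=\id$, i.e.\ that $C'_\ve$ is a strong deformation retract of $C_\ve$. A mere homotopy equivalence does not give this: if $C_\ve=0$ and $C'_\ve\neq 0$ is contractible, $fg=\id$ is impossible over any ring. Without it, $fg\simeq\id$ holds only through vertexwise homotopies $k_\ve$, and these are not homotopies for the perturbed total differential. Whether the ground ring is a field is beside the point; in the application it is $\F\llbracket U\rrbracket$ anyway.

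The inductive cone route does not remove this difficulty. It moves it into the induction step, and it already appears at $n=1$. Let $F^j,G^j$ be the equivalences of the two faces, with homotopies $H^j\colon G^jF^j\simeq\id$ and $K^j\colon F^jG^j\simeq\id$, and set $\Phi'=F^1\Phi G^0$. The natural hypercube maps are $\mathcal F=\begin{pmatrix}F^0&0\\ F^1\Phi H^0&F^1\end{pmatrix}$ and $\mathcal G=\begin{pmatrix}G^0&0\\ H^1\Phi G^0&G^1\end{pmatrix}$. They satisfy $\mathcal G\mathcal F\simeq\id$ via $\begin{pmatrix}H^0&0\\ H^1\Phi H^0&H^1\end{pmatrix}$. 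A lower-triangular homotopy $\begin{pmatrix}K^0&0\\ X&K^1\end{pmatrix}$ from $\mathcal F\mathcal G$ to $\id$, however, exists only if $F^1\Phi(H^0G^0+G^0K^0)+(F^1H^1+K^1F^1)\Phi G^0$ is null-homotopic through hypercube maps. That is a coherence condition between the $H^j$ and $K^j$ which your induction hypothesis does not supply.

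The general ``cones of equivalent maps are equivalent'' argument does not help either. It produces an inverse in the homotopy category of $(n-1)$-cubes, which may have components from $\ve_n=1$ to $\ve_n=0$ and so is not a morphism of $n$-cubes. An unfiltered equivalence would also stop the induction, since $F^1\Phi G^0$ must be a hypercube morphism.

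The missing ingredient is this lemma: a hypercube morphism whose diagonal components are homotopy equivalences is a homotopy equivalence of hypercubes. Your Step~1 identity proves it.

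That identity holds for any data with $gf=\id+\partial h+h\partial$. Apply it to the hypercube $\mathrm{Cone}(F)$, whose vertices $\mathrm{Cone}(f_\ve)$ are contractible, taking $f=g=0$ and $h$ a vertexwise contraction. Then $h+hAh$ is a filtered contraction of $\mathrm{Cone}(F)$. Its components give a hypercube homotopy inverse of $F$, and $GF\simeq\id$ then forces $FG\simeq\id$.

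Alternatively, in the paper's application one has $fg=\id$. There you can replace $h$ by $h'=ehe$ with $e=\partial h+h\partial$, and then by $h'\partial h'$. This achieves the side conditions over any ring, and then $FG=\id$ on the nose.
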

In particular, when $n=2$, there is a simple description of the new hypercube. 
\begin{prop}[{\cite[Example 5.20]{Yajing_twobridge}}] \label{prop: yajing_two_component}
   Let $(C_\ve,D_{\ve,\ve'})$ be a $2$-dimensional hypercube, and let $(C'_\ve)$ be a  collection of chain complexes for $\ve \in \bE_2$. Assume that for each $\ve \in \bE_2$ we have homotopy equivalences  $\pi_\ve: C_\ve \to C'_\ve$ and $i_\ve: C'_\ve \to C_\ve$ such that
$i_\ve \circ \pi_\ve$ is homotopic to $\id_{C_\ve}$ via $h_\ve$, and $\pi_\ve \circ i_\ve$ is homotopic to $\id_{C'_\ve}$ via $h'_\ve.$
 Define
   \begin{align*}
   D'_{\ve,\ve'} &= \pi_\ve \circ D_{\ve,\ve'} \circ i_\ve,     \qquad \text{for} \  |\ve-\ve'|=1\\
   D'_{00,11} &= \pi_{11} \circ \big( D_{00,11}  +    D_{01,11} \circ h_{01} \circ D_{00,01}  \\ &\hspace{6.5em}+   D_{10,11} \circ h_{10} \circ D_{00,10} \big) \circ i_{00}.
   \end{align*}
   Then $(C'_\ve, D'_{\ve,\ve'})$ is a hypercube and is homotopy equivalent to $(C_\ve,D_{\ve,\ve'})$.
\end{prop}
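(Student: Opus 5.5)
The plan is to verify directly, from the square-of-maps structure, that $D'$ satisfies the hypercube relations; then to build explicit hypercube morphisms $\Pi\colon (C_\ve,D)\to(C'_\ve,D')$ and $I\colon(C'_\ve,D')\to(C_\ve,D)$ with homotopies $I\circ\Pi\simeq\id$ and $\Pi\circ I\simeq\id$. A homotopy equivalence of hypercubes is the same as one of the total complexes $\bigoplus_\ve C_\ve$, filtered by $|\ve|$. So the cleanest route is the homological perturbation lemma for filtered complexes. Take the unperturbed complex to be $\bigoplus_\ve C_\ve$ with only the internal differentials $D_{\ve,\ve}$, and the unperturbed deformation retraction data to be $\bigoplus\pi_\ve$, $\bigoplus i_\ve$, $\bigoplus h_\ve$. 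The perturbation is $\delta=\sum_{\ve<\ve'}D_{\ve,\ve'}$.

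\textbf{Step 1 (reduce to strong deformation retract data).} The lemma needs $\pi\circ i=\id$ together with the side conditions $h\circ i=0$, $\pi\circ h=0$, $h\circ h=0$. Given arbitrary homotopy equivalences, I would first replace them by such data. The standard trick is to modify $h_\ve$ using $h'_\ve$ and to replace $C'_\ve$ up to isomorphism; alternatively one observes that the transferred formulas only require $i\pi\simeq\id$ via $h$ at the level of an $A_\infty$-type transfer, and one checks the relations directly. I expect this bookkeeping to be the main obstacle. If the side conditions cannot be arranged cleanly, I would instead prove the hypercube relation for $D'$ by hand and build the maps explicitly.

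\textbf{Step 2 (apply perturbation).} $\delta$ strictly raises $|\ve|$, so $\delta h$ is nilpotent: $(\delta h)^3=0$ on a 2-dimensional cube. Hence the series $\sum_k \pi\,\delta(h\delta)^k\, i$ terminates. Its component from $00$ to $11$ collects $\pi_{11}D_{00,11}i_{00}$ and the two-step terms $\pi_{11}D_{\ve,11}h_\ve D_{00,\ve}i_{00}$ for $\ve=01,10$, since $h$ preserves the vertex. This is exactly $D'_{00,11}$. The length-one components are $\pi_{\ve'}D_{\ve,\ve'}i_\ve$. The perturbation lemma then gives that $D'$ squares to zero, which is the hypercube relation, and produces perturbed maps $I=i+h\delta i+\cdots$, $\Pi=\pi+\pi\delta h+\cdots$ and $H=h+h\delta h+\cdots$. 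These respect the cube filtration and give a homotopy equivalence of hypercubes.

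\textbf{Step 3 (consistency check).} As a sanity check, verify the relation at $(00,11)$ directly. Expanding $\partial' D'_{00,11}+D'_{00,11}\partial'+\sum_{\ve}D'_{\ve,11}D'_{00,\ve}$ and substituting $i\pi=\id+\partial h+h\partial$ makes the cross terms cancel against the original relation $\sum D_{\ve',11}D_{00,\ve'}=0$. Completeness of the $\prod_{\bs}$ factors causes no issue, since all sums are finite.
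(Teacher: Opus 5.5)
Your identification of the transferred differential in Step~2 is correct. The direct computation in Step~3 is by itself a complete proof that $(C'_\ve,D')$ is a hypercube: write $D'_{01,11}D'_{00,01}=\pi_{11}D_{01,11}\,i_{01}\pi_{01}\,D_{00,01}i_{00}$, substitute $i_{01}\pi_{01}=\id+\partial h_{01}+h_{01}\partial$, do the same at $10$, and use that the edge maps, $\pi$ and $i$ are chain maps together with the original relation at $(00,11)$. The paper gives no argument of its own here and simply quotes Liu.

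The gap is in the homotopy-equivalence half, which you route through Step~1, and Step~1 does not work as described.
\begin{itemize}
\item Strong deformation retract data onto $C'_\ve$ requires $\pi_\ve i_\ve=\id$ on the nose, i.e.\ $C'_\ve$ must be a retract of $C_\ve$. No change of $h_\ve$, and no replacement of $C'_\ve$ up to isomorphism, can arrange this: take $C_\ve=0$ and $C'_\ve$ a nonzero contractible complex.
\item Even when $\pi_\ve i_\ve=\id$ does hold, as in the paper's application, enforcing $hi=0$, $\pi h=0$, $hh=0$ replaces $h_{01},h_{10}$ by different homotopies. That changes $D'_{00,11}$, so the perturbation lemma would then give an equivalence with a different hypercube from the one in the statement.
\item Apart from the vague mention in Step~1, $h'_\ve$ is never used, yet it is indispensable. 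With $C_\ve=0$, $C'_\ve=\F$ and $\pi=i=h=0$, every other hypothesis holds and the conclusion fails.
\end{itemize}

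The missing observation is that no reduction is needed. Put $\partial=\bigoplus_\ve D_{\ve,\ve}$, let $\partial'$ be the differential of $\bigoplus_\ve C'_\ve$, and set $A=\delta+\delta h\delta=\delta(\id+h\delta)^{-1}=(\id+\delta h)^{-1}\delta$.
\begin{itemize}
\item Multiply $\partial A+A\partial+A\,i\pi\,A$ on the left by $\id+\delta h$ and on the right by $\id+h\delta$. You get $\partial\delta+\delta\partial+\delta\delta=0$, so $\partial A+A\partial+A\,i\pi\,A=0$. This uses only $i\pi=\id+\partial h+h\partial$.
\item From this identity, with the given $h$ and no side conditions, the following hold. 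The map $\partial'+\pi Ai$, whose off-diagonal part is exactly the stated $D'$, squares to zero. The maps $I=i+hAi$ and $\Pi=\pi+\pi Ah$ are chain maps. For $H=h+hAh$ one has $I\Pi=\id+(\partial+\delta)H+H(\partial+\delta)$.
\item All of these maps have components $\ve\to\ve'$ only for $\ve\le\ve'$, so they are hypercube morphisms and a hypercube homotopy.
\item The remaining relation $\Pi I\simeq\id$ is exactly where $h'_\ve$ enters. $I$ is a hypercube morphism whose vertex components $i_\ve$ are homotopy equivalences. Its mapping cone therefore has contractible vertex pieces, and is contractible by induction over the finite cube filtration. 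Hence $I$ is a homotopy equivalence of hypercubes, and $I\Pi\simeq\id$ then forces $\Pi\simeq I^{-1}$, i.e.\ $\Pi I\simeq\id$.
\end{itemize}
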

We now prove the following theorem from the introduction. The argument is taken from \cite{Yajing_twobridge}.
\begin{namedtheorem}[\ref{thm: intro_formal_link_H_Lambda}]
    If $L \subset S^3$ is a two-component $L$-space link, then for any rational surgery coefficients $\Lambda$, we have $H_\Lambda(L) \simeq C_\Lambda(L)$.  
\end{namedtheorem}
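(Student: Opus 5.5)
Throughout, $C_\Lambda(L)$ is the $2$-dimensional hypercube $(C_\ve,D_{\ve,\ve'})_{\ve\in\bE_2}$ of Definition~\ref{def: rational_link_surgery_complex}. Here $C_\ve=\prod_{(\bs,\bk)}C_\ve(\bs,\bk)$, the edge maps are $D_{\ve,\ve'}=\sum_{|M|=1}\tilde\phi^{\vec M}_\ve$, and $D_{00,11}$ is the sum of the $\tilde\phi^{\vec M}_{00}$ with $|M|=2$. The plan is to apply Proposition~\ref{prop: yajing_two_component} with $C'_\ve:=\prod_{(\bs,\bk)}H_*(C_\ve(\bs,\bk))$, equipped with zero differential. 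Once the hypotheses are checked, it produces a hypercube structure on $H_\Lambda(L)$, homotopy equivalent to $C_\Lambda(L)$, whose edge maps are $\pi\circ D\circ i$ and whose diagonal is given by an explicit formula. It then remains to show that the edge maps coincide with the $\tilde\Phi^{\vec M}$ and that the diagonal vanishes.

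\textbf{Step 1: choosing $\pi_\ve,i_\ve,h_\ve$.} Each $C_\ve(\bs,\bk)=C_\ve(\bs)$ is a free, finitely generated complex over $\F\llbracket U_1,U_2\rrbracket$, or over the appropriate localisation. By Remark~\ref{rmk: even hom degrees}, its homology is $\F\llbracket U\rrbracket$ and is supported in even gradings. For free complexes over such a ring, homology is free of rank one, so standard homological algebra (cancellation, or a splitting of the free complex) gives a deformation retraction onto it. Concretely, this means chain maps $\pi_\ve(\bs,\bk)$ and $i_\ve(\bs,\bk)$ together with a homotopy $h_\ve(\bs,\bk)$, all $U$-equivariant and all preserving the homological grading. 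Taking products over $(\bs,\bk)$ defines $\pi_\ve$, $i_\ve$ and $h_\ve$. The product is harmless because every map in sight respects the $(\bs,\bk)$ indexing up to the fixed shifts $\tilde\Lambda_{L,\vec M}$.

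\textbf{Step 2: the edge maps.} For $|\ve'-\ve|=1$, Proposition~\ref{prop: yajing_two_component} gives $D'_{\ve,\ve'}=\pi\circ\tilde\phi^{\vec M}_\ve\circ i$. Because $\tilde\phi^{\vec M}_\ve$ is a chain map when $|M|=1$, this composite is precisely the induced map $\tilde\Phi^{\vec M}_\ve$ on homology. So the edge maps agree with those of $H_\Lambda(L)$.

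\textbf{Step 3: the diagonal map (the main point).} We must show that
\[
\pi_{11}\circ\big(D_{00,11}+D_{01,11}\circ h_{01}\circ D_{00,01}+D_{10,11}\circ h_{10}\circ D_{00,10}\big)\circ i_{00}
\]
is zero. The argument is a grading one. In the hypercube, the length-one maps $\tilde\phi^{\vec M}$ preserve the homological grading, the length-two maps shift it by $+1$, the internal differentials shift it by $-1$, and hence the homotopies $h$ shift it by $+1$. I expect the bookkeeping of these shifts to be the main obstacle; it should follow from the grading conventions of \cite{mo_linksurgery} after the standard shift by $-|\ve|$.

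Granting these shifts, every term inside the parentheses raises the grading by exactly one. The composite therefore maps $H_*(C_{00}(\bs,\bk))$, which lives in even degrees, to degrees of odd parity in $H_*(C_{11}(\bs',\bk'))$. But $H_*(C_{11}(\bs',\bk'))$ is also concentrated in even degrees, so the composite is zero.

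A subtlety is that the relative $\Z$-gradings on the different summands are only compatible up to parity. Parity suffices for the argument, since all homology groups lie in even degrees regardless of the shift. Hence $D'_{00,11}=0$, and Proposition~\ref{prop: yajing_two_component} yields $H_\Lambda(L)\simeq C_\Lambda(L)$.
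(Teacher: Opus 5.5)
Your proposal is correct and follows the paper's proof essentially step for step. It uses the same application of Proposition~\ref{prop: yajing_two_component} with grading-preserving retractions onto homology, the edge maps become $\tilde\Phi^{M}_\ve+\tilde\Phi^{-M}_\ve$, and the diagonal $D'_{00,11}$ vanishes because it shifts the $\Z/2\Z$-grading by one (via \cite[Lemma 8.12]{mo_linksurgery}) while all homologies are even by Remark~\ref{rmk: even hom degrees}. The one point to tidy is Step 1: over $\F\llbracket U_1,U_2\rrbracket$ the homology $\F\llbracket U\rrbracket$ is not free, so the paper regards the $C_\ve(\bs,\bk)$ as $\F\llbracket U_1\rrbracket$-complexes and takes $\pi,i,h$ (with $\pi\circ i=\id$) from \cite[Corollary 5.6]{Yajing_twobridge}, rather than from a bare splitting argument.
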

\begin{proof}
We view  $C_{\Lambda}(L)$ as a hypercube.  Here 
 \[
 C_\ve = \prod_{(\bs,\bk)\in \tilde{\bH}(L)} C_\ve(\bs,\bk)   
 \]
 and $D_{\ve,\ve'}$ is the sum of $\tilde{\phi}^{\vec{M}}_\ve$, ranging over all possible orientations of $M$, where $M \subset L$ is the sublink with indicator function $\ve'-\ve$. The hypercube relation follows from the fact that $\partial^2=0$ in $C_{\Lambda}(L)$. 
 
 For each $(\bs,\bk) \in \tilde{\bH}(L),$ we consider the $\Z/2\Z$-gradings induced on the $\F\llbracket U_1 \rrbracket$-chain complexes  $C_\ve$ and $H_*(C_\ve)$ by their homological gradings\footnote{We view $H_*(C_\ve)$ as a chain complex with vanishing differential and identify  $U$ with $U_1$.}. By \cite[Corollary 5.6]{Yajing_twobridge} we can find homotopy equivalences, preserving the homological gradings,
 \[\pi_\ve^{(\bs, \bk)}: C_\ve(\bs, \bk) \to H_*(C_\ve(\bs, \bk)) \quad \text{and} \quad  i_\ve^{(\bs, \bk)}: H_*(C_\ve(\bs, \bk)) \to C_\ve(\bs, \bk)\] such that
$i_\ve^{(\bs, \bk)} \circ \pi_\ve^{(\bs, \bk)}$ is homotopic to the identity on $C_\ve(\bs, \bk)$ via $h_\ve^{(\bs, \bk)}$ and $\pi_\ve^{(\bs, \bk)} \circ i_\ve^{(\bs, \bk)}$ is equal to the identity on $H_*(C_\ve(\bs, \bk))$.  Define
% and $\pi_\ve^{(\bs, \bk)} \circ i_\ve^{(\bs, \bk)}$ is homotopic to  via $h_\ve^{'(\bs, \bk)}.$
\[
\pi_\ve = \prod_{(\bs,\bk)\in \tilde{\bH}(L)} \pi_\ve^{(\bs, \bk)}, \qquad i_\ve = \prod_{(\bs,\bk)\in \tilde{\bH}(L)} i_\ve^{(\bs, \bk)}, \qquad h_\ve = \prod_{(\bs,\bk)\in \tilde{\bH}(L)} h_\ve^{(\bs, \bk)}. 
\]
The maps $\pi_\ve, i_\ve$ and $h_\ve$ satisfy the hypotheses of Proposition~\ref{prop: yajing_two_component}, and hence  $C_{\Lambda}(L)$ is homotopy equivalent to a hypercube $(H_*(C_\ve(\bs, \bk)), D'_{\ve,\ve'}).$ It remains to verify that  $D'_{\ve,\ve'}$ agrees with the differential defined on $H_\Lambda(L)$.
 
When $|\ve-\ve'|=1$, denote by $M$ the component of $L$ with indicator function $\ve'-\ve$. Then we have $D'_{\ve,\ve'} =\pi_{\ve'} \circ D_{\ve,\ve'}\circ i_{\ve} = \pi_{\ve'} \circ (\tilde{\phi}^{{M}}_{\varepsilon} + \tilde{\phi}^{-{M}}_{\varepsilon}) \circ i_{\ve} = \tilde{\Phi}^{{M}}_{\varepsilon} + \tilde{\Phi}^{-{M}}_{\varepsilon}$.

We claim that $D'_{00,11}$ changes the $\Z/2\Z$ homological grading by $1$.
Indeed,
the maps $i_\ve$ and $\pi_\ve$  both  preserve the homological grading, and $h_\ve$ changes it by $1$.  The maps $\tilde{\phi}^{\vec{M}}_{\varepsilon}$ are defined to be the same as ${\phi}^{\vec{M}}_{\varepsilon}$, which have homological gradings $|M|-1$, see for example \cite[Lemma 8.12]{mo_linksurgery}. %\ds{I don't think we need the next sentence. We only need the gradings of the $C_e$ right?}By \cite[Section 9.3]{mo_linksurgery} the integral link surgery hypercube admits a $\Z/2\Z$ grading which agrees with the $\Z/2\Z$ homological grading on $C_\ve$. 
According to the description of $D'_{00,11}$ in Proposition \ref{prop: yajing_two_component},  inside the parentheses,  the first term $D_{00,11}$ is a sum of $\tilde{\phi}^{\vec{L}}_{\varepsilon}$ which
 changes $\Z/2\Z$ homological grading by $1$ since $L$ has two components; each of the other two terms is a composition of   ${\phi}^{\vec{M}}_{\varepsilon}$ with $|M|=1$, which preserves the grading and $h_\ve,$ which changes it by $1.$ Therefore $D'_{00,11}$ changes the $\Z/2\Z$ homological grading by $1$.

Since $H_*(C_\ve)$ is supported in even homological grading by Remark~\ref{rmk: even hom degrees}, it follows that $D'_{00,11} = 0.$ This completes the proof.
\end{proof}

\section{Very good points and non $L$-space surgeries on $L_{n,k}$}\label{section: very good points and non L space surgeries}
In this section, we combine  Gorsky-N{\'e}methi's notion of ``very good point''   in \cite{GorNem18}, arguments in \cite{BeibeiLiu21} and the rational link surgery formula to obtain Lemma \ref{lemma: very_good_point}, which provides an obstruction to $L$-space surgeries on two-component $L$-space links in terms of the $H$-function.  Applying it to $L_{n,k},$ we obtain the following result. 
\begin{prop}
 \label{prop: L_n_k_non_L_space}
     For any $n,k \geq 1$,  $S^3_{d_1,d_2}(L_{n,k})$ is not an $L$-space for any $(d_1,d_2)\in (n+k-3,n+k-1)\times \Q.$
\end{prop}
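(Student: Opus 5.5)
The strategy is to use the simplified model of Theorem~\ref{thm: intro_formal_link_H_Lambda} together with a ``very good point'' obstruction in the spirit of \cite{GorNem18,BeibeiLiu21}. Concretely, I will first prove a lemma (the forthcoming Lemma~\ref{lemma: very_good_point}) of the following shape. Let $L$ be a two-component $L$-space link, and let $\bs=(s_1,s_2)\in\bH(L)$ be a lattice point at which the $H$-function is ``very good''. By this I mean that
\[
H_L(s_1,s_2)=H_L(s_1-1,s_2)=H_L(s_1,s_2-1)=H_L(s_1,\infty)=H_L(\infty,s_2).
\]
Equivalently, all four maps $\Phi^{\pm K_i}_{00}$ out of the vertex $C_{00}(\bs)$ are identities (powers $U^0$) in the appropriate directions. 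Then for a suitable range of framings, the Spin$^c$ summand of $H_\Lambda(L)$ containing $(\bs,\bk)$ has homology of rank at least two after setting $U=0$. In the rational model, the relevant range is determined by where $f(\bs,\bk)\pm\tilde\Lambda_j$ lands. Since $H_\Lambda(L)\simeq C_\Lambda(L)$, the surgery is then not an $L$-space. The proof of the lemma follows \cite{BeibeiLiu21}. One works in the summand $H_\Lambda(L,\bt)$, reduced mod $U$, and exhibits an explicit cycle at $C_{00}(\bs,\bk)$ that is not a boundary. One also counts ranks via an Euler characteristic argument: the rank of $\widehat{\HF}$ in $\bt$ is at least $|\chi|$ plus twice the number of extra cycles. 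The rational framing enters only through which lattice points $(\bs',\bk')$ in $\tilde\bH(L)$ are joined to $(\bs,\bk)$ by the maps $\tilde\Phi^{\pm K_i}$. For $d_i\in(a,a+1)$ with $a$ an integer, the shifts of $f$ lie strictly between integer ones, which is what produces an open interval of bad slopes.

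Second, I compute the $H$-function of $L_{n,k}$. I will use Proposition~\ref{prop: Alex poly of L_{n,k}} for $\Delta_{n,k}$, the fact that both components are unknots (so $H_{K_i}(s)=\max(0,-s)$), and Proposition~\ref{prop:GNH-function}. The lattice-path description in Hoste's algorithm makes $\chi(\HFL^-)$ explicit: it is supported on a staircase region with entries $\pm1$. Summing over the upper-right quadrant then gives $H_{L_{n,k}}$ as a piecewise function, essentially the unknot-pair value minus a correction counting lattice points of the staircase. The linking number is $l=n+k$ (from $b(4nk+2n+2k,-2k-1)$ and the diagram), which fixes $\bH(L)=(\Z+\tfrac{l}{2})^2$.

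Third, I locate very good points. I expect a point near $\bs=(\tfrac{l}{2}-1,\tfrac{l}{2}-1)$, or its symmetric counterpart, to be very good for $L_{n,k}$. The distances between the surgery shifts $\tilde\Lambda$ and this point should then translate into exactly the condition $d_1\in(n+k-3,n+k-1)$, with $d_2$ arbitrary, because the $K_2$-direction maps are isomorphisms on the relevant column. I would check the endpoints against Theorem~\ref{thm: L-space links}: at $d_1=n+k-1$ the obstruction must fail.

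The main obstacle is the first step in the rational setting. One must track, for $d_1=p_1/q_1$, which copies $(\bs,\bk)$ lie in the same Spin$^c$ summand and how the $q_1$ copies chain together via $\tilde\Phi^{-K_1}$ with the $T^{\lfloor (k+p)/q\rfloor}$ shifts. One must then show that the extra cycle at the very good point survives uniformly for every $d_1$ in the open interval, and for every $d_2\in\Q$, including negative $d_2$ and the case where the framing matrix is degenerate, which is excluded by $b_1=0$. The first thing I would try is to reduce to a finite ``truncated'' complex, using that $\Phi$ maps are isomorphisms far from the origin, as in \cite{BeibeiLiu21}, and then do the rank count there.
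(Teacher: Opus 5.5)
There is a genuine gap, and it is in your key lemma: the condition you impose at $\bs$ is the opposite of the one that produces an obstruction. If all four maps $\Phi^{\pm K_i}_{00}$ out of $H_*(C_{00}(\bs,\bk))$ are $U^0$, then in $H_\Lambda(L)/U$ the generator at that vertex has nonzero differential, since it hits generators in $C_{10}$ and $C_{01}$. So it is not even a cycle.

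What works is the reverse. Suppose $H_L(\bs_0)>H_L(s_1,\infty)$ and $H_L(\bs_0)>H_L(\infty,s_2)$, and the same strict inequalities hold at $-\bs_0$; this is a very good point in the sense of Gorsky--N\'emethi. Then the four outgoing maps at $(\bs_0,\bk)$ and at $(-\bs_0,\bk')$ are positive powers of $U$, so they vanish mod $U$. Since no map enters the $\ve=(0,0)$ vertex, each of these copies of $\F$ is a direct summand of $H_\Lambda(L)/U$.

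Your proposal also lacks the mechanism that puts two such summands in the same Spin$^c$ structure. If $2\bs_0=(r,l)$ with $l=\operatorname{lk}$, take $k_2=k_2'$ and $k_1-k_1'=p_1-q_1r$. Then $f(\bs_0,\bk)-f(-\bs_0,\bk')=(p_1,q_2l)=\tilde\Lambda_1$. Such $k_1,k_1'\in\{0,\dots,q_1-1\}$ exist exactly when $|p_1/q_1-r|<1$. This is where the window $(r-1,r+1)$ of length two comes from, and it contains the integer $r$, so your ``strictly between integer shifts'' heuristic is not the mechanism. It is also why $d_2$ is unrestricted. No truncation and no tracking of the $T^{\lfloor (k+p)/q\rfloor}$ chains is needed.

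Your input data for $L_{n,k}$ is also wrong. The linking number is $n-k$, not $n+k$: for instance $\Delta_{n,k}(1,1)=(k+1)n-k(n+1)=n-k$, whereas $n+k$ is the linking number of $L'_{n,k}$. Hence $H(s,\infty)=H(\infty,s)=H_U(s-\tfrac{n-k}{2})$.

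The point that works is $\bs_0=(\tfrac{n+k}{2}-1,\tfrac{n-k}{2})$:
\begin{itemize}
\item The only term of $\tilde\Delta_{n,k}$ in the quadrant $\bs'\ge\bs_0+\mathbf 1$ is $-x^{\frac{n+k}{2}}y^{\frac{n-k}{2}+1}$. So $H(\bs_0)=1>0=H(s_1,\infty)=H(\infty,s_2)$.
\item The symmetry $H(-\bs)=H(\bs)+s_1+s_2$ gives $H(-\bs_0)=n$. This exceeds $H(-s_1,\infty)=n-1$ and $H(\infty,-s_2)=\max\{0,n-k\}$.
\item Since $2\bs_0=(n+k-2,\,n-k)$, this yields exactly $d_1\in(n+k-3,n+k-1)$.
\end{itemize}

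Your candidate $(\tfrac{n+k}{2}-1,\tfrac{n+k}{2}-1)$ coincides with $\bs_0$ only when $k=1$. For $k\ge 2$ it is not good: no term of $\tilde\Delta_{n,k}$ lies at or above $(\tfrac{n+k}{2},\tfrac{n+k}{2})$, so $H$ vanishes there. In any case its second coordinate does not satisfy $2s_2=\operatorname{lk}$, which is what makes the argument independent of $d_2$.
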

This result is used later in the proof of Theorem \ref{thm: L-space links}, where in order to classify $L$-space surgeries on $L_{n,k}$, we need to  obstruct certain rational surgeries from being $L$-space.

We first recall the rational link surgery formula using the simplified model described in Section \ref{subsec: simplified model}, restricted to the case of a two-component $L$-space link $L$. 
Fix any surgery coefficients $\Lambda = (p_1/q_1,p_2/q_2)$ where $p_i\in\Z, q_i\in \Z_{>0}$ and with $p_i, q_i$ coprime for $i=1,2$. Then, by Theorems \ref{thm: rational_surgery_chain} and \ref{thm: intro_formal_link_H_Lambda},  we have $\widehat{\HF} (S^3_{\Lambda}(L)) \cong H_*(H_{\Lambda}(L)/U)$. Here $H_{\Lambda}(L)$ is the simplified surgery complex, with the underlying space given by 
\[
H_{\Lambda}(L) = \bigoplus_{ \ve \in \bE_2} \prod_{(\bs,\bk)\in \tilde{\bH}(L)} H_*(C_{\ve}(\bs,\bk)),
\]
where  
 $H_*(C_{\ve}(\bs,\bk)) \cong \F\llbracket U\rrbracket$ for any $(\bs,\bk)\in \tilde{\bH}(L)$ and $ \ve \in \bE_2$ since $L$ is an $L$-space link.  Recall that the differential is the sum of 
\[
\tilde{\Phi}^{\vec{M}}_{\varepsilon}: C_\ve(\bs,\bk) \rightarrow C_{\ve'}(\bs',\bk' )
\]
over all oriented sublinks $\vec{M}\subset L$ with $|M|=1$, and where
where  $f(\bs',\bk') = f(\bs,\bk) + \tilde{\Lambda}_{L,\vec{M}}$ under the identification \eqref{eq: identification_tilde_H}. By definition, we have $\tilde{\Phi}^{\vec{M}}_{\varepsilon} = \Phi^{\vec{M}}_{\varepsilon}$ as endomorphism of $\F\llbracket U\rrbracket$. All the non-trivial $\Phi^{\vec{M}}_{\varepsilon}$ are described by Lemma \ref{lemma: two_component_$L$-space_H_Lambda_maps}.
\subsection{Very good points}
We recall  Gorsky-N{\'e}methi's definition of a very good point.
\begin{defn}[\cite{GorNem18}]
Let $L$ be a two-component with $H$-function $H_L(\bdve{s})$, where $ \bdve{s}\in\bH(L).$
    A point $(s_1,s_2) \in \bH(L)$ is called \emph{good} if $H_L(s_1,s_2)>H_L(s_1,\infty)$ and $H_L(s_1,s_2)>H_L(\infty, s_2)$. A point $(s_1,s_2)$ is called \emph{very good} if both $(s_1,s_2)$ and $(-s_1,-s_2)$ are good.
\end{defn}
The following is our main technical lemma. The proof uses  arguments similar to those in \cite{BeibeiLiu21}; see, for example, the proof of \cite[Proposition 4.12]{BeibeiLiu21}.
\begin{lem} \label{lemma: very_good_point}
Let $L=K_1\cup K_2$ be a two-component $L$-space link with $l=\operatorname{lk}(K_1,K_2)$.
    If for some $r\in \Z$ the following are satisfied:
    \begin{enumerate}
    \item \label{it: very_good_v_1} there is a very good point $\bs_0 \in \bH(L)$;
    \item \label{it: very_good_v_2}  $2\bs_0 =(r, l)$,
    \end{enumerate}
    then the $(d_1,d_2)$-surgery on $L$ is not an $L$-space for any $(d_1,d_2)
     \in (r-1,r+1) \times \Q$.
\end{lem}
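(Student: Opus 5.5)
The plan is to exhibit, for each surgery $\Lambda=(d_1,d_2)=(p_1/q_1,p_2/q_2)$ with $d_1\in(r-1,r+1)$, a spin$^c$ structure $\bdve{t}$ in which $H_*(H_\Lambda(L,\bdve{t})/U)$ has dimension at least $2$. By Theorems~\ref{thm: rational_surgery_chain} and~\ref{thm: intro_formal_link_H_Lambda}, this gives $\dim\widehat{\HF}(S^3_\Lambda(L),\bdve{t})\geq 2$. First I dispose of the degenerate case: if $\tilde\Lambda$ is singular, then $S^3_\Lambda(L)$ is not a rational homology sphere and there is nothing to prove, so I assume $\det\tilde\Lambda\neq 0$.

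The key step is local. Let $\bs_0$ be the very good point, and look at the generators of $H_\Lambda(L)/U$ lying over lattice points $(\bs,\bk)$ whose $\bs$-coordinate is $\bs_0$ or $-\bs_0=\bs_0-(r,l)$. The central observation is this. Since $2\bs_0=(r,l)$, the vector $(-\bs_0)-\bs_0=-(r,l)$ is, in the $\bs$-coordinates, exactly the shift produced by $\tilde\Lambda_{L,-K_1}$ when the first surgery coefficient equals $r$. For rational $d_1$ with $|d_1-r|<1$, the map $f$ of Definition~\ref{def: tildeH} shows that $\Phi^{-K_1}$ starting from a $00$-generator at $(\bs_0,\bk)$ lands at $s$-coordinate $\lfloor (q_1 s_{0,1}+k_1-p_1)/q_1\rfloor$-type values. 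For a suitable choice of $k_1$ (using $r-1<p_1/q_1<r+1$), this is $s_1=-s_{0,1}$ or differs from it by an amount that does not affect the relevant $H$-values. I would choose $\bk$ so that this works for $K_1$, and then handle $K_2$ analogously using $2s_{0,2}=l$ together with the $\operatorname{lk}$-term in $\tilde\Lambda$. The real constraint is on $d_1$; the second coordinate is free because $s_{0,2}=l/2$ is the center of symmetry of the $K_2$-maps.

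Next I apply Lemma~\ref{lemma: two_component_$L$-space_H_Lambda_maps}. Goodness of $\bs_0$ says $\Phi^{K_1}_{00},\Phi^{K_2}_{00}$ at $\bs_0$ are positive powers of $U$, hence they vanish mod $U$. Goodness of $-\bs_0$ makes $\Phi^{-K_1}_{00},\Phi^{-K_2}_{00}$ at $\bs_0$, which use $H_L(-\bs_0)$, also vanish mod $U$. So the $00$-generator $x$ at $(\bs_0,\bk)$ is a cycle in $H_\Lambda(L,\bdve{t})/U$. Following the argument of \cite[Proposition~4.12]{BeibeiLiu21}, I would then show that $x$ is not a boundary, since $00$-generators receive no differential. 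I would also show that $x$ is not homologous to the generator that survives from large-$\bs$ regions. To do this, I produce a second independent class, for instance by a grading or Euler-characteristic count: the Euler characteristic of each spin$^c$ summand is $1$, so one extra cycle in even degree forces a compensating class and hence total rank $\geq 3>1$. Concretely, I would argue that the truncated complex (as in Manolescu--Ozsv\'ath) has Euler characteristic $1$ while $x$ contributes an extra homology class.

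I expect the main obstacle to be the bookkeeping in the rational lattice $\tilde{\bH}(L)$. One must check that for every $d_1\in(r-1,r+1)$ there is some $\bk$ for which both $\Phi^{\pm K_i}$ out of $(\bs_0,\bk)$ only involve the $H$-values at $\pm\bs_0$, or at points where the inequalities persist by monotonicity of $H$. I would first treat $q_1=1$, i.e. $d_1=r$, and then use the floor-function description of the $\tau$-action in ${}_{\cK}\cD_{p/q}$ to reduce the general case to it.
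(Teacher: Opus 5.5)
\textbf{The gap: the second class.} Your local analysis is correct. Since $\bs_0$ and $-\bs_0$ are both good, all four exponents $H_L(\pm\bs_0)-H_L(\infty,\pm s_{0,2})$ and $H_L(\pm\bs_0)-H_L(\pm s_{0,1},\infty)$ are positive. Since no edge of $H_\Lambda(L)$ enters a $00$-vertex, the $00$-generator $x$ at $(\bs_0,\bk)$ spans an $\F$ direct summand of $H_\Lambda(L)/U$. The gap is the second class, because your Euler characteristic count does not supply one. A single even direct summand $\F\langle x\rangle$ is entirely consistent with $\chi(\widehat{\HF}(S^3_\Lambda(L),\bdve{t}))=1$ and total rank $1$; that is exactly what an $L$-space looks like. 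So the step ``one extra even cycle forces a compensating class'' presupposes an even class independent of $x$, namely your ``generator that survives from large-$\bs$ regions''. You never construct that class, and you give no means of separating it from $x$. As written, nothing excludes $\widehat{\HF}(S^3_\Lambda(L),\bdve{t})=\F\langle x\rangle$.

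\textbf{The fix: use the generator at $-\bs_0$.} The missing idea is the other half of the very good pair, which your coset observation already points to. The same four exponents are positive at $-\bs_0$, so for every $\bk'$ the $00$-generator at $(-\bs_0,\bk')$ is also an $\F$ direct summand. All that remains is to place it in the same spin$^c$ structure as $(\bs_0,\bk)$.
\begin{itemize}
\item Take $k_2'=k_2$. Then $f(\bs_0,\bk)-f(-\bs_0,\bk')=(q_1r+k_1-k_1',\,q_2l)$.
\item Choose $k_1-k_1'=p_1-q_1r$. This is possible precisely because $p_1/q_1\in(r-1,r+1)$ means $|p_1-q_1r|\le q_1-1$.
\item With this choice the difference equals $\tilde\Lambda_1=(p_1,q_2l)$, so both summands lie in $C_\Lambda(L,\bdve{t})$ for a single $\bdve{t}$.
\item They are distinct unless $\bs_0=0$ and $p_1=0$, which is your degenerate case $\det\tilde\Lambda=0$.
\end{itemize}
This gives $\dim\widehat{\HF}(S^3_\Lambda(L),\bdve{t})\geq 2$ directly, with no Euler characteristic needed.

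\textbf{The obstacle you anticipated does not exist.} The map $\tilde\Phi^{\vec M}_\ve$ out of $(\bs,\bk)$ is just $\Phi^{\vec M}_\ve$ at $\bs$, independent of $\bk$ and of $\Lambda$. So the vanishing mod $U$ holds for every $\bk$. The hypothesis on $d_1$ is used only in the coset computation above, not to control which $H$-values appear, and no reduction to $q_1=1$ is needed. Note also a sign slip: $\tilde\Lambda_{L,-K_1}=\tilde\Lambda_1$ is added, so the $-K_1$ edge runs from $(-\bs_0,\bk')$ to $(\bs_0,\bk)$, not from $\bs_0$ toward $-\bs_0$.
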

\begin{proof}
This is proved by considering the simplified complex $H_{\Lambda}(L)/U$.
For $i=1,2$, write $d_i = p_i/q_i$, where $p_i \in \Z, q_i\in \Z_{>0}$ and $p_i,q_i$ coprime, and assume $\frac{p_1}{q_1}\in (r-1,r+1)$.

By the definition of very good point and Lemma \ref{lemma: two_component_$L$-space_H_Lambda_maps}, the maps $\Phi^{\vec{M}}_{\varepsilon}$ at $H_{(0,0)}(\bs_0,\bk)$ and $H_{(0,0)}(-\bs_0,\bk)$ both have positive $U$-powers for any  $\vec{M}\in \{\pm K_1,\pm K_2\}$ and any $\bk = (k_1,k_2) \in \Z/q_1\Z \oplus \Z/q_2\Z$\footnote{Here with a slight abuse of notations, we are identifying integers in $\{0,\dots, q_i-1\}$ with their classes in \(\Z/q_1\Z\).}. Therefore the corresponding maps in the hat version, which are obtained by quotienting by $U$, are all zero. Thus  $H_{(0,0)}(\bs_0,\bk)$ and $H_{(0,0)}(-\bs_0,\bk)$ each splits as an $\F$ summand in $H_{\Lambda}(L)/U$.

Choose $k_1$ and $k'_1$ such that $k_1 - k'_1 = p_1 - q_1 r$. Observe that this is possible because $p_1/q_1 \in (r-1,r+1)$ whereas  $k_1-k'_1 $ attains every value in $  \{-q_1 + 1, \dots,q_1-1 \}$. Fix $k_2 = k'_2$ arbitrarily. If we set $\bk=(k_1,k_2)$ and $\bk'=(k'_1,k'_2)$, then by \eqref{eq: identification_tilde_H} we have
\begin{align*}
    f(\bs_0,\bk) - f(-\bs_0,\bk') = (q_1 r + k_1 - k'_1,q_2 l) = (p_1, q_2 l).
\end{align*}
 In particular, $(\bs_0,\bk)$ and  $(-\bs_0,\bk')$
are in the same coset of $\tilde{\bH}(L)/H(\tilde{\Lambda})$. This implies that if we denote the corresponding spin$^c$ structure by $\bdve{t}\in $ Spin$^c(S^3_{d_1,d_2}(L))$, then $\widehat{\HF}(S^3_{d_1,d_2}(L),\bdve{t})$ is at least two dimensional. Therefore $S^3_{d_1,d_2}(L)$ is not an $L$-space.
\end{proof}
\subsection{The $H$-function of $L_{n,k}$} \label{subsec: H_function_L_n_k}
To apply Lemma \ref{lemma: very_good_point}, we need to compute the $H$-function of $L_{n,k}.$ 
We recall from
Proposition \ref{prop: Alex poly of L_{n,k}} that
for $n,k\geq 1$, the multivariable Alexander polynomial $\Delta_{n,k}$ of $L_{n,k}$ is
\[
\Delta_{n,k}\doteq\big{(}\sum_{i=0}^{k}x^{i}y^{k-i}\big{)}r_{n-1} -\big{(}\sum_{i=0}^{k-1}x^{i}y^{k-i-1}\big{)}r_n, 
\]
where $r_n=1+xy+\cdots+ x^ny^n$. The highest power for both $x$ and $y$ is $n+k-1$ and the lowest power for both $x$ and $y$ is $0$. Therefore the symmetrised Alexander polynomial is 
\[
\Delta_{n,k}=(xy)^{-\frac{n+k-1}{2}}\Big(\big{(}\sum_{i=0}^{k}x^{i}y^{k-i}\big{)}r_{n-1} -\big{(}\sum_{i=0}^{k-1}x^{i}y^{k-i-1}\big{)}r_n\Big), 
\]
up to a $-$ sign. We may sketch $\Delta_{n,k}$ in the $(i,j)$-plane, where each term $x^i y^j$ corresponds to a dot in the $(i,j)$-coordinates,  as depicted in Figure \ref{fig: H_function_L_n_k}.
\begin{figure}[hbtp!]
    \centering
  \begin{tikzpicture}[scale=1]
  \begin{scope}
      \draw [very thick,black!20!white] (-2.5,-2.5)--(-2.5,2.5);
      \draw [very thick,black!20!white] (-2.5,-2.5)--(2.5,-2.5);
      \draw [very thick,black!20!white] (2.5,-2.5)--(2.5,2.5);
      \draw [very thick,black!20!white] (-2.5,2.5)--(2.5,2.5);
       \draw [->,thick,black!40!white] (-3,0)--(3,0);
       \draw [->,thick,black!40!white] (0,-2.6)--(0,3);
  \end{scope}
  \filldraw [blue] (2.5,1) circle (2pt) node[]  {};
   \node [right] at (2.5,1) {{\small $(\frac{n+k-1}{2},\frac{n-k+1}{2})$}};
     \filldraw [red] (2.5,0.5) circle (2pt)  node[]  {};
       \node [right] at (2.5,0.5) {{\small $(\frac{n+k-1}{2},\frac{n-k-1}{2})$}};
     \filldraw [blue] (1,2.5) circle (2pt) node[]  {};
     \node [above] at (2,2.6) {{\small $(\frac{n-k+1}{2},\frac{n+k-1}{2})$}};
     \node [above] at (-0.4,2.9) {{\small $(\frac{n-k-1}{2},\frac{n+k-1}{2})$}};
     \filldraw [red] (0.5,2.5) circle (2pt)  node[]  {};
     \filldraw [blue] (2,1.5) circle (2pt) node[]  {};
      \filldraw [red] (2,1) circle (2pt)  node[]  {};
     \filldraw [red] (1,2) circle (2pt)  node[]  {};
 \draw [<-,black!50!white] (0.5,2.6)--(0.2,3);
 
     \node[rotate=-45] at (1.5,1.5) {$\cdots$};
\node[rotate=-45] at (1.5,2) {$\cdots$};
\draw[color=blue,decorate,decoration={brace,amplitude=5pt,raise=6pt}]
  (1,2.5) -- (2.5,1)  
  node[midway,sloped,above= 10 pt,color=blue] {$k$};
\draw[color=red,decorate,decoration={brace,amplitude=5pt,raise=6pt}]
  (-2.5,-0.5) -- (0.5,2.5)  
  node[midway,sloped,above= 10 pt,color=red] {$n$};
  
\filldraw [blue] (2,0.5) circle (2pt) node[]  {};
     \filldraw [red] (2,0) circle (2pt)  node[]  {};
        \filldraw [red] (0.5,1.5) circle (2pt)  node[]  {};
     \filldraw [blue] (0.5,2) circle (2pt) node[]  {};
     \filldraw [red] (0,2) circle (2pt)  node[]  {};
  \node[rotate=-45] at (1.5,0.5) {$\cdots$};
\node[rotate=-45] at (1.5,1) {$\cdots$};
 \node[rotate=45] at (-0.5,1.5) {$\cdots$};
  \node[rotate=45] at (-1.5,0.5) {$\cdots$};
         \node [left] at (-2.5,-0.5) {{\small $(-\frac{n+k-1}{2},-\frac{n-k-1}{2})$}};
     \node [left] at (-2.5,-1) {{\small $(-\frac{n+k-1}{2},-\frac{n-k+1}{2})$}};
       % \draw [<-,black!50!white] (-0.5,-2.6)--(-0.5,-2.9);
 \node  at (0.6,-2.9) {{\small $(-\frac{n-k-1}{2},-\frac{n+k-1}{2})$}};
 \node [left] at (-1,-2.8) {{\small $(-\frac{n-k+1}{2},-\frac{n+k-1}{2})$}};

     \filldraw [blue] (-2.5,-1) circle (2pt) node[]  {};
     \filldraw [red] (-2.5,-0.5) circle (2pt)  node[]  {};
     \filldraw [blue] (-1,-2.5) circle (2pt) node[]  {};
     \filldraw [red] (-0.5,-2.5) circle (2pt)  node[]  {};
     \filldraw [blue] (-2,-1.5) circle (2pt) node[]  {};
      \filldraw [red] (-2,-1) circle (2pt)  node[]  {};
     \filldraw [red] (-1,-2) circle (2pt)  node[]  {};

    \node[rotate=-45] at (-1.5,-1.5) {$\cdots$};
    \node[rotate=-45] at (-1.5,-2) {$\cdots$};
 \node[rotate=45] at (0.5,-1.5) {$\cdots$};
  \node[rotate=45] at (1.5,-0.5) {$\cdots$};

  \end{tikzpicture}
    \caption{The symmetrised Alexander polynomial $\Delta_{n,k}$. Each blue dot (resp.~red dot) in coordinates $(i,j)$ represents a term $-x^iy^j$ (resp.~$x^iy^j$) in $\Delta_{n,k}$. The diagram depicts the case when  $n-k=3$. }
    \label{fig: H_function_L_n_k}
\end{figure}
Indeed, the terms 
\[
-(xy)^{-\frac{n+k-1}{2}}\left(\sum_{i=0}^{k-1}x^{i}y^{k-i-1}\right)
\]
have coordinates $(-\frac{n+k-1}{2},-\frac{n-k+1}{2}),\cdots,(-\frac{n-k+1}{2}, -\frac{n+k-1}{2})$, where from one term to the next the coordinates change by $(1,-1)$. They correspond to the $k$ blue dots in the bottom left of the diagram. Multiplying by $r_n$ changes the coordinates by $(i,i)$ for $i=0,\dots,n$, which corresponds $n+1$ copies of the diagonal sequence of $k$ blue dots in total, each shifted by $(1,1)$ in coordinates.   

Similarly, the terms 
\[
(xy)^{-\frac{n+k-1}{2}}\left(\sum_{i=0}^{k}x^{i}y^{k-i}\right)
\]
have coordinates $(-\frac{n+k-1}{2},-\frac{n-k-1}{2}),\cdots,(-\frac{n-k-1}{2}, -\frac{n+k-1}{2})$ with coordinates change $(1,-1)$ between adjacent terms, which correspond to the $k+1$ red dots in the bottom left of the diagram. Multiplying by $r_{n-1}$ changes the coordinates by $(i,i)$ for $i=0,\cdots,n-1$, corresponding to $n$ copies of the diagonal sequence of $k+1$ red dots, each shifted by $(1,1)$ in coordinates. 

The full $H$-function $H_{L_{n,k}}$ may be computed using the above description. However, for our application, we only need the following lemma. For simplicity, we suppress the index from $H_{L_{n,k}}$.  
\begin{figure}\captionsetup{width=\textwidth}
\begin{subfigure}{0.45\textwidth}
  \centering
  \begin{tikzpicture}[scale=1]
  \begin{scope}
          \draw [black!0!white] (0,-3.3)--(0,3.3);
       \draw [->,thick,black!40!white] (-3.5,0)--(3.5,0);
       \draw [->,thick,black!40!white] (0,-3.5)--(0,3.5);
  \end{scope}
   \foreach \i in {0,...,4}
   \foreach \j in {0,1,2}
{ 
\filldraw [blue] ({-3+\j+\i},{-1-\j+\i}) circle (2pt) node[]  {};
}
  \foreach \i in {0,...,3}
   \foreach \j in {0,...,3}
{ 
\filldraw [red] ({-3+\j+\i},{-\j+\i}) circle (2pt) node[]  {};
}
  \end{tikzpicture}
  \end{subfigure}
\hfill
\begin{subfigure}{0.45\textwidth}
 \begin{tikzpicture}[scale=0.8]
 \fill[gray!5!white] 
    (2.8,0.6) -- (2.8,-0.6) -- (-0.4,-3.8) -- (-1.6,-3.8) -- (-3.8,-1.6) -- (-3.8,-0.4) -- (-0.6,2.8) -- (0.6,2.8) -- cycle;
 \begin{scope}
     \draw[->,thick, black!40!white]  (-4, 0) -- (4, 0);
     \draw[->,thick,black!40!white]  (0, -4.2) -- (0, 4.2);
 \end{scope}
 \node () at (-3.5,3.5) {{$4$}};
 \node () at (-2.5,3.5) {{$3$}};
 \node () at (-1.5,3.5) {{$2$}};
    \node () at (-0.5,3.5) {{$1$}};
 \node () at (0.5,3.5) {{$0$}};
 \node () at (1.5,3.5) {{$0$}};
 \node () at (2.5,3.5) {{$0$}};
    \node () at (3.5,3.5) {{$0$}};

 \node () at (-3.5,2.5) {{$4$}};
 \node () at (-2.5,2.5) {{$3$}};
 \node () at (-1.5,2.5) {{$2$}};
    \node () at (-0.5,2.5) {{$1$}};
     \node () [brown] at (0.5,2.5) {{$1$}};
 \node () at (1.5,2.5) {{$0$}};
 \node () at (2.5,2.5) {{$0$}};
    \node () at (3.5,2.5) {{$0$}};

     \node () at (-3.5,1.5) {{$4$}};
 \node () at (-2.5,1.5) {{$3$}};
 \node () at (-1.5,1.5) {{$2$}};
    \node ()[brown] at (-0.5,1.5) {{$2$}};
     \node () [brown] at (0.5,1.5) {{$1$}};
 \node () [brown] at (1.5,1.5) {{$1$}};
 \node () at (2.5,1.5) {{$0$}};
    \node () at (3.5,1.5) {{$0$}};

         \node () at (-3.5,0.5) {{$4$}};
 \node () at (-2.5,0.5) {{$3$}};
 \node ()[brown] at (-1.5,0.5) {{$3$}};
    \node ()[brown] at (-0.5,0.5) {{$2$}};
     \node ()[orange] at (0.5,0.5) {{$2$}};
 \node ()[brown] at (1.5,0.5) {{$1$}};
 \node ()[brown] at (2.5,0.5) {{$1$}};
    \node () at (3.5,0.5) {{$0$}};

 \node ()[brown] at (-3.5,-0.5) {{$4$}};
 \node () at (-2.5,-0.5) {{$4$}};
 \node () at (-1.5,-0.5) {{$3$}};
    \node ()[brown] at (-0.5,-0.5) {{$3$}};
     \node ()[brown] at (0.5,-0.5) {{$2$}};
 \node ()[brown] at (1.5,-0.5) {{$2$}};
 \node () at (2.5,-0.5) {{$1$}};
    \node () at (3.5,-0.5) {{$1$}};

     \node ()[brown] at (-3.5,-1.5) {{$5$}};
 \node ()[brown] at (-2.5,-1.5) {{$4$}};
 \node () at (-1.5,-1.5) {{$4$}};
    \node () at (-0.5,-1.5) {{$3$}};
     \node ()[brown] at (0.5,-1.5) {{$3$}};
 \node () at (1.5,-1.5) {{$2$}};
 \node () at (2.5,-1.5) {{$2$}};
    \node () at (3.5,-1.5) {{$2$}};

       \node ()[brown] at (-3.5,-2.5) {{$6$}};
 \node ()[brown] at (-2.5,-2.5) {{$5$}};
 \node ()[brown] at (-1.5,-2.5) {{$4$}};
    \node () at (-0.5,-2.5) {{$4$}};
     \node () at (0.5,-2.5) {{$3$}};
 \node () at (1.5,-2.5) {{$3$}};
 \node () at (2.5,-2.5) {{$3$}};
    \node () at (3.5,-2.5) {{$3$}};

 \node ()[brown] at (-3.5,-3.5) {{$7$}};
 \node ()[brown] at (-2.5,-3.5) {{$6$}};
 \node ()[brown] at (-1.5,-3.5) {{$5$}};
    \node ()[brown] at (-0.5,-3.5) {{$4$}};
     \node () at (0.5,-3.5) {{$4$}};
 \node () at (1.5,-3.5) {{$4$}};
 \node () at (2.5,-3.5) {{$4$}};
    \node () at (3.5,-3.5) {{$4$}};
    
    \draw [blue] (2.5,0.5) circle (8pt);
    \draw [blue] (-2.5,-0.5) circle (8pt);
 \end{tikzpicture}
 \end{subfigure}
    \caption{On the left, the symmetrised Alexander polynomial $\Delta_{4,3}$. On the right, the $H$-function of $L_{4,3}$. Values of $H(\bs)$ that differ from those of the unlink (after shifting) by $1$ are indicated in brown, while the one that differs by $2$ is  indicated in orange. The region of $\mathbf{s}$ for which $\mathbf{s} + \mathbf{1}$ corresponds to a lattice point in $\tilde{\Delta}_{4,3} = (xy)^{1/2}\Delta_{4,3}$ is shaded. A pair of very good points $\pm\bigl(\tfrac{5}{2}, \tfrac{1}{2}\bigr)$ is circled.}
    \label{fig: H_fun_L_4_3}
\end{figure}
\begin{lem}\label{lem: L_nk_very_good_point}
For any $n,k \geq 1$, we have that
    $\left(\frac{n+k}{2}-1,\frac{n-k}{2}\right) \in \bH(L_{n,k})$ is a very good point.
\end{lem}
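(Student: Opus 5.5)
The plan is to compute the relevant values of $H=H_{L_{n,k}}$ directly from Proposition~\ref{prop:GNH-function}, using the symmetrised Alexander polynomial $\Delta_{n,k}$ of Proposition~\ref{prop: Alex poly of L_{n,k}} and its lattice picture (Figure~\ref{fig: H_function_L_n_k}). Both components of $L_{n,k}$ are unknots, and the linking number is $l=n+k$; this can be read off, for instance, from the fact that the $x$- and $y$-degrees of $\Delta_{n,k}$ span $n+k-1$. Hence for $K_i$ we have $H_{K_i}(m)=\max(0,-m)$, and the reduction gives $H(s_1,\infty)=H_{K_1}(s_1-l/2)$ and $H(\infty,s_2)=H_{K_2}(s_2-l/2)$ by Lemma~\ref{lemma: two_component_$L$-space_H_Lambda_maps}.

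Write the formula as $H(\bs)=H_{K_1}(s_1-l/2)+H_{K_2}(s_2-l/2)-\sum_{\bs'\ge \bs+\bdve 1}\chi(\bs')$, where $\chi$ are the coefficients of $\tilde\Delta=(xy)^{1/2}\Delta_{n,k}$. Here the two knot terms form the unlink-like part, and the correction $h(\bs):=-\sum_{\bs'\geq \bs+\bdve{1}}\chi(\bs')$ is a count of red dots minus blue dots in the upper-right quadrant based at $\bs+\bdve 1-(\tfrac12,\tfrac12)$, in the coordinates of Figure~\ref{fig: H_function_L_n_k}. The sign should be fixed so that $H$ is non-increasing. The point $\bs_0=(\tfrac{n+k}{2}-1,\tfrac{n-k}{2})$ has $s_0+\bdve1-(\tfrac12,\tfrac12)=(\tfrac{n+k-1}{2},\tfrac{n-k+1}{2})$, which is exactly the top-right blue dot in the figure. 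The quadrant based there contains that single blue dot, so $h(\bs_0)=1$ (in the $L_{4,3}$ picture this is the circled point at $(\tfrac52,\tfrac12)$).

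Goodness of $\bs_0$ then amounts to checking $H(\bs_0)>H(s_{0,1},\infty)$ and $H(\bs_0)>H(\infty,s_{0,2})$. Since $s_{0,1}-l/2=-1$ and $s_{0,2}-l/2=-k$, we get $H(\bs_0)=1+k+1$ minus the appropriate constant. This compares with $H(s_{0,1},\infty)=1$ and $H(\infty,s_{0,2})=k$, and the correction term $h\ge 1$ gives strict inequality in both cases. For $-\bs_0$, I would use the symmetry $H(-\bs)=H(\bs)+s_1+s_2=H(\bs_0)+n-1$. The comparison values are $H(-s_{0,1},\infty)=H_{K_1}(1-n-k)=n+k-1$ and $H(\infty,-s_{0,2})=H_{K_2}(-n)=n$. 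Goodness therefore reduces to $H(\bs_0)>k$ and $H(\bs_0)>1$, which again holds provided $H(\bs_0)\ge k+1$.

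The main obstacle is the bookkeeping: getting the normalisation of $\tilde\Delta$, the overall sign, and the half-integer shifts exactly right, so that the quadrant count at $\bs_0+\bdve1$ really isolates one blue dot, and checking that no red dots lie weakly above and to the right of it. I expect the latter to follow from the maximal red $x$-coordinate $\tfrac{n+k-1}{2}$ being attained only at $y=\tfrac{n-k-1}{2}$. I would verify all of these conventions against the $L_{4,3}$ table in Figure~\ref{fig: H_fun_L_4_3} before writing the general count.
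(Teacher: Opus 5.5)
Your plan has the right shape: the unlink part plus the Alexander correction $h$, the single blue dot at $\bs_0+(\tfrac12,\tfrac12)=(\tfrac{n+k-1}{2},\tfrac{n-k+1}{2})$, and the symmetry for $-\bs_0$. However, the linking number you use is wrong, and this breaks the numerical part of the argument.

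\textbf{The linking number is $n-k$.} For $L_{n,k}$ one has $l=n-k$, not $n+k$. Setting $y=1$ in Proposition~\ref{prop: Alex poly of L_{n,k}} gives $\Delta_{n,k}(x,1)\doteq [k+1][n]-[k][n+1]$, where $[m]=\frac{x^m-1}{x-1}$. This simplifies to $\frac{x^{n}-x^{k}}{x-1}$, so by the Torres condition $|l|=|n-k|$. The degree span $n+k-1$ tells you nothing directly about the linking number. The value $n+k$ is the linking number of $L'_{n,k}$, not of $L_{n,k}$. The $L_{4,3}$ table in Figure~\ref{fig: H_fun_L_4_3} confirms this, since there $H(s_1,\infty)=H_U(s_1-\tfrac12)$.

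\textbf{Consequences for $\bs_0$.} With $l=n-k$, both knot terms at $\bs_0$ vanish, because $s_{0,1}-\tfrac l2=k-1\ge 0$ and $s_{0,2}-\tfrac l2=0$. Hence $H(\bs_0)=h(\bs_0)=1$, not $k+2$, and $H(s_{0,1},\infty)=H(\infty,s_{0,2})=0$. Your argument for goodness of $\bs_0$ still works, because $H(\bs)-H(s_1,\infty)=H_{K_2}(s_2-\tfrac l2)+h(\bs)\ge h(\bs)$ whatever $l$ is.

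\textbf{Consequences for $-\bs_0$.} Here the argument fails as written. Your comparison values $n+k-1$ and $n$ are wrong. The reduction you derive from them requires $H(\bs_0)>\max\{k,1\}$, which is false since $H(\bs_0)=1$. The correct values are:
\begin{itemize}
\item $H(-\bs_0)=H(\bs_0)+(n-1)=n$;
\item $H(-s_{0,1},\infty)=H_U(1-n)=n-1$;
\item $H(\infty,-s_{0,2})=H_U(k-n)=\max\{0,n-k\}$.
\end{itemize}
Since $k\ge 1$, we get $n>n-1\ge\max\{0,n-k\}$, so $-\bs_0$ is good. This is exactly the paper's computation. The lemma holds, but only after you redo the $-\bs_0$ comparison with the correct $l$.

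\textbf{Minor sign slip.} In the figure's convention blue dots are $-x^iy^j$, so $h=-\sum\chi$ counts blue minus red, not red minus blue as you wrote. With your stated sign, the single blue dot would give $h(\bs_0)=-1$.
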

\begin{proof}
    Both components of $L_{n,k}$ are unknots and the linking number  is $l=n-k$. Therefore, by Proposition \ref{prop:GNH-function}, the $H$-function is obtained from that of the unlink by first shifting by $\bigl(\tfrac{n-k}{2}, \tfrac{n-k}{2}\bigr)$ and then adding the contribution of $\tilde{\Delta}_{n,k} = (xy)^{1/2}\Delta_{n,k}$. See Figure \ref{fig: H_fun_L_4_3} for an example when $n=4$ and $k=3.$ 
    In particular, 
     we have that $H(s,\infty) = H(\infty,s)=H_U(s-\frac{n-k}{2})$ where $ H_U$ is the $H$-function of the unknot given by
    \begin{equation*}
      H_U(s)=  \begin{cases}
          0 &s \geq 0\\
          -s &s < 0
       \end{cases}
    \end{equation*}
    Therefore $H(\frac{n+k}{2}-1,\infty) = H(\infty,\frac{n-k}{2})=0$. We compute
    \[
    H\left(\frac{n+k}{2}-1,\frac{n-k}{2}\right) = - \sum_{\substack{\bdve{s}'\in \bH(L) \\ \bdve{s}'\ge (\frac{n+k}{2},\frac{n-k}{2}+1)}} \chi(\HFL^-(L_{n,k},\bdve{s}').
    \]
 From the previous discussion, we know that the only  term  \(c(s_1,s_2)x^{s_1}y^{s_2}\) in \(\tilde{\Delta}_{n,k}=(xy)^{\frac{1}{2}}\Delta_{n,k}\) with \((s_1,s_2) \geq (\frac{n+k}{2},\frac{n-k}{2}+1)\) 
 is \(-x^{\frac{n+k}{2}}y^{\frac{n-k}{2}+1}\). Therefore 
 \[\chi(\HFL^-(L_{n,k},\bdve{s})) = -1 \text{ when } \bdve{s}= \left(\frac{n+k}{2},\frac{n-k}{2}+1\right)\]
 and is zero for any other \(\bdve{s}\ge (\frac{n+k}{2},\frac{n-k}{2}+1)\). It follows that \[
 H\left(\frac{n+k}{2}-1,\frac{n-k}{2}\right) =1 > H\left(\frac{n+k}{2}-1,\infty\right) = H\left(\infty,\frac{n-k}{2}\right).\]    
On the other hand, by the symmetry of the $H$-function, we have 
\begin{align*}
H\left(-\frac{n+k}{2}+1,-\frac{n-k}{2}\right)&=H\left(\frac{n+k}{2}-1,\frac{n-k}{2}\right) + \frac{n+k}{2}-1 + \frac{n-k}{2}\\
&=1 + \frac{n+k}{2}-1 + \frac{n-k}{2}=n,
\end{align*}
whereas 
\[H\left(-\frac{n+k}{2}+1,\infty\right)=\frac{n-k}{2}-\left(-\frac{n+k}{2}+1\right)=n-1\]
and 
\[H\left(\infty,-\frac{n-k}{2}\right)=\frac{n-k}{2} - \left(-\frac{n-k}{2}\right) = n-k.
\]
It follows that $H(-\frac{n+k}{2}+1,-\frac{n-k}{2})>H(-\frac{n+k}{2}+1,\infty) \geq H(\infty,-\frac{n-k}{2})$, and hence $(\frac{n+k}{2}-1,\frac{n-k}{2})$ is a very good point.
\end{proof}
\begin{proof}[Proof of Proposition \ref{prop: L_n_k_non_L_space}]
    Let $r=n+k-2$ and we have $l=n-k$ for $L_{n,k}$.  Set $\bs_0 = (\frac{n+k}{2}-1,\frac{n-k}{2}) \in \bH(L_{n,k})$. By Lemma \ref{lem: L_nk_very_good_point}, $\bs_0 $ is a very good point. Both conditions in Lemma \ref{lemma: very_good_point} are satisfied, and hence the result follows.
\end{proof}

\section{Turaev torsion and $L$-spaces}\label{section: turaev torsion}
In this section, we recall how the Turaev torsion can be used to determine the set of $L$-space surgery slopes on rational homology solid tori, following Rasmussen-Rasmussen and their main theorem in \cite{RR}.
We start by recalling some properties of the Turaev torsion, and we refer the reader to \cite{Tur} for a comprehensive discussion on the topic.
\newline

Let $N$ be a compact orientable $3$-manifold whose boundary is a possibly empty union of tori, let $H$ denote its first integer homology group and let $Q(H)$ be the ring of fraction of $\Q[H]$, i.e. the localization of $\Q[H]$ by the multiplicative system of all non-zerodivisors. 

The \emph{Turaev torsion} (or maximal abelian torsion) $\tau(N)$ of $N$ is a topological invariant of $N$ and it is an element of $Q(H)$, well-defined up to multiplication by $\pm H$, where $H$ is identified with its image in $Q(H)$ via the natural inclusion. We do not need the precise definition of this invariant, but only some of its properties.

First of all, it is possible to remove the ambiguity in the sign of the Turaev torsion by endowing $N$ with a \emph{homological orientation}, i.e. an orientation of the vector space $H_{*}(N; \R)$. 

In some cases there is a natural choice of a homology orientation:
\begin{itemize}
\item if $N$ is closed and oriented we define the \emph{natural homology orientation} $\omega_N$ of $N$ as the one defined by a basis $(h_0, h_1, h_2, h_3)$ of $H_*(N; \R)$ such that $h_i$ is a basis of $H_i(N; \R)$ and $h_i$ and $h_{3-i}$ are dual with respect to the non-degenerate pairing $H_i(N;\R)\times H_{3-i}(N ;\R) \rightarrow \R$ given by the orientation of $N$. One can check that $\omega_N$ does not depend on the choice of $(h_0, h_1, h_2, h_3)$;
\item if $N$ is the exterior of an oriented link $L=K_1 \cup \cdots \cup K_m$ in a rational homology sphere $M$, we define the \emph{natural homology orientation} $\omega_N$ of $N$ to be the one determined by $([\textit{pt}], [\mu_1],\dots, [\mu_m], [T_1], \dots, [T_{m-1}])$, where $[pt]$ is the class of a point in $H_0(N; \R)$ and $[\mu_i]$ and $[T_i]$ are the classes the oriented meridian of $L_i$ and the oriented boundary of a tubular neighbourhood of $K_i$ respectively. Here, as usual, the meridian is oriented so to have linking number $1$ with $K_i$ and $T_i$ is oriented with the ``outward vector first'' convention. One can show that $\omega_N$ does not depend on the numeration of the components of $L$. 
\end{itemize}

It is possible to eliminate also the ambiguity given by the multiplication by elements in $H$, by fixing a so-called \emph{Euler structure} on $N$. We will not need do to this and we refer the reader to \cite{Tur} for more details. 

\subsection{Turaev torsion and Alexander polynomial} 
We now suppose that the boundary of $N$ is not empty. Denote by $G=H/\text{Tors} H$ and let $\text{pr}:Q(H)\rightarrow Q(G)$ denote the map induced by the projection $H\rightarrow G$. Recall that the Alexander polynomial $\Delta(M)$ is an element in $\Z[G]$, defined up to sign and multiplication by $G$. 

When $b_1(N)=1$, $N$ is a so-called a \emph{rational homology solid torus}, since its boundary is a torus and it has the same rational homology groups of $S^1\times D^2$. In this case 
we also fix an identification $H=\Z \oplus \text{Tors}H$, and denote by $t$ the element $(1,0)$, and by $\Tilde{t}$ its image in $G$. 
The Turaev torsion and the Alexander polynomial are strongly related:

\begin{thm}[{\cite[Section~II.5.2]{Tur}}]\label{Thm: torsion vs alexander}
Suppose that $b_1(N)\geq 2$. Then
$$
\emph{pr}(\tau(N))=\Delta(N)
$$
as elements in $\Z[G]/\pm G$.
If $b_1(N)=1$ then 
$$
\emph{pr}(\tau(N))(\Tilde{t}-1)=\Delta(N)
$$
as elements in $\Z[G]/\pm G$.\qed
\end{thm}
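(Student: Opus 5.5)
This is the classical relation between the maximal abelian torsion and the Alexander polynomial, and the plan is to derive it from the chain-level definitions, following the argument in \cite[Section~II.5]{Tur}.

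First, fix a CW (or simplicial) structure on $N$ and consider the maximal abelian cover $\hat N\to N$, with deck group $H$. By definition, $\tau(N)$ is the Reidemeister torsion of the based chain complex $C_*(\hat N)\otimes_{\Z[H]} Q(H)$, which is acyclic because $\partial N\neq\emptyset$ and $b_1(N)\geq 1$. The map $\mathrm{pr}$ is a ring homomorphism $Q(H)\to Q(G)$ on the relevant elements. Torsion is natural under change of coefficients when both sides are acyclic, so $\mathrm{pr}(\tau(N))$ equals the torsion of $C_*(\tilde N^{\mathrm{free}})\otimes Q(G)$, where $\tilde N^{\mathrm{free}}$ is the maximal free abelian cover. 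When $b_1=1$ one should check that this complex is still acyclic; it is, since its Euler characteristic vanishes and $H_0=\Q[G]/(\tilde t-1)$ becomes $0$ after inverting $\tilde t-1$.

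Second, simplify the cell structure. Since $N$ has nonempty boundary, it collapses onto a $2$-complex with one $0$-cell, $m$ $1$-cells and $m-1$ $2$-cells. The torsion of the resulting complex $0\to \Z[G]^{m-1}\xrightarrow{\partial_2}\Z[G]^m\xrightarrow{\partial_1}\Z[G]\to 0$ can be computed by Milnor's standard lemma. Here $\partial_1$ has entries $g_i-1$, where $g_i$ are the images of the generators. Choose a generator $x_j$ whose image $g_j-1$ is nonzero in $Q(G)$. Then
\[
\tau=\pm\det\bigl(\partial_2^{(j)}\bigr)\,(g_j-1)^{-1},
\]
where $\partial_2^{(j)}$ is the Fox Jacobian with the $j$-th column removed.

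Third, compare with the Alexander polynomial. By definition, $\Delta(N)$ is the gcd of the $(m-1)$-minors of the Alexander matrix (the first elementary ideal), and the Fox calculus identity $\sum_i \partial_2^{(\cdot,i)}(g_i-1)=0$ gives $\det\partial_2^{(i)}(g_j-1)=\pm\det\partial_2^{(j)}(g_i-1)$.
\begin{itemize}
\item If $b_1\geq 2$, the key step is Turaev's lemma that the first elementary ideal is $E_1=\Delta\cdot I$, with $I$ the augmentation ideal. This comes from a short exact sequence argument for $H_1(\tilde N,\tilde p)$ together with the fact that, for rank $\geq 2$, the augmentation ideal has gcd $1$. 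Together with the ratio identity, it forces $\det\partial_2^{(j)}=\pm g\,\Delta\,(g_j-1)$, hence $\mathrm{pr}(\tau)=\Delta$.
\item If $b_1=1$, then $G=\langle\tilde t\rangle$. Choosing $g_j=\tilde t$ (after a Nielsen move on the generators), the minor $\det\partial_2^{(j)}$ is a generator of $E_1$, which is principal and equal to $(\Delta)$. This gives $\mathrm{pr}(\tau)(\tilde t-1)=\Delta$.
\end{itemize}

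The main obstacle I expect is the $b_1\geq2$ identification $E_1=\Delta\cdot I$, that is, showing that the minors are divisible by $g_j-1$ with quotient exactly $\Delta$ up to units. I would handle it through the exact sequence $0\to H_1(\tilde N)\to H_1(\tilde N,\tilde p)\to I\to 0$ and the gcd properties of $I\subset\Z[\Z^r]$ for $r\geq 2$, as Turaev does.
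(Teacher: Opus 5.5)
Your strategy is the classical Milnor--Turaev argument behind the result the paper cites; the paper itself gives no proof. You collapse $N$ to a $2$-complex with one $0$-cell, $m$ $1$-cells and $m-1$ $2$-cells, write $\mathrm{pr}(\tau)=\pm D_j/(g_j-1)$ with $D_i$ the maximal minors of the abelianised Fox matrix, and compare with $\gcd_i D_i$ via the identity $D_i(g_j-1)=\pm D_j(g_i-1)$. This works, but two steps need repair.

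\textbf{Acyclicity.} Your acyclicity claims are false as stated.
\begin{itemize}
\item For $b_1\geq 2$, $\partial N\neq\emptyset$ does not force $C_*(\hat N)\otimes Q(G)$ to be acyclic. For the exterior of the two-component unlink, $N\simeq S^1\vee S^1\vee S^2$, the $Q(G)$-homology is nonzero in degrees $1$ and $2$, and $\Delta=0$.
\item For $b_1=1$, vanishing Euler characteristic together with $H_0\otimes Q(G)=0$ only gives $\dim H_1=\dim H_2$, not acyclicity.
\end{itemize}
Neither error is fatal. $Q(H)$ is a finite product of fields, and Turaev's $\tau(N)$ is defined factorwise, a factor being $0$ when the corresponding twisted complex is not acyclic. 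So $\mathrm{pr}(\tau(N))$ is by definition the torsion over $Q(G)$. In your model $\partial_1\otimes Q(G)$ is onto, so the complex is acyclic iff some $D_i\neq 0$, i.e.\ iff $\Delta\neq 0$. Hence $\mathrm{pr}(\tau)=\pm D_j/(g_j-1)$ holds in all cases, with both sides vanishing in the degenerate one, and that is what you should state instead.

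\textbf{The ``main obstacle''.} This needs no separate lemma: with $\Delta:=\gcd_i D_i$, the ratio identity does everything. Set $Q=D_j/(g_j-1)$, so that $D_i=\pm Q(g_i-1)$ for all $i$. Write $Q=a/b$ in lowest terms in the UFD $\Z[G]$. Then $b$ divides each $g_i-1$, hence every element of the augmentation ideal $I=(g_1-1,\dots,g_m-1)$, since the $g_i$ generate $G$.
\begin{itemize}
\item If $b_1\geq 2$, $I$ contains the non-associate primes $t_1-1$ and $t_2-1$, so $b$ is a unit, $Q\in\Z[G]$, and $\Delta=Q\cdot\gcd_i(g_i-1)=Q$. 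This also proves $E_1=\Delta\cdot I$ rather than using it.
\item If $b_1=1$, then $g_i=\tilde t^{\,n_i}$ with $\gcd_i n_i=1$, so $\gcd_i(g_i-1)=\tilde t-1$. Run the same argument with $Q(\tilde t-1)$ and the elements $(g_i-1)/(\tilde t-1)$, whose gcd is $1$; this gives $\Delta=Q(\tilde t-1)$, with no Nielsen move needed.
\end{itemize}
The exact sequence $0\to H_1(\hat N)\to H_1(\hat N,\hat p)\to I\to 0$ is needed only if $\Delta(N)$ is instead defined as the order of $H_1(\hat N)$, to match the two definitions.
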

In particular, when $H$ is torsion free and has rank at least $2$ the Alexander polynomial and the Turaev torsion coincide. We will use this in the case of the exterior of links in $S^3$. 
\begin{corollary}
    If $N$ is the exterior of a link in $S^3$ with at least two components then
    $$ 
    \tau(N)=\Delta(N)
    $$
    as elements in $\Z[H]/\pm H$. \qed
\end{corollary}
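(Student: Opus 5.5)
The plan is to apply Theorem~\ref{Thm: torsion vs alexander} directly, after checking that the exterior $N$ of an $m$-component link $L\subset S^3$, with $m\geq 2$, satisfies its hypotheses with no torsion in $H$.

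First I compute $H=H_1(N;\Z)$. By Alexander duality in $S^3$, $H_1(S^3\setminus \nu L;\Z)\cong H^1(L;\Z)\cong \Z^m$. The isomorphism sends the class of the oriented meridian $\mu_i$ of $K_i$ to the $i$-th basis vector. Hence $H$ is free abelian of rank $m$, so $\operatorname{Tors}H=0$, $G=H$, and $b_1(N)=m\geq 2$.

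Since $\operatorname{Tors}H=0$, the projection $H\to G$ is the identity. The induced map $\operatorname{pr}\colon Q(H)\to Q(G)$ is therefore the identity as well. We are in the case $b_1(N)\geq 2$ of Theorem~\ref{Thm: torsion vs alexander}, which gives $\operatorname{pr}(\tau(N))=\Delta(N)$ in $\Z[G]/\pm G$. Replacing $\operatorname{pr}$ and $G$ by the identity and $H$ yields $\tau(N)=\Delta(N)$ in $\Z[H]/\pm H$. In particular $\tau(N)$ lies in $\Z[H]$ up to units, not merely in $Q(H)$.

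One bookkeeping point needs stating. The Alexander polynomial of $N$, as an element of $\Z[G]$ with $G=H\cong\Z^m$ generated by the meridians, is exactly the multivariable Alexander polynomial of $L$ in the variables $x_i=[\mu_i]$. This identification is used later in the paper, but it needs no argument beyond the definition.

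There is no real obstacle; the argument is a specialisation of the general theorem. The only point requiring care is the vanishing of torsion, and Alexander duality supplies it.
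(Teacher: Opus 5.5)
Your proposal is correct and is exactly the paper's (implicit) argument: the corollary is the $b_1\geq 2$ case of Theorem~\ref{Thm: torsion vs alexander}, specialised using the fact that $H_1$ of the exterior of an $m$-component link in $S^3$ is free abelian of rank $m$. Hence $G=H$, $\operatorname{pr}$ is the identity, and the theorem reads $\tau(N)=\Delta(N)$ in $\Z[H]/\pm H$.
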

In this case we notice that the Turaev torsion of $N$ can be represented by an element in $\Z[H]$. This holds more generally:

\begin{prop}[{\cite[Section~II.4.3]{Tur}}]
If $b_1(N)\geq 2$, then $\tau(N)\in\Z[H]/\pm H$. \qed
\end{prop}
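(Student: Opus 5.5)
The plan is to reduce the statement to the previous Theorem~\ref{Thm: torsion vs alexander} and the fact that Turaev torsion, for $b_1(N)\ge 2$, is defined through a Fox-calculus/Reidemeister computation that a priori only lands in $Q(H)$. I will work with a cellular model: by collapsing, $N$ (with nonempty toral boundary) is simple homotopy equivalent to a $2$-complex $X$ with one $0$-cell, $m$ one-cells and $m-1$ two-cells, since $\chi(N)=0$. The cellular chain complex of the maximal abelian cover is then $0\to \Z[H]^{m-1}\xrightarrow{\partial_2}\Z[H]^m\xrightarrow{\partial_1}\Z[H]\to 0$, with $\partial_1$ the column $(g_i-1)$ for the images $g_i\in H$ of the generators.

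The key step is the standard formula $\tau(N)\cdot (g_j-1) = \pm \det(\partial_2^{(j)})$, where $\partial_2^{(j)}$ is the square matrix obtained by deleting the $j$-th row, valid whenever $g_j-1$ is a non-zerodivisor in $Q(H)$. Thus $\tau(N)$ is a quotient of elements of $\Z[H]$, and the content of the claim is divisibility. First I would split $H=G\oplus \mathrm{Tors}H$ and use the decomposition $Q(H)\cong\prod_\chi Q(\F_\chi[G])$ over characters $\chi$ of $\mathrm{Tors}H$ (after tensoring with suitable cyclotomic fields); torsion is compatible with this splitting, so it suffices to show each twisted torsion $\tau^\chi(N)$ lies in $\Z[\chi][G]$ and that these glue integrally. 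Since $\mathrm{rk}\,G\ge 2$, I can choose two generators $g_a,g_b$ whose images in $G$ are linearly independent, giving $\tau\cdot(g_a-1)$ and $\tau\cdot(g_b-1)$ both in $\Z[H]$.

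The divisibility then follows as in the classical argument for the Alexander polynomial of links (Torres): in $\Z[\chi][G]$, a Laurent polynomial ring in at least two variables, hence a UFD, the elements $g_a-1$ and $g_b-1$ (twisted by $\chi$) are coprime because their images in $G$ are independent, so $\tau^\chi$ is a Laurent polynomial. The main obstacle is the gluing back from the product over characters to $\Z[H]$ itself rather than just $\Z[H]\otimes\Q$ or a finite extension: for this I would avoid characters and instead argue directly in $\Z[H]$, noting that $g_a-1$ and $g_b-1$ generate an ideal whose annihilator-free quotient forces $x(g_b-1)=y(g_a-1)$ with $x,y\in\Z[H]$ to imply $x\in (g_a-1)$; this is a regular-sequence statement for $(g_a-1,g_b-1)$ in $\Z[H]=\Z[\mathrm{Tors}H][G]$, which holds since one can change basis in $G$ so that $g_a,g_b$ project to powers of distinct free variables up to torsion, and then use that $t_1^{p}-c$, $t_2^{q}-c'$ form a regular sequence over $\Z[\mathrm{Tors}H][t_1^{\pm},\dots]$. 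I expect this regular-sequence check, when $g_a,g_b$ have nontrivial torsion parts, to be where the real work lies; this is precisely the content of \cite[Section~II.4.3]{Tur}.
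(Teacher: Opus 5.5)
The paper gives no proof here; it only cites \cite[Section~II.4.3]{Tur}. The skeleton of your argument is the standard route to that result: a one-vertex $2$-complex with $m$ edges and $m-1$ faces, the identity $\tau(N)(g_j-1)=\det\partial_2^{(j)}$ up to $\pm H$, and integrality extracted from two independent directions by a regular-sequence argument in $\Z[H]$. You are right to drop the character decomposition. It only yields integrality up to denominators dividing $|\mathrm{Tors}H|$, and its UFD claim is false in general, since $\Z[\zeta]$ can have nontrivial class group. The opening appeal to Theorem~\ref{Thm: torsion vs alexander} plays no role either, because $\mathrm{pr}$ forgets exactly the torsion information at stake.

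The genuine error is at the step you identify as the crux. One cannot in general change basis in $G$ so that the images of $g_a$ and $g_b$ become powers of distinct basis vectors. In $G=\Z^2$ the vectors $(1,0)$ and $(1,2)$ are both primitive but span a sublattice of index $2$, so they are not $\pm e_1,\pm e_2$ for any basis.

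The regular-sequence property is still true, and you can avoid the issue entirely. The set $J=\{y\in\Z[H] : y\,\tau(N)\in\Z[H]\}$ is an ideal containing every $g_j-1$. Using $gh-1=g(h-1)+(g-1)$ and $g^{-1}-1=-g^{-1}(g-1)$, it therefore contains the whole augmentation ideal. So fix a splitting $H=G\oplus\mathrm{Tors}H$ and a basis $t_1,\dots,t_r$ of $G$ with $r\geq 2$, and take $g_a=t_1$, $g_b=t_2$. Then $\Z[H]/(t_1-1)\cong\Z[\mathrm{Tors}H][t_2^{\pm1},\dots,t_r^{\pm1}]$, in which $t_2-1$ has unit leading coefficient and is therefore a non-zero-divisor. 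Set $A=\tau(N)(t_1-1)$ and $B=\tau(N)(t_2-1)$. The relation $A(t_2-1)=B(t_1-1)$ forces $A=C(t_1-1)$, hence $\tau(N)=C\in\Z[H]$, and torsion components never enter.

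If you want to keep your original $g_a,g_b$, triangularize rather than diagonalize. Choose the basis so that $g_a=s\,t_1^{d}$ and $g_b=u\,t_2^{e}$, with $s\in\mathrm{Tors}H$, $d>0$, $e\neq 0$, and $u$ a unit of $\Z[\mathrm{Tors}H][t_1^{\pm1}]$. Then $\Z[H]/(g_a-1)$ is a Laurent ring in $t_2,\dots,t_r$ over the free $\Z[\mathrm{Tors}H]$-module $\Z[\mathrm{Tors}H][t_1]/(t_1^{d}-s^{-1})$. In that ring $g_b-1$ again has unit extreme coefficients in $t_2$, so it is a non-zero-divisor.
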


Notice that when $N$ is a rational homology solid torus the identification $H=\Z \oplus \text{Tors}H$ defines a homology orientation of $H$ obtained by choosing $t$ as a generator of $H_1(N; \R)$. Therefore we can consider the sign-refined torsion associated to this homology orientation. It follows from the proof of \cite[Lemma~II.4.5.2]{Tur} that, if we expand $(1-t)^{-1}$ as an infinite sum in positive powers of $t$, then we can normalise $\tau(N)$ as a formal sum
$$
\tau(N)=\sum_{\substack{h\in H \\ \text{pr}(h)\geq 0}}a_h h
$$
where $a_h$ is an integer for all $h\in H$, $a_0\ne 0$ and $a_h=1$ for all but finitely many $h\in H$.

For example, when $H$ has no torsion, it follows from Theorem \ref{Thm: torsion vs alexander} that the normalisation is
$$
\tau(N)=\frac{\Delta(N)}{1-t}\in \Z[[t]]
$$
where $(1-t)^{-1}$ is expanded as an infinite sum in positive powers of $t$ and the Alexander polynomial $\Delta(N)$ is normalised so that $\Delta(N)\in \Z[t]$, $\Delta(N)(0)\ne 0$ and $\Delta(N)(1)=1$. 

The last property of Turaev torsion we need is its behaviour under Dehn filling.
\begin{thm}[{\cite[Theorem~VII.1.4]{Tur}}]\label{thm: gluing formula turaev}
Let $N$ be a compact orientable $3$-manifold with $b_1(N)\geq 2$ whose boundary is union of tori and let $N'$ be obtained by gluing a solid torus to one boundary component of $N$. Let $f$ denote the inclusion-induced map $f:H_1(N; \Z)\rightarrow H_1(N'; \Z)$, and let $f_{\#}$ be the corresponding map $f_{\#}: \Z[H_1(N;\Z)]\rightarrow \Z[H_1(N'; \Z)]$. Orient the core of the solid torus arbitrarily, and let $h$ denote its class in $H_1(N'; \Z)$. 
Then we have
$$
f_{\#}(\tau(N))=(h-1)\tau(N')
$$
as elements in $\Z[H_1(N'; \Z)]/\pm H_1(N'; \Z)$.\qed
\end{thm}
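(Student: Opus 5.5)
This is Turaev's gluing formula, which the paper cites rather than proves; I would reconstruct it from the multiplicativity of Reidemeister--Turaev torsion in short exact sequences of chain complexes.

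\textbf{Setup.} Write $N' = N\cup_{T} V$ with $V=S^1\times D^2$ and $T=\partial V$. Fix a CW structure in which $T$ and $V$ are subcomplexes, and take twisted coefficients in $Q(H')$ through the ring map $f_\#$, where $H'=H_1(N';\Z)$. The Mayer--Vietoris sequence
\[
0\to C_*(T;Q(H'))\to C_*(N;Q(H'))\oplus C_*(V;Q(H'))\to C_*(N';Q(H'))\to 0
\]
is exact. By multiplicativity of torsion,
\[
\tau^{f_\#}(N)\,\tau(V) \doteq \tau(T)\,\tau(N'),
\]
up to a correction torsion coming from the long exact homology sequence. That correction is trivial when all twisted complexes are acyclic.

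\textbf{Computing the pieces.} The image of $H_1(T)$ in $H'$ is nontrivial, since $b_1(N)\ge 2$ gives $b_1(N')\ge 1$ and the core has infinite order or is torsion. I would argue that $C_*(T;Q(H'))$ is acyclic with $\tau(T)=1$. The reason is that $T$ has a cell structure with the product torsion $(g-1)(g-1)^{-1}=1$ for any $g$ in the image with $g-1$ invertible. For the solid torus $V$, the standard circle computation gives $\tau(V)=(h-1)^{-1}$, where $h$ is the class of the core. Substituting these into the multiplicativity relation yields
\[
f_\#(\tau(N)) \doteq (h-1)\,\tau(N').
\]

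\textbf{Identifying the left side.} I would then show that $\tau^{f_\#}(N)$ equals $f_\#(\tau(N))$. This is naturality of torsion under ring homomorphisms, and it applies here because $\tau(N)\in\Z[H]$ by the earlier proposition, since $b_1(N)\ge2$. So $\tau(N)$ is a polynomial and can be pushed forward under $f_\#$ without any denominator issues.

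\textbf{Main obstacle.} The hard part is the case where $C_*(N';Q(H'))$ is not acyclic, or where $h-1$ is a zero divisor. This can happen if $b_1(N')=0$, or if $h$ is torsion and some characters send it to $1$.

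My approach is to split $Q(H')$ into a product of fields along characters of the torsion subgroup. On each factor I would check the identity separately. On factors where the character sends $h$ to $1$, I would show both sides vanish: the complex $C_*(N')$ is not acyclic there, and $f_\#\tau(N)$ projects to $0$ by the Mayer--Vietoris homology computation.

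Finally, I would track the sign and $\pm H'$ ambiguity via homology orientations. This is only needed modulo $\pm H'$, so that tracking should be routine.
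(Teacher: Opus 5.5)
The paper gives no proof here; it only quotes Turaev. Your skeleton, factorwise Mayer--Vietoris multiplicativity with $\tau(T)=1$ and $\tau(V)=(h-1)^{-1}$, is the standard one. The gap is in the one step where $b_1(N)\ge 2$ is actually used.

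You identify the torsion of $C_*(N)\otimes_{f_\#}Q(H')$ with $f_\#(\tau(N))$ by ``naturality of torsion under ring homomorphisms''. But $\tau(N)$ is the torsion of $C_*(N;Q(H))$, and $f_\#$ does not extend to $Q(H)$. Indeed, if the filling slope $s\in H_1(T)$ has infinite order in $H=H_1(N;\Z)$, then $[s]-1$ is a unit of $Q(H)$ while $f_\#([s]-1)=0$. Already in the paper's application, $x^{p}y^{-ql}-1\mapsto 0$. Knowing $\tau(N)\in\Z[H]$ lets you write down $f_\#(\tau(N))$, but not that it computes anything.

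What you need is a specialization lemma. For every ring homomorphism $\varphi\colon\Z[H]\to F$ to a field with $\varphi(H)\neq\{1\}$, the torsion of $C_*(N)\otimes_\varphi F$ (set to $0$ if that complex is not acyclic) equals $\varphi(\tau(N))$ up to $\pm\varphi(H)$. To prove it:
\begin{itemize}
\item Collapse $N$ onto a $2$-complex with one $0$-cell, $1$-cells $g_0,\dots,g_m$ and $m$ $2$-cells. Let $D_j$ be the minor of the Fox matrix omitting column $j$.
\item From $\partial^2=0$ you get $D_j(g_i-1)=\pm D_i(g_j-1)$ in $\Z[H]$.
\item Over any field with $\psi(g_j)\neq 1$, the complex is acyclic iff $\psi(D_j)\neq 0$, and then its torsion is $\pm\psi(D_j)/(\psi(g_j)-1)$.
\item Checking this on each field factor of $Q(H)$ gives $\tau(N)(g_j-1)=\pm D_j$ in $Q(H)$. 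This holds in $\Z[H]$ because $\tau(N)\in\Z[H]$, which is exactly where $b_1(N)\ge 2$ enters.
\item Applying $\varphi$, with $j$ chosen so that $\varphi(g_j)\neq 1$, yields the lemma, including $\varphi(\tau(N))=0$ in the non-acyclic case.
\end{itemize}

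With this lemma your factorwise plan works, but the case analysis needs fixing.
\begin{itemize}
\item The image of $H_1(T)$ in $H'$ is exactly $\langle h\rangle$, and it may even be trivial. So on a field factor $F_\chi$ of $Q(H')$, the complexes $C_*(T;F_\chi)$ and $C_*(V;F_\chi)$ are acyclic iff $\chi(h)\neq 1$.
\item On factors with $\chi(h)\neq 1$, Mayer--Vietoris gives $H_*(N;F_\chi)\cong H_*(N';F_\chi)$. Either both vanish and multiplicativity gives the identity, or neither does and the lemma gives $0=0$. The second case occurs even with $h$ of infinite order, for instance when the filling produces a split link; your plan omits it.
\item On factors with $\chi(h)=1$, the right-hand side vanishes simply because $\chi(h)-1=0$. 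Contrary to what you say, $C_*(N';F_\chi)$ can be acyclic there. For example, fill one component of the Whitehead link exterior with slope $p/q$, $p\ge 2$, and take $\chi$ trivial on the $\Z/p$ summand; then $N'$ is a knot exterior in a lens space and $\tau^\chi(N')\neq 0$.
\item What you actually need there is non-acyclicity of $C_*(N;F_\chi)$. This holds because $\chi$ is nontrivial (as $b_1(N')\ge 1$), so $H_3(N';F_\chi)=0=H_2(V;F_\chi)$, which forces $F_\chi\cong H_2(T;F_\chi)\hookrightarrow H_2(N;F_\chi)$. The lemma then gives $\chi(f_\#\tau(N))=0$.
\item The case $b_1(N')=0$ cannot occur, since Dehn filling lowers $b_1$ by at most one.
\end{itemize}
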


\subsection{Turaev torsion and \texorpdfstring{$L$}{L}-space filling slopes}

The Turaev torsion will be useful for us because it contains all the information needed to determine the set of $L$-space surgeries on knots in rational homology spheres. We now briefly review how, following \cite{RR}.

Let $Y$ be a rational homology solid torus and let $\Sl(Y)$ denote the set of (rational) slopes in $\partial Y$
\[
\Sl(Y)=\{ \alpha\in H_1(\partial Y;\Z)\,| \,\alpha\text{ is primitive}\}/ \pm 1=\mathbb{P}(H_1(\partial Y;\Q)).
\]
Each slope $[\alpha]\in \Sl(Y)$ determines a Dehn filling of $Y$, that we denote with $Y(\alpha)$.
Since $Y$ is a rational homology solid torus, there exists a distinguished slope called \emph{homological longitude}, that is defined as $[l]$, where $l\in H_1(\partial Y;\Z)$ is a generator of the kernel of the inclusion-induced map
$$
\iota:H_1(\partial Y;\Z)\rightarrow H_1(Y;\Z).
$$
The element $l$ is unique up to sign, and hence the class $[l]\in \Sl(Y)$ is well-defined. Equivalently, $[l]$ can be defined as the unique slope defining a filling that is not a rational homology sphere.
For example, when $Y$ is the exterior of a knot $K$ in $S^3$, the homological longitude $[l]$ coincides with the slope defined by the (canonical) longitude of $K$. We denote $\Sl^*(Y)=\Sl(Y)\setminus [l]$.

We say that $[\alpha]$ is an \emph{$L$-space filling slope} of $Y$ if $Y(\alpha)$ is an $L$-space, and we denote by $L(Y)$ the set of $L$-space filling slopes of $Y$.
We say that $Y$ is \emph{Floer simple} if there exist at least two Dehn fillings of $Y$ that are $L$-spaces, i.e. if $|L(Y)|>1$.

It turns out that if $Y$ is Floer simple, then it is possible to describe explicitly the set of $L$-spaces filling slopes of $Y$ in terms of its Turaev torsion.
Fix an identification $H=\Z \oplus \text{Tors}H$ and write 
$$
\tau(Y)=\sum_{\substack{h\in H_1(Y;\Z) \\ \text{pr}(h)\geq 0}}a_h h.
$$
We define the following subset of $H_1(Y; \Z)$
$$
D^{\tau}_{>0}=\{x-y\,| x\notin S[\tau(Y)],\, y\in S[\tau(Y)] \text{ and } \text{pr}(x)>\text{pr}(y)\}\cap \iota(H_1(\partial Y; \Z))
$$
where $S[\tau(Y)]=\{ h\in H_1(Y;\Z)\,|\, a_h\ne 0 \}$ is the \emph{support} of $\tau(Y)$, and where \[\iota:H_1(\partial Y;\Z)\rightarrow H_1(Y;\Z)\] is induced by the inclusion $\partial Y\hookrightarrow Y$.

We are now ready to state the main result of \cite{RR}. 
\begin{thm}[{\cite[Theorem 1.6]{RR}}] \label{theorem RR}
If $Y$ is Floer simple, then either
\begin{itemize}
    \item $D_{>0}^\tau(Y)=\emptyset$ and $L(Y)=\Sl^*(Y)$, or
    \item $D_{>0}^\tau(Y)\ne\emptyset$ and $L(Y)$ is a closed interval whose endpoints are consecutive elements in $\iota^{-1}(D_{>0}^\tau(Y))$.\qed
\end{itemize}
\end{thm}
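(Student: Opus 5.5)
This is the main theorem of \cite{RR}, and the paper uses it as a black box. If I had to reprove it, I would follow Rasmussen--Rasmussen's strategy in three steps.

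\emph{Step 1: the set of $L$-space slopes is an interval.} Since $Y$ is Floer simple, fix two distinct slopes $\alpha_0,\alpha_1\in L(Y)$. I would show that $L(Y)$ is an interval in $\Sl(Y)\cong\Q\cup\{\infty\}$ that does not contain $[l]$. The tool is the surgery exact triangle together with the Ozsv\'ath--Szab\'o criterion: if $\alpha,\beta$ are $L$-space slopes at distance one and $|H_1(Y(\alpha+\beta))|=|H_1(Y(\alpha))|+|H_1(Y(\beta))|$, then $Y(\alpha+\beta)$ is also an $L$-space. Iterating this Farey-type operation fills in all slopes lying between two $L$-space slopes on the side away from $[l]$. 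A complementary argument shows the interval is closed and cannot cross $[l]$, since $Y(l)$ has $b_1>0$.

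\emph{Step 2: a torsion criterion for a single slope.} Using Step 1, I would represent every filling $Y(\alpha)$ as surgery on the core $K_{\alpha_0}$ of the filling $Y(\alpha_0)$, which is an $L$-space. Floer simplicity should force the knot Floer complex of $K_{\alpha_0}$, in each relative spin$^c$ structure, to be as simple as possible: a staircase-like complex determined by its Euler characteristic. That Euler characteristic is the Turaev torsion $\tau(Y)$, through Theorem~\ref{Thm: torsion vs alexander} and the gluing formula Theorem~\ref{thm: gluing formula turaev}. With this in hand, the rational surgery formula for $K_{\alpha_0}$ computes $\widehat{\HF}(Y(\alpha))$ spin$^c$-wise. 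I expect $Y(\alpha)$ to be an $L$-space exactly when, for every relative spin$^c$ class, the support $S[\tau(Y)]$ has no ``gap'' in the direction of $\alpha$. Concretely: there should be no $x\notin S[\tau]$, $y\in S[\tau]$ with $\mathrm{pr}(x)>\mathrm{pr}(y)$ such that $x-y$ is the image of a boundary class on the wrong side of $\alpha$.

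\emph{Step 3: reading off the endpoints.} Translating Step 2 gives the dichotomy. If $D^\tau_{>0}(Y)=\emptyset$, then every slope other than $[l]$ satisfies the criterion, so $L(Y)=\Sl^*(Y)$. Otherwise, each element of $\iota^{-1}(D^\tau_{>0}(Y))$ is a slope at which the criterion switches, and the interval $L(Y)$ is bounded by two consecutive such slopes.

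\emph{Main obstacle.} I expect the hardest step to be Step 2, proving that Floer simplicity forces the chain-level knot Floer data to be determined by $\tau(Y)$. This requires a careful analysis of the mapping cone and of the torsion coefficients of $Y$. For that, I would first try to mimic Ozsv\'ath--Szab\'o's proof that $L$-space knots have staircase complexes, now working in each spin$^c$ structure separately.
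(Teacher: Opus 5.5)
The paper gives no proof of this statement. It is quoted from \cite{RR} and used as a black box, so your first sentence is all the paper needs. As a reconstruction, however, your sketch has a genuine gap at its centre: Step~2 is only an expectation. The staircase route needs, as its hypothesis, that the fillings of $Y$ close to $\alpha_0$ on one side are $L$-spaces, since these are the large surgeries on the core $K_{\alpha_0}$. Step~1 supplies this only if $\Delta(\alpha_0,\alpha_1)=1$. The route then needs a spin$^c$-wise staircase theorem for rationally null-homologous knots in $L$-spaces, and a further translation of the resulting genus-type bounds into $D^\tau_{>0}$.

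Moreover, whether a slope is an $L$-space slope cannot be decided from $\tau(Y)$ and $\alpha$ alone. Take the exteriors of both trefoils, with $t=[\mu]$. In both cases $S[\tau]=\{0,2,3,\dots\}$ and $D^\tau_{>0}=\{1\}$, so $\iota^{-1}(D^\tau_{>0})$ gives the slopes $1/n$, $n\in\Z$. Yet $L(Y)=[1,\infty]$ for the right-handed trefoil and $[-\infty,-1]$ for the left-handed one. A reference $L$-space slope is indispensable, which is why the paper needs Remark~\ref{rem: infinity is an endpoint} before applying the theorem.

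The notion that organises \cite{RR} is that of a Floer simple knot. For $\alpha$ in the interior of $L(Y)$, the core of $Y(\alpha)$ has $\operatorname{rk}\widehat{\mathit{HFK}}=|H_1(Y(\alpha))|$, that is, one generator in each spin$^c$ structure. Each summand of $\mathit{CFK}^\infty$ is then a single $\F[U,U^{-1}]$ determined by one Alexander grading. Since no cancellation is possible, these gradings are read off from the graded Euler characteristic of $\widehat{\mathit{HFK}}$, which is determined by $\tau(Y)$. No staircase analysis is needed, but one must first produce such an interior slope and show that its core is Floer simple.

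The other steps also fail as stated. In Step~1, the Farey iteration only fills in slopes between two $L$-space slopes at distance one. For $\alpha_0,\alpha_1$ at distance greater than one, the exact triangle says nothing about the slopes between them, so obtaining a distance-one pair already requires input beyond the triangle. The triangle also cannot give closedness, i.e.\ that the ends of $L(Y)$ are rational slopes that are themselves $L$-space slopes; that is an output of the torsion computation.

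In Step~3, the trefoil example shows that $\iota^{-1}(D^\tau_{>0})$ typically contains many slopes far from $L(Y)$: for the right-handed trefoil, every $1/n$ with $n\ge 2$ or $n\le -1$. So its elements are not points where $L$-space-ness switches. What must be proved is twofold: no element of $\iota^{-1}(D^\tau_{>0})$ lies in the interior of $L(Y)$, and the nearest elements on either side of a known interior $L$-space slope are themselves $L$-space slopes.

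Finally, the link between the Euler characteristic and $\tau(Y)$ is not Theorem~\ref{thm: gluing formula turaev}, which requires $b_1\ge 2$. It is the identification of the graded Euler characteristic of $\widehat{\mathit{HFK}}$ of a knot with the torsion of its exterior.
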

Recall that $[l]\in \Sl(Y)$ denotes the homological longitude of $Y$, and since the corresponding Dehn filling of $Y$ is not a rational homology sphere, by the definition of $L$-space we have $[l]\notin L(Y)$. The second part of the statement of the theorem can be made more concrete as follows. Once we fix a basis $(\mu,\lambda)$ for $H_1(\partial Y;\Z)$, we can define a map
\[
\begin{array}{ccc}
  H_1(\partial Y;\Z)&\longrightarrow& \Q\cup \{\infty\}\\
p\mu+q\lambda&\longmapsto&\frac{p}{q} 
\end{array}
\]
and this association induces an identification between $\Sl(Y)$ and $\Q \cup \{\infty\}$. If the set $D^{\tau}_{>0}$ is not empty, we can apply this map to the set $\iota^{-1}(D^{\tau}_{>0})$ and Theorem \ref{theorem RR} states that if $Y$ is Floer simple then $L(Y)$ is a closed interval in $\Sl(Y)=\Q\cup \{\infty\}$ whose endpoints are consecutive elements in the image of $\iota^{-1}(D^{\tau}_{>0})$ in $\Q \cup \{\infty\}$.
\subsection{Multiple components and box-convexity}\label{subsec: box convexity}
In this section, we observe that Theorem~\ref{theorem RR} implies a useful structural property of the set of \(L\)-space fillings slopes of a manifold with multiple boundary components.

Let $N$ be the exterior of link in a rational homology sphere, with boundary components $T_1,\dots, T_n$, and let
\[\Sl(N)=\mathbb{P}(H_1(T_1;\Q))\times \dots \times \mathbb{P}(H_1(T_n;\Q))\]
be the set of (rational) multislopes in $\partial N$. We denote $L(N)$ the set of $L$-space fillings slopes of $N$, and by $\Sl^*(N)$ the set of slopes that produce rational homology spheres by filling.

\begin{defn}
A \emph{rectangle} in $\Sl(N)$ is a subset of the type $I_1\times\dots\times I_n$, where each $I_i$ is either a closed interval or a point in \(\mathbb{P}(H_1(T_i;\Q))\). We say that $R$ is \emph{degenerate} if some of the $I_i$ is a point, and \emph{non-degenerate} otherwise. 
\end{defn}

\begin{defn}
A subset $S\subset \Sl^*(N)$ is \emph{weakly box-convex} if whenever the vertices of a rectangle $R\subset \Sl^*(N)$ are contained in $S$, then $R\subset S$. 
\end{defn}
\begin{remark}
The notion of box-convexity was inspired by Massoni and Zung, who discuss -- in a forthcoming paper -- a stronger version of this property for multislopes realised by taut foliations transverse to pseudo-Anosov flows.
\end{remark}
We will usually omit the prefix \emph{weakly}; however ``box-convex" should be understood to mean ``weakly box-convex''.
We define the \emph{box-convex hull} of a subset $S\subset \Sl^*(N)$ as the smallest box-convex set containing $S$. 

\begin{prop}\label{prop: box-convex}
Let $N$ be the exterior of a link in a rational homology sphere. Then the set $L(N)$ is box-convex.
\end{prop}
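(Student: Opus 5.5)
We argue by induction on the number $k$ of non-degenerate factors of the rectangle, using Theorem~\ref{theorem RR} one boundary component at a time. Let $R=I_1\times\dots\times I_n\subset \Sl^*(N)$ be a rectangle whose vertices lie in $L(N)$. If $k=0$, then $R$ is a single multislope, which is a vertex, and there is nothing to prove.

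For the inductive step, suppose $I_1=[a,b]$ is a non-degenerate interval, after reordering the boundary components. Every vertex of $R$ lies on exactly one of the two faces $R_a=\{a\}\times I_2\times\dots\times I_n$ and $R_b=\{b\}\times I_2\times\dots\times I_n$. These faces are rectangles in $\Sl^*(N)$ with one fewer non-degenerate factor, and their vertices are vertices of $R$. By the inductive hypothesis, $R_a$ and $R_b$ are contained in $L(N)$.

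Now fix an arbitrary point $(s_2,\dots,s_n)\in I_2\times\dots\times I_n$, and let $Y$ be the manifold obtained from $N$ by filling $T_2,\dots,T_n$ along $s_2,\dots,s_n$. Its boundary is the single torus $T_1$. We check that $Y$ is a rational homology solid torus. The fillings $Y(a)$ and $Y(b)$ are rational homology spheres, since $(a,s_2,\dots,s_n)\in R\subset \Sl^*(N)$. Filling one torus boundary component changes $b_1$ by at most one, so $b_1(Y)\le 1$. Half-lives-half-dies gives $b_1(Y)\ge 1$, hence $b_1(Y)=1$.

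Since $R_a,R_b\subset L(N)$, both $a$ and $b$ lie in $L(Y)$. As $a\neq b$, $Y$ is Floer simple, and Theorem~\ref{theorem RR} applies. Either $L(Y)=\Sl^*(Y)$, or $L(Y)$ is a closed interval of $\Sl(Y)\cong\mathbb{P}^1(\Q)$ that does not contain the homological longitude $[l]$. Every slope $s\in[a,b]$ lies in $\Sl^*(Y)$, because $(s,s_2,\dots,s_n)\in R\subset\Sl^*(N)$. So the interval $[a,b]$ avoids $[l]$.

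\textbf{Main obstacle.} The step needing care is showing $[a,b]\subset L(Y)$ in the second case, because intervals in the circle $\mathbb{P}^1(\Q)$ are ambiguous. The points $a$ and $b$ cut the circle into two closed arcs. One arc is $I_1=[a,b]$, which avoids $[l]$, so the other arc must contain $[l]$. $L(Y)$ is a closed arc (connected subset) containing $a$ and $b$ and avoiding $[l]$, so it cannot contain the arc through $[l]$. Hence it contains $[a,b]$. In the first case $[a,b]\subset\Sl^*(Y)=L(Y)$ directly.

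It follows that $\{s\}\times\{(s_2,\dots,s_n)\}\subset L(N)$ for every $s\in I_1$. Since $(s_2,\dots,s_n)$ was arbitrary, $R\subset L(N)$, which completes the induction.
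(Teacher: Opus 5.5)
Your proof is correct and follows essentially the same route as the paper. In both, the two opposite faces $\{a\}\times I_2\times\cdots\times I_n$ and $\{b\}\times I_2\times\cdots\times I_n$ are obtained by induction, and then each segment $I_1\times\{(s_2,\dots,s_n)\}$ is filled in by applying Theorem~\ref{theorem RR} to the one-cusped manifold obtained by filling all other boundary components. The differences are minor. You induct on the number of non-degenerate factors for a fixed $N$, rather than on the number of boundary components via the filled manifolds $N^0,N^1$. You also make explicit two points the paper leaves implicit: that the filled manifold $Y$ is a rational homology solid torus, and that the arc $L(Y)$ through $a,b$ must be the prescribed arc $I_1$ because it avoids the homological longitude.
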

\begin{proof}
We proceed by induction on the number of boundary components of $N$. When $\partial N$ is connected, this follows directly from Theorem \ref{theorem RR}. Assume $N$ has boundary components $T_1,\dots, T_n$, and let
\[R=I_1\times\cdots \times I_n\subset \Sl^*(N)
\]
be a rectangle whose vertices are contained in $L(N)$. We can assume that $R$ is non-degenerate, since otherwise we could easily conclude by using the inductive hypothesis. 

Let  \((x^{\varepsilon_1}_1,\dots, x^{\varepsilon_n}_n)\), where \(\varepsilon_i \in\{0,1\}\) for  \(i=1,\dots, n \),  denote the vertices of $R$, and consider the manifolds $N^0$ and $N^1$ obtained by filling $N$ along the slope $x_1^0$ and $x_1^1$, respectively. These manifolds are exterior of links in rational homology spheres, and have $n-1$ boundary components each. By inductive hypothesis, the rectangles
\begin{align*}
R^0=I_2\times\cdots \times I_n\subset \Sl^*(N^0)\\
R^1=I_2\times\cdots \times I_n\subset \Sl^*(N^1)
\end{align*}
are contained in \(L(N^0)\) and \(L(N^1)\) respectively. In other words, the degenerate rectangles 
\[\{x_1^0\}\times I_2\times\cdots \times I_n \text{ and }\{x_1^1\}\times I_2\times\cdots \times I_n
\] are contained in $L(N)$. This implies the desired conclusion, by using again the inductive hypothesis on the rectangles $I_1\times \{p_1\}\times \dots \times \{p_n\}$, where the $p_i$ range in $I_i$ for all $i=2,\dots, n$.
\end{proof}

\begin{remark}\label{rem: longitudes 2-comp}
Observe that a direct computation using Mayer-Vietoris shows that if $L\subset S^3$ is a two-component link, with exterior $E_L$, then  
\[
\Sl^*(E_L)=\Sl(E_L)\setminus \left\{x=\frac{\operatorname{lk}(L)^2}{y}\right\},
\]
where we parametrise $H_1(\partial E_L; \Z)$ with the canonical meridian-longitude bases of the link components, and we set $\frac{0}{0}=\infty$.
\end{remark}
We conclude the section with the following consequence of Proposition~\ref{prop: box-convex}.
\begin{lem}\label{lem: small surgery implies gen L space link}
Let \(L\subset S^3\) be an oriented two-component link with unknotted components \(K_1,K_2\). Suppose that \(S^3_{r,s}(L)\) is an \(L\)-space, for some non-zero rational numbers \(r,s\) of the same sign satisfying \(rs<\operatorname{lk}(L)^2\). Then \(L\) is a generalised \((+,-)\) and \((-,+)\,L\)-space link. Moreover, if \(r\) and \(s\) are both positive, then for each integer \(m<0\) the manifolds \(S^3_{\frac{1}{m},\bullet}(L)\) and \(S^3_{\bullet,\frac{1}{m}}(L)\) are exteriors of \(L\)-space knots in \(S^3\). Analogously, if \(r\) and \(s\) are both negative, then for each integer \(m>0\) the manifolds \(S^3_{\frac{1}{m},\bullet}(L)\) and \(S^3_{\bullet,\frac{1}{m}}(L)\) are exteriors of mirrors of \(L\)-space knots in \(S^3\).
\end{lem}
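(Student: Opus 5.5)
The plan is to apply the box-convexity of Proposition~\ref{prop: box-convex} to the link exterior $E_L$, using the fact that the components are unknotted. We work with canonical meridian--longitude coordinates and write $l=\operatorname{lk}(L)$. By Remark~\ref{rem: longitudes 2-comp}, $\Sl^*(E_L)$ is the complement of the curve $xy=l^2$. Assume first that $r,s>0$, so that $0<rs<l^2$ (in particular $l\neq 0$). Then the point $(r,s)$ lies in the region between the coordinate axes and the hyperbola branch $xy=l^2$ in the first quadrant.

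The key observation is that fillings with a coefficient $\infty$ are $L$-spaces, or are rational homology spheres, up to the excluded slope. Indeed, $S^3_{\infty,y}(L)=S^3_y(K_2)$ is a lens space, hence an $L$-space, for every $y\neq 0$. Similarly $S^3_{x,\infty}(L)$ is an $L$-space for $x\ne0$. In $\Sl(E_L)$, each factor is a circle $\Q\cup\{\infty\}$, and a closed interval may pass through $\infty$.

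I would consider rectangles $R=I_1\times I_2$ with $I_1=[r,\infty]$ passing through $+\infty$ to negative values, that is, $I_1=\{x\ge r\}\cup\{\infty\}\cup\{x\le a\}$ for $a<0$, and similarly $I_2$ running from $s$ through $\infty$ to $b<0$. The vertices are $(r,s)$, $(r,b)$, $(a,s)$ and $(a,b)$. For box-convexity we need $R\subset\Sl^*(E_L)$, that is, $R$ must avoid the hyperbola $xy=l^2$. The positive branch of the hyperbola meets $\{x\ge r\}\times\{y\ge s\}$, since $rs<l^2$, so this choice fails. I would therefore instead take $I_1$ to be the interval from $r$ going downward through $0$ to negative values, and then through $\infty$. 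Making such choices requires care; it is the main obstacle.

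My concrete plan is as follows. Start from the degenerate rectangles $\{\infty\}\times[\text{any}]$ and $[\text{any}]\times\{\infty\}$, which lie in $L(E_L)$ away from $0$. First use $[r,\infty]\times\{s\}$, going upward. This segment avoids the hyperbola only if $x s\neq l^2$ for all $x\geq r$, which fails. So go instead through $x\in(-\infty,r]$, that is, $I_1=[\infty\to r]$ through the negatives and $0$. On this interval $xs<l^2$ always holds, since $x\le r$ gives $xs\le rs<l^2$. Its endpoints $(\infty,s)$ and $(r,s)$ are $L$-spaces, so box-convexity in one variable (Theorem~\ref{theorem RR}) gives that $(x,s)$ is an $L$-space for all $x<r$, including $x=1/m$ with $m<0$.

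The same argument applied with the second slope fixed at $s'\le s$, $s'\ne0$, uses the rectangle $[-\infty,r]\times[-\infty,s]$ with vertices $(\infty,\infty)$, $(\infty,s)$, $(r,\infty)$ and $(r,s)$. This is a $2$-dimensional rectangle avoiding $xy=l^2$ except possibly in the third quadrant. Negative times negative gives positive products, which can equal $l^2$, so the rectangle must be truncated. I would handle this by fixing $x=1/m$ with $m<0$: then $xy=l^2$ forces $y=ml^2\le -l^2$. Hence for every $y$ in the interval from $ml^2$ through $\infty$ to $s$, the filling is an $L$-space. This set contains all sufficiently large positive and sufficiently negative $y$, except a bounded set. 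Thus $S^3_{1/m,\bullet}(L)$, which is the exterior of a knot in $S^3$ because $K_1$ is unknotted, has an interval of $L$-space slopes containing $\infty$ and positive slopes. Therefore it is an $L$-space knot exterior, proving the ``moreover'' statement.

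For the generalised $(+,-)$ claim, I would use the same rectangle to show that $(x,y)$ is an $L$-space for $x\gg0$ and $y\ll0$. Take $I_1$ from $r$ up through $\infty$ and $I_2$ from $s$ down through $0$ to $\infty$. Then $xy<0<l^2$ holds on the mixed-sign quadrant, and care is needed only near the vertices. Large integer surgeries then lie in this set. The $(-,+)$ case follows by symmetry, and the case $r,s<0$ follows by mirroring. The main obstacle is choosing intervals whose rectangles avoid the hyperbola while still having all vertices known to be $L$-spaces.
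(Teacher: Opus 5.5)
Your first step, on the line $y=s$, is the same as the paper's. There $(\infty,s)$ and $(r,s)$ are $L$-space fillings, and $xs\le rs<l^2$ for $x\le r$, so $(x,s)$ is an $L$-space for every $x\le r$.

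\textbf{The gap: the rectangle for the generalised $(+,-)$ claim.} You take $I_1=\{x\ge r\}\cup\{\infty\}$ and $I_2=\{y\le s\}\cup\{\infty\}$. This rectangle is not contained in $\Sl^*(E_L)$. Besides the mixed-sign quadrant, it contains the whole strip $[r,\infty)\times(0,s]$. That strip meets the hyperbola at $(l^2/s,s)$, because $l^2/s>r$. The rectangle also contains $(\infty,0)$, which gives $S^3_0(K_2)=S^1\times S^2$. So box-convexity cannot be applied, and its conclusion would actually be false on this rectangle. The problem is not ``care near the vertices''; it is that your second interval starts at the positive value $s$.

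The fix, which is what the paper does, is to first run your one-variable argument on the line $x=r$. The points $(r,\infty)$ and $(r,s)$ are $L$-spaces, and $ry\le rs<l^2$ for all $y\le s$. Hence $(r,y)$ is an $L$-space for every $y\le s$, in particular for some $s'<0$. Only then take the rectangle $(\{x\ge r\}\cup\{\infty\})\times(\{y\le s'\}\cup\{\infty\})$. Its finite points all have $xy<0$, and it avoids $(0,\infty)$ and $(\infty,0)$. Its vertices $(r,s')$, $(r,\infty)$, $(\infty,s')$, $(\infty,\infty)$ are $L$-spaces, so every $(x,y)$ with $x\ge r$, $y\le s'$ is an $L$-space. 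The $(-,+)$ case comes the same way from your line $y=s$, using $(\{x\le r'\}\cup\{\infty\})\times(\{y\ge s\}\cup\{\infty\})$ with $r'<0$.

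\textbf{A smaller error in the ``moreover'' part.} That last rectangle also gives the ``moreover'' part directly, and it shows your interval there is misidentified. On the line $x=1/m$ with $m<0$, the known $L$-space slopes are $s$ and $\infty$, and the homological longitude $ml^2$ is negative. So Theorem~\ref{theorem RR} gives the arc $\{y\ge s\}\cup\{\infty\}$. Your arc ``from $ml^2$ through $\infty$ to $s$'' contains the homological longitude itself. Your claim that sufficiently negative $y$ give $L$-spaces is also false in general. For example, $L'_{n,k}$ satisfies the hypotheses with $r=s=1$, yet its $(-\tfrac{1}{|m|},y)$-surgery is an $L$-space only for $y\ge 1-2n-|m|l$. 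Since $\{y\ge s\}$ already shows that $S^3_{1/m,\bullet}(L)$ is the exterior of an $L$-space knot, this part only needs the interval corrected. The $(+,-)$ step is where an argument is actually missing.
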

\begin{figure}[]
   \centering   \includegraphics[width=0.45\textwidth]{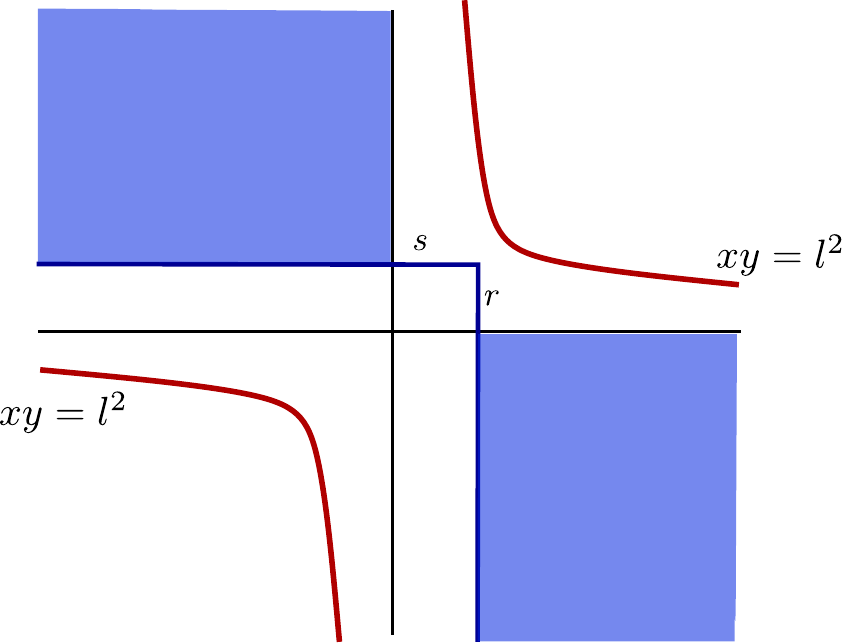}
    \caption{The figure shows part of the $L$-space surgeries slopes of a link with two unknotted components in $S^3$, with linking number $l$, and whose $(r,s)$-surgery is an $L$-space.}
    \label{figure: pictorial sketch 1}
\end{figure}
\begin{proof}
We assume $r,s>0$, the other case being analogous, and we denote $l=\operatorname{lk}(L)$ and $E_L$ the exterior of $L$. Since the components of $L$ are unknots, we have \[
(r',\infty), (\infty, s')\in L(E_L) \text{ for every } r'\ne 0, s'\ne 0.\]
By hypothesis, $(r,s)\in L(E_L)$, and hence by box-convexity and Remark~\ref{rem: longitudes 2-comp}, the rectangle $[-\infty, r]\times \{s\}$ is contained in $L(E_L)$, i.e. $S^3_{r',s}(L)$ is an $L$-space for all $r'\leq r$. By box-convexity again, we have that for each pair of rationals $(r',s')$ with $r'<0, s'>s$
the surgery $S^3_{r',s'}(L)$ is an $L$-space. Notice that this implies that $L$ is a generalised $(-,+)\,L$-space link, and that, for $m<0$, large enough surgeries on the knot $K\subset S^3$ whose exterior is $S^3_{\frac{1}{m},\bullet}$ are $L$-spaces, \ie $K$ is an $L$-space knot. 

Repeating the same argument starting with the rectangle \(\{r\}\times [-\infty,s]\) one shows that $L$ is a generalised $(+,-)\,L$-space link, and that, for $m<0$, the knot $K'\subset S^3$ whose exterior is $S^3_{\bullet,\frac{1}{m}}$ is an $L$-space knot. Figure \ref{figure: pictorial sketch 1} shows a pictorial sketch of the proof.
\end{proof}

\section{Classification of the $L$-space surgeries}\label{sec: classification}
In this section we classify the set of $L$-space surgeries on the links \(L_{n,k}\) and \(L'_{n,k}\). The classification for the links $L_{n,k}$ is simpler, and is presented in Section \ref{subsec: class for Lnk}. The majority of the section is devoted to the case of the links \(L'_{n,k}\), and is presented in Section \ref{subsec: class for L'nk}.
\newline

We start by observing the following.
\begin{lem}\label{lemma: mirror of generalised L-space links}
The mirror of the link $L'_{n,k}$ is the link $L'_{k+1,n-1}$.
\end{lem}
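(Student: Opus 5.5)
The plan is to use Schubert's classification, Theorem~\ref{teo: classification two-bridge links}, together with the explicit fractions computed in Section~\ref{subsubsec: Alex poly of link L'_n,k}. There we recorded that $L'_{n,k}$ is the mirror of $b(4nk+2n-2k,\,2n-1)$, i.e.
\[
L'_{n,k}=b(4nk+2n-2k,\,-(2n-1)).
\]
The mirror of $b(p,q)$ is $b(p,-q)$, since mirroring negates every $a_i$ in the continued fraction~\eqref{eq:continued fraction}. Hence the mirror of $L'_{n,k}$ is $b(4nk+2n-2k,\,2n-1)$. I will therefore compute the fraction of $L'_{k+1,n-1}$ and compare the two.

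Substituting $(n,k)\mapsto(k+1,n-1)$ gives
\[
p'=4(k+1)(n-1)+2(k+1)-2(n-1)=4nk+2n-2k,
\]
the same numerator $p$, and $q'=-(2k+1)$. So $L'_{k+1,n-1}=b(p,-(2k+1))$.

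By Theorem~\ref{teo: classification two-bridge links} (the unoriented version suffices, since the statement concerns unoriented links), it remains to check that $-(2k+1)\equiv (2n-1)^{\pm1} \pmod p$, or the analogous congruence mod $2p$ allowing orientation reversal. I will verify the inverse relation directly:
\[
(2n-1)(2k+1)=4nk+2n-2k-1=p-1\equiv -1 \pmod p.
\]
Thus $-(2k+1)\equiv (2n-1)^{-1} \pmod p$, which is exactly Schubert's criterion. Mod $2p$ the congruence also holds up to adding $p$, and that only affects orientations.

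An alternative route avoids arithmetic altogether. Remark~\ref{rem: L_n,k other description} and the plumbing description $L'_{n,k}=L(2,\dots,2,2n)$ of length $2k+1$ give a plumbing chain; mirroring negates its weights, and Lemma~\ref{lemma: operation on plumb graphs 1.5} then reconverts the result to the form of $L'_{k+1,n-1}$.

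The main point needing care is the sign and mirror convention: making sure the mirror of $b(p,q)$ is $b(p,-q)$ in this paper's orientation conventions, consistent with Remark~\ref{rem: linear_plumbing}. If a sign slips there, the check becomes $2k+1\equiv(2n-1)^{-1}$, which fails, so I would double-check it with the smallest case, $n=2,k=1$.
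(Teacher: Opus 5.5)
Your proposal is correct and is essentially the paper's own argument: writing $L'_{n,k}=b(pq+1,-q)$ with $p=2k+1$, $q=2n-1$, its mirror is $b(pq+1,q)$, and Schubert's classification (Theorem~\ref{teo: classification two-bridge links}) identifies this, as unoriented links, with $b(pq+1,-p)=L'_{k+1,n-1}$ because $q\cdot(-p)\equiv 1 \pmod{pq+1}$. This is exactly your congruence $(2n-1)(2k+1)\equiv -1 \pmod p$, and your remark that modulo $2p$ one lands in the orientation-reversing case $qq'\equiv 1+p$ is also right.
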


\begin{proof}
The link $L'_{n,k}$ is equal to $b(pq+1,-q)$, where $p=2k+1$ and $q=2n-1$. So its mirror is $L=b(pq+1,q)$, and by Theorem~\ref{teo: classification two-bridge links} this is isotopic, as unoriented link, to $b(pq+1,-p)=L'_{k+1,n-1}$
\end{proof}

In particular, the previous lemma implies that the links $L'_{n, n-1}$ are amphichiral. 

\begin{prop}\label{prop: finding some L-space surg on nk/nk'}
For $n,k\geq 1$ the \((n+k-1,n+k-1)\)-surgery on \(L_{n,k}\) is an $L$-space. Moreover, there exists a rational number \(r\ne 0\) satisfying \(r^2<(n+k)^2\) such that \(S^3_{r,r}(L'_{n,k})\) is an $L$-space. In particular \(L'_{n,k}\) is a generalised $(+,-)$ and $(-,+)\,L$-space link.
\end{prop}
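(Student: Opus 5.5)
We want to exhibit explicit $L$-space surgeries. The natural tool is a Kirby-calculus identification of the relevant surgeries with manifolds already known to be $L$-spaces. For this we use the plumbing descriptions of Section~\ref{subsec: plumbing calculus}, which realise the links as $L(2,\dots,2,2z)$, together with the surgery presentations from the proofs of Propositions~\ref{prop: some ctf Lnk} and~\ref{prop: some ctf on L'nk}: $L_{n,k}$ is $(-\frac1k)$-surgery on $K'_3\subset\mathcal{L}_n$, and $L'_{n,k}$ is $(\frac1n)$-surgery on $K'_3\subset\mathcal{L}_k$.

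For $L_{n,k}$, both components are unknots, so we first perform $(n+k-1)$-surgery on $K_2$. This turns $K_1$ into a knot $K$ in a lens space. Writing $L_{n,k}=L(-2n,1,-2k)$, we would do a blow-down/slam-dunk and identify $K$ as a $(1,1)$-knot, or more concretely a simple knot, in a lens space. If that works, $K$ is Floer simple. A cleaner alternative is to show directly that the double branched cover picture makes $S^3_{n+k-1,n+k-1}(L_{n,k})$ Seifert fibered with base $S^2$ and three exceptional fibres, and then check that it is an $L$-space using the Lisca--Stipsicz criterion \cite{LiscaStipsicz}.

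Concretely, we will use that $L_{n,k}$ is surgery on the four-component chain-like link in Figure~\ref{fig: $L8n7$}, or on $\mathcal{L}_n$. Applying surgery $n+k-1$ on both components and then slam-dunking the $-\frac1k$ component should yield a plumbing of a star-shaped graph. Once the manifold is identified as a small Seifert fibered space $M(e_0;r_1,r_2,r_3)$, we apply the known criterion: $M$ is an $L$-space iff it carries no horizontal foliation. Equivalently, with $e_0=-1$ normalisation, it is an $L$-space iff there are no coprime $m>a$ with $mr_1<a<m(1-r_2)$ and the corresponding third inequality. Alternatively, we could show that the $(n+k-1,n+k-1)$-surgery is a double branched cover of a quasi-alternating link. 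This is plausible since the diagram comes from an alternating two-bridge diagram with integer tangles added.

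For $L'_{n,k}$, we look for $r$ with $r^2<(n+k)^2$; note that $\operatorname{lk}(L'_{n,k})=\pm(n+k)$ up to convention. By Lemma~\ref{lem: small surgery implies gen L space link}, this gives the generalised $L$-space link conclusion immediately once we find such an $r$, so the final sentence of the statement is formal. We take $r=1$ (or $r=-1$ after fixing signs) and show that $S^3_{1,1}(L'_{n,k})$ is an $L$-space. Since both components are unknots, $1$-surgery on $K_2$ returns $S^3$ and $K_1$ becomes a twisted knot, obtained by adding a full twist to the $2n+\dots$ strands passing through $K_2$. From the diagram of $L'_{n,k}=L(2,\dots,2,2n)$, the image of $K_1$ should be a torus knot or a cable. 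We then check that the induced framing on it, namely $1-\operatorname{lk}^2$ in the usual twisting formula, lies at or above $2g-1$ (or below its mirror bound). An alternative route is to use the other presentation $L'_{n,k}=(\frac1n)$-surgery on $\mathcal{L}_k$, which turns the computation into an explicit Seifert fibered space.

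The main obstacle is identifying the image knot, or the resulting Seifert invariants, precisely enough to verify the $L$-space condition. If the twisted knot is not a torus knot, we fall back to the Seifert fibered/Lisca--Stipsicz computation, or to the quasi-alternating argument. Sign and orientation conventions (mirror, and Lemma~\ref{lemma: mirror of generalised L-space links}) must be tracked carefully so that $r$ has the correct sign.
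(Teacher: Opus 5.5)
Your proposal is a list of strategies rather than a proof. Every step that would actually certify an $L$-space is stated as something that ``should'' hold, and none of them is carried out.

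\emph{The $L_{n,k}$ part.}
\begin{itemize}
\item Knowing that $K_1$ becomes a $(1,1)$-knot in the lens space $S^3_{n+k-1}(K_2)$ does not make it a simple knot.
\item Even Floer simplicity of its exterior would not place the particular slope $n+k-1$ inside the interval of $L$-space slopes. For that you need two known $L$-space fillings (or a torsion computation) together with the position of the homological longitude.
\item The slam-dunk is not available: $K'_3$ is a crossing circle around two strands, not a meridian of an integrally framed component.
\item Nothing supports the claim that $S^3_{n+k-1,n+k-1}(L_{n,k})$ is a small Seifert fibred space, or a branched double cover of a quasi-alternating link.
\end{itemize}

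\emph{The $L'_{n,k}$ part.} The value $r=\pm1$ is fine numerically, but the step ``the image of $K_1$ should be a torus knot or a cable'' fails. Write $L'_{n,k}=\hat\beta\cup\mathfrak a$ and $l=n+k$. Then $\pm1$-surgery on the axis leaves (the mirror of) a twisted positive braid closure such as $\widehat{(\sigma_k\cdots\sigma_1)^2(\sigma_{l-1}\cdots\sigma_1)^{l-1}}$.

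For example, take $+1$-surgery on a component of $L'_{3,1}$, the mirror of $L'_{2,2}$. The resulting knot has Alexander polynomial $t^{10}-t^9+t^7-t^6+t^5-t^4+t^3-t+1$. This is Lehmer's polynomial up to $t\mapsto -t$; it is irreducible and not cyclotomic, so the knot is neither a torus knot nor a cable.

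There is also a circularity problem. That these twisted knots are (mirrors of) $L$-space knots is something the paper \emph{derives from} the present proposition (Proposition~\ref{prop: L-space knots as surgeries on L'}), so it cannot be used as input here.

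\emph{The missing idea.} Realise both families as Dehn fillings of a single rational homology solid torus, then apply Theorem~\ref{theorem RR}.
\begin{itemize}
\item Take $\mathcal{L}=L_{1,k}\cup C$, where $C$ is a crossing circle around one strand of each component. Then
\[
S^3_{r_1,r_2,-\frac{1}{n-1}}(\mathcal{L})=S^3_{r_1+n-1,\,r_2+n-1}(L_{n,k})
\quad\text{and}\quad
S^3_{r_1,r_2,\frac{1}{n+1}}(\mathcal{L})=S^3_{r_1-n-1,\,r_2-n-1}(L'_{n,k}).
\]
\item On $Y=S^3_{k,k,\bullet}(\mathcal{L})$, the fillings $\infty$ and $1$ give $S^3_{k,k}(L_{1,k})$ and $S^3_{k-1,k-1}(T(2,2k))$. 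Both are known $L$-spaces.
\item The determinant of the presentation matrix of $H_1(S^3_{k,k,p/q}(\mathcal{L}))$ is $(2k-1)(p-2q)$, so the homological longitude of $Y$ is the slope $2$.
\item Theorem~\ref{theorem RR} then says every slope $s\le 1$ is an $L$-space filling.
\item Taking $s=-\frac{1}{n-1}$ gives the claim for $L_{n,k}$.
\item Taking $s=\frac{1}{n+1}$ gives that $S^3_{k-n-1,\,k-n-1}(L'_{n,k})$ is an $L$-space, and $(k-n-1)^2<(n+k)^2$. When $k=n+1$, pass to the mirror using Lemma~\ref{lemma: mirror of generalised L-space links}.
\end{itemize}
Your final step, applying Lemma~\ref{lem: small surgery implies gen L space link} with $\operatorname{lk}=n+k$, is then correct.
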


\begin{proof}
Consider the three-component link $\mathcal{L}$ shown in Figure \ref{fig: L-space surgeries}. By construction, for each $n\geq 1$, we have
\begin{align*}
S^3_{r_1,r_2,-\frac{1}{n-1}}(\mathcal{L})=S^3_{r_1+n-1,r_2+n-1}(L_{n,k})
\\
S^3_{r_1,r_2,\frac{1}{n+1}}(\mathcal{L})=S^3_{r_1-n-1,r_2-n-1}(L'_{n,k}).
\end{align*}
Denote by $Y$ the rational homology solid torus \(S^3_{k,k,\bullet}(\mathcal{L})\). The slopes $1$ and $\infty$ are both $L$-space filling slopes for $Y$. In fact \(S^3_{k,k,1}(\mathcal{L})= S^3_{k-1,k-1}(T(2,2k))\) is an $L$-space by \cite[Corollary~3.6]{yajing_lspace}, and \(S^3_{k,k,\infty}(\mathcal{L})= S^3_{k,k}(L_{1,k})\) is an $L$-space by \cite[Proposition~1.5]{San23twobridge}.
\begin{figure}[]
    \centering
    \includegraphics[width=0.23\textwidth]{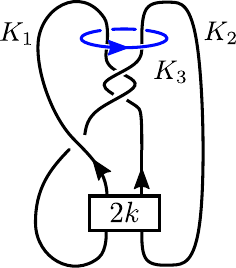}
    \caption{The link \(\mathcal{L}\).}
    \label{fig: L-space surgeries}
\end{figure}

We show that the homological longitude of $Y$ is the slope $2$. To do so, we fix orientations of the components of \(\mathcal{L}\) as in Figure~\ref{fig: L-space surgeries}. By computing the linking numbers among the components, we deduce that for a rational number \(\frac{p}{q}\), a presentation matrix for the first homology group of \(S^3_{k,k,\frac{p}{q}}(\mathcal{L})\) is given by
    $$
    A=\begin{pmatrix}
    k &k-1 &-q \\
    k-1 &k &q \\
    -1 &1 &p
    \end{pmatrix}
    $$
    and in particular \(S^3_{k,1,\frac{p}{q}}(\mathcal{L})\) is not a rational homology sphere if and only if the determinant of $A$ is zero. By direct computation, one sees that this happens if and only if $p=2q$, and therefore $2$ is the homological longitude of the manifold \(S^3_{k,k,\bullet}(\mathcal{L})\).

This implies, by Proposition \ref{prop: box-convex}, that \(S^3_{k,k,s}(\mathcal{L})\) is an $L$-space for all $s\leq 1$, proving the desired conclusion regarding \(L_{n,k}\). For what concerns \(L'_{n,k}\) we deduce that \((k-n-1,k-n-1)\) surgery on \(L'_{n,k}\) is an $L$-space. This provides the desired result unless $k=n+1$. In such a case, we can just consider the mirror of \(L'_{n,k}\), using Lemma \ref{lemma: mirror of generalised L-space links}. The last part of the statement follows from Lemma \ref{lem: small surgery implies gen L space link} and the fact that there exists an orientation of \(L'_{n,k}\) such that $l=n+k$.
\end{proof}

\subsection{Classification for the links \texorpdfstring{$L_{n,k}$}{Lnk}}\label{subsec: class for Lnk}
We are now ready to prove Theorem~\ref{thm: L-space links}, which we recall here.

\begin{namedtheorem}[\ref{thm: L-space links}]
A hyperbolic two-bridge link $L$ is an $L$-space link if and only if it is isotopic to $L_{n,k}$ for some $n\geq 1, k\geq 1$. In this case, for rationals $r,s$, the $(r,s)$-surgery on $L_{n,k}$ is an $L$-space if and only if $\operatorname{min}\{r,s\}\geq n+k-1$.
\end{namedtheorem}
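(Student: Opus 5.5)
The proof splits into three parts: the classification of which hyperbolic two-bridge links are $L$-space links, the sufficiency of $\min\{r,s\}\geq n+k-1$, and its necessity.

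\textbf{Classification.} If a hyperbolic two-bridge link $L$ is an $L$-space link, it has non-trivial $L$-space surgeries. By Theorem~\ref{thm: persistently fol two-br} (which gives the "only if" half of Theorem~\ref{thm: classification of links with L-space surgeries}), $L$ is then isotopic, up to mirror, to some $L_{n,k}$ or $L'_{n,k}$. Hyperbolicity excludes the torus links, so $n,k\geq 1$ in the first family and $n\geq 2$ in the second. I would rule out the remaining cases as follows.
\begin{itemize}
\item The links $L'_{n,k}$ and the mirrors of $L_{n,k}$ admit taut foliations on all surgeries with both coefficients very negative (Proposition~\ref{prop: some ctf on L'nk}, applied with Lemma~\ref{lemma: mirror of generalised L-space links}). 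This excludes the mirrors of $L'_{n,k}$.
\item $L'_{n,k}$ itself is excluded by Proposition~\ref{prop: some ctf Lnk}-type foliations, after checking orientations and linking-number conventions.
\item The mirror of $L_{n,k}$ is excluded because large positive surgeries on it are large negative surgeries on $L_{n,k}$, and these carry taut foliations by Proposition~\ref{prop: some ctf Lnk}.
\end{itemize}
Conversely, the region established in the next step contains all large positive surgeries, so $L_{n,k}$ is an $L$-space link.

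\textbf{Sufficiency.} By Proposition~\ref{prop: finding some L-space surg on nk/nk'}, $S^3_{N,N}(L_{n,k})$ is an $L$-space for $N=n+k-1$. Both components are unknots, so every surgery of the form $(r,\infty)$ or $(\infty,s)$ with $r,s\neq 0$ is a lens space. The homological-longitude curve of Remark~\ref{rem: longitudes 2-comp} is $xy=(n-k)^2$, and since $N^2>(n-k)^2$, the box $[N,\infty]\times[N,\infty]$ avoids it. Its four vertices are $(N,N)$, $(N,\infty)$, $(\infty,N)$ and $(\infty,\infty)$, all of which are $L$-spaces, so Proposition~\ref{prop: box-convex} shows every $(r,s)$ with $\min\{r,s\}\geq N$ is an $L$-space.

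\textbf{Necessity.} By the symmetry of Lemma~\ref{lemma: symmetric}, we may assume $r<N$. There are three ranges:
\begin{itemize}
\item If $r<|n-k|+1$, Proposition~\ref{prop: some ctf Lnk} gives a taut foliation, so the surgery is not an $L$-space.
\item If $r\in(N-2,N)$, Proposition~\ref{prop: L_n_k_non_L_space} directly obstructs it.
\item In the intermediate range, I fix $s$ and consider $Y=S^3_{\bullet,s}(L_{n,k})$. If $r_0$ in this range gave an $L$-space, then Theorem~\ref{theorem RR} would make $L(Y)$ an interval containing $r_0$ and $\infty$. That interval either passes through $(N-2,N)$, contradicting Proposition~\ref{prop: L_n_k_non_L_space}, or wraps through $-\infty$ into the small-$r$ region where the surgeries carry foliations. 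This wrap-around is blocked only if the homological longitude $(n-k)^2/s$ lies in between. I would handle that subcase using Theorem~\ref{theorem RR}'s description of the endpoints through $D^\tau_{>0}$, or by combining box-convexity in the $s$-direction with the known non-$L$-space region.
\end{itemize}

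I expect the interval argument in this intermediate range, specifically controlling where the homological longitude sits, to be the main obstacle.
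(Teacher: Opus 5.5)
Your outline is the paper's proof. Sufficiency is box-convexity from the $L$-space $(n+k-1,n+k-1)$. Necessity fixes $s$ and traps $L(Y)$, with $Y=S^3_{\bullet,s}(L_{n,k})$, between Proposition~\ref{prop: some ctf Lnk} and Proposition~\ref{prop: L_n_k_non_L_space}.

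The ``main obstacle'' you flag does not exist. You may assume $s\geq |n-k|+1\geq 1$, since otherwise Proposition~\ref{prop: some ctf Lnk} already applies; then $Y$ is a rational homology solid torus with $\infty\in L(Y)$. If also $r_0\in L(Y)$, the interval $L(Y)$ contains one of the two arcs of $\overline{\mathbb{Q}}$ joining $r_0$ to $\infty$. The homological longitude $(n-k)^2/s$ differs from both endpoints, so it lies in the interior of exactly one arc, and $L(Y)$ must contain the other.
\begin{itemize}
\item If $(n-k)^2/s<r_0$, the longitude is on the arc through $-\infty$. You are then forced onto $\{r'\geq r_0\}\cup\{\infty\}$, which meets $(n+k-3,n+k-1)$.
\item If $(n-k)^2/s>r_0$, you are forced onto $\{r'\leq r_0\}\cup\{\infty\}$, which meets the foliated region $r'<1$.
\end{itemize}
So the dichotomy you already wrote is exhaustive, and the ``subcase'' where the longitude blocks the wrap-around is just your first alternative. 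This is exactly the paper's split into $r<l^2/s$ and $r>l^2/s$; no work with $D^\tau_{>0}$ is needed.

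There is one error in the classification step. Proposition~\ref{prop: some ctf Lnk} says nothing about $L'_{n,k}$. Also, the mirrors of $L_{n,k}$ do \emph{not} carry foliations on very negative surgeries: up to orientation reversal those are very positive surgeries on $L_{n,k}$, hence $L$-spaces. The $L_{n,k}$ in the statement of Proposition~\ref{prop: some ctf on L'nk} is a typo for $L'_{n,k}$.

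The correct argument, as in the paper, has two parts.
\begin{itemize}
\item Proposition~\ref{prop: some ctf on L'nk} gives foliations on very negative surgeries on $L'_{n,k}$, which rules out the mirror of $L'_{n,k}$.
\item By Lemma~\ref{lemma: mirror of generalised L-space links}, $L'_{n,k}$ is the mirror of $L'_{k+1,n-1}$, with $k+1\geq 2$ and $n-1\geq 1$. Applying the same proposition to $L'_{k+1,n-1}$ rules out $L'_{n,k}$ itself.
\end{itemize}
No check of orientations or linking-number conventions is needed, since surgery on a link in $S^3$ does not depend on the orientations of its components. Your exclusion of the mirror of $L_{n,k}$ via Proposition~\ref{prop: some ctf Lnk} is correct.
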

\begin{proof}
First of all, we observe that if $n,k\geq 1$, then $L_{n,k}$ is a hyperbolic $L$-space link. Since two-bridge links are alternating, by Menasco \cite{Menasco} we have that a two-bridge link is either hyperbolic or a torus link. If $n,k\geq 1$, then Proposition~\ref{prop: some ctf Lnk} implies that $L_{n,k}$ is not a generalised $(+,-)\,L$-space link, and hence is not a torus link. This shows that $L_{n,k}$ is hyperbolic.  

Let us fix an orientation on $L_{n,k}$ such that $\operatorname{lk}(L_{n,k})=n-k$.
We know from Proposition~\ref{prop: finding some L-space surg on nk/nk'} that $(n+k-1,n+k-1)$-surgery on $L_{n,k}$ is an $L$-space. Since $(n+k-1)^2>(n-k)^2$, we deduce from Proposition~\ref{prop: box-convex} and Remark~\ref{rem: longitudes 2-comp} that $S^3_{r,s}(L_{n,k})$ is an $L$-space when $\operatorname{min}\{r,s\}\geq n+k-1$. In particular, we have that $L_{n,k}$ is a hyperbolic $L$-space link.

On the other hand, assume that $L$ is a hyperbolic $L$-space two-bridge link. Then, by Theorem~\ref{thm: persistently fol two-br}, $L$ must be isotopic, up to mirror, to one of the links \(L_{n,k}\) or \(L'_{n,k}\). However, by Proposition~\ref{prop: some ctf on L'nk} and Lemma~\ref{lemma: mirror of generalised L-space links}, the links $L'_{n,k}$ are not $L$-space links, unless they are torus links. This, and the assumption that $L$ is hyperbolic, implies that $L$ is isotopic to $L_{n,k}$, with $n,k\geq 1$, or their mirrors.  However, by Proposition~\ref{prop: some ctf Lnk} the mirror of $L_{n,k}$ is not an $L$-space link. Hence $L$ is isotopic to $L_{n,k}$, for some $n.k\geq 1$.

We conclude the proof by showing that if   \(\operatorname{min}\{r,s\}< n+k-1\), then the $(r,s)$-surgery on \(L_{n,k}\) is not an $L$-space.
Assume, by contradiction, that $S^3_{r,s}(L_{n,k})$ is an $L$-space. 
Since two-bridge links are symmetric, we can assume that $r<n+k-1$. Moreover, by Proposition \ref{prop: some ctf Lnk} and Proposition \ref{prop: L_n_k_non_L_space}, we must have \[
1\leq r<n+k-3 \text{ and } 1\leq s.\]
Denote by $K_1$ and $K_2$ the components of $L_{n,k}$, and consider the manifold $Y=S^3_{\bullet,s}(L_{n,k})$ obtained by drilling $K_1$ and by performing $s$-surgery on $K_2$. By hypothesis there are two fillings of $Y$ that are $L$-spaces, corresponding to $S^3_{r,s}(L_{n,k)}$ and $S^3_{\infty, s}(L_{n,k})$, the latter being an $L$-space since the components of $L_{n,k}$ are unknotted. If $r<\frac{l^2}{s}$, by box-convexity and Remark \ref{rem: longitudes 2-comp} we deduce that $S^3_{r',s}(L_{n,k})$ is an $L$-space for all $r'\leq r$, contradicting Proposition \ref{prop: some ctf Lnk}. Analogously, if $r>\frac{l^2}{s}$, then the same conclusion holds for $S^3_{r',s}(L_{n,k})$, for $r'\geq r$, and since $r<n+k-3$, this contradicts Proposition \ref{prop: L_n_k_non_L_space}.

\begin{figure}[]
   \centering   \includegraphics[width=0.4\textwidth]{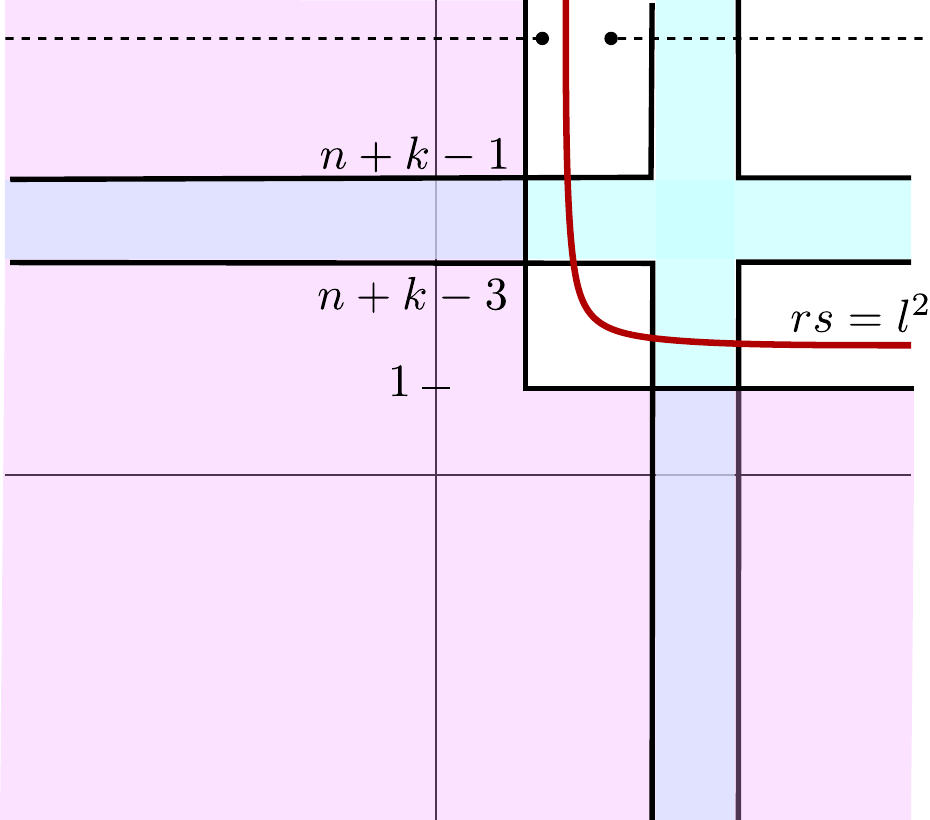}
    \caption{The rational points in the plane correspond to surgery coefficients for the link $L_{n,k}$. The coloured regions corresponds to the non-$L$-space surgeries provided by Proposition \ref{prop: some ctf Lnk} and Proposition \ref{prop: L_n_k_non_L_space}, and the red curve is one of the two connected components of the curve $rs=l^2$. If the dot represents the coefficients $(r,s)$ of an $L$-space surgery, then the dotted half-line would correspond to additional $L$-space surgeries provided by Theorem \ref{theorem RR}. In both cases, this half-line would intersect a region of non-$L$-space surgeries. }
    \label{figure: pictorial sketch}
\end{figure}

We conclude by observing that $r\ne \frac{l^2}{s}$, since otherwise $S^3_{r,s}(L_{n,k})$ would not be a rational homology sphere, and in particular not an $L$-space. A pictorial sketch of the proof is shown in Figure \ref{figure: pictorial sketch}.
\end{proof}

We observe that, if $n\ne 1$ and $k\ne 1$, then there are surgeries on $L_{n,k}$ that are non-$L$-space but are not covered by the result of Proposition~\ref{prop: some ctf Lnk}. We can combine Theorem~\ref{thm: L-space links} with the main result of \cite{Lyu25} to provide taut foliations on the remaining surgeries.

\begin{namedtheorem}[\ref{thm: CTF iff Lspace for Lnk}]
Let $n,k\geq 1$, and let $M$ be a Dehn surgery on the link $L_{n,k}$. Then $M$ is not an $L$-space if and only if it supports a coorientable taut foliation.    
\end{namedtheorem}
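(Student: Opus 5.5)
One direction holds in general: an $L$-space never supports a coorientable taut foliation \cite{OSgenus04,Bow16,KR17}, so a surgery on $L_{n,k}$ carrying one is not an $L$-space. All the work is in the converse. Let $M=S^3_{r,s}(L_{n,k})$ be a non-$L$-space. If $r$ or $s$ is $\infty$, then $M$ is surgery on an unknot, hence a lens space or $S^2\times S^1$; the first is excluded, and $S^2\times S^1$ carries the product foliation. So assume both coefficients are finite. The non-rational-homology-sphere case $rs=l^2$ either falls into the region handled below or is treated the same way: a foliation of the one-cusped manifold realising any non-meridional slope also covers it. By Theorem~\ref{thm: L-space links} we have $\min\{r,s\}<n+k-1$, and by symmetry of two-bridge links I take $r<n+k-1$. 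If $\min\{r,s\}<|n-k|+1$, Proposition~\ref{prop: some ctf Lnk} already gives a foliation. The remaining region is therefore
\[
|n-k|+1\le r<n+k-1,\qquad s\ge |n-k|+1 .
\]

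For this region I would use Lyu's theorem \cite{Lyu25} on persistently foliar $(1,1)$ non-$L$-space knots. Fix $s$ and let $Y_s=S^3_{\bullet,s}(L_{n,k})$. Since $K_2$ is unknotted and $L_{n,k}$ is two-bridge, one expects to present $Y_s$, possibly after a slam-dunk or a Rolfsen twist when $s=1/m$ is integral-reciprocal, as the exterior of a $(1,1)$ knot $K$ in the lens space $S^3_s(K_2)$. The cleanest choice is to fill $K_2$ with slope $1/m$, which realises $Y$ as a knot exterior in $S^3$; one then checks that this knot is a $(1,1)$ knot, because the exterior of $K_2$ is a solid torus in which $K_1$ is a $1$-bridge braid-like pattern.

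Next I use Theorem~\ref{thm: L-space links} to identify $L(Y_s)$. When $s\ge n+k-1$ it is the interval of slopes $r\ge n+k-1$. When $s<n+k-1$ no finite $r$ gives an $L$-space, so only $\infty$ is an $L$-space filling, $Y_s$ is not Floer simple, and the knot is a non-$L$-space knot. Lyu's result then says that all non-$L$-space fillings (non-Floer-simple case) or all slopes outside the $L$-space interval support coorientable taut foliations. This is the step I expect to be hardest: one has to verify that the hypotheses of \cite{Lyu25} fit, namely the ambient manifold, the $(1,1)$ structure, and the handling of the interval endpoints in the Floer simple case. For non-integral-reciprocal $s$ in particular, I would apply Lyu's theorem in the lens space $S^3_s(K_2)$ and check that $K_1$ stays $(1,1)$ there, which follows from the genus-one doubly pointed Heegaard diagram of the two-bridge link.

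Finally I combine the two inputs. Every $(r,s)$ in the remaining region with $S^3_{r,s}$ not an $L$-space is a non-$L$-space filling of the appropriate $Y_s$, so Lyu's theorem produces the foliation. Together with Proposition~\ref{prop: some ctf Lnk} for small coefficients and the trivial cases above, this covers every non-$L$-space surgery.
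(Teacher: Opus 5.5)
Your overall strategy is the right one: realise the surgery as a filling of a $(1,1)$ knot in a lens space and invoke \cite{Lyu25}. But you fill the wrong component, and this creates a genuine gap.

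After arranging $r<n+k-1$, you fix $s$ and treat $r$ as a filling slope of $Y_s=S^3_{\bullet,s}(L_{n,k})$. When $s\ge n+k-1$, the knot $K_1\subset S^3_s(K_2)$ is Floer simple, with $L$-space filling interval $[n+k-1,\infty]$. This case does occur in your remaining region, e.g.\ $(r,s)=(2,5)$ for $n=k=2$. There you need foliations on the slopes $r<n+k-1$ lying outside that interval. Lyu's theorem, as used in the paper, only says that a $(1,1)$ knot with \emph{no} non-trivial $L$-space surgeries is persistently foliar. It gives nothing for the non-$L$-space slopes of an $L$-space knot.

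You can see this from the paper itself. If Lyu's theorem covered the Floer simple case, the same one-component-filling argument would settle all surgeries on $L'_{n,k}$. For that link, every finite non-zero filling of one component leaves a $(1,1)$ $L$-space knot. The paper instead leaves exactly that case as an open problem.

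The fix is to fill the other component. Since $r<n+k-1$, Theorem~\ref{thm: L-space links} shows that $S^3_{r,s}(L_{n,k})$ is not an $L$-space for any finite $s$, because $\min\{r,s\}\le r<n+k-1$. So for $r\neq 0$, $K_2$ becomes a $(1,1)$ knot (see \cite{MorSak91,Saito04}) in the lens space $S^3_r(K_1)$ with no non-trivial $L$-space surgeries. It is therefore persistently foliar by \cite{Lyu25}. Hence every finite $s$, including the homological longitude, yields a coorientable taut foliation. The case $r=0$ is covered by Proposition~\ref{prop: some ctf Lnk}.

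This is exactly the paper's proof. It also makes your case split on $|n-k|+1$ and on the size of $s$ unnecessary.
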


\begin{proof}
Assume  $M=S^3_{r_1,r_2}(L_{n,k})$ and denote by \(K_1, K_2\) the components of $L_{n,k}$. If $r_1=\infty$ or $r_2=\infty$, then $M$ is either an $L$-space or $S^2\times S^1$ and there is nothing to prove. If $r_1=0$ or $r_2=0$, then the result follows from Proposition~\ref{prop: some ctf Lnk}. In the remaining cases, the manifold $S^3_{r_1}(K_1)$ is a lens space and the component $K_2$ corresponds to a $(1,1)$-knot \(K'_2\subset S^3_{r_1}(K_1)\), see for example \cite{MorSak91,Saito04}. We see from Theorem~\ref{thm: L-space links} that if $r_1<n+k-1$, then the knot $K'_2$ has no $L$-space surgeries, and hence is persistently foliar by \cite{Lyu25}. This implies that if
\[
r_1<n+k-1 \text{ and } r_2\in \Q
\]
then $M$ supports a coorientable taut foliation. By using the symmetry of two-bridge links and Theorem\ref{thm: L-space links}, we deduce that this happens for every non-$L$-space surgery on $L_{n,k}$.
\end{proof}
\subsection{Classification for the links \texorpdfstring{$L'_{n,k}$}{L'nk}}\label{subsec: class for L'nk}

In this section we prove Theorem ~\ref{thm: gen L-space links}, which classifies $L$-space surgeries on the (hyperbolic) links in the family $L'_{n,k}$. We can reformulate the statement of Theorem~\ref{thm: gen L-space links} by using the language of box-convex sets introduced in the previous section.
We denote $l=n+k$.

\begin{thm}[Reformulation of Theorem \ref{thm: gen L-space links}]\label{thm: reformulation}
A hyperbolic two-bridge link $L$ is a generalised $(+,-)\,L$-space link if and only if it is isotopic to \(L'_{n,k}\), with $n\geq 2, k\geq 1$. In this case the set of $L$-space surgeries \(L(L'_{n,k})\) is the box-convex hull of 
\[
\mathcal{B}\cup H_+\cup H_-\cup L_+\cup L_-,
\]
where
\[
\begin{array}{ll}
\mathcal{B}=\left(\{\infty\}\times \overline{\mathbb{Q}}\setminus\{0\}\right)\cup \left(\overline{\mathbb{Q}}\setminus\{0\}\times \{\infty\}\right)\\
   H_+=\left\{\left(\frac{1}{m},h_+(\frac{1}{m})\right), \left(h_+(\frac{1}{m}), \frac{1}{m})\right)\Big{|}\, m\in \Z_{>0}\right\} &\text{with } h_+(x)=2k+1+\frac{l}{x}\\
    H_-=\left\{\left(-\frac{1}{m},h_-(-\frac{1}{m})\right), \left(h_-(-\frac{1}{m}), -\frac{1}{m}\right)\Big{|}\, m\in \Z_{>0}\right\} &\text{with } h_-(x)=1-2n+\frac{l}{x} \\
    L_+=\Big{\{}\left(m,l_+(m)\right)\Big{|}\, m\in [2,2k+1]\cap\Z\Big{\}}&\text{with } l_+(x)=2k+3-x \\
    L_-=\Big{\{}(-m,l_-(-m))\Big{|}\, -m\in[1-2n,-2]\cap \Z_{>0}\Big{\}} &\text{with } l_-(x)=-1-2n-x
    \end{array}
\]
\end{thm}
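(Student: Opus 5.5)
The first part of the statement (which hyperbolic two-bridge links are generalised $(+,-)\,L$-space links) I would handle exactly as in the proof of Theorem~\ref{thm: L-space links}. By Theorem~\ref{thm: persistently fol two-br}, any two-bridge link with a non-trivial $L$-space surgery is, up to mirror, some $L_{n,k}$ or $L'_{n,k}$. Proposition~\ref{prop: some ctf Lnk} rules out $L_{n,k}$ and its mirror for $n,k\geq 1$, since it gives taut foliations whenever $\min\{r_1,r_2\}<1$. Lemma~\ref{lemma: mirror of generalised L-space links} shows the mirror family of $L'$ is again $L'$. Proposition~\ref{prop: finding some L-space surg on nk/nk'} then gives the converse, and hyperbolicity of $L'_{n,k}$ for $n\geq 2$ follows from Menasco, because these links are not torus links ($L'_{1,k}$ are exactly the torus links).

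For the surgery set, I would prove two inclusions. For the inclusion ``box-convex hull $\subseteq L(L'_{n,k})$'', Proposition~\ref{prop: box-convex} reduces the task to showing the generators lie in $L(L'_{n,k})$, since $L(E_L)$ is box-convex.
\begin{itemize}
\item The set $\mathcal{B}$ consists of $L$-spaces because the components are unknotted.
\item For $H_\pm$: filling one component with slope $\pm\frac1m$ gives the exterior of a knot in $S^3$, which by Lemma~\ref{lem: small surgery implies gen L space link} together with Proposition~\ref{prop: finding some L-space surg on nk/nk'} is an $L$-space knot or the mirror of one. I would identify this knot as a twisted torus-type knot obtained by $\frac1m$-twisting, compute its genus $g_m$ via the Alexander polynomial of Proposition~\ref{prop: Alex poly of L'_{n,k}} specialised under Theorem~\ref{thm: gluing formula turaev}, and check that $2g_m-1$, converted back to the link framing (shift by $m l^2$), equals $h_\pm(\pm\frac1m)$.
\item For $L_\pm$: I would work with the rational homology solid tori $Y_m=S^3_{m,\bullet}(L'_{n,k})$ and $Y_{-m}$ for $m$ in the stated integer ranges. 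Their torsion $\tau(Y_m)$ is computed from $\Delta'_{n,k}$ via Theorem~\ref{thm: gluing formula turaev}, with $h$ the core class in $H_1(Y_m)\cong \Z\oplus\Z/\gcd$ determined by the surgery matrix. Since $Y_m$ is Floer simple (the filling $\infty$ is a lens space, and one more filling comes from the box-convex pieces already established), Theorem~\ref{theorem RR} identifies $L(Y_m)$ as an interval whose endpoints are consecutive elements of $\iota^{-1}(D^\tau_{>0})$. Reading these off should give the endpoint $l_\pm(\pm m)$.
\end{itemize}

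For the reverse inclusion, I would apply the same torsion machinery to every one-component filling $Y_r=S^3_{r,\bullet}(L'_{n,k})$ with $r\in\Q$, not only integers. Theorem~\ref{theorem RR} says $L(Y_r)$ is exactly an interval containing $\infty$ bounded by consecutive points of $\iota^{-1}(D^\tau_{>0}(Y_r))$. The task is therefore to show that the finite endpoint is $\frac{1}{\lfloor (r+2n-1)/l\rfloor}$, $-1-2n+m$, $1-2n-ml$, and so on, in the seven ranges of Theorem~\ref{thm: gen L-space links}. Concretely, I would write $r=p/q$, present $H_1(Y_r)$, and push $\Delta'_{n,k}$ forward under $x\mapsto$ meridian class, $y\mapsto$ generator. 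The support of $\tau(Y_r)$ is then a union of arithmetic progressions governed by the staircase shape of $\Delta'_{n,k}$ (the sums of $r_k$ and $r_{k-1}$ shifted by $x^{-i}y^i$). From this I would locate the smallest (respectively largest) gap $x-y$ lying in $\iota(H_1(\partial Y_r))$.

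I expect this computation of $D^\tau_{>0}(Y_r)$ for general rational $r$ to be the main obstacle. It requires tracking torsion in $H_1(Y_r)$ and the floor functions that produce the piecewise description. Where a direct computation is hard, I would instead get the non-$L$-space side from Proposition~\ref{prop: some ctf on L'nk}, symmetry, and box-convexity arguments as in Figure~\ref{figure: pictorial sketch}.
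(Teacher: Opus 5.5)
The first part of your proposal, the set $\mathcal{B}$, and $H_\pm$ are essentially fine. There are two small omissions, and one genuine gap in the inclusion $L_\pm\subset L(L'_{n,k})$.

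\textbf{The parts that work.} For $H_\pm$ you read the genus of the knot obtained by $\mp\frac1m$-surgery on one component off the degree of its Alexander polynomial. This is legitimate for an $L$-space knot or its mirror, and the degree count from $\Delta'_{n,k}$ does reproduce $2g-1=1-2n+ml^2-ml$. The paper instead recognises the knot as the closure of the positive braid $\beta_{-m}$ and uses the word length. Two small points need fixing:
\begin{itemize}
\item Lemma~\ref{lem: small surgery implies gen L space link} only handles one sign of $\frac1m$ per small $L$-space surgery $(r,r)$. You therefore need small $L$-space surgeries of both signs. The paper gets the second one from the $-\frac1m$ case with $m=1$, or via Lemma~\ref{lemma: mirror of generalised L-space links}.
\item Hyperbolicity should not rest on asserting which members are torus links. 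Justify it by Proposition~\ref{prop: some ctf on L'nk} and Lemma~\ref{lemma: mirror of generalised L-space links}: neither $L'_{n,k}$ nor its mirror is an $L$-space link, while torus links are.
\end{itemize}

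\textbf{The gap: $L_\pm\subset L(L'_{n,k})$.} Theorem~\ref{theorem RR} is lopsided in what an explicit torsion gives you.
\begin{itemize}
\item Exhibiting one element of $D^\tau_{>0}$ (say a gap $x\notin S[\tau]$ minus $0\in S[\tau]$) only shows that its slope is not interior to $L(Y)$. That \emph{bounds} the $L$-space interval from above.
\item To prove that $-1-2n+a$ \emph{is} an $L$-space filling slope of $Y_{-a}=S^3_{-a,\bullet}(L'_{n,k})$, you must show that no element of $\iota^{-1}(D^\tau_{>0})$ has slope in $(-1-2n+a,\infty)$.
\item Since $D^\tau_{>0}$ consists of \emph{all} differences $x-y$ with $x\notin S[\tau]$ and $y\in S[\tau]$, this means proving $S[\tau]+d\subseteq S[\tau]$ for a whole family of $d$. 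This happens in $H_1(Y_{-a})\cong\Z\oplus\Z/\gcd(a,l)$, which may have torsion.
\end{itemize}
``Reading off the endpoint'' hides exactly this, and nothing in your plan addresses it.

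The paper avoids this by independent topology:
\begin{itemize}
\item After using symmetry, it writes the surgery as $S^3_{x,-1-x,\frac1n}(\mathcal{L})$ and sets $Y=S^3_{x,-1-x,\bullet}(\mathcal{L})$.
\item The filling $Y_0$ is the Seifert fibred space $M(-1;\frac12,\frac{k}{2k+1},\frac{1}{2k+3})$, which is an $L$-space by Lisca--Stipsicz.
\item The filling $Y_{1/x}$ is an $L$-space by Corollary~\ref{cor: quadrant 2 and 4 are L-spaces}.
\item The homological longitude of $Y$ is negative, so Theorem~\ref{theorem RR} gives $[0,\frac1x]\subseteq L(Y)$, and this interval contains $\frac1n$.
\end{itemize}

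\textbf{The reverse inclusion.} Here only the cheap direction of Theorem~\ref{theorem RR} is needed, which is how the paper proceeds.
\begin{itemize}
\item For $r=-\frac pq$ with $\gcd(p,l)=1$ (so $H_1(Y_r)\cong\Z$), it exhibits a single witness $v\in D^\tau_{>0}$ whose preimage has slope $\rho$.
\item It uses Proposition~\ref{prop: some ctf on L'nk} to see that $\infty$ is an endpoint of $L(Y_r)$ (Remark~\ref{rem: infinity is an endpoint}). You assume this implicitly when you speak of ``the finite endpoint''.
\item For $r\in[-a,-a+1)$ it reduces to $a\in[n+1,2n-1]$ by symmetry, since the witness requires $a\geq n+1$.
\item It handles $\gcd(p,l)>1$ by approximation (Lemma~\ref{lemma: approx}) and box-convexity.
\end{itemize}
So you need neither the extreme gaps of the support nor the torsion case. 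You do, however, need an explicit witness element in each range, and your proposal does not supply one.
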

We observe that the sets $H_+/H_-$ lie on hyperbolas, while the sets $L_+/L_-$ lie on lines.

The section is divided in two parts. In Section \ref{subsubsec: finding L-space surgeries} we prove that \(L(L'_{n,k})\) contains the above mentioned box-convex hull, and then in Section~\ref{subsubsec: constraints L-space surgeries} we show that it coincides with it.

We begin by identifying the hyperbolic links among the $L'_{n,k}$. It is clear from the definition that $L'_{n,k}$ is a torus link when $k=0$, and by Lemma \ref{lemma: mirror of generalised L-space links}, this also happens when $n=1$. On the other hand we have:

\begin{lem}\label{lemma: gener. L-space links are not L-space links}
If $k\geq 1$ and $n\geq 2$, then the link $L'_{n,k}$ is hyperbolic.
\end{lem}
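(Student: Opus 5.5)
We argue exactly as for $L_{n,k}$ at the start of the proof of Theorem~\ref{thm: L-space links}. Two-bridge links are alternating, so by Menasco \cite{Menasco} a two-bridge link is either hyperbolic or a torus link, and the only torus two-bridge links are the $T(2,2j)$ torus links. It therefore suffices to show that, for $n\geq 2$ and $k\geq 1$, the link $L'_{n,k}$ is not isotopic (even forgetting orientations, and up to mirror) to any $T(2,2j)$.

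We distinguish the two families by their $L$-space surgeries. By Proposition~\ref{prop: some ctf on L'nk}, for $n\geq 2$ and $k\geq 1$ every $(r_1,r_2)$-surgery on $L'_{n,k}$ with $\max\{r_1,r_2\}\ll 0$ supports a coorientable taut foliation, hence is not an $L$-space; in particular $L'_{n,k}$ is not the mirror of an $L$-space link. By Lemma~\ref{lemma: mirror of generalised L-space links}, the mirror of $L'_{n,k}$ is $L'_{k+1,n-1}$, with $k+1\geq 2$ and $n-1\geq 1$. Applying Proposition~\ref{prop: some ctf on L'nk} to $L'_{k+1,n-1}$ as well, neither $L'_{n,k}$ nor its mirror is an $L$-space link. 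On the other hand, for $T(2,2j)$ with either orientation, one of $T(2,2j)$ and its mirror is a positive torus link, whose large positive surgeries are Seifert fibered $L$-spaces; so $T(2,2j)$ is an $L$-space link up to mirror (see \cite{yajing_lspace,LiscaStipsicz}). Since being an $L$-space link is unaffected by reversing the orientation of a component, this distinguishes $L'_{n,k}$ from every torus two-bridge link, and hence $L'_{n,k}$ is hyperbolic.

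The step I expect to need the most care is making sure the orientation conventions in Proposition~\ref{prop: some ctf on L'nk} match: the statement there is phrased with a slip ($L_{n,k}$ in place of $L'_{n,k}$), and I will read the proof, which uses the fibered link $\mathcal{L}_k$ and $(1/n)$-surgery on $K'_3$, as establishing taut foliations for $(r_1,r_2)\in(-\infty,-n-k)\times(-\infty,2-n-k)$ on $L'_{n,k}$. This is symmetric enough, after using Lemma~\ref{lemma: symmetric}, to rule out $L$-space surgeries in a whole negative quadrant. If that reading caused trouble, the fallback would be to compare $p$ in $b(p,q)$: $L'_{n,k}=b(4nk+2n-2k,\cdot)$ has $q=2n-1\not\equiv\pm1 \pmod p$ when $n\geq2,k\geq1$. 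Theorem~\ref{teo: classification two-bridge links} then shows it is not $b(p,\pm1)=T(2,p)$, up to mirror and up to the orientation-reversal cases listed there, since $q\pm p$ and the inverse conditions can be checked directly.
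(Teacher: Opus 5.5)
Your argument is correct and is essentially the paper's proof. Menasco's dichotomy for alternating links is combined with Proposition~\ref{prop: some ctf on L'nk}, applied both to $L'_{n,k}$ and, via Lemma~\ref{lemma: mirror of generalised L-space links}, to its mirror $L'_{k+1,n-1}$, to show that neither is an $L$-space link, whereas every torus two-bridge link is one up to mirror. You read the misprint $L_{n,k}$ in that proposition as $L'_{n,k}$, which is the intended meaning. Your fallback via Theorem~\ref{teo: classification two-bridge links} also works, since $0<2n-2<2n<4nk+2n-2k$ gives $2n-1\not\equiv\pm1$ modulo $4nk+2n-2k$.
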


\begin{proof}
From Proposition \ref{prop: some ctf on L'nk} and Lemma \ref{lemma: mirror of generalised L-space links} we have that neither $L'_{n,k}$ nor its mirror are $L$-space links when $n\geq 2$ and $k\geq 1$. Since torus links are $L$-space links we deduce that $L'_{n,k}$ is not a torus link, and non-torus alternating links are hyperbolic \cite{Menasco}, and two-bridge links are alternating, we conclude.
\end{proof}

From now on, unless otherwise stated, we will suppose $n\geq 2$ and $k\geq 1$. 
\subsubsection{Finding $L$-space surgeries.}\label{subsubsec: finding L-space surgeries}
In this section we show that the set \(\mathcal{B}\cup H_+\cup H_-\cup L_+\cup L_-\), and hence its box-convex hull by Proposition \ref{prop: box-convex}, is contained in $L(L'_{n,k})$. Since the components of $L'_{n,k}$ are unknotted, we have \(\mathcal{B}\subset L(L'_{n,k})\), and  the next proposition implies that the same holds for $H_+\cup H_-$.
\begin{prop}\label{prop: L-space knots as surgeries on L'}
For each positive integer $m$, the manifolds
  \renewcommand*{\arraystretch}{1.5}
\[
\begin{array}{ll}
    S^3_{-\frac{1}{m},s}(L'_{n,k})=S^3_{s,-\frac{1}{m}}(L'_{n,k}) &\textit{ for }s\geq 1-2n-ml\\
    S^3_{\frac{1}{m},s}(L'_{n,k})=S^3_{s,\frac{1}{m}}(L'_{n,k})   &\textit{ for }s\leq 2k+1+ml
\end{array}
\]
are $L$-spaces.
\end{prop}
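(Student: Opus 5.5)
The plan is to use the fact that both components of $L'_{n,k}$ are unknotted, so a $\frac{1}{m}$-surgery on one component gives back $S^3$. The other component then becomes a knot $K_m\subset S^3$, and $S^3_{\pm\frac1m,s}(L'_{n,k})=S^3_{s'}(K_m)$ for a suitably reframed coefficient $s'$. By the symmetry of two-bridge links (Lemma~\ref{lemma: symmetric}), it suffices to treat surgery on the first component; this already gives the equalities $S^3_{-\frac1m,s}=S^3_{s,-\frac1m}$ and $S^3_{\frac1m,s}=S^3_{s,\frac1m}$.

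First, the reframing. Performing $\frac{1}{m}$-surgery on the unknotted component $K_1$ amounts to $-m$ full twists of the strands of $K_2$ passing through a disc bounded by $K_1$. Since $|\operatorname{lk}|=l=n+k$ with the orientation of Proposition~\ref{prop: finding some L-space surg on nk/nk'}, the framing of $K_2$ shifts by $-m\,l^2$. Hence
\[
S^3_{\frac1m,s}(L'_{n,k})=S^3_{s-ml^2}(K_m),\qquad S^3_{-\frac1m,s}(L'_{n,k})=S^3_{s+ml^2}(K_{-m}).
\]

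Second, identify $K_{\pm m}$ and show that they are (mirrors of) $L$-space knots. By Proposition~\ref{prop: finding some L-space surg on nk/nk'}, $L'_{n,k}$ is a generalised $(+,-)$ and $(-,+)$ $L$-space link with an $L$-space surgery $(r,r)$ satisfying $r^2<l^2$. Lemma~\ref{lem: small surgery implies gen L space link} then says that, for the appropriate sign of $m$, $K_{\pm m}$ is an $L$-space knot, and for the opposite sign it is the mirror of one. To apply the lemma for both signs, I would use the mirror description of Lemma~\ref{lemma: mirror of generalised L-space links}, or choose the surgery $(r,r)$ with $r$ of the needed sign. By \cite[Lemma~2.13]{HeddenCabling2}, $S^3_{t}(K)$ is an $L$-space exactly when $t\ge 2g(K)-1$ for an $L$-space knot, and when $t\le 1-2g$ for its mirror.

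Third, and this is the main computational step, compute $g(K_{\pm m})$. Since $K_{\pm m}$ is the closure of a twisted two-bridge-type braid-like diagram, I would compute its Alexander polynomial from $\Delta'_{n,k}$ (Proposition~\ref{prop: Alex poly of L'_{n,k}}) via Turaev's surgery formula (Theorem~\ref{thm: gluing formula turaev}). Setting the meridian of $K_1$ to $y^{\text{?}}$ in terms of $x$ after $\frac1m$ filling (namely $h_1=x^{\mp ml}$, $h_2=x$), one gets $\Delta_{K_{\pm m}}(x)\doteq\Delta'_{n,k}(x^{\mp m l},x)\cdot\frac{x-1}{x^{\mp ml}\cdots}$. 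For an $L$-space knot the genus is the degree, so reading off the span should give $2g(K_m)-1=ml^2-ml+2k+1$ and $2g(K_{-m})-1=ml^2-ml+2n-1$. Substituting into the reframing yields the thresholds $s\le 2k+1+ml$ and $s\ge 1-2n-ml$, up to carefully tracking signs. The obstacle I expect is the sign and degree bookkeeping in this specialization, including which variable carries the exponent and the orientation conventions. Checking the answer against $m=0$ (giving genus data for the unknot) and against the torus case $n=1$ would serve as a sanity test.
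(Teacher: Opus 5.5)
The overall plan matches the paper's: the Rolfsen twist $S^3_{\mp\frac1m,s}(L'_{n,k})=S^3_{s\pm ml^2}(K_{\mp m})$, then Lemma~\ref{lem: small surgery implies gen L space link} to make $K_{\mp m}$ an $L$-space knot or the mirror of one, then \cite[Lemma~2.13]{HeddenCabling2}. The reframing and the criteria $t\geq 2g-1$ and $t\leq 1-2g$ are also correct.

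\textbf{The gap is in the step you call the main one.} The genera you predict are wrong, and substituting them does not give the statement:
\begin{itemize}
\item $s+ml^2\geq ml^2-ml+2n-1$ means $s\geq 2n-1-ml$;
\item $s-ml^2\leq -(ml^2-ml+2k+1)$ means $s\leq ml-2k-1$.
\end{itemize}
The constants have the wrong sign. The correct values are
\[
2g(K_{-m})-1=ml^2-ml+1-2n,\qquad 2g(K_m)-1=ml^2-ml-2k-1.
\]
For example, when $(n,k,m)=(2,1,1)$, the knot $K_{-1}$ is the closure of $\sigma_1^2(\sigma_2\sigma_1)^2\in B_3$. This is $T(2,5)$, of genus $2$, not $5$. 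The torus case $n=1$ that you suggested as a check also rules out your formula: there $K_{-m}=T(ml+1,l)$, which has $2g-1=ml^2-ml-1$.

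The paper reads the genus off a braid. Writing $L'_{n,k}=\hat\beta\cup\mathfrak{a}$ with $\beta=\sigma_k\cdots\sigma_1\sigma_{k+1}^{-1}\cdots\sigma_{l-1}^{-1}$, the knot $K_{-m}$ is the closure of the positive braid $\sigma_k\cdots\sigma_1\sigma_k\cdots\sigma_1(\sigma_{l-1}\cdots\sigma_1)^{ml-1}$ on $l$ strands. So $2g(K_{-m})-1$ is its word length minus $l$. The $+\frac1m$ half then follows by applying the first half to the mirror $L'_{k+1,n-1}$ (Lemma~\ref{lemma: mirror of generalised L-space links}).

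\textbf{Your Alexander-polynomial route can still work}, since for $L$-space knots the genus equals the degree of $\Delta$, but it must be carried out precisely.
\begin{itemize}
\item Take $x$ to be the variable of the filled component and $\operatorname{lk}=+l$. The $\mp\frac1m$ filling sends $x\mapsto t^{\pm ml}$ and $y\mapsto t$.
\item The core of the filling torus represents $\lambda_1=l\mu_2$ up to sign, so Theorem~\ref{thm: gluing formula turaev} gives
\[
\Delta_{K_{\mp m}}(t)\doteq\Delta'_{n,k}(t^{\pm ml},t)\,\frac{t-1}{t^{l}-1}.
\]
Your denominator involving $x^{\mp ml}$ is not the right one.
\item Since $ml\geq 3$, the extreme terms come, with coefficient $1$, from $\bigl(\sum_{i=0}^{n-1}x^{-i}y^{i}\bigr)r_k$.
 \begin{itemize}
 \item For $t^{ml}$ the extreme exponents are $(ml+1)k$ and $-(ml-1)(n-1)$. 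Subtracting $l-1$ from the span gives $2g(K_{-m})=ml^2-ml+2-2n$.
 \item For $t^{-ml}$ they are $(ml+1)(n-1)$ and $-(ml-1)k$. Subtracting $l-1$ from the span gives $2g(K_m)=ml^2-ml-2k$.
 \end{itemize}
\end{itemize}

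Two further points need care.
\begin{itemize}
\item The multivariable polynomial does not see the sign of the linking number, but the two specializations have different spans. You must check that the orientation for which $\Delta'_{n,k}$ is written has $\operatorname{lk}=+l$; this is the convention used in Proposition~\ref{prop: Turaev torsion general}. Otherwise the two genera get swapped.
\item Lemma~\ref{lem: small surgery implies gen L space link}, applied to a single $(r,r)$, gives only one sign of $m$. Proposition~\ref{prop: finding some L-space surg on nk/nk'} does not tell you the sign of $r$. You must either bootstrap (once one half is proved, $(\mp1,\mp1)$ is an $L$-space surgery of the opposite sign), or use the paper's case split $k>n+1$, $k<n+1$, $k=n+1$ via the mirror.
\end{itemize}
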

\begin{proof}
We know from the proof of Proposition \ref{prop: finding some L-space surg on nk/nk'} that $(k-n-1,k-n-1)$-surgery on $L'_{n,k}$ is an $L$-space. We assume for now that $k>n+1$, so that Lemma \ref{lem: small surgery implies gen L space link} implies that the knot $K_{-m}\subset S^3$, obtained by $(-\frac{1}{m})$-surgery on one component of $L'_{n,k}$, is an $L$-space knot. The knot $K_{-m}$ can be explicitly described as follows. Observe that $L'_{n,k}$ is isotopic to $\hat{\beta}\cup \mathfrak{a}$, where $\hat{\beta}$ is the closure of the braid
$$
\beta=\sigma_k\cdots\sigma_1\sigma^{-1}_{k+1}\cdots\sigma^{-1}_{n+k-1}\in B_{n+k}
$$
and $\mathfrak{a}$ is the braid axis, see Figure \ref{figure: braid}.
\begin{figure}[]
   \centering   \includegraphics[width=0.38\textwidth]{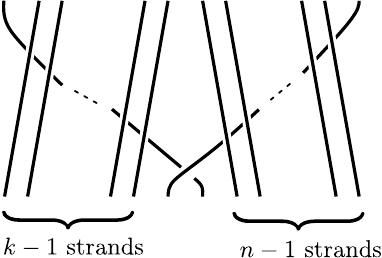}
    \caption{The braid $\beta$. The link $L'_{n,k}$ is isotopic to its closure union the braid axis.}
    \label{figure: braid}
\end{figure} Here, we denote by $B_{n+k}$ the group of braids on $n+k=l$ strands, and by $\sigma_i$ the Artin generators. We also denote 
$$
\Delta=(\sigma_{1}\cdots\sigma_{l-1})^{l}=(\sigma_{l-1}\cdots\sigma_1)^{l}.
$$
Since $L'_{n,k}=\hat{\beta}\cup \mathfrak{a}$, the knot $K_{-m}$ is the closure of the braid
\begin{align*}
\beta_{-m}&=\sigma_k\cdots\sigma_1\sigma^{-1}_{k+1}\cdots\sigma^{-1}_{l-1}\Delta^{m}=\sigma_k\cdots\sigma_1\sigma^{-1}_{k+1}\cdots\sigma^{-1}_{l-1}(\sigma_{l-1}\cdots\sigma_{1})^{ml} \\
&=\sigma_k\cdots\sigma_1\sigma_{k}\cdots\sigma_1(\sigma_{l-1}\cdots\sigma_{1})^{ml-1}.
\end{align*}
The braid $\beta_{-m}$ is positive, and hence fibered \cite{Stallings}. Moreover a fiber (and hence minimal genus) surface can be explicitely constructed starting from the braid word, and its Euler characteristic is given by the difference between the number of strands of $\beta_{-m}$ and its length as a word in the positive powers of the Artin generators. Therefore, the genus $g(K_{-m})$ satisfies
$$
2g(K_{-m})-1=2k+1-2l+ml^2-ml=1-2n+ml^2-ml.
$$
Since $K_{-m}$ is an $L$-space knot, the manifold $S^3_r(K_{-m})$ is an $L$-space if and only if $r\geq 2g(K_{-m})-1=1-2n+ml^2-ml$ by \cite[Lemma 2.13]{HeddenCabling2}. So we have that
$$
S^3_{-\frac{1}{m},s}(L'_{n,k})=S^3_{s,-\frac{1}{m}}(L'_{n,k})= S^3_{s+ml^2}(K_{-m})
$$
is an $L$-space if and only if $s\geq 1-2n-ml$. This proves the first half of the statement, and moreover implies that $L'_{n,k}$ also has a negative surgery with coefficient $(r,r)$ satisfying $r^2<l^2$ that is an $L$-space (for example, one can take $r=-1$). Therefore, if we consider the mirror $L'_{n',k'}$ of $L'_{n,k}$ --- where $n'=k+1$ and $k'=n-1$ by Lemma \ref{lemma: mirror of generalised L-space links} --- we can apply the same reasoning as above and deduce that
$$
S^3_{\frac{1}{m},s}(L'_{n,k})=S^3_{-\frac{1}{m},-s}(L'_{n',k'})
$$
is an $L$-space if and only if 
$$
-s\geq 1-2n'-ml=-1-2k-ml \Leftrightarrow s\leq2k+1+ml,
$$
which concludes the proof of the proposition when $k>n+1$. To prove it in general, we need two observations. First, note that in order to make the proof work, we only need the existence of a positive rational $r$ satisfying $r^2< l^2$ and such that $S^3_{r,r}(L'_{n,k})$ is an $L$-space.
Secondly, note that proving the proposition for $L'_{n,k}$ is equivalent to proving it for its mirror $L'_{n',k'}$. 
If $k<n+1$, then $r=n+1-k$ is positive and satisfies $r^2<l^2$. Moreover, the $(r,r)$-surgery on the mirror $L'_{n',k'}$ of $L'_{n,k}$ is an $L$-space. Hence, the proposition holds for the mirror of $L'_{n,k}$, and for $L'_{n,k}$ as a consequence. Finally, if $k=n+1$, we consider the mirror of $L'_{n',k'}$ of $L'_{n,k}$, for which the proposition holds, since $k'<n'+1$.
\end{proof}

\begin{corollary}\label{cor: quadrant 2 and 4 are L-spaces}
Fix $n,k\geq 0$ such that $(n,k)\ne(0,0)$. If $(r,s)$ are rational numbers satisfying $rs\leq 0$, then $S^3_{r,s}(L'_{n,k})$ is an $L$-space.
\end{corollary}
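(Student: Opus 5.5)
The plan is to deduce the corollary from Proposition~\ref{prop: L-space knots as surgeries on L'} together with box-convexity (Proposition~\ref{prop: box-convex}) and the description of the non-rational-homology-sphere slopes in Remark~\ref{rem: longitudes 2-comp}. First I would dispose of the degenerate cases. If $k=0$ or $n\le 1$, then $L'_{n,k}$ is a torus link $T(2,2l)$ or its mirror (by Lemma~\ref{lemma: mirror of generalised L-space links}), and its small surgeries are Seifert fibered. There I would either invoke the classification of \cite{LiscaStipsicz}, or run the same argument as below using the $L$-space knots $T(2,2l\pm1)$-type obtained by twisting, which are torus knots. If $r=0$ or $s=0$, I would note that with $l\neq 0$ the condition $rs=l^2$ fails. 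Hence $S^3_{0,s}$ is a rational homology sphere for $s\neq\infty$, and $(0,s)$ would be handled by the same rectangle argument as below, using $0$ as an interior point of $[-1/m,1/m]$.

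For the main case ($n\ge2,k\ge1$), take $r\le 0\le s$ (the case $r\ge0\ge s$ follows by the symmetry of two-bridge links, Lemma~\ref{lemma: symmetric}). I would use the rectangle with corners $(-\tfrac1m,\tfrac1{m'})$, $(-\tfrac1m,S)$, $(\tfrac1{m},\tfrac1{m'})$, $(\tfrac1m,S)$, or more cleanly one of the form $I_1\times I_2$ with $I_1=[r_-,\infty]$ passing through $\infty$ and $I_2=[0^+,\infty]$. The key point is that every rectangle in the closed quadrants $\{rs\le0\}$ avoids the hyperbola $rs=l^2>0$, so it lies in $\Sl^*(E_L)$ by Remark~\ref{rem: longitudes 2-comp}. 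The corners can then be chosen among known $L$-space surgeries. Concretely, for $(r,s)$ with $r<0<s$, I would pick $m$ with $-\tfrac1m\in[r,0)$ and use the interval in $\overline{\Q}$ from $r$ through $-\infty=\infty$... but it is simpler to take corners on the axes at infinity.

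Corners $(\infty,s')$ and $(r',\infty)$ lie in $\mathcal{B}$. Corners $(-\tfrac1m,s')$ with $s'\ge 1-2n-ml$ come from Proposition~\ref{prop: L-space knots as surgeries on L'}, as do, symmetrically, $(r',-\tfrac1m)$. So I take the rectangle $[-\infty\!=\!\infty,\,-\tfrac1m]$ in the first slot, with $r$ in it, i.e.\ the interval from $-\tfrac1m$ down to $r$ and on to $\infty$. In the second slot I take $[s,\infty]$, with $s>0$. The four corners are $(-\tfrac1m,s)$ and $(-\tfrac1m,\infty)$, both $L$-spaces since $s>0\ge1-2n-ml$, together with $(\infty,s)$ and $(\infty,\infty)$. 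Box-convexity then gives $(r,s)$. Equality cases such as $r=0$ are covered by choosing instead a first-slot interval $[-\tfrac1m,\tfrac1m]$ containing $0$. In that case the corners $(\pm\tfrac1m,s)$ require $s\le 2k+1+ml$, which holds for $m$ large.

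The main obstacle I expect is bookkeeping. I must make sure that each chosen rectangle, taken as a product of arcs in $\overline{\Q}$, genuinely avoids the curve $rs=l^2$, including through the points at $\infty$. At $\infty$ the value $0/0=\infty$ of the parametrisation of Remark~\ref{rem: longitudes 2-comp} needs care: the corner $(\infty,\infty)$ is $S^3$ surgery data, so it is a lens space and a rational homology sphere, but $(0,\infty)$-type points are not. I would also need to check the degenerate torus-link cases honestly.
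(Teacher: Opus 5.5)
This is essentially the paper's argument: Proposition~\ref{prop: L-space knots as surgeries on L'} plus box-convexity (Proposition~\ref{prop: box-convex}) in the hyperbolic case, and, for the torus links, twisting along one component to get torus knots. Note that these twisted knots are $T(l,ml+1)$ and the mirror of $T(l,ml-1)$, not $T(2,2l\pm1)$. Your extra treatment of the axes works once you use the degenerate rectangle $[-\tfrac1m,\tfrac1m]\times\{s\}$ with $m>|s|/l^2$, whose corners $(\pm\tfrac1m,s)$ are $L$-spaces for $m$ large; if you instead keep the second slot $[s,\infty]$, the rectangle meets the curve $rs=l^2$ at $(\tfrac1m,ml^2)$ and box-convexity no longer applies.
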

\begin{proof}
When $n\geq 2,k\geq 1$ this follows from the previous proposition and the fact that the set of $L$-space surgeries is box-convex. Otherwise, $L'_{n,k}$ is (up to mirror) a torus link $T(2,2l)$, where $l=n+k$. In this case $-\frac{1}{m}$-surgery on one component gives the torus knot $T(ml+1,l)$ and $\frac{1}{m}$-surgery gives the mirror of $T(ml-1,l)$. This implies that a statement similar to that of Proposition \ref{prop: L-space knots as surgeries on L'} holds, and the desired result follows.
\end{proof}
%\ds{check that the torus links are correct}

Our goal now is to show that also the set $L_+\cup L_-$ is contained in \(L(L'_{n,k})\). This is the content of Proposition \ref{prop: L space surgeries on L'nk small box}. We need some preliminary lemmas. We consider the link $\mathcal{L}$ of Figure \ref{figure: L-space surgeries2}, and fix an integer number $x$ satisfying $0\leq x <n-1$. We denote by $Y$ the rational homology solid torus $S^3_{x,-1-x,\bullet}(\mathcal{L})$. As usual, we parametrise slopes on $\partial Y$ by using the canonical meridian-longitude basis of the component $K_3$ of $\mathcal{L}$.
\begin{figure}[]
   \centering   \includegraphics[width=0.25\textwidth]{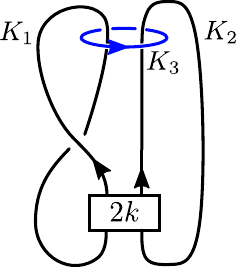}
    \caption{The link $\mathcal{L}$.}
    \label{figure: L-space surgeries2}
\end{figure} 

\begin{lem}\label{lemma: longitude is neg}
The homological longitude of $Y$ is a negative rational number.
\end{lem}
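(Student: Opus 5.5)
The plan is to argue exactly as in the proof of Proposition~\ref{prop: finding some L-space surg on nk/nk'}: compute a presentation matrix for $H_1$ of the fillings of $Y$ and locate the unique slope at which its determinant vanishes. First I fix orientations on the three components $K_1,K_2,K_3$ of the link $\mathcal{L}$ of Figure~\ref{figure: L-space surgeries2} and read off the pairwise linking numbers $l_{12},l_{13},l_{23}$ from the diagram. I expect these to be small explicit integers, possibly depending on $n$ and $k$, chosen so that a $(\pm\frac1m)$-filling of $K_3$ recovers $L'_{n,k}$ up to a framing shift. The slope on $\partial Y$ is measured in the canonical meridian–longitude basis of $K_3$. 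For a filling slope $\frac pq$ on $K_3$, a presentation matrix for $H_1(S^3_{x,-1-x,\frac pq}(\mathcal{L}))$ is then
\[
A=\begin{pmatrix} x & l_{12} & q\,l_{13}\\ l_{12} & -1-x & q\,l_{23}\\ l_{13} & l_{23} & p\end{pmatrix}.
\]

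By definition, the homological longitude of $Y$ is the unique slope for which $\det A=0$, equivalently the unique filling that is not a rational homology sphere. Expanding the determinant, it is linear in $(p,q)$:
\[
\det A=p\,D-q\,N,\qquad D=-x(1+x)-l_{12}^2,
\]
where $N$ is the quadratic expression
\[
N=x\,l_{23}^2+(-1-x)\,l_{13}^2-2\,l_{12}l_{13}l_{23}
\]
(up to an overall sign that I will fix once the signs are checked). Hence the homological longitude is the slope $\frac pq=\frac ND$. Since $x\ge 0$, we get $D\le -x(1+x)<0$; in fact $D\le -l_{12}^2<0$ whenever $l_{12}\neq 0$, and $D<0$ for $x\ge 0$ in any case because $x(1+x)\ge 0$ and the two cases cannot both vanish given the explicit values. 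So the slope is negative precisely when $N>0$ (with the correct sign convention), and it remains to establish this positivity.

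The main obstacle is this sign verification. It requires getting the signs of the linking numbers exactly right from the figure, and then checking that $N$ has the required sign for every integer $x$ in the range $0\le x<n-1$. I expect $N$ to be an affine or quadratic function of $x$ whose sign is controlled at the endpoints of this range. The hypothesis $x<n-1$ is presumably precisely what keeps $N$ on the correct side of zero, so I would evaluate $N$ at $x=0$ and at $x=n-2$ and then use monotonicity, or concavity, in $x$ to cover the intermediate values. As a consistency check, I would verify that $D\neq 0$, so that the slope $\infty$ is not the longitude; this matches the fact that the $\infty$-filling $S^3_{x,-1-x}$ of the two-component sublink is a rational homology sphere.
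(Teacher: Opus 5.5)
Your plan is the paper's own proof: compute the determinant of the presentation matrix for $H_1(S^3_{x,-1-x,p/q}(\mathcal{L}))$ and locate its unique zero. Your general formula $\det A = pD - qN$ is correct. With the linking numbers read off the figure, $\operatorname{lk}(K_1,K_2)=k$, $\operatorname{lk}(K_1,K_3)=-1$ and $\operatorname{lk}(K_2,K_3)=1$, it gives $D=-(x(1+x)+k^2)<0$ and $N=2k-1>0$, so the longitude is $\frac{1-2k}{x(1+x)+k^2}<0$.

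Contrary to your expectation, the $x$-dependence of $N$ cancels. Only $k\ge 1$ and $x\ge 0$ are used, and the bound $x<n-1$ plays no role in this lemma. Also, your claim $D\le -x(1+x)<0$ fails at $x=0$, but $D\le -k^2<0$ suffices.
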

\begin{proof}
We compute the homological longitude explicitly. We orient the components of $\mathcal{L}$ as in Figure~\ref{figure: L-space surgeries2}, and see that the linking numbers among them are
$$
\operatorname{lk}(K_1,K_2)=k \quad \operatorname{lk}(K_1,K_3)=-1 \quad \operatorname{lk}(K_1,K_2)=1
$$
and hence, for $r=\frac{p}{q}\in \mathbb{Q}\cup \{\infty\}$, a presentation matrix for the first homology group of $S^3_{x,-1-x,r}(\mathcal{L})$ is given by
 $$
    A=\begin{pmatrix}
    x &k &-q \\
    k &-1-x &q \\
    -1 &1 &p
    \end{pmatrix}.
    $$
By definition, the slope $r$ is the homological longitude of $Y$ if and only if $p$ and $q$ solve the equation 
$$
\operatorname{det}A= -p(x(1+x)+k^2)+q(1-2k)=0.
$$
It is easy to see that $r=\infty$ does not solve the above equation, and so we can divide by $q$ and obtain
$$
\operatorname{det}A=0 \Leftrightarrow r=\frac{1-2k}{x(1+x)+k^2}
$$
and since $x\geq 0$ by hypothesis, we conclude.
\end{proof}

\begin{lem}\label{lemma: Y0 is L-space}
In the same setting as above, the manifolds $Y_{\frac{1}{x}}$ and $Y_0$ are $L$-spaces.
\end{lem}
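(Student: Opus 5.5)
The two fillings will be identified directly as surgeries on two-component links, and then the earlier results for those links will be used. Neither the $H$-function nor Turaev torsions should be needed.

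For $Y_0=S^3_{x,-1-x,0}(\mathcal{L})$, the $0$-surgery on the unknotted component $K_3$ is to be undone by a Rolfsen twist, or equivalently by a slam-dunk-type manipulation. First we read off from Figure~\ref{figure: L-space surgeries2} how $K_1$ and $K_2$ pass through the disc bounded by $K_3$. Since $\operatorname{lk}(K_1,K_3)=-1$ and $\operatorname{lk}(K_2,K_3)=1$, the $0$-framed $K_3$ acts as a band. Handle-sliding $K_1$ over $K_2$ (or cancelling $K_3$ against a meridian) should turn the framed link into a two-component two-bridge link, either a torus link $T(2,2j)$ or one of the $L_{n',k'}$ / $L'_{n',k'}$, with explicit integer framings depending on $x$ and $k$. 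Once this is identified, $L$-space status follows from Corollary~\ref{cor: quadrant 2 and 4 are L-spaces}: the framings $x\ge 0$ and $-1-x<0$, shifted by the linking corrections, should land in a region with $rs\le 0$. Failing that, it follows from the classification of $L$-space surgeries on torus links.

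For $Y_{1/x}$, the $\tfrac1x$-surgery on the unknot $K_3$ is a blow-down that adds $-x$ full twists to the strands of $K_1\cup K_2$ passing through $K_3$. It also shifts the framings by $-x\operatorname{lk}(K_i,K_3)^2$, so they become $(0,-1)$ up to the linking-number cross terms. We then check that the resulting two-component link is again $L'_{n,k'}$ or a torus link. With one framing $0$ and the other negative, the surgery is an $L$-space by Corollary~\ref{cor: quadrant 2 and 4 are L-spaces}; the boundary case $r=0$ is included there since $rs\le 0$. Alternatively, $\pm1$-surgery on one unknotted component turns it into a knot in $S^3$, and $L$-space status is then read off from the genus bound of \cite[Lemma 2.13]{HeddenCabling2}, as in the proof of Proposition~\ref{prop: L-space knots as surgeries on L'}.

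The main obstacle is the diagrammatic bookkeeping. This means tracking exactly which link, with which framings, results from the twisting in Figure~\ref{figure: L-space surgeries2}, including the orientation and linking-number corrections to the framings. It also means ensuring that the hypothesis $0\le x<n-1$ is precisely what puts the resulting coefficients in the known $L$-space region. I would first test the case $x=0$, where the $\tfrac1x$ filling is the trivial $\infty$ filling. There $Y_\infty=S^3_{0,-1}(K_1\cup K_2)$, so the whole task reduces to identifying $K_1\cup K_2$, presumably a $T(2,2k)$ torus link. I would then handle general $x$ by the twisting argument.
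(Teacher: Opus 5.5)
Your treatment of $Y_{1/x}$ is the same as the paper's. Twisting $-x$ times along $K_3$ turns $\tfrac{1}{x}$ into $\infty$ and shifts each framing by $-x\operatorname{lk}(K_i,K_3)^2=-x$. So $Y_{1/x}=S^3_{0,-1-2x}(L'_{x,k})$, not $(0,-1)$ as you wrote, though only the signs matter. Corollary~\ref{cor: quadrant 2 and 4 are L-spaces} then applies, including the torus cases $x\le 1$.

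The gap is $Y_0$.

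\emph{Your manipulation does not give a two-component link.} A $0$-framed unknot cannot be removed by a Rolfsen twist, since the coefficient $\tfrac{0}{1}$ is fixed by every twist. A slam-dunk needs $K_3$ to be a meridian of a single integrally framed component. Your handle slide can arrange this. If, as the linking numbers indicate, $K_3$ encircles one strand of $K_1$ and one of $K_2$ with opposite orientations, then sliding $K_2$ over $K_1$ along a band beside the disc of $K_3$ frees $K_2$. Then $K_3$ is a $0$-framed meridian of $K_1$. But the slam-dunk changes the coefficient of $K_1$ to $x-\tfrac{1}{0}=\infty$, so $K_1$ and $K_3$ both disappear. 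What remains is a single knot $K'$, a band sum of the two components of the torus link $K_1\cup K_2$, with framing $x+(-1-x)+2\operatorname{lk}(K_1,K_2)=2k-1$. This agrees with $|\det A|=2k-1$ at $p=0$ in Lemma~\ref{lemma: longitude is neg}. So no two-component link with coefficients in the region $rs\le 0$ appears, and Corollary~\ref{cor: quadrant 2 and 4 are L-spaces} cannot certify $Y_0$.

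\emph{The conclusion is sharp.} Compare with the Seifert invariants $M(-1;\tfrac12,\tfrac{k}{2k+1},\tfrac{1}{2k+3})$ obtained in the paper. Up to orientation, $Y_0$ is $S^3_{2k-1}(T(2,2k+1))$, which sits exactly at the threshold $2g-1=2k-1$ of \cite[Lemma~2.13]{HeddenCabling2}. With the mirror knot, the same framing would give a non-$L$-space. So the missing step is a precise identification of $Y_0$ together with a sharp $L$-space criterion.

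The paper does this with the Montesinos trick: it realises $Y_0$ as the double branched cover of the Montesinos link $M(\tfrac{k}{2k+1},-\tfrac12,\tfrac{1}{2k+3})$, and then checks the Lisca--Mati\'c condition \cite{LiscaMatic,LiscaStipsicz} for this Seifert fibered space. To finish along your lines instead, you would have to show from the diagram that $K'$ is the positive torus knot $T(2,2k+1)$, not its mirror.
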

 
\begin{proof}
The manifold $Y_{\frac{1}{x}}$ is an $L$-space since
\[
Y_{\frac{1}{x}}=S^3_{x,-1-x,\frac{1}{x}}(\mathcal{L})=S^3_{0,-1-2x}(L'_{x,k}),
\]
and the last manifold in the chain of equality is an $L$-space by Corollary \ref{cor: quadrant 2 and 4 are L-spaces}.
In order to show that $Y_0$ is an $L$-space, we first show that it is a Seifert manifold by exhibiting it as a double branched cover on $S^3$ along a Montesinos link, and then use the classification of Seifert manifolds $L$-space by Lisca-Stipsicz \cite{LiscaStipsicz}. Figure \ref{figure: Montesinos} shows how a strongly invertible surgery description of the link $\mathcal{L}$ and its  $(x,-x-1,0)$-surgery can be obtained, and demonstrates how one may take the quotient and perform appropriate rational tangle
replacements to produce a link whose double branched cover is $Y_0$. This link is the Montesinos link \(M(\frac{k}{2k+1},-\frac{1}{2}, \frac{1}{3+2k})\), and hence its double branched cover is the Seifert fibered space \(M(0; \frac{k}{2k+1},-\frac{1}{2}, \frac{1}{3+2k})\)\footnote{Here we are using in the notation of Lisca–Stipsicz \cite{LiscaStipsicz}, where the
Seifert fibered space \(M(e_0; r_1, r_2,\dots,r_k)\) is obtained by $e_0$–surgery on an unknot with $k$ meridians having
\(-\frac{1}{r_i}\) surgery coefficient on the $i$-th one.}.
\begin{figure}[]
   \centering   \includegraphics[width=0.9\textwidth]{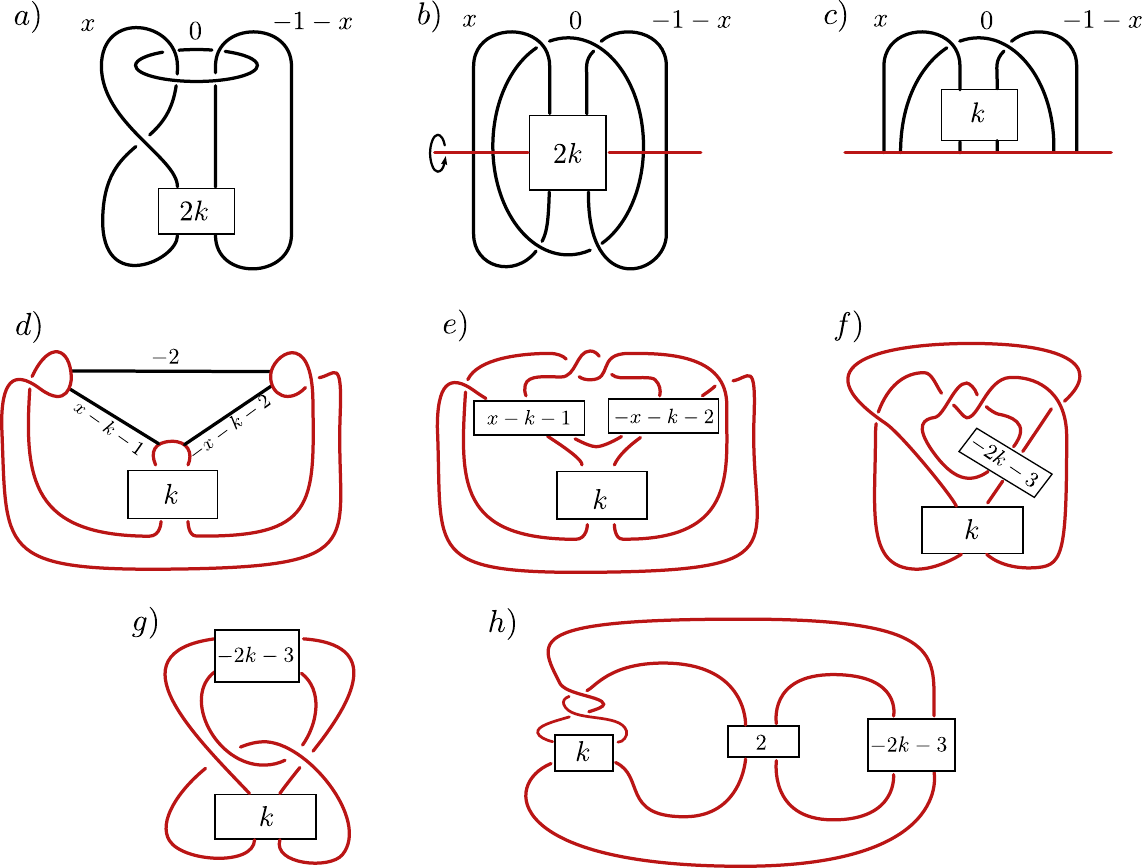}
    \caption{In $b)$, we shows a strongly invertible description of the surgery presentation given in $a)$. In $c)$ we draw the quotient of the surgery description by the involution, and $d)$ shows the effect of an isotopy that straightens the black arcs. By rational tangle replacements along these arcs, we obtain the link in $e)$, whose double branched cover is the manifold $Y_0$. In $f)-h )$ we perform isotopies to show that this link is the Montesinos link \(M(\frac{k}{2k+1},-\frac{1}{2}, \frac{1}{3+2k})\). }
    \label{figure: Montesinos}
\end{figure} 

We now show that these manifolds are $L$-spaces. By \cite[Theorem~1]{LiscaStipsicz} the Seifert fibered space \(M=M(e_0; r_1, r_2, r_3)\), with \(1 \geq r_1 \geq r_2 \geq r_3 \geq 0\),
is an $L$-space if and only if either $M$ or $-M$ does not carry a positive transverse contact structure. By \cite{LiscaMatic}, such a Seifert fibered space $M$ carries no positive transverse contact structure if and
only if either \(e_0 \geq 0\) or \(e_0=-1\) and there exist no coprime integers $a$ and $m$
such that \(mr_1 < a < m(1-r_2)\) and \(mr_3 < 1\). If we rewrite our manifolds to apply \cite{LiscaStipsicz}, we have \(Y_0=M(-1;\frac{1}{2},\frac{k}{2k+1},\frac{1}{3+2k})\). Assume that there exist coprime integers $a$ and $m$
such that 
\begin{equation}
\frac{1}{2}m< a < \left(\frac{k+1}{2k+1}
\right)m \quad \text{and} \quad\frac{m}{3+2k}< 1.
\label{eq:LS eq}
\end{equation}
Since $\frac{1}{2}< \frac{k+1}{2k+1}$, the first inequality implies $m>0$. Moreover, we also deduce 
$$
2a\in (m,m+\frac{m}{2k+1})
$$
and in order for this interval to contain an even number, it must be either that 

\begin{enumerate}
\item $m$ satisfies \(\frac{m}{2k+1}>2\) or
\item $m$ is odd and satisfies \(1<\frac{m}{2k+1} \leq 2\).
\end{enumerate}
We show that in both cases we have an absurd. In the first case, using the second inequality in \eqref{eq:LS eq} we deduce
$$
4k+2<m<2k+3,
$$
which cannot hold. In the second case, we have
$$
2k+1<m<2k+3 \Rightarrow m=2k+2,
$$
contradicting the assumption of $m$ being odd. Hence such integers $a$ and $m$ cannot exist, and $Y_0$ is an $L$-space.
\end{proof}

We conclude this section by finding more $L$-space surgeries on $L'_{n,k}$.

\begin{prop}\label{prop: L space surgeries on L'nk small box}
For each integers $r\in [1-2n,-2]$ and $s$ satisfying $r+s=-1-2n$, the manifold \(S^3_{r,s}(L'_{n,k})\) is an $L$-space.
\end{prop}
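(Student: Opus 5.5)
Following the setup preceding Lemmas~\ref{lemma: longitude is neg} and \ref{lemma: Y0 is L-space}, the plan is to realise each surgery $S^3_{r,s}(L'_{n,k})$ with $r+s=-1-2n$, $r\in[1-2n,-2]$ as a Dehn filling of one of the rational homology solid tori $Y=S^3_{x,-1-x,\bullet}(\mathcal{L})$, with $0\leq x<n-1$. We then show that the corresponding slope lies in the interval of $L$-space filling slopes given by Theorem~\ref{theorem RR}.

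The first step is to relate the two pictures. The identity used in the proof of Lemma~\ref{lemma: Y0 is L-space}, namely $S^3_{x,-1-x,\frac{1}{x}}(\mathcal{L})=S^3_{0,-1-2x}(L'_{x,k})$, indicates that $\frac{1}{j}$-filling of $K_3$ performs a twist that changes the $n$-parameter of $L'$ and shifts the framings of $K_1,K_2$ by $\pm j$. I will track the framings through the Rolfsen twist. Then $-\frac{1}{n-x}$-type fillings of $K_3$, or the appropriate slope $\frac{1}{j}$ with $j$ chosen so that the twisted link is $L'_{n,k}$, give
\[
Y_{\frac{1}{j}}=S^3_{x+a(j),\,-1-x+a(j)}(L'_{n,k})
\]
for an explicit integer shift $a(j)$. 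The sum of the two coefficients is then $-1+2a(j)$, and it should equal $-1-2n$. As $x$ ranges over $0,\dots,n-2$, and using the symmetry of two-bridge links (Lemma~\ref{lemma: symmetric}) to swap $r$ and $s$, the first coefficient $r=x+a$ should range over all integers of $[1-2n,-2]$. The exact bookkeeping of signs and of which $j$ is needed is a routine linking-number computation, which I would do with the oriented diagram of Figure~\ref{figure: L-space surgeries2}.

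The second step is to show that the required slope is an $L$-space filling slope of $Y$. Lemma~\ref{lemma: Y0 is L-space} gives two $L$-space fillings, $0$ and $\frac{1}{x}$, so $Y$ is Floer simple. By Theorem~\ref{theorem RR}, $L(Y)$ is then either all of $\Sl^*(Y)$ or a closed interval in $\overline{\Q}$ containing $0$ and $\frac{1}{x}$ and not containing the homological longitude. Lemma~\ref{lemma: longitude is neg} shows that the longitude is negative. Hence the interval not containing it and containing $\{0,\frac1x\}$ must contain $[0,\frac{1}{x}]$, and in fact the whole arc from $0$ through $\frac1x$ up to $\infty$ that avoids the negative longitude, if that arc is the one forced. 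More precisely, the two arcs of $\overline{\Q}$ joining $0$ and $\frac{1}{x}$ are $[0,\frac1x]$ and the complementary arc passing through $\infty$ and the negatives; the latter contains the longitude, so $[0,\frac1x]\subset L(Y)$. For $x=0$ the slope $\frac{1}{0}=\infty$, and $[0,\infty]$ is the positive arc. The slopes $\frac{1}{j}$ needed are those with $j\geq x$, which lie in $[0,\frac1x]$, and so the corresponding surgeries are $L$-spaces.

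I expect the main obstacle to be the first step: checking that the twisted link and framings indeed yield $L'_{n,k}$ with coefficients summing to $-1-2n$, and that every $r\in[1-2n,-2]$ is reached with a filling slope inside $[0,\frac1x]$ for some admissible $x$. If a single family of $Y$'s does not cover all values, I would use the mirror relation of Lemma~\ref{lemma: mirror of generalised L-space links} together with symmetry to cover the remaining ones.
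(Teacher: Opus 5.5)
Your proposal is correct and is essentially the paper's own proof. After using the symmetry of two-bridge links to assume $-n\le r\le -2$, the paper writes $S^3_{r,s}(L'_{n,k})=S^3_{x,-1-x,\frac{1}{n}}(\mathcal{L})$ with $x=n+r\in[0,n-2]$. So your bookkeeping resolves to $j=n$, $a(j)=-n$, not to a $-\frac{1}{n-x}$ filling. The paper then combines Lemmas~\ref{lemma: Y0 is L-space} and \ref{lemma: longitude is neg} with Theorem~\ref{theorem RR}, exactly as you do, to get $[0,\frac{1}{x}]\subset L(Y)$, and this interval contains $\frac{1}{n}$ since $x<n$.
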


\begin{proof}
First of all, we can assume $-n\leq r\leq -2$. In fact, if $r<-n$, then 
$$
s=-1-2n-r>-1-2n+n>-1-n \Rightarrow s\geq -n 
$$
and \(S^3_{r,s}(L'_{n,k})=S^3_{s,r}(L'_{n,k})\) since two-bridge links are symmetric. We have 
\[
S^3_{r,s}(L'_{n,k})=S^3_{r+n,s+n,\frac{1}{n}}(\mathcal{L})= S^3_{x,-1-x,\frac{1}{n}}(\mathcal{L}),
\]
where $\mathcal{L}$ is the link depicted in Figure \ref{figure: L-space surgeries2}, we have set $x=n+r$, and we have used 
$$
s+n=-1-2n-r+n=-1-n-r=-1-x.
$$
Notice our assumptions imply $0\leq x<n-1$. If we denote $Y=S^3_{x,-1-x,\bullet}(\mathcal{L})$, then we know from Lemma \ref{lemma: Y0 is L-space} that $Y_0$ and $Y_{\frac{1}{x}}$ are $L$-spaces, and by Theorem~\ref{theorem RR} the same holds for all surgeries whose coefficient is contained in the interval between $0$ and $\frac{1}{x}$ not containing the homological longitude of $Y$. By Lemma~\ref{lemma: longitude is neg}, the homological longitude is negative, and hence the for all surgery coefficients \(r'\in [0,\frac{1}{x}]\) the manifold \(Y_{r'}\) is an $L$-space. Since $x<n$, the rational \(\frac{1}{n}\) belongs to this interval, and we obtain the desired result.
\end{proof}

\begin{corollary}\label{cor: contains convex hull}
Fix $n\geq 2, k\geq 1$. The set \(L(L'_{n,k})\) contains 
\[
\mathcal{B}\cup H_+\cup H_-\cup L_+\cup L_-,
\]
and hence its box-convex hull. \qed
\end{corollary}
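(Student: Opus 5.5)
The corollary follows by assembling the pieces of this subsection and then invoking box-convexity, so I would simply verify each of the five sets in turn.

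First, $\mathcal{B}\subset L(L'_{n,k})$. Both components of $L'_{n,k}$ are unknots, so filling one component with $\infty$ leaves a surgery on an unknot in $S^3$. This is a lens space whenever the other coefficient is non-zero, and lens spaces are $L$-spaces. The coefficient $0$ is excluded precisely because it gives $S^2\times S^1$.

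Second, $H_\pm\subset L(L'_{n,k})$. This is the boundary case of Proposition~\ref{prop: L-space knots as surgeries on L'}. Take $s=h_-(-\tfrac1m)=1-2n-ml$ in the first line and $s=h_+(\tfrac1m)=2k+1+ml$ in the second. The symmetric points $(h_\pm(\pm\tfrac1m),\pm\tfrac1m)$ are handled by the same proposition, which states both orderings explicitly (equivalently, by Lemma~\ref{lemma: symmetric}).

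Third, $L_-\subset L(L'_{n,k})$. Proposition~\ref{prop: L space surgeries on L'nk small box} gives exactly the points $(r,-1-2n-r)$ with $r\in[1-2n,-2]\cap\Z$, and these are the points $(-m,l_-(-m))$.

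Fourth, $L_+\subset L(L'_{n,k})$; this is the only step needing an extra argument. I would deduce it from the $L_-$ statement applied to the mirror. By Lemma~\ref{lemma: mirror of generalised L-space links}, $-L'_{n,k}=L'_{n',k'}$ with $n'=k+1\geq 2$ and $k'=n-1\geq 1$, so $L'_{n',k'}$ satisfies the hypotheses of Proposition~\ref{prop: L space surgeries on L'nk small box}. Since $S^3_{r,s}(L'_{n,k})=-S^3_{-r,-s}(L'_{n',k'})$ and the $L$-space property is invariant under orientation reversal, the points of $L_-$ for $L'_{n',k'}$, negated, are $L$-space surgeries on $L'_{n,k}$. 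Concretely, for $-m\in[1-2n',-2]=[-1-2k,-2]$ the point $(-m,-1-2n'+m)$ is an $L$-space surgery on the mirror, so $(m,\,2k+3-m)=(m,l_+(m))$ is an $L$-space surgery on $L'_{n,k}$ for $m\in[2,2k+1]$.

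The main obstacle is the orientation convention in this step. The mirror identification in Lemma~\ref{lemma: mirror of generalised L-space links} is as unoriented links, and the framings/linking number used in the surgery coefficients must be compatible after mirroring. This worked in the proof of Proposition~\ref{prop: L-space knots as surgeries on L'}, where the identity $S^3_{\frac1m,s}(L'_{n,k})=S^3_{-\frac1m,-s}(L'_{n',k'})$ was used, so I would rely on the same identity here.

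Finally, Proposition~\ref{prop: box-convex} shows that $L(L'_{n,k})$ is box-convex. Hence it contains the box-convex hull of any subset of itself, in particular of $\mathcal{B}\cup H_+\cup H_-\cup L_+\cup L_-$.
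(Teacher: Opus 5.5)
Your proof is correct and follows the paper's route. You get $\mathcal{B}$ from unknottedness, $H_\pm$ from Proposition~\ref{prop: L-space knots as surgeries on L'}, $L_-$ from Proposition~\ref{prop: L space surgeries on L'nk small box}, and the hull from Proposition~\ref{prop: box-convex}. Your mirror argument via Lemma~\ref{lemma: mirror of generalised L-space links} for $L_+$ makes explicit a step the paper leaves implicit, since it attributes all of $L_+\cup L_-$ to Proposition~\ref{prop: L space surgeries on L'nk small box}, which literally only covers $L_-$. The orientation worry you flag is harmless: each surgery coefficient is measured against that component's own Seifert longitude, which does not depend on how the link is oriented.
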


\subsubsection{Constraints on the set of $L$-space surgeries.}\label{subsubsec: constraints L-space surgeries}
In this section we complete the proof of the classification Theorem~\ref{thm: gen L-space links}. Recall that $L'_{n,k}$ is hyperbolic if and only if $n\geq 2$ and $k\geq 1$, and this will be the assumption for the course of the section. 

\begin{comment}\begin{thm}
Let $r,s$ be rational numbers. Then the manifold $S^3_{r,s}(L'_{n,k})$ is an $L$-space if and only, up to switching $r$ and $s$, we have:   
\begin{itemize}
\itemsep 0.7em 

\item 
$r> 2k+1$ and $s\leq \frac{1}{\ceil{\frac{r}{l}-\frac{2k+1}{l}}}$
\item 
$r< -(2n-1)$ and $s\geq \frac{1}{\left\lfloor{\frac{r}{l}+\frac{2n-1}{l}}\right\rfloor}$
\item $r\in [-a,-a+1)$ with $-a\in \Z\cap [1-2n,-2]$ and $s\geq -1-2n+a$
\item $r\in (a-1,a]$ with $a\in \Z\cap [2,2k+1]$ and $s\geq 2k+3-a$.
\end{itemize}
\end{thm}
The symmetry in the surgery coefficients in the statement of the theorem is due to the fact that two-bridge links are symmetric.
\end{comment}
We denote $L=L'_{n,k}$, and $l=n+k$. Observe that, by virtue of Lemma \ref{lemma: mirror of generalised L-space links}, we can limit ourselves to study surgeries \(S^3_{r,s}(L)\) with $r\leq 0$. Also notice that Theorem~\ref{thm: gen L-space links} holds when $r=0$, since the manifold $S^3_{0,s}(L)$ is an $L$-space for all $s\ne 0$. For this reason, from now on we assume $r<0$.
\newline

The proof is roughly divided in three parts. We write $r=-\frac{p}{q}$ for coprime $p,q>0$. We begin by assuming $p$ coprime with $l$, in which case the first homology group of $S^3_{r,\bullet}(L)$ is isomorphic to $\mathbb{Z}$. We prove Theorem~\ref{thm: gen L-space links} by computing the Turaev torsion of $S^3_{r,\bullet}(L)$ and by using Theorem \ref{theorem RR}. We do this by studying the cases
\begin{itemize}
\item $r<-(2n-1)$
\item $r\in [-a,-a+1]$ with $-a\in \mathbb{Z}\cap[1-2n,-2]$
\end{itemize}
separately. Finally, we prove the theorem for general $r<0$, by approximating $r$ with rationals whose numerator is coprime with $l$.
\newline

As premised, we fix $r=\frac{p}{-q}$ with $p,q$ coprime and $p>0, q>0$, and suppose $p$ coprime with $l$. We fix canonical meridian-longitude bases $\mu_i,\lambda_i$ for the components $K_1$ and $K_2$ of $L$. In this case we have 
$$
H_1(S^3_{r,\bullet}(L); \Z)=\dfrac{<\mu_1, \mu_2>}{p\mu_1-ql\mu_2}\cong \Z
$$
and we fix the class of the element $a\mu_1+ b\mu_2$ as positive generator of $\Z$, where $a,b\ \in \Z$ satisfy
\[
\det\begin{pmatrix}
p & a  \\
-ql & b 
\end{pmatrix}=1.
\]

The quotient map $\pi:H_1(E_L; \Z)\rightarrow H_1(S^3_{r,\bullet}(L); \Z)$ in these coordinates is:
\[
\begin{array}{ccc}
\Z^2&\overset{\pi}{\longrightarrow}&\Z\\
\mu_1=(1,0)&\longmapsto&ql\\
\mu_2=(0,1)&\longmapsto& p
\end{array}
\]
and the induced map $\pi_{\#}$ to the group rings is:
\[
\begin{array}{ccc}
\Z[H_1(E_L;\Z)]=\Z[x,y]&\overset{\pi_{\#}}{\longrightarrow}& \Z[H_1(S^3_{r,\bullet}(L); \Z)]=\Z[t]\\
u(x,y)&\longmapsto&u(t^{ql}, t^p)
\end{array}
\]
\begin{prop}\label{prop: Turaev torsion general}
The Turaev torsion of $S^3_{r,\bullet}(L)$ is represented by:

$$
\tau(S^3_{r,\bullet}(L))=\sum_{i=1}^{n-1}\sum_{j=0}^{k-1}\sum_{a=0}^{q-1}t^{(i+j)p}t^{(j-i)ql}t^{al} + \left(\sum_{j=0}^{k}t^{j(p+ql)}+ \sum_{i=1}^{n-1}t^{(k+i)p}t^{(k-i)ql}\right)(1-t^l)^{-1}
$$

where $(1-t^l)^{-1}=1+t^l+t^{2l}+\cdots$ is expanded as a formal sum.
\end{prop}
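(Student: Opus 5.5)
The plan is to apply the gluing formula, Theorem~\ref{thm: gluing formula turaev}, to $N=E_L$ and $N'=S^3_{r,\bullet}(L)$, obtained by filling the boundary torus of $K_1$ along the slope $r=-p/q$.

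First, the Corollary following Theorem~\ref{Thm: torsion vs alexander} gives $\tau(E_L)=\Delta_L(x,y)$ up to units. We use the representative of Proposition~\ref{prop: Alex poly of L'_{n,k}}, with the orientation of Figure~\ref{figure: L' ori}. One point needs care: the linking number with that orientation must equal $l$, matching the quotient $p\mu_1-ql\mu_2$. If it does not, reverse one component, which replaces $y$ by $y^{-1}$ up to units. I would fix this first, checking that $\mu_1\mapsto t^{ql}$, $\mu_2\mapsto t^{p}$ is consistent with the sign conventions.

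Second, identify the class $h$ of the core of the glued solid torus in $H_1(N')\cong\mathbb Z$. The core is dual to the filling slope $p\mu_1-q\lambda_1$. If $a',b'$ satisfy $pb'+qa'=1$, the core is homologous to $a'\mu_1+b'\lambda_1$. Using $\lambda_1=l\mu_2$ in $H_1(E_L)$ and the map $\pi$, its image is $a'ql+b'lp=l(a'q+b'p)$. Up to sign this should be $t^{\pm l}$, though the exact exponent needs checking against $\gcd$ and the determinant normalisation. Theorem~\ref{thm: gluing formula turaev} then yields
\[
\tau(N')\doteq \frac{\Delta_L(t^{ql},t^{p})}{t^{l}-1}.
\]
The normalisation discussion after Theorem~\ref{Thm: torsion vs alexander} fixes this as a power series with $(1-t^l)^{-1}$ expanded, up to $\pm t^c$.

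Third, I would rewrite $\Delta_L(t^{ql},t^p)$ to match the claimed expression. Substitute $x=t^{ql}$, $y=t^p$ into $\bigl(\sum_{i=0}^{n-1}x^{-i}y^{i}\bigr)r_k-\bigl(\sum_{i=0}^{n-2}x^{-i}y^{i+1}\bigr)r_{k-1}$, after multiplying by a monomial to clear negative powers. The key algebraic move is to split each $x^{-i}y^ir_k$ as $x^{-i}y^i(r_{k-1}+x^ky^k)$ and pair it against $x^{-(i-1)}y^{i}r_{k-1}$. This leaves differences of the form $x^{-i}y^i r_{k-1}(1-x)$, and with $x=t^{ql}$ we have
\[
\frac{1-t^{ql}}{1-t^l}=\sum_{a=0}^{q-1}t^{al},
\]
which produces the finite triple sum. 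The leftover terms $r_k$ for $i=0$ and $x^{-i}y^{i}x^ky^k$ for $i=1,\dots,n-1$ give the tail $\bigl(\sum_j t^{j(p+ql)}+\sum_i t^{(k+i)p+(k-i)ql}\bigr)(1-t^l)^{-1}$. One needs the multiplier $x^{n-1}$, possibly with an index shift $i\mapsto n-1-i$, to reach exactly the stated exponents.

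The main obstacle is the bookkeeping of signs and units. Orientation conventions, the sign of $h$, and the monomial shift must all be fixed so that the result is a genuine normalised representative with nonnegative exponents, the right sign, and $a_0\neq0$. I would verify this by checking the coefficient sum, which for the normalised form corresponds to the torsion-free normalisation $\Delta(1)=1$, and by testing small cases such as $n=2$, $k=1$, $q=1$.
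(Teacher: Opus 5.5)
Your proposal is correct and follows essentially the same route as the paper: the gluing formula with $h=t^{l}$, the substitution $x\mapsto t^{ql}$, $y\mapsto t^{p}$ into $\Delta'_{n,k}$, and pairing each block $x^{-i}y^{i}r_{k-1}$ with the reindexed subtracted block, so that $(1-t^{ql})/(1-t^{l})=\sum_{a=0}^{q-1}t^{al}$ produces the finite triple sum (the paper performs the same cancellation termwise after expanding $(1-t^l)^{-1}$). The one superfluous step is the monomial multiplier $x^{n-1}$ and index shift. The stated formula is exactly the direct substitution with the negative powers of $x$ kept, which is a valid representative up to $\pm t^{c}$ and can have negative exponents when $p<ql$.
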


\begin{proof}
We use the gluing formula of Theorem \ref{thm: gluing formula turaev}. 
This states that, up to sign and multiplication by $t^{\pm m}$, we have:
\[
\pi_{\#}(\tau(E_L))=(h-1)\tau(S^3_{r,\bullet}(L))
\]
where $h$ is the homology class of the oriented core of the solid torus.
We know by Theorem \ref{Thm: torsion vs alexander} that the Turaev torsion of $E_L$ is equivalent to the Alexander polynomial $\Delta'_{n,k}$, which was computed in Proposition \ref{prop: Alex poly of L'_{n,k}}, and a direct computation shows that 
\begin{align*}
\pi_{\#}(\Delta'_{n,k})&=
\sum_{i=0}^{n-1}\sum_{j=0}^k t^{i(p-ql)}t^{j(p+ql)}-t^p\sum_{i=0}^{n-2}\sum_{j=0}^{k-1}t^{i(p-ql)}t^{j(p+ql)}\\
&=\sum_{i=0}^{n-1}\sum_{j=0}^k t^{(i+j)p}t^{(j-i)ql}-t^p\sum_{i=0}^{n-2}\sum_{j=0}^{k-1}t^{(i+j)p}t^{(j-i)ql}.
\end{align*}
We determine now the homology class $h$ of the core of the solid torus. We fix the canonical meridian and longitude $(\mu_1, \lambda_1)$  for the boundary component of $E_L$ affected by the Dehn filling, and a basis $(\mu, \lambda)$ for the boundary of the glued solid torus, where $\mu$ is the meridian and $\lambda$ is any longitude, oriented in such a way that their intersection is positive when the boundary torus is oriented with the orientation induced by the orientation of the solid torus. 
The gluing is then parametrised by a matrix
\[
\begin{pmatrix}
p & \alpha  \\
-q & \beta 
\end{pmatrix}
\]
with determinant $1$. We take $\lambda$ as representative of the core of the solid torus and its class in $H_1(S^3_{r,\bullet}(L); \Z)=\Z$ is
$$
\pi(\alpha \mu_1 + \beta \lambda_1)=\pi(\alpha \mu_1 + l\beta \mu_2)=l(q\alpha + \beta p)=l.
$$
Hence we have
\begin{align*}
\tau(S^3_{r,\bullet}(L))&=\left(\sum_{i=0}^{n-1}\sum_{j=0}^k t^{(i+j)p}t^{(j-i)ql}-t^p\sum_{i=0}^{n-2}\sum_{j=0}^{k-1}t^{(i+j)p}t^{(j-i)ql}\right)(1-t^l)^{-1},
\end{align*}
where the sign is chosen so that in the end we will get a formal sum with only finitely many coefficients different from $1$.

We can simplify the above expression by noting that for every $0\leq i \leq n-2$ and $0\leq j \leq k-1$ the term
$$
-t^pt^{(i+j)p}t^{(j-i)ql}(1+t^l+t^{2l}+\cdots)
$$
cancels with 
$$
t^{(i'+j)p}t^{(j-i')ql}(t^{ql}+t^{(q+1)l}+\cdots)
$$
where $i'=i+1$, and so we get
\begin{align*}
\tau(S^3_{r,\bullet}(L))&=\sum_{i=0}^{n-2}\sum_{j=0}^{k-1}\sum_{a=0}^{q-1}t^{(i+1+j)p}t^{(j-i-1)ql}t^{al} + \left(\sum_{j=0}^{k}t^{j(p+ql)}+ \sum_{i=1}^{n-1}t^{(k+i)p}t^{(k-i)ql}\right)(1-t^l)^{-1}\\
&=\sum_{i=1}^{n-1}\sum_{j=0}^{k-1}\sum_{a=0}^{q-1}t^{(i+j)p}t^{(j-i)ql}t^{al} + \left(\sum_{j=0}^{k}t^{j(p+ql)}+ \sum_{i=1}^{n-1}t^{(k+i)p}t^{(k-i)ql}\right)(1-t^l)^{-1}
\end{align*}
which is the desired result.
\end{proof}

\begin{remark}\label{rem: 0 is in support}
Note that the coefficient of the trivial element in $H_1(S^3_{r,\bullet}(L);\Z)$ is non-zero. Moreover when $p-ql\geq 0$ the support of $\tau$ is contained in $\Z_{\geq 0}$.  
\end{remark}

We now assume $r<-(2n-1)$. We recap the current assumptions and our goal.
\begin{tcolorbox}
\textbf{Standing assumptions:} $r<-(2n-1)$, $r=\frac{p}{-q}$, with $p>0, q>0$ coprime integers. Equivalently, we suppose $\frac{p}{q}> 2n-1$. Moreover, $p$ is coprime with $l$.
\newline

\textbf{Goal:} Prove that $S^3_{r,s}(L)$ is an $L$-space if and only if $
s\geq \dfrac{1}{\left\lfloor{\frac{r}{l}+\frac{2n-1}{l}}\right\rfloor}$
\end{tcolorbox}
We wanto to understand $\iota^{-1}(D^{\tau}_{>0})\subset H_1(\partial S^3_{r,\bullet}(L);\Z)$.
We start by proving the following lemma.

\begin{lem}\label{lem: bound on element in support}
Let $m\geq 0$ be an integer and suppose that $m\equiv (l-1)p \mod{l}$. Then $m\in S[\tau]$ if and only if $m\geq (l-1)p + (k-n+1)ql$.
\end{lem}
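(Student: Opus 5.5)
The plan is to read off the support of $\tau=\tau(S^3_{r,\bullet}(L))$ in the residue class of $(l-1)p$ modulo $l$ directly from the formula of Proposition~\ref{prop: Turaev torsion general}. I will use the normalisation given there, since the support $S[\tau]$ in the definition of $D^\tau_{>0}$ refers to that representative. The key observation is that every monomial appearing in that formula has exponent of the form $cp+(\text{multiple of } l)$ with $0\le c\le l-1$. Since $p$ is coprime with $l$, the exponent's residue modulo $l$ determines $c$ uniquely.

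First I will list the three families of terms with their values of $c$:
\begin{itemize}
\item The finite triple sum consists of $t^{(i+j)p+(j-i)ql+al}$, with $c=i+j$ and $1\le i\le n-1$, $0\le j\le k-1$, so $1\le c\le n+k-2=l-2$.
\item The first infinite family consists of $t^{j(p+ql)}t^{bl}$ for $b\ge 0$, with $c=j$ and $0\le j\le k\le l-2$ (using $n\ge 2$).
\item The second infinite family consists of $t^{(k+i)p+(k-i)ql}t^{bl}$ for $b\ge 0$, with $c=k+i$ and $k+1\le c\le l-1$.
\end{itemize}
Thus all the $c$'s lie in $\{0,\dots,l-1\}$. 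Since $p$ is invertible modulo $l$, an exponent is congruent to $(l-1)p$ modulo $l$ if and only if its $c$ equals $l-1$.

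By the ranges above, $c=l-1$ occurs only in the second infinite family, and there only for $i=n-1$. That term is
\[
t^{(l-1)p+(k-n+1)ql}\,(1+t^l+t^{2l}+\cdots).
\]
So the monomials of $\tau$ whose exponent lies in the residue class of $(l-1)p$ are exactly $t^{(l-1)p+(k-n+1)ql+bl}$ for $b\ge 0$. Each has coefficient $1$. No other term of $\tau$ has an exponent in this class, and this family has no repeated exponents, so no coefficient in the class can be cancelled or changed.

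It follows that an integer $m\ge 0$ with $m\equiv (l-1)p \pmod l$ lies in $S[\tau]$ if and only if $m=(l-1)p+(k-n+1)ql+bl$ for some $b\ge 0$. Because $m$ is in the same residue class, this is equivalent to $m\ge (l-1)p+(k-n+1)ql$, which is the claim. The only step needing care is the bookkeeping of the ranges of $c$. In particular, $n\ge 2$ is what rules out $j=l-1$ in the first infinite family.
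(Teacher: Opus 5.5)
Your proof is correct and follows essentially the same route as the paper's. Like the paper, you split the torsion into the three summands of Proposition~\ref{prop: Turaev torsion general}, use that $p$ is coprime to $l$ to see that only the $i=n-1$ term of the second infinite family meets the residue class of $(l-1)p$, and then read off its support. You additionally spell out the steps the paper leaves implicit: the role of $n\geq 2$ in excluding $j=l-1$, and why no cancellation can occur in that residue class.
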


\begin{proof}
The Turaev torsion of $S^3_{r,\bullet}(L)$ is sum of three terms $A, B, C$ where:

\begin{align*}
&A=\sum_{i=1}^{n-1}\sum_{j=0}^{k-1}\sum_{a=0}^{q-1}t^{(i+j)p}t^{(j-i)ql}t^{al}  \\
&B=\left(\sum_{j=0}^{k}t^{j(p+ql)}\right)(1+t^l+t^{2l}+\cdots) \\
&C=\left(\sum_{i=1}^{n-1}t^{(k+i)p}t^{(k-i)ql}\right)(1+t^l+t^{2l}+\cdots).
\end{align*}
The elements in the support of $A$ are congruent to $(i+j)p \mod{l}$, for some $1\leq i\leq n-1$ and $0\leq j\leq k-1$, and those in the support of $B$ are congruent to $jp \mod{l}$, for some $0\leq j\leq k$. This implies that $m$ is in the support of $\tau$ if and only if it is in the support of $C$. The elements in the support of $C$ that are congruent to $(l-1)p=(n+k-1)p \mod{l}$ are of the form $(l-1)p + (k-n+1)ql +cl$, with $c\in \Z_{\geq 0}$ and this proves the lemma.
\end{proof}

We fix the canonical meridian and longitude basis $(\mu_2, \lambda_2)$ for the first homology group of the boundary of \(S^3_{r,\bullet}(L)\), and we use it to consider the image of $\iota^{-1}(D^{\tau}_{>0})$ in $\Q \cup \{\infty\}$ via the map $\varphi:x\mu_2+ y\lambda_2\mapsto \frac{x}{y}$.

\begin{remark}\label{rem: infinity is an endpoint}
By Corollary~\ref{cor: contains convex hull}, we know that $S^3_{r,\bullet}(L)$ is a Floer simple manifold and that $\infty$ is an $L$-space filling slope. Moreover $\infty$ must be one of the endpoints of the interval of $L$-space fillings on $S^3_{r,\bullet}(L)$. In fact, if this was not the case, we would deduce that for $r_1, r_2<<0$ the manifold $S^3_{r_1,r_2}(L)$ is an $L$-space, contradicting Proposition  \ref{prop: some ctf on L'nk}. 
\end{remark}

The previous remark and Theorem \ref{theorem RR} imply that in order to completely determine the set of $L$-space filling slopes of $S^3_{r, \bullet}(L)$ it is enough to locate the maximum point of the image of $\iota^{-1}(D^{\tau}_>0)$ in $\Q$. We denote this set by $\mathcal{P}$.
Thus our goal is to show that the element
\[
\rho=\dfrac{1}{\left\lfloor{\frac{r}{l}+\frac{2n-1}{l}}\right\rfloor}
\]
is the maximum element of $\mathcal{P}$.

We first prove that $\rho$ belongs $\mathcal{P}$, and then that it is its maximum. 
Recall that we denote by $\iota:H_1(\partial S^3_{r,\bullet}(L))\rightarrow H_1(S^3_{r,\bullet}(L))=\Z$ the map induced by the inclusion.

\begin{lem}\label{lemma: v_- belongs to D^t}
The element $v=-p + ql^2\left\lceil{\frac{p}{ql}-\frac{2n-1}{l}}\right\rceil$ belongs to $D^{\tau}_{>0}$.
\end{lem}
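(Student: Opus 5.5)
To show $v\in D^{\tau}_{>0}$ we must write $v=x-y$ with $x\notin S[\tau]$, $y\in S[\tau]$ and $x>y$, and also check that $v\in\iota(H_1(\partial S^3_{r,\bullet}(L);\Z))$.

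\emph{Step 1: $v$ lies in the image of $\iota$.} Using the basis $(\mu_2,\lambda_2)$ and the projection $\pi$, we have $\iota(\mu_2)=p$ and $\iota(\lambda_2)=\pi(l\mu_1)=ql^2$. Hence
\[
v=-p+ql^2c=\iota(-\mu_2+c\lambda_2), \qquad c=\left\lceil \tfrac{p}{ql}-\tfrac{2n-1}{l}\right\rceil .
\]
This preimage has slope $\varphi=-1/c$, which matches $\rho$ up to the sign conventions relating $r=-p/q$ to $\lceil\cdot\rceil$ versus $\lfloor\cdot\rfloor$. That consistency is a useful check for the next lemma.

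\emph{Step 2: choosing $x$ and $y$.} The plan is to work in the residue class $(l-1)p \bmod l$, where Lemma~\ref{lem: bound on element in support} characterises the support exactly. Set
\[
m_0=(l-1)p+(k-n+1)ql .
\]
Take $y=m_0$, which lies in $S[\tau]$ by that lemma, and set $x=y+v$. Since $v\equiv -p \pmod l$, we get $x\equiv (l-2)p \pmod l$. Lemma~\ref{lem: bound on element in support} does not apply directly in this residue class, so I would redo its argument there. Residues of the form $(i+j)p$ and $jp$ arise from the terms $A$, $B$, $C$, and since $p$ is coprime to $l$ these residues are distinct for distinct exponents. It follows that the class of $(l-2)p=(n+k-2)p$ is hit by $C$ with $i=n-2$ (when $n\ge 3$), and also by $A$ with $i+j=l-2$, that is $i=n-1$, $j=k-1$. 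So the support in that class is explicit.

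Because this becomes messy, I would instead reverse the roles: take $x$ in the class $(l-1)p$ and $y=x-v$ in the class $lp\equiv 0$. Concretely, let $x=m_0-l$, the largest element of its class not in the support (Lemma~\ref{lem: bound on element in support}); here we need $m_0-l\ge 0$, or allow negative $x$, which are never in the support by Remark~\ref{rem: 0 is in support}. Then $y=x-v=m_0-l+p-ql^2c$, which is congruent to $0 \pmod l$.

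\emph{Step 3: membership of $y$ in the support.} In the residue class $0$, the contributions come from $B$ with $j=0$, which gives all of $0,l,2l,\dots$ when $p-ql\ge0$. Since $r<-(2n-1)$ gives $p>(2n-1)q$, which need not exceed $ql$, I must check the sign of $y$ directly. The point is that $c$ is exactly the ceiling making $y\ge 0$ while keeping $x>y$, which is equivalent to $v>0$. Simplifying, $y=lp+(k-n+1)ql-l-ql^2c$, and $y\ge0$ should reduce to $c\le \frac{p}{ql}+\frac{k-n}{l}+\dots$, to be compared with the definition of $c$.

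\emph{Step 4: the obstacle.} The main difficulty is this bookkeeping: choosing which residue class and which extremal elements make both inequalities $x\notin S[\tau]$ and $y\in S[\tau]$ hold, together with $v>0$, exactly at the value of $c$ given by the ceiling. To settle it I would first test the small cases $n=2$, $k=1$ to pin down the right pair, then carry out the general inequality using $\frac pq>2n-1$.
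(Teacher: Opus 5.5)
Your proposal has a genuine gap: it never proves the two inequalities that make up the whole content of the lemma, namely $v>0$ and $v<m_0=(l-1)p+(k-n+1)ql$. Step~3 only says that $y\ge 0$ ``should reduce to'' some bound on $c$, and Step~4 defers the verification to testing small cases.

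Your final pair $x=m_0-l$, $y=x-v$ is in fact viable. But look at what it requires. Since $v\equiv -p\equiv (l-1)p\equiv m_0 \pmod l$, the condition $y\ge 0$ is equivalent to $v\le m_0-l$, that is, to $v<m_0$. The condition $x>y$ is just $v>0$. So you need exactly the same two inequalities as the much simpler choice $x=v$, $y=0$, which is what the paper uses. Indeed, $0\in S[\tau]$ by Remark~\ref{rem: 0 is in support}, and once $0<v<m_0$, Lemma~\ref{lem: bound on element in support} gives $v\notin S[\tau]$ directly. There is then no need to analyse a second residue class.

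If you keep your pair, you must also justify that the support in the class $0 \bmod l$ is exactly $\{0,l,2l,\dots\}$. This holds because $A$ only contributes residues $(i+j)p$ with $1\le i+j\le l-2$, $C$ only contributes residues $(k+i)p$ with $k+1\le k+i\le l-1$, and $\gcd(p,l)=1$. Your qualifier ``when $p-ql\ge 0$'' plays no role there. Two smaller points also need fixing:
\begin{itemize}
\item Your aside that negative integers are never in $S[\tau]$ is not justified by Remark~\ref{rem: 0 is in support}, because when $p<ql$ the term $A$ can have negative exponents.
\item Lemma~\ref{lem: bound on element in support} is stated only for $m\ge 0$, so you need $x\ge 0$. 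That follows from $x>y\ge 0$, but only after the inequalities are proved.
\end{itemize}

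The missing computation is short. Put $R=c=\left\lceil \frac{p}{ql}-\frac{2n-1}{l}\right\rceil$.
\begin{itemize}
\item For $v>0$: since $p/q>2n-1$, we have $R\ge 1$. Then $\frac{p}{ql}\le R+\frac{2n-1}{l}<R+2\le lR$, where the last step uses $(l-1)R\ge 2$, valid because $l=n+k\ge 3$. Hence $ql^2R>p$, i.e.\ $v>0$.
\item For $v<m_0$: the ceiling gives $\frac{p}{ql}-\frac{2n-1}{l}>R-1$. Multiplying out, $lp>(2n-1)ql+(R-1)ql^2=Rql^2+(n-k-1)ql$. Equivalently $lp+(k-n+1)ql>Rql^2$, which is exactly $v<m_0$.
\end{itemize}
With these two facts, either your pair or the pair $(v,0)$ finishes the proof, together with Step~1 for membership in $\iota(H_1(\partial S^3_{r,\bullet}(L);\Z))$.
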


\begin{proof}
Recall that by definition $D^{\tau}_{>0}$ is the subset of $H_1(S^3_{r,\bullet}(L);\Z)=\Z$ defined as
$$
D^{\tau}_{>0}=\{x-y\,| x\notin S[\tau],\, y\in S[\tau] \text{ and } x>y\}\cap \iota(H_1(\partial S^3_{r,\bullet}(L);\Z).
$$
It follows from Lemma \ref{lem: v is in the image} that $v$ is in the image of $\iota$ (which is indeed a surjection). We know from Remark \ref{rem: 0 is in support} that $0$ is in the support of $\tau$, and hence to prove the lemma it is enough to show that $v$ is a positive number that is not in $S[\tau]$.
\begin{itemize}[leftmargin=*]
\item \emph{$v$ is positive:} denote $R=\left\lceil{\frac{p}{ql}-\frac{2n-1}{l}}\right\rceil$ and we observe that our assumptions imply that $R\geq 1$. We have:
$$
v=-p + ql^2R>0 \Leftrightarrow ql^2R>p \Leftrightarrow lR> \frac{p}{ql}
$$
and by the definition of $R$ we know that 
\[
\frac{p}{ql}\leq R + \frac{2n-1}{l}< R +2.
\]
Therefore, it is enough to prove that $lR\geq R+2$, or equivalently that $(l-1)R\geq 2$. Since we are working under the assumptions that $n\geq 2$ and $k\geq 1$, we have that $l=n+k\geq 3$ and conclude.
\item \emph{$v$ is not in the support of $\tau$:} since $v\equiv (l-1)p \mod{l}$, by virtue of Lemma \ref{lem: bound on element in support}, it is sufficient to prove that $v<(l-1)p + (k-n+1)ql$ and this holds if and only if
$$
ql^2R<lp + (k-n+1)ql. 
$$
By the definition of $R$ we have 
\begin{align*}
\frac{p}{ql}-\frac{2n-1}{l}> R-1 \Rightarrow p>(2n-1)q+(R-1)ql \Rightarrow \\
\Rightarrow pl>(2n-1)ql+(R-1)ql^2=Rql^2 +(n-k-1)ql
\end{align*}
and therefore 
$$
lp + (k-n+1)ql>Rql^2 +(n-k-1)ql + (k-n+1)ql= Rql^2
$$
which is what we wanted to prove.
\end{itemize}
This concludes the proof.
\end{proof}
We now consider the class in $H_1(\partial S^3_{r,\bullet}(L);\Z)$ defined as
\[
u=-\mu_2 + \left\lceil{\frac{p}{ql}-\frac{2n-1}{l}}\right\rceil\lambda_2
\]
\begin{lem}\label{lem: v is in the image}
The image of $u$  via the map $\iota$ is $v$. 
\end{lem}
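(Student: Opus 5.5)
We must compute $\iota(u)$ where $u=-\mu_2+R\lambda_2$, with $R=\left\lceil \frac{p}{ql}-\frac{2n-1}{l}\right\rceil$. The plan is to compute the images of $\mu_2$ and $\lambda_2$ in $H_1(S^3_{r,\bullet}(L);\Z)\cong\Z$ using the identification already fixed, namely the quotient map $\pi$ with $\pi(\mu_1)=ql$ and $\pi(\mu_2)=p$, and then use linearity.

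First, $\iota$ factors as the inclusion $H_1(\partial\nu K_2)\to H_1(E_L)$ followed by $\pi$. In $H_1(E_L;\Z)=\langle\mu_1,\mu_2\rangle$, the meridian $\mu_2$ maps to itself. The canonical longitude $\lambda_2$ is homologous in $E_L$ to $\operatorname{lk}(K_1,K_2)\mu_1=l\mu_1$; here we use the orientation of $L'_{n,k}$ for which the linking number is $l=n+k$, consistent with the presentation $p\mu_1-ql\mu_2$ used above. Hence
\[
\iota(\mu_2)=\pi(\mu_2)=p,\qquad \iota(\lambda_2)=\pi(l\mu_1)=l\cdot ql=ql^2 .
\]

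By linearity, $\iota(u)=-p+R\,ql^2$, which is exactly $v$. The only point requiring care is the sign conventions: the orientation of $\lambda_2$ relative to $\mu_1$, and the sign of the linking number in the chosen orientation. I would check these against the relation $p\mu_1-ql\mu_2=0$, which arises from the filling slope $p\mu_1-q\lambda_1$ with $\lambda_1=l\mu_2$. That same convention gives $\lambda_2=l\mu_1$ with the same sign, so no sign discrepancy arises. This bookkeeping is the only potential obstacle; the rest is immediate.
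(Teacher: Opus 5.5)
Your proposal is correct and follows essentially the same route as the paper. You factor $\iota$ through $H_1(E_L;\Z)$, use $\lambda_2\mapsto l\mu_1$ together with $\pi(\mu_1)=ql$ and $\pi(\mu_2)=p$ to get $\iota(\mu_2)=p$ and $\iota(\lambda_2)=ql^2$, and conclude by linearity that $\iota(u)=-p+ql^2R=v$; your sign check against the relation $p\mu_1-ql\mu_2=0$ is a consistency check the paper leaves implicit.
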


\begin{proof}
We have the following commutative diagrams of inclusions:
\[\begin{tikzcd}
\partial S^3_{r,\bullet}(L) \arrow{d}{i_1}\arrow{r}{i}   &  S^3_{r,\bullet}(L)\\
E_L\arrow{ur}{i_2}
\end{tikzcd}
\]
which induces a corresponding commutative diagrams on the homologies. The map $i_1$ sends the homology classes of $\mu_2,\lambda_2\in H_1(S^3_{r,\bullet}(L);\Z)$ to $\mu_2$ and $l\mu_1$ respectively. We already showed at the beginning of Section~\ref{subsubsec: constraints L-space surgeries} that the map $i_2$ sends $\mu_2$ to $p\in \Z=H_1(S^3_{r,\bullet}(L))$ and $\mu_1$ to $ql$.  By commutativity we get $\iota(\mu_2)=p$ and $\iota(\lambda_2)=ql^2$.
\end{proof}
\begin{comment}
We start by noting that the preimage of this element in $H_1(\partial S^3_{r,\bullet}(L))$ are the classes

\[
u_{\pm}=\pm(\mu_2 + \left\lfloor{\frac{r}{l}+\frac{2n-1}{l}}\right\rfloor\lambda_2)=\pm(\mu_2 - \left\lceil{\frac{-r}{l}+\frac{2n-1}{l}}\right\rceil\lambda_2)=\pm(\mu_2 - \left\lceil{\frac{p}{ql}-\frac{2n-1}{l}}\right\rceil\lambda_2)
\]
Recall that we denote by $\iota:H_1(\partial S^3_{r,\bullet}(L))\rightarrow H_1(S^3_{r,\bullet}(L))=\Z$ the map induced by the inclusion.
\begin{lem}\label{lemma: two candidates}
The image of $u_{\pm}$ in $H_1(S^3_{r,\bullet}(L))$ via the map $\iota$ is the point $v_{\pm}=\pm(p - ql^2\left\lceil{\frac{p}{ql}-\frac{2n-1}{l}}\right\rceil)$. 
\end{lem}

\begin{proof}
We have the following commutative diagrams of inclusions:
\[\begin{tikzcd}
\partial S^3_{r,\bullet}(L) \arrow{d}{i_1}\arrow{r}{i}   &  S^3_{r,\bullet}(L)\\
E_L\arrow{ur}{i_2}
\end{tikzcd}
\]
which induces the corresponding commutative diagrams on the homologies. The map $i_1$ sends the homology classes of $\mu_2,\lambda_2\in H_1(S^3_{r,\bullet}(L))$ to $\mu_2$ and $l\mu_1$ respectively. We already showed that the map $i_2$ sends $\mu_2$ to $p\in \Z=H_1(S^3_{r,\bullet}(L))$ and $\mu_1$ to $ql$.  By commutativity we get $\iota(\mu_2)=p$ and $\iota(\lambda_2)=ql^2$.
\end{proof}
\end{comment}

\begin{corollary}
The element $\rho$ belongs to \(\mathcal{P}\).
\end{corollary}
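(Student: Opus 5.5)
The plan is to combine Lemma~\ref{lemma: v_- belongs to D^t} with Lemma~\ref{lem: v is in the image}. Since $\iota(u)=v$ and $v\in D^{\tau}_{>0}$, we get $u\in \iota^{-1}(D^{\tau}_{>0})$. So it suffices to compute the image of $u$ under $\varphi\colon x\mu_2+y\lambda_2\mapsto \frac{x}{y}$ and check that it equals $\rho$. Note that $\mathcal{P}$ is by definition the image in $\Q\cup\{\infty\}$ of $\iota^{-1}(D^{\tau}_{>0})$.

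Write $R=\left\lceil \frac{p}{ql}-\frac{2n-1}{l}\right\rceil$. Then $u=-\mu_2+R\lambda_2$, and
\[
\varphi(u)=\frac{-1}{R}=\frac{1}{-R}.
\]
Since $r=-\frac{p}{q}$, we have
\[
-R=-\left\lceil -\frac{r}{l}-\frac{2n-1}{l}\right\rceil=\left\lfloor \frac{r}{l}+\frac{2n-1}{l}\right\rfloor,
\]
using the identity $-\lceil x\rceil=\lfloor -x\rfloor$. Hence $\varphi(u)=\rho$. The map $\varphi$ is well defined on $u$ because $R\ge 1$, as already observed in the proof of Lemma~\ref{lemma: v_- belongs to D^t}. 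The same observation shows $\rho$ is a negative rational, not $\infty$.

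I expect no real obstacle here. The only points to watch are the sign and orientation conventions: $\varphi$ is insensitive to replacing $u$ by $-u$, since slopes are taken up to sign, so $\pm u$ both give $\rho$. One should also make sure that $\iota^{-1}(D^{\tau}_{>0})$ is regarded as a subset of $H_1(\partial S^3_{r,\bullet}(L);\Z)$ before projectivising.
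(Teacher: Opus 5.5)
Your proposal is correct and follows the same route as the paper. The paper likewise combines the two lemmas to get $u\in\iota^{-1}(D^{\tau}_{>0})$. It then rewrites the $\lambda_2$-coefficient $\left\lceil \frac{p}{ql}-\frac{2n-1}{l}\right\rceil$ as $-\left\lfloor \frac{r}{l}+\frac{2n-1}{l}\right\rfloor$ and reads off $\varphi(u)=\rho$, which is exactly your argument.
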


\begin{proof}
It follows from Lemmas~\ref{lemma: v_- belongs to D^t} and \ref{lem: v is in the image} that \(u\) belongs to \(\iota^{-1}(D^\tau_{>0})\). We can rewrite \(u\) as
\[
u=-\mu_2+\left\lceil \frac{p}{ql}-\frac{2n-1}{l}\right\rceil \lambda_2
=-\mu_2-\left\lfloor \frac{p}{-ql}+\frac{2n-1}{l}\right\rfloor \lambda_2
=-\mu_2-\left\lfloor \frac{r}{l}+\frac{2n-1}{l}\right\rfloor \lambda_2.
\]
Its image in \(\mathbb{Q}\cup\{\infty\}\) under the map \(\varphi\) is therefore $\rho$, as claimed.
\end{proof}

\begin{prop}\label{prop: the desired element is the maximum}
The element \(\rho\) is the maximum of \(\mathcal{P}\).
\end{prop}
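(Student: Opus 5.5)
We must show that no element of $\mathcal{P}$ exceeds $\rho$. Since $\rho=1/R'$ with $R'=\lfloor r/l+(2n-1)/l\rfloor\le -1$, we have $\rho\in[-1,0)$. By Remark~\ref{rem: infinity is an endpoint} and Theorem~\ref{theorem RR}, $\infty$ is one endpoint of the interval of $L$-space fillings and the other endpoint is the largest element of $\mathcal{P}$. So it suffices to show that no primitive class $w=x\mu_2+y\lambda_2$ with $x/y>\rho$ has $\iota(w)=xp+yql^2\in D^\tau_{>0}$ (up to sign).

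First we reduce to a congruence class. By Lemma~\ref{lem: v is in the image}, $\iota(w)=xp+yql^2\equiv xp \pmod l$. We can normalise $y>0$. Slopes in $(\rho,\infty)$ are then of two kinds: those with $x\ge 0$ (slopes in $[0,\infty)$), and those with $x<0$ and $-x/y<-\rho=1/|R'|$, that is, $y>|R'|\,|x|$.

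Next we analyse differences $d=X-Y$ with $X\notin S[\tau]$, $Y\in S[\tau]$ and $X>Y$. From the decomposition $\tau=A+B+C$ in the proof of Lemma~\ref{lem: bound on element in support}, in each residue class mod $l$ the support is determined by a few "tails" $c+l\Z_{\ge0}$ coming from $B$ and $C$, together with finitely many extra points from $A$. In every residue class $jp$ the support is eventually everything: the gaps occur below the tail start $(k+i)p+(k-i)ql$ or $j(p+ql)$. The key estimate to prove is an upper bound $D_{\max}$ for elements of $D^\tau_{>0}$, say the largest gap minus the smallest support element, so that $X\le (l-1)p+(k-n+1)ql-l$ roughly. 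I would then show this bound is of the form $d\le v$, where $v=-p+ql^2R$ is the element of Lemma~\ref{lemma: v_- belongs to D^t}, and that $d>0$.

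Finally we combine the two steps. For slopes with $x\ge0$ and $y>0$ we have $\iota(w)\ge p+ql^2$, which should exceed the maximal gap, so these give no elements of $\mathcal{P}$. For $x<0$ we have $\iota(w)=-|x|p+yql^2>-|x|p+|x||R'|ql^2=|x|\,v$. If $|x|\ge 1$ this exceeds $v$, hence exceeds $D_{\max}$, and so $\iota(w)\notin D^\tau_{>0}$. This forces every element of $\mathcal{P}$ to be at most $\rho$.

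The main obstacle is the middle step: pinning down the maximal element of $D^\tau_{>0}$ exactly as $v$, across all residue classes. This needs a careful comparison of the tail starting points $(k+i)p+(k-i)ql$ for $i=1,\dots,n-1$, together with the $A$-terms, using $p/q>2n-1$ and the definition of $R$. I would first try to show that the largest non-support element overall is $(l-1)p+(k-n+1)ql-l$, the last gap in the residue class of Lemma~\ref{lem: bound on element in support}, and then subtract the minimal support element $0$.
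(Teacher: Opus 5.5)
There is a genuine gap. Your strategy rests on an upper bound for $D^\tau_{>0}$, and that bound is both false as stated and not enough to finish.

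First, the claim $\max D^\tau_{>0}=v$ fails in general. Since $v\equiv(l-1)p \pmod l$, it is a gap in that residue class, but usually not the largest gap there. Take $n=2$, $k=1$, $q=1$, $p=11$. Then
$\tau=t^{8}+(1+t^{14})(1-t^3)^{-1}+t^{22}(1-t^3)^{-1}$, with $R=3$, $v=16$ and $\rho=-1/3$. However $19\notin S[\tau]$ and $0\in S[\tau]$, so $19\in D^\tau_{>0}$.

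Nor is the smallest support element always $0$. If $p<ql$, which the standing assumptions allow when $k\ge n$, the summand $A$ contains $t^{(n-1)(p-ql)}$ with negative exponent.

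For $x<0$ your argument is repairable. Since $y\ge R|x|+1$, you get $\iota(w)\ge |x|v+ql^2\ge v+ql^2$, and one can check that this exceeds every element of $D^\tau_{>0}$.

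The real problem is the range $x\ge 0$. Your bound $\iota(w)\ge p+ql^2$ fails at $x=0$, where $\iota(\lambda_2)=ql^2$. Moreover, for $p/q$ large even $p+ql^2$ lies far below the largest gap, which is roughly $(l-1)p$. In the example above, slope $0>\rho$ has $\iota(\lambda_2)=9<19$. It is true that $9\notin D^\tau_{>0}$, but only for arithmetic reasons: a gap and a support element differing by $9$ would have to lie in the same residue class mod $3$, and no such pair exists. No size estimate can detect this, so your proposal lacks any argument excluding slopes in $[0,\infty)$.

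The paper avoids analysing the top of $D^\tau_{>0}$ altogether. Writing $\rho=-1/m$, one has $1-2n-ml\le r$. So Proposition~\ref{prop: L-space knots as surgeries on L'}, together with the symmetry of two-bridge links, already shows that $S^3_{r,\rho}(L)$ is an $L$-space.

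By Remark~\ref{rem: infinity is an endpoint} and Theorem~\ref{theorem RR}, the set of $L$-space fillings of $S^3_{r,\bullet}(L)$ is $[\max\mathcal{P},\infty]$. If some element of $\mathcal{P}$ exceeded $\rho$, the filling at slope $\rho$ would not be an $L$-space, a contradiction. Combined with $\rho\in\mathcal{P}$, this proves the claim.

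The missing idea in your proposal is to use this already-known $L$-space filling at slope $\rho$, rather than trying to bound $D^\tau_{>0}$ from above.
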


\begin{proof}
If $\rho$ is not the maximum of $\mathcal{P}$, then it follows from Theorem \ref{theorem RR}, that there exist $\rho<s'\in \mathbb{Q}$ such that
$$
S^3_{r,s}(L) \text{ is an $L$-space } \Leftrightarrow
s\geq s'.
$$
We show that this produces an absurd by proving that $S^3_{r,\rho}(L)$ is an $L$-space. It follows from our assumptions that $\rho=-\frac{1}{m}$ for some integer $m>0$. By Proposition~\ref{prop: L-space knots as surgeries on L'}, we know that the manifold $S^3_{r',-\frac{1}{m}}(L)$ is an $L$-space for all $r'\geq 1-2n-ml$. In particular, since
\[
1-2n-ml=1-2n+\left\lfloor{\frac{r}{l}+\frac{2n-1}{l}}\right\rfloor l\leq 1-2n +(r+2n-1)=r,
\]
we deduce that $S^3_{r,-\frac{1}{m}}(L)=S^3_{r,\rho}(L)$ is an $L$-space, which is what we wanted.
\end{proof}

\begin{corollary}\label{cor: L-space surgeries out of box}
Let \(r=-\frac{p}{q}<-(2n-1)\) and suppose that $p$ is coprime with $l$. Then \(S^3_{r,s}(L)\) is an $L$-space if and only if \(
s\geq \dfrac{1}{\left\lfloor{\frac{r}{l}+\frac{2n-1}{l}}\right\rfloor}\)
\end{corollary}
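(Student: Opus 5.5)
The corollary is essentially an assembly of the preceding results through Theorem~\ref{theorem RR}. Set $Y=S^3_{r,\bullet}(L)$. Under the standing assumptions ($p$ coprime with $l$, so $H_1(Y;\Z)\cong\Z$), Corollary~\ref{cor: contains convex hull} shows that $Y$ is Floer simple. Indeed, both $\infty$ and the slope $-\frac1m$ with $m=-\lfloor \frac{r}{l}+\frac{2n-1}{l}\rfloor$ are $L$-space filling slopes: the latter by Proposition~\ref{prop: L-space knots as surgeries on L'}, as in the proof of Proposition~\ref{prop: the desired element is the maximum}. Hence Theorem~\ref{theorem RR} applies, and $L(Y)$ is either $\Sl^*(Y)$ or a closed interval whose endpoints are consecutive points of $\mathcal{P}=\varphi(\iota^{-1}(D^\tau_{>0}))$.

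First I will rule out the case $L(Y)=\Sl^*(Y)$. The set $D^\tau_{>0}$ is nonempty, since it contains $v$ by Lemma~\ref{lemma: v_- belongs to D^t}. Alternatively, Proposition~\ref{prop: some ctf on L'nk} gives non-$L$-space fillings $s\ll0$. So $L(Y)$ is a closed interval in $\Q\cup\{\infty\}$. By Remark~\ref{rem: infinity is an endpoint}, $\infty$ is one endpoint of this interval; otherwise the interval would contain all sufficiently negative $s$, contradicting Proposition~\ref{prop: some ctf on L'nk}.

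Next I will identify the other endpoint and the direction of the interval. The other endpoint must be a point of $\mathcal{P}$ adjacent to $\infty$, hence either $\max\mathcal{P}$ or $\min\mathcal{P}$. By Proposition~\ref{prop: the desired element is the maximum}, $\rho=1/\lfloor\frac{r}{l}+\frac{2n-1}{l}\rfloor$ is the maximum of $\mathcal{P}$, and $S^3_{r,\rho}(L)$ is an $L$-space. The interval therefore contains $\rho$ and $\infty$, and it cannot contain points of $\mathcal{P}$ in its interior.

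The main care point is to check which of the two arcs of $\Q\cup\{\infty\}$ between $\rho$ and $\infty$ is meant. The arc $[\rho,\infty]$ contains no point of $\mathcal{P}$ in its interior, by maximality of $\rho$. The complementary arc contains the large negative slopes, which are non-$L$-space slopes by Proposition~\ref{prop: some ctf on L'nk}. It also contains the homological longitude, which by Remark~\ref{rem: longitudes 2-comp} is $l^2/r<0$.

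Therefore $L(Y)=[\rho,\infty]$. This means $S^3_{r,s}(L)$ is an $L$-space if and only if $s\ge\rho$, which is the statement.
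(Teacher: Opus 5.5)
Your proposal is correct and follows the paper's own argument: the paper's proof is the one-line combination of Remark~\ref{rem: infinity is an endpoint}, Proposition~\ref{prop: the desired element is the maximum} and Theorem~\ref{theorem RR}, and your write-up is that same assembly with the details spelled out. One small point: $l^2/r<0$ alone does not put the homological longitude on the arc below $\rho$. For that you need $l^2/r<\rho$, which is equivalent to the positivity of $v$ in Lemma~\ref{lemma: v_- belongs to D^t}. Your other argument, via Remark~\ref{rem: infinity is an endpoint}, already suffices to exclude that arc, so this does not affect the proof.
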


\begin{proof}
The result follows by combining Remark~\ref{rem: infinity is an endpoint} and Proposition \ref{prop: the desired element is the maximum} with Theorem \ref{theorem RR}.
\end{proof}

We now focus our attention to the case when $r\in [-a,-a+1)$ with $-a\in \Z\cap [1-2n,-2]$. We summarise here our setting.

\begin{tcolorbox}
\textbf{Standing assumptions:} $r=-\frac{p}{q}\in [-a,-a+1)$ with $-a\in \Z\cap [1-2n,-2]$ and $p$ is coprime with $l$.
\newline

\textbf{Goal:} Prove that $S^3_{r,s}(L)$ is an $L$-space if and only if $s\geq -1-2n+a$.
\end{tcolorbox}

First of all, we show that we can assume $a\in [1+n,2n-1]$.

\begin{lem} \label{lemma: assumption on a}
If \emph{\textbf{Goal}} holds for $a\in [1+n, 2n-1]$, then it holds for $a\in [2,2n-1]$.
\end{lem}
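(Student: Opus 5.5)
The plan is to reduce the range $a\in[2,n]$ to the range $a\in[n+1,2n-1]$ using the symmetry of two-bridge links, Lemma~\ref{lemma: symmetric}, which gives $S^3_{r,s}(L)=S^3_{s,r}(L)$. We also use the $L$-space surgeries already found in Proposition~\ref{prop: L space surgeries on L'nk small box} and Corollary~\ref{cor: contains convex hull}, together with the box-convexity of $L(E_L)$ from Proposition~\ref{prop: box-convex}.

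Fix $a\in[2,n]$ and $r\in[-a,-a+1)$ with the standing assumptions. The \textbf{Goal} has two directions.

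The ``if'' direction does not depend on the reduction. The point $(-a,-1-2n+a)$ lies in $L_-$, and the points $(\infty,s)$ and $(-a+1,\cdot)$-type points lie in the hull. By Corollary~\ref{cor: contains convex hull}, $(-a,s)$ and $(-a+1-\epsilon,s)$ are in $L(L)$ for all $s\ge -1-2n+a$. Rectangles $[-a,r']\times[s_0,\infty]$ then give $(r,s)\in L(L)$. These rectangles avoid the longitude curve $rs=l^2$ by Remark~\ref{rem: longitudes 2-comp}, since for $r<0$ and $s$ with $rs<0$ or small $|rs|$ the product is not $l^2$.

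For the ``only if'' direction, suppose $S^3_{r,s}(L)$ is an $L$-space with $s<-1-2n+a$. Set $s':=-1-2n+a$. Since $a\le n$, we have $s'\in[-1-n,\,-n-1+(a-n)]$, so $s'\le -1-n$, i.e.\ $-s'\ge n+1$. Swapping coordinates gives $S^3_{s,r}(L)$ an $L$-space with $s<s'\le -(n+1)$.

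Now split on $s$. If $s<-(2n-1)$, Corollary~\ref{cor: L-space surgeries out of box} (or its approximated version) forces $r\ge 1/\lfloor (s+2n-1)/l\rfloor$. Since $r<-1$ and $|1/\lfloor\cdot\rfloor|\le 1$, this is a contradiction. If $s\in[-b,-b+1)$ for some $b\in[n+1,2n-1]$, the \textbf{Goal} in the assumed range gives $r\ge -1-2n+b$. We then check that this contradicts $r<-a+1$ together with $s<-1-2n+a$, using $-b\le s<-1-2n+a$, so $b>2n+1-a$, and hence $-1-2n+b>-a$. A rational $r$ with $r<-a+1$ and $r\ge -1-2n+b\ge -a+1$ is impossible.

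The main obstacle is that $s$ need not have numerator coprime to $l$, so the assumed range of the \textbf{Goal} may not apply directly to $s$. To handle this, I would use box-convexity of $L(E_L)$ and Remark~\ref{rem: infinity is an endpoint}. If $(r,s)$ is an $L$-space, then so is the whole segment from $(r,s)$ to $(r,\infty)$ within the interval containing $\infty$. This lets us perturb $s$ slightly, within the same unit interval, to a value $s''$ with numerator coprime to $l$ before swapping. The boundary cases, where $s$ is an integer endpoint and the perturbation crosses into the next interval, need care. There I would perturb $s$ upward, which keeps it in the $L$-space set and in $[-b,-b+1)$.
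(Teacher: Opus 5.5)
Your proof is correct. Its core is the same as the paper's: use the symmetry $S^3_{r,s}(L)=S^3_{s,r}(L)$, then apply the \textbf{Goal} in the range $[n+1,2n-1]$ with the two coordinates exchanged.

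The difference is in how the argument is organised.
\begin{itemize}
\item The paper first uses the interval structure of the $L$-space fillings of $S^3_{r,\bullet}(L)$ (Propositions~\ref{prop: some ctf on L'nk} and~\ref{prop: box-convex}) to reduce to one test slope $s'=-1-2n+a-\varepsilon$. This slope lies in $[-a',-a'+1)$ with $a'=2n+2-a$. Applying the \textbf{Goal} once at $a'$ gives the threshold $r\geq 1-a$, which contradicts $r<1-a$.
\item You keep an arbitrary $s<-1-2n+a$. You use the interval structure only to push $s$ slightly upward to a slope whose numerator is coprime to $l$. You then check every possible location of $s$: Corollary~\ref{cor: L-space surgeries out of box} when $s<1-2n$, and the \textbf{Goal} at the appropriate $b\in[n+1,2n-1]$ otherwise.
\end{itemize}

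The paper's version is shorter. Yours makes explicit two points the paper leaves implicit.
\begin{itemize}
\item The \textbf{Goal} requires the first coordinate to have numerator coprime to $l$. After the swap this is the perturbed slope, and you arrange the coprimality explicitly.
\item In the edge case $a=2$, the paper's $a'=2n$ falls outside $[n+1,2n-1]$. There it is really Corollary~\ref{cor: L-space surgeries out of box} that gives $r\geq -1=1-a$, which is exactly your first case.
\end{itemize}

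Your appeal to Remark~\ref{rem: infinity is an endpoint} for $r\in[-n,-1)$ is legitimate, since its argument never uses $r<1-2n$.

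One minor slip: $s'=-1-2n+a$ ranges over $[1-2n,-1-n]$, not the interval you wrote. Your conclusion $s'\leq -1-n$ is still correct.
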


\begin{proof}
The lemma is a consequence of the fact that two-bridge links are symmetric. Assume $r\in [-a,-a+1)$ with $a\in [2,n]$. First of all, observe that we already know by Proposition~\ref{prop: L space surgeries on L'nk small box} and Corollary~\ref{cor: contains convex hull} that $S^3_{r,s}(L)$ is an $L$-space when $s\geq -1-2n+a$, so we only need to prove the inverse implication. By Propositions~\ref{prop: some ctf on L'nk} and \ref{prop: box-convex}, it is enough to show that for arbitrarily small $\varepsilon > 0$, the manifold $S^3_{r,s'}(L)$ is not an $L$-space, where $s'=-1-2n+a-\varepsilon$. Notice that 
$$
-1-2n+a-1 \leq s' < -1-2n+a,
$$
and if we set $-a'=-1-2n+a-1$, then the assumption on $a$ implies $a'\geq n+1$. Since $s'\in [-a', -a'+1)$, and \textbf{Goal} holds for $a'$, we have that $S^3_{r,s'}(L)=S^3_{s',r}(L)$
is an $L$-space if and only if $r\geq -1-2n+a'=1-a$. Since $r< 1-a$, the manifold $S^3_{r,s'}(L)$ is not an $L$-space, and this implies the desired result.
\end{proof}

From now on we assume $a\in[1+n,2n-1]$. Analogously as we did before, we want to show that $-1-2n+a$ is the maximum element of $\mathcal{P}$. Recall that $\mathcal{P}$ denotes the image of $\iota^{-1}(D^{\tau}_>0)$ in $\Q$.

\begin{lem}\label{lemma: v belongs to Dt 2}
The element $v=(a-1-2n)p + ql^2$ belongs to $D^{\tau}_{>0}$.
\end{lem}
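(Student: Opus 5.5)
The strategy mirrors Lemma~\ref{lemma: v_- belongs to D^t}. Since $\iota(\mu_2)=p$ and $\iota(\lambda_2)=ql^2$ (Lemma~\ref{lem: v is in the image}), the element $v=(a-1-2n)p+ql^2$ is $\iota$ of
\[
u=(a-1-2n)\mu_2+\lambda_2 ,
\]
so $v$ lies in the image of $\iota$. By Remark~\ref{rem: 0 is in support}, $0\in S[\tau]$. It therefore suffices to show two things: that $v>0$, and that $v\notin S[\tau]$. Then $v=v-0$ exhibits $v$ as an element of $D^\tau_{>0}$.

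\emph{Positivity.} The hypothesis $r\in[-a,-a+1)$ gives $a-1<\frac pq\le a$, so $p\le aq$. Hence
\[
v\ge ql^2-(2n+1-a)aq = q\bigl(l^2-a(2n+1-a)\bigr).
\]
For $a\in[n+1,2n-1]$ we have $a(2n+1-a)\le (n+1)n<(n+k)^2$, since $k\ge1$. So $v>0$.

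\emph{Non-membership.} I would use the decomposition $\tau=A+B+C$ from the proof of Lemma~\ref{lem: bound on element in support}. Reducing mod $l$, we get $v\equiv (a-1-2n)p$. Writing $a-1-2n=-(2n+1-a)$ and using $2n\equiv n-k \pmod l$, this becomes
\[
v\equiv (k+a-n-1)p \pmod l .
\]
Since $a\in[n+1,2n-1]$, the multiplier $j_0:=k+a-n-1$ lies in $[k,\,n+k-2]$. Because $p$ is coprime to $l$, residues $cp$ with distinct $c\in[0,l-1]$ are distinct.

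The residue classes met by each term are as follows. The support of $B$ meets residues $jp$ with $0\le j\le k$. The support of $A$ meets residues $(i+j)p$ with $1\le i+j\le n+k-2$. The support of $C$ meets residues $(k+i)p$ with $1\le i\le n-1$.

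The problem is that $j_0$ can collide with residues from $A$, and with $B$ when $a=n+1$. So, unlike in Lemma~\ref{lem: bound on element in support}, a congruence argument alone cannot exclude $v$. Instead I would list the exponents explicitly.

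\begin{itemize}
\item \emph{$A$.} Exponents are $(i+j)p+(j-i)ql+\alpha l$ with $0\le\alpha\le q-1$, and the residue condition forces $i+j=j_0$. Substituting $v$ and dividing by $l$ turns the equation into a linear relation in $q$ and $p$. I expect a size comparison to rule it out: $p/q\in(a-1,a]$ pins $p$ against $q$, and $0\le\alpha<q$.
\item \emph{$B$.} Here $j=j_0$, so $a=n+1$ and $j_0=k$. The condition reduces to $v\ge k(p+ql)$, which again should be contradicted using $p>(a-1)q$.
\item \emph{$C$.} Here $i=j_0-k=a-n-1$. The condition becomes
\[
v\ge (k+i)p+(k-i)ql ,
\]
which after simplification should reduce to an inequality incompatible with $p>(a-1)q$.
\end{itemize}

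\emph{Main obstacle.} This last step, the case analysis showing $v$ avoids each of the supports of $A$, $B$ and $C$, is the main obstacle. I would handle each inequality by writing $p=(a-1)q+\delta$ with $0<\delta\le q$ and comparing the coefficients of $q$; this is where $a\ge n+1$ must be used essentially.
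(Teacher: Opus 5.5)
Your plan is the same as the paper's proof. You write $v=\iota\bigl((a-1-2n)\mu_2+\lambda_2\bigr)$, get positivity from $p\le aq$ and $a(2n+1-a)<l^2$, and then use $\tau=A+B+C$, with the residue $v\equiv(k+a-n-1)p \pmod l$ fixing the indices, followed by a size comparison in each piece. Your remark that $B$ is relevant only when $a=n+1$, and $C$ only when $a\ge n+2$, slightly sharpens the paper, which compares against $B$ and $C$ for every $a$ via the intermediate bound $v<(a-1)(a-1-2n)q+ql^2$.

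The comparisons you leave as expectations do go through:
\begin{itemize}
\item In $A$, dividing by $l$ gives $\alpha=q(l-j+i)-p$. Since $j\le k-1$ forces $i\ge a-n$, we get $\alpha\ge (a+1)q-p\ge q$, contradicting $\alpha\le q-1$.
\item In $B$ and $C$, the smallest candidates are $k(p+ql)$ and $(k+i)p+(k-i)ql$ (with $i=a-n-1$). Both exceed $v$ by exactly $l\bigl(p-(a-1)q\bigr)>0$.
\end{itemize}
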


\begin{proof}
The proof is similar to the one of Lemma \ref{lemma: v_- belongs to D^t}. Since $0$ is in the support of $\tau$ by Remark \ref{rem: 0 is in support}, it is enough to show that $v$ is positive and that it does not belong to the support of $\tau$. We start by proving that $v$ is positive, or equivalently that 
$$
(1+2n-a)\frac{p}{q}< l^2.
$$
Recall that by assumption we know that $a-1< \frac{p}{q} \leq a$ and that $a\in [1+n,2n-1]$. This implies that $(1+2n-a)\frac{p}{q}\leq (1+2n-a)a$, and hence it is enough to show that 
$$
(1+2n-a)a < (n+1)^2 \leq l^2.
$$
We have 
$$
(1+2n-a)a < (n+1)^2 \Longleftrightarrow a-1-2n < (a-n)^2,
$$
and the last inequality holds since by our assumptions the left-hand side is negative.  To show that $v$ is not in the support of $\tau$, we write $\tau=A+B+C$ as in the proof of Lemma 
\ref{lem: bound on element in support} and prove that $v$ is not in the support of any of them.

\begin{itemize}[leftmargin=*]
\item \emph{$v$ is not in the support of $A$:} recall that 
$$
A=\sum_{i=1}^{n-1}\sum_{j=0}^{k-1}\sum_{\alpha=0}^{q-1}t^{(i+j)p}t^{(j-i)ql}t^{\alpha l},
$$
and so $v$ is in the support of $A$ if and only if 
$$
v=(a-1-2n)p+ql^2=(i+j)p+(j-1)ql+\alpha l
$$
for some $1\leq i \leq n-1, 0\leq j\leq k-1, 0\leq\alpha\leq q-1$.
Since $p$ and $l$ are coprime, by considering this equality mod $l$ we deduce 
$$
i+j\equiv a-1-2n \mod l,
$$ and since $1\leq i\leq n-1, 0\leq j\leq k-1$ and $a\in [n+1,2n-1]$, we must have 
$i+j=a-1-n+k$. We consider the following set
$$
\mathcal{A}=\{w=(i+j)p+(j-i)ql+\alpha l| \text{ $w$ is in the support $A, i+j=a-1-n+k$}\},
$$
and we claim that for every $w\in \mathcal{A}$ we have $w<v$, and hence $v$ is not in the support of $A$. Of course, it is enough to prove this for the maximum of $\mathcal{A}$, which we denote by $w_{\operatorname{max}}$. This element is obtained by fixing $\alpha=q-1$, and by taking $i,j$ that maximise
$j-i$ in the set 
\[
\{(i,j)\, |\, 1\leq i \leq n-1, 0\leq j\leq k-1 \text{ and } i+j=a-1-n+k\}.
\]
Therefore, $w_{\operatorname{max}}$ is obtained by taking 
$$
i=a-n,\, j=k-1,\, \alpha=q-1,
$$
and to prove the claim we need to show that 
$$
(a-1-n+k)p+(k-1-a+n)ql+(q-1)l < (a-1-2n)p+ql^2=v,
$$
which happens if and only if
\begin{align*}
&lp< ql^2-ql+l-(k+n-1-a)ql \Longleftrightarrow p< ql-q+1-(l-1-a)q=qa+1 \\
&\Longleftrightarrow\frac{p}{q}< a+ \frac{1}{q}
\end{align*}
and the last inequality holds since $\frac{p}{q}\leq a$ by hypothesis.
\item \emph{$v$ is not in the support of $B$:} recall that 
$$
B=\left(\sum_{j=0}^{k}t^{j(p+ql)}\right)(1+t^l+t^{2l}+\cdots).
$$
Hence $v$ is in the support of $B$ if and only if 
$$v=jp+jql+\alpha l \text{ for some $0\leq j\leq k$ and $0 \leq \alpha$}.$$
Arguing as above, if this happen then we must have $j=a-1-n+k$. We consider the set
$$
\mathcal{B}=\{w=jp+jql+\alpha l| \text{ $w$ is in the support $B$ and $j=a-1-n+k$}\}
$$
and its minimum $w_{\operatorname{min}}=(a-1-n+k)(p+ql)$.
We show that $v<w_{\operatorname{min}}$ and, as a consequence, that $v$ is not in the support of $B$. First, we observe that since $a-1<\frac{p}{q}$ and $a-1-2n<0$ we have
\begin{align*}
&v=(a-1-2n)p+ql^2<(a-1)(a-1-2n)q+ql^2 \text{ and}\\
&w_{\operatorname{min}}\geq (a-1-n+k)\big{[}(a-1)q+ql\big{]},
\end{align*}
and so it is enough to show that
$$
(a-1-n+k)\big{[}(a-1)q+ql\big{]}\geq(a-1)(a-1-2n)q+ql^2.
$$
By explicit computation, one sees that the above inequality is equivalent to
$$
l(a-1) \geq l(2n-a+1) \Longleftrightarrow a\geq n+1,
$$
which holds since $a\in[n+1,2n-1]$ by assumption.
\item \emph{$v$ is not in the support of $C$:} the summand $C$ is equal to
$$
C=\big{(}\sum_{i=1}^{n-1}t^{(k+i)p}t^{(k-i)ql}\big{)}(1+t^l+t^{2l}+\cdots)
$$
and, proceeding as above one shows that if $v$ is in its support then we must have
$$
v=(a-1-n+k)p+(k-a+1+n)ql+\alpha l \text{ for some $\alpha\geq 0$}.
$$
We show that 
$$
(a-1-n+k)p+(k-a+1+n)ql\geq \big{[}(a-1)(a-1-2n)+l^2\big{]}q
$$
and conclude as in the previous case. We denote the term on left-hand side by $w'_{\operatorname{min}}$.  Since $\frac{p}{q}\geq a-1$, we have
$$
w'_{\operatorname{min}}\geq q\big{[}(a-1-n+k)(a-1)+(k-a+1+n)l\big{]}
$$
and this implies the desired result, because of the following chain of equivalences
\begin{align*}
&q\big{[}(a-1-n+k)(a-1)+(k-a+1+n)l\big{]}\geq \big{[}(a-1)(a-1-2n)+l^2\big{]}q \Longleftrightarrow\\
&(a-1)(n+k)\geq l^2-l(k-a+1+n) \Longleftrightarrow l(a-1)\geq l(a-1).
\end{align*}
\end{itemize}
In conclusion, since $v$ is not in the support of any of the summands $A, B$ and $C$, we deduce $v\notin S[\tau]$, and hence $v\in D^{\tau}_{>0}$.
\end{proof}

\begin{prop}\label{prop: v is the max 2}
The element $\rho=-1-2n+a$ is the maximum of $\mathcal{P}$.
\end{prop}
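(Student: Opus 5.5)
We follow the scheme used for Proposition~\ref{prop: the desired element is the maximum}, in two steps: first show that $\rho$ lies in $\mathcal{P}$, then show that no larger element of $\mathcal{P}$ can occur.

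\textbf{Step 1: $\rho\in\mathcal{P}$.} Consider the class
\[
u=(a-1-2n)\mu_2+\lambda_2\in H_1(\partial S^3_{r,\bullet}(L);\Z).
\]
As in the proof of Lemma~\ref{lem: v is in the image}, we have $\iota(\mu_2)=p$ and $\iota(\lambda_2)=ql^2$, so $\iota(u)=(a-1-2n)p+ql^2=v$. By Lemma~\ref{lemma: v belongs to Dt 2}, $v\in D^{\tau}_{>0}$, and therefore $u\in\iota^{-1}(D^\tau_{>0})$. Under the map $\varphi$, the class $u$ goes to $a-1-2n=\rho$, so $\rho\in\mathcal{P}$.

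\textbf{Step 2: $\rho$ is the maximum.} Suppose, for contradiction, that $\rho$ is not the maximum of $\mathcal{P}$. By Remark~\ref{rem: infinity is an endpoint}, $S^3_{r,\bullet}(L)$ is Floer simple, and $\infty$ is one endpoint of its interval of $L$-space fillings. Theorem~\ref{theorem RR} then says the other endpoint is the largest element $s'$ of $\mathcal{P}$, so $L$-space fillings are exactly the $s\geq s'$. Since $s'>\rho$, this forces $S^3_{r,\rho}(L)$ not to be an $L$-space. To reach a contradiction, it therefore suffices to show that $S^3_{r,\rho}(L)$ is an $L$-space.

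For this we use box-convexity. By Proposition~\ref{prop: L space surgeries on L'nk small box}, the integer surgery $S^3_{-a,\,a-1-2n}(L)$ is an $L$-space, since $-a\in[1-2n,-2]$ and $(-a)+(a-1-2n)=-1-2n$. We also need an $L$-space at $(r',\rho)$ for some $r'\geq r$. For instance, $(0,\rho)$ is an $L$-space by Corollary~\ref{cor: quadrant 2 and 4 are L-spaces}, because $\rho<0$.

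The segment $[-a,0]\times\{\rho\}$ contains $(r,\rho)$, since $-a\leq r<0$. It remains to check that this segment avoids the homological longitude curve $xy=l^2$ of Remark~\ref{rem: longitudes 2-comp}. Here $x\leq 0$ and $y=\rho<0$, so $xy\geq 0$. The maximum of $xy$ on the segment is $a(2n+1-a)$, attained at $x=-a$. The computation in the proof of Lemma~\ref{lemma: v belongs to Dt 2} gives $a(1+2n-a)<(n+1)^2\leq l^2$, so the segment lies in $\Sl^*(E_L)$. Proposition~\ref{prop: box-convex} then gives that the whole segment, and in particular $(r,\rho)$, consists of $L$-space surgeries. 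This is the required contradiction.

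\textbf{Main obstacle.} The delicate point is confirming that the degenerate rectangle genuinely lies in $\Sl^*$. If the quantitative bound above turned out to be insufficient, a fallback is to use $H_-$ with $m=1$ together with $\mathcal{B}$ instead of the point $(0,\rho)$.
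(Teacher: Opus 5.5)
Your proposal is correct and follows the paper's proof. The same class $u=(a-1-2n)\mu_2+\lambda_2$, with $\iota(u)=v$, gives $\rho\in\mathcal{P}$, and maximality reduces to showing that $S^3_{r,\rho}(L)$ is an $L$-space, using Proposition~\ref{prop: L space surgeries on L'nk small box} together with box-convexity.

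The paper states that last box-convexity step tersely. Your explicit choice of the second vertex $(0,\rho)$, together with the check $a(1+2n-a)<(n+1)^2\le l^2$ showing the segment avoids $xy=l^2$, correctly fills in that detail.
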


\begin{proof}
Recall that $\iota$ denotes the map induced in homology by the inclusion \(\partial S^3_{r,\bullet}(L)\subset S^3_{r,\bullet}\). Arguing as in the proof of Lemma \ref{lem: v is in the image}, one checks that the image of the element 
$$
u=-(1+2n-a)\mu_2 +\lambda_2
$$
via $\iota$ is $v$. This shows that $u\in \iota^{-1}(D^{\tau}_{>0})$. Since we are using the basis $(\mu_2, \lambda_2)$ to identify the slopes on $S^3_{r,\bullet}(L)$ with $\mathbb{Q}\cup \{\infty\}$, this implies that $\rho\in \mathcal{P}$. We now show that it is the maximum of $\mathcal{P}$. By arguing as in the proof of Proposition \ref{prop: the desired element is the maximum} it is enough prove that that $S^3_{r,\rho}(L)$ is an $L$-space. By Proposition \ref{prop: L space surgeries on L'nk small box} we know that $S^3_{-a,\rho}(L)$ is an $L$-space, and since $r\geq -a$, by box-convexity we obtain the desired conclusion. 
\end{proof}

\begin{corollary}\label{cor: class L' small box}
Suppose $r=-\frac{p}{q}\in [-a,-a+1)$ with $-a\in \Z\cap [1-2n,-2]$ and $p$ is coprime with $l$. Then $S^3_{r,s}(L)$ is an $L$-space if and only if $s\geq -1-2n+a$.
\end{corollary}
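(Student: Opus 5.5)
The plan is to argue exactly as in the proof of Corollary~\ref{cor: L-space surgeries out of box}, now feeding in Proposition~\ref{prop: v is the max 2} in place of Proposition~\ref{prop: the desired element is the maximum}.

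First, reduce to $a\in[1+n,2n-1]$ by Lemma~\ref{lemma: assumption on a}. Under this assumption, set $Y=S^3_{r,\bullet}(L)$. Since $p$ is coprime with $l$, we have $H_1(Y;\Z)\cong\Z$, so the Turaev torsion computed in Proposition~\ref{prop: Turaev torsion general} is available. By Corollary~\ref{cor: contains convex hull}, $Y$ is Floer simple: both $\infty$ and $-1-2n+a$ are $L$-space filling slopes, by Proposition~\ref{prop: L space surgeries on L'nk small box} together with box-convexity from $r\ge -a$.

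As in Remark~\ref{rem: infinity is an endpoint}, $\infty$ must be an endpoint of the interval $L(Y)$. Otherwise the interval would contain slopes $s\ll 0$, so $S^3_{r,s}(L)$ would be an $L$-space for such $s$. Applying box-convexity (Proposition~\ref{prop: box-convex}) with the $L$-space slopes $(\infty,s)$ would then produce $L$-space surgeries $S^3_{r_1,r_2}(L)$ with $r_1,r_2\ll0$, contradicting Proposition~\ref{prop: some ctf on L'nk}. One point to check here is that the relevant rectangle avoids the curve $r_1r_2=l^2$ of Remark~\ref{rem: longitudes 2-comp}; this holds because all coordinates involved are negative and the rectangle stays in $r_1r_2>l^2$ only where we do not need it.

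By Theorem~\ref{theorem RR}, $L(Y)$ is a closed interval whose endpoints are consecutive elements of $\mathcal{P}\cup\{\infty\}$, with one endpoint equal to $\infty$. Since $-1-2n+a\in L(Y)$, the interval must be $[\max\mathcal{P},\infty]$, taken on the side containing large positive $s$. The homological longitude is $l^2/r<0$; it lies below $\max\mathcal P$ and so is excluded automatically. Proposition~\ref{prop: v is the max 2} gives $\max\mathcal{P}=-1-2n+a$, hence $L(Y)=\{s\ge -1-2n+a\}\cup\{\infty\}$, which is the claim.

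The main obstacle is the orientation issue: confirming that the interval runs from $\max\mathcal P$ up through $+\infty$, rather than the complementary arc of $\overline{\Q}$. This is settled by the fact that $-1-2n+a$ and all $s$ beyond it are $L$-spaces, while $s\ll 0$ is not.
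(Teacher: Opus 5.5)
Your proposal is correct and follows the paper's proof. The paper reduces to $a\in[n+1,2n-1]$ via Lemma~\ref{lemma: assumption on a} and then combines Remark~\ref{rem: infinity is an endpoint}, Proposition~\ref{prop: v is the max 2} and Theorem~\ref{theorem RR}, exactly as you do; your extra discussion only spells out why the interval is $[\max\mathcal{P},\infty]$. One wording fix: your sentence about the rectangle is garbled. The rectangle $[-\infty,r]\times\{s\}$ avoids the curve $r_1r_2=l^2$ simply because, for $s\ll 0$, every point of it satisfies $r_1s\ge rs>l^2$.
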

\begin{proof}
When $a\in[n+1,2n-1]$, this is a consequence of Remark \ref{rem: infinity is an endpoint}, Proposition \ref{prop: v is the max 2} and Theorem \ref{theorem RR}. In the general case, it follows from Lemma \ref{lemma: assumption on a}.
\end{proof}

In order to deal with the case when $p$ and $l$ are not coprime, and conclude the proof in the general case, we need the following elementary lemma.

\begin{lem}\label{lemma: approx}
For any $\varepsilon,\delta>0$ and any rational $r=\frac{p}{q}$, where $p$ and $q$ are coprime integers, there exist rationals $r_1$ and $r_2$ such that 
$$
r-\delta <r_1\leq r\leq r_2<r+\varepsilon,
$$ where $r_i=\frac{p_i}{q_i}$ and $p_i$ and $l$ are coprime for each $i=1,2$.
\end{lem}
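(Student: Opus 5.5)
The plan is an explicit construction: perturb the fraction along a Farey-type family so that the numerator runs through an arithmetic progression, and then use that $l$ has only finitely many prime factors. I will treat $\delta$ and $\varepsilon$ separately and build $r_1$ and $r_2$ independently. If $p$ is already coprime with $l$, I simply take $r_1=r_2=r$. In general the candidates are fractions of the form
\[
\frac{Np\pm 1}{Nq},
\]
with $N$ a large positive integer. These differ from $r$ by exactly $\frac{1}{Nq}$, so for $N\gg 0$ they lie in $(r-\delta,r]$ and $[r,r+\varepsilon)$ respectively. I use the minus sign for $r_1$ and the plus sign for $r_2$, taking $q>0$ as the paper does.

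Next I deal with coprimality of numerator and denominator. Since $\gcd(Np\pm1,N)=1$, any common factor of $Np\pm 1$ and $Nq$ must divide $q$. Reducing the fraction therefore divides the numerator only by a divisor of $\gcd(Np\pm1,q)$. To control this cleanly I will take $N$ to be a multiple of $q$, say $N=Mq$. Then $Np\pm1\equiv\pm1 \pmod q$, so the fraction $\frac{Np\pm1}{Nq}$ is already in lowest terms and its reduced numerator is exactly $p_i=Np\pm1$.

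It remains to make $Np\pm1$ coprime with $l$, and this is the only real obstacle. I take $N$ to be a multiple of $l$ as well, so $N=Mql$ with $M\ge 1$ large. Then $Np\pm1\equiv \pm1 \pmod{\ell}$ for every prime $\ell\mid l$, so $p_i$ is coprime with $l$. Combining the three requirements, $N=Mql$ works: it gives the approximation for $M$ large, it gives lowest terms (residue $\pm1$ mod $q$), and it gives coprimality with $l$ (residue $\pm1$ mod $l$). This produces
\[
r-\delta<r_1=\frac{Mqlp-1}{Mq^2l}\le r\le r_2=\frac{Mqlp+1}{Mq^2l}<r+\varepsilon,
\]
and in fact both inequalities involving $r$ are strict. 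No genuine difficulty remains beyond checking these congruences. The same construction also covers $p=0$ or negative $p$, since only the residues $\pm1$ matter.
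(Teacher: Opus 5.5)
Your proof is correct and follows essentially the same approach as the paper: perturb $r$ by a small unit fraction and force the new numerator to avoid the primes dividing $l$. The only difference is the choice of perturbation. The paper uses $r+\frac{1}{m}=\frac{pm+q}{qm}$ with $m$ coprime to $q$ and avoiding one residue class modulo each prime of $l$, and asserts that infinitely many such $m$ exist. Your choice $r\pm\frac{1}{Mq^2l}=\frac{Mqlp\pm 1}{Mq^2l}$ makes the numerator $\equiv\pm1$ modulo every prime dividing $ql$, so both lowest terms and coprimality with $l$ are automatic, and no existence argument is needed.
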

\begin{proof}
If $p$ is coprime with $l$ there is nothing to prove. Otherwise, let $a_1,\dots, a_k$ be the distinct primes appearing in the factorisation of $q$, and $b_1,\dots, b_h$ those appearing in the factorisation of $l$. We consider the integers $m$ satisfying 
\[
\begin{cases}
m\not\equiv 0 \mod a_i \quad \forall i=1,\dots, k\\
pm+q\not\equiv 0 \mod b_j \quad \forall j=1,\dots, h
\end{cases}.
\]
There are infinitely many such integers, both negative and positive, and for any such $m$ we consider the rational
$$
\frac{p_m}{q_m}=\frac{p}{q}+\frac{1}{m}=\frac{pm+q}{qm}.
$$
Since $m$ and $q$ are coprime, we have that $\operatorname{gcd}(p_m,q_m)=1$. Moreover, by construction we also have \(\operatorname{gcd}(p_m,l)=1\). By taking $m$ with large enough absolute value, we obtain the desired result. 
\end{proof}

We can now complete the proof of Theorem~\ref{thm: gen L-space links}.
\begin{proof}[Proof of Theorem~\ref{thm: gen L-space links}]
Assume $L$ is a hyperbolic two-bridge link that is a generalised $(+,-)\,L$-space link. Then by Theorem~\ref{thm: persistently fol two-br} and Proposition~\ref{prop: some ctf Lnk}, \(L\) must be isotopic to $L'_{n,k}$ with $n\geq 2,k\geq 1$. 

Thus we fix any such \(L'_{n,k}\). By the reformulation given in Theorem~\ref{thm: reformulation} and Corollary~\ref{cor: contains convex hull}, we know that the $(r,s)$-surgery on $L$ is an $L$-space whenever the pair $(r,s)$ satisfies the hypotheses of the theorem, and we have to prove the converse. By virtue of Lemma \ref{lemma: mirror of generalised L-space links}, it is enough to prove the theorem for $r\leq0$. Moreover,  when $r=0$, the manifold $S^3_{0,s}(L)$ is an $L$-space for all $s\ne 0$, so we can suppose $r<0$. We write $r=-\frac{p}{q}$ for $p,q >0$ coprime integers, and analyse the possible cases:
\begin{figure}[]
   \centering   \includegraphics[width=0.45\textwidth]{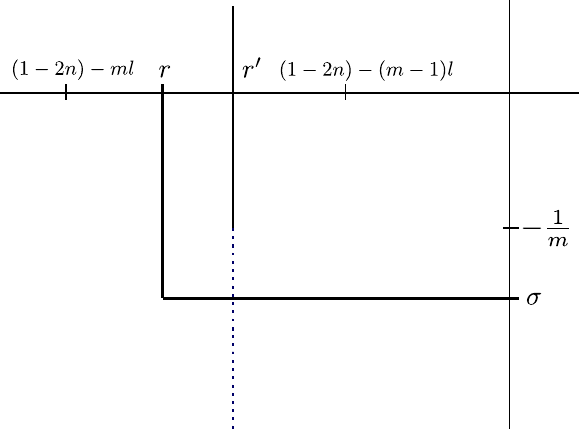}
    \caption{The solid vertical line passing through $(r',0)$ denotes the $L$-spaces surgeries, while the dashed part denotes the non-$L$-space ones, as proved in Corollary \ref{cor: L-space surgeries out of box}. If $S^3_{r,\sigma}(L)$ is an $L$-space, then all the surgeries on the horiziontal solid arc starting from $(r,\sigma)$ are $L$-space and this produces a contradiction.}
    \label{figure: picture sketch}
\end{figure} 
\begin{enumerate}[leftmargin=*]
    \item $r<1-2n$: if $p$ is coprime with $l=n+k$, the result follows from Corollary \ref{cor: L-space surgeries out of box}. In general, we denote
    $$
    -\frac{1}{m}=\frac{1}{\left\lfloor{\frac{r}{l}+\frac{2n-1}{l}}\right\rfloor} 
    $$
    for some $m>0$, so that $1-2n-ml\leq r<1-2n-(m-1)l$. We know by box-convexity that $S^3_{r,s}(L)$ is an $L$-space for $s\geq -\frac{1}{m}$. If the converse implication does not hold, then for all rationals $\sigma< -\frac{1}{m}$ arbitrarily close to $-\frac{1}{m}$ we would have that $S^3_{r,\sigma}(L)$ is an $L$-space. This, by Proposition \ref{prop: box-convex}, implies that the same holds for $S^3_{r',\sigma}(L)$ for all $r'>r$. We take any such $\sigma$ and $r'=-\frac{p'}{q'}$ so that
    $$
    r<r'<(1-2n)-(m-1)l  \, \text{ and } \operatorname{gcd}(p',l)=1,
    $$
    and obtain a contradiction, since $S^3_{r',\sigma}(L)$ cannot be an $L$-space by Corollary~\ref{cor: L-space surgeries out of box}. See Figure \ref{figure: picture sketch} for a pictorial sketch of the proof.
    \item $1-2n\leq r <1$: assume that $r\in [-a,-a+1)$ with $-a\in \Z\cap [1-2n,-2]$. The proof is similar to the one above. If $p$ and $l$ are coprime, we conclude by Corollary \ref{cor: class L' small box}. Otherwise, we conclude by approximating $r$ with $r'=-\frac{p'}{q'}$, where $p'$ coprime with $l$ and $r<r'<-a+1$.
    \item $-1\leq r<0$: this case follows from the ones above. In fact consider $S^3_{r,s}(L)$ for $s\in \mathbb{Q}$. If $s\geq -1$, the manifold is an $L$-space by box-convexity. If $s<-1$ then $S^3_{r,s}(L)=S^3_{s,r}(L)$, and hence this case falls in one of the two cases above.
\end{enumerate}
This concludes the proof.
\end{proof}

Analogously to what happened in the case of the link $L_{n,k}$, we have many non-$L$-space surgeries on \(L'_{n,k}\) that are not covered by the result Proposition~\ref{prop: some ctf on L'nk}. Even though we believe that the proof of Proposition~\ref{prop: some ctf on L'nk} can be refined to produce a larger family of surgeries admitting taut foliations, we do not expect this approach to cover the entire set of non-$L$-space surgeries on $L'_{n,k}$.

\begin{prob}
Construct coorientable taut foliations on the non-$L$-space surgeries on $L'_{n,k}$ that are not covered by Proposition~\ref{prop: some ctf on L'nk} (and its proof). 
\end{prob}

\section{Applications to satellite knots}\label{sec: satellite}
In this final section, we prove Theorem \ref{thm: satellite}, which classifies $L$-space satellite knots whose pattern link is a two-bridge link. 

Recall from the introduction that we denote by $P(K,m)$ the $m$-twisted satellite knot of $K$ with pattern $P$. In this setting, the pattern link $L_P=P \cup K_0$ is the two-component link in $S^3$ obtained by viewing the solid torus containing $P$ as a tubular neighbourhood of an unknot in $S^3$, whose meridian we denote by $K_0$. Since the case when $L_P$ is a torus two-bridge link follows from the results of \cite{HeddenCabling2,Homcable}, we focus on the hyperbolic case. 

\begin{namedtheorem}[\ref{thm: satellite}]
Let $P(K,m)$ be the satellite knot of a non-trivial knot $K$, and assume that the pattern link $L_P$ is a hyperbolic two-bridge link. Then $P(K,m)$ is an $L$-space knot if and only if 
\begin{itemize}
\item$K$ is an $L$-space knot, 
\item $L_P$ is isotopic to $L'_{n,k}$ with $n\geq 2, k\geq 1$, and
\item $m> 2g(K)-1$.
\end{itemize}
Moreover, if $L_P$ is not isotopic to any of the links $L'_{n,k}$, then every non-trivial surgery on $P(K,m)$ supports a coorientable taut foliation.
\end{namedtheorem}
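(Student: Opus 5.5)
The plan is to realise surgeries on the satellite as surgeries on the pattern link, glued to the companion exterior. Write $E_K$ for the exterior of $K$ and $E_{L_P}$ for the exterior of $L_P=P\cup K_0$. The exterior of $P(K,m)$ is obtained by gluing $E_K$ to $E_{L_P}$ along the boundary torus of $K_0$, matching the meridian of $K_0$ with the $m$-framed longitude of $K$. Filling the $P$-component along a slope $s$ produces the manifold $S^3_s(P(K,m))$, up to the usual reparametrisation of the slope by $w^2 m$, where $w=\operatorname{lk}(P,K_0)$. This manifold is a union of $E_K$ with $Y_s=S^3_{\bullet,s}(L_P)$ along a torus.

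\textbf{The ``moreover'' part.} By Theorem~\ref{thm: persistently fol two-br}, if $L_P$ is not isotopic to $L_{n,k}$, $L'_{n,k}$ or their mirrors, then $L_P$ is persistently foliar. So for any non-trivial slope $s$ on $P$, and any slope on the $K_0$-torus other than the meridian of $K_0$, there is a taut foliation of $E_{L_P}$ meeting the boundary in linear foliations of these slopes. We choose the slope on $K_0$'s torus that corresponds to the meridian of $K$ under the gluing; since $K$ is non-trivial, this is not the meridian of $K_0$. We also take a taut foliation of $E_K$ transverse to the boundary with meridional slope, which exists by Gabai's sutured hierarchy. Gluing these along the torus gives a coorientable taut foliation on $S^3_s(P(K,m))$.

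For the mirrors of $L_{n,k}$ and for $L_{n,k}$ itself we argue differently. Proposition~\ref{prop: some ctf Lnk} shows that $L_{n,k}$ is not a generalised $(\pm,\mp)$ $L$-space link. We need a persistent-foliation-type statement at the slope of $K_0$ glued to $E_K$, and I would try to extract it from the branched-surface construction in that proof, since it yields foliations on whole half-planes of slopes. This is the step I consider most delicate.

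\textbf{The $L$-space classification.} For the ``only if'' direction, suppose $P(K,m)$ is an $L$-space knot. Then some surgery $E_K\cup Y_s$ is an $L$-space. By the gluing theorem for $L$-spaces along tori (Hanselman--Rasmussen--Watson / Rasmussen--Rasmussen), $K$ must be an $L$-space knot. The theorem also requires the slopes on the $K_0$-torus that give $L$-space fillings of $Y_s$ (read through the gluing) to cover the complement of the interior of the $L$-space interval $[2g(K)-1,\infty]$ of $K$. For large $s$ this forces $Y_s$ to have $L$-space fillings on an interval of slopes containing $[-\infty, m-2g(K)+1]$ in the framing of $K_0$, after the sign conventions are sorted out.

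With Theorem~\ref{thm: L-space links} and the box-convexity result Proposition~\ref{prop: box-convex}, this rules out $L_{n,k}$: its $L$-space region is only the quadrant $\min\{r,s\}\ge n+k-1$, which contains no long interval through $\infty$ on the negative side. After orienting $L_P$ correctly, it also rules out the mirrors. This leaves $L'_{n,k}$ with $n\ge2$, $k\ge1$, by hyperbolicity.

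Theorem~\ref{thm: gen L-space links} then gives, for $s\gg0$, the interval of $L$-space fillings on the $K_0$-torus: roughly $r\le \frac{1}{\lceil (s-2k-1)/l\rceil}$, which tends to $0$ from above, together with all negative $r$. In the reparametrised slopes of $K$, the required containment becomes exactly $m>2g(K)-1$.

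For the ``if'' direction, we reverse the argument. For $s$ large, the $L$-space filling interval of $Y_s$ coming from Theorem~\ref{thm: gen L-space links} covers the complement of $K$'s $L$-space interval precisely when $m>2g(K)-1$, so the gluing criterion gives an $L$-space, and hence $P(K,m)$ is an $L$-space knot.

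The main obstacle is the careful bookkeeping of slopes. The $K_0$-slope $r$ corresponds to a slope on $\partial E_K$ via $\mu_{K_0}\leftrightarrow \lambda_K+m\mu_K$ and $\lambda_{K_0}\leftrightarrow \mu_K$, and this correspondence, together with the strictness of the inequality at the endpoints (using the $1/m$ points of $H_+$), must be checked to land exactly on $m>2g(K)-1$.
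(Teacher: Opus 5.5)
Your decomposition and overall route are the paper's. You write $S^3_s(P(K,m))=S^3_{\bullet,s}(L_P)\cup_\varphi E_K$, use the Hanselman--Rasmussen--Watson gluing criterion with Theorems~\ref{thm: L-space links} and~\ref{thm: gen L-space links} for the $L$-space statement, and glue taut foliations across the companion torus for the rest.

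\textbf{The foliation gluing fails as written.} There is no coorientable taut foliation of $E_K$ transverse to $\partial E_K$ meeting it in meridians of $K$: capping those circles off with meridian discs would give one on $S^3_\infty(K)=S^3$. Gabai's sutured hierarchy gives a foliation with a minimal genus Seifert surface as a leaf, so the slope it realises is the longitude $\lambda_K$. The slope you must realise on the $K_0$-torus is therefore $\varphi^{-1}(\lambda_K)=\mu_0-m\lambda_0$, i.e.\ $-1/m$, not $\varphi^{-1}(\mu_K)=\lambda_0$. Non-triviality of $K$ has nothing to do with whether your slope is meridional; it is what provides Gabai's foliation and the incompressibility of $\partial E_K$.

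With this correction, the matching foliation on the pattern side comes from Theorem~\ref{thm: persistently fol two-br}, or, for $L_{n,k}$ and its mirror, from the exterior foliations in the proof of Proposition~\ref{prop: some ctf Lnk}; there you must check that $(-1/m,s)$ lies in the range they cover. This is the argument of \cite{San23twobridge} that the paper invokes. For $m=0$ the slope $-1/m$ is the meridian of $K_0$, so this gluing gives nothing and you need an extra input there.

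The ``only if'' half of the $L$-space statement does not need any gluing of foliations. If $L_P$ is persistently foliar, every non-meridional filling of $S^3_{\bullet,s}(L_P)$ carries a taut foliation, so $L^\circ(S^3_{\bullet,s}(L_P))=\emptyset$ and the gluing criterion fails directly.

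\textbf{The slope bookkeeping in the $L$-space half.} Your final condition is right, but the intermediate set $[-\infty,m-2g(K)+1]$ is not. Since $\varphi^{-1}$ sends a slope $\sigma$ of $\partial E_K$ to $(\sigma-m)^{-1}$, one has $\varphi^{-1}(L^\circ(E_K))=(0,\tfrac{1}{2g(K)-1-m})$, the arc leaving $0$ in the positive direction. The criterion requires $L^\circ(S^3_{\bullet,s}(L'_{n,k}))$ to contain the complementary closed arc from $\tfrac{1}{2g(K)-1-m}$ to $0$.

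For $s\gg0$, Theorem~\ref{thm: gen L-space links} gives $L^\circ(S^3_{\bullet,s}(L'_{n,k}))=(-\infty,1/M)$ with $M=\lceil (s-2k-1)/l\rceil$. This misses $\infty$, so the containment holds exactly when that arc avoids $\infty$, i.e.\ when $m>2g(K)-1$. The strictness of the inequality therefore comes from $\infty\notin L^\circ$, not from the points of $H_+$. The same computation also rules out $L(E_K)=[-\infty,1-2g(K)]$, which the gluing criterion alone leaves open when you conclude that $K$ is an $L$-space knot.
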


We start by fixing some notation. We denote by $\mu_{K},\lambda_{K}$ the meridian and longitude of $K$, and by $\mu_0, \lambda_{0}$ and $\mu_{P},\lambda_{P}$ the meridian and longitude of $K_0$ and $P$ respectively. We use these meridian-longitude bases to identify the slopes on the various boundary components with $\overline{\mathbb{Q}}$. By construction, the exterior $E_{P(K,m)}$ of $P(K,m)$ is diffeomorphic to $E_{L_P}\cup_{\varphi}E_{K}$, where the gluing map
$$
\varphi:  \partial \nu K_0\rightarrow \partial \nu K 
$$
satisfies $\varphi(\mu_0)=m\mu_K+\lambda_K$ and $\varphi(\lambda_0)=\mu_K$.

\begin{lem}\label{lemma: satellite}
Suppose that $K\subset S^3$ is a non-trivial $L$-space knot, and that $L_P$ is isotopic to $L'_{n,k}$ for some $n\geq2, k\geq1$. Then $P(K,m)$ is an $L$-space knot if and only if $m > 2g(K)-1$.    
\end{lem}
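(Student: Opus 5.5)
We will prove the lemma by gluing two Floer simple manifolds along a torus and invoking Rasmussen--Rasmussen's gluing criterion, which the paper does not state but which is standard in \cite{RR,Ras20}. The criterion says: if $Y_1,Y_2$ are Floer simple rational homology solid tori glued by $\varphi$, then the result is an $L$-space if and only if $\varphi(L^\circ(Y_1))\cup L^\circ(Y_2)=\Sl(\partial Y_2)$, where $L^\circ$ denotes the interior of the set of $L$-space filling slopes. In our setting we take $Y_2=E_K$, and for the first piece we fill the pattern boundary: $Y_1=S^3_{\bullet,t}(L'_{n,k})$, the exterior of $K_0$ in the $t$-surgery on $P$. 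Then $S^3_t(P(K,m))=Y_1\cup_\varphi E_K$.

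Since $P(K,m)$ is a satellite with non-trivial companion, it cannot be a lens-space-type knot with arbitrary negative $L$-space surgeries. So we only need to decide whether large positive $t$ give $L$-spaces; for $L$-space knots this is equivalent to the existence of some such $t$.

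\begin{itemize}
\item[\textit{Step 1.}] Since $K$ is a non-trivial $L$-space knot, \cite[Lemma~2.13]{HeddenCabling2} gives $L(E_K)=[2g(K)-1,\infty]$, with interior $(2g(K)-1,\infty)$.
\item[\textit{Step 2.}] For large positive $t$ we read off $L(Y_1)$ from Theorem~\ref{thm: gen L-space links}, applied with the roles of $r$ and $s$ swapped via symmetry. For fixed $t>2k+1$, the surgery $S^3_{r,t}$ is an $L$-space exactly when $r\le 1/\lceil (t-2k-1)/l\rceil$ or $r=\infty$, by the last bullet of that theorem together with the $r<0$ and $r=0$ cases. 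Checking those cases, $L(Y_1)$ should be the closed interval from $\infty$ through the negatives to $1/M$, where $M=\lceil (t-2k-1)/l\rceil$, going the long way around and avoiding the homological longitude $l^2/t$.
\item[\textit{Step 3.}] Transport this interval under $\varphi$. We have $\varphi(\mu_0)=m\mu_K+\lambda_K$ and $\varphi(\lambda_0)=\mu_K$, so the slope $a\mu_0+b\lambda_0$ maps to $(am+b)\mu_K+a\lambda_K$. In coordinates, $x=a/b\mapsto m+1/x$. Hence $\infty\mapsto m$, $0^-\mapsto-\infty$, and $1/M\mapsto m+M$. The interior of $L(Y_1)$ therefore maps to the arc from $m$ up through $+\infty$ and around to $m+M$, which is $\overline{\Q}\setminus[m,m+M]$ traversed appropriately. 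I must double-check the orientation of the arc here.
\item[\textit{Step 4.}] The union with $(2g(K)-1,\infty)$ covers all slopes if and only if $[m,m+M]\subset(2g(K)-1,\infty)$, that is, if and only if $m>2g(K)-1$. This condition is independent of $t$, which gives the equivalence for large $t$.
\end{itemize}

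For the converse direction, when $m\le 2g(K)-1$, the slope $m$ (the image of $\infty$) lies in neither interior for every $t$. Hence no large surgery is an $L$-space and $P(K,m)$ is not an $L$-space knot.

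The main obstacle is Step 2: pinning down $L(Y_1)$ exactly from Theorem~\ref{thm: gen L-space links}, including endpoints and which side of the longitude $l^2/t$ the interval lies on, together with the orientation and sign conventions in $\varphi$. I would first test the computation on the degenerate case $t\to\infty$, where $Y_1$ becomes the exterior of an unknot.
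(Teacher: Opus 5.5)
Your route is the same as the paper's. You write $S^3_t(P(K,m))=S^3_{\bullet,t}(L'_{n,k})\cup_\varphi E_K$, apply the gluing criterion phrased with interiors of $L$-space intervals, and read the filling set off the last bullet of Theorem~\ref{thm: gen L-space links}. Your Steps 2--4 are correct, and Step 2 is not an obstacle:
\begin{itemize}
\item For $t>2k+1$, the set of $L$-space filling slopes is $\{x\le 1/M\}\cup\{\infty\}$.
\item Its interior $\{x<1/M\}$ is sent by $x\mapsto m+1/x$ to $\overline{\Q}\setminus[m,m+M]$.
\item This covers $\overline{\Q}$ together with $(2g(K)-1,\infty)$ exactly when $m>2g(K)-1$, which is equivalent to the paper's $\varphi^{-1}(L^\circ(E_K))=(0,\frac{1}{2g(K)-1-m})$.
\end{itemize}

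The gap is the gluing criterion itself. As you state it, for arbitrary Floer simple pieces, it is false. Take $Y_1$ a solid torus whose meridian is glued to the slope $2g(K)-1$. Then $Y_1\cup_\varphi E_K=S^3_{2g(K)-1}(K)$ is an $L$-space, yet $\varphi(L^\circ(Y_1))\cup L^\circ(E_K)=\overline{\Q}\setminus\{2g(K)-1\}$.

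The criterion with interiors is \cite[Theorem~1.14]{HRW}, which is what the paper invokes. It requires both pieces to have incompressible boundary. Your ``only if'' direction depends on this hypothesis: there the decisive slope $m$ is the image of the endpoint $\infty$ of $L(Y_1)$, exactly where the difference between $L$ and $L^\circ$ matters. So you must check two things:
\begin{itemize}
\item $\partial E_K$ is incompressible because $K$ is non-trivial.
\item $S^3_{\bullet,t}(L'_{n,k})$ is hyperbolic, hence boundary incompressible, for all sufficiently large $t$. This holds because $L'_{n,k}$ is hyperbolic for $n\ge 2$, $k\ge 1$ (Lemma~\ref{lemma: gener. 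L-space links are not L-space links}), combined with Thurston's hyperbolic Dehn filling theorem.
\end{itemize}
Since being an $L$-space knot only involves large $t$, this is enough.

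Two minor points. The remark about ``lens-space-type knots'' is unnecessary, since large positive surgeries being $L$-spaces is the definition. Also, $t$ is measured with respect to $\lambda_P$ rather than the Seifert longitude of $P(K,m)$; this is harmless for large $t$.
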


\begin{proof}
By definition, $P(K,m)$ is an $L$-space knot if and only if large surgeries on it are $L$-spaces, i.e. if and only if large fillings of the manifold $E_{P_L}\cup_{\varphi}E_{K}$ are $L$-spaces\footnote{Note that in general the longitude $\lambda_P$ that we use to identify slopes on the boundary with $\overline{\mathbb{Q}}$ does not coincide with the canonical longitude of the knot $P(K,m)$. Nonetheless, the property of having large surgeries that are $L$-spaces does not depend on the choice of the longitude.}. Since $L_P$ is hyperbolic by assumption, we have that $P(K,m)$ is an $L$-space knot if and only for large enough rationals $r$ the $r$-filling $M_r$ of $E_{L_P}\cup_{\varphi}E_{K}$ is an $L$-space and $S^3_{\bullet,r}(L_P)$ is hyperbolic. Note that by construction we have 
$$
M_r\cong S^3_{\bullet,r}(L_P)\cup_{\varphi} E_K,
$$
and both $S^3_{\bullet,r}(L_P)$ and $E_K$ have incompressible boundary, the first manifold being hyperbolic, and the second being the exterior of a non-trivial knot in $S^3$. Then, by \cite[Theorem~1.14]{HRW}, $M_r$ is an $L$-space if and only if 
$$
\varphi(L^{\circ}(S^3_{\bullet,r}))\cup L^{\circ}(E_K)=\Sl(E_K),
$$
where we use the superscript $^{\circ}$ to denote the interior of a set, and where, with a slight abuse of notation, we denote by $\varphi$ the bijection induced on the set of slopes by the gluing map $\varphi$. Of course this is equivalent to 
$$
L^{\circ}(S^3_{\bullet,r})\cup \varphi^{-1}(L^{\circ}(E_K))=\Sl(S^3_{\bullet,r}).
$$
It is straightforward from the definition that the map $\varphi^{-1}$ identifies the slope $s$ with $(s-m)^{-1}$. Since $K$ is an $L$-space knot, we have that $L(E_K)=[2g(K)-1,\infty]$ and hence
$$
\varphi^{-1}({L^{\circ}(E_K)})=\left(0,\frac{1}{2g(K)-1-m}\right),
$$
where the right-hand side has to be intended as the oriented interval in $\overline{\mathbb{Q}}$ from $0$ to $\frac{1}{2g(K)-1-m}$; for example, when \(\frac{1}{2g(K)-1-m}\) is negative, it contains $\infty$. The result follows from the classification of $L$-space surgeries on $L'_{n,k}$ given by Theorem \ref{thm: gen L-space links}. 
\end{proof}

\begin{proof}[Proof of Theorem \ref{thm: satellite}]
By repeating verbatim the proof of \cite[Theorem~1.11]{San23twobridge} and using Theorem~\ref{thm: pers fol link} and Proposition~\ref{prop: some ctf Lnk}, one deduces that if the pattern link \(L_P\) is not isotopic to \(L'_{n,k}\), for any \(n,k\geq 0\), then every non-trivial surgery on \(P(K,m)\) supports a coorientable taut foliation. In particular, \(P(K,m)\) is not an \(L\)-space link.

It remains to consider the case $L_P=L'_{n,k}$. Since $L_P$ is hyperbolic by assumption, we must have $n\geq2, k\geq 1$. If $K$ is an $L$-space knot, then the conclusion follows from Lemma \ref{lemma: satellite}. Conversely, if $P(K,m)$ is an $L$-space knot, then by using \cite[Theorem~1.14]{HRW} and the classification given by Theorem \ref{thm: gen L-space links} as in the proof of Lemma \ref{lemma: satellite}, one deduces that $K$ must be an $L$-space knot, and that the desired result holds\footnote{We point out that the statement of \cite[Theorem~1.15]{HRW}, which implies that $K$ is an $L$-space knot cannot be directly applied here, since the authors in \cite{HRW} use a slightly different definition of $L$-space knot (they do not require the $L$-space surgery to be positive).}.
\end{proof}

\bibliographystyle{amsalpha}
\bibliography{bibliography}

\end{document}